\newtheorem{theorem}{Theorem}[section]
\newtheorem{corollary}{Corollary}[section]
\newtheorem{remark}{Remark}[section]
\newtheorem{lemma}{Lemma}[section]
\numberwithin{equation}{section}
\def\d{\mathrm{d}}
\def\no{\nonumber}
\def\R{\mathbb{R}}
\def\eps{\varepsilon}
\def\u{\mathbf{u}}
\def\exp{\mathrm{exp}}
\def\M{\mathfrak{M}}
\def\A{\mathcal{A}}
\def\B{\mathcal{B}}
\def\u{\mathfrak{u}}
\def\L{\mathcal{L}}
\def\P{\mathcal{P}}
\def\I{\mathcal{I}}
\def\LL{[\![}
\def\RR{]\!]}
\def\n{{\mathfrak{n}_\gamma}}
\newcounter{wronumber}\setcounter{wronumber}{1}
\begin{document}
\title[Knudsen layer equation]
			{Knudsen boundary layer equations for full ranges of cutoff collision kernels: Maxwell reflection boundary with all accommodation coefficients in [0,1]}

\author[Ning Jiang]{Ning Jiang}
\address[Ning Jiang]{\newline School of Mathematics and Statistics, Wuhan University, Wuhan, 430072, P. R. China}
\email{njiang@whu.edu.cn}

\author[Yi-Long Luo]{Yi-Long Luo${}^*$}
\address[Yi-Long Luo]
{\newline School of Mathematics, South China University of Technology, Guangzhou, 510641, P. R. China}
\email{luoylmath@scut.edu.cn}

\author[Yulong Wu]{Yulong Wu}
\address[Yulong Wu]{\newline School of Mathematics and Statistics, Wuhan University, Wuhan, 430072, P. R. China}
\email{yulong\_wu@whu.edu.cn}

\thanks{${}^*$ Corresponding author \quad \today}

\maketitle

\begin{abstract}
   In this paper, we prove the existence and uniqueness of the Knudsen layer equation imposed on Maxwell reflection boundary condition with full ranges of cutoff collision kernels and accommodation coefficients (i.e., $- 3 < \gamma \leq 1$ and $0 \leq \alpha_* \leq 1$, respectively) in the $L^\infty_{x,v}$ framework. Moreover, the solution enjoys the exponential decay $\exp \{- c x^\frac{2}{3 - \gamma} - c |v|^2 \}$ for some $c > 0$. In order to study the general angular cutoff collision kernel $-3 < \gamma \leq 1$, we should introduce a $(x,v)$-mixed weight $\sigma$. The biggest difficulty in this paper is the nondissipative boundary condition, hence, the boundary temperature and velocity $(T_w, u_w)$ on $\{ x = 0 \}$ and $(T, \u)$ on $\{ x = + \infty \}$ do not guarantee the nonnegativity of the $L^2$ boundary energy. We also do not assume that $(T_w, u_w)$ and $(T, \u)$ are very closed to each other. We first derive the Nondissipative boundary lemma to pull the boundary energy to the interior weighted $L^2$ norms with higher power of $x$-polynomial weights. Then a so-called spatial-velocity indices iteration approach is developed to shift the higher power $x$-polynomial weights to $|v|$-polynomial weights. Finally, we construct an interleaved iteration process such that the boundary energy is successfully dominated. \\

   \noindent\textsc{Keywords.} Knudsen layer equation, Maxwell reflection boundary, exponential decay, nondissipative boundary \\

   \noindent\textsc{MSC2020.}  35Q20, 76P05, 35M32, 35B45, 35A01, 35A02
\end{abstract}

%
%


\section{Introduction and main results}

\subsection{The description of the problem}

When studying the hydrodynamic limits of the scaled Boltzmann equation in the domain with boundary, there is an essential kinetic boundary layer equation, called the Knudsen layer equation, will be generated, see \cite{Aoki-2017-JSP,Guo-Huang-Wang-2021-ARMA,JLT-2021-arXiv,JLT-2021-arXiv-2,Sone-Book-2022,Sone-Book-2007} for instance. In this paper, we consider the Knudsen layer equation over $(x,v) \in \R_+ \times \R^3$ with the Maxwell boundary condition at $x = 0$:
\begin{equation}\tag{KL}\label{KL}
	\left\{
	  \begin{aligned}
	  	& v_3 \partial_x f + \L f = S \,, \\
	  	& f (0,v) |_{v_3 > 0} = (1 - \alpha_*) f (0,R_0 v) + \alpha_* \mathcal{D}_w f (0, v) + f_b ( v) \,, \ \lim_{x \to + \infty} f (x,v) = 0 \,,
	  \end{aligned}
	\right.
\end{equation}
where $S = S(x,v) \in \mathrm{Null}^\perp (\L)$, $f_b (v) = 0$ if $v_3 < 0$. At $x = 0$, we define an external normal vector $n (0) = (0,0,-1)$. $R_0 v = v - 2 (n(0) \cdot v) n(0) = (v_1, v_2, - v_3)$ represents the specular reflection at $x = 0$. $f (0,R_0 v)$ then characterizes the specular reflection effect at the boundary $x = 0$. The operator $\mathcal{D}_w f (0, v)$ is the diffusive reflection operator, which is defined by
\begin{equation}\label{Dw}{\small
	\begin{aligned}
		\mathcal{D}_w f (0, v) = \tfrac{M_w (v)}{\sqrt{\M (v)}} \int_{v_3' < 0} (- v_3') f (0, v') \sqrt{\M (v')} \d v' \,.
	\end{aligned}}
\end{equation}
Here $M_w (v)$ is the Maxwellian of the boundary with the expression
\begin{equation*}
	\begin{aligned}
		M_w (v) = \sqrt{\tfrac{2 \pi}{ T_w }} \tfrac{1}{ ( 2 \pi T_w )^\frac{3}{2} } \exp \big[ - \tfrac{|v - u_w|^2}{ 2 T_w } \big] \,,
	\end{aligned}
\end{equation*}
where $T_w > 0$ is the temperature of the boundary, and $u_w = (u_{w, 1}, u_{w, 2}, u_{w, 3}) \in \R^3$ with $u_{w, 3} = 0$ is the velocity of the boundary. Remark that $M_w (v)$ satisfies
\begin{equation*}
	\begin{aligned}
		\int_{v_3 > 0} v_3 M_w (v) \d v = 1 \,,
	\end{aligned}
\end{equation*}
which means that the particles absorbed by the boundary will be completely released by the way of Gaussian distribution. The constant $\alpha_* \in [0, 1]$ is the accommodation coefficient weighting the specular reflection effect and the diffusive reflection effect.

The symbol $\L$ is the linearized Boltzmann collision operator $\B (F, F)$ around the Maxwellian
\begin{equation*}
	\begin{aligned}
		\M (v) = \tfrac{\rho}{(2 \pi T)^{3/2}} \exp \big[ - \tfrac{|v - \u|^2}{2 T} \big] \,,
	\end{aligned}
\end{equation*}
where $\rho, T > 0$ and $\u = (\u_1, \u_2, \u_3) \in \R^3$ with $\u_3 = 0$ are all constant. We remark that $T > 0$ and $\u \in \R^3$ exactly are the temperature and velocity of the far-field boundary of the Knudsen layer equation. It can be intuitively observed in the nonlinear problem \eqref{KL-NL} later.

Furthermore, the Boltzmann collision operator $\mathcal{B} (F, F)$ is defined by
\begin{equation}
	\begin{aligned}
		\mathcal{B} (F, F) = \iint_{\mathbb{R}^3 \times \mathbb{S}^2} (F' F_*' - F F_*) b (\omega, v_* - v) \d \omega \d v_* \,.
	\end{aligned}
\end{equation}
Here $\omega \in \mathbb{S}^2$ is a unit vector, $\d \omega$ is the rotationally invariant surface integral on $\mathbb{S}^2$, while $F_*'$, $F'$, $F_*$ and $F$ are the number density $F(\cdot)$ evaluated at the velocities $v_*'$, $v'$, $v_*$ and $v$ respectively, i.e.,
\begin{equation*}
	\begin{aligned}
		F_*' = F(v_*') \,, \ F' = F(v') \,, \ F_* = F(v_*) \,, \ F = F(v) \,.
	\end{aligned}
\end{equation*}
Here $(v_*', v')$ are the velocities after an elastic binary collision between two molecules with velocities $(v_*, v)$ before the collision, or vice versa. Since both momentum and energy are conserved during the elastic collision, $v_*'$ and $v'$ can be expressed in terms of $v_*$ and $v$ as
\begin{equation}
	\begin{aligned}
		v_*' = v_* - [ (v_* - v) \cdot \omega ] \omega \,, \ v' = v + [ (v_* - v) \cdot \omega ] \omega \,,
	\end{aligned}
\end{equation}
where the unit vector $\omega \in \mathbb{S}^2$ is parallel to the deflections $v_*' - v_*$ and $v'-v$, and is therefore perpendicular to the plane of reflection. In the collision term $\mathcal{B} (F, F)$, $F_*'F'$ is the gain term, while $F_* F$ is called the loss term.

By Grad's work \cite{Grad-1963}, the collision kernel $b (\omega, v_* - v)$ has the factored form
\begin{equation}\label{b}
	\begin{aligned}
		b (\omega, v_* - v) = \tilde{b} (\theta) |v_* - v|^\gamma \,, \ \cos \theta = \tfrac{(v_* - v) \cdot \omega}{|v_* - v|} \,, \ -3 < \gamma \leq 1 \,,
	\end{aligned}
\end{equation}
where $\tilde{b} (\theta)$ satisfies the small deflection cutoff condition
\begin{equation}\label{Cutoff}
	\begin{aligned}
		\int_{\mathbb{S}^2} \tilde{b} (\theta) \d \omega = 1 \,, \ 0 \leq \tilde{b} (\theta) \leq \tilde{b}_0 |\cos \theta|
	\end{aligned}
\end{equation}
for some constant $\tilde{b}_0 > 0$. The cases $-3 < \gamma < 0$ and $0 \leq \gamma \leq 1$ are respectively referred to as the ``soft" and ``hard" potential cases. In particular, $\gamma = 0$ is the Maxwell potential case, and $\gamma = 1$ is the hard sphere case, in which $\tilde{b} (\theta) = \tilde{b}_0 |\cos \theta|$.

Considering the perturbation $F = \M + \sqrt{\M} f$ around $\M$, the linearized Boltzmann collision operator $\L$ is defined by
\begin{equation}\label{Lf}
	\begin{aligned}
		\L f = - \M^{- \frac{1}{2}} (v) \big[ \mathcal{B} (\M, \sqrt{\M} f) + \mathcal{B} ( \sqrt{\M} f , \M ) \big] = \nu (v) f (v) - K f (v) \,,
	\end{aligned}
\end{equation}
where the collision frequency $\nu (v)$ is defined by
\begin{equation}\label{nu}
	\begin{aligned}
		\nu (v) = \iint_{\mathbb{R}^3 \times \mathbb{S}^2} \M (v_*) b (\omega, v_* - v) \d \omega \d v_* \overset{\eqref{Cutoff}}{=} \int_{\R^3} |v_* - v|^\gamma \M (v_*) \d v_* \,,
	\end{aligned}
\end{equation}
and the operator $K f (v)$ can be decomposed into two parts:
\begin{equation}
	\begin{aligned}\label{K-K1-K2}
		K f (v) = - K_1 f (v) + K_2 f (v) \,.
	\end{aligned}
\end{equation}
Here the loss term $K_1 f (v)$ is
\begin{equation}\label{K1}
	\begin{aligned}
		K_1 f(v) = \M^\frac{1}{2} (v) \iint_{\mathbb{R}^3 \times \mathbb{S}^2} f (v_*) \M^\frac{1}{2} (v_*) b (\omega, v_* - v) \d \omega \d v_* \,,
	\end{aligned}
\end{equation}
and the gain term $K_2 f (v)$ is
\begin{equation}\label{K2}
	\begin{aligned}
		K_2 f(v) = & \M^\frac{1}{2} (v) \iint_{\mathbb{R}^3 \times \mathbb{S}^2} \big[ \M^{- \frac{1}{2}} (v') f (v') + \M^{- \frac{1}{2}} (v_*') f(v_*') \big] \M (v_*) b (\omega, v_* - v) \d \omega \d v_* \\
		= & \iint_{\mathbb{R}^3 \times \mathbb{S}^2} \big[ \M^{\frac{1}{2}} (v'_*) f (v') + \M^{\frac{1}{2}} (v') f(v_*') \big] \M^\frac{1}{2} (v_*) b (\omega, v_* - v) \d \omega \d v_* \,,
	\end{aligned}
\end{equation}
where the last equality is derived from the collisional invariant $\M (v) \M (v_*) = \M (v') \M (v_*')$. Remark that the collision frequency is invariant under the specular reflection operator, i.e.,
\begin{equation}\label{nu-rf}
	\begin{aligned}
		\nu (R_0 v) = \nu (v) \,.
	\end{aligned}
\end{equation}

The null space $\mathrm{Null} (\L)$ of the operator $\L$ is spanned by the basis
\begin{equation}\label{psi-i-bases}
	\begin{aligned}
		\psi_0 = \tfrac{1}{\sqrt{\rho}} \sqrt{\M} \,, \ \psi_i =  \tfrac{v_i - \u_i}{\sqrt{\rho T}} \sqrt{\M} \, (i = 1,2,3) \,, \ \psi_4 = \tfrac{1}{\sqrt{6 \rho}} \left( \tfrac{|v - \u|^2}{T} - 3 \right) \sqrt{\M} \,.
	\end{aligned}
\end{equation}
The basis is orthonormal in the Hilbert space $L^2 : = L^2 (\R^3)$. Namely, $\int_{\R^3} \psi_i \psi_j \d v = \delta_{ij}$ for $i,j= 0, 1,2,3,4$. Let $\mathrm{Null}^\perp (\L)$ be the orthogonal space of $\mathrm{Null} (\L)$ in $L^2$, namely,
\begin{equation*}
	\begin{aligned}
		\mathrm{Null} (\L) \oplus \mathrm{Null}^\perp (\L) = L^2 \,.
	\end{aligned}
\end{equation*}
We define the projection $\P : L^2 \to \mathrm{Null} (\L)$ by
\begin{equation}\label{Projt-P}
	\begin{aligned}
		\P f = \sum_{i = 0}^4 a_i (f) \psi_i \,, \ a_i (f) = \int_{\R^3} f \psi_i \d v \,, i = 0, 1, 2, 3, 4 \,.
	\end{aligned}
\end{equation}

Based on the projection $\P$, over the space $\mathrm{Null} (\L)$, we introduce the operator
\begin{equation}\label{A-operator}
	\begin{aligned}
		\A = \P v_3 \P \,,
	\end{aligned}
\end{equation}
which is the 5-dimensional linear bounded self adjoint operator. It is easy to find that $\A$ possesses the eigenvalues
\begin{equation}
	\lambda_i = 0 \ (i = 0,1,2) \,, \ \lambda_3 = \sqrt{\tfrac{5}{3} T} \,, \lambda_4 = - \sqrt{\tfrac{5}{3} T} \,.
\end{equation}
The corresponding unit eigenvectors are
\begin{equation}\label{psi*-basis}
	\begin{aligned}
		& \psi_0^* (v) = \tfrac{1}{\sqrt{\rho}} ( \tfrac{|v - \u|^2}{T} - 5 ) \sqrt{\M} \,, \ \psi_i^* (v) = \tfrac{v_i - \u_i}{\sqrt{\rho T}} \sqrt{\M} (i = 1,2) \,, \\
		& \psi_3^* (v) = \tfrac{1}{\sqrt{\rho}} ( \tfrac{|v - \u|^2}{T} + \sqrt{15} \tfrac{v_3}{\sqrt{T}} ) \sqrt{\M} \,, \ \psi_4^* (v) = \tfrac{1}{\sqrt{\rho}} ( \tfrac{|v - \u|^2}{T} - \sqrt{15} \tfrac{v_3}{\sqrt{T}} ) \sqrt{\M} \,,
	\end{aligned}
\end{equation}
which forms an orthonormal basis of the null space $\mathrm{Null} (\L)$. Then one can define the projection $\P^+$ from $\mathrm{Null} (\L)$ to the subspace associated with positive eigenvalues of $\A$ by
\begin{equation}\label{P+}
	\begin{aligned}
		\P^+ f = \int_{\R^3} \psi_3^* f \d v \psi_3^* \,.
	\end{aligned}
\end{equation}

We now introduce the Burnett functions $\mathbb{A} \in \R^{3 \times 3}$ and $\mathbb{B} \in \R^3$ with the entries
\begin{equation}\label{AB-1}
	\begin{aligned}
		& \mathbb{A}_{ij} = \big\{ \tfrac{ (v_i - \u_i) ( v_j - \u_j ) }{ T } - \delta_{ij} \tfrac{|v - \u|^2}{3 T} \big\} \sqrt{\M} \ (1 \leq i,j \leq 3) \,, \\
		& \mathbb{B}_i = \tfrac{v_i - \u_i}{2 \sqrt{T}} ( \tfrac{|v - \u|^2}{T} - 5 ) \sqrt{\M} \ (1 \leq i \leq 3) \,.
	\end{aligned}
\end{equation}
The entries above belonging to $\mathrm{Null}^\perp (\L)$ are all orthogonal each other. We also define
\begin{equation}\label{AB-2}
	\begin{aligned}
		\widehat{\mathbb{A}}_{ij} = \L^{-1} \mathbb{A}_{ij} \ (1 \leq i,j \leq 3) \,, \ \widehat{\mathbb{B}}_i = \L^{-1} \mathbb{B}_i \ (1 \leq i \leq 3) \,,
	\end{aligned}
\end{equation}
where the notation $\L^{-1}$ is the inverse of $\L$ restricted on the orthogonal space $\mathrm{Null}^\perp (\L)$, whose decay properties can be found in \cite{JLT-2022-arXiv}. As shown in Golse's work \cite{Golse-2008-BIMA}, we introduce the projection $\P^0$ by
\begin{equation}\label{P0}
	\begin{aligned}
		\P^0 f = \frac{ \int_{\R^3} v_3 \widehat{\mathbb{B}}_3 f \d v }{ \int_{\R^3} \widehat{\mathbb{B}}_3 \mathbb{B}_3 \d v } \psi_0^* + \frac{ \int_{\R^3} v_3 \widehat{\mathbb{A}}_{13} f \d v }{ \int_{\R^3} \widehat{\mathbb{A}}_{13} \mathbb{A}_{13} \d v } \psi_1^* + \frac{ \int_{\R^3} v_3 \widehat{\mathbb{A}}_{23} f \d v }{ \int_{\R^3} \widehat{\mathbb{A}}_{23} \mathbb{A}_{23} \d v } \psi_2^* \,,
	\end{aligned}
\end{equation}
which is actually a map from $L^2$ to the subspace of $\mathrm{Null} (\L)$ associated with the zero eigenvalues. Moreover, Golse \cite{Golse-2008-BIMA} also defined the operator $\mathbb{P}$ from $\mathrm{Null}^\perp (\L)$ to $\mathrm{Span} \{ \mathbb{A}_{12}, \mathbb{A}_{13}, \mathbb{B}_3 \} $ by
\begin{equation}\label{P-AB}
	\begin{aligned}
		\mathbb{P} f = \frac{ \int_{\R^3} \widehat{\mathbb{B}}_3 f \d v }{ \int_{\R^3} \widehat{\mathbb{B}}_3 \mathbb{B}_3 \d v } \mathbb{B}_3 + \frac{ \int_{\R^3} \widehat{\mathbb{A}}_{13} f \d v }{ \int_{\R^3} \widehat{\mathbb{A}}_{13} \mathbb{A}_{13} \d v } \mathbb{A}_{13} + \frac{ \int_{\R^3} \widehat{\mathbb{A}}_{23} f \d v }{ \int_{\R^3} \widehat{\mathbb{A}}_{23} \mathbb{A}_{23} \d v } \mathbb{A}_{23} \,.
	\end{aligned}
\end{equation}
By Lemma 3 of \cite{Golse-2008-BIMA}, the operators $\P^0$ and $\mathbb{P}$ enjoy the following properties:
\begin{itemize}
	\item $ \mathbb{P}^2 = \mathbb{P} $, $(\P^0)^2 = \P^0$;
	\item $\mathrm{Im} \mathbb{P} = \mathrm{Span} \{ \mathbb{A}_{13}, \mathbb{A}_{23}, \mathbb{B}_3 \} \subset \mathrm{Null}^\perp (\L)$, $\mathrm{Im} \P^0 = \mathrm{Span} \{ \psi_0^*, \psi_1^*, \psi_2^* \} \subset \mathrm{Null} (\L)$;
	\item $\mathbb{P} (v_3 f) = v_3 \P^0 f$;
	\item $\mathbb{P} (\L f) = 0$ if $v_3 f \perp \mathrm{Null} (\L)$;
	\item $\int_{\R^3} f \P^0 f \d v \geq 0$, and the map $f \mapsto ( \int_{\R^3} f \P^0 f \d v )^\frac{1}{2} $ defines a norm on $\mathrm{Span} \{ \psi_0^*, \psi_1^*, \psi_2^* \}$.
\end{itemize}

\subsection{Overdetermined far-field condition $\lim_{x \to + \infty} f (x, v) = 0$}

As inspired in Theorem 3.3 of \cite{Bardos-Caflisch-Nicolaenko-1986-CPAM}, the function $f (x,v)$ solving the problem
\begin{equation}\label{KL-d}
	\left\{
	  \begin{aligned}
	  	& v_3 \partial_x f + \L f = S \,, \ x > 0 \,, \\
	  	& f (0,v) |_{v_3 > 0} = (1 - \alpha_*) f (0,R_0 v) + \alpha_* \mathcal{D}_w f (0, v) + f_b ( v)
	  \end{aligned}
	\right.
\end{equation}
will enjoy the asymptotic behavior
\begin{equation}\label{Asymptotic-f}
	\begin{aligned}
		\lim_{x \to + \infty} f (x,v) = a_\infty \psi_0 (v) + b_{1 \infty} \psi_1 (v) + b_{2 \infty} \psi_2 (v) + b_{3 \infty} \psi_3 (v) + c_\infty \psi_4 (v)
	\end{aligned}
\end{equation}
for some constants $a_\infty$, $b_{1 \infty}$, $b_{2 \infty}$, $b_{3 \infty}$ and $c_\infty$, where the functions $\psi_i (v)$ are given in \eqref{psi-i-bases}. In other words, the far-field condition $\lim_{x \to + \infty} f (x, v) = 0$ in \eqref{KL} is overdetermined for general source terms $S \in \mathrm{Null}^\perp (\L)$ and $f_b$.

However, the far-field condition $\lim_{x \to + \infty} f (x, v) = 0$ is necessary in proving the hydrodynamic limits (including the compressible/incompressible limits) of the Boltzmann equation with Maxwell reflection boundary condition. In order to overcome the overdetermination of zero far-field condition, some further assumptions on the source terms $S \in \mathrm{Null}^\perp (\L)$ and $f_b$ are required. Now we introduce a so-called {\em vanishing sources set} $\mathbb{VSS}_{\alpha_*}$ defined by
\begin{equation}\label{ASS}
	\begin{aligned}
		\mathbb{VSS}_{\alpha_*} = \big\{ (S, f_b) ; S (x,v) \in \mathrm{Null}^\perp (\L) \textrm{ and } f_b (v) \textrm{ in \eqref{KL-d} such that } \lim_{x \to + \infty} f (x, v) = 0 \big\} \,.
	\end{aligned}
\end{equation}
As shown in \cite{Golse-Perthame-Sulem-1988-ARMA}, one knows that for $\alpha_* = 0$,
\begin{equation*}
	\begin{aligned}
		\mathbb{VSS}_0 = \Big\{ (S, f_b) ; S (x,v) \in \mathrm{Null}^\perp (\L) \,, \ \int_{v_3 > 0} v_3 \psi_i (v) f_b (v) \d v = 0 \,, \ i = 0, 1, 2, 4 \Big\} \neq \varnothing \,.
	\end{aligned}
\end{equation*}
For the case $0 < \alpha_* \leq 1$, by employing the isotropic properties of $\L$ (see Appendix B of \cite{Sone-Book-2022} and Appendix A.2 of \cite{Sone-Book-2007}), one can find some special expressions of $ S (x,v) $ and $f_b (v)$ belonging to the so-called vanishing sources set $\mathbb{VSS}_{\alpha_*}$. For example, special representations of $S$ and $f_b$ can be found in Problems (106)-(109), Section 5.2 of \cite{Aoki-2017-JSP}, and Problems (4.16)-(4.17), Section 4 of \cite{HJW-2024-preprint}. Therefore, by summarizing above arguments, we know that for $0 \leq \alpha_* \leq 1$,
\begin{equation}
	\begin{aligned}
		\mathbb{VSS}_{\alpha_*} \neq \varnothing \,.
	\end{aligned}
\end{equation}
Lately, the structure of the vanishing sources set $ \mathbb{VSS}_{\alpha_*} $ with codimension 4 will be clearly characterized. From now on, we will assume $(S, f_b) \in \mathbb{VSS}_{\alpha_*}$ such that the far-field condition $\lim_{x \to + \infty} f (x, v) = 0$ holds.

\subsection{Toolbox}

In this subsection, we will collect all notations, functions spaces and energy functionals that will be utilized in whole paper.

\subsubsection{Notations}

We employ the symbol $A \lesssim B$ meaning that $A \leq C B$ for some harmless constant $C > 0$. Moreover, $A \thicksim B$ denotes by $C_1 B \leq A \leq C_2 B$ for some harmless constants $C_1, C_2 > 0$. As inspired in \cite{Chen-Liu-Yang-2004-AA}, we introduce the following $(x,v)$-mixed weight
\begin{equation}\label{sigma}{\small
	\begin{aligned}
		\sigma (x,v) = 5 (\delta x + l)^\frac{2}{3 -\gamma} \Big[ 1 - \Upsilon \Big( \tfrac{\delta x + l}{( 1 + |v - \u| )^{3 - \gamma}} \Big) \Big] + \Big( \tfrac{\delta x + l}{(1 + |v - \u|)^{1 - \gamma}} + 3 |v - \u|^2 \Big) \Upsilon \Big( \tfrac{\delta x + l}{( 1 + |v - \u| )^{3 - \gamma}} \Big)
	\end{aligned}}
\end{equation}
for small $\delta > 0$ and large $l > 0$ to be determined, where the cutoff function $\Upsilon (\cdot)$ is defined in \eqref{Ups-Cut}. This weight can also be employed in the works \cite{Wang-Yang-Yang-2006-JMP,Wang-Yang-Yang-2007-JMP,Yang-2008-JMAA}. For any small $\hbar > 0$ to be determined, denote by
\begin{equation}
	\begin{aligned}
		f_\sigma : = e^{\hbar \sigma} f \ (\forall f = f (x,v)) \,.
	\end{aligned}
\end{equation}
We also introduce a weighted function
\begin{equation}\label{wv}
	\begin{aligned}
		w_{\beta, \vartheta} (v) = (1 + |v|)^\beta e^{\vartheta |v - \u|^2}
	\end{aligned}
\end{equation}
for the constants $\beta \in \R$ and $ \vartheta \geq 0 $. Then, for $\alpha \in \R$, we denote a weight function
\begin{equation}\label{z-alpha}
	z_\alpha (v) =
	\left\{
	\begin{array}{cl}
		|v_3|^\alpha \,, & |v_3| < 1 \,, \\
		1 \,, & |v_3| \geq 1 \,. \\
	\end{array}
	\right.
\end{equation}
We remark that the weights $\sigma (x,v)$ and $w_{\beta, \vartheta} (v)$ satisfy
\begin{equation}
	\begin{aligned}\label{sigma-w-inv}
		\sigma (x,R_U v) = \sigma (x,v) \,, \ w_{\beta, \vartheta} (R_U v) = w_{\beta, \vartheta} (v) \,, \ z_\alpha (R_U v) = z_\alpha (v) \textrm{ for } U = 0, A \,,
	\end{aligned}
\end{equation}
due to $\u_3 = 0$. Here $R_U v = v - 2 [ n (U) \cdot v ] n (U)$, $n (A) = (0,0,1)$. We now define the constant $\beta_\gamma$ is given by
\begin{equation}\label{beta-gamma}
	\begin{aligned}
		\beta_\gamma =
		\left\{
		\begin{aligned}
			0 \,, \quad & \textrm{if } 0 \leq \gamma \leq 1 \,, \\
			- \tfrac{\gamma}{2} \,, \quad & \textrm{if } - 3 < \gamma < 0 \,,
		\end{aligned}
		\right.
	\end{aligned}
\end{equation}
which will be utilized later while designing the various weighted norms.

\subsubsection{Functions spaces}

Based on the above weights, we introduce the spaces $X^\infty_{\beta, \vartheta} (A)$ and $Y^\infty_{m, \beta, \vartheta} (A)$ over $(x,v) \in {\Omega_A} \times \R^3$ for $m, \beta, \vartheta \in \R$ and $0 < A \leq \infty$ with the weighted norms
\begin{equation}\label{XY-space}{\small
	\begin{aligned}
		& \| f \|_{A; \beta, \vartheta} : = \| w_{\beta, \vartheta} f \|_{L^\infty_{x,v}} = \sup_{(x, v) \in \Omega_A \times \R^3} | w_{\beta, \vartheta} (v) f (x,v) | \,, \ \LL f \RR_{A; m, \beta, \vartheta} : = \| \sigma_x^\frac{m}{2} w_{\beta, \vartheta} f \|_{L^\infty_{x,v}} \,,
	\end{aligned}}
\end{equation}
respectively. On the boundary $\Sigma : = \partial {\Omega_A} \times \R^3 $, we introduce the spaces $L^\infty_\Sigma$, $X^\infty_{ \beta, \vartheta, \Sigma}$ and $Y^\infty_{m, \beta, \vartheta, \Sigma}$ endowed with the norms
\begin{equation}\label{XY-Sigma-space}
	\begin{aligned}
		& \| f \|_{L^\infty_\Sigma} : = \sup_{(x, v) \in \Sigma} |f (x,v)| \,, \ \| f \|_{ \beta, \vartheta, \Sigma} : = \| w_{\beta, \vartheta} f \|_{L^\infty_\Sigma} \,, \ \LL f \RR_{m, \beta, \vartheta, \Sigma} : = \| \sigma_x^\frac{m}{2} w_{\beta, \vartheta} f \|_{L^\infty_\Sigma} \,,
	\end{aligned}
\end{equation}
respectively. Let $L^p_x$ and $L^p_v$ with $1 \leq p \leq \infty$ be the standard $L^p$ space over $x \in \Omega_A$ and $v \in \R^3$, respectively. Moreover, the norm $\| \cdot \|_{L^r_v (L^p_x)}$ is defined by
\begin{equation*}
	\begin{aligned}
		\| \cdot \|_{L^r_v (L^p_x)} = \| \big( \| f (\cdot, v) \|_{L^x_p} \big) \|_{L^r_v} \,.
	\end{aligned}
\end{equation*}
We also introduce the space $L^\infty_x L^2_v$ endowed with the norm
\begin{equation*}
	\begin{aligned}
		\| f \|_{L^\infty_x L^2_v} = \sup_{x \in \Omega_A} \| f (x, \cdot) \|_{L^2_v} \,.
	\end{aligned}
\end{equation*}
Furthermore, the space $L^2_{x,v}$ over $\Omega_A \times \R^3$ endows with the norm
\begin{equation}
	\begin{aligned}
		\| f \|_A = \big( \iint_{\Omega_A \times \R^3} |f (x,v) |^2 \d v \d x \big)^\frac{1}{2} \,.
	\end{aligned}
\end{equation}

One introduces the out normal vector $n (0) = (0,0,-1)$ and $n (A) = (0,0,1)$ of the boundary $\partial \Omega_A$. Define
\begin{equation*}
	\begin{aligned}
		\Sigma_\pm = \{ (x, v) \in \Sigma ; \pm n (x) \cdot v > 0 \} \,.
	\end{aligned}
\end{equation*}
We then define the space ${L^2_{\Sigma_\pm}}$ over $(x,v) \in \Sigma_\pm$ as follows:
\begin{equation}\label{L2-Sigma+}{\footnotesize
	\begin{aligned}
		\| f \|^2_{{L^2_{\Sigma_\pm}}} = \| f \|^2_{{L^2_{\Sigma_\pm^0}}} + \| f \|^2_{{L^2_{\Sigma_\pm^A}}} \,, \ \| f \|^2_{{L^2_{\Sigma_\pm^0}}} = \int_{ \pm v_3 < 0} |f (0, v)|^2 \d v \,, \ \| f \|^2_{{L^2_{\Sigma_\pm^A}}} = \int_{\pm v_3 > 0} |f (A, v)|^2 \d v \,.
	\end{aligned}}
\end{equation}

\subsubsection{Energy functionals}\label{Subsubsec:EF}

We now introduce total energy functional as
\begin{equation}\label{Eg-lambda}
	\begin{aligned}
		\mathscr{E}^A (g) = \mathscr{E}_\infty^A (g) + \mathscr{E}_{\mathtt{cro}}^A (g) + \big[ \mathscr{E}_{\mathtt{NBE}}^A (g) \big]^\frac{1}{2} + \big[ \mathscr{E}_2^A (g) \big]^\frac{1}{2}
	\end{aligned}
\end{equation}
for $0 < A \leq \infty$. The weighted $L^\infty_{x,v}$ energy functional $\mathscr{E}_\infty^A (g)$ is defined as
\begin{equation}\label{E-infty}
	\begin{aligned}
		\mathscr{E}_\infty^A (g) = \LL g \RR_{A; m, \beta_*, \vartheta} \,.
	\end{aligned}
\end{equation}
The cross energy functional $ \mathscr{E}_{\mathtt{cro}}^A (g) $ connecting the $L^\infty_{x,v}$ and $L^2_{x,v}$ estimates is defined as
\begin{equation}\label{E-cro}
	\begin{aligned}
		\mathscr{E}_{\mathtt{cro}}^A (g) = \| \nu^\frac{1}{2} z_{- \alpha} \sigma_x^\frac{m}{2} w_{- \gamma + \beta_\gamma, \vartheta} g \|_A + \| \nu^{ - \frac{1}{2} } z_{- \alpha} \sigma_x^\frac{m}{2} z_1 w_{- \gamma + \beta_\gamma, \vartheta} \partial_x g \|_A \,.
	\end{aligned}
\end{equation}
The nondissipative boundary energy functional $\mathscr{E}_{\mathtt{NBE}}^A (g)$ is expressed by
\begin{equation}\label{E-NBE}{\small
	\begin{aligned}
		\mathscr{E}_{\mathtt{NBE}}^A (g) = & \Xi^A [ \tfrac{59 - 24 \gamma}{24 (3 - \gamma)}; \tfrac{15}{8 (3 - \gamma)}, 0 ] (g) + \Xi^A [ \tfrac{9 + \gamma}{3 (3 - \gamma)}; \tfrac{7}{8 (3 - \gamma)}, \beta_\gamma + \tfrac{1}{2} ] (g) + \Xi^A [ \tfrac{ 107 - 24 \gamma }{12 (3  - \gamma)}; \tfrac{7}{4 (3 - \gamma)}, 0 ] (g) \\
& + \Xi^A [ \tfrac{ 297 - 40 \gamma }{36 (3 - \gamma)}; \tfrac{3}{4 (3 - \gamma)}, \beta_\gamma + \tfrac{1}{2} ] (g) + \Xi^A [ \tfrac{189 - 52 \gamma}{36 ( 3 - \gamma )}; - \tfrac{1}{8 (3 - \gamma)}, 2 \beta_\gamma + 1 ] (g) \,,
	\end{aligned}}
\end{equation}
where the constant $\beta_\gamma$ is given in \eqref{beta-gamma}, and
\begin{equation}\label{Xi-A}{\footnotesize
	\begin{aligned}
		\Xi^A [ \mathbf{a}; \mathbf{b}, \mathbf{c} ] (g) : = & l^{ \mathbf{a} } \Big( \delta \hbar \| ( l^{- 6} \delta x + l )^\mathbf{b} ( \delta x + l )^{ - \frac{ 1 - \gamma }{ 2 ( 3 - \gamma ) } }  \P w_{\mathbf{c} , 0 } g \|_A^2 + \| ( l^{- 6} \delta x + l )^\mathbf{b} \nu^\frac{1}{2} \P^\perp w_{\mathbf{c} , 0 } g \|_A^2 \Big)
	\end{aligned}}
\end{equation}
with the weight $w_{\beta, \vartheta}$ introduced in \eqref{wv}. The weighted $L^2_{x,v}$ energy functional $\mathscr{E}_2^A (g)$ is represented by
\begin{equation}\label{E2-lambda}
	\begin{aligned}
		\mathscr{E}_2^A (g) = & {\delta \hbar} \| ( \delta x + l )^{ - \frac{1 - \gamma}{2 ( 3 - \gamma ) } } \P w_{\beta, \vartheta} g \|_A^2 + \| \nu^\frac{1}{2} \P^\perp w_{\beta, \vartheta} g \|_A^2 \,.
	\end{aligned}
\end{equation}

We further introduce the source energy functional $\mathscr{A}^A (h)$ and the boundary source energy functional $\mathscr{B} (\varphi)$ as follows:
\begin{equation}\label{Ah-lambda}
	\begin{aligned}
		& \mathscr{A}^A (h) = \mathscr{A}_\infty^A (h) + \mathscr{A}_{\mathtt{cro}}^A (h) + \big[ \mathscr{A}_{\mathtt{NBE}}^A (h) \big]^\frac{1}{2} + \big[ \mathscr{A}_2^A (h) \big]^\frac{1}{2} \,, \\
		& \mathscr{B} (\varphi) = \mathscr{B}_\infty (\varphi) + \mathscr{B}_{\mathtt{cro}} (\varphi) + \big[ \mathscr{B}_{\mathtt{NBE}} (\varphi) \big]^\frac{1}{2} + \big[ \mathscr{B}_2 (\varphi) \big]^\frac{1}{2} \,,
	\end{aligned}
\end{equation}
where
\begin{equation}\label{As-def}
	\begin{aligned}
		\mathscr{A}_\infty^A (h) = & \LL \nu^{-1} h \RR_{A; m, \beta_*, \vartheta} \,, \ \mathscr{A}_{\mathtt{cro}}^A (h) = \| \nu^{ - \frac{1}{2} } z_{- \alpha} \sigma_x^\frac{m}{2} w_{- \gamma + \beta_\gamma, \vartheta} h \|_A \,, \\
		\mathscr{A}_{\mathtt{NBE}}^A (h) = & \Lambda^A [ \tfrac{15}{8 (3 - \gamma)}, 0 ] (h_\sigma) + \Lambda^A [ \tfrac{7}{4 (3 - \gamma)}, 0 ] (h_\sigma) + \Lambda^A [ \tfrac{7}{8 (3 - \gamma)}, \beta_\gamma + \tfrac{1}{2} ] (h_\sigma) \\
		& + \Lambda^A [ \tfrac{3}{4 (3 - \gamma)}, \beta_\gamma + \tfrac{1}{2} ] (h_\sigma) + \Lambda^A [ - \tfrac{1}{8 (3 - \gamma)}, 2 \beta_\gamma + 1 ] (h_\sigma) \,, \\
		\mathscr{A}_2^A (h) = & (\delta \hbar )^{- 1 } \| ( \delta x + l )^{ \frac{1 - \gamma}{2 ( 3 - \gamma ) } } \P w_{\beta, \vartheta} h \|_A^2 + \| \nu^{ - \frac{1}{2} } \P^\perp w_{\beta, \vartheta} h \|_A^2 \,,
	\end{aligned}
\end{equation}
and
	\begin{align}\label{B-def}
		\no \mathscr{B}_\infty (\varphi) = & \| \sigma_x^\frac{m}{2} (A, \cdot) w_{\beta, \vartheta} \varphi \|_{L^\infty_v } \,, \ \mathscr{B}_2 (\varphi) = \| |v_3|^\frac{1}{2} w_{\beta, \vartheta} \varphi \|_{L^2_{ \Sigma_-^A }}^2 \,, \\ 
         \no \mathscr{B}_{\mathtt{cro}} (\varphi) = & \| |v_3|^\frac{1}{2} \sigma_x^\frac{m}{2} z_{- \alpha} w_{\beta + \beta_\gamma, \vartheta} \varphi \|_{L^2_{ \Sigma_-^A }} \,, \\
		\no \mathscr{B}_{\mathtt{NBE}} (\varphi) = & \Lambda_\star [ \tfrac{15}{8 (3 - \gamma)}, 0 ] (\varphi_{A, \sigma}) + \Lambda_\star [ \tfrac{7}{4 (3 - \gamma)}, 0 ] (\varphi_{A, \sigma}) + \Lambda_\star [ \tfrac{7}{8 (3 - \gamma)}, \beta_\gamma + \tfrac{1}{2} ] (\varphi_{A, \sigma}) \\
		& + \Lambda_\star [ \tfrac{3}{4 (3 - \gamma)}, \beta_\gamma + \tfrac{1}{2} ] (\varphi_{A, \sigma}) + \Lambda_\star [ - \tfrac{1}{8 (3 - \gamma)}, 2 \beta_\gamma + 1 ] (\varphi_{A, \sigma}) \,.
	\end{align}
Here
\begin{equation*}{\small
	\begin{aligned}
		\Lambda^A [ \mathbf{a}, \mathbf{b} ] (h_\sigma) : = (\delta \hbar)^{-1} \| ( l^{-6} \delta x + l )^{\mathbf{a}} ( \delta x + l )^\frac{ 1 - \gamma }{ 2 ( 3 - \gamma ) } \P w_{\mathbf{b} , 0 } h_\sigma \|_A^2 + \| ( l^{-6} \delta x + l )^{\mathbf{a}} \nu^{- \frac{1}{2}} \P^\perp w_{\mathbf{b} , 0 } h_\sigma \|_A^2 \,,
	\end{aligned}}
\end{equation*}
and
\begin{equation*}
	\begin{aligned}
		\Lambda_\star [ \mathbf{a}, \mathbf{b} ] (\varphi_{A, \sigma}) : = \| |v_3|^\frac{1}{2} ( l^{-6} \delta x + l )^\mathbf{a} w_{\mathbf{b}, 0} \varphi_{A, \sigma} \|^2_{L^2_{\Sigma_-^A}} \,.
	\end{aligned}
\end{equation*}
At the end, we define the following norm associated with the boundary source term $\widetilde{f}_b (v)$ as
\begin{equation}\label{fb-N}
	\begin{aligned}
		\| \widetilde{f}_b \|_{\mathfrak{N}} : = & \| w_{1 + \beta_* , \vartheta + 3 \hbar } \widetilde{f}_b \|_{L^\infty_v}  + \| \nu^\frac{1}{2} z_{- \alpha} w_{1 - \gamma + \beta_\gamma, \vartheta + 3 \hbar} \widetilde{f}_b \|_{L^2_v} + \| \P w_{1 + \beta, \vartheta + 3 \hbar} \widetilde{f}_b \|_{L^2_v} \\
		& + \| \nu^{- \frac{1}{2} } z_{1 - \alpha} w_{1 - \gamma + \beta_\gamma, \vartheta + 3 \hbar} \widetilde{f}_b \|_{L^2_v} + \| \nu^\frac{1}{2} \P^\perp w_{1 + \beta, \vartheta + 3 \hbar} \widetilde{f}_b \|_{L^2_v} \,.
	\end{aligned}
\end{equation}

\subsection{Main results}

\subsubsection{Linear problem \eqref{KL}}

We now state the existence result on the Knudsen layer equation \eqref{KL}. Before doing this, we first display the parameters assumptions occurred in $\mathscr{E}^A (g)$ (see \eqref{Eg-lambda}) and $ \mathscr{A}^A (h) $, $\mathscr{B} (\varphi)$ (see \eqref{Ah-lambda}) above.

{\bf Parameters Hypotheses (PH):} For $ \{ \gamma, \alpha_*, \beta, \beta_*, \alpha, m, \delta, \hbar, \vartheta, l, \rho, T, T_w, \u, u_w \} $,
we assume that
\begin{itemize}
	\item $- 3 < \gamma \leq 1$, $0 \leq \alpha_* \leq 1$, $m \geq 1$, $0 < \delta < 1$, integer $\beta_* \geq 0$;
	\item $\beta \geq 3 (\beta_\gamma + \frac{1}{2})$, where $\beta_\gamma$ is given in \eqref{beta-gamma};
	\item $0 < \alpha < \mu_\gamma$, where $\mu_\gamma > 0$ is given in Lemma \ref{Lmm-Kh-L2} below;
	\item $0 \leq \vartheta \leq \vartheta_0$, where small $\vartheta_0 > 0$ is given in Lemma \ref{Lmm-APE-A3} below;
	\item $0 < \hbar \leq o (1) \delta$ and $l \geq O (1) (\delta \hbar)^{ - \frac{512 (3 - \gamma)}{3 (3 + \gamma)} }$, where sufficiently small $0 < o (1) \ll 1$ and large enough $O (1) \gg 1$ are both independent of $\delta, \hbar, l$ ;
	\item $\rho > 0$, $\u, w_w \in \R^3$ with $\u_3 = u_{w 3} = 0$, and
	\begin{equation}\label{Assmp-T}
		\begin{aligned}
			0 < T_w < 2 T \,.
		\end{aligned}
	\end{equation}
\end{itemize}

For the source term, we introduce a functional space $\mathfrak{X}_\gamma^\infty$ by
\begin{equation}\label{X-gamma-infty}
  \begin{aligned}
    \mathfrak{X}_\gamma^\infty = \big\{ (S, f_b); S \in \mathrm{Null}^\perp (\L) , \mathscr{E}^\infty ( (\delta x + l)^\frac{1 - \gamma}{3 - \gamma} e^{\hbar \sigma} S) + \| {f}_b \|_{\mathfrak{N}} < \infty \big\} \,,
  \end{aligned}
\end{equation}
where the quantity $\mathscr{E}^\infty ( \cdot )$ is defined in \eqref{Eg-lambda} with $A = \infty$ and the norm $\| {f}_b \|_{\mathfrak{N}}$ is introduced in \eqref{fb-N}.

We now state the first result of this paper.

\begin{theorem}\label{Thm-Linear}
Assume that
\begin{itemize}
  \item the mixed weight $\sigma (x,v)$ be given in \eqref{sigma};
  \item the parameters $\{ \gamma, \alpha_*, \beta, \beta_*, \alpha, m, \delta, \hbar, \vartheta, l, \rho, T, T_w, \u, u_w \}$ satisfy the hypotheses {\bf (PH)};
  \item the source terms $(S, f_b) \in \mathbb{VSS}_{\alpha_*} \cap \mathfrak{X}_\gamma^\infty $, where $\mathbb{VSS}_{\alpha_*}$ is defined in \eqref{ASS}.
\end{itemize}	
Then the linear Knudsen layer equation \eqref{KL} admits a unique mild solution $f (x, v)$ satisfying
	\begin{equation}\label{Bnd-f}
		\begin{aligned}
			\mathscr{E}^\infty ( e^{ \hbar \sigma } f ) \leq C ( \mathscr{E}^\infty ( (\delta x + l)^\frac{1 - \gamma}{3 - \gamma} e^{\hbar \sigma} S) + \| {f}_b \|_{\mathfrak{N}} )
		\end{aligned}
	\end{equation}
	for some constant $C > 0$. Moreover, the set $\mathbb{VSS}_{\alpha_*} \cap \mathfrak{X}_\gamma^\infty$ is the subspace of $ \mathfrak{X}_\gamma^\infty $ with codimension $4$.
\end{theorem}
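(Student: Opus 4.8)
The plan is to prove Theorem~\ref{Thm-Linear} by an approximation-and-limit argument: first solve the problem on a finite slab $\Omega_A = (0,A)$ with an artificial outflow condition at $x = A$, then pass $A \to \infty$ using the uniform energy bound \eqref{Bnd-f}. On the finite slab the far-field condition is replaced by a prescribed incoming datum $\varphi$ on $\Sigma_-^A$, so that the boundary operator at $x = A$ is of pure ``absorbing'' type; together with the Maxwell reflection at $x = 0$ this makes the transport operator $v_3 \partial_x + \L$ amenable to the standard mild-solution construction via the characteristics. The key technical inputs — which I would assemble before the limit — are: (i) the Nondissipative boundary lemma, which converts the possibly-sign-indefinite $L^2$ boundary energy at $x = 0$ (arising because $(T_w, u_w)$ and $(T, \u)$ need not be close) into interior weighted $L^2$ norms with higher $x$-polynomial weights; (ii) the spatial-velocity indices iteration, which trades those high powers of $(\delta x + l)$ for powers of $(1 + |v - \u|)$, i.e.\ converts $\sigma_x$-weights into $w_{\beta, \vartheta}$-weights; and (iii) the $L^\infty_{x,v}$ estimate obtained by integrating along characteristics and bounding the operator $K$ with the gain of integrability/decay recorded in the Toolbox. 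The interleaved iteration closes the estimate: the $L^\infty$ bound feeds the $L^2$ energy, the $L^2$ energy (via the Nondissipative boundary lemma and the indices iteration) absorbs the boundary term, and the loop is made contractive by the smallness of $\hbar$ relative to $\delta$ and the largeness of $l$ stipulated in (PH).

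Concretely, I would first (Step~1) establish well-posedness and the a priori estimate $\mathscr{E}^A(e^{\hbar\sigma}f) \lesssim \mathscr{A}^A((\delta x+l)^{\frac{1-\gamma}{3-\gamma}}e^{\hbar\sigma}S) + \mathscr{B}(\varphi) + \|f_b\|_{\mathfrak{N}}$ on the slab $\Omega_A$, uniformly in $A$, for the problem with data $(S, f_b, \varphi)$. This is the heart of the paper: multiply the equation by $e^{2\hbar\sigma}f$, integrate over $\Omega_A \times \R^3$, use the coercivity $\langle \L f, f\rangle \gtrsim \|\nu^{1/2}\P^\perp f\|^2$ together with the weight-generated gain $v_3 \partial_x(e^{2\hbar\sigma}) \sim \hbar\delta(\delta x+l)^{\frac{\gamma-1}{3-\gamma}}e^{2\hbar\sigma}$ on the hydrodynamic part, and handle the boundary contributions at $x=0$ via the Maxwell-reflection bilinear form plus the Nondissipative boundary lemma. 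Then run the spatial-velocity indices iteration to upgrade to the full $\mathscr{E}^A$-norm, and close via the interleaved $L^\infty$–$L^2$ loop; the $f_b$ data enters through the $\|\cdot\|_{\mathfrak{N}}$ norm exactly because that norm was designed to control every boundary term that appears.

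Next (Step~2) I would take $A \to \infty$. The uniform bound from Step~1 gives weak-$*$ compactness in the relevant weighted spaces; extract a limit $f$, check it is a mild solution of \eqref{KL-d} on $\R_+ \times \R^3$, and note that the decay built into $\mathscr{E}^\infty$ forces $\lim_{x\to\infty} f = 0$ automatically on the subspace $(S,f_b) \in \mathbb{VSS}_{\alpha_*}$. Uniqueness follows from the a priori estimate applied to the difference of two solutions with zero data (here one must check the difference still satisfies the decay needed to justify the boundary-term integrations at $x=+\infty$, which the $e^{\hbar\sigma}$ weight provides). This yields \eqref{Bnd-f}.

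Finally (Step~3), the codimension-$4$ claim. The space $\mathfrak{X}_\gamma^\infty$ imposes only the finiteness of a norm, so it is a Banach space on which $(S,f_b)\mapsto f$ is the bounded solution map just constructed (ignoring the far-field condition). The five constants $a_\infty, b_{1\infty}, b_{2\infty}, b_{3\infty}, c_\infty$ in \eqref{Asymptotic-f} are bounded linear functionals of $(S,f_b)$ on $\mathfrak{X}_\gamma^\infty$, and $\mathbb{VSS}_{\alpha_*}\cap\mathfrak{X}_\gamma^\infty$ is precisely their common kernel; a priori this has codimension at most $5$. The reduction from $5$ to $4$ is the conservation law obtained by testing \eqref{KL-d} against $\psi_3$ (the eigenfunction for the positive eigenvalue $\lambda_3$ of $\A$, which is the only null direction that survives the Maxwell boundary condition in a nontrivial way): integrating $v_3\partial_x f = S - \L f$ against $\psi_3$ over $\R_+\times\R^3$ and using $\int v_3 \psi_3 \psi_i \,dv$, the boundary relation at $x=0$, and $S\in\mathrm{Null}^\perp(\L)$ shows that one linear combination of the five functionals is already forced to vanish for \emph{every} admissible datum — equivalently the far-field coefficient vector lies in a fixed $4$-dimensional subspace — so the kernel has codimension exactly $4$ (nonemptiness of $\mathbb{VSS}_{\alpha_*}$, recorded in the excerpt, guarantees the codimension is not smaller). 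I expect Step~1 to be the main obstacle, and within it the Nondissipative boundary lemma together with making the interleaved iteration genuinely contractive — i.e.\ tracking the precise powers of $l$, $\delta$, $\hbar$ through the $\Xi^A$ and $\Lambda^A$ functionals so that every ``bad'' boundary and commutator term is strictly dominated — is where the real work lies.
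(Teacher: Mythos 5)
Your overall architecture (finite slab with absorbing datum at $x=A$, uniform weighted $L^2$--$L^\infty$ estimates closed through the nondissipative boundary lemma and the spatial--velocity indices iteration, then $A\to\infty$) does match the paper's Sections \ref{Sec:WS-CA}--\ref{Sec:EKLe}. But there are two genuine gaps.

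First, your Step 1 cannot close as written because you have no dissipation on the macroscopic part. The coercivity of $\L$ lives only on $\mathrm{Null}^\perp(\L)$, and the weight-generated term you invoke, $v_3\,\partial_x(e^{2\hbar\sigma}) = 2\hbar\sigma_x v_3 e^{2\hbar\sigma}$, is \emph{not} sign-definite on the hydrodynamic part: $\sigma_x>0$ while $v_3$ changes sign, so $\int_{\R^3} v_3\sigma_x|\P g|^2\,\d v$ has no favorable sign in general. It yields a good sign only on the negative eigendirection $\psi_4^*$ of $\A=\P v_3\P$ (this is precisely \eqref{Neg-Damp}). The paper must therefore first split off the zero-eigenspace ODE \eqref{P0f-eq} (since $\mathbb{P}S$ cannot be absorbed by any coercivity) and then add the artificial damping $\mathbf{D}$ of \eqref{D-operator} on the positive and zero eigenspaces; Lemma \ref{Lmm-Dh} shows it is the \emph{combination} $\mathbf{D}_\hbar g-\hbar\sigma_x v_3 g$ that is coercive on $\P g$. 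Without this your energy identity leaves $\|\P w_{\beta,\vartheta}g\|^2$ uncontrolled and the whole interleaved loop fails. Consequently you also miss the entire ``remove the damping'' step, which for $-3<\gamma<1$ requires the Freezing Point Method to show that the non-constant-coefficient second-order ODE system \eqref{F-eq} for the zero-eigenspace coefficients vanishes identically iff it vanishes at $x=0$.

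Second, your codimension-$4$ argument is wrong in mechanism. The problem \eqref{KL-d} without the far-field condition is \emph{underdetermined} (solutions differ by bounded non-decaying elements of the form \eqref{Asymptotic-f}), so there is no well-defined bounded solution map $(S,f_b)\mapsto f$ on $\mathfrak{X}_\gamma^\infty$ and hence no well-defined $5$-tuple of asymptotic functionals whose common kernel you could take; the paper gets a canonical solution operator $\mathbb{I}_\gamma$ only \emph{via} the damped problem. Moreover, your proposed reduction from $5$ to $4$ by testing against $\psi_3$ does not work: $\int_{\R^3} v_3\psi_3 f\,\d v\propto\int_{\R^3} v_3^2\sqrt{\M}f\,\d v$ is conserved in $x$ but satisfies no boundary identity under Maxwell reflection (only the mass flux $\int v_3\psi_0 f\,\d v$ does, and that identity merely expresses $b_{3\infty}$ in terms of $\int_{v_3>0}v_3\sqrt{\M}f_b\,\d v$ rather than forcing a linear dependence among the five functionals). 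The correct count is $4=1+3$: one condition for the positive eigendirection $\psi_3^*$ of $\A$ and three for the zero eigendirections, i.e.\ exactly the conditions \eqref{Solvb-1} (equivalently \eqref{SBC}) under which the artificial damping vanishes; the negative eigendirection needs no condition because of the intrinsic damping \eqref{Neg-Damp}. Linear independence of $\psi_3^*,\widehat{\mathbb{B}}_3,\widehat{\mathbb{A}}_{13},\widehat{\mathbb{A}}_{23}$ then gives codimension exactly $4$.
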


\begin{remark}[Exponential decay]
	Since $ \sigma (x,v) \geq c ( \delta x + l )^\frac{2}{3 - \gamma} $ for $- 3 < \gamma \leq 1$, one has
	\begin{equation*}
		\begin{aligned}
			\mathscr{E}^\infty ( e^{ \hbar \sigma } f ) \geq \| \sigma_x^\frac{m}{2} w_{\beta, \vartheta} e^{ \hbar \sigma } f \|_{L^\infty_{x,v}} \geq C' e^{ \frac{1}{2} c ( \delta x + l )^\frac{2}{3 - \gamma} } e^{ \frac{1}{2} \vartheta |v|^2 } | f (x,v) |
		\end{aligned}
	\end{equation*}
	uniformly in $(x,v) \in \R_+ \times \R^3$. Together with the bound \eqref{Bnd-f} in Theorem \ref{Thm-Linear}, the solution $f (x,v)$ to the problem \eqref{KL} enjoys the pointwise decay behavior
	\begin{equation}\label{Decay}
		\begin{aligned}
			| f (x,v) | \lesssim e^{ - \frac{1}{2} c ( \delta x + l )^\frac{2}{3 - \gamma} } e^{ - \frac{1}{2} \vartheta |v|^2 } \,.
		\end{aligned}
	\end{equation}
\end{remark}

\begin{remark}[Background temperature $T$ vs boundary temperature $T_w$]
	The assumption \eqref{Assmp-T}, i.e., $0 < T_w < 2 T$, is such that the factor $\frac{M_w (v)}{\sqrt{\M (v)}}$ in the diffusive operator $\mathcal{D}_w$ exponentially decays at $|v| \to + \infty$. Indeed,
	\begin{equation*}{\small
		\begin{aligned}
			\tfrac{M_w (v)}{\sqrt{\M (v)}} = & \tfrac{\sqrt{2 \pi} T^\frac{3}{2}}{ \rho
				T_w^2 } \exp \big\{ - ( \tfrac{1}{2 T_w} - \tfrac{1}{4 T} ) |v - u_w|^2 - \tfrac{(u_w - \u) \cdot (v - u_w)}{2 T} - \tfrac{|u_w - \u|^2}{4 T} \big\} \\
			= & \mathcal{O} \big( \exp \{ - ( \tfrac{1}{2 T_w} - \tfrac{1}{4 T} )^- |v - u_w|^2 \} \big) \to 0
		\end{aligned}}
	\end{equation*}
	as $|v| \to + \infty$ under the assumption \eqref{Assmp-T}. This reasonable assumption is consistent with the desired exponential decay \eqref{Decay} about the Maxwell reflection boundary condition $ f (0, v) |_{v_3 > 0} = (1 - \alpha_* ) f (0, R_0 v) + \alpha_* \tfrac{M_w (v)}{\sqrt{\M (v)}} \int_{v_3' < 0} ( - v_3' ) f ( 0, v' ) \sqrt{M} (v') \d v' + f_b (v) $. In other words, $ \tfrac{M_w (v)}{\sqrt{\M (v)}} $ should enjoy the same decay behavior of the quantity $ f (0, v) |_{v_3 > 0} - (1 - \alpha_* ) f (0, R_0 v) - f_b (v) $.
\end{remark}

\begin{remark}[Re-characterization of the vanishing sources set $\mathbb{VSS}_{\alpha_*}$]\label{Rmk-L-ASS}
  The structure of the vanishing sources set $\mathbb{VSS}_{\alpha_*} $ given in \eqref{ASS} is unclear. Actually, it can be characterized by a clearer way. Let $\mathbb{I}_\gamma (S, f_b) = f$ be the solution operator of the following damped problem:
  \begin{equation}\label{dd}{\small
    \begin{aligned}
      \left\{
      \begin{aligned}
        & v_3 \partial_x f_1 + \L f_1 + \mathbf{D} f_1 = (\mathbb{I} - \mathbb{P}) S \,, \ v_3 \partial_x f_2 = \mathbb{P} S \,, \\
        & f = f_1 + f_2 \,, f_1 = (\I - \P^0) f \,, f_2 = \P^0 f \,, \\
        & f (0, v) |_{v_3 > 0} = (1 - \alpha_*) f (0, R_0 v) + \alpha_* \mathcal{D}_w f (0, v) + f_b (v) \,, \\
        & \lim_{x \to + \infty} f_1 (x, v) = \lim_{x \to + \infty} f_2 (x, v) = 0 \,,
      \end{aligned}
      \right.
    \end{aligned}}
  \end{equation}
  where the damping operator $\mathbf{D}$ is given in \eqref{D-operator} later. The solution to \eqref{dd} is exactly that to \eqref{KL} if and only if $\mathbf{D} f_1 (x,v) = 0$ for all $x \geq 0$ and $v \in \R^3$, which is equivalent to \eqref{Solvb-1} (replacing $f_*$ by $f_1$) later. It is easy to see that the condition \eqref{Solvb-1} can be equivalently expressed by
  \begin{equation}\label{SBC}{\small
		\begin{aligned}
			\int_{\R^3}
			\left(
			\begin{array}{c}
				\psi_3^* \\
				\widehat{\mathbb{B}}_3 \\
				\widehat{\mathbb{A}}_{13}\\
				\widehat{\mathbb{A}}_{23}
			\end{array}
			\right) \Big( v_3 f (0,v) + \int_0^\infty S (z, v) \d z \Big) \d v = 0_4 \,.
		\end{aligned}}
	\end{equation}
  As a result, the set $\mathbb{VSS}_{\alpha_*} $ can be re-characterized by
  \begin{equation}\label{ASS-equv}
    \begin{aligned}
      \mathbb{VSS}_{\alpha_*} = \big\{ (S, f_b); \mathbb{I}_\gamma (S, f_b) = f \textrm{ such that \eqref{SBC} holds } \big\} \,.
    \end{aligned}
  \end{equation}
\end{remark}

\subsubsection{Application to nonlinear problem}

In this part, we will employ the linear theory constructed in Theorem \ref{Thm-Linear} to investigate the following nonlinear problem
\begin{equation}\tag{KL-NL}\label{KL-NL}{\small
	\begin{aligned}
		\left\{
		    \begin{aligned}
		    	& v_3 \partial_x F = \mathcal{B} (F, F) + H \,, \ x > 0, v \in \R^3 \,, \\
		    	& F (0, v) |_{v_3 > 0 } = ( 1 - \alpha_* ) F ( 0, R_0 v ) + \alpha_* M_w (v) \int_{v_3' < 0} ( - v_3' ) F (0, v') \d v' + F_b (v) \,, \\
		    	& \lim_{x \to + \infty} F (x,v) = \M (v) \,.		
	    	\end{aligned}
		\right.
	\end{aligned}}
\end{equation}
Obviously, $T_w > 0$ and $u_w \in \R^3$ are the temperature and velocity of the boundary $\{ x = 0 \}$. However, $T > 0$ and $\u \in \R^3$ are actually the temperature and velocity of the far-field boundary $\{ x = + \infty \}$. Let $f = \tfrac{F - \M}{\sqrt{\M}}$, $\widehat{h} = \tfrac{H}{\sqrt{\M}}$ and $\widehat{f}_b = \tfrac{F_b}{\sqrt{\M}}$. Then the problem \eqref{KL-NL} can be equivalently rewritten as
\begin{equation}\label{KL-NL-f}{\small
	\left\{
	    \begin{aligned}
	    	& v_3 \partial_x f + \L f = \Gamma ( f, f ) + \widehat{h} \,, \ x > 0 \,, v \in \R^3 \,, \\
	    	& f (0, v) |_{v_3 > 0} = (1 - \alpha_*) f (0, R_0 v) + \alpha_* \mathcal{D}_w f (0, v) + \widehat{f}_b (v) \,, \ \lim_{x \to + \infty} f (x,v) = 0 \,,
	    \end{aligned}
	\right.}
\end{equation}
where the nonlinear operator $ \Gamma (f, f) $ is defined as
\begin{equation}\label{Gamma-ff}
	\begin{aligned}
		\Gamma (f, f) = \tfrac{1}{\sqrt{\M}} \mathcal{B} ( f \sqrt{\M}, f \sqrt{\M} ) \,.
	\end{aligned}
\end{equation}
As the same as the linear problem \eqref{KL}, the far-field condition $ \lim_{x \to + \infty} f (x,v) = 0 $ in the nonlinear problem \eqref{KL-NL-f} is also overdetermined. We also introduce the vanishing sources set $\widetilde{\mathbb{VSS}}_{\alpha_*}$ by
\begin{equation}\label{ASS-NL}
  \begin{aligned}
    \widetilde{\mathbb{VSS}}_{\alpha_*} = \big\{ (\widehat{h}, \widehat{f}_b) ; \widehat{h} \in \mathrm{Null}^\perp (\L) \textrm{ and } \widehat{f}_b \textrm{ in \eqref{KL-NL-f} such that } \lim_{x \to + \infty} f (x,v) = 0 \big\} \,.
  \end{aligned}
\end{equation}

For the source term, we introduce the functional space $\mathfrak{X}_\gamma^{\varsigma_0}$ by
\begin{equation}\label{X-gamma-0}
  \begin{aligned}
    \mathfrak{X}_\gamma^{\varsigma_0} = \big\{ (\widehat{h}, \widehat{f}_b); \widehat{h} \in \mathrm{Null}^\perp (\L) , \varsigma := \mathscr{E}^\infty ( (\delta x + l)^\frac{1 - \gamma}{3 - \gamma} e^{\hbar \sigma} \widehat{h}) + \| \widehat{f}_b \|_{\mathfrak{N}} \leq \varsigma_0 \big\} 
  \end{aligned}
\end{equation}
for $\varsigma_0 > 0$, where the quantity $\mathscr{E}^\infty ( \cdot )$ is defined in \eqref{Eg-lambda} with $A = \infty$ and the norm $\| \cdot \|_{\mathfrak{N}}$ is introduced in \eqref{fb-N}.

More precisely, we can establish the following result.

\begin{theorem}\label{Thm-Nonlinear}
Assume that
\begin{itemize}
  \item the mixed weight $\sigma (x,v)$ be given in \eqref{sigma};
  \item the parameters $\{ \gamma, \alpha_*, \beta, \beta_*, \alpha, m, \delta, \hbar, \vartheta, l, \rho, T, T_w, \u, u_w \}$ satisfy the hypotheses {\bf (PH)};
  \item there is a small $\varsigma_0 > 0$ such that $(\tfrac{H}{\sqrt{\M}}, \tfrac{F_b}{\sqrt{\M}}) \in \widetilde{\mathbb{VSS}}_{\alpha_*} \cap \mathfrak{X}_\gamma^{\varsigma_0}$.
\end{itemize}
	Then the nonlinear problem \eqref{KL-NL} admits a unique solution $F (x, v)$ enjoying the bound
	\begin{equation}
		\begin{aligned}
			\mathscr{E}^\infty ( e^{ \hbar \sigma } \tfrac{F - \M}{\sqrt{\M}} ) \leq C \big[ \mathscr{E}^\infty ( (\delta x + l)^\frac{1 - \gamma}{3 - \gamma} e^{\hbar \sigma} \tfrac{H}{\sqrt{\M}}) + \| \tfrac{F_b}{\sqrt{\M}} \|_{\mathfrak{N}} \big]
		\end{aligned}
	\end{equation}
	for some constant $C > 0$, where the functionals $ \mathscr{E}^\infty ( \cdot ) $ is defined in \eqref{Eg-lambda} with $A = \infty$ and the norm $ \| \cdot \|_{ \mathfrak{N} } $ is given in \eqref{fb-N}. Moreover, the set $\widetilde{\mathbb{VSS}}_{\alpha_*} \cap \mathfrak{X}_\gamma^{\varsigma_0}$ is the subspace of $ \mathfrak{X}_\gamma^{\varsigma_0}$ with codimension $4$.
\end{theorem}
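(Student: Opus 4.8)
The plan is to solve \eqref{KL-NL-f} by a contraction mapping argument layered on the linear solution operator $\mathbb{I}_\gamma$ introduced in Remark \ref{Rmk-L-ASS}. Recall that $\mathbb{I}_\gamma(S,f_b)$ is defined for \emph{every} pair with $S\in\mathrm{Null}^\perp(\L)$ via the damped system \eqref{dd}, that the proof of Theorem \ref{Thm-Linear} in fact establishes the a priori bound $\mathscr{E}^\infty(e^{\hbar\sigma}\mathbb{I}_\gamma(S,f_b))\le C\big(\mathscr{E}^\infty((\delta x+l)^{\frac{1-\gamma}{3-\gamma}}e^{\hbar\sigma}S)+\|f_b\|_{\mathfrak{N}}\big)$ for all such data, and that by Remark \ref{Rmk-L-ASS} the function $\mathbb{I}_\gamma(S,f_b)$ solves the \emph{undamped} problem (equivalently $\mathbf{D}f_1\equiv0$, equivalently the four scalar conditions \eqref{SBC}) exactly when those conditions hold. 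I would fix data $(\widehat h,\widehat f_b)\in\mathfrak{X}_\gamma^{\varsigma_0}$ with $\varsigma=\mathscr{E}^\infty((\delta x+l)^{\frac{1-\gamma}{3-\gamma}}e^{\hbar\sigma}\widehat h)+\|\widehat f_b\|_{\mathfrak{N}}\le\varsigma_0$, and consider $\mathcal{T}f:=\mathbb{I}_\gamma(\Gamma(f,f)+\widehat h,\widehat f_b)$ on the ball $B_R=\{f:\mathscr{E}^\infty(e^{\hbar\sigma}f)\le R\}$ with $R=2C\varsigma_0$. Since $\Gamma(f,f)\in\mathrm{Null}^\perp(\L)$ automatically (collisional invariants), $\mathcal{T}$ is well defined on $B_R$.

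The analytic core is a bilinear estimate
\[
\mathscr{E}^\infty\big((\delta x+l)^{\frac{1-\gamma}{3-\gamma}}e^{\hbar\sigma}\Gamma(f,g)\big)\ \le\ C_\Gamma\,\mathscr{E}^\infty(e^{\hbar\sigma}f)\,\mathscr{E}^\infty(e^{\hbar\sigma}g),
\]
which must be proved piece by piece for the four constituents $\mathscr{E}_\infty^\infty,\mathscr{E}_{\mathtt{cro}}^\infty,[\mathscr{E}_{\mathtt{NBE}}^\infty]^{1/2},[\mathscr{E}_2^\infty]^{1/2}$ of $\mathscr{E}^\infty$, so that the bound is exactly of the form that feeds back through the linear estimate above. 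The structural facts I would exploit: (i) $\Gamma(f,g)\in\mathrm{Null}^\perp(\L)$, so $\P\Gamma(f,g)=0$ and the $\P$-weighted summands in $\mathscr{E}_2^\infty$ and $\mathscr{E}_{\mathtt{NBE}}^\infty$ drop out entirely; (ii) on the collision manifold, combined with the $\sqrt{\M(v_*)}$ factor in \eqref{K2}-type representations of $\Gamma$, the exponential weight is subadditive in the sense $e^{\hbar\sigma(x,v)}\lesssim e^{\hbar\sigma(x,v')}\,e^{\hbar\sigma(x,v_*')}\,e^{-c|v_*|^2}$, and the polynomial weights $w_{\beta,\vartheta}$, $(\delta x+l)^{\bullet}$, $(l^{-6}\delta x+l)^{\bullet}$ distribute across the two arguments up to a harmless Gaussian loss in $v_*$; (iii) the classical Grad-type pointwise and $L^2_v$ bounds on $\Gamma$ absorb the $\nu^{\pm1/2}$ and $z_{-\alpha}$ weights occurring in $\mathscr{E}_{\mathtt{cro}}^\infty$, $\mathscr{E}_2^\infty$, $\mathscr{E}_{\mathtt{NBE}}^\infty$. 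The genuine gain to be produced is in $x$: $\Gamma$ carries \emph{no} $x$-decay, so the extra source factor $(\delta x+l)^{\frac{1-\gamma}{3-\gamma}}$ must be manufactured from one of the two copies of $e^{\hbar\sigma}$, using $(\delta x+l)^{\frac{1-\gamma}{3-\gamma}}e^{-\frac{\hbar}{2}c(\delta x+l)^{2/(3-\gamma)}}\le C$, and similarly one must balance the $(l^{-6}\delta x+l)^{\mathbf a}$ powers and the $\beta_\gamma$-shifts inside $\mathscr{E}_{\mathtt{NBE}}^\infty$. This weight bookkeeping—making all the exponents in \eqref{E-NBE}, \eqref{E2-lambda}, \eqref{E-cro} close under $\Gamma$—is where essentially all the difficulty lies and is the step I expect to be the main obstacle.

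Granting the bilinear estimate, the fixed point is routine: $\mathscr{E}^\infty(e^{\hbar\sigma}\mathcal{T}f)\le C(C_\Gamma R^2+\varsigma)\le CC_\Gamma R^2+\tfrac12R\le R$ once $\varsigma_0$ is small enough that $2CC_\Gamma\varsigma_0\le\tfrac14$, and for $f,\tilde f\in B_R$ the identity $\Gamma(f,f)-\Gamma(\tilde f,\tilde f)=\Gamma(f-\tilde f,f)+\Gamma(\tilde f,f-\tilde f)$ with the linear bound gives $\mathscr{E}^\infty(e^{\hbar\sigma}(\mathcal{T}f-\mathcal{T}\tilde f))\le 2CC_\Gamma R\,\mathscr{E}^\infty(e^{\hbar\sigma}(f-\tilde f))\le\tfrac12\mathscr{E}^\infty(e^{\hbar\sigma}(f-\tilde f))$. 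Hence $\mathcal{T}$ has a unique fixed point $f\in B_R$; it solves the damped version of \eqref{KL-NL-f}, and the same Lipschitz computation yields uniqueness among all small solutions. By Remark \ref{Rmk-L-ASS} applied with $S=\Gamma(f,f)+\widehat h$, this $f$ solves the undamped \eqref{KL-NL-f}—equivalently $F=\M+\sqrt{\M}\,f$ solves \eqref{KL-NL} with $\lim_{x\to+\infty}F=\M$—precisely when the four conditions \eqref{SBC} hold at the fixed point, which is exactly the defining condition for $(\widehat h,\widehat f_b)\in\widetilde{\mathbb{VSS}}_{\alpha_*}$ in \eqref{ASS-NL}. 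This proves the existence, uniqueness, and the stated bound.

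For the codimension-$4$ assertion, I would regard \eqref{SBC}, evaluated at the fixed point $f=f[\widehat h,\widehat f_b]$, as a map $\Theta:\mathfrak{X}_\gamma^{\varsigma_0}\to\R^4$; its $C^1$ (indeed real-analytic) dependence on the data follows from the implicit function theorem applied to the fixed-point equation, using the linear estimate and the bilinear estimate for the relevant bounds on the linearization. Since $f[0,0]=0$ and $\Gamma$ is quadratic, differentiating the fixed-point equation at the origin gives $D_{(\widehat h,\widehat f_b)}f\big|_0=\mathbb{I}_\gamma(\,\cdot\,,\,\cdot\,)$, so $D\Theta|_0$ equals the linear map $(\delta\widehat h,\delta\widehat f_b)\mapsto\int_{\R^3}(\psi_3^*,\widehat{\mathbb{B}}_3,\widehat{\mathbb{A}}_{13},\widehat{\mathbb{A}}_{23})^{\!\top}\big(v_3\,\mathbb{I}_\gamma(\delta\widehat h,\delta\widehat f_b)(0,v)+\int_0^\infty\delta\widehat h\,\d z\big)\,\d v$, whose surjectivity onto $\R^4$ is precisely the codimension-$4$ statement already proved in Theorem \ref{Thm-Linear} for $\mathbb{VSS}_{\alpha_*}$. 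Therefore $\Theta$ is a submersion on a neighbourhood of $0$, and $\widetilde{\mathbb{VSS}}_{\alpha_*}\cap\mathfrak{X}_\gamma^{\varsigma_0}=\Theta^{-1}(0)$ is a codimension-$4$ (Lipschitz) submanifold of $\mathfrak{X}_\gamma^{\varsigma_0}$, i.e. the stated ``codimension $4$'' subset, completing the proof.
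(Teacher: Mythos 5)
Your proposal is correct and follows essentially the same route as the paper: both arguments rest on the linear theory of Theorem \ref{Thm-Linear} for the damped system, the bilinear estimate $\mathscr{E}^\infty((\delta x+l)^{\frac{1-\gamma}{3-\gamma}}e^{\hbar\sigma}\Gamma(f,g))\lesssim \mathscr{E}^\infty(e^{\hbar\sigma}f)\,\mathscr{E}^\infty(e^{\hbar\sigma}g)$ (which is exactly Lemma \ref{Lmm-Gamma}, whose detailed weight bookkeeping the paper also defers), a small-data fixed point, and finally the removal of the artificial damping via the four conditions \eqref{Solvb-NL}, which is precisely the defining condition of $\widetilde{\mathbb{VSS}}_{\alpha_*}$. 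The only cosmetic difference in the existence part is that you run a single Banach contraction $\mathcal{T}f=\mathbb{I}_\gamma(\Gamma(f,f)+\widehat h,\widehat f_b)$ on a ball, whereas the paper writes out a two-component Picard iteration \eqref{Iter-fi} in which $f_2^{i+1}=\P^0 f^{i+1}$ is solved by the explicit ODE formula and $f_1^{i+1}$ by the linear damped problem; these are equivalent, and your formulation is slightly cleaner because the solution operator of \eqref{dd} already packages both components.

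Where you genuinely diverge is the codimension-$4$ assertion. The paper simply states that linear independence of $\psi_3^*,\widehat{\mathbb{B}}_3,\widehat{\mathbb{A}}_{13},\widehat{\mathbb{A}}_{23}$ gives codimension $4$ and (somewhat loosely) calls the resulting set a ``subspace,'' even though the constraint map involves the quadratic term $\Gamma(f,f)$ evaluated at the fixed point and is therefore nonlinear in the data. Your argument --- regard the four conditions as a map $\Theta:\mathfrak{X}_\gamma^{\varsigma_0}\to\R^4$, establish $C^1$ dependence of the fixed point on the data from the uniform contraction, compute $D\Theta|_0$ via $D f|_0=\mathbb{I}_\gamma$, and invoke the surjectivity already secured in the linear theorem to conclude that $\Theta^{-1}(0)$ is a codimension-$4$ submanifold --- is more careful and is, in my view, the argument the paper's statement actually requires. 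The only caveat is that you should make explicit that the surjectivity of the linearized constraint map at the origin is indeed what the linear codimension-$4$ statement provides (i.e., that the four functionals in \eqref{SBC} are linearly independent as functionals on $\mathfrak{X}_\gamma^\infty$, not merely that the four weight functions are independent in $L^2_v$); granting that, your proof is complete and slightly stronger than the paper's.
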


\begin{remark}[Re-characterization of  the vanishing sources set $\widetilde{\mathbb{VSS}}_{\alpha_*}$]\label{Rmk-NL-ASS}
  As the similar as the linear problem, let $\mathcal{I}_\gamma (\widehat{h}, \widehat{f}_b) = f$ be the solution operator of the following nonlinear damped problem:
  \begin{equation}\label{KL-NL-f-damp}{\small
  \left\{
  \begin{aligned}
    & v_3 \partial_x f_1 + \L f_1 + \mathbf{D} f_1 = (\mathbb{I} - \mathbb{P}) \Gamma (f, f) + (\mathbb{I} - \mathbb{P}) \widehat{h} \,, \ v_3 \partial_x f_2 = \mathbb{P} \Gamma (f, f) + \mathbb{P} \widehat{h} \,, f = f_1 + f_2 \,, \\
    & f (0, v) |_{v_3 > 0} = (1 - \alpha_*) f (0, R_0 v) + \alpha_* \mathcal{D}_w f (0, v) + \widehat{f}_b \,, \\ 
    & \lim_{x \to + \infty} f_1 (x,v) = \lim_{x \to + \infty} f_2 (x,v) = 0 \,,
  \end{aligned}
  \right.}
\end{equation}
where the artificial damping operator $\mathbf{D}$ is defined in \eqref{D-operator} later. The solution to \eqref{KL-NL-f-damp} is exactly that to \eqref{KL-NL-f} (equivalently \eqref{KL-NL}) if and only if $\mathbf{D} f_1 (x,v) = 0$ for all $x \geq 0$ and $v \in \R^3$, which is further equivalent to the relations \eqref{Solvb-NL} later. As a result,
\begin{equation*}{\small
  \begin{aligned}
    \widetilde{\mathbb{VSS}}_{\alpha_*} = \Big\{ & (\widehat{h}, \widehat{f}_b) ; \widehat{h} \in \mathrm{Null}^\perp (\L), \mathcal{I}_\gamma (\widehat{h}, \widehat{f}_b) = f \,, \\
    & \int_{\R^3}
    \left(
    \begin{array}{c}
      \psi_3^* \\
      \widehat{\mathbb{B}}_3 \\
      \widehat{\mathbb{A}}_{13} \\
      \widehat{\mathbb{A}}_{23}
    \end{array}
    \right) \Big( v_3 f (0, v) + \int_0^\infty \big[ \Gamma (f, f) + \widehat{h} \big] (z, v) \d z \Big) \d v = 0_4 \Big\} \,.
  \end{aligned}}
\end{equation*}
\end{remark}

\subsection{Outline of existence of the solutions to the system $\eqref{KL}$}\label{Subsec:OESKL}

We now sketch the rough process of solving the Knudsen layer equation \eqref{KL}.

First, by the definitions of the operators $\P^0$ in \eqref{P0} and $\mathbb{P}$ in \eqref{P-AB}, together with the relation $\mathbb{P} (v_3 f) = v_3 \P^0 f$ (see Lemma 3 of \cite{Golse-2008-BIMA}),the equation \eqref{KL} can be decomposed as
\begin{equation}\label{P0f-eq}{\small
	\begin{aligned}
		v_3 \partial_x \P^0 f = \mathbb{P} S \,, \ \lim_{x \to + \infty} \P^0 f = 0 \,,
	\end{aligned}}
\end{equation}
and, by employing the notation $f_* =  ( \I - \P^0 ) f$ for simplicity,
\begin{equation}\label{KL-P0}{\small
	\left\{
	  \begin{aligned}
	  	& v_3 \partial_x f_* + \L f_* = ( \mathbb{I} - \mathbb{P} ) S \,, \\
	  	& f_* (0, v) |_{v_3 > 0} = ( 1 - \alpha_* ) f_* (0, R_0 v) + \alpha_* \mathcal{D}_w f_* (0, v) + \widetilde{f}_b (v) \,, \ \lim_{x \to + \infty} f_* (x, v) = 0 \,,
	  \end{aligned}
	\right.}
\end{equation}
where
\begin{equation}\label{fb-tilde}
	\begin{aligned}
		\widetilde{f}_b (v) = f_b (v) - \P^0 f (0,v) \mathbf{1}_{v_3 > 0} + ( 1 - \alpha_* ) \P^0 f (0, R_0 v) \mathbf{1}_{v_3 > 0} + \alpha_* \mathcal{D}_w \P^0 f (0, v) \mathbf{1}_{v_3 > 0} \,.
	\end{aligned}
\end{equation}
The equation \eqref{P0f-eq} is actually an ordinary differential equation, which can be explicitly solved by
\begin{equation}\label{P0f-sol}{\small
	\begin{aligned}
		\P^0 f (x,v) = - \int_x^{+ \infty} \tfrac{1}{v_3} \mathbb{P} S (x', v) \d x' \,.
	\end{aligned}}
\end{equation}
Then the function $\widetilde{f}_b (v)$ is determined by the boundary source term $f_b (v)$ and part of source term $\mathbb{P} S$. Then we will focus on the problem \eqref{KL-P0} later.

Second, because the operator $\L$ is coercive merely on the orthogonal space $\mathrm{Null}^\perp (\L)$, we should add an artificial damping on the fluid part $\mathrm{Null} (\L)$ to the problem \eqref{KL-P0}. Namely, we consider the following damped problem
\begin{equation}\tag{KLd}\label{KL-Damped-f}
	\left\{	
	\begin{aligned}
		& v_3 \partial_x f_* + \L f_* + \mathbf{D} f_* = ( \mathbb{I} - \mathbb{P} ) S \,, \\
		& f_* (0,v) |_{v_3 > 0} = (1 - \alpha_*) f_* (0, R_0 v) + \alpha_* \mathcal{D}_w f_* (0, v) + \widetilde{f}_b (v) \,, \ \lim_{x \to + \infty} f_* (x,v) = 0 \,,
	\end{aligned}
	\right.
\end{equation}
where the artificial damping operator $\mathbf{D} f_*$ is defined as
\begin{equation}\label{D-operator}
	\begin{aligned}
		\mathbf{D} f_* = \ss_+ (\delta x + l)^{ - \frac{1 - \gamma}{3 - \gamma} } \P^+ ( v_3 f_* ) + \ss_0 ( \delta x + l )^{- \frac{1 - \gamma }{3 - \gamma }} \P^0 f_*
	\end{aligned}
\end{equation}
for all $f_* \in L^2_{x,v}$. Here the constants $\ss_+, \ss_0 > 0$ will be determined later. We remark that the artificial damping penalizes the decay of subspace of $\mathrm{Null} (\L)$ associated with the nonnegative eigenvalues of the operator $\A$ given in \eqref{A-operator}. Once one justifies the existence of the equation \eqref{KL-Damped-f}, the artificial damping will be removed by employing the structure of the vanishing sources set $\mathbb{VSS}_{\alpha_*}$ so that the existence of the problem \eqref{KL-P0} is established.

Third, we homogenize the Maxwell reflection boundary condition in \eqref{KL-Damped-f}. Let $\Upsilon : \R_+ \to [0,1]$ be a smooth monotone function satisfying
\begin{equation}\label{Ups-Cut}
	\begin{aligned}
		\Upsilon (x) =
		\left\{
		\begin{aligned}
			1 \quad & \textrm{for } \ 0 \leq x \leq 1 \\
			0 \quad & \textrm{for } \ x \geq 2 \,.
		\end{aligned}
		\right.
	\end{aligned}
\end{equation}
Denote by
\begin{equation}\label{gh-def}
	\begin{aligned}
		g (x,v) = & f_* (x,v) - \Upsilon (x) \widetilde{f}_b (v) \,, \\
		h (x,v) = & ( \mathbb{I} - \mathbb{P} ) S(x,v) - v_3 \partial_x \Upsilon (x) \widetilde{f}_b (v) - \Upsilon (x) ( \L + \mathbf{D} ) \widetilde{f}_b (v) \,.
	\end{aligned}
\end{equation}
It is easy to see that
\begin{equation*}
	\begin{aligned}
		v_3 \partial_x g + \L g + \mathbf{D} g = h \,.
	\end{aligned}
\end{equation*}
Recall that $f_b (v) = 0$ for $v_3 < 0$, which means that $\widetilde{f}_b (v) |_{v_3 < 0} = 0$ and $ \widetilde{f}_b (R_0 v) |_{v_3 > 0} = 0 $. Then
\begin{equation*}
	\begin{aligned}
		g (0, R_0 v) |_{v_3 > 0} = f_* (0, R_0 v) |_{v_3 > 0} - \Upsilon (0 ) \widetilde{f}_b ( R_0 v) |_{v_3 > 0} = f_* (0, R_0 v) |_{v_3 > 0} \,.
	\end{aligned}
\end{equation*}
Moreover, $g (0, v) |_{v_3 > 0} = f_* (0, v) |_{v_3 > 0} - \Upsilon (0) \widetilde{f}_b (v) |_{v_3 > 0} = f_* (0, v) - \widetilde{f}_b (v) ) |_{v_3 > 0}$, where the fact $\Upsilon (0) = 1$ has been used. It is further derived that
\begin{equation*}
	\begin{aligned}
		g (0, v') |_{v_3' < 0} = f_* (0, v') |_{v_3' < 0} - \Upsilon (0) \widetilde{f}_b (v') |_{v_3' < 0} = f_* (0, v') |_{v_3' < 0} \,,
	\end{aligned}
\end{equation*}
which infers that $\mathcal{D}_w g (0, v) = \mathcal{D}_w f_* (0, v)$. As a result, $g (x,v)$ satisfies the boundary condition
\begin{equation*}
	\begin{aligned}
		g (0, v) |_{v_3 > 0} = (1 - \alpha_*) g (0, R_0 v) + \alpha_* \mathcal{D}_w g (0, v) \,.
	\end{aligned}
\end{equation*}
Observing that $\Upsilon (x) = 0$ for $x \geq 2$, one has $ \lim_{x \to + \infty} g (x,v) = \lim_{x \to + \infty} f_* (x,v) = 0 $. Therefore, $g(x,v)$ satisfies
\begin{equation}\label{KL-Damped}
	\left\{	
	\begin{aligned}
		& v_3 \partial_x g + \L g + \mathbf{D} g = h \,, \\
		& g (0,v) |_{v_3 > 0} = (1 - \alpha_*) g (0, R_0 v) + \alpha_* \mathcal{D}_w g (0, v) \,, \ \lim_{x \to + \infty} g (x,v) = 0 \,,
	\end{aligned}
	\right.
\end{equation}
where $h (x,v)$ is defined in \eqref{gh-def}. So, the problem \eqref{KL-Damped} is the equivalent form of the damped problem \eqref{KL-Damped-f}.

Forth, in order to prove the existence of \eqref{KL-Damped}, we consider the problem in a finite slab with Maxwell reflection condition at $x = 0$ and incoming boundary condition at $x = A$. We call it the so-called connection auxiliary equation. Namely, for $(x, v) \in \Omega_A \times \R^3$ with $\Omega_A = \{ x; 0 < x < A \}$,
\begin{equation}\tag{CA-eq}\label{A1}
	\left\{	
	\begin{aligned}
		& v_3 \partial_x g + \L g + \mathbf{D} g = h \,, \\
		& g (0,v) |_{v_3 > 0} = (1 - \alpha_*) g (0, R_0 v) + \alpha_* \mathcal{D}_w g (0, v) \,, \ g (A, v) |_{v_3 < 0} = \varphi_A (v) \,.
	\end{aligned}
	\right.
\end{equation}
The incoming data $g (A, v) |_{v_3 < 0} = 0$ is the most closed approximation of the far-field condition $\lim_{x \to + \infty} g (x,v) = 0$ in \eqref{KL-Damped}. Here we write down the general $\varphi_A (v)$ will be used to prove the corresponding uniqueness results. We will prove the solution of the approximate problem \eqref{A1} converges to that of \eqref{KL-Damped} with $\varphi_A = 0$ as $A \to + \infty$. The existence of the problem \eqref{A1} will be shown by employing the Hanh-Banach Theorem and the Lax-Milgram Theorem.

\subsection{Methodology and novelties}\label{Subsec:MI}

  In this subsection, we methodologically sketch the proof of the main results and illustrate the novelties of this paper. As shown in the previous subsection, the core of current work is to derive the uniform estimates of the connection auxiliary equation \eqref{A1} associated with the parameter $A > 1$. The main ideas are displayed as follows.

  {\bf (I) Choice of the mixed weight $\sigma (x,v)$.} The major part of the equation \eqref{KL} reads
  	\begin{equation}\label{MajorPart}
  		\begin{aligned}
  			v_3 \partial_x f + \nu (v) f = s.o.t. \textrm{ (some other terms)} \,.
  		\end{aligned}
  	\end{equation}
  	For the hard sphere model $ \gamma = 1 $, $ v_3 $ and $ \nu (v) $ have the same order as $ |v| \gg 1 $, i.e. $ |v_3| \lesssim  \nu (v) $. Roughly speaking, the $ x $-decay can be expected from
  	\begin{equation*}
  		\begin{aligned}
  			\partial_x f + c f = s.o.t. \,,
  		\end{aligned}
  	\end{equation*}
  	which means that the expected $ x $-decay is $ e^{- c x} $. For example, for the hard sphere model $\gamma = 1$, there have been some related works as follows: Golse-Perthame-Sulem \cite{Golse-Perthame-Sulem-1988-ARMA} proved the exponential decay $e^{- c x}$ in the space $ L^2 ( e^{c x} \d x; L^2 ( (1 + |v|)^\frac{1}{2} \d v ) ) \cap L^\infty ( e^{cx} \d x ; L^2 (\d v) ) $ for specular reflection condition ($\alpha_* = 0$); Coron-Golse-Sulem \cite{Coron-Golse-Sulem-1988-CPAM} verified the exponential decay $e^{- c x}$ in $L^\infty ( e^{cx} \d x ; L^2 ( |v_3| \d v ) )$ for general Maxwell reflection boundary condition ($0 \leq \alpha_* \leq 1$); Huang-Wang \cite{HW-2022-SIMA} proved the exponential decay $ e^{- c x} e^{ - a |v|^2 } $ in $L^\infty_{x,v}$ space for diffusive reflection condition ($ \alpha_* = 1 $); Huang-Jiang-Wang \cite{HJW-2023-AMASES} also shown the exponential decay $ e^{- c x} e^{ - a |v|^2 } $ in $L^\infty_{x,v}$ space for specular reflection condition ($ \alpha_* = 0 $); He-Jiang-Wu \cite{HJW-2024-preprint} recently proved the exponential decay $ e^{- c x} e^{ - a |v|^2 } $ in $L^\infty_{x,v}$ space for Maxwell reflection boundary condition ($ 0 < \alpha_* \leq 1 $). All of previous works have essentially used the fact $|v_3| \lesssim \nu (v)$ which only holds for $\gamma = 1$.
  	
  	For the cases $ - 3 < \gamma < 1 $, $ |v_3| \lesssim \nu (v) |v_3|^{1 - \gamma} $. Note that \eqref{MajorPart} implies
  	\begin{equation*}
  		\begin{aligned}
  			\partial_x ( e^{\frac{\nu (v)}{v_3} x} f ) = s.o.t. \,.
  		\end{aligned}
  	\end{equation*}
  	This inspires us to introduce an $ (x,v) $-mixed weight $ \sigma (x,v) $ to deal with the power $ \frac{\nu (v)}{v_3} x $. More precisely, \eqref{MajorPart} reduces to
  	\begin{equation*}
  		\begin{aligned}
  			v_3 \partial_x ( e^{ \hbar \sigma } f ) + ( \nu (v) - \hbar v_3 \sigma_x ) e^{ \hbar \sigma } f = s.o.t. \,,
  		\end{aligned}
  	\end{equation*}
  	which inspires us to find a weight $\sigma (x,v)$ such that $v_3 \sigma_x (x,v) \thicksim \nu (v)$. As in Chen-Liu-Yang's work \cite{Chen-Liu-Yang-2004-AA}, the weight $ \sigma (x,v) $ in \eqref{sigma} is introduced, which satisfies
  	$$ |v_3| \sigma_x (x,v) \lesssim \nu (v) \,, \sigma (x,v) \geq c (\delta x + l)^\frac{2}{3 - \gamma} \,, \sigma_x (x,v) \lesssim (\delta x + l)^{- \frac{1-\gamma}{3-\gamma}} \,. $$
  	The derivative $\sigma_x$ of $ \sigma $ actually balances the disparity of $ |v_3| $ and $ \nu (v) $. We remark that the work \cite{Chen-Liu-Yang-2004-AA} investigated the Knudsen layer equation with the nondegenerate moving boundary condition and hard potential collision kernel $0 < \gamma \leq 1$. Here we, similarly as in \cite{Chen-Liu-Yang-2004-AA}, design the mixed weight $\sigma (x,v) $ in \eqref{sigma} to verify the exponential decays both $x$ and $v$ variables for all cases $- 3 < \gamma \leq 1$ and $0 \leq \alpha_* \leq 1$.
  	
  	The intuition idea of choosing the mixed weight $\sigma (x,v)$ can be illustrated as follows:\footnote{This intuition idea comes from the valuable discussion with Prof. Tong Yang.} We set
  	\begin{equation*}
  		\begin{aligned}
  			\sigma (x,v) = \underbrace{ 5 ( \delta x + l )^{\gamma_1} \big( 1 - \Upsilon ( \tfrac{\delta x + l}{( 1 + |v - \u| )^{\gamma_2} } ) \big) }_{\sigma_I} + \underbrace{ \big( \tfrac{\delta x + l}{ ( 1 + |v - \u| )^{\gamma_3} } + 3 |v - \u|^2 \big) \Upsilon ( \tfrac{\delta x + l}{( 1 + |v - \u| )^{\gamma_2} } ) }_{\sigma_{I\!I}} \,,
  		\end{aligned}
  	\end{equation*}
    where the indices $\gamma_1$, $\gamma_2$ and $\gamma_3$ will be determined later. The first part $\sigma_I$ depends only $x$ when $x \gg 1$ because we want to show decay like $e^{- \sigma} = e^{ - c ( \delta x + l )^{\gamma_1} }$. The factor $3 |v - \u|^2$ in the second part $\sigma_{I\!I}$ bears the best (Maxwellian type) decay in $v$ as $|v| \gg 1$. The factor $\tfrac{\delta x + l}{ ( 1 + |v - \u| )^{\gamma_3} }$ in the second part $\sigma_{I\!I}$ recovers the relation between $\sigma_x$ and $v$-polynomials, i.e., $\sigma_x \thicksim (1 + |v - \u|)^{- \gamma_3}$, which will be applied to balance the disparity of $|v_3|$ and $\nu (v)$. Let $g = e^{\sigma} f$. The equation \eqref{MajorPart} can be rewritten as
    \begin{equation}\label{MajorPart-g}
    	\begin{aligned}
    		v_3 \partial_x g + ( - v_3 \sigma_x + \nu (v) ) g = s.o.t. \,.
    	\end{aligned}
    \end{equation}
    Because higher order dissipation in $v$ than $\nu (v) \thicksim ( 1 + |v| )^\gamma$ can not be expected, we set $ v_3 \sigma_x \thicksim \nu (v) $, which means that $ \sigma_x \thicksim ( 1 + |v| )^{\gamma - 1} $. Together with $\sigma_x \thicksim (1 + |v - \u|)^{- \gamma_3}$, one has
    \begin{equation}\label{G3}
    	\begin{aligned}
    		- \gamma_3 = \gamma - 1 \,.
    	\end{aligned}
    \end{equation}
    In the transition regime $ \delta x + l \thicksim ( 1 + |v - \u| )^{\gamma_2}  $, one should have $ \tfrac{\delta x + l}{ ( 1 + |v - \u| )^{\gamma_3} } \thicksim |v - \u|^2 $. This indicates that
    \begin{equation}\label{G2}
    	\begin{aligned}
    		\gamma_2 - \gamma_3 = 2 \,.
    	\end{aligned}
    \end{equation}
    Moreover, the ODE \eqref{MajorPart-g} in the transition regime is like
    \begin{equation*}
    	\begin{aligned}
    		\partial_x g + ( 1 + |v - \u| )^{ \gamma - 1 } g \thicksim \partial_x g + ( \delta x + l )^\frac{\gamma - 1}{\gamma_2} g = s.o.t. \,,
    	\end{aligned}
    \end{equation*}
    which reveals that the decay rate in $x$ should be $g \thicksim e^{ - ( \delta x + l )^{ \frac{\gamma - 1}{\gamma_2} + 1 } }$. Consequently, one obtains
    \begin{equation}\label{G1}
    	\begin{aligned}
    		\gamma_1 = \tfrac{\gamma - 1}{\gamma_2} + 1 \,.
    	\end{aligned}
    \end{equation}
    Then the relations \eqref{G3}-\eqref{G2}-\eqref{G1} imply that $\gamma_1 = \frac{2}{3 - \gamma}$, $\gamma_2 = 3 - \gamma$ and $\gamma_3 = 1 - \gamma$. The mixed weight $\sigma (x,v)$ in \eqref{sigma} is therefore constructed.

  	{\bf (II) Artificial damping.} It is well-known that the linearized Boltzmann operator $\L$ does not supply the coercivity structure in the null space $\mathrm{Null} (\L)$, i.e., macroscopic damping effect. In the previous literature, various artificial damping quantities are introduced to obtain the macroscopic coercivity, and then remove additional damping quantities by proper ways. For example, in the works \cite{Coron-Golse-Sulem-1988-CPAM,Golse-Perthame-Sulem-1988-ARMA,HJW-2024-preprint,HJW-2023-AMASES,HW-2022-SIMA}, the damping $\eps f$ was introduced to deal with the linear Knudsen layer equation for hard sphere collision case with various boundary conditions. Then it is removed by taking $\eps \to 0$ via compact arguments. Moreover, in the works \cite{Chen-Liu-Yang-2004-AA,Wang-Yang-Yang-2006-JMP,Wang-Yang-Yang-2007-JMP,Yang-2008-JMAA} associated with the incoming data boundary condition, the artificial damping $- \gamma P_0^+ \xi_1 f$ ($\gamma > 0$) coming from the eigenspace corresponding to positive eigenvalues of the linear operator $P_0 \xi_1 P_0$ are applied to deal with the incoming boundary conditions. In order to return the original equation, a further assumption on the incoming data $a (\xi)$ (the data $a (\xi)$ vanishes in the eigenspace corresponding to positive eigenvalues of the linear operator $P_0 \xi_1 P_0 $, i.e., $ P_0^+ \xi_1 P_0 a = 0 $) will be imposed such that the artificial damping $- \gamma P_0^+ \xi_1 f$ vanishes by the Gr\"onwall inequality argument. Remark that the series of works \cite{Chen-Liu-Yang-2004-AA,Wang-Yang-Yang-2006-JMP,Wang-Yang-Yang-2007-JMP,Yang-2008-JMAA} considered the nondegenerate case, i.e., the far-field velocity such that the operator $P_0 \xi_1 P_0 $ admits no zero eigenvalue. For the corresponding degenerate case in the hard sphere model ($\gamma = 1$), Golse \cite{Golse-2008-BIMA} studied the $L^\infty$ decay theory of the Knudsen layer problem by adding a damping $\alpha \Pi_+ (v_1 \cdot) + \beta \mathbf{p} - \gamma v_1 I$ for some constants $\alpha, \beta, \gamma > 0$, where $\Pi_+$ is the orthogonal projection on the positive eigensubspace, $\mathbf{p}$ is that on the zero eigensubspace, and $- v_1 I$ supplies the damping effect on the negative eigensubspace.
  
Inspired by Golse's work, we add the artificial damping $\mathbf{D} f_*$ given in \eqref{D-operator} in the problem \eqref{KL-Damped-f}, i.e.,
\begin{equation*}
    \begin{aligned}
	    \mathbf{D} f_* = \ss_+ (\delta x + l)^{ - \frac{1 - \gamma}{3 - \gamma} } \P^+ ( v_3 f_* ) + \ss_0 ( \delta x + l )^{- \frac{1 - \gamma }{3 - \gamma }} \P^0 f_* \,.
	\end{aligned}
\end{equation*}	
Here $\P+$ and $\P^0$ supply the damping effect in the positive and zero eigensubspaces, respectively. What different with Golse's damping is that we do not add a damping effect on the negative eigensubspace. The reason is that there is an intrinsic damping mechanism in the negative eigensubspace, see \eqref{Neg-Damp} later, hence,
\begin{equation*}
  \begin{aligned}
    - \tfrac{10 \delta \hbar}{3 - \gamma} \int_{\R^3} v_3 ( a_4^* \psi_4^* )^2 \d v = \sqrt{\tfrac{3}{5 T}} \tfrac{10 \delta \hbar}{3 - \gamma} (a_4^*)^2 \int_{\R^3} v_3 \psi_4^* \P v_3 \P \psi_4^* \d v \geq \mu_2 \delta \hbar  (a_4^*)^2 \,,
  \end{aligned}
\end{equation*}	
where $\P v_3 \P \psi_4^* = - \sqrt{\frac{5}{3} T} \psi_4^* $, and $\psi_4^*$ is the negative unit eigenvector of the eigenvalue $\lambda_4 = - \sqrt{\frac{5}{3} T} < 0 $ of the operator $\P v_3 \P$ (see \eqref{psi*-basis}). For the precise application, we first rescale the equation \eqref{A1} by using the factor $e^{ \hbar \sigma }$, \eqref{A1} can be equivalently expressed by \eqref{A3-lambda} below, i.e.,
  	\begin{equation*}{\small
  		\begin{aligned}
  			v_3 \partial_x g + \mathbf{D}_\hbar g - \hbar \sigma_x v_3 g + \nu (v) g - \lambda K_\hbar g = h \,.
  		\end{aligned}}
  	\end{equation*}
  	The term $\nu (v) g - \lambda K_\hbar g $ supplies the coercivity structure $ \| \nu^\frac{1}{2} \P^\perp w_{\beta, \vartheta} g \|^2_{L^2_v} $ in $ \mathrm{Null}^\perp ( \L ) $. By using the property of $\sigma (x,v)$ and the damping mechanism of $\mathbf{D}$, the term $ \mathbf{D}_\hbar g - \hbar \sigma_x v_3 g $ implies the macroscopic damping $ \hbar ( \delta x + l )^{ - \frac{1 - \gamma}{3 - \gamma} } \| \P w_{\beta, \vartheta} g \|^2_{L^2_v} $ by losing a small microscopic quantity $ \hbar^\frac{1}{2} \| \nu^\frac{1}{2} \P^\perp w_{\beta, \vartheta} g \|^2_{L^2_v}$ (the smallness comes from the small parameter $\hbar > 0$). To be more precise, the macroscopic damping is constructed in Lemma \ref{Lmm-Dh} below, i.e.,
  	\begin{equation*}{\footnotesize
  		\begin{aligned}
  			\int_{\R^3} w^2_{\beta, \vartheta} ( \mathbf{D}_\hbar g - \hbar \sigma_x v_3 g ) \d v \geq \mu_0 \delta \hbar ( \delta x + l )^{ - \frac{1 - \gamma}{3 - \gamma} } \| \P w_{\beta, \vartheta} g \|^2_{L^2_v} - C \hbar^\frac{1}{2} \| \nu^\frac{1}{2} \P^\perp w_{\beta, \vartheta} g \|^2_{L^2_v} \,.
  		\end{aligned}}
  	\end{equation*}

    {\bf (III) Nondissipative boundary condition.} While multiplying the equation \eqref{KL-Damped} by $g (x,v)$ and integrating the resultant equation over $(x,v) \in \R_+ \times \R^3$, one can obtain the boundary $L^2$ energy
    \begin{equation*}{\small
    	\begin{aligned}
    		\mathcal{E}_{\!B\!C} := - \int_{\R^3} v_3 |g (0, v)|^2 \d v \,.
    	\end{aligned}}
    \end{equation*}
    If the boundary condition $g (0, v) |_{v_3 > 0} = \mathcal{K} g (0,v)$ is such that $ \mathcal{E}_{\!B\!C} \geq 0 $, we call that the boundary condition $g (0, v) |_{v_3 > 0} = \mathcal{K} g (0,v)$ is {\em dissipative}. Otherwise, we call that the corresponding boundary is {\em nondissipative}. In this paper, we consider
    \begin{equation*}{\small
    	\begin{aligned}
    		\mathcal{K} g (0,v) = ( 1 - \alpha_* ) g (0, R_0 v) + \alpha_* \tfrac{M_w (v) }{ \sqrt{\M (v)} } \int_{ v_3' < 0 } ( - v_3' ) g (0, v') \sqrt{\M (v')} \d v' \,.
    	\end{aligned}}
    \end{equation*}

    To the best knowledge of the authors, all the known results of the boundary value problem of Boltzmann equation studied the dissipative boundary or nearly dissipative boundary. For instance, Guo \cite{Guo-2010-ARMA} studied the Boltzmann equation in three dimensional bounded domain with in-flow, bounce-back, specular reflection ($\alpha_* = 0$) and diffusive reflection ($\alpha_* = 1$) boundary conditions associated with the global Maxwellian $M_w (v) = \M (v) = e^{ - \frac{|v|^2}{2} }$. We remark that the in-flow boundary condition responds to the case $ \mathcal{K} = 0 $ by homogenizing the boundary condition as the similar operations as in Subsection \ref{Subsec:OESKL} above. Li-Lu-Sun \cite{LLS-2017-MC} investigated one dimensional half-space linear steady Boltzmann equation with general dissipative boundary condition. Moreover, the all hard sphere model $\gamma = 1$ mentioned in Part (I) and (II) of this subsection studied the dissipative Maxwell type boundary conditions. Moreover, Esposito-Guo-Kim-Marra \cite{EGKM-2013-CMP} studied the steady Boltzmann equation of hard potential model ($0 \leq \gamma \leq 1$) with non-isothermal diffusive boundary condition ($\alpha_* = 1$) in the bounded spatial domain $\Omega \subset \R^3$, in which the boundary temperature $\theta = \theta (x)$ for $x \in \partial \Omega$. For general $\theta = \theta (x)$, the diffusive boundary condition is of course nondissipative. However, for a fixed constant $\theta_0 > 0$, there was an assumption $\sup_{x \in \partial \Omega} |\theta (x) - \theta_0| < \delta_0$ in \cite{EGKM-2013-CMP} for a sufficiently small $\delta_0 > 0$. It is actually a nearly dissipative boundary. The ideas in \cite{EGKM-2013-CMP} were to avoid the boundary $L^2$ energy $\mathcal{E}_{\!B\!C}$ by employing the trajectory approach. They first controlled the weighted $L^\infty$ norm. Then the required $L^2$ norm can be obtained by the $L^\infty$ estimates, see Lemma 3.1 of \cite{EGKM-2013-CMP}. However, this method essentially relied on the boundedness of the domain $\Omega \subset \R^3$. In our work, the spatial domain $\R_+$ is unbounded, so that the approach of \cite{EGKM-2013-CMP} do not work for our issue.

    We now focus on the Maxwell reflection boundary condition of \eqref{KL-Damped} considered in our paper. Actually, under the boundary condition in \eqref{KL-Damped},
    \begin{equation*}{\footnotesize
    	\begin{aligned}
    		\mathcal{E}_{\!B\!C} = & 2 \alpha_* ( 1 - \alpha_* ) \Big[ \int_{ v_3 < 0 } |v_3| g^2 (0, v) \d v - \int_{ v_3 < 0 } |v_3| \tfrac{M_w}{ \sqrt{\M} } g (0, v) \d v \int_{ v_3 < 0 } |v_3| g (0, v) \sqrt{\M (v)} \d v \Big] \\
    		& + \alpha_*^2 \Big[ \int_{ v_3 < 0 } |v_3| g^2 (0, v) \d v - \int_{ v_3 < 0 } |v_3| \tfrac{M_w^2}{ \M } \d v \big( \int_{ v_3 < 0 } |v_3| g (0, v) \sqrt{\M (v)} \d v \big)^2 \Big] \,.
    	\end{aligned}}
    \end{equation*}
    If $\alpha_* = 0$, $\mathcal{E}_{\!B\!C} = 0$, which means that the specular reflection boundary condition is dissipative. If $0 < \alpha_* \leq 1$, the fact $ \mathcal{E}_{\!B\!C} \geq 0 $ for any $ g (0, v) $ is equivalent to dissipative condition
    \begin{equation}\label{DBC}{\footnotesize
    	\begin{aligned}
    		\int_{ v_3 < 0 } |v_3| \tfrac{M_w^2}{\M} \d v \int_{ v_3 < 0 } |v_3| \M \d v \leq 1 \,.
    	\end{aligned}}
    \end{equation}
    However, the general parameters $(T, T_w, \u, u_w)$ given in the hypotheses {\bf(PH)} above do not meet the dissipative condition \eqref{DBC}. It is easy to check that \eqref{DBC} will hold by setting $T = T_w$ and $\u = u_w$. In the works \cite{Coron-Golse-Sulem-1988-CPAM,Golse-Perthame-Sulem-1988-ARMA,Guo-2010-ARMA,HJW-2024-preprint,HW-2022-SIMA}, the special case $( \u, T ) = (u_w, T_w) = (\vec{0}, 1) $, which subjected to the dissipative condition \eqref{DBC}, was considered.

    In our work, we focus on the general case WITHOUT the dissipative condition \eqref{DBC}. We only assume that $\{ T, \u, T_w, u_w \}$ satisfies the hypotheses given in {\bf (PH)}, hence,
    \begin{equation*}
      \begin{aligned}
        0 < T_w < 2 T \,, u_w, \u \in \R^3 \,, u_{w3} = \u_3 = 0 \,.
      \end{aligned}
    \end{equation*}
    So we do not require that the temperature and velocity of the boundaries $\{ x =0 \}$ and $\{ x = + \infty \}$ are closed each other. Our way will be divided into the following three steps.
    
    \underline{\em (III.1) Nondissipative boundary lemma.} The nondissipation of $\mathcal{E}_{\!B\!C}$ comes from the integral form $ \int_{ v_3 < 0 } |v_3| g (0, v) \mathfrak{w} (v) \d v $ for some exponential decay function $\mathfrak{w} (v)$. In order to dominate the previous integral form, in the so-called nondissipative boundary lemma (see Lemma \ref{Lmm-NDBL} below), we subtly construct a useful nondissipative boundary inequality
    \begin{equation*}{\footnotesize
    	\begin{aligned}
    		| \int_{v_3 < 0} |v_3| \mathfrak{w} (v) g_\sigma (0, v) \d v | \lesssim & \delta^{- \frac{1}{2}} l^{ - ( a - \frac{1 - \gamma}{2 (3 - \gamma)} ) } \| ( l^{- \mathbf{Z}} \delta x + l)^a (\delta x + l)^{- \frac{1 - \gamma}{2 (3 - \gamma)}} \nu^\frac{1}{2} g_\sigma \|_A \\
    & + \delta^{- \frac{1}{2}} l^{- 50} \| (\delta x + l)^{ - \frac{1 - \gamma}{2 (3 - \gamma)}} \nu^\frac{1}{2} g_\sigma \|_A + \cdots
    	\end{aligned}}
    \end{equation*}
    for any $a > 0$ and $\mathbf{Z} \geq 0$. Once $l \geq 1$ is large enough such that $ \delta^{- \frac{1}{2}} l^{- 50} $ is sufficiently small, the quantity $\delta^{- \frac{1}{2}} l^{- 50} \| (\delta x + l)^{ - \frac{1 - \gamma}{2 (3 - \gamma)}} \nu^\frac{1}{2} g_\sigma \|_A$ can be easily dominated by the coercivity of $\L$ and $\mathbf{D}$. The degree of freedom for the constants $a > 0$ and $\mathbf{Z} \geq 0$ will play an essential role in controlling the quantity $\delta^{- \frac{1}{2}} l^{ - ( a - \frac{1 - \gamma}{2 (3 - \gamma)} ) } \| ( l^{- \mathbf{Z}} \delta x + l)^a (\delta x + l)^{- \frac{1 - \gamma}{2 (3 - \gamma)}} \nu^\frac{1}{2} g_\sigma \|_A$. The idea is to pull the boundary integral $ \int_{v_3 < 0} |v_3| \mathfrak{w} (v) g_\sigma (0, v) \d v $ to the interior of the equation by the Euler factor $e^{- (\delta x + l)}$. When $|v - \u| \geq 2$, the integral can be bounded by the second quantity with small factor $\delta^{- \frac{1}{2} } l^{-50}$, where the smallness comes from the interaction of the $x$- and $v$-variables, see the estimate of $U_{11}$ in \eqref{U11} later. If $|v - \u| \leq 2$, the integral can be bounded by the first quantity with small factor $\delta^{- \frac{1}{2}} l^{ - ( a - \frac{1 - \gamma}{2 (3 - \gamma)} ) }$. The key is to use the relation $\mathbf{1}_{|v - \u| \leq 2} |v - \u| \leq 2 ( l^{- \mathbf{Z} - 1} \delta x + 1 )^a$, which also guarantees the degree of freedom of the parameters $\mathbf{Z} \geq 0$ and $a > 0$.
    
    By utilizing the above inequality, we can establish the Boundary energy lemma (see Lemma \ref{Lmm-BEL} below). Namely, the boundary energy $\mathcal{E}_{\!B\!C}$ with a certain weight $w_* (v)$ has a lower bound
    \begin{equation*}{\footnotesize
    	\begin{aligned}
    		\mathcal{E}_{\!B\!C} \geq & [1 - (1 - \alpha_*)^2] \int_{ v_3 < 0 } |v_3| w_* (v) g^2_\sigma (0, v) \d v \\
    & - C_0 \alpha_* \delta^{- 1} l^{ - 2 ( a - \frac{1 - \gamma}{2 ( 3 - \gamma ) } ) } \| ( l^{- \mathbf{Z}} \delta x + l)^a (\delta x + l)^{- \frac{1 - \gamma}{2 (3 - \gamma)}} \nu^\frac{1}{2} g_\sigma \|_A^2 - \alpha_* \times ( \textrm{some controllable quantities} ) 
    	\end{aligned}}
    \end{equation*}
    for any $a > 0$ and $\mathbf{Z} \geq 0$. Then the weighted $L^2$ estimates implies
    \begin{equation*}
      \begin{aligned}
        \mathscr{E}_2^A (g_\sigma) \lesssim \delta^{- 1} l^{ - 2 ( a - \frac{1 - \gamma}{2 ( 3 - \gamma ) } ) } \| ( l^{- \mathbf{Z}} \delta x + l)^a (\delta x + l)^{- \frac{1 - \gamma}{2 (3 - \gamma)}} \nu^\frac{1}{2} g_\sigma \|_A^2 + \cdots \,.
      \end{aligned}
    \end{equation*}
    
    \underline{\em (III.2) Spatial-velocity indices iteration approach.} In  the quantity 
    \begin{equation}\label{QD}
      \begin{aligned}
        \delta^{- 1} l^{ - 2 ( a - \frac{1 - \gamma}{2 ( 3 - \gamma ) } ) } \| ( l^{- \mathbf{Z}} \delta x + l)^a (\delta x + l)^{- \frac{1 - \gamma}{2 (3 - \gamma)}} \nu^\frac{1}{2} g_\sigma \|_A^2 \,,
      \end{aligned}
    \end{equation}
    there is an additional spatial polynomial weight $ ( l^{- \mathbf{Z}} \delta x + l)^a $ with positive power $a > 0$. In order to overcome this difficulty, we develop a so-called {\em spatial-velocity indices iteration approach}. The ideas is to shift the spatial polynomial weight to the velocity one in the $L^2_{x,v}$ framework. As in Lemma \ref{Lmm-SVII} below, together with the Nondissipative boundary lemma, the following spatial-velocity indices iteration form is found:
    \begin{equation}\label{SVIIF}{\small
    	\begin{aligned}
    		\psi_{\mathbf{Z}; \mathtt{q}, \beta_\sharp } \lesssim & l^{ - \frac{2}{3 - \gamma} \mathbf{Z} } \psi_{\mathbf{Z}; \mathtt{q} - \frac{1}{3 - \gamma}, \beta_\sharp + \beta_\gamma + \frac{1}{2} } + \alpha_* l^{ 2 \mathtt{q} - (2 a - \frac{1 - \gamma}{3 - \gamma}) } \psi_{\mathbf{Z}; a, 0 } + \textrm{ some controllable terms} \,,
    	\end{aligned}}
    \end{equation}
    for any $a > 0$ and $\mathbf{Z} \geq 0$, where $ \psi_{\mathbf{Z}; \mathtt{q}, \beta_\sharp } $ is defined in \eqref{psi-y}, i.e.,
    $$ \psi_{ \mathbf{Z}; \mathtt{q}, \beta_\sharp} : = \delta \hbar \| ( l^{- \mathbf{Z}} \delta x + l )^{ \mathtt{q} } (\delta x + l)^{ - \frac{1 - \gamma}{2 (3 - \gamma)} } \P w_{\beta_\sharp, 0} g_\sigma \|^2_A + \| ( l^{- \mathbf{Z}} \delta x + l )^\mathtt{q} \nu^\frac{1}{2} \P^\perp w_{\beta_\sharp, 0} g_\sigma \|^2_A \,. $$ 
    Here, compared with Lemma \ref{Lmm-SVII}, we omit the constant factors about the parameters $\delta, \hbar$, due to the factors on the parameter $l$ is dominant. The above iteration form means that once an $x$-polynomial weight $( l^{- \mathbf{Z}} \delta x + l )^{ \frac{1}{3 - \gamma} }$ is reduced, a $|v|$-polynomial weight $ (1 + |v|)^{ \beta_\gamma + \frac{1}{2} } $ will be increased. In the quantity \eqref{QD}, we take $a = \frac{15}{8 (3 - \gamma)} > 0$ and $\mathbf{Z} = 6$ (it will be always taken as 6 in our proof). Then the quantity $\mathscr{E}_2^A (g_\sigma)$ can be bounded by
    \begin{equation*}
      \begin{aligned}
        \mathscr{E}_2^A (g_\sigma) \lesssim l^{- \frac{125}{24 (3 - \gamma)}} \big[ l^\frac{59 - 24 \gamma}{24 (3 - \gamma)} \psi_{ 6; \frac{15}{8 (3 - \gamma)}, 0} \big] + \cdots  \lesssim  l^{- \frac{3 (3 + \gamma)}{32 (3 - \gamma)}} \Xi^A [ \tfrac{59 - 24 \gamma}{24 (3 - \gamma)}; \tfrac{15}{8 (3 - \gamma)}, 0 ] (g_\sigma) + \cdots
      \end{aligned}
    \end{equation*}
    for $- 3 < \gamma \leq 1$, where we have used the relation $l^a \psi_{6; b, c } = \Xi^A [a; b, c] (g_\sigma) $ defined in \eqref{Xi-A}.

    \underline{\em (III.3) Interleaved iteration process.} Together with the spatial-velocity indices iteration form \eqref{SVIIF}, we will apply the so-called {\em interleaved iteration process} to dominate the quantity $\Xi^A [ \tfrac{59 - 24 \gamma}{24 (3 - \gamma)}; \tfrac{15}{8 (3 - \gamma)}, 0 ] (g_\sigma)$. The process can be intuitively expressed by the following Figure \ref{Fig-IIP}.
    \begin{figure}[h]
      \begin{center}
        \begin{tikzpicture}
          \draw (0,0)--(1.6,0)--(1.6,-0.7)--(0,-0.7)--cycle;
          \draw (0.8, -0.35) node{\small $\mathscr{E}_2^A (g_\sigma)$};
          \draw (-1.2,2.5)--(2.8,2.5)--(2.8,3.2)--(-1.2,3.2)--cycle;
          \draw (0.8,2.85) node{\small $ \Xi^A [ \frac{59 - 24 \gamma}{24 (3 - \gamma)}; \frac{15}{8 (3 - \gamma)}, 0 ] (g_\sigma) $};
          \draw (0.2,-2.7)--(-4.8,-2.7)--(-4.8,-3.4)--(0.2,-3.4)--cycle;
          \draw (-2.3,-3.05) node{\scriptsize $ \Xi^A [ \frac{189 - 52 \gamma}{36 (3 - \gamma)}; - \frac{1}{8 (3 - \gamma)}, 2 (\beta_\gamma + \frac{1}{2}) ] (g_\sigma) $ };
          \draw (2,-2.7)--(6.4,-2.7)--(6.4,-3.4)--(2,-3.4)--cycle;
          \draw (4.2,-3.05) node{\scriptsize $ \Xi^A [ \frac{297 - 40 \gamma}{36 (3 - \gamma)}; \frac{3}{4 (3 - \gamma)}, \beta_\gamma + \frac{1}{2} ] (g_\sigma) $};
          \draw (-2.6,1.2)--(-7,1.2)--(-7,0.5)--(-2.6,0.5)--cycle;
          \draw (-4.8,0.85) node{\scriptsize $ \Xi^A [ \frac{9 + \gamma}{3 (3 - \gamma)}; \frac{7}{8 (3 - \gamma)}, \beta_\gamma + \frac{1}{2} ] (g_\sigma) $};
          \draw (4.2,0.5)--(4.2,1.2)--(8.2,1.2)--(8.2,0.5)--cycle;
          \draw (6.2,0.85) node{\scriptsize $ \Xi^A [ \frac{107 - 24 \gamma}{12 (3 - \gamma)}; \frac{7}{4 (3 - \gamma)}, 0 ] (g_\sigma) $};
          \draw[thin,<-] (1,2.47)--(1,0);
          \draw (1,1.23) node[right]{\tiny $l^{- \frac{3 (3 + \gamma)}{32 (3 - \gamma)}}$};
          \draw[thin,<-,color=red] (0.6,0.03)--(0.6,2.5);
          \draw (0.6,1.24) node[left=-1pt,color=red]{\tiny $l^{- \frac{3 (3 + \gamma)}{32 (3 - \gamma)}}$};
          \draw[thin,->,color=red] (2.8,2.85)--(6.2,1.23);
          \draw (4.5,2.04) node[left=1pt,color=red]{\tiny $l^{- \frac{3 (3 + \gamma)}{32 (3 - \gamma)}}$};
          \draw[thin,->,color=red] (-1.2,2.85)--(-4.8,1.23);
          \draw (-3,2.04) node[left=1pt,color=red]{\tiny $l^{- \frac{3 (3 + \gamma)}{32 (3 - \gamma)}}$};
          \draw[thin,->,color=blue] (-4,1.2)--(-1.2,2.47);
          \draw (-2.6,1.8) node[right=2pt,color=blue]{\tiny $\epsilon_*^2$};
          \draw[thin,->,color=blue] (-2.6,0.5)--(-0.03,0);
          \draw (-0.6,0) node[left,color=blue]{\tiny $l^{- \frac{3 (3 + \gamma)}{32 (3 - \gamma)}}$};
          \draw[thin,->,color=blue] (-5.9,0.5)--(-5.9,-0.5)--(-3.7,-0.5)--(-3.7,0.47);
          \draw (-4.8,-0.5) node[above,color=blue]{\tiny $C_{\epsilon_*} l^{- \frac{3 (3 + \gamma)}{32 (3 - \gamma)}}$};
          \draw[thin,->,color=purple] (-2.3,-2.7)--(0,-0.73);
          \draw (-1.15,-1.7) node[left=-1pt,color=purple]{\tiny $l^{- \frac{3 (3 + \gamma)}{32 (3 - \gamma)}}$};
          \draw[thin,->,color=purple] (0.2,-3.05)--(1.97,-3.05);
          \draw (1.1,-3.05) node[above=-1.5pt,color=purple]{\tiny $l^{- \frac{3 (3 + \gamma)}{32 (3 - \gamma)}}$};
          \draw[thin,->,color=green] (2.4,-2.7)--(1.6,-0.73);
          \draw (2,-1.7) node[right=-1pt,color=green]{\tiny $l^{- \frac{3 (3 + \gamma)}{32 (3 - \gamma)}}$};
          \draw[thin,->,color=green] (3.4,-2.7)--(5.7,0.47);
          \draw (4.55,-1.11) node[left=-1pt,color=green]{\tiny $\epsilon_*^2$};
          \draw[thin,->,color=green] (6.2,-2.7)--(6.2,-2)--(4.5,-2)--(4.5,-2.67);
          \draw (5.4,-2.7) node[above,color=green]{\tiny $C_{\epsilon_*} l^{- \frac{3 (3 + \gamma)}{32 (3 - \gamma)}}$};
          \draw[thin,->,color=orange] (6.2,0.5)--(3.9,-2.67);
          \draw (5.1,-0.9) node[right=-1pt,color=orange]{\tiny $l^{- \frac{3 (3 + \gamma)}{32 (3 - \gamma)}} + \epsilon_*^2$};
          \draw[thin,->,color=orange] (4.2,0.5)--(1.6,0.03);
          \draw (2.9,0.25) node[above,color=orange]{\tiny $l^{- \frac{3 (3 + \gamma)}{32 (3 - \gamma)}}$};
          \draw[thin,->,color=orange] (5.2,0.5)--(0.2,-2.67);
          \draw (4,-0.4) node[left=-1pt,color=orange]{\tiny $C_{\epsilon_*} l^{- \frac{3 (3 + \gamma)}{32 (3 - \gamma)}}$};
          \draw (-4.5,-3.8)--(-5,-3.8)--(-5,-4.2)--(-4.5,-4.2)--cycle;
          \draw (-4.75,-4) node{$B_1$};
          \draw (-4.5,-4.8)--(-5,-4.8)--(-5,-5.2)--(-4.5,-5.2)--cycle;
          \draw (-4.75,-5) node{$B_n$};
          \draw (-7,-4.3)--(-7,-4.7)--(-6.5,-4.7)--(-6.5,-4.3)--cycle;
          \draw (-6.75,-4.5) node{$A$};
          \draw (-4.75,-4.42) node{$\vdots$};
          \draw[thin,->] (-6.5,-4.4)--(-5.03,-4);
          \draw (-5.7,-3.99) node{$\kappa_1$};
          \draw[thin,->] (-6.5,-4.6)--(-5.03,-5);
          \draw (-5.7,-4.7) node{$\kappa_n$};
          \draw (0.7, -4.42) node{means $A \lesssim \kappa_1 A_1 + \cdots + \kappa_n A_n +$ some controllable terms.};
        \end{tikzpicture}
      \end{center}
      \caption{Interleaved iteration process.}\label{Fig-IIP}
    \end{figure}
    The process displayed in Figure \ref{Fig-IIP} illustrates the estimates \eqref{U0-d}-\eqref{RHS-d} later. The quantity $ \Xi^A [ \frac{59 - 24 \gamma}{24 (3 - \gamma)}; \frac{15}{8 (3 - \gamma)}, 0 ] (g_\sigma) $ is dominated by directly employing the spatial-velocity indices iteration form \eqref{SVIIF} and the bound $\psi_{6; - \frac{1}{8 (3 - \gamma)}, 2 (\beta_\gamma + \frac{1}{2})}$, see \eqref{U1-d}. When controlling the quantity $ \Xi^A [ \frac{9 + \gamma}{3 (3 - \gamma)}; \frac{7}{8 (3 - \gamma)}, \beta_\gamma + \frac{1}{2} ] (g_\sigma) $ in \eqref{U2-d}, the spatial-velocity indices iteration form \eqref{SVIIF} and the following interpolation inequality are utilized:
    \begin{equation*}{\small
      \begin{aligned}
        l^\frac{15 - 4 \gamma}{12 (3 - \gamma)} ( l^{-6} \delta x + l )^\frac{13}{8 (3 - \gamma)} \leq \epsilon_* l^\frac{59 - 24 \gamma}{48 (3 - \gamma)} ( l^{-6} \delta x + l )^\frac{15}{8 (3 - \gamma)} + C_{\epsilon_*} l^{- \frac{3}{16 (3 - \gamma)}} l^\frac{9 + \gamma}{3 (3 - \gamma)} ( l^{- 6} \delta x + l)^\frac{7}{8 (3 - \gamma)} \,.
      \end{aligned}}
    \end{equation*}
    Similarly, by applying the bound $\psi_{6; - \frac{1}{4 (3 - \gamma)}, 2 (\beta_\gamma + \frac{1}{2})} \leq \mathscr{E}_2^A (g_\sigma)$, the spatial-velocity indices iteration form \eqref{SVIIF}, and the interpolation inequality
    \begin{equation*}{\small
      \begin{aligned}
        l^\frac{279 - 40 \gamma}{72 (3 - \gamma)} ( l^{-6} \delta x + l )^\frac{3}{2 (3 - \gamma)} \leq \epsilon_* l^\frac{107 - 24 \gamma}{24 (3 - \gamma)} ( l^{-6} \delta x + l )^\frac{7}{4 (3 - \gamma)} + C_{\epsilon_*} l^{- \frac{2 (3 + \gamma)}{3 (3 - \gamma)}} l^\frac{297 - 40 \gamma}{72 (3 - \gamma)} ( l^{- 6} \delta x + l)^\frac{3}{4 (3 - \gamma)} \,,
      \end{aligned}}
    \end{equation*}
    the estimate \eqref{U4-d} of the quantity $ \Xi^A [ \frac{297 - 40 \gamma}{36 (3 - \gamma)}; \frac{3}{4 (3 - \gamma)}, \beta_\gamma + \frac{1}{2} ] (g_\sigma) $ can be obtained. Moreover, the estimate \eqref{U3-d} of $ \Xi^A [ \frac{107 - 24 \gamma}{12 (3 - \gamma)}; \frac{7}{4 (3 - \gamma)}, 0 ] (g_\sigma) $ can be derived from the spatial-velocity indices iteration form \eqref{SVIIF} and the interpolation inequality
    \begin{equation*}{\small
      \begin{aligned}
        l^\frac{149 - 36 \gamma}{24 (3 - \gamma)} ( l^{-6} \delta x + l )^\frac{1}{2 (3 - \gamma)} \leq \epsilon_* l^\frac{297 - 40 \gamma}{72 (3 - \gamma)} ( l^{-6} \delta x + l )^\frac{3}{4 (3 - \gamma)} + C_{\epsilon_*} l^{- \frac{2 (3 + \gamma)}{9 (3 - \gamma)}} l^\frac{189 - 52 \gamma}{72 (3 - \gamma)} ( l^{- 6} \delta x + l)^{- \frac{1}{8 (3 - \gamma)}} \,.
      \end{aligned}}
    \end{equation*}
    Finally, by the bound $\psi_{6; - \frac{7}{8 (3 - \gamma)}, 3 (\beta_\gamma + \frac{1}{2})} \leq \mathscr{E}_2^A (g_\sigma)$ for $\beta \geq 3 (\beta_\gamma + \frac{1}{2})$ and the spatial-velocity indices iteration form \eqref{SVIIF}, the estimate \eqref{U5-d} of $ \Xi^A [ \frac{189 - 52 \gamma}{36 (3 - \gamma)}; - \frac{1}{8 (3 - \gamma)}, 2 (\beta_\gamma + \frac{1}{2}) ] (g_\sigma) $ is gained. Once taking $\epsilon_* > 0$ sufficiently small and then choosing $l \geq 1$ large enough, one can obtain the uniform bound of the quantity $\mathscr{E}_2^A (g_\sigma) + \mathscr{E}_{\mathtt{NBE}}^A (g_\sigma)$, hence, the closed uniform weighted $L^2_{x,v}$ estimates in Lemma \ref{Lmm-L2xv-closed} holds.

    {\bf (IV) Designing the uniform norms of \eqref{A1}.} Now we illustrate the process of deriving the uniform bounds of the problem \eqref{A3-lambda} below (equivalently \eqref{A1}). Due to the complication of deriving the uniform bounds, the following sketch map will be initially drawn for the sake of readers' intuition (see Figure \ref{Fig1} below).

    \begin{figure}[h]
    	\begin{center}
    		\begin{tikzpicture}
    			\draw (0.8,0)--(4,0)--(4,-0.8)--(0.8,-0.8)--cycle;
    			\draw (0.75,-0.4) node[right]{\footnotesize $\textcolor{red}{ \mathscr{E}^A_{\infty} (g_\sigma) } = \LL g_\sigma \RR_{A; m, \beta, \vartheta}$};
    			\draw[->] (2.5,-0.9)--(2.5,-1.9);
    			\draw (2.4,-1.4) node[right]{\footnotesize Lemma \ref{Lmm-Y-bnd}};
    			\draw (0.7,-2)--(4.1,-2)--(4.1,-2.8)--(0.7,-2.8)--cycle;
    			\draw (0.65,-2.4) node[right]{\footnotesize $\| z_{\alpha'} \sigma_x^\frac{m}{2} w_{- \gamma, \vartheta} g_\sigma \|_{L^\infty_x L^2_v}$};
    			\draw[->] (2.5,-2.9)--(2.5,-3.9);
    			\draw (2.4,-3.4) node[right]{\footnotesize Lemma \ref{Lmm-Linfty-L2}};
    			\draw (0.1,-4)--(4.9,-4)--(4.9,-5.2)--(0.1,-5.2)--cycle;
    			\draw (0,-4.3) node[right]{{\footnotesize $ \textcolor{red}{ \mathscr{E}^A_{\mathtt{cro}} (g_\sigma) } = \| \nu^\frac{1}{2} z_{- \alpha} \sigma_x^\frac{m}{2} w_{-\gamma, \vartheta} g_\sigma \|_A $ }};
    			\draw (0.45,-4.9) node[right]{{\footnotesize $ + \| \nu^{- \frac{1}{2} } z_{- \alpha} \sigma_x^\frac{m}{2} z_1 w_{-\gamma, \vartheta} \partial_x g_\sigma \|_A $}};
                \draw[->] (5,-4.6)--(6.9,-4.6);
    			\draw (6.3,-4.6) node[above]{\footnotesize Lemma \ref{Lmm-L2xv-alpha}};
    			\draw (6.2,-3) node[below]{\footnotesize Lemma \ref{Lmm-NDBL}};
                \draw (6,-3.8) node{$+$};
    			\draw (8,-5.2)--(12.95,-5.2)--(12.95,-4)--(8,-4)--cycle;
                \draw (7.9,-4.6) node[right]{\footnotesize$ \| (\delta x + l)^{- \frac{m (1 - \gamma)}{2 (3 - \gamma)} } \nu^\frac{1}{2} w_{ - \gamma + \beta_\gamma, \vartheta } g_\sigma  \|_A $};
    			\draw (8,-1)--(11.2,-1)--(11.2,0)--(8,0)--cycle;
                \draw (8,-0.5) node[right]{\footnotesize $[ \mathscr{E}_2^A (g_\sigma) + \mathscr{E}_{\mathtt{NBE}}^A (g_\sigma) ]^\frac{1}{2}$};
                \draw[->] (9.6,-3.9)--(9.6,-1.1);
    			\draw (9.6,-2.5) node[right]{\footnotesize \eqref{X2'}};
    			\draw[->] (4.5,-3.9)--(7.9,-0.5);
                \draw (7.6,-1.7) node[left,color=blue]{\tiny $(\delta \hbar)^{- \frac{15}{2}} l^{ - \frac{3 (3 + \gamma)}{64 (3 - \gamma)} }$};
    		\end{tikzpicture}
    	\end{center}
        \caption{Derivation of uniform bounds for the connection auxiliary equation \eqref{A1}. Here we denote by $g_\sigma = e^{\hbar \sigma} g$.}\label{Fig1}
    \end{figure}
    Based on the Figure \ref{Fig1}, we now illustrate the main ideas. We mainly want to control the weighted $L^\infty_{x,v}$ quantity $\mathscr{E}^A_\infty (g_\sigma)$. However, the operator $K$ is not compact in the weighted $L^\infty_{x,v}$ spaces, which fails to obtain a closed estimate in the $L^\infty_{x,v}$ framework. By applying the property of $K$ in Lemma \ref{Lmm-Kh-2-infty}, the quantity $\mathscr{E}^A_\infty (g_\sigma)$ can be bounded by the norm $ \| z_{\alpha'} \sigma_x^\frac{m}{2} w_{- \gamma, \vartheta} g_\sigma \|_{L^\infty_x L^2_v} $, see Lemma \ref{Lmm-Y-bnd}. In this step, the $L^\infty_{x,v}$ bounds for the operators $Y_A, Z, U$ defined in \eqref{YAn-f}-\eqref{Rn-f}-\eqref{U-f} are important, see Lemma \ref{Lmm-ARU}. By Lemma \ref{Lmm-Linfty-L2}, the quantity $ \| z_{\alpha'} \sigma_x^\frac{m}{2} w_{- \gamma, \vartheta} g_\sigma \|_{L^\infty_x L^2_v} $ is thereby dominated by $ \mathscr{E}^A_{\mathtt{cro}} (g_\sigma) $. It is actually the Sobolev type interpolation in one dimensional space with different weights. Due to the structure of the equation, the singular weight $z_{- \alpha} (v) $ is unavoidable.
    	
    By Lemma \ref{Lmm-L2xv-alpha}, the quantity $ \mathscr{E}^A_{\mathtt{cro}} (g_\sigma) $ can be bounded by 
    $$ \| (\delta x + l)^{ - \frac{m ( 1 - \gamma ) }{ 2 ( 3 - \gamma ) } } \nu^\frac{1}{2} w_{ - \gamma + \beta_\gamma - \mathbf{1}_{m < 0} m (1 - \gamma) /2 , \vartheta } g_\sigma \|_A + \alpha_* (\delta \hbar)^{- \frac{15}{2}} l^{- \frac{3 (3 + \gamma)}{64 (3 - \gamma)}} \big[ \mathscr{E}_2^A (g_\sigma) + \mathscr{E}_{\mathtt{NBE}}^A (g_\sigma) \big]^\frac{1}{2} \,,$$
    where the first term can be further dominated by $ [ \mathscr{E}_2^A (g_\sigma) + \mathscr{E}_{\mathtt{NBE}}^A (g_\sigma) ]^\frac{1}{2} $ if $m \geq 1$, see \eqref{X2'}. Note that the quantity $ \mathscr{E}^A_{\mathtt{cro}} (g_\sigma) $ is considered in the $L^2_{x,v}$ framework. The Nondissipative boundary lemma (Lemma \ref{Lmm-NDBL}) is therefore required to deal with the weighted boundary energy as stated in Part (III) above. Then the second quantity above is obtained. The main goal of this step is to control the $(x,v)$-mixed polynomial type weight $\sigma_x^\frac{m}{2} (x,v)$ and the singular weight $z_{- \alpha} (v)$ involved in $ \mathscr{E}^A_{\mathtt{cro}} (g_\sigma) $. The $(x,v)$-mixed weight $\sigma_x^\frac{m}{2} (x,v)$ can be controlled by $x$-polynomial and $|v|$-polynomial weights. The difficult is to deal with the singular weight $z_{- \alpha} (v)$ associated with the operator $K$. Thanks to Lemma \ref{Lmm-Kh-L2}, the weight $z_{- \alpha} (v)$ is successfully removed, in which the key is to obtain the estimate $\mathsf{h} (v_*) = \int_{\R^3} z_{- \alpha}^2 (v) |v-v_*|^\gamma \M^\frac{3}{8} (v) \d v \lesssim (1 + |v_*|)^\gamma$, see \eqref{h-bnd} below. At the end, the quantity $\mathscr{E}_2^A (g_\sigma) + \mathscr{E}_{\mathtt{NBE}}^A (g_\sigma)$ has been successfully controlled as in Part (III). Therefore, we obtain the uniform a priori estimates for \eqref{A1} in Lemma \ref{Lmm-APE-A3}.

    {\bf (V) Existence of the linear damped problem \eqref{KL-Damped}.} Based on the uniform a priori weighted $L^\infty_{x,v} \cap L^2_{x,v}$ estimates on the connection auxiliary problem \eqref{A1} in Lemma \ref{Lmm-APE-A3}, we sketch the proof of existence and uniqueness of the mild solution to the linear damped problem \eqref{KL-Damped} (equivalently \eqref{KL-Damped-f}).

    Initially, together with the uniform a priori $L^2_{x,v}$ estimates on the connection auxiliary problem \eqref{A1} Lemma \ref{Lmm-L2xv-closed}, the existence and uniqueness of weak solution to the approximate problem \eqref{A1} can be prove by employing the well-known Hahn-Banach Theorem and Lax-Milgram Theorem. Inspired by Lemma II.2 of \cite{DiPerna-Lions-1989-AM}, one knows that the previous weak solution to \eqref{A1} is exactly the mild solution with the form \eqref{A3-3}.

    Then we can justify the existence and uniqueness of the mild solution to \eqref{KL-Damped} by using Lemma \ref{Lmm-APE-A3}. By taking $\varphi_A (v) = 0$ and assuming $ \mathscr{A}^\infty ( e^{\hbar \sigma} h ) < \infty $, the solution $g^A (x,v)$ constructed in Lemma \ref{Lmm-APE-A3} obeys the uniform-in-$A$ bound $\mathscr{E}^A ( e^{ \hbar \sigma } g^A ) \lesssim \mathscr{A}^\infty ( e^{\hbar \sigma} h )$. Note that $g^A$ is defined on $(x,v) \in (0,A) \times \R^3$. We extend $g^A (x,v)$ to $(x,v) \in \R_+ \times \R^3$ as $\tilde{g}^A (x,v) = \mathbf{1}_{x \in  (0, A)} g^A (x,v)$, which, together the bound of $g^A$, is uniformly bounded in the space $\mathbb{B}^\hbar_\infty$ defined in \eqref{Bh-1}. Moreover, we can prove that $\tilde{g}^A (x,v)$ is a Cauchy sequence in $\mathbb{B}^{\hbar'}_\infty$ for any fixed $\hbar' \in (0, \hbar)$. Then we can show that the limit of $ \tilde{g}^A (x,v) $ is the unique solution to \eqref{KL-Damped}. The result on Theorem \ref{Thm-Linear} is thereby obtained.
    
    {\bf (VI) Remove the artificial damping: Freezing Point Method.}
    
    In order to prove the existence of mild solution to the linear problem \eqref{KL}, i.e., to prove Theorem \ref{Thm-Linear}, we should remove the artificial damping $\mathbf{D} f_*$, which means that
    \begin{equation*}
      \begin{aligned}
        \mathfrak{f} (x) = \int_{\R^3} v_3 \psi_3^* f_* (x,v) \d v = 0 \,, \quad \mathfrak{F} (x) = \int_{\R^3} v_3
			\left(
			\begin{array}{c}
				\widehat{\mathbb{B}}_3 \\
				\widehat{\mathbb{A}}_{13}\\
				\widehat{\mathbb{A}}_{23}
			\end{array}
			\right) f_* (x,v) \d v = 0
      \end{aligned}
    \end{equation*}
    for all $x \geq 0$. Here $\mathfrak{f} (x)$ is the coefficient of positive eigensubspace and $\mathfrak{F} (x)$ is that of zero eigensubspace. Note that $\mathfrak{f} (x)$ obeys the first order ODE $\frac{\d}{\d x} \mathfrak{f} (x) = - \ss_+ (\delta x + l)^{- \frac{1 - \gamma}{3 - \gamma}} \mathfrak{f} (x)$, which means that $\mathfrak{f} (x) = 0$ for all $x \geq 0$ if and only if $\mathfrak{f} (0) = 0$. This argument on removing the damping effect in the positive eigensubspace is similar as in Chen-Liu-Yang's work \cite{Chen-Liu-Yang-2004-AA}. Moreover, the coefficient $\mathfrak{F} (x)$ of zero eigensubspace enjoys the second order ODE system 
    \begin{equation*}
		\begin{aligned}
			\tfrac{\d^2}{\d x^2} \mathfrak{F} (x) = ( \delta x + l )^{ - \frac{1 - \gamma}{3 - \gamma} } \mathrm{diag} (\lambda_0, \lambda_1, \lambda_2) \mathfrak{F} (x) \,, \ \ \lim_{x \to + \infty} \mathfrak{F} (x) = 0_3 \,.
		\end{aligned}
	\end{equation*}
    If $\gamma = 1$, the above equation is a constant coefficient second order linear ODE system, so that the solution can be explicitly expressed, see Golse's work \cite{Golse-2008-BIMA}. Then it can be directly concluded that $\mathfrak{F} (x) = 0$ for all $x \geq 0$ if and only if $\mathfrak{F} (0) = 0$. However, it is a nonconstant coefficient second order linear ODE system for $- 3 < \gamma < 1$, whose solution cannot be explicitly written down. For general $- 3 < \gamma \leq 1$, we also need to prove that $\mathfrak{F} (x) = 0$ for all $x \geq 0$ if and only if $\mathfrak{F} (0) = 0$. We employ the {\em Freezing Point Method} (developed in elliptic theory) to fix our issue. We consider the components equations of above ODE system: for $i = 0, 1, 2$,
    \begin{equation*}
		\begin{aligned}
			\tfrac{\d^2}{\d x^2} \mathfrak{F}_i (x) = \lambda_i ( \delta x + l )^{ - \frac{1 - \gamma}{3 - \gamma} } \mathfrak{F}_i (x) \,, \ \ \lim_{x \to + \infty} \mathfrak{F}_i (x) = 0 \,.
		\end{aligned}
	\end{equation*}
    For any fixed $x_0 \geq 0$ and $- 3 < \gamma \leq 1$, we define 
    $$ \boldsymbol{\theta}_i (x_0) = \lambda_i^\frac{1}{2} ( \delta x_0 + l )^{ - \frac{1 - \gamma}{2 ( 3 - \gamma ) } } > 0 \,, \ \mathfrak{g}_i (x) = \lambda_i ( \delta x + l )^{ - \frac{1 - \gamma}{3 - \gamma} } \mathfrak{F}_i (x) - \lambda_i ( \delta x_0 + l )^{ - \frac{1 - \gamma}{3 - \gamma} } \mathfrak{F}_i (x) \,, $$
    which satisfies $\mathfrak{g}_i (x_0) = 0$. Then we can rewrite the component equation as the form
    \begin{equation*}
		\begin{aligned}
			\tfrac{\d^2}{\d x^2} \mathfrak{F}_i (x) = \boldsymbol{\theta}_i^2 (x_0) \mathfrak{F}_i (x) + \mathfrak{g}_i (x) \,, \ \lim_{x \to + \infty} \mathfrak{F}_i (x) = 0 \,.
		\end{aligned}
	\end{equation*}
    By the standard process of solving the second order linear ODE with constant coefficients and applying the far-field condition $\lim_{x \to + \infty} \mathfrak{F}_i (x) = 0$, $ \mathfrak{F}_i (x)$ can be expressed by
    \begin{equation*}
		\begin{aligned}
			\mathfrak{F}_i (x) = \Big[ C_1 (x_0) - \tfrac{1}{2 \boldsymbol{\theta}_i (x_0)} \int_{x_0}^x e^{ \boldsymbol{\theta}_i (x_0) y } \mathfrak{g}_i (y) \d y \Big] e^{- \boldsymbol{\theta}_i (x_0) x} 
		\end{aligned}
	\end{equation*}
	for some constant $C(x_0) \in \R$, which means that $\mathfrak{F}_i (x_0) = C_1 (x_0) e^{- \boldsymbol{\theta}_i (x_0) x_0}$. Moreover, a direct computation shows that $\tfrac{\d}{\d x} \mathfrak{F}_i (x_0) = - \boldsymbol{\theta}_i (x_0) C_1 (x_0) e^{- \boldsymbol{\theta}_i (x_0) x_0}$. By the arbitrariness of $x_0$, one has
\begin{equation*}
  \begin{aligned}
    \tfrac{\d}{\d x} \mathfrak{F}_i (x) = - \boldsymbol{\theta}_i (x) \mathfrak{F}_i (x) = - \lambda_i (\delta x + l)^{ - \frac{1 - \gamma}{3 - \gamma} } \mathfrak{F}_i (x) \,,
  \end{aligned}
\end{equation*}
which means that $\mathfrak{F} (x) = 0$ for all $x \geq 0$ if and only if $\mathfrak{F} (0) = 0$.

As a result, we have proved that the artificial damping $\mathbf{D} f_* (x,v) = 0$ for all $x \geq 0$ if and only if the restriction \eqref{Solvb-1} hold, which can re-characterize the vanishing sources set $\mathbb{VSS}_{\alpha_*}$ defined in \eqref{ASS} as an equivalent form \eqref{ASS-equv} in Remark \ref{Rmk-L-ASS}. Moreover, due to the linear independence of $\psi_3^*$, $\widehat{\mathbb{B}}_3$, $\widehat{\mathbb{A}}_{13}$ and $\widehat{\mathbb{A}}_{23}$, we know that $ \mathbb{VSS}_{\alpha_*} \cap \mathfrak{X}_\gamma^\infty $ is the subspace of $\mathfrak{X}_\gamma^\infty$ with codimension 4, which finish the conclusion of Theorem \ref{Thm-Linear}.

    {\bf (VII) Nonlinear problem \eqref{KL-NL}.} At the end, we focus on the nonlinear problem \eqref{KL-NL}, which can be equivalently represented by \eqref{KL-NL-f}. The one of keys of studying the nonlinear problem is to obtain nonlinear estimate \eqref{Star-1}, i.e.,
    \begin{equation*}
    	\begin{aligned}
    		\mathscr{E}^\infty ( (\delta x + l)^\frac{1 - \gamma}{3 - \gamma} e^{ \hbar \sigma } \Gamma ( f, g ) ) \lesssim \mathscr{E}^\infty ( e^{ \hbar \sigma } f ) \mathscr{E}^\infty ( e^{ \hbar \sigma } g )
    	\end{aligned}
    \end{equation*}
    given in Lemma \ref{Lmm-Gamma}. 
    
    We first decompose the solution $f$ to the problem \eqref{KL-NL-f} as two part $f_1 = (\I - \P^0) f$ and $f_2 = \P^0 f$, whose subject to the equations \eqref{KL-NL-f-decmp}. We then consider the nonlinear damping problem \eqref{KL-NL-f-damp}, hence, adding the artificial damping $\mathbf{D} f_1$ in the $f_1$-equation of \eqref{KL-NL-f-damp}. By employing the linear theory constructed in Theorem \ref{Thm-Linear}, the iterative scheme \eqref{Iter-fi} is contraction under the small data assumption \eqref{X-gamma-0} (i.e., $\varsigma_0 > 0$ sufficiently small). Then the limit of the iterative scheme \eqref{Iter-fi} uniquely solves the nonlinear problem \eqref{KL-NL-f-damp}, which means that the solution operator $\mathcal{I}_\gamma$ in Remark \ref{Rmk-NL-ASS} is well-defined. Following the same arguments as in the linear theory, $\mathbf{D} f_1 (x,v) = 0$ for all $x \geq 0$ if and only if \eqref{Solvb-NL-1}, which is then equivalent to \eqref{Solvb-NL}. Consequently, the vanishing sources set $\widetilde{\mathbb{VSS}}_{\alpha_*}$ for the nonlinear problem \eqref{KL-NL} defined in \eqref{ASS-NL} can be re-characterized as in Remark \ref{Rmk-NL-ASS}. Thanks to the linear independence of $\psi_3^*$, $\widehat{\mathbb{B}}_3$, $\widehat{\mathbb{A}}_{13}$ and $\widehat{\mathbb{A}}_{23}$, we know that $ \widetilde{\mathbb{VSS}}_{\alpha_*} \cap \mathfrak{X}_\gamma^{\varsigma_0} $ for sufficiently small $\varsigma_0 > 0$ is the subspace of $\mathfrak{X}_\gamma^{\varsigma_0}$ with codimension 4. The result on Theorem \ref{Thm-Nonlinear} is therefore constructed.

\subsection{Historical remarks}

The known results on Knudsen layer equation with Maxwell type boundary condition has been listed in Subsection \ref{Subsec:MI} above, hence, \cite{Coron-Golse-Sulem-1988-CPAM,Golse-Perthame-Sulem-1988-ARMA,HJW-2024-preprint,HJW-2023-AMASES,HW-2022-SIMA}, which all considered the hard sphere model ($\gamma = 1$). For the incoming data boundary conditions with angular cutoff collisional kernel cases, there have been many results. The incoming data involves two cases: fixed boundary and moving boundary. For the fixed boundary case, Bardos-Caflisch-Nicolaenko \cite{Bardos-Caflisch-Nicolaenko-1986-CPAM} proved the exponential decay $e^{- c x}$ in $L^\infty ( \d x; L^2 ( (1 + |v|) \d v ) )$ for hard sphere model $\gamma = 1$, and Golse-Poupaud \cite{Golse-Poupaud-1989-MMAS} then proved superalgebraic $O (x^{ - \infty })$ in $L^\infty (\d x; L^2 ( |v_3| \d v ))$ space for $- 2 \leq \gamma < 1$. The results on moving boundary case are listed as follows. Coron-Golse-Sulem \cite{Coron-Golse-Sulem-1988-CPAM} studied the exponential decay $ e^{ - cx } $ in $L^\infty ( e^{cx} \d x; L^2 ( |v_3 + u| \d v ) )$ space for $\gamma = 1$, both degenerate and nondegenerate moving velocities. Ukai-Yang-Yu \cite{Ukai-Yang-Yu-2003-CMP} investigated the exponential decay $e^{- cx}$ in $x$ and algebraic decay $(1 + |v|)^{- \beta}$ in $v$ in $L^\infty_{x,v}$ space for $\gamma = 1$ and nondegenerate moving velocity. Then Golse \cite{Golse-2008-BIMA} proved the same result for the degenerate moving velocity as in \cite{Ukai-Yang-Yu-2003-CMP}. Chen-Liu-Yang \cite{Chen-Liu-Yang-2004-AA} justified the exponential decay $e^{ - c x^\frac{2}{3 - \gamma} }$ in $x$ and algebraic decay $(1 + |v|)^{- \beta}$ in $v$ in $L^\infty_{x,v}$ space for $0 < \gamma \leq 1$ and nondegenerate moving velocity. Wang-Yang-Yang \cite{Wang-Yang-Yang-2007-JMP} proved the same decay results of \cite{Chen-Liu-Yang-2004-AA} for $- 2 < \gamma \leq 0$ and nondegenerate moving velocity. Yang \cite{Yang-2011-JSP} verified the superalgebraic decay $O(x^{-\infty})$ and $O(|v|^{-\infty})$ in $L^\infty_{x,v}$ for $- 3 < \gamma \leq 1$, both degenerate and nondegenerate moving velocities. There also were some nonlinear stability results on the Knudsen boundary layer equation with incoming data, see \cite{Ukai-Yang-Yu-2004-CMP,Wang-Yang-Yang-2006-JMP,Yang-2008-JMAA}. We remark that there is no any result about the Knudsen boundary layer problems with noncutoff angular for all types of boundary conditions.

\subsection{Organization of current paper}

In the next section, we give some preliminaries will be frequently used later. In Section \ref{Sec:WS-CA}, we prove the existence and uniqueness of the weak solution to the connection auxiliary equation \eqref{A1}. The key point is to closed the nondissipative boundary condition in the $L^2_{x,v}$ framework. Section \ref{Sec:UECA} aims at deriving  the uniform weighted $L^\infty_{x,v}$ estimates for \eqref{A1}. In Section \ref{Sec:EKLe}, the existence and uniqueness of the linear problem \eqref{KL} is proved, i.e., Theorem \ref{Thm-Linear}. In Section \ref{Sec:ENL}, we justify the existence and uniqueness of the nonlinear problem \eqref{KL-NL}, hence, Theorem \ref{Thm-Nonlinear}. The $L^\infty_{x,v}$ bounds for the operators $Y_A, Z, U$ are studied in Section \ref{Sec:YZU-proof}, i.e., proving Lemma \ref{Lmm-ARU}. We then study the properties of the operator $K$, namely, to prove Lemma \ref{Lmm-K-Oprt}-\ref{Lmm-Kh-2-infty}-\ref{Lmm-Kh-L2} in Section \ref{Sec:K}. In Section \ref{Sec:WMDM}, the properties of the artificial damping operator $\mathbf{D}$ are studied, i.e., verifying Lemma \ref{Lmm-Dh}-\ref{Lmm-D-XY}-\ref{Lmm-Dh-L2}-\ref{Lmm-Dh-L2-L2}.

\section{Preliminaries}\label{Sec:Prel}

\subsection{Properties of the $(x,v)$-mixed weights $\sigma (x,v)$}

From the works \cite{Chen-Liu-Yang-2004-AA,Wang-Yang-Yang-2006-JMP,Wang-Yang-Yang-2007-JMP,Yang-2008-JMAA}, the weihgt $\sigma (x,v)$ admits the following properties.

\begin{lemma}\label{Lmm-sigma}
	For $-3 < \gamma \leq 1$, some large constant $l > 0$ and small constant $\delta > 0$, there are some positive constants $c, c_1, c_2$ such that
	\begin{equation*}{\small
		\begin{aligned}
			& \sigma (x,v) \geq c (\delta x + l)^\frac{2}{3 - \gamma} \,, \ |\sigma (x,v) - \sigma (x, v_*)| \leq c \big| |v - \u|^2 - |v_* - \u|^2 \big| \,, \\
			& 0 < c_1 \min \big\{ (\delta x + l)^{- \frac{1 - \gamma}{3 - \gamma}} \,, (1 + |v - \u|)^{- 1 + \gamma} \big\} \leq \sigma_x (x,v) \leq c_2 (\delta x + l)^{- \frac{1 - \gamma}{3 - \gamma}} \leq c_2 l^{- \frac{1 - \gamma}{3 - \gamma}} \,, \\
			& |\sigma_x (x,v) v_3 | \leq c \nu (v) \,, \ |\sigma_{xx} (x, v) v_3| \leq \delta \sigma_x \nu (v) \,.
		\end{aligned}}
	\end{equation*}
\end{lemma}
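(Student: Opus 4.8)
The plan is to read off all five inequalities directly from the explicit formula \eqref{sigma} by splitting $\R_+\times\R^3$ according to the cutoff $\Upsilon$. Write $y:=\delta x+l$ (so $y\geq l\gg 1$), $w:=1+|v-\u|$, $s:=|v-\u|^2$ (so $w=1+\sqrt{s}$), and $\tau:=y\,w^{-(3-\gamma)}$; then \eqref{sigma} reads $\sigma=(1-\Upsilon(\tau))\,A+\Upsilon(\tau)\,B$ with $A:=5y^{\frac{2}{3-\gamma}}$ and $B:=y\,w^{-(1-\gamma)}+3s$. By \eqref{Ups-Cut} the space decomposes into the \emph{far regime} $\{\tau\geq 2\}$, where $\sigma=A$ depends on $x$ only; the \emph{near regime} $\{\tau\leq 1\}$, where $\sigma=B$; and the \emph{transition regime} $\{1\leq\tau\leq 2\}$, where $y\thicksim w^{3-\gamma}$ and hence $A\thicksim B\thicksim w^{2}\thicksim y^{\frac{2}{3-\gamma}}$. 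Two elementary facts drive everything: on $\mathrm{supp}\,\Upsilon\cup\mathrm{supp}\,\Upsilon'=\{\tau\leq 2\}$ one has $y\leq 2w^{3-\gamma}$, hence $w\geq(y/2)^{1/(3-\gamma)}\geq 2$ once $l$ is large; and on $\{\tau\geq 1\}$ one has $w\leq y^{1/(3-\gamma)}$, hence $y^{-\frac{1-\gamma}{3-\gamma}}\leq w^{-(1-\gamma)}$. Throughout $|v_3|=|(v-\u)_3|\leq w$ (using $\u_3=0$) and $\nu(v)\thicksim(1+|v|)^{\gamma}\thicksim w^{\gamma}$ by \eqref{nu}.

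For the first inequality, $\sigma=(1-\Upsilon)A+\Upsilon B\geq\min\{A,B\}$, and $A\geq y^{2/(3-\gamma)}$, while on $\mathrm{supp}\,\Upsilon$ (where $w\geq2$, $y\leq2w^{3-\gamma}$) we have $B\geq 3s=3(w-1)^{2}\geq\tfrac34 w^{2}\geq\tfrac34\,2^{-2/(3-\gamma)}y^{2/(3-\gamma)}$; taking $c$ to be the smaller constant gives $\sigma\geq c(\delta x+l)^{2/(3-\gamma)}$. For the $|v-\u|^{2}$–Lipschitz bound, fix $y$ and regard $\sigma=\sigma(s)$; it is continuous, and it is locally constant ($=A$) wherever $\tau\geq 2$, so the only place $\partial_s\sigma$ could feel the singular factor $\partial_s\sqrt{s}=1/(2\sqrt s)$, namely $s$ near $0$ where $\tau=y$ is enormous, is precisely where $\Upsilon$ and $\Upsilon'$ vanish. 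Off that set a direct differentiation gives
\[
\partial_s\sigma=\Upsilon'(\tau)\,\partial_s\tau\,(B-A)+\Upsilon(\tau)\Big(3-\tfrac{(1-\gamma)y}{2\sqrt{s}}\,w^{-(2-\gamma)}\Big),
\]
and on $\mathrm{supp}\,\Upsilon'$ ($y\thicksim w^{3-\gamma}$, $\sqrt s\thicksim w$, $|B-A|\lesssim w^{2}$, $|\partial_s\tau|\lesssim w^{-2}$) the first term is $O(1)$, while on $\mathrm{supp}\,\Upsilon$ ($y\leq2w^{3-\gamma}$, $\sqrt s\geq w/2$) one has $\tfrac{(1-\gamma)y}{2\sqrt s}w^{-(2-\gamma)}\leq 2(1-\gamma)$, so the second term is $O(1)$; hence $|\partial_s\sigma|\leq c$ and $|\sigma(x,v)-\sigma(x,v_*)|\leq c\,|s-s_*|$ by integration in $s$.

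For the bounds on $\sigma_x=\delta\,\partial_y\sigma$, differentiate in $y$ (using $\tfrac{2}{3-\gamma}-1=-\tfrac{1-\gamma}{3-\gamma}$, $\partial_y\tau=w^{-(3-\gamma)}$):
\[
\partial_y\sigma=\tfrac{10}{3-\gamma}\,y^{-\frac{1-\gamma}{3-\gamma}}\,(1-\Upsilon(\tau))+w^{-(1-\gamma)}\,\Upsilon(\tau)+\Upsilon'(\tau)\,w^{-(3-\gamma)}\,\mathcal{R},\qquad \mathcal{R}:=B-A .
\]
For the upper bound: the first term is $\leq\tfrac{10}{3-\gamma}y^{-\frac{1-\gamma}{3-\gamma}}$; on $\mathrm{supp}\,\Upsilon$, $w^{-(1-\gamma)}\leq 2^{\frac{1-\gamma}{3-\gamma}}y^{-\frac{1-\gamma}{3-\gamma}}$; and on $\mathrm{supp}\,\Upsilon'$ ($w\thicksim y^{1/(3-\gamma)}$, $|\mathcal{R}|\lesssim w^{2}$) the third term is $\lesssim w^{-(1-\gamma)}\thicksim y^{-\frac{1-\gamma}{3-\gamma}}$; hence $\partial_y\sigma\lesssim y^{-\frac{1-\gamma}{3-\gamma}}$ and $\sigma_x\leq c_2(\delta x+l)^{-\frac{1-\gamma}{3-\gamma}}\leq c_2 l^{-\frac{1-\gamma}{3-\gamma}}$ (absorbing $\delta<1$). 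The lower bound rests on the \emph{sign} of the transition term: since $\Upsilon$ is decreasing, $\Upsilon'\leq 0$, and with $y=\tau w^{3-\gamma}$, $\tau\in[1,2]$, $w\geq 2$,
\[
\mathcal{R}=\big(\tau+3-5\tau^{\frac{2}{3-\gamma}}\big)w^{2}-6w+3\leq -\,w^{2}<0,
\]
because $\tau\mapsto\tau+3-5\tau^{\frac{2}{3-\gamma}}$ has derivative $1-\tfrac{10}{3-\gamma}\tau^{-\frac{1-\gamma}{3-\gamma}}<0$ on $[1,2]$ for every $-3<\gamma\leq 1$ (one checks $\tfrac{10}{3-\gamma}\,2^{-\frac{1-\gamma}{3-\gamma}}>1$ there) and equals $-1$ at $\tau=1$. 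Thus $\Upsilon'(\tau)w^{-(3-\gamma)}\mathcal{R}\geq 0$, so $\partial_y\sigma\geq\tfrac{10}{3-\gamma}y^{-\frac{1-\gamma}{3-\gamma}}(1-\Upsilon(\tau))+w^{-(1-\gamma)}\Upsilon(\tau)\geq\min\{\tfrac{10}{3-\gamma},1\}\,\min\{y^{-\frac{1-\gamma}{3-\gamma}},w^{-(1-\gamma)}\}$ by $(1-\Upsilon)+\Upsilon=1$, which is the claimed lower bound for $\sigma_x$ (with $c_1$ absorbing $\delta$).

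Finally, from $\partial_y\sigma\lesssim w^{-(1-\gamma)}$ in \emph{all} regimes (on $\{\tau\geq 1\}$ since $y^{-\frac{1-\gamma}{3-\gamma}}\leq w^{-(1-\gamma)}$, on the transition since $w\thicksim y^{1/(3-\gamma)}$, on $\{\tau\leq 1\}$ since there $\partial_y\sigma=w^{-(1-\gamma)}$ exactly) we get $|\sigma_x v_3|=\delta|\partial_y\sigma|\,|v_3|\lesssim\delta\,w^{-(1-\gamma)}w=\delta w^{\gamma}\lesssim\nu(v)$. For the last bound, $\partial_y^{2}\sigma=0$ on $\{\tau\leq 1\}$, $\partial_y^{2}\sigma=-\tfrac{10(1-\gamma)}{(3-\gamma)^{2}}y^{-\frac{4-2\gamma}{3-\gamma}}$ on $\{\tau\geq 2\}$, and on the transition each chain-rule term is $\lesssim y^{-\frac{4-2\gamma}{3-\gamma}}$ (using $w\thicksim y^{1/(3-\gamma)}$, $|\mathcal{R}|\lesssim w^{2}$, $|\partial_y\mathcal{R}|\lesssim w^{-(1-\gamma)}$), so $|\partial_y^{2}\sigma|\lesssim y^{-\frac{4-2\gamma}{3-\gamma}}$; since $\sigma_{xx}=\delta^{2}\partial_y^{2}\sigma$ and $\sigma_x=\delta\partial_y\sigma$, the claim $|\sigma_{xx}v_3|\leq\delta\sigma_x\nu(v)$ reduces to $|\partial_y^{2}\sigma|\,|v_3|\leq\partial_y\sigma\,\nu(v)$, and where $\partial_y^{2}\sigma\neq 0$ one has $\tau\geq 1$, hence $\partial_y\sigma\gtrsim y^{-\frac{1-\gamma}{3-\gamma}}$ and $w^{3-\gamma}\leq y$, so $|\partial_y^{2}\sigma|\,|v_3|\lesssim y^{-\frac{1-\gamma}{3-\gamma}}w^{\gamma}\cdot\big(w^{1-\gamma}/y\big)\lesssim l^{-2/(3-\gamma)}\,\partial_y\sigma\,\nu(v)\leq\partial_y\sigma\,\nu(v)$ for $l$ large. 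The steps I expect to be the real work are precisely these last two: the lower bound on $\sigma_x$ and the $|\sigma_{xx}v_3|$ bound both force a quantitative handling of the transition term $\Upsilon'(\tau)w^{-(3-\gamma)}\mathcal{R}$ — its sign for the former, its size for the latter — and one must exploit carefully that on $\{\partial_y^{2}\sigma\neq 0\}$ the relation $w^{3-\gamma}\leq y$ combined with the largeness of $l$ absorbs all the constants; the mild non-smoothness of $\sigma$ at $v=\u$ is harmless since it lies inside the region where $\Upsilon$ is flat.
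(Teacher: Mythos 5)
Your proof is correct. Note that the paper itself gives no argument for this lemma — it simply cites \cite{Chen-Liu-Yang-2004-AA,Wang-Yang-Yang-2006-JMP,Wang-Yang-Yang-2007-JMP,Yang-2008-JMAA} — so your direct verification from the explicit formula \eqref{sigma} is a genuine addition rather than a reproduction. The decomposition into the three regimes of $\Upsilon$ is the natural one (it is implicitly the same trichotomy the paper uses later in \eqref{sigma-x}--\eqref{Omega123}), and the two points where your argument does real work are exactly the right ones and are handled correctly: (i) the lower bound on $\sigma_x$ hinges on the transition term $\Upsilon'(\tau)w^{-(3-\gamma)}\mathcal{R}$ having a favorable sign, which you obtain from $\mathcal{R}=\big(\tau+3-5\tau^{2/(3-\gamma)}\big)w^2-6w+3\leq -w^2$ on $[1,2]$; the monotonicity check behind this, $\tfrac{10}{3-\gamma}\,2^{-\frac{1-\gamma}{3-\gamma}}>1$, is genuinely tight (its infimum over $-3<\gamma\leq 1$ is $\tfrac{5}{3}2^{-2/3}\approx 1.05$, attained as $\gamma\to -3^+$), so it is worth recording that the coefficient $5$ in \eqref{sigma} is not arbitrary — a smaller coefficient would break this step; and (ii) the bound $|\sigma_{xx}v_3|\leq\delta\sigma_x\nu$ correctly exploits that $\partial_y^2\sigma$ vanishes on $\{\tau<1\}$, so that on its support $w^{3-\gamma}\leq y$ and the surplus factor $y^{-2/(3-\gamma)}\leq l^{-2/(3-\gamma)}$ absorbs all constants for $l$ large. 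The only caveat, which you already flag, is that your $c_1$ carries a factor of $\delta$; this is consistent with the lemma as stated and with the paper's later use of $\sigma_x$ in \eqref{sigma-x}.
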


\subsection{The operators $Y_A$, $Z$ and $U$}

We introduce the functions
\begin{equation}\label{kappa}{\small
	\begin{aligned}
		& \kappa (x,v) = \int_0^x \big[ - \tfrac{m \sigma_{xx} (y,v) }{2 \sigma_x (y,v)} - \hbar \sigma_x (y, v) + \tfrac{\nu (v)}{v_3} \big] \d y \,.
	\end{aligned}}
\end{equation}
Moreover, denote by
\begin{equation}\label{M-sigma}{\small
	\begin{aligned}
		\M_\sigma (v) = \M(v) e^{- 2 \hbar \sigma (0, v)} \,.
	\end{aligned}}
\end{equation}
We then define the following linear operators
\begin{equation}\label{YAn-f}{\scriptsize
	Y_A (f) =
	\left\{
	    \begin{aligned}
	    	(1 - \alpha_*) e^{{\kappa} (A, R_0 v) - {\kappa} (x,v)} f (A, R_0 v) \qquad & \\
	    	+ \alpha_* \tfrac{M_w (v)}{\sqrt{\M_\sigma (v)}} \int_{v_3' < 0} (- v_3') \tfrac{ \sigma_x^\frac{m}{2} (0, v) }{ \sigma_x^\frac{m}{2} (0, v') } e^{{\kappa} (A, v') - {\kappa} (x,v)} f(A, v') \sqrt{\M_\sigma (v') } \d v' \,, \quad & \textrm{if } v_3 > 0 \,, \\[10pt]
	    	e^{{\kappa} (A, v) - {\kappa} (x,v)} f (A, v) \,, \quad & \textrm{if } v_3 < 0 \,,
	    \end{aligned}
	\right.}
\end{equation}
and
\begin{equation}\label{Rn-f}{\scriptsize
	Z (f) =
	\left\{
	    \begin{aligned}
	    	(1 - \alpha_*) \int_0^A e^{ - [ {\kappa} (x,v) - {\kappa} (x', R_0 v) ] } \tfrac{1}{v_3} f (x', R_0 v) \d x' \, \qquad \quad & \\
	    	- \alpha_* \tfrac{M_w (v)}{\sqrt{\M_\sigma (v)}} \int_{v_3' < 0} \int_0^A (- v_3') \tfrac{ \sigma_x^\frac{m}{2} (0, v) }{ \sigma_x^\frac{m}{2} (0, v') } e^{- [ {\kappa} (x,v) - {\kappa} (x', v') ] } \tfrac{1}{v_3'} f (x', v') \sqrt{\M_\sigma (v') } \d x' \d v' \,, \quad & \textrm{if  } v_3 > 0 \,, \\[10pt]
	    	0 \,, \quad & \textrm{if  } v_3 < 0 \,,
	    \end{aligned}
	\right.}
\end{equation}
and
\begin{equation}\label{U-f}{\small
	U (f) =
	\left\{
	    \begin{aligned}
	    	\int_0^x e^{- [ {\kappa} (x,v) - {\kappa} (x', v) ]} \tfrac{1}{v_3} f (x' , v) \d x' \,, \quad & \textrm{if  } v_3 > 0 \,, \\
	    	- \int_x^A  e^{- [ {\kappa} (x,v) - {\kappa} (x', v) ]} \tfrac{1}{v_3} f (x' , v) \d x' \,, \quad & \textrm{if  } v_3 < 0 \,.
	    \end{aligned}
	\right.}
\end{equation}
These operators will play an essential role in estimating the weighted $L^\infty_{x,v}$ norms of the Knudsen layer equations \eqref{KL-Damped}. More precisely, they obey the following estimates.

\begin{lemma}\label{Lmm-ARU}
	Let $- 3 < \gamma \leq 1$, $l \geq 1$, $m, \beta \in \mathbb{R}$, $A > 0$ and $\vartheta > 0$. Then, for sufficiently small $\hbar, \delta > 0$, there are some positive constant $C > 0$, independent of $l$, $\delta$, $\hbar$ and $A$, such that
	\begin{enumerate}
    \item $Y_A (f)$ subjects to the estimate
    \begin{equation}\label{YAn-bnd}
    	\begin{aligned}
    		\| Y_A (f) \|_{L^\infty_{x,v}} \leq C \| f (A, \cdot) \|_{L^\infty_v} \,;
    	\end{aligned}
    \end{equation}
	
	\item $Z (f)$ enjoys the bound
	\begin{equation}\label{Rn-bnd}
		\begin{aligned}
			\| Z (f) \|_{L^\infty_{x,v}} \leq C \| \nu^{-1} f \|_{L^\infty_{x,v}} \,;
		\end{aligned}
	\end{equation}

    \item $U (f)$ obeys the bound
    \begin{equation}\label{U-bnd}
    	\begin{aligned}
    		& \| U (f) \|_{L^\infty_{x,v}} \leq C \| \nu^{-1} f \|_{L^\infty_{x,v}} \,.
    	\end{aligned}
    \end{equation}
    \end{enumerate}
\end{lemma}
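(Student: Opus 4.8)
The plan is to estimate the three operators by brute-force $L^\infty$ bounds, using in an essential way the lower bound for the exponent $\kappa(x,v)-\kappa(x',v)$ that comes from the properties of $\sigma(x,v)$ in Lemma \ref{Lmm-sigma}. The starting observation is that, by \eqref{kappa} and Lemma \ref{Lmm-sigma}, for $0\le x'\le x$ and $v_3>0$ one has
\begin{equation*}
  \kappa(x,v)-\kappa(x',v)=\int_{x'}^x\Big[-\tfrac{m\sigma_{xx}(y,v)}{2\sigma_x(y,v)}-\hbar\sigma_x(y,v)+\tfrac{\nu(v)}{v_3}\Big]\d y
  \ \ge\ \tfrac{\nu(v)}{2|v_3|}(x-x')-C\log\tfrac{\sigma_x(x',v)}{\sigma_x(x,v)},
\end{equation*}
since $|\sigma_{xx}v_3|\le\delta\sigma_x\nu$ makes the first term harmless for $\delta$ small and $\sigma_x>0$ is decreasing in $x$, so $-\int\frac{m\sigma_{xx}}{2\sigma_x}=-\frac m2\log\frac{\sigma_x(x,v)}{\sigma_x(x',v)}$. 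The factor $\sigma_x^{m/2}(0,v')/\sigma_x^{m/2}(0,v)$ appearing in $Y_A,Z$ is exactly the one that absorbs these logarithmic boundary terms at $x'=0$, so after inserting it the net weight is bounded. The upshot is that in all three operators the kernel is controlled by $\frac{1}{|v_3|}e^{-\frac{\nu(v)}{2|v_3|}|x-x'|}$ (times, for $Y_A$, a decaying Gaussian-type factor $M_w/\sqrt{\M_\sigma}$ and, for $Z$, the factor $\sqrt{\M_\sigma(v')}$ integrated against $|v_3'|$).

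First I would treat $U(f)$, which is the cleanest: for $v_3>0$,
\begin{equation*}
  |U(f)(x,v)|\le\|\nu^{-1}f\|_{L^\infty_{x,v}}\,\nu(v)\int_0^x e^{-\frac{\nu(v)}{2v_3}(x-x')}\tfrac{1}{v_3}\d x'
  \le 2\|\nu^{-1}f\|_{L^\infty_{x,v}},
\end{equation*}
and symmetrically for $v_3<0$ using the integral from $x$ to $A$; this gives \eqref{U-bnd}. Next, for $Z(f)$ with $v_3>0$, the specular part is handled exactly as $U$ (note $\kappa(x',R_0v)$ uses $\nu(R_0v)=\nu(v)$ by \eqref{nu-rf} and $|(R_0v)_3|=|v_3|$), so it is $\lesssim\|\nu^{-1}f\|_{L^\infty_{x,v}}$. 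For the diffusive part one pulls out $\|\nu^{-1}f\|_{L^\infty_{x,v}}$, performs the $x'$-integral in $v'$ to get $\nu(v')\int_0^A e^{-\frac{\nu(v')}{2|v_3'|}x}\frac{1}{|v_3'|}\d x\le2$, and is then left with $\frac{M_w(v)}{\sqrt{\M_\sigma(v)}}\int_{v_3'<0}|v_3'|\,\frac{\sigma_x^{m/2}(0,v)}{\sigma_x^{m/2}(0,v')}\sqrt{\M_\sigma(v')}\,\d v'$; here the assumption \eqref{Assmp-T} (i.e. $0<T_w<2T$) guarantees $M_w(v)/\sqrt{\M(v)}$ decays, so $\frac{M_w(v)}{\sqrt{\M_\sigma(v)}}\sigma_x^{m/2}(0,v)$ is bounded, and $\int_{v_3'<0}|v_3'|\sigma_x^{-m/2}(0,v')\sqrt{\M_\sigma(v')}\d v'<\infty$ because $\sqrt{\M_\sigma}$ beats any polynomial power of $\sigma_x^{-1}(0,v')\lesssim(1+|v'-\u|)^{|m|(1-\gamma)}$. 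This yields \eqref{Rn-bnd}. For $Y_A(f)$ one simply drops the exponential $e^{\kappa(A,\cdot)-\kappa(x,\cdot)}\le1$ (valid since $x\le A$ and the integrand of $\kappa$ is, up to the harmless $\sigma_{xx}$ term, $\ge-\hbar\sigma_x+\nu/v_3>0$ after incorporating the $\sigma_x^{m/2}$ weights), bound $|f(A,R_0v)|,|f(A,v')|\le\|f(A,\cdot)\|_{L^\infty_v}$, and estimate the same $v'$-integral as above; this gives \eqref{YAn-bnd}.

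The main obstacle is making the bookkeeping of the weight $\sigma_x^{m/2}(0,v)/\sigma_x^{m/2}(0,v')$ against the exponent $\kappa(A,v')-\kappa(x,v)$ completely rigorous for \emph{both signs of $m$} and for the full range $-3<\gamma\le1$, because for $\gamma<1$ the ratio $\sigma_x(x',v)/\sigma_x(x,v)$ is not bounded uniformly in $v$ (it grows like $(1+|v-\u|)^{(1-\gamma)|m|}$ near $x'=0$) and must be genuinely absorbed — either into the $\sqrt{\M_\sigma}$ Gaussian (for the diffusive/$Z$/$Y_A$ terms) or into the exponential decay $e^{-\nu(v)|x-x'|/(2|v_3|)}$ together with the elementary inequality $t^k e^{-ct}\le C_k$ (for the specular/$U$ terms, where one needs $\frac{\nu(v)}{v_3}(x-x')\gtrsim$ the logarithm of the weight ratio on the region where that ratio is large, i.e. where $|v_3|$ is comparable to $\nu(v)$). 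I would isolate this as the one delicate point; everything else is the routine $\int_0^\infty e^{-at}\d t=1/a$ computation above. One should also double-check, using Lemma \ref{Lmm-sigma}'s bound $|\sigma_{xx}v_3|\le\delta\sigma_x\nu$, that the $-\frac{m\sigma_{xx}}{2\sigma_x}$ piece of $\kappa$ never reverses the sign of the exponent for $\delta$ small — this is where the hypothesis ``$\hbar,\delta$ sufficiently small'' is consumed.
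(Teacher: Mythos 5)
Your proposal is correct and follows essentially the same route as the paper: the pointwise bounds $|\sigma_{xx} v_3|\le\delta\sigma_x\nu$ and $|v_3\sigma_x|\le c\nu$ from Lemma \ref{Lmm-sigma} give $\kappa(x,v)-\kappa(x',v)\ge(1-c\hbar-\tfrac{|m|}{2}\delta)\tfrac{\nu(v)}{|v_3|}|x-x'|$ in the relevant sign configurations, after which $U$ and the specular parts of $Y_A,Z$ reduce to $\int_0^\infty a e^{-at}\,\d t=1$, while the diffusive parts reduce to the uniform bounds $\tfrac{M_w(v)}{\sqrt{\M_\sigma(v)}}\sigma_x^{m/2}(0,v)\le C$ and $|v_3'|\,\sigma_x^{-m/2}(0,v')\sqrt{\M_\sigma(v')}\le C\M^{1/4}(v')$ under \eqref{Assmp-T} and small $\hbar$. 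The one point you isolate as delicate --- absorbing $\log\big(\sigma_x(x',v)/\sigma_x(x,v)\big)$ --- in fact never arises: since $\big|\tfrac{m\sigma_{xx}}{2\sigma_x}\big|\le\tfrac{|m|\delta}{2}\tfrac{\nu(v)}{|v_3|}$ pointwise in $y$, the logarithmic contribution is dominated by the main term $\tfrac{\nu(v)}{|v_3|}|x-x'|$ for $\delta$ small (exactly the check you defer to the end), so no comparison between the weight ratio and the exponential decay, and no case distinction on the sign of $m$ or on where $|v_3|\sim\nu(v)$, is needed.
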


The proof of Lemma \ref{Lmm-ARU} will be given in Section \ref{Sec:YZU-proof} later.

\subsection{Properties of the operator $K$}\label{Subsec:K}

In this subsection, the goal is to derive some useful properties of the operator $K$ defined in \eqref{K-K1-K2}-\eqref{K1}-\eqref{K2}. It then is focused on the following decay results of the operator $K$.

\begin{lemma}\label{Lmm-K-Oprt}
	Let $- 3 < \gamma \leq 1$, $A > 0$, $m, \beta \in \R$, $\hbar, \vartheta \geq 0$ with $\delta_0 : = \tfrac{1}{4} - T (c \hbar + \vartheta) > 0 $, where $c > 0$ is given in Lemma \ref{Lmm-sigma}. Then there is a constant $C > 0$, independent of $A, \hbar$, such that
	\begin{equation}\label{K-bnd}
		\begin{aligned}
			\| \sigma_x^\frac{m}{2} e^{\hbar \sigma} K f \|_{A; \beta, \vartheta} \leq C \| \sigma_x^\frac{m}{2} e^{\hbar \sigma} f \|_{A; \beta + \gamma - 1, \vartheta} \,.
		\end{aligned}
	\end{equation}
    where the norm $\| \cdot \|_{A; \beta, \vartheta}$ is defined in \eqref{XY-space}.
\end{lemma}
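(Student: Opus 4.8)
\medskip

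The plan is to reduce the weighted $L^\infty$ bound for $Kf$ to a pointwise kernel estimate, exactly in the spirit of the classical Grad estimates for the linearized Boltzmann operator, but carrying the extra mixed weight $\sigma_x^{m/2} e^{\hbar\sigma}$ through the argument. Writing $K = -K_1 + K_2$ with $K_1, K_2$ as in \eqref{K1}--\eqref{K2}, each of these is an integral operator $Kf(v) = \int_{\R^3} k(v,v_*) f(v_*)\,\d v_*$ with an explicit kernel $k(v,v_*)$; for $K_1$ one has $k_1(v,v_*) = \M^{1/2}(v)\M^{1/2}(v_*)|v-v_*|^\gamma$ (after integrating $\tilde b$ over $\mathbb S^2$ using \eqref{Cutoff}), and for $K_2$ one has after the standard change of variables the Grad-type kernel $k_2(v,v_*)\lesssim |v-v_*|^{-1}\exp\{-c_0(|v-v_*|^2 + (\,|v|^2-|v_*|^2\,)^2/|v-v_*|^2)\}$ when $\gamma\ge0$, with an additional factor $|v-v_*|^\gamma$ (which is integrable near $v=v_*$ since $\gamma>-3$) in the soft-potential range. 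The first step is therefore to fix $(x,v)$, multiply the identity $Kf(v) = \int k(v,v_*) f(v_*)\,\d v_*$ by $\sigma_x^{m/2}(x,v) e^{\hbar\sigma(x,v)} w_{\beta,\vartheta}(v)$, and insert $1 = w_{\beta+\gamma-1,\vartheta}(v_*)^{-1}\sigma_x^{-m/2}(x,v_*) e^{-\hbar\sigma(x,v_*)} \cdot \sigma_x^{m/2}(x,v_*) e^{\hbar\sigma(x,v_*)} w_{\beta+\gamma-1,\vartheta}(v_*)$ under the integral, so that the last factor matches the norm on the right-hand side of \eqref{K-bnd}. It then suffices to show that the remaining kernel
\[
\widetilde k(x,v,v_*) := k(v,v_*)\,\frac{\sigma_x^{m/2}(x,v)}{\sigma_x^{m/2}(x,v_*)}\, e^{\hbar[\sigma(x,v)-\sigma(x,v_*)]}\,\frac{w_{\beta,\vartheta}(v)}{w_{\beta+\gamma-1,\vartheta}(v_*)}
\]
has $\sup_{x,v}\int_{\R^3}|\widetilde k(x,v,v_*)|\,\d v_* \le C$.

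\medskip

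The second step is to absorb each of the three weight ratios into the Gaussian decay of the kernel. For the $\sigma$-ratio I would use Lemma 2.1: the bound $|\sigma(x,v)-\sigma(x,v_*)|\le c\big|\,|v-\u|^2-|v_*-\u|^2\,\big|$ gives $e^{\hbar[\sigma(x,v)-\sigma(x,v_*)]}\le e^{c\hbar|\,|v-\u|^2-|v_*-\u|^2\,|}$, and the ratio $\sigma_x^{m/2}(x,v)/\sigma_x^{m/2}(x,v_*)$ is controlled, again by Lemma 2.1, by a polynomial in $(1+|v-\u|)$ and $(1+|v_*-\u|)$ of fixed degree $|m|(1-\gamma)/2$ (using $\sigma_x\sim\min\{(\delta x+l)^{-(1-\gamma)/(3-\gamma)},(1+|v-\u|)^{\gamma-1}\}$ from above and below, uniformly in $x$); polynomial factors are harmless against Gaussians. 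The $\vartheta$-part of $w$ contributes $e^{\vartheta(|v-\u|^2-|v_*-\u|^2)}$, and the polynomial part contributes $(1+|v|)^{\beta}(1+|v_*|)^{1-\gamma-\beta}$, again polynomial. Collecting everything, the crucial exponential factor multiplying $k(v,v_*)$ is $\exp\{(c\hbar+\vartheta)(|v-\u|^2-|v_*-\u|^2)\}$ (together with harmless polynomials). For $K_1$, $k_1$ already contains $\M^{1/2}(v)\M^{1/2}(v_*) = \rho(2\pi T)^{-3/2}\exp\{-(|v-\u|^2+|v_*-\u|^2)/(4T)\}$, so the product is bounded by $\exp\{-(\tfrac1{4T}-(c\hbar+\vartheta))(|v-\u|^2 + |v_*-\u|^2)\}\,|v-v_*|^\gamma\cdot(\text{poly})$; since $\delta_0 = \tfrac14 - T(c\hbar+\vartheta)>0$ by hypothesis, i.e. $\tfrac1{4T}-(c\hbar+\vartheta) = \delta_0/T >0$, this is an honest Gaussian in $v_*$ (and in $v$), and $\int_{\R^3}|v-v_*|^\gamma e^{-(\delta_0/T)|v_*-\u|^2}\,\d v_*\lesssim (1+|v|)^{\gamma}\lesssim (1+|v|)$ is uniformly bounded after dividing back by the residual $v$-decay — so the $K_1$ piece is done. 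For $K_2$ the treatment is the classical one: one first notes that $\M^{1/2}(v)\M^{1/2}(v_*)$ in front of the Grad kernel can again be split, exploiting the Carleman representation so that on the collision manifold $|v|^2+|v_*|^2 = |v'|^2+|v_*'|^2$, to produce a factor $\exp\{-\tfrac18(|v-\u|^2+|v_*-\u|^2)\}$ times a Grad-type kernel $k_2^{\sharp}(v,v_*)$ with exponential decay in $|v-v_*|$ and in $(|v|^2-|v_*|^2)^2/|v-v_*|^2$; then the ``$\tfrac1{4T}$ versus $(c\hbar+\vartheta)$'' bookkeeping is identical, using the strict positivity of $\delta_0$, and $\sup_v\int |k_2^\sharp(v,v_*)|\,\d v_*<\infty$ is the standard Grad lemma (the $|v-v_*|^\gamma$ factor in the soft range is locally integrable since $\gamma>-3$, and is dominated by the Gaussian at infinity).

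\medskip

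The third and last step is to note that all the estimates above are uniform in $x\ge0$ and in $A$: the only $x$-dependence enters through $\sigma_x(x,v)/\sigma_x(x,v_*)$ and $\sigma(x,v)-\sigma(x,v_*)$, both of which were bounded by $x$-independent quantities via Lemma 2.1 (the monotone lower bound $\sigma_x\ge c_1\min\{\dots\}$ and the Lipschitz-type bound on $\sigma$ in the $|v-\u|^2$ variable hold with constants independent of $x$, $l$, $\delta$, $\hbar$ for $\hbar,\delta$ small). Taking the supremum over $(x,v)\in\Omega_A\times\R^3$ then yields \eqref{K-bnd} with a constant $C$ depending only on $\gamma$, $\beta$, $m$, $\rho$, $T$, $\tilde b_0$ and on $\delta_0>0$, but not on $A$ or $\hbar$.

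\medskip

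I expect the main obstacle to be the careful verification that the $\sigma$-dependent weight ratios really are dominated by the available Gaussian margin $\delta_0$ without consuming it — in particular, making sure that the polynomial blow-up of $\sigma_x^{m/2}(x,v)/\sigma_x^{m/2}(x,v_*)$ (degree $\sim |m|(1-\gamma)/2$, which can be large) is genuinely polynomial and not, say, exponential, which forces one to use both the upper \emph{and} lower bounds for $\sigma_x$ in Lemma 2.1 simultaneously and in the transition regime $\delta x+l\sim(1+|v-\u|)^{3-\gamma}$ where neither branch of the $\min$ dominates trivially. Once that polynomial control is secured, the remainder is the classical Grad computation and the hypothesis $\delta_0>0$ is exactly what makes the exponent of the surviving Gaussian strictly negative.
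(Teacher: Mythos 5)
Your proposal is correct in strategy and matches the paper's reduction step exactly: conjugate $K$ by $\mathfrak{p}=\sigma_x^{m/2}e^{\hbar\sigma}w_{\beta,\vartheta}$, bound the weight ratio by $(1+|v-v_*|)^{k}\exp\{(c\hbar+\vartheta)\big||v-\u|^2-|v_*-\u|^2\big|\}$ (which is precisely \eqref{p-pro}, and does require the case analysis of $\sigma_x(x,v)/\sigma_x(x,v_*)$ over the three regimes $\delta x+l\lessgtr(1+|v-\u|)^{3-\gamma}$ that you correctly identify as the main obstacle — the paper devotes Lemma \ref{Lmm-Psigma} to it), and then let the hypothesis $\delta_0>0$ keep the surviving Gaussian exponent negative. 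The loss term $K_1$ is handled identically in both arguments. Where you genuinely diverge is the gain term: you propose to pass to the reduced Carleman/Grad kernel $k_2(v,v_*)\lesssim|v-v_*|^{-1}\exp\{-c_0(|v-v_*|^2+(|v|^2-|v_*|^2)^2/|v-v_*|^2)\}$ and absorb the weight ratio into the $(|v|^2-|v_*|^2)^2/|v-v_*|^2$ decay, citing the standard Grad lemma. The paper instead never forms the reduced kernel: it splits $K_2$ with a cutoff in $|v-v_*|$, treats small relative velocities via the comparison $\M(v_*')\lesssim\M^{1-\delta_0}(v')$, $\M(v_*)\lesssim\M^{1-\delta_0}(v)$, and estimates the large-relative-velocity part directly in Grad's $(V_\shortparallel,V_\perp)$ coordinates, absorbing the weight ratio through the identities $|v-\u|^2-|v+V_\shortparallel-\u|^2=-2V_\shortparallel\cdot\zeta_\shortparallel$ and their analogues. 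Your route is shorter if one is willing to import the kernel bound, but for $-3<\gamma<0$ your description of that kernel is imprecise: the diagonal singularity of the reduced kernel is not simply ``$|v-v_*|^{-1}$ times an extra $|v-v_*|^\gamma$,'' and the correct soft-potential form (with its $(1+|v|+|v_*|)$-dependent prefactor) has to be quoted accurately for the final integrability and for the claimed gain of $(1+|v|)^{\gamma-1}$ to come out. The paper's direct splitting is self-contained and sidesteps this, at the cost of the longer computation in Step 4 of its proof. Neither point is a gap in your argument, only a place where a citation would have to carry real weight.
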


The proof of Lemma \ref{Lmm-K-Oprt} will be given in Subsection \ref{Subsec:K-infty} later.

Next we prove the boundedness of the operator $K_\hbar$ from $Y_{m, 0, \vartheta}^\infty (A) \cap L^\infty_x L^2_v$ to $Y_{m, - \gamma, \vartheta}^\infty (A)$. Here the operator $K_\hbar$ is defined as
\begin{equation}\label{Kh}
	\begin{aligned}
		K_\hbar g_\sigma = e^{\hbar \sigma} K ( e^{- \hbar \sigma} g_\sigma ) \,.
	\end{aligned}
\end{equation}
More precisely, the following results hold.

\begin{lemma}\label{Lmm-Kh-2-infty}
	Let $- 3 < \gamma \leq 1$, $A > 0 $, $m \in \R$, $0 \leq \alpha' < \frac{1}{2}$ and $\hbar, \vartheta \geq 0$ sufficiently small. Then for any $\eta_1 > 0$, there is a $C_{\eta_1} > 0$, independent of $A, \hbar$, such that
	\begin{equation}
		\begin{aligned}
			\LL \nu^{-1} K_\hbar g \RR_{A; m, 0, \vartheta} \leq \eta_1 \LL g \RR_{A; m, 0, \vartheta} + C_{\eta_1} \| \sigma_x^\frac{m}{2} z_{\alpha'} w_{- \gamma, \vartheta} g \|_{L^\infty_x L^2_v} \,.
		\end{aligned}
	\end{equation}
    Here the weight $z_{\alpha'}$ is given in \eqref{z-alpha}.
\end{lemma}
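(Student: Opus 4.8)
The plan is to decompose $K_\hbar g = -K_{1,\hbar} g + K_{2,\hbar} g$ according to \eqref{K-K1-K2}, where $K_{i,\hbar} g = e^{\hbar\sigma} K_i(e^{-\hbar\sigma} g)$, and to treat the loss part $K_{1,\hbar}$ and the gain part $K_{2,\hbar}$ separately. Both operators have integral kernels of the form $k_\hbar(v,v_*)$ (for $K_{1,\hbar}$ a product of Maxwellian halves with the collision weight $|v-v_*|^\gamma$; for $K_{2,\hbar}$ the classical Grad kernel $k_2(v,v_*)$, further modified by the bounded factor $e^{\hbar[\sigma(x,v)-\sigma(x,v_*)]}$, which by Lemma~\ref{Lmm-sigma} is controlled by $e^{c\hbar||v-\u|^2-|v_*-\u|^2|}$). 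The first step is to record the pointwise kernel bounds. Using the second inequality in Lemma~\ref{Lmm-sigma} to absorb the $\sigma$-difference into a small Gaussian loss, and the hypothesis $\delta_0=\tfrac14-T(c\hbar+\vartheta)>0$ (so that $\vartheta|v-\u|^2+c\hbar|v-\u|^2$ is strictly subcritical relative to the Maxwellian weight), one gets that the kernel of $\nu(v)^{-1} w_{-\gamma,\vartheta}(v) K_{i,\hbar}(w_{-\gamma,\vartheta}^{-1}\,\cdot\,)$ is dominated by the standard Grad-type kernel $\tilde k(v,v_*)$ satisfying $\int \tilde k(v,v_*)(1+|v_*|)^{-\gamma}\,\d v_*\lesssim (1+|v|)^{-\gamma-\varepsilon}$ for some $\varepsilon>0$, together with the familiar smallness for large $|v|$ and integrability near $v=v_*$.

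The second step is the splitting into a "large velocity / large deviation" piece and a "compact" piece. Fix $R>0$ large, to be chosen. Write $K_{i,\hbar} g = K_{i,\hbar}^{R,\flat} g + K_{i,\hbar}^{R,\sharp} g$, where $K_{i,\hbar}^{R,\flat}$ retains the part of the $v_*$-integration with $|v_*|\geq R$ or $|v|\geq R$ or $|v-v_*|\leq 1/R$. On this part one estimates crudely, pulling out the $L^\infty$ norm $\LL g\RR_{A;m,0,\vartheta}$ and using that the remaining kernel mass is $\leq\eta_1$ once $R$ is large (this is where the tail smallness of the Grad kernel and the strict subcriticality $\delta_0>0$ enter). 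This produces the term $\eta_1\LL g\RR_{A;m,0,\vartheta}$ on the right-hand side.

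The third step handles the principal piece $K_{i,\hbar}^{R,\sharp} g$, where $|v|\leq R$, $|v_*|\leq R$, $|v-v_*|\geq 1/R$: here the kernel is bounded, compactly supported in $v_*$, and one estimates by Cauchy–Schwarz in $v_*$ at fixed $x$, which converts the $L^\infty_v$-pointwise bound into an $L^2_v$-bound of $g(x,\cdot)$ against a bounded weight supported in $|v_*|\leq R$; taking $\sup_x$ yields the $\|\sigma_x^{m/2} z_{\alpha'} w_{-\gamma,\vartheta} g\|_{L^\infty_x L^2_v}$ term with a constant $C_{\eta_1}=C(R)$ depending on $R$, hence on $\eta_1$. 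The singular weight $z_{\alpha'}(v)$ with $0\leq\alpha'<\tfrac12$ is inserted precisely because the factor $\tfrac1{v_3}$ does not appear in $K$ itself but is needed for compatibility with the $L^\infty_x L^2_v$ norm used downstream; on $|v_*|\leq R$ the weight $z_{\alpha'}(v_*)^{-1}$ is locally $L^2$ in $v_*$ (since $2\alpha'<1$), so Cauchy–Schwarz still closes. The mixed weight $\sigma_x^{m/2}$ is handled by noting $\sigma_x^{m/2}(x,v)\sim\sigma_x^{m/2}(x,v_*)$ up to $(1+|v|+|v_*|)^{C|m|}$ factors — a consequence of the bounds on $\sigma_x$ in Lemma~\ref{Lmm-sigma} — which on $|v|,|v_*|\leq R$ is harmless and can be absorbed into $C(R)$.

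The main obstacle is the interplay of the three weights — the Gaussian $e^{\vartheta|v-\u|^2}$, the $(x,v)$-mixed polynomial $\sigma_x^{m/2}$, and the singular $z_{\alpha'}$ — all being simultaneously moved across the kernel while keeping the $v$-integration integrable and the $v_*\to v$ singularity under control; the condition $\delta_0>0$ is exactly what guarantees the Gaussian transfer works, and the restriction $\alpha'<\tfrac12$ is exactly what keeps $z_{\alpha'}^{-1}$ locally square-integrable. Once these are arranged the estimate is a routine $R\to\infty$ argument; I expect no genuine difficulty beyond bookkeeping, and the proof will be deferred to Subsection~\ref{Subsec:K-infty} as stated.
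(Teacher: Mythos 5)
Your overall architecture is the right one and coincides with the paper's: isolate a piece of $K_\hbar$ whose $L^\infty\to L^\infty$ operator norm can be made $\le\eta_1$, and estimate the remaining piece by Cauchy--Schwarz in the velocity variable against the locally square-integrable weight $z_{-\alpha'}$ (here $\alpha'<\tfrac12$ is used exactly as you say), with the $\sigma$- and Gaussian-weight transfers controlled by Lemma~\ref{Lmm-sigma} and the smallness of $\hbar,\vartheta$. The paper's decomposition is different in detail: it cuts off the diagonal $|v-v_*|\le\eps$ (producing the factor $\eps^{3+\gamma}$, which is why $\gamma>-3$ enters) and then, on the far-from-diagonal gain term, splits further according to whether the collision is grazing, $|v'-v|\le\tilde\eps|v_*-v|$ (resp. $|v_*'-v|\le\tilde\eps|v_*-v|$), rather than according to the size of $|v|$ and $|v_*|$ as you do.

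The genuine gap is in your Step 3, in the sentence ``here the kernel is bounded, compactly supported in $v_*$, and one estimates by Cauchy--Schwarz in $v_*$.'' For the gain term, $g$ is evaluated at the post-collisional velocities $v'$ and $v_*'$, so before any Cauchy--Schwarz you must rewrite $K_{2,\hbar}^{R,\sharp}$ as an integral operator in the variable $\xi=v'$ (resp. $\xi=v_*'$) via the Carleman/Grad change of variables $\d\omega\,\d v_*=2|V_\shortparallel|^{-2}\d V_\perp\d V_\shortparallel$. Your restriction $|v-v_*|\ge 1/R$ does \emph{not} keep $\xi$ away from $v$: grazing collisions make $|v'-v|=|V_\shortparallel|$ arbitrarily small even when $|v-v_*|\ge 1/R$, so the resulting kernel retains a Jacobian singularity at $\xi=v$ (of order $|V_\shortparallel|^{-1}$ for the $v'$-piece, and of order $|V_\perp|^{-2}$ for the $v_*'$-piece in the naive representation) and is not bounded. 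For the $v_*'$-piece the square of that kernel is not even locally integrable, so a direct duality/Cauchy--Schwarz argument fails; one must first integrate the singularity out in the directions transverse to $\xi-v$ (this is the Fubini step in the paper's estimates \eqref{III-12} and \eqref{III-22}), and the grazing contributions must be shown small separately (the paper's $III_{11}$, $III_{21}$, which supply part of the $\eta_1$). Your flat-piece smallness claim is plausible (it amounts to the classical decay $\int k_2(v,\xi)\,w(v)/w(\xi)\,\d\xi\lesssim(1+|v|)^{\gamma-1}$ together with Gaussian off-diagonal decay), but it too presupposes the Grad-kernel pointwise bounds with the $\hbar\sigma$-modified weight for the whole range $-3<\gamma\le1$, which is precisely the nontrivial content that the paper establishes by the explicit $(V_\shortparallel,V_\perp)$ computations rather than by citation.
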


The proof of Lemma \ref{Lmm-Kh-2-infty} will be given in Subsection \ref{Subsec:K-2infty} later.

Next we show the boundedness of the operator $K_\hbar$ in the weighted $L^2_v$ space. More precisely, the following conclusions hold.

\begin{lemma}\label{Lmm-Kh-L2}
	Let $- 3 < \gamma \leq 1$, $m, \beta \in \R$ and $\hbar, \vartheta \geq 0$ be sufficiently small. Define two nonempty sets
	\begin{equation*}
		\begin{aligned}
			& \mathcal{S}_\gamma : = \{ b_0 \in \R | b_0 < 2 \,, 0 \leq b_0 \leq 1 - \gamma \,, b_0 + \gamma + 1 > 0 \} \,, \\
			& \mathcal{T}_\gamma : = \{ b_1 \in \R | b_1 < 3 \,, 0 \leq b_1 \leq 1 - \gamma \,, b_1 + \gamma > 0 \} \,.
		\end{aligned}
	\end{equation*}
	Denote by $\mu_\gamma : = \min \{ \frac{1}{2}, \frac{\gamma + 3}{2}, \frac{b_0 + \gamma + 1}{2} , \frac{b_1 + \gamma}{2} \} > 0$ with $b_0 \in \mathcal{S}_\gamma$ and $b_1 \in \mathcal{T}_\gamma$. Let $0 \leq \alpha < \mu_\gamma$. Then there is a positive constant $C > 0$, independent of $\hbar$, such that
	\begin{equation}\label{Kh-L2-Bnd}
		\begin{aligned}
			\int_{\R^3} |\nu^{- \frac{1}{2}} z_{- \alpha} \sigma_x^\frac{m}{2} w_{\beta, \vartheta} K_\hbar g (x,v)|^2 \d v \leq C \int_{\R^3} |\nu^\frac{1}{2} \sigma_x^\frac{m}{2} w_{\beta, \vartheta} g (x,v)|^2 \d v \,.
		\end{aligned}
	\end{equation}
\end{lemma}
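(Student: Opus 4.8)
The plan is to reduce the weighted $L^2_v$ bound for $K_\hbar$ to a Schur-test-type argument on the kernel of $K$. First I would write $K_\hbar g = e^{\hbar\sigma} K(e^{-\hbar\sigma} g)$ and use the explicit kernel representations \eqref{K1}-\eqref{K2}: after the standard Grad reduction, $K$ acts as an integral operator $Kf(v) = \int_{\R^3} \mathbf{k}(v,v_*) f(v_*)\,\d v_*$ whose kernel $\mathbf{k}(v,v_*)$ has the classical Grad-type pointwise bound, i.e.\ $|\mathbf{k}(v,v_*)| \lesssim (|v-v_*|^\gamma + |v-v_*|^{-(3-\gamma)-}) \exp\{-c|v-v_*|^2 - c\frac{(|v|^2-|v_*|^2)^2}{|v-v_*|^2}\}$ for the hard-potential part, with the analogous singular-in-$|v-v_*|$ behavior for soft potentials absorbed by the choice $\beta_\gamma = -\gamma/2$. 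The target inequality \eqref{Kh-L2-Bnd} is then equivalent, after cancelling the common $x$-dependent factor $\sigma_x^{m/2}(x,\cdot)$ (which is comparable to a fixed power of $(1+|v-\u|)$ uniformly in $x$ by Lemma \ref{Lmm-sigma}, and is $R_U$-invariant), to a bound of the form
\begin{equation*}
	\int_{\R^3} \Big| \nu^{-\frac12}(v) z_{-\alpha}(v) w_{\beta,\vartheta}(v) \int_{\R^3} \widetilde{\mathbf{k}}(v,v_*) \nu^{-\frac12}(v_*) w_{\beta,\vartheta}^{-1}(v_*) G(v_*)\,\d v_* \Big|^2 \d v \lesssim \int_{\R^3} |G(v_*)|^2 \d v_*,
\end{equation*}
where $G = \nu^{\frac12} w_{\beta,\vartheta} g$ and $\widetilde{\mathbf{k}}$ is $\mathbf{k}$ dressed with the $e^{\hbar(\sigma(v)-\sigma(v_*))}$ factor; by the second estimate in Lemma \ref{Lmm-sigma}, $e^{\hbar(\sigma(v)-\sigma(v_*))} \lesssim e^{c\hbar(|v-\u|^2-|v_*-\u|^2)}$, so this exponential is harmless provided the loss is compensated inside the Gaussian of $\mathbf{k}$, which forces the smallness condition on $\hbar$ (and on $\vartheta$ via $w_{\beta,\vartheta}$) — exactly the hypothesis "$\hbar,\vartheta$ sufficiently small."

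Next I would run the Schur test. By Cauchy–Schwarz in $v_*$ with the splitting $\widetilde{\mathbf{k}} = \widetilde{\mathbf{k}}^{1/2}\cdot\widetilde{\mathbf{k}}^{1/2}$ and a weight $p(v_*)$ to be chosen, the left side is bounded by
\begin{equation*}
	\int_{\R^3} \nu^{-1}(v) z_{-\alpha}^2(v) w_{\beta,\vartheta}^2(v) \Big( \int_{\R^3} |\widetilde{\mathbf{k}}(v,v_*)| p(v_*)\,\d v_* \Big) \Big( \int_{\R^3} |\widetilde{\mathbf{k}}(v,v_*)| p(v_*)^{-1} \nu^{-1}(v_*) w_{\beta,\vartheta}^{-2}(v_*) |G(v_*)|^2 \d v_* \Big) \d v,
\end{equation*}
and after Fubini one needs: (a) $\sup_v \nu^{-1}(v) z_{-\alpha}^2(v) w_{\beta,\vartheta}^2(v) \int |\widetilde{\mathbf{k}}(v,v_*)| p(v_*)\,\d v_*$ finite, and (b) $\sup_{v_*} p(v_*)^{-1}\nu^{-1}(v_*) w_{\beta,\vartheta}^{-2}(v_*) \int |\widetilde{\mathbf{k}}(v,v_*)| \nu^{-1}(v) z_{-\alpha}^2(v) w_{\beta,\vartheta}^2(v)\,\d v$ finite. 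The polynomial factors $(1+|v|)^{\pm 2\beta}$ and the Gaussian weights $e^{\pm 2\vartheta|v-\u|^2}$ are standard to absorb against $\mathbf{k}$'s Gaussian using the elementary inequality $|v|^2 - c\frac{(|v|^2-|v_*|^2)^2}{|v-v_*|^2} - c|v-v_*|^2 \le |v_*|^2 + C$ (Grad's lemma), together with the smallness of $\vartheta,\hbar$; and $\nu(v)\sim(1+|v|)^\gamma$. The genuinely delicate new ingredient is the singular velocity weight $z_{-\alpha}(v) = |v_3|^{-\alpha}\mathbf{1}_{|v_3|<1}$, which, because $-\alpha<0$, blows up near the grazing set $\{v_3=0\}$; this is handled by the key pointwise bound flagged in the methodology overview, namely that $\mathsf{h}(v_*) := \int_{\R^3} z_{-\alpha}^2(v)|v-v_*|^\gamma \M^{3/8}(v)\,\d v \lesssim (1+|v_*|)^\gamma$, whose proof amounts to: near $v_3=0$ the integrand is $\sim |v_3|^{-2\alpha}$, integrable precisely when $2\alpha<1$, i.e.\ $\alpha<\tfrac12$ (the first entry in $\mu_\gamma$), while the $|v-v_*|^\gamma$ singularity at $v=v_*$ for $\gamma<0$ is integrable in $\R^3$ exactly when $\gamma+3>0$ (the entry $\frac{\gamma+3}{2}$), and the remaining conditions $b_0+\gamma+1>0$, $b_1+\gamma>0$ with $b_0\in\mathcal{S}_\gamma$, $b_1\in\mathcal{T}_\gamma$ are what make the two endpoint cases ($|v-v_*|$ small vs.\ $|v|$ large) of the Schur integrals converge after one distributes powers of $|v-v_*|$ between the two arguments.

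I would choose the Schur weight as a power $p(v_*) = (1+|v_*|)^{\theta}$ with $\theta$ tuned so that both (a) and (b) reduce, after the Gaussian factors have done their job of trading $|v|$ for $|v_*|$, to integrals of the shape $\int z_{-\alpha}^2(v)(|v-v_*|^\gamma + |v-v_*|^{-(3-\gamma)+\epsilon})\M^{\rho}(v)\,\d v$ (some $\rho\in(0,1)$) which are controlled by $\mathsf{h}(v_*)$ and its variant $\int z_{-\alpha}^2 |v-v_*|^{-(3-\gamma)+\epsilon}\M^\rho$; the inequalities defining $\mathcal{S}_\gamma,\mathcal{T}_\gamma$ (in particular $0\le b_0,b_1\le 1-\gamma$) are exactly the constraints under which such a $\theta$ and such an $\epsilon$ can be found simultaneously for the loss term $K_1$ and the gain term $K_2$, and the requirement $0\le\alpha<\mu_\gamma$ packages all of them. \textbf{The main obstacle} I anticipate is the bookkeeping around the singular kernel for soft potentials combined with the $|v_3|^{-\alpha}$ weight: one must verify that the $v$-integral is finite uniformly in $v_*$ after the Gaussian exchange, and that the $v_*$-integral converges at infinity, which requires the two-sided bound on $\alpha$ and a careful split of the $|v-v_*|$ singularity near and away from the diagonal — the case $\gamma<0$ (where $-(3-\gamma)$ is a stronger singularity and $\nu(v_*)^{-1}\sim(1+|v_*|)^{-\gamma}$ grows) being the tight one. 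Everything else — the $x$-independence via Lemma \ref{Lmm-sigma}, the reflection invariance \eqref{sigma-w-inv}, and the Gaussian exchange — is routine.
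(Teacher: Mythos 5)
Your overall architecture matches the paper's: decompose $K_\hbar$ into the loss term and the two gain pieces, run a Cauchy--Schwarz/Schur-type argument, absorb the $e^{\hbar(\sigma(x,v)-\sigma(x,v_*))}$ and $w_{\beta,\vartheta}$ factors into the Gaussians using smallness of $\hbar,\vartheta$, and control the singular weight $z_{-\alpha}$ through the bound $\mathsf{h}(v_*)=\int z_{-\alpha}^2(v)|v-v_*|^\gamma\M^{3/8}(v)\,\d v\lesssim(1+|v_*|)^\gamma$. You also correctly locate the origins of the constraints $\alpha<\tfrac12$ and $\alpha<\tfrac{3+\gamma}{2}$. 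However, there is a genuine gap in your treatment of the gain term, which is precisely where the sets $\mathcal{S}_\gamma,\mathcal{T}_\gamma$ enter. You propose to start from a ``classical Grad-type pointwise bound'' on the gain kernel with singularity $|v-v_*|^{-(3-\gamma)}$; such a kernel is not locally integrable in $\R^3$ for $\gamma\le 0$, so the Schur test cannot be run on it, and no choice of Schur weight $p(v_*)=(1+|v_*|)^\theta$ fixes a non-integrable diagonal singularity. The paper instead keeps the gain term as a double integral, treats the two pieces $g(x,v')$ and $g(x,v_*')$ \emph{separately}, applies Grad's change of variables $(\omega,v_*)\mapsto(V_\shortparallel,V_\perp)$ with the two different Jacobians $2|V_\shortparallel|^{-2}$ and $2|V_\perp|^{-2}$, and uses the tunable interpolation $(|V_\shortparallel|^2+|V_\perp|^2)^{(\gamma-1)/2}|V_\perp|^{b_0}\lesssim|V_\shortparallel|^{b_0+\gamma-1}$ (and its mirror with $b_1$) to integrate out the transverse variable. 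This produces effective one-variable kernels $|v-\xi|^{b_0+\gamma-2}$ and $|v-\varpi|^{b_1+\gamma-3}$, whose integrability conditions $b_0+\gamma+1>0$ and $b_1+\gamma>0$, together with $0\le b_0,b_1\le 1-\gamma$, are exactly the definitions of $\mathcal{S}_\gamma$ and $\mathcal{T}_\gamma$. Your proposal attributes both conditions to a single kernel and a generic ``distribution of powers,'' which does not explain why the two gain pieces carry \emph{different} constraints, and as written the soft-potential case $-3<\gamma\le -1$ (where $b_0>0$ is mandatory) is not reachable.

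A secondary but real imprecision: you justify dropping $\sigma_x^{m/2}$ by claiming it is ``comparable to a fixed power of $(1+|v-\u|)$ uniformly in $x$.'' That is false in the region $\delta x+l\ge 2(1+|v-\u|)^{3-\gamma}$, where $\sigma_x=\tfrac{10\delta}{3-\gamma}(\delta x+l)^{-(1-\gamma)/(3-\gamma)}$ is independent of $v$ and decays in $x$. What the argument actually needs, and what the paper proves by a case-by-case comparison of the regions $\Omega_1,\Omega_2,\Omega_3$, is the \emph{ratio} bound $\big[\sigma_x(x,v)/\sigma_x(x,v_*)\big]^{m/2}\lesssim(1+|v-v_*|)^{|m|(1-\gamma)/2}$ at fixed $x$, which is then absorbed by the Gaussian in $|v-v_*|$. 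With that ratio bound in hand and with the gain term reworked along the lines above (separate $V_\shortparallel$/$V_\perp$ reductions near the diagonal, plus a duality argument for the part away from the diagonal), your plan closes; without them, the two most delicate steps are unsupported.
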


The proof of Lemma \ref{Lmm-Kh-L2} will be given in Subsection \ref{Subsec:K-L2} later.

\subsection{Properties of artificial damping operator $\mathbf{D}$}\label{Subsec:D}

In this subsection, the majority is to study the boundedness of the artificial damping operator $\mathbf{D}$ over various weighted $L^2_{x,v}$ or $L^\infty_{x,v}$ spaces, which will play an essential role in closing the uniform bounds of the connection auxiliary problem \eqref{A1}. For convenience of later use, we introduce the scaled artificial damping operator $\mathbf{D}_\hbar$ by
\begin{equation}\label{Dh}
	\begin{aligned}
		\mathbf{D}_\hbar g = e^{ \hbar \sigma } \mathbf{D} e^{ - \hbar \sigma } g \,,
	\end{aligned}
\end{equation}
where the weight function $\sigma (x,v)$ is given in \eqref{sigma}, and small $\hbar > 0$ is to be determined later.

We first state the coercivity results of the scaled artificial damping operator $\mathbf{D}_\hbar$.

\begin{lemma}[Coercivity of $\mathbf{D}$]\label{Lmm-Dh}
	Let $- 3 < \gamma \leq 1$, $0 < \delta < 1$, $\beta \in \R$. Set $\hbar > 0, \vartheta \geq 0$ be both small enough. Assume that $l > O (1) ( \ln \frac{1}{\delta} )^\frac{3 - \gamma}{2}$, $\ss_0 , \ss_+ \geq O (1) \delta \hbar$, where $O (1) > 1$ is large enough and independent of $\delta, \hbar, l$. Then there are $\mu_0 , C > 0$, independent of $\delta, \hbar, l$, such that
	\begin{equation}\label{Dh-Coercivity}{\small
		\begin{aligned}
			\int_{\R^3} w_{\beta, \vartheta}^2 g ( \mathbf{D}_\hbar g - \hbar \sigma_x v_3 g) \d v \geq & \mu_0 \delta \hbar (\delta x + l)^{- \frac{1 - \gamma}{3 - \gamma}} \int_{\R^3} |\P w_{\beta, \vartheta} g|^2 \d v - C \hbar^\frac{1}{2} \int_{\R^3} \nu (v) |\P^\perp w_{\beta, \vartheta} g|^2 \d v \,.
		\end{aligned}}
	\end{equation}
\end{lemma}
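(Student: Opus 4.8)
The plan is to treat $\int_{\R^3} w_{\beta,\vartheta}^2\, g\,(\mathbf D_\hbar g - \hbar\sigma_x v_3 g)\,\d v$ as a controlled perturbation of the transparent case $w\equiv 1$, $\hbar=0$, where the macroscopic coercivity is produced by the eigenstructure of $\A=\P v_3\P$ together with the rank‑one dampings $\P^+,\P^0$ and the intrinsic damping of $-\hbar\sigma_x v_3$ on the negative eigenspace. \emph{First} I would reduce the weights to $x$‑uniform ones: write $\sigma(x,v)=\sigma(x,\u)+\widetilde\sigma(x,v)$, so that by Lemma \ref{Lmm-sigma} one has $0\le\widetilde\sigma(x,v)\le c|v-\u|^2$ uniformly in $x$. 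Since $\P^+,\P^0$ (see \eqref{P+}, \eqref{P0}) act through $v$‑integrals against fixed Maxwellian‑type functions, every term of $\int w_{\beta,\vartheta}^2 g\,\mathbf D_\hbar g\,\d v$ is a product of two such integrals, one carrying $e^{+\hbar\sigma(x,v)}$ and the other $e^{-\hbar\sigma(x,v')}$ in a \emph{different} variable; hence the $x$‑dependent constants $e^{\pm\hbar\sigma(x,\u)}$ cancel pairwise and only $e^{\pm\hbar\widetilde\sigma}$ remains. Combined with $w_{\beta,\vartheta}^{\pm1}$ and the Gaussian decay of the $\psi_i^*$, the smallness of $\hbar,\vartheta$ (ensuring $\tfrac14-T(c\hbar+\vartheta)>0$) makes all these integrals converge with $x$‑uniform bounds, with $e^{\pm\hbar\widetilde\sigma}=1+\mathcal O(\hbar)$ in the relevant $\nu^{\pm1/2}$‑weighted $L^2_v$ norms against these Gaussians; in $-\hbar\int w_{\beta,\vartheta}^2\sigma_x v_3 g^2\,\d v=-\hbar\int\sigma_x v_3 (w_{\beta,\vartheta}g)^2\,\d v$ the weight cancels outright.

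\emph{Next}, I extract the macroscopic mechanism. On the bulk $\{\delta x+l\ge 2(1+|v-\u|)^{3-\gamma}\}$ one has $\Upsilon\equiv 0$ in \eqref{sigma}, hence the exact identity $\sigma_x=\tfrac{10\delta}{3-\gamma}(\delta x+l)^{-\frac{1-\gamma}{3-\gamma}}$; on the complement $|v-\u|\gtrsim(\delta x+l)^{1/(3-\gamma)}\gtrsim l^{1/(3-\gamma)}$ the Maxwellian tail and $|\sigma_x v_3|\le c\nu$ give a contribution $\lesssim (\delta x+l)^{-\frac{1-\gamma}{3-\gamma}}e^{-c'(\delta x+l)^{2/(3-\gamma)}}(\cdots)$, negligible against the main term once $l\gtrsim(\ln\tfrac1\delta)^{(3-\gamma)/2}$. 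With $G:=w_{\beta,\vartheta}g$ and $\P G=\sum_i a_i^*\psi_i^*$, the identity $\langle v_3\psi_i^*,\psi_j^*\rangle=\langle\A\psi_i^*,\psi_j^*\rangle=\lambda_i\delta_{ij}$ gives $\int v_3(\P G)^2\,\d v=\lambda_3[(a_3^*)^2-(a_4^*)^2]$ ($\lambda_3=-\lambda_4=\sqrt{\tfrac53 T}$), so $-\hbar\sigma_x v_3$ supplies $+\tfrac{10\delta\hbar}{3-\gamma}\lambda_3(\delta x+l)^{-\frac{1-\gamma}{3-\gamma}}(a_4^*)^2$ on the negative eigenmode at the cost of $-\tfrac{10\delta\hbar}{3-\gamma}\lambda_3(\delta x+l)^{-\frac{1-\gamma}{3-\gamma}}(a_3^*)^2$; the $\P^+$‑part of $\mathbf D_\hbar$ produces $\ss_+(\delta x+l)^{-\frac{1-\gamma}{3-\gamma}}\lambda_3(a_3^*)^2$, dominating this loss once $\ss_+>\tfrac{10\delta\hbar}{3-\gamma}\lambda_3$, and the $\P^0$‑part produces $\ss_0(\delta x+l)^{-\frac{1-\gamma}{3-\gamma}}\int(\P G)\P^0(\P G)\,\d v$, which by Lemma 3 of \cite{Golse-2008-BIMA} is $\ge 0$ and controls the three zero eigenmodes $a_0^*,a_1^*,a_2^*$. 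With $\ss_0,\ss_+\ge O(1)\delta\hbar$ this yields, for the macro–macro part, a lower bound $\gtrsim\delta\hbar(\delta x+l)^{-\frac{1-\gamma}{3-\gamma}}\|\P G\|_{L^2_v}^2=\mu_0\delta\hbar(\delta x+l)^{-\frac{1-\gamma}{3-\gamma}}\|\P w_{\beta,\vartheta}g\|_{L^2_v}^2$.

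\emph{Finally}, the leftover terms — the micro–macro cross terms (e.g.\ $\ss_+(\delta x+l)^{-\frac{1-\gamma}{3-\gamma}}\langle v_3\psi_3^*,\P^\perp G\rangle\,a_3^*$ and $\hbar\int\sigma_x v_3(\P G)(\P^\perp G)\,\d v$, the latter via $|\sigma_x v_3|\le c\nu$), the $\mathcal O(\hbar)$ corrections from $e^{\pm\hbar\widetilde\sigma}-1$, and the exponentially small tails — are absorbed by Young's inequality: each such term is $\lesssim\delta\hbar(\delta x+l)^{-\frac{1-\gamma}{3-\gamma}}\|\P(w_{\beta,\vartheta}g)\|_{L^2_v}\|\nu^{1/2}\P^\perp(w_{\beta,\vartheta}g)\|_{L^2_v}$, and with a small fixed parameter splits into $\epsilon\,\delta\hbar(\delta x+l)^{-\frac{1-\gamma}{3-\gamma}}\|\P(w_{\beta,\vartheta}g)\|^2$ (absorbed into half the main term) plus $C_\epsilon\,\delta\hbar(\delta x+l)^{-\frac{1-\gamma}{3-\gamma}}\|\nu^{1/2}\P^\perp(w_{\beta,\vartheta}g)\|^2\le C_\epsilon\hbar^{1/2}\|\nu^{1/2}\P^\perp(w_{\beta,\vartheta}g)\|^2$ (using $\delta<1$, $(\delta x+l)^{-\frac{1-\gamma}{3-\gamma}}\le l^{-\frac{1-\gamma}{3-\gamma}}\le1$, $\hbar\le\hbar^{1/2}$), which is exactly the loss allowed in \eqref{Dh-Coercivity}. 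Choosing $\epsilon$ small, then $O(1)$ in $\ss_0,\ss_+\ge O(1)\delta\hbar$ large, then $l$ large, concludes. The genuinely delicate point I expect to be the main obstacle is the positivity in the second step once the weight $w_{\beta,\vartheta}$ is present: because $w_{\beta,\vartheta}$ mixes the eigenmodes $\psi_i^*$, the $w_{\beta,\vartheta}^2$‑weighted forms $\int w_{\beta,\vartheta}^2 g\,\mathbf D_\hbar g$ are no longer diagonal in the eigenbasis, and one must check that Golse's $\P^0$‑positivity, the $\P^+$‑positivity, and the negative‑eigenspace damping — whose coefficient $\tfrac{10\delta\hbar}{3-\gamma}\lambda_3$ is \emph{fixed} and cannot be enlarged — still combine to a definite quadratic form, using the smallness of $\vartheta$, the equivalence $\|\P w_{\beta,\vartheta}g\|\sim\|w_{\beta,\vartheta}\P g\|$ modulo $\|\nu^{1/2}\P^\perp w_{\beta,\vartheta}g\|$ (from positive‑definiteness of $(u,u')\mapsto\int w_{\beta,\vartheta}u u'\,\d v$ on $\mathrm{Null}(\L)$), and the freedom in the size of $\ss_0,\ss_+$, and taking care that no eigenmode is left uncontrolled.
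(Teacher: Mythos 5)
Your overall strategy coincides with the paper's: the same splitting of $\R^3$ into the bulk $\{2(1+|v-\u|)^{3-\gamma}\le \delta x+l\}$, where $\sigma_x=\tfrac{10\delta}{3-\gamma}(\delta x+l)^{-\frac{1-\gamma}{3-\gamma}}$ exactly, and an exponentially small tail; the same use of the eigenstructure of $\A=\P v_3\P$ (the intrinsic gain $\mu_2\delta\hbar\,(a_4^*)^2$ on the negative mode coming from $-\hbar\sigma_x v_3$, the $\P^+$ damping on $a_3^*$, Golse's positivity for the zero modes); the same role of $\ss_0,\ss_+\ge O(1)\delta\hbar$ with $O(1)$ large to dominate the $c_1\delta\hbar$-size losses; and the same absorption of micro--macro cross terms into $C\hbar^{1/2}\int\nu|\P^\perp w_{\beta,\vartheta}g|^2\d v$. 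Your device of cancelling $e^{\pm\hbar\sigma(x,\u)}$ pairwise inside the rank-one products is a clean alternative for removing the $x$-dependence of the exponential factor.

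There is, however, a genuine gap exactly where you flag it, and your proposed remedies do not close it. After your reductions, the $\P^+$ contribution to $\int w_{\beta,\vartheta}^2 g\,\mathbf D_\hbar g\,\d v$ is, up to small errors, $\ss_+(\delta x+l)^{-\frac{1-\gamma}{3-\gamma}}\langle w_{\beta,\vartheta}^2 g,\psi_3^*\rangle\,\langle v_3\psi_3^*,g\rangle=\ss_+(\cdots)\langle G,w_{\beta,\vartheta}\psi_3^*\rangle\,\langle G, v_3 w_{\beta,\vartheta}^{-1}\psi_3^*\rangle$ with $G=w_{\beta,\vartheta}g$. This is a product of two \emph{different} linear functionals of $G$, hence an indefinite bilinear form; it is not $\lambda_3(a_3^*)^2$ with $a_3^*=\langle\psi_3^*,G\rangle$, and the equivalence $\|\P w_{\beta,\vartheta}g\|\sim\|w_{\beta,\vartheta}\P g\|$ you invoke does not turn it into a square. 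The paper's resolution is organizational: it writes $\int w_{\beta,\vartheta}^2 g\,\mathbf D_\hbar g\,\d v=\int G\,\mathbf D G\,\d v+\int G\,( w_{\beta,\vartheta}\mathbf D_\hbar w_{\beta,\vartheta}^{-1}-\mathbf D) G\,\d v$ (the splitting \eqref{D0EF}--\eqref{EF}). In the first term the rank-one projections act on the already-weighted function $G$, so the macroscopic quadratic form is exactly $a_3^*\langle v_3\psi_3^*,G\rangle=\lambda_3(a_3^*)^2+a_3^*\langle v_3\psi_3^*,\P^\perp G\rangle$ (and analogously for $\P^0$ and for the $\psi_4^*$-damping), after which your positivity argument goes through verbatim. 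The second term is a commutator whose kernel is $\bigl[w_{\beta,\vartheta}(v)w_{\beta,\vartheta}^{-1}(\tilde v)e^{\hbar(\sigma(x,v)-\sigma(x,\tilde v))}-1\bigr]$ multiplied by Gaussian-decaying functions of $v$ and $\tilde v$; it is estimated as a small perturbation for $\hbar,\vartheta$ small (via \eqref{Claim-q}) and absorbed. Without this conjugation step --- comparing $\mathbf D_\hbar$ applied to $g$ with $\mathbf D$ applied to $w_{\beta,\vartheta}g$ --- the ``definite quadratic form'' you hope for in your last paragraph is asserted but not established.
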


The proof of Lemma \ref{Lmm-Dh} will be given in Subsection \ref{Subsec:D-Coercivity} later.

\begin{lemma}[Weighted $L^\infty_{x,v}$ estimate of $\mathbf{D}$]\label{Lmm-D-XY}
	Let $- 3 < \gamma \leq 1$, $m, \beta, \tilde{N} \in \R$, and $\hbar, \vartheta \geq 0$ with $s_0 : = \tfrac{c'}{2} - c \hbar - \vartheta > 0$, where $c > 0$ is given in Lemma \ref{Lmm-sigma} and $c' > 0$ is mentioned in \eqref{Proj-3}. Then there is a constant $C > 0$ such that
	\begin{equation}\label{D-inf-bnd}
		\begin{aligned}
			\| \sigma_x^\frac{m}{2} e^{\hbar \sigma} \mathbf{D} g \|_{\beta, \vartheta} \leq C \delta \hbar \| \sigma_x^\frac{m}{2} e^{\hbar \sigma} g \|_{\beta - \tilde{N}, \vartheta} \,,
		\end{aligned}
	\end{equation}
	where the norm $\| \cdot \|_{\beta, \vartheta}$ is defined in \eqref{XY-space}, and the constants $\ss_+, \ss_0 > 0$ involved in the operator $\mathbf{D}$ are given Lemma \ref{Lmm-Dh}.
\end{lemma}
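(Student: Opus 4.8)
The plan is to bound each of the two pieces of $\mathbf{D} g$ separately, since $\mathbf{D} g = \ss_+ (\delta x + l)^{-\frac{1-\gamma}{3-\gamma}} \P^+(v_3 g) + \ss_0 (\delta x + l)^{-\frac{1-\gamma}{3-\gamma}} \P^0 g$, with the prefactor $(\delta x + l)^{-\frac{1-\gamma}{3-\gamma}} \leq l^{-\frac{1-\gamma}{3-\gamma}} \leq 1$ harmlessly absorbed. The key structural fact I would use is that $\P^+$ and $\P^0$ are finite-rank projections onto subspaces of $\mathrm{Null}(\L)$ spanned by fixed functions $\psi_j^*$ (and $\mathbb{A}_{13},\mathbb{A}_{23},\mathbb{B}_3$ via $v_3 f$), each of which is a polynomial times $\sqrt{\M}$; consequently both $\P^+(v_3 g)(x,v)$ and $\P^0 g(x,v)$ have the form $\sum_j c_j(x)\,\psi_j^*(v)$ where the coefficients $c_j(x)$ are the $L^2_v$-pairings $\int_{\R^3} g(x,v')\,\chi_j(v')\,\d v'$ against fixed rapidly-decaying test functions $\chi_j$ (namely $v_3 \widehat{\mathbb{B}}_3$, $v_3 \widehat{\mathbb{A}}_{13}$, $v_3 \widehat{\mathbb{A}}_{23}$, $\psi_3^*$, up to the normalizing denominators appearing in \eqref{P0}, \eqref{P+}, \eqref{P-AB}).

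The main steps, in order, are as follows. First, I would write $|\sigma_x^{m/2}(x,v) e^{\hbar\sigma(x,v)} \mathbf{D} g(x,v)| \lesssim \delta\hbar \sum_j |c_j(x)| \cdot |\sigma_x^{m/2}(x,v) e^{\hbar\sigma(x,v)} \psi_j^*(v)|$. Since each $\psi_j^*(v)$ decays like $\sqrt{\M(v)} = \M^{1/2}(v)$ times a polynomial and $\sigma(x,v) \lesssim (\delta x + l)^{2/(3-\gamma)} + |v-\u|^2$ while $\sigma_x \lesssim l^{-\frac{1-\gamma}{3-\gamma}}$ is bounded (Lemma \ref{Lmm-sigma}), the factor $e^{\hbar\sigma(x,v)}\psi_j^*(v)$ is, under the smallness hypothesis $s_0 = \tfrac{c'}{2} - c\hbar - \vartheta > 0$, bounded by $C w_{\beta,\vartheta}^{-1}(v)$ uniformly in $x$ after multiplying by an innocuous fixed power of $(\delta x+l)$ — the exponential gap in $|v|^2$ dominates any polynomial weight $(1+|v|)^{|\beta|+|\tilde N|}$. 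The remaining $x$-dependence $(\delta x+l)^{m/(3-\gamma)}$ coming from $\sigma_x^{m/2}$ (when $m>0$) is matched: it is precisely $\sigma_x^{m/2}(x,v)$ evaluated at large $|v|$ would be smaller, but for the bound we pull out $\sigma_x^{m/2}$ and estimate it against $\sigma_x^{m/2}$ appearing in the target norm. Second, I would estimate each coefficient $|c_j(x)| = |\int g(x,v')\chi_j(v')\d v'|$ by inserting and extracting the weight: $|c_j(x)| \leq \big(\sup_{v'} |\sigma_x^{m/2}(x,v') e^{\hbar\sigma(x,v')} w_{\beta-\tilde N,\vartheta}(v') g(x,v')|\big)\cdot \int |\sigma_x^{-m/2}(x,v') e^{-\hbar\sigma(x,v')} w_{\beta-\tilde N,\vartheta}^{-1}(v')\chi_j(v')|\,\d v'$, and the last integral is finite and controlled by a fixed power of $(\delta x+l)$ because $\chi_j$ is Schwartz-class with Gaussian decay that beats $e^{-\hbar\sigma}w^{-1}$ (again using $s_0>0$) and $\sigma_x^{-m/2}$ contributes at most $(\delta x+l)^{\frac{m(1-\gamma)}{2(3-\gamma)}}$ or $(1+|v'|)^{(1-\gamma)|m|/2}$, absorbed into the weight exponent. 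Third, I would collect: $\sup_x (\delta x+l)^{(\text{powers})} \cdot \big(\text{bounded } v\text{-factor}\big) \leq C$, yielding $\|\sigma_x^{m/2} e^{\hbar\sigma}\mathbf{D} g\|_{\beta,\vartheta} \leq C\delta\hbar \|\sigma_x^{m/2} e^{\hbar\sigma} g\|_{\beta-\tilde N,\vartheta}$, where the generous loss of polynomial weight $\tilde N$ (allowed to be any real number in the statement) leaves plenty of room to absorb all such fixed polynomial factors in $|v|$.

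The main obstacle I anticipate is bookkeeping the interplay between the $x$-weight $\sigma_x^{m/2}(x,v)$ — which for $m>0$ grows in $x$ but is $v$-dependent and actually \emph{decays} in $|v|$ like $(1+|v|)^{m(\gamma-1)/2}$ — and the requirement that the bound hold with the same $\sigma_x^{m/2}$ on both sides. The clean way around this is to exploit that on the range of $\P^+(v_3\,\cdot)$ and $\P^0$ the output is evaluated against fixed smooth functions, so one never needs $\sigma_x^{m/2}$ at large $|v|$ in a sharp way: any polynomial mismatch in $|v|$ is swallowed by the exponential $e^{-\hbar\sigma}$-type decay (guaranteed by $s_0>0$) and by the freedom in $\tilde N$, while the $x$-growth factors cancel exactly because $(\delta x+l)^{-\frac{1-\gamma}{3-\gamma}}$ in $\mathbf{D}$ together with the structure $\sigma_x^{m/2}\sim (\delta x+l)^{-\frac{m(1-\gamma)}{2(3-\gamma)}}$ in the transition region keeps the $x$-powers balanced. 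A secondary point requiring care is the normalization denominators $\int \widehat{\mathbb{B}}_3\mathbb{B}_3\,\d v$ etc. in \eqref{P0}, \eqref{P-AB}, which are fixed nonzero constants by Golse's Lemma 3 \cite{Golse-2008-BIMA} and hence harmless; and the factor $\ss_+,\ss_0$, which by the hypotheses of Lemma \ref{Lmm-Dh} are $O(1)\delta\hbar$, producing exactly the claimed $\delta\hbar$ prefactor.
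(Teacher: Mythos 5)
Your overall strategy coincides with the paper's proof (Subsection \ref{Subsec:D-L5}): decompose $\mathbf{D} g$ into its four rank-one pieces, write $\sigma_x^{m/2}e^{\hbar\sigma}w_{\beta,\vartheta}\mathbf{D} g$ as a sum of integral operators whose kernels are built from $\psi_j^*$, $\widehat{\mathbb{B}}_3$, $\widehat{\mathbb{A}}_{j3}$, absorb the polynomial weights and the factors $e^{\hbar\sigma}$, $e^{\vartheta|v-\u|^2}$ into the Gaussian decay of these functions using $s_0=\tfrac{c'}{2}-c\hbar-\vartheta>0$, and extract the prefactor $\delta\hbar$ from the choice of $\ss_0,\ss_+$ in Lemma \ref{Lmm-Dh}. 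The normalization denominators and the harmlessness of $(\delta x+l)^{-\frac{1-\gamma}{3-\gamma}}$ are handled exactly as you say.

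One step, however, would fail as written. You claim that $e^{\hbar\sigma(x,v)}\psi_j^*(v)$ is bounded by $Cw_{\beta,\vartheta}^{-1}(v)$ uniformly in $x$ ``after multiplying by an innocuous fixed power of $(\delta x+l)$''. This is false: when $\delta x+l\ge 2(1+|v-\u|)^{3-\gamma}$ one has $\sigma(x,v)=5(\delta x+l)^{2/(3-\gamma)}$, so for fixed $v$ the factor $e^{\hbar\sigma(x,v)}$ grows superpolynomially in $x$ and no fixed power of $(\delta x+l)$ tames it. The output-side exponential can only be controlled \emph{jointly} with the input-side factor $e^{-\hbar\sigma(x,\tilde v)}$ that sits inside your coefficient estimate for $c_j(x)$; the correct tool is the difference bound $|\sigma(x,v)-\sigma(x,\tilde v)|\le c\,\big||v-\u|^2-|\tilde v-\u|^2\big|$ from Lemma \ref{Lmm-sigma} (equivalently the ratio estimate \eqref{p-pro} for $\mathfrak{p}(x,v)/\mathfrak{p}(x,\tilde v)$), which cancels the $x$-dependence exactly and leaves only a $v$-quadratic that the Gaussian decay of $\psi_j^*(v)\widehat{\mathbb{B}}_3(\tilde v)$ etc.\ absorbs under $s_0>0$. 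Your separate bookkeeping happens to work for the ratio $\sigma_x^{m/2}(x,v)/\sigma_x^{m/2}(x,\tilde v)$, where the $x$-powers genuinely cancel factor by factor, but not for the exponential weight, which must be kept as a ratio. With that repair your argument is the paper's proof.
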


The proof of Lemma \ref{Lmm-Dh} will be given in Subsection \ref{Subsec:D-L5} later.

\begin{lemma}[Boundedness of $\mathbf{D}$ from weighted $L^\infty_x L^2_v$ to $L^\infty_{x,v}$]\label{Lmm-Dh-L2}
	Let $- 3 < \gamma \leq 1$, $A > 0$, $m \in \R$, $0 \leq \alpha' < \frac{1}{2}$ and $\hbar , \vartheta \geq 0$ sufficiently small. Then there is a constant $C > 0$ such that
	\begin{equation}
		\begin{aligned}
			\LL \nu^{-1} \mathbf{D}_\hbar g \RR_{A; m, 0, \vartheta} \leq C \| \sigma_x^\frac{m}{2} z_{\alpha'} w_{- \gamma, \vartheta} g \|_{L^\infty_x L^2_v} \,.
		\end{aligned}
	\end{equation}
	Here the weight $z_{\alpha'}$ is given in \eqref{z-alpha}.
\end{lemma}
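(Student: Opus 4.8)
The plan is to unwind the definition of $\mathbf{D}_\hbar$ and reduce the claim to an $L^\infty_v$-to-$L^2_v$ Hölder estimate applied to each of the two pieces of the damping operator, uniformly in $x$. Recall that $\mathbf{D}_\hbar g = e^{\hbar\sigma}\mathbf{D}(e^{-\hbar\sigma}g)$ with $\mathbf{D} f_* = \ss_+(\delta x+l)^{-\frac{1-\gamma}{3-\gamma}}\P^+(v_3 f_*) + \ss_0(\delta x+l)^{-\frac{1-\gamma}{3-\gamma}}\P^0 f_*$. Since $\P^+$ (see \eqref{P+}) and $\P^0$ (see \eqref{P0}) are each finite-rank projections whose defining functions ($\psi_3^*$, respectively $\widehat{\mathbb{B}}_3,\widehat{\mathbb{A}}_{13},\widehat{\mathbb{A}}_{23}$ paired against $v_3\cdot$) carry Gaussian decay in $v$, each such projection applied to $v_3 f$ or $f$ takes the schematic form $\big(\int_{\R^3} v_3 \Phi(v') f(x,v')\,dv'\big)\Psi(v)$ with $\Phi,\Psi$ exponentially decaying. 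First I would commute $e^{\hbar\sigma}$ through: because $\sigma(x,v)$ has the Gaussian-compatible bound $\sigma(x,v)\le c(\ldots)|v-\u|^2+\ldots$ from Lemma \ref{Lmm-sigma}, the factor $e^{\hbar\sigma(x,v)}\Psi(v)$ and $e^{-\hbar\sigma(x,v')}\Phi(v')$ remain bounded by Gaussians once $\hbar$ is small; this turns $\mathbf{D}_\hbar g$ into a sum of terms $\lesssim \delta\hbar\,(\delta x+l)^{-\frac{1-\gamma}{3-\gamma}}\,\tilde\Psi(v)\int_{\R^3}\tilde\Phi(v')\,g(x,v')\,dv'$ with $\tilde\Phi,\tilde\Psi$ Gaussian, absorbing the constants $\ss_+,\ss_0\sim\delta\hbar$.

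Next I would estimate the inner integral. Write $g(x,v') = [z_{\alpha'}(v')\,w_{-\gamma,\vartheta}(v')\,\sigma_x^{m/2}(x,v')\,g(x,v')]\cdot z_{\alpha'}^{-1}(v')\,w_{-\gamma,\vartheta}^{-1}(v')\,\sigma_x^{-m/2}(x,v')$ and apply Cauchy–Schwarz in $v'$:
\begin{equation*}
	\Big|\int_{\R^3}\tilde\Phi(v')g(x,v')\,dv'\Big| \le \Big(\int_{\R^3}\tilde\Phi^2(v')\,z_{\alpha'}^{-2}(v')\,w_{-\gamma,\vartheta}^{-2}(v')\,\sigma_x^{-m}(x,v')\,dv'\Big)^{1/2}\,\|z_{\alpha'}\sigma_x^{m/2}w_{-\gamma,\vartheta}g(x,\cdot)\|_{L^2_v}\,.
\end{equation*}
The first factor must be shown finite, uniformly in $x$, which is where the hypotheses are used: $\tilde\Phi$ is Gaussian so it beats $w_{-\gamma,\vartheta}^{-2}=(1+|v|)^{2\gamma}e^{-2\vartheta|v-\u|^2}$ for $\vartheta$ small; $z_{\alpha'}^{-2}=|v_3|^{-2\alpha'}$ near $v_3=0$ is locally integrable in $\R^3$ precisely because $2\alpha'<1$; and $\sigma_x^{-m}(x,v')$ is controlled using the two-sided bound on $\sigma_x$ from Lemma \ref{Lmm-sigma}, namely $\sigma_x\ge c_1\min\{(\delta x+l)^{-\frac{1-\gamma}{3-\gamma}},(1+|v-\u|)^{\gamma-1}\}$ and $\sigma_x\le c_2(\delta x+l)^{-\frac{1-\gamma}{3-\gamma}}$, so $\sigma_x^{-m}$ grows at most polynomially in $|v'|$ for either sign of $m$ (when $m<0$ one gets a polynomial in $(1+|v-\u|)$; when $m\ge0$ one gets the $x$-factor $(\delta x+l)^{\frac{m(1-\gamma)}{2(3-\gamma)}}$), and in every case the Gaussian $\tilde\Phi^2$ dominates after integration. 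Crucially the resulting $x$-dependent constant is bounded above uniformly because $(\delta x+l)^{-\frac{1-\gamma}{3-\gamma}}\le l^{-\frac{1-\gamma}{3-\gamma}}$ and any positive power of $(\delta x+l)$ coming from $\sigma_x^{-m}$, $m\ge0$, is killed by the prefactor $(\delta x+l)^{-\frac{1-\gamma}{3-\gamma}}$ and the weight $w_{-\gamma,\vartheta}^{-2}$ together with the $\tilde\Psi$ that will be reinstated in the outer norm.

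Finally I would reassemble. After the Cauchy–Schwarz step, $|\nu^{-1}(v)\mathbf{D}_\hbar g(x,v)|\lesssim \delta\hbar\,(\delta x+l)^{-\frac{1-\gamma}{3-\gamma}}\nu^{-1}(v)\tilde\Psi(v)\,\|z_{\alpha'}\sigma_x^{m/2}w_{-\gamma,\vartheta}g(x,\cdot)\|_{L^2_v}$; multiplying by $\sigma_x^{m/2}(x,v)w_{0,\vartheta}(v)$ (the weights defining $\LL\cdot\RR_{A;m,0,\vartheta}$) and taking the sup over $(x,v)$, the quantity $\sup_{x,v}\sigma_x^{m/2}(x,v)\,(\delta x+l)^{-\frac{1-\gamma}{3-\gamma}}\,\nu^{-1}(v)\,w_{0,\vartheta}(v)\,\tilde\Psi(v)$ is finite because $\nu^{-1}$ is polynomially bounded, $\tilde\Psi$ is Gaussian, and $\sigma_x^{m/2}(\delta x+l)^{-\frac{1-\gamma}{3-\gamma}}$ is bounded by a $v$-polynomial times $l^{-\text{(positive)}}$ by Lemma \ref{Lmm-sigma}; and $\sup_x\|z_{\alpha'}\sigma_x^{m/2}w_{-\gamma,\vartheta}g(x,\cdot)\|_{L^2_v}$ is exactly $\|\sigma_x^{m/2}z_{\alpha'}w_{-\gamma,\vartheta}g\|_{L^\infty_x L^2_v}$. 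This yields the claimed bound with a constant $C$ that (absorbing $\delta\hbar\le1$) is harmless. The main obstacle is the bookkeeping on the weight $\sigma_x^{-m}$ inside the $v'$-integral for $m\ge0$: one must verify that the spurious positive $x$-power it produces is genuinely absorbed by the $(\delta x+l)^{-\frac{1-\gamma}{3-\gamma}}$ prefactor together with the outgoing $\sigma_x^{m/2}$ weight — this is a matching-of-exponents check rather than a deep estimate, but it is the step where the precise form \eqref{D-operator} of $\mathbf{D}$ and the bounds of Lemma \ref{Lmm-sigma} genuinely interact, and where one should be careful that the inequality is uniform in $A$ (which it is, since $A$ enters only through the domain $\Omega_A$ of the $x$-supremum and all bounds above are $x$-pointwise).
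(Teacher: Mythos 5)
Your proposal is correct and is essentially the paper's proof: both exploit the finite-rank, Gaussian-localized structure of $\mathbf{D}_\hbar$, commute the weights onto the kernel, and reduce to a Cauchy--Schwarz estimate in the integration variable in which the local integrability of $z_{\alpha'}^{-2}=|v_3|^{-2\alpha'}$ (from $2\alpha'<1$) is the decisive point. The only cosmetic difference is that the paper keeps the ratio $\sigma_x^{m/2}(x,v)/\sigma_x^{m/2}(x,\tilde v)$ bundled and bounds it by $C(1+|v-\tilde v|)^{|m|(1-\gamma)/2}$ via Lemma \ref{Lmm-Psigma} and \eqref{pro-sigma}, which the double Gaussian absorbs at once and which dispenses with the exponent-matching bookkeeping you flag as the main obstacle.
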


The proof of Lemma \ref{Lmm-Dh-L2} will be given in Subsection \ref{Subsec:D-L5L2} later.

\begin{lemma}[Weighted $L^2_v$ boundedness of $\mathbf{D}$]\label{Lmm-Dh-L2-L2}
	Let $- 3 < \gamma \leq 1$, $0 \leq \alpha < \frac{1}{2}$, $m, \beta \in \R$ and sufficiently small $\hbar, \vartheta \geq 0$. Then there is a positive constant $C > 0$ such that
	\begin{equation}\label{Dh-L2-Bnd}
		\begin{aligned}
			\int_{\R^3} |\nu^{- \frac{1}{2}} z_{- \alpha} \sigma_x^\frac{m}{2} w_{\beta, \vartheta} \mathbf{D}_\hbar g (x,v)|^2 \d v \leq C \int_{\R^3} | \nu^\frac{1}{2} \sigma_x^\frac{m}{2} w_{\beta, \vartheta} g (x, v) |^2 \d v \,.
		\end{aligned}
	\end{equation}
\end{lemma}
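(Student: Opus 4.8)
The plan is to unwind the definition of the scaled damping operator $\mathbf{D}_\hbar g = e^{\hbar\sigma}\mathbf{D}(e^{-\hbar\sigma}g)$ and reduce everything to the $L^2_v$-boundedness of the two finite-rank projections $\P^+(v_3\,\cdot)$ and $\P^0$, each carrying the bounded scalar prefactor $(\delta x+l)^{-\frac{1-\gamma}{3-\gamma}}\le l^{-\frac{1-\gamma}{3-\gamma}}$. First I would note that, since $\sigma(x,R_U v)=\sigma(x,v)$ and $\sigma_x$ likewise (by \eqref{sigma-w-inv}), and since the weights $z_{-\alpha}$, $w_{\beta,\vartheta}$, $\nu$ are all $R_U$-invariant, the conjugation by $e^{\hbar\sigma}$ produces only the pointwise factor $e^{\hbar\sigma(x,v)}$ in front and $e^{-\hbar\sigma(x,v')}$ inside the velocity integrals defining $\P^+$ and $\P^0$. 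By Lemma \ref{Lmm-sigma} one has $c(\delta x+l)^{\frac{2}{3-\gamma}}\le \sigma(x,v)$ and $|\sigma(x,v)-\sigma(x,v_*)|\le c\big||v-\u|^2-|v_*-\u|^2\big|$, so the Gaussian tails $\sqrt{\M}$ appearing in $\psi_3^\ast,\psi_i^\ast,\widehat{\mathbb A}_{i3},\widehat{\mathbb B}_3$ (and hence in $\P^+$ and $\P^0$, cf. \eqref{P+}, \eqref{P0}) absorb the exponential growth $e^{\hbar\sigma}$ provided $\hbar,\vartheta$ are small enough — this is exactly the smallness hypothesis in the statement.

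Second, I would estimate each of the two terms separately. For the $\P^0$-term, $\P^0 (e^{-\hbar\sigma}g)$ is a finite linear combination of the three vectors $\psi_0^\ast,\psi_1^\ast,\psi_2^\ast\in\mathrm{Null}(\L)$ with coefficients $\int_{\R^3} v_3\widehat{\mathbb B}_3\, e^{-\hbar\sigma}g\,\d v$ etc.; by Cauchy--Schwarz each coefficient is bounded by $C\|\nu^{1/2}g(x,\cdot)\|_{L^2_v}$ once one checks $\int_{\R^3}\nu^{-1}(v)|v_3\widehat{\mathbb B}_3(v)|^2 e^{-2\hbar\sigma(x,v)}\,\d v<\infty$ uniformly in $x$ (this uses the decay properties of $\widehat{\mathbb B}_3,\widehat{\mathbb A}_{i3}$ from \cite{JLT-2022-arXiv}, which are Maxwellian-times-polynomial, and $\nu(v)\sim(1+|v|)^\gamma$). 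Then $\nu^{-1/2}z_{-\alpha}\sigma_x^{m/2}w_{\beta,\vartheta}\,e^{\hbar\sigma}\psi_i^\ast$ is again bounded in $L^2_v$ because $\psi_i^\ast$ carries a full Gaussian, which dominates $z_{-\alpha}$ (note $z_{-\alpha}\le|v_3|^{-\alpha}\le|v_3|^{-1/2}$ near $v_3=0$, and $\alpha<\tfrac12$ guarantees $z_{-\alpha}^2$ is $v_3$-locally integrable), the polynomial $w_{\beta,\vartheta}$ up to its Gaussian part $e^{\vartheta|v-\u|^2}$, the mixed weight $\sigma_x^{m/2}\lesssim(\delta x+l)^{-\frac{m(1-\gamma)}{2(3-\gamma)}}$ which on the support is bounded by a pure velocity polynomial, and finally $e^{\hbar\sigma}$. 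Collecting, the $\P^0$-contribution is $\le C\,l^{-\frac{1-\gamma}{3-\gamma}}\|\nu^{1/2}g(x,\cdot)\|_{L^2_v}^2\le C\|\nu^{1/2}\sigma_x^{m/2}w_{\beta,\vartheta}g(x,\cdot)\|_{L^2_v}^2$ after inserting the matching weights on the right. For the $\P^+$-term, $\P^+(v_3 e^{-\hbar\sigma}g)=\big(\int_{\R^3}\psi_3^\ast v_3 e^{-\hbar\sigma}g\,\d v\big)\psi_3^\ast$ is handled identically, with the extra factor $v_3$ absorbed into the Gaussian of $\psi_3^\ast$.

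Third, since the estimate has been proved pointwise in $x$, integrating in $x$ over $\Omega_A$ is trivial — or, more precisely, the statement as written is already a pointwise-in-$x$ inequality between $\int_{\R^3}\cdots\d v$ quantities, so no $x$-integration is needed. The main (mild) obstacle is bookkeeping: one must verify that the weight $z_{-\alpha}$ with $\alpha<\tfrac12$, the possibly-negative-power mixed weight $\sigma_x^{m/2}$, and the possibly-negative polynomial power $\beta$ in $w_{\beta,\vartheta}$ together with the growth $e^{\hbar\sigma}\lesssim e^{C\hbar|v-\u|^2}$ are all uniformly controlled by the Gaussian decay built into the finite-rank range of $\mathbf{D}$; this forces the smallness of $\hbar$ and $\vartheta$ (so that $C\hbar+\vartheta$ stays below the Gaussian exponent of $\psi_i^\ast,\widehat{\mathbb A}_{i3},\widehat{\mathbb B}_3$) and the local integrability condition $2\alpha<1$. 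None of this is delicate; it is the same kind of computation underlying Lemma \ref{Lmm-D-XY} and Lemma \ref{Lmm-Dh-L2}, just transferred to the $L^2_v$ setting.
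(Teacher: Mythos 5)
Your overall architecture matches the paper's: $\mathbf{D}_\hbar$ is finite-rank, so everything reduces to showing that the kernels built from $\psi_j^*$, $\widehat{\mathbb{B}}_3$, $\widehat{\mathbb{A}}_{j3}$ remain square-integrable after conjugation by $e^{\hbar\sigma}$ and multiplication by the various weights, with $\alpha<\tfrac12$ entering only through the local integrability of $z_{-\alpha}^2$ and the smallness of $\hbar,\vartheta$ entering through Gaussian absorption. However, there is a genuine gap in your weight bookkeeping. You bound each coefficient of $\P^0(e^{-\hbar\sigma}g)$ by Cauchy--Schwarz against the \emph{unweighted} norm $\|\nu^{1/2}g(x,\cdot)\|_{L^2_v}$ and then conclude with ``$\le C\|\nu^{1/2}\sigma_x^{m/2}w_{\beta,\vartheta}g(x,\cdot)\|_{L^2_v}^2$ after inserting the matching weights on the right.'' That last inequality is false for general $m,\beta\in\R$: if $\beta<0$ and $\vartheta=0$ the weight $w_{\beta,\vartheta}$ decays polynomially in $|v|$, and if $m>0$ with $\gamma<1$ then $\sigma_x^{m/2}(x,v)\lesssim(\delta x+l)^{-m(1-\gamma)/(2(3-\gamma))}\to0$ as $x\to\infty$, so the weighted norm does not dominate the unweighted one. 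If instead you insert the weights into the Cauchy--Schwarz from the start, you are forced to control $\int\nu^{-1}|v_3\widehat{\mathbb{B}}_3|^2e^{-2\hbar\sigma}\sigma_x^{-m}(x,\tilde v)\,w_{\beta,\vartheta}^{-2}\,\d\tilde v$, and $\sigma_x^{-m}(x,\tilde v)$ grows like $(\delta x+l)^{m(1-\gamma)/(3-\gamma)}$ on the region $\Omega_3$ where $\tilde v$ is moderate — the Gaussian does not kill this $x$-growth. One must then verify that this growth cancels exactly against the decay of the outer factor $\int\nu^{-1}z_{-\alpha}^2\sigma_x^{m}(x,v)w_{\beta,\vartheta}^2e^{2\hbar\sigma}|\psi_j^*|^2\,\d v$ (and vice versa when $m<0$); your plan never addresses this.

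The paper sidesteps the issue by never separating the two $\sigma_x$ factors: it writes $\nu^{-1/2}z_{-\alpha}\sigma_x^{m/2}w_{\beta,\vartheta}\mathbf{D}_\hbar g$ as $\sum_j\int\mathfrak{n}_j(x,v,\tilde v)\,\nu^{1/2}\sigma_x^{m/2}w_{\beta,\vartheta}g(x,\tilde v)\,\d\tilde v$, where each kernel $\mathfrak{n}_j$ contains the \emph{ratio} $\mathfrak{p}(x,v)/\mathfrak{p}(x,\tilde v)$ with $\mathfrak{p}=\sigma_x^{m/2}e^{\hbar\sigma}w_{\beta,\vartheta}$. The point of Lemma \ref{Lmm-Psigma} (encapsulated in \eqref{p-pro}) is precisely that the ratio $\sigma_x^{m/2}(x,v)/\sigma_x^{m/2}(x,\tilde v)$ is bounded \emph{uniformly in $x$} by $(1+|v-\tilde v|)^{|m|(1-\gamma)/2}$, even though neither factor alone admits an $x$-independent bound in the needed direction; combined with $|\sigma(x,v)-\sigma(x,\tilde v)|\le c\,||v-\u|^2-|\tilde v-\u|^2|$ this yields $\sum_j|\mathfrak{n}_j|\lesssim z_{-\alpha}(v)e^{-c'(|v-\u|^2+|\tilde v-\u|^2)/2}$, after which $\alpha<\tfrac12$ and Cauchy--Schwarz finish the proof. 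To repair your argument you should either adopt this kernel-ratio formulation, or explicitly carry out the two weighted Gaussian integrals above and check that their $x$-dependent upper bounds multiply to a constant for both signs of $m$.
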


The proof of Lemma \ref{Lmm-Dh-L2-L2} will be given in Subsection \ref{Subsec:D-L2L2} later.


\section{Weak solution to connection auxiliary problem \eqref{A1}}\label{Sec:WS-CA}

In this section, we mainly prove the existence and uniqueness of weak solution to the connection auxiliary problem \eqref{A1} in the weighted $L^2_{x,v}$ space. Moreover, the energy bound is uniform in $A \geq 1$. Let $g_\sigma = e^{\hbar \sigma} g$. The system \eqref{A1} can be equivalently expressed as
\begin{equation}\label{A3-lambda}
	\left\{
	\begin{aligned}
		& v_3 \partial_x g_\sigma + [ - \hbar \sigma_x v_3 + \nu (v) ] g_\sigma - K_\hbar g_\sigma + \mathbf{D}_\hbar g = h_\sigma \,, \\
		& g_\sigma (0, v) |_{v_3 > 0} = (1 - \alpha_*) g_\sigma (0, R_0 v) + \alpha_* \tfrac{M_w (v)}{\sqrt{\M_\sigma (v)}} \int_{v_3' < 0} (- v_3') g_\sigma (0, v') \sqrt{\M_\sigma (v') } \d v' \,, \\
		& g_\sigma (A, v) |_{v_3 < 0} = \varphi_{A, \sigma} (v) \,,
	\end{aligned}
	\right.
\end{equation}
where $K_\hbar$ is defined in \eqref{Kh}, $\mathbf{D}_\hbar$ is given in \eqref{Dh} and $\M_\sigma (v)$ is given in \eqref{M-sigma}, i.e., $ \M_\sigma (v) = \tfrac{\M (v)}{ e^{2 \hbar \sigma (0, v)} } $. Here $h_\sigma = e^{\hbar \sigma} h$.

As stated in Subsection \ref{Subsec:MI} before, we consider the nondissipative Maxwell reflection boundary condition in this paper. As a result, we will first deal with the boundary integral while carrying the weighted $L^2_{x,v}$ estimates.

\subsection{Nondissipative boundary energy}

In this subsection, we first establish the following key lemma to deal with the nondissipative boundary condition. The nondissipation of the boundary energy comes from the form $\int_{v_3 < 0} |v_3| \mathfrak{w} (v) g_\sigma (0, v) \d v$, whose signs are indefinite. The main ideas are as follows. Together with the structure of equation, the integral $\int_{v_3 < 0} |v_3| \mathfrak{w} (v) g_\sigma (0, v) \d v$ can be pulled to some interior integrals by using the Euler factor $e^{ - (\delta x + l) }$. The interior integrals can be dominated by two parts. One is quantity coming from the coercivity of the linear Boltzmann collision operator $\L$ with a small factor. The small factor is due to the interaction of spatial and velocity variables. The other one is the $L^2_{x,v}$ norm with additional spatial weight $(l^{- \mathbf{Z}} \delta x + l)^a$ for any $a > 0$ and $\mathbf{Z} \geq 0$. Thanks to the degree of freedom for the parameters $a > 0$ and $\mathbf{Z} \geq 0$, the weighted $L^2_{x,v}$ norm can be controlled by seeking more estimates.

\begin{lemma}[Nondissipative boundary lemma]\label{Lmm-NDBL}
	Let $- 3 < \gamma \leq 1$, $A \geq 1$, $0 < \delta, \hbar < 1$, $l \geq 1$, $0 \leq \lambda \leq 1$. Let $\mathfrak{w} (v) $ be of exponential decay form $e^{- b |v - \u|^2}$ for some $b > 0$. Assume that $g_\sigma (x,v)$ is a solution to \eqref{A3-lambda}. Then for any $ a > 0$ and $\mathbf{Z} \geq 0$, one has
	\begin{equation}\label{NDBL-inq}{\small
		\begin{aligned}
			\Big| \int_{v_3 < 0} |v_3| \mathfrak{w} (v) g_\sigma (0, v) \d v \Big| \leq C \digamma_{\delta, l,a,\mathbf{Z}} (g_\sigma) + C \digamma_{\delta, l,a, \mathbf{Z}} (\nu^{-1} h_\sigma) + \Big| \int_{ v_3 < 0 } \mathfrak{w} (v) v_3 \varphi_{A, \sigma} (v) \d v \Big|
		\end{aligned}}
	\end{equation}
	for some constant $C > 0$ independent of $ a$, $\lambda$, $A$, $\delta$, $\hbar$, $l$. Here $\digamma_{\delta, l,a, \mathbf{Z}} (\cdot)$ is introduced by
	\begin{equation}\label{F-delta-l-a}{\footnotesize
		\begin{aligned}
			\digamma_{\delta, l,a, \mathbf{Z}} (f) = \delta^{- \frac{1}{2}} l^{- 50} \| (\delta x + l)^{ - \frac{1 - \gamma}{2 (3 - \gamma)}} \nu^\frac{1}{2} f \|_A + \delta^{- \frac{1}{2}} l^{ - ( a - \frac{1 - \gamma}{2 (3 - \gamma)} ) } \| ( l^{- \mathbf{Z}} \delta x + l)^a (\delta x + l)^{- \frac{1 - \gamma}{2 (3 - \gamma)}} \nu^\frac{1}{2} f \|_A \,.
		\end{aligned}}
	\end{equation}
\end{lemma}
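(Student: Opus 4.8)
The strategy is to "pull" the boundary trace $\int_{v_3<0}|v_3|\mathfrak{w}(v)g_\sigma(0,v)\,\d v$ into the interior of the slab $\Omega_A$ by integrating the transport equation along characteristics in the $x$-direction, at the cost of an auxiliary exponential (Euler) factor $e^{-(\delta x+l)}$, and then estimate the resulting bulk integrals. First I would fix $v$ with $v_3<0$ and write the first equation of \eqref{A3-lambda} in the form $v_3\partial_x g_\sigma=-[-\hbar\sigma_x v_3+\nu(v)]g_\sigma+K_\hbar g_\sigma-\mathbf{D}_\hbar g+h_\sigma$. Multiplying by a suitable integrating factor and integrating from $x=0$ to $x=A$ gives an identity expressing $g_\sigma(0,v)$ in terms of the far-field data $\varphi_{A,\sigma}(v)=g_\sigma(A,v)|_{v_3<0}$ plus an $x$-integral over $(0,A)$ of the right-hand side, weighted by a decaying kernel in $x$. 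The point of introducing the factor $e^{-(\delta x+l)}$ (or more precisely a factor comparable to $e^{-\theta(\delta x+l)^{2/(3-\gamma)}}$ that majorizes the natural characteristic kernel $e^{\kappa(\cdot)}$ built from $\nu(v)/v_3$, cf. \eqref{kappa}) is that after multiplying the identity by $|v_3|\mathfrak{w}(v)$ and integrating over $\{v_3<0\}$, the resulting double integral over $(x,v)$ factors into an $L^2_{x,v}$ norm of the unknown (or of $\nu^{-1}h_\sigma$) against an explicit kernel; by Cauchy--Schwarz in $(x,v)$ this kernel must be shown to lie in $L^2$, which is where the decay $\sigma(x,v)\ge c(\delta x+l)^{2/(3-\gamma)}$ and $|\sigma_x v_3|\lesssim\nu(v)$ from Lemma \ref{Lmm-sigma} enter.

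**Key steps, in order.** (1) Derive the mild representation of $g_\sigma(0,v)$ for $v_3<0$ by integrating along $x$-characteristics, keeping track of $K_\hbar g_\sigma$, $\mathbf{D}_\hbar g$, and $h_\sigma$ on the right; this produces the boundary term $\int_{v_3<0}\mathfrak{w}(v)v_3\varphi_{A,\sigma}(v)\,\d v$ plus bulk contributions. (2) Insert the Euler majorant $e^{-(\delta x+l)}$: bound the characteristic kernel $e^{\kappa(0,v)-\kappa(x,v)}$ (which decays because $\int_0^x\nu(v)/|v_3|\gtrsim$ some positive power of $x$) by a product of a clean factor $e^{-c'(\delta x+l)^{2/(3-\gamma)}}$ and an $x$-independent piece, uniformly for $v_3<0$; here the interplay of $\hbar\sigma_x$ (small) and $\nu(v)$ is used so that the net exponent stays negative. (3) Split $\{v_3<0\}$ according to $|v-\u|\le 2$ versus $|v-\u|\ge 2$. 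On $|v-\u|\ge 2$ the Gaussian factor $\mathfrak{w}(v)=e^{-b|v-\u|^2}$ together with the crude $x$-decay supplies the tiny constant $\delta^{-1/2}l^{-50}$ multiplying $\|(\delta x+l)^{-\frac{1-\gamma}{2(3-\gamma)}}\nu^{1/2}(\cdot)\|_A$ — this is the first term of $\digamma$. (4) On $|v-\u|\le 2$, use the elementary inequality $\mathbf 1_{|v-\u|\le2}|v-\u|\le 2(l^{-\mathbf Z-1}\delta x+1)^a$ (valid because the left side is bounded by $2$ while the right side is $\ge 2$ for $x\ge 0$) to trade the modest velocity localization for the spatial polynomial weight $(l^{-\mathbf Z}\delta x+l)^a$; this produces the second term of $\digamma$, with the power $-(a-\frac{1-\gamma}{2(3-\gamma)})$ of $l$ coming from rescaling $(l^{-\mathbf Z}\delta x+l)^a$ against $(\delta x+l)^{-\frac{1-\gamma}{2(3-\gamma)}}$ and the leftover $\delta^{-1/2}$ from the $x$-integration of the squared kernel. (5) Handle the $K_\hbar g_\sigma$ and $\mathbf{D}_\hbar g$ contributions by absorbing them: Lemma \ref{Lmm-Kh-L2} and Lemma \ref{Lmm-Dh-L2-L2} bound their weighted $L^2_v$ norms by $\|\nu^{1/2}\sigma_x^{m/2}w_{\beta,\vartheta}g\|_{L^2_v}$-type quantities, which after the $x$-integration fold back into the same $\digamma_{\delta,l,a,\mathbf Z}(g_\sigma)$ form; similarly the source $h_\sigma$ yields $\digamma_{\delta,l,a,\mathbf Z}(\nu^{-1}h_\sigma)$. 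Assembling (1)--(5) gives \eqref{NDBL-inq}.

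**Main obstacle.** The delicate point is step (2)--(4): making the two competing scales in $\sigma$ cooperate. I need the characteristic decay $e^{\kappa(0,v)-\kappa(x,v)}$ to be genuinely summable in $x$ \emph{uniformly in $v_3<0$} — including small $|v_3|$, where $\nu(v)/|v_3|$ is large but the weight $z_\alpha$/singular factors threaten integrability — and simultaneously to extract exactly the right negative power of $l$ so that, later, choosing $l$ large kills the boundary contribution. Getting the bookkeeping of powers to land on $l^{-(a-\frac{1-\gamma}{2(3-\gamma)})}$ and $l^{-50}$ (rather than something weaker) requires carefully using both $\sigma(x,v)\gtrsim(\delta x+l)^{2/(3-\gamma)}$ and $\sigma_x\lesssim(\delta x+l)^{-\frac{1-\gamma}{3-\gamma}}$ together with the trick $\mathbf 1_{|v-\u|\le2}|v-\u|\le 2(l^{-\mathbf Z-1}\delta x+1)^a$, and this is where the degrees of freedom $a>0$, $\mathbf Z\ge0$ must be preserved through every estimate so that the companion lemmas (spatial--velocity indices iteration, interleaved iteration) can later exploit them. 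The rest is Cauchy--Schwarz and routine manipulation of the explicit weights.
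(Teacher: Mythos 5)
Your proposal is correct and follows essentially the same route as the paper: pull the boundary trace into the interior via the Euler factor $e^{-(\delta x+l)}$, apply Cauchy--Schwarz, split $\{|v-\u|\le 2\}$ versus $\{|v-\u|\ge 2\}$ (with the transition-regime interaction of $x$ and $v$ giving $l^{-50}$ on the latter and the inequality $\mathbf 1_{|v-\u|\le 2}|v-\u|\le 2(l^{-\mathbf Z-1}\delta x+1)^a$ giving the $(l^{-\mathbf Z}\delta x+l)^a$ weight on the former), absorb $K_\hbar$ and $\mathbf D_\hbar$ by their weighted $L^2_v$ bounds, and retain the $\varphi_{A,\sigma}$ trace at $x=A$. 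The only cosmetic difference is that the paper never invokes the characteristic kernel $e^{\kappa}$ here: it simply multiplies the conservative form $\partial_x(v_3 g_\sigma)=\cdots$ by $e^{-(\delta x+l)}$ and integrates over $(0,A)\times\{v_3<0\}$, treating $[\hbar\sigma_x v_3-\nu]g_\sigma$ as just another bulk term bounded by $C\nu|g_\sigma|$, which sidesteps the uniformity-in-$v_3$ issue you flag as the main obstacle.
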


\begin{proof}
	Note that $g_\sigma$ solves \eqref{A3-lambda}, i.e.,
	\begin{equation*}
		\begin{aligned}
			\partial_x ( v_3 g_\sigma ) = [ \hbar \sigma_x v_3 - \nu (v) ] g_\sigma + ( K_\hbar - \mathbf{D}_\hbar ) g_\sigma + h_\sigma \,.
		\end{aligned}
	\end{equation*}
	We multiply the above equation by the factor $ e^{ - (\delta x + l) }$. It thereby holds
	\begin{equation}\label{ab-1}
		\begin{aligned}
			\partial_x \big( e^{ - (\delta x + l) } v_3 g_\sigma \big) = & - \delta e^{ - (\delta x + l) } v_3 g_\sigma + [ \hbar \sigma_x v_3 - \nu (v) ] e^{ - (\delta x + l) } g_\sigma \\
			& + e^{ - (\delta x + l) } ( K_\hbar - \mathbf{D}_\hbar ) g_\sigma + e^{ - (\delta x + l) } h_\sigma \,.
		\end{aligned}
	\end{equation}
	We integrate the equation \eqref{ab-1} over $ ( x, v ) \in (0, A) \times \{ v_3 < 0 \}$. Together with the boundary condition $g_\sigma (A, v) |_{v_3 < 0} = \varphi_{A, \sigma} (v)$, one has
{\small
	\begin{align}\label{ab-2}
		\no & - e^{- l } \int_{v_3 < 0} \mathfrak{w} (v) v_3 g_\sigma (0, v) \d v = \underbrace{ - \delta \int_0^A \int_{v_3 < 0} e^{ - (\delta x + l) } \mathfrak{w} (v) v_3 g_\sigma (x,v) \d v \d x }_{ := \mathfrak{A}_1 } \\
		\no & + \underbrace{ \int_0^A \int_{v_3 < 0} [ \hbar \sigma_x v_3 - \nu (v) ] e^{ - (\delta x + l) } \mathfrak{w} (v) g_\sigma (x,v) \d v \d x }_{ := \mathfrak{A}_2 } + \underbrace{ \int_0^A \int_{v_3 < 0} e^{ - (\delta x + l) } \mathfrak{w} (v) ( K_\hbar - \mathbf{D}_\hbar ) g_\sigma (x,v) \d v \d x }_{ := \mathfrak{A}_3 } \\
		& + \underbrace{ \int_0^A \int_{v_3 < 0} e^{ - (\delta x + l) } \mathfrak{w} (v) h_\sigma (x,v) \d x \d v }_{ := \mathfrak{A}_4 } \ \underbrace{ - e^{ - (\delta A + l) } \int_{v_3 < 0} \mathfrak{w} (v) v_3 \varphi_{A, \sigma} (v) \d v }_{ := \mathfrak{A}_5 } \,,
	\end{align}}
	where the weight function $\mathfrak{w} (v)$ is given in Lemma \ref{Lmm-NDBL}.
	
	{\bf Step 1. Control of the quantity $ \mathfrak{A}_1 $.}
	
	Note that for any $a_* > 0$ and $0 < \eps < 1$,
	\begin{equation*}{\small
		\begin{aligned}
			|\mathfrak{A}_1| \leq &  C \delta \int_0^A \int_{v_3 < 0} (\delta x + l)^{a_* + \frac{1 - \gamma}{2 (3 - \gamma)}} e^{ - (\delta x + l) } |v_3| \nu^{ - \frac{1}{2} } e^{ - \frac{b}{2} |v - \u|^2 } \\
			& \qquad \qquad \qquad \qquad \qquad \qquad \times | (\delta x + l)^{ - a_* - \frac{1 - \gamma}{2 (3 - \gamma)}} e^{ - \frac{b}{2} |v - \u|^2 } \nu^\frac{1}{2} g_\sigma (x,v) | \d v \d x \\
			= & \underbrace{ C \delta \int_0^A \int_{v_3 < 0} ( \cdots ) \mathbf{1}_{|v - \u| \geq 2} \d v \d x }_{U_{11}} + \underbrace{ C \delta \int_0^A \int_{v_3 < 0} ( \cdots ) \mathbf{1}_{|v - \u| \leq 2} \d v \d x }_{U_{12}} \,.
		\end{aligned}}
	\end{equation*}

	For the quantity $U_{11}$, the H\"older inequality implies
	\begin{equation*}{\small
		\begin{aligned}
			|U_{11}| \leq & C \delta \big( \int_0^A (\delta x + l)^{2 a_* + \frac{1 - \gamma}{ 3 - \gamma }} e^{ - 2 (\delta x + l) } \d x \big)^\frac{1}{2} \big( \int_{v_3 < 0} \nu^{- 1} |v_3|^2 e^{- b |v - \u|^2} \mathbf{1}_{ |v - \u| \geq 2} \d v \big)^\frac{1}{2} \\
			& \times \| \mathbf{1}_{ |v - \u| \geq 2} (\delta x + l)^{ - a_* - \frac{1 - \gamma}{2 (3 - \gamma)}} e^{ - \frac{b}{2} |v - \u|^2 } \nu^\frac{1}{2} g_\sigma \|_A \,.
		\end{aligned}}
	\end{equation*}
	Note that $\big( \int_{v_3 < 0} \nu^{- 1} |v_3|^2 e^{- b |v - \u|^2} \mathbf{1}_{ |v - \u| \geq 2} \d v \big)^\frac{1}{2} \leq \big( \int_{v_3 < 0} \nu^{- 1} |v_3|^2 e^{- b |v - \u|^2} \d v \big)^\frac{1}{2} \leq C$ and $ \big( \int_0^A $ $ (\delta x + l)^{2 a_* + \frac{1 - \gamma}{ 3 - \gamma }} e^{ - 2 (\delta x + l) } \d x \big)^\frac{1}{2} \leq C \delta^{ - \frac{1}{2} } l^{a_* + \frac{1 - \gamma}{2 (3 - \gamma)}} e^{- l} $. Then one obtains
	\begin{equation*}
		\begin{aligned}
			|U_{11}| \leq & C \delta \delta^{ - \frac{1}{2} } l^{a_* + \frac{1 - \gamma}{2 (3 - \gamma)}} e^{- l} \| \mathbf{1}_{ |v - \u| \geq 2} (\delta x + l)^{ - a_* - \frac{1 - \gamma}{2 (3 - \gamma)}} e^{ - \frac{b}{2} |v - \u|^2 } \nu^\frac{1}{2} g_\sigma \|_A \\
			\leq & C \delta \delta^{ - \frac{1}{2} } l^{a_* + \frac{1 - \gamma}{2 (3 - \gamma)}} e^{- l} \big( \| \mathbf{1}_{ 2 \leq |v - \u| \leq (\delta x + l)^{\kappa_0} } (\delta x + l)^{ - a_* - \frac{1 - \gamma}{2 (3 - \gamma)}} e^{ - \frac{b}{2} |v - \u|^2 } \nu^\frac{1}{2} g_\sigma \|_A \\
			& + \| \mathbf{1}_{ |v - \u| > (\delta x + l)^{\kappa_0}} (\delta x + l)^{ - a_* - \frac{1 - \gamma}{2 (3 - \gamma)}} e^{ - \frac{b}{2} |v - \u|^2 } \nu^\frac{1}{2} g_\sigma \|_A \big) \,,
		\end{aligned}
	\end{equation*}
	where $\kappa_0 > 0$ is to be determined. If $2 \leq |v - \u| \leq (\delta x + l)^{\kappa_0}$, there holds
	\begin{equation*}
		\begin{aligned}
			(\delta x + l)^{ - a_* } e^{ - \frac{b}{2} |v - \u|^2 } \leq 2^{- \frac{a_*}{\kappa_0} } \,.
		\end{aligned}
	\end{equation*}
	Take a sufficiently small $\kappa_0 > 0$ such that $2^{- \frac{a_*}{\kappa_0} } \leq l^{ - 50 - a_* - \frac{1 - \gamma}{2 (3 - \gamma)} }$, and fix this $\kappa_0 > 0$. It therefore follows that
	\begin{equation*}
		\begin{aligned}
			& \| \mathbf{1}_{ 2 \leq |v - \u| \leq (\delta x + l)^{\kappa_0} } (\delta x + l)^{ - a_* - \frac{1 - \gamma}{2 (3 - \gamma)}} e^{ - \frac{b}{2} |v - \u|^2 } \nu^\frac{1}{2} g_\sigma \|_A \\
			\leq & l^{ - 50 - a_* - \frac{1 - \gamma}{2 (3 - \gamma)} } \| (\delta x + l)^{ - \frac{1 - \gamma}{2 (3 - \gamma)}} e^{ - \frac{b}{2} |v - \u|^2 } \nu^\frac{1}{2} g_\sigma \|_A \,.
		\end{aligned}
	\end{equation*}
	If $|v - \u| > (\delta x + l)^{\kappa_0}$, one has
	\begin{equation*}
		\begin{aligned}
			(\delta x + l)^{ - a_* } e^{ - \frac{b}{2} |v - \u|^2 } \leq l^{- a_*} e^{ - \frac{b}{2} (\delta x + l)^{2 \kappa_0} } \leq l^{- a_*} e^{ - \frac{b}{2} l^{2 \kappa_0} } \leq C l^{- a_* - \frac{1 - \gamma}{2 (3 - \gamma)} - 50}
		\end{aligned}
	\end{equation*}
	for some harmless constant $C > 0$. Then
	\begin{equation*}{\small
		\begin{aligned}
			\| \mathbf{1}_{ |v - \u| > (\delta x + l)^{\kappa_0}} (\delta x + l)^{ - a_* - \frac{1 - \gamma}{2 (3 - \gamma)}} e^{ - \frac{b}{2} |v - \u|^2 } \nu^\frac{1}{2} g_\sigma \|_A \leq C l^{- a_* - \frac{1 - \gamma}{2 (3 - \gamma)} - 50} \| (\delta x + l)^{ - \frac{1 - \gamma}{2 (3 - \gamma)}} \nu^\frac{1}{2} g_\sigma \|_A \,.
		\end{aligned}}
	\end{equation*}
	As a result, $U_{11}$ can be bounded by
	\begin{equation}\label{U11}
		\begin{aligned}
			|U_{11}| \leq C \delta \delta^{- \frac{1}{2}} l^{- 50} e^{- l} \| (\delta x + l)^{ - \frac{1 - \gamma}{2 (3 - \gamma)}} \nu^\frac{1}{2} g_\sigma \|_A
		\end{aligned}
	\end{equation}
	
	For the quantity $U_{12}$, the H\"older inequality reduces to
	\begin{equation*}
		\begin{aligned}
			|U_{12}| \leq & C \delta \big( \int_0^A (\delta x + l)^{ \frac{1 - \gamma}{ 3 - \gamma }} e^{ - 2 (\delta x + l) } \d x \big)^\frac{1}{2} \big( \int_{v_3 < 0} \nu^{- 1} |v_3|^2 |v - \u|^{-1} e^{- 2 b |v - \u|^2} \mathbf{1}_{ |v - \u| \leq 2} \d v \big)^\frac{1}{2} \\
			& \times \| \mathbf{1}_{ |v - \u| \leq 2} (\delta x + l)^{ - \frac{1 - \gamma}{2 (3 - \gamma)}} |v - \u| \nu^\frac{1}{2} g_\sigma \|_A
		\end{aligned}
	\end{equation*}
	It is easy to see that $\big( \int_0^A (\delta x + l)^{ \frac{1 - \gamma}{ 3 - \gamma }} e^{ - 2 (\delta x + l) } \d x \big)^\frac{1}{2} \leq C l^\frac{1 - \gamma}{2 (3 - \gamma)} e^{- l}$. Moreover, by using $|v_3|^3 \leq |v - \u|^2$,
	\begin{equation*}{\small
		\begin{aligned}
			\big( \int_{v_3 < 0} \nu^{- 1} |v_3|^2 |v - \u|^{-1} e^{- 2 b |v - \u|^2} \mathbf{1}_{ |v - \u| \leq 2} \d v \big)^\frac{1}{2} \leq & \big( \int_{\R^3} \nu^{- 1} |v - \u| e^{- 2 b |v - \u|^2} \mathbf{1}_{ |v - \u| \leq 2} \d v \big)^\frac{1}{2} \leq C \,.
		\end{aligned}}
	\end{equation*}
	Then
	\begin{equation*}
		\begin{aligned}
			|U_{12}| \leq C \delta \delta^{- \frac{1}{2}} l^\frac{1 - \gamma}{2 (3 - \gamma)} e^{- l} \| \mathbf{1}_{ |v - \u| \leq 2} (\delta x + l)^{ - \frac{1 - \gamma}{2 (3 - \gamma)}} |v - \u| \nu^\frac{1}{2} g_\sigma \|_A \,.
		\end{aligned}
	\end{equation*}
	For any $a > 0$ and $\mathbf{Z} \geq 0$, one knows that $ \mathbf{1}_{ |v - \u| \leq 2} |v - \u| \leq 2 \leq 2 ( l^{- \mathbf{Z} -1} \delta x + 1 )^a $, which means that
	\begin{equation*}
		\begin{aligned}
			\| \mathbf{1}_{ |v - \u| \leq 2} (\delta x + l)^{ - \frac{1 - \gamma}{2 (3 - \gamma)}} |v - \u| \nu^\frac{1}{2} g_\sigma \|_A \leq 2 l^{-a} \| ( l^{- \mathbf{Z}} \delta x + l)^a (\delta x + l)^{ - \frac{1 - \gamma}{2 (3 - \gamma)}} \nu^\frac{1}{2} g_\sigma \|_A
		\end{aligned}
	\end{equation*}
	for any $a > 0$ and $\mathbf{Z} \geq 0$. The quantity $U_{12}$ is then bounded by
	\begin{equation*}
		\begin{aligned}
			|U_{12}| \leq C \delta \delta^{- \frac{1}{2}} l^{ - ( a - \frac{1 - \gamma}{2 (3 - \gamma)} ) } e^{- l} \| ( l^{- \mathbf{Z}} \delta x + l)^a (\delta x + l)^{- \frac{1 - \gamma}{2 (3 - \gamma)}} \nu^\frac{1}{2} g_\sigma \|_A \,.
		\end{aligned}
	\end{equation*}
	Collecting the bounds of $U_{11}$ and $U_{12}$ above, one then gains
	\begin{equation}\label{Afrak-1}
		\begin{aligned}
			| \mathfrak{A}_1 | \leq C \delta \digamma_{\delta, l,a, \mathbf{Z}} (g_\sigma)
		\end{aligned}
	\end{equation}
	for any $a > 0$ and $\mathbf{Z} \geq 0$, where the functional $ \digamma_{\delta, l,a, \mathbf{Z}} (g_\sigma) $ is defined in \eqref{F-delta-l-a}.
	
	{\bf Step 2. Control of the quantity $ \mathfrak{A}_2 $.} Lemma \ref{Lmm-sigma} indicates that $|v_3| \sigma_x (x, v) \leq c \nu (v)$, which means that $| \hbar \sigma_x v_3 - \nu (v) | \leq C \nu (v)$. Then the similar arguments of estimating the quantity $\mathfrak{A}_1$ in \eqref{Afrak-1} follow that
	\begin{equation}\label{Afrak-2}
		\begin{aligned}
			| \mathfrak{A}_2 | \leq C \digamma_{\delta, l,a, \mathbf{Z}} (g_\sigma)
		\end{aligned}
	\end{equation}
	for any $a > 0$ and $\mathbf{Z} \geq 0$.
	
	{\bf Step 3. Control of the quantity $ \mathfrak{A}_3 $.} Following the similar arguments of $| \mathfrak{A}_2 |$ in \eqref{Afrak-2}, one has
	\begin{equation*}
		\begin{aligned}
			| \mathfrak{A}_3 | \leq C \digamma_{\delta, l,a, \mathbf{Z}} ( \nu^{ - 1 } ( K_\hbar - \mathbf{D}_\hbar ) g_\sigma )
		\end{aligned}
	\end{equation*}
	for any $a > 0$ and $\mathbf{Z} \geq 0$. By employing Lemma \ref{Lmm-Kh-L2} and Lemma \ref{Lmm-Dh-L2-L2} with $\alpha = m = \beta = \vartheta = 0$, one obtains $ \| (a x + b)^{ \mathfrak{z} } \nu^{ - \frac{1}{2} } ( K_\hbar - \mathbf{D}_\hbar ) g_\sigma \|_A \leq C \| (a x + b)^{ \mathfrak{z} } \nu^\frac{1}{2} g_\sigma \|_A $ for any $\mathfrak{z} \in \R$, which means that $\digamma_{\delta, l,a, \mathbf{Z}} ( \nu^{ - 1 } ( K_\hbar - \mathbf{D}_\hbar ) g_\sigma ) \leq C \digamma_{\delta, l,a, \mathbf{Z}} ( g_\sigma )$. Consequently,
	\begin{equation}\label{Afrak-3}
		\begin{aligned}
			| \mathfrak{A}_3 | \leq C \digamma_{\delta, l,a, \mathbf{Z}} ( g_\sigma )
		\end{aligned}
	\end{equation}
	for any $a > 0$ and $\mathbf{Z} \geq 0$.
	
	{\bf Step 4. Control of the quantity $ \mathfrak{A}_4 $.} It infers from the similar arguments of $|\mathfrak{A}_3|$ in \eqref{Afrak-3} that
	\begin{equation}\label{Afrak-4}
		\begin{aligned}
			| \mathfrak{A}_4 | \leq C \digamma_{\delta, l,a, \mathbf{Z}} ( \nu^{-1} h_\sigma )
		\end{aligned}
	\end{equation}
	for any $ a > 0 $ and $\mathbf{Z} \geq 0$.
	
	{\bf Step 5. Control of the quantity $ \mathfrak{A}_5 $.} Note that
	\begin{equation*}
		\begin{aligned}
			| \mathfrak{A}_5 | = e^{ - l } e^{ - ( \delta A + l ) + l } \Big| \int_{ v_3 < 0 } \mathfrak{w} (v) v_3 \varphi_{A, \sigma} (v) \d v \Big| \,.
		\end{aligned}
	\end{equation*}
	Observe that $ e^{ - ( \delta A + l ) + l } \leq 1 $. Then
	\begin{equation}\label{Afrak-5}
		\begin{aligned}
			| \mathfrak{A}_5 | \leq e^{ - l } \Big| \int_{ v_3 < 0 } \mathfrak{w} (v) v_3 \varphi_{A, \sigma} (v) \d v \Big| \,.
		\end{aligned}
	\end{equation}
	
	Collecting the all above estimates on $|\mathfrak{A}_i|$ ($1 \leq i \leq 5$) in \eqref{Afrak-1}-\eqref{Afrak-2}-\eqref{Afrak-3}-\eqref{Afrak-4}-\eqref{Afrak-5}, one knows that the inequality \eqref{NDBL-inq} holds. Consequently, the proof of Lemma \ref{Lmm-NDBL} is finished.	
\end{proof}

Based on the Nondissipative boundary lemma (Lemma \ref{Lmm-NDBL}) above, we will deal with the boundary energy with the form $ - \int_{\R^3} v_3 w_* (v) g_\sigma^2 (0,v) \d v $, where the weight $w_* (v)$ is to be determined later.

\begin{lemma}[Boundary energy lemma]\label{Lmm-BEL}
	Let $- 3 < \gamma \leq 1$, $l, A \geq 1$, $ a > 0$, $\mathbf{Z} \geq 0$ and $0 \leq \alpha_* \leq 1$. Assume that \eqref{Assmp-T} holds, i.e., $0 < T_w < 2 T$. We further assume that the weight $w_* (v)$ satisfies
	\begin{equation}\label{BEL-1}
		\begin{aligned}
			w_* (R_0 v) = w_* (v) \,, \quad 0 \leq w_* (v) \lesssim e^{ \frac{1}{8} ( \frac{1}{2 T_w} - \frac{1}{4 T} ) |v - \u|^2 } \,,
		\end{aligned}
	\end{equation}
	and the small $\hbar > 0$ is such that
	\begin{equation}\label{BEL-2}
		\begin{aligned}
			e^{\hbar \sigma (0, v)} \lesssim e^{ C_{T, T_w} |v - \u|^2 } \,.
		\end{aligned}
	\end{equation}
	where $C_{T, T_w} = \min \{ \tfrac{1}{8 T}, \frac{1}{8} ( \frac{1}{2 T_w} - \frac{1}{4 T} ) \} > 0$. Let $g_\sigma (x,v)$ be a solution to \eqref{A3-lambda}. Then there is a constant $C_0 > 0$, independent of $l, \delta, A, \lambda, a , \hbar$, such that
	\begin{equation}\label{BEL-bnd}{\small
		\begin{aligned}
			- \int_{\R^3} v_3 w_* (v) g_\sigma^2 (0,v) \d v \geq & [ 1 - (1 - \alpha_*)^2 ] \int_{v_3 < 0} | v_3 | w_* (v) g_\sigma^2 (0,v) \d v \\
			- C_0 \alpha_* & ( \digamma_{\delta, l,a, \mathbf{Z}} (g_\sigma) + \digamma_{\delta, l,a, \mathbf{Z}} (\nu^{-1} h_\sigma ) )^2  - C_0 \alpha_* \| |v_3|^\frac{1}{2} \varphi_{A, \sigma} \|^2_{L^2_{\Sigma_-^A}} \,,
		\end{aligned}}
	\end{equation}
	where the functional $\digamma_{\delta, l,a, \mathbf{Z}} (\cdot)$ is mentioned as in \eqref{F-delta-l-a}.
\end{lemma}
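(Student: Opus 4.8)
The plan is to expand $-\int_{\R^3} v_3 w_*(v) g_\sigma^2(0,v)\,\d v$ by means of the Maxwell reflection boundary condition in \eqref{A3-lambda}, peel off the ``good'' specular part, and then control every remaining term by the Nondissipative boundary lemma (Lemma \ref{Lmm-NDBL}). First I would split
\[
-\int_{\R^3} v_3 w_* g_\sigma^2(0,v)\,\d v = \int_{v_3<0}|v_3| w_* g_\sigma^2(0,v)\,\d v - \int_{v_3>0} v_3 w_* g_\sigma^2(0,v)\,\d v ,
\]
and insert into the second integral the boundary relation $g_\sigma(0,v)|_{v_3>0} = (1-\alpha_*) g_\sigma(0,R_0v) + \alpha_* \mathfrak{m}(v) J$, where $\mathfrak{m}(v):=\tfrac{M_w(v)}{\sqrt{\M_\sigma(v)}}$ and $J:=\int_{v_3'<0}|v_3'| g_\sigma(0,v')\sqrt{\M_\sigma(v')}\,\d v'$. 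Squaring, and using the change of variables $v\mapsto R_0 v$ together with the symmetries $w_*(R_0v)=w_*(v)$ (from \eqref{BEL-1}), $\M(R_0v)=\M(v)$, $\sigma(0,R_0v)=\sigma(0,v)$ (hence $\M_\sigma(R_0v)=\M_\sigma(v)$ and $\mathfrak{m}(R_0v)=\mathfrak{m}(v)$), and $M_w(R_0v)=M_w(v)$ — all valid since $\u_3=u_{w,3}=0$ — the $(1-\alpha_*)^2$ term reproduces exactly $(1-\alpha_*)^2\int_{v_3<0}|v_3|w_*g_\sigma^2(0,v)\,\d v$, and one arrives at the identity
\[
-\int_{\R^3} v_3 w_* g_\sigma^2(0,v)\,\d v = [1-(1-\alpha_*)^2]\int_{v_3<0}|v_3|w_*g_\sigma^2(0,v)\,\d v - 2\alpha_*(1-\alpha_*)\, J I_1 - \alpha_*^2 J^2 I_2 ,
\]
where $I_1:=\int_{v_3<0}|v_3| w_*(v)\mathfrak{m}(v)g_\sigma(0,v)\,\d v$ (obtained from $\int_{v_3>0}v_3 w_*\mathfrak{m}\, g_\sigma(0,R_0v)\,\d v$ after the change of variables) and $I_2:=\int_{v_3>0}v_3 w_*(v)\mathfrak{m}^2(v)\,\d v\geq 0$.

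The decisive point is that $J$ and $I_1$ are both boundary integrals of exactly the shape $\int_{v_3<0}|v_3|\mathfrak{w}(v)g_\sigma(0,v)\,\d v$ handled by Lemma \ref{Lmm-NDBL}, with $\mathfrak{w}=\sqrt{\M_\sigma}$ and $\mathfrak{w}=w_*\mathfrak{m}$ respectively. Here I must check that these two weights are dominated by a Gaussian $e^{-b|v-\u|^2}$ (the only way $\mathfrak{w}$ enters the proof of Lemma \ref{Lmm-NDBL}): for $\sqrt{\M_\sigma}=\sqrt{\M}\,e^{-\hbar\sigma(0,\cdot)}\le\sqrt{\M}$ this is immediate; for $w_*\mathfrak{m}$ it combines \eqref{Assmp-T} — under which $M_w/\sqrt{\M}$ decays essentially like $e^{-(\frac{1}{2T_w}-\frac{1}{4T})|v-\u|^2}$, up to an arbitrarily small loss in the rate and a linear-in-$v$ factor absorbed by Young's inequality — with the growth restriction $0\le w_*\lesssim e^{\frac{1}{8}(\frac{1}{2T_w}-\frac{1}{4T})|v-\u|^2}$ of \eqref{BEL-1} and the smallness $e^{\hbar\sigma(0,v)}\lesssim e^{C_{T,T_w}|v-\u|^2}$ of \eqref{BEL-2} with $C_{T,T_w}\le\frac{1}{8}(\frac{1}{2T_w}-\frac{1}{4T})$; a short exponent count then gives $w_*(v)\mathfrak{m}(v)\lesssim e^{-b|v-\u|^2}$ for some $b>0$, and likewise $I_2\lesssim 1$. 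Applying Lemma \ref{Lmm-NDBL} twice,
\[
|J| + |I_1| \le C\big(\digamma_{\delta,l,a,\mathbf{Z}}(g_\sigma)+\digamma_{\delta,l,a,\mathbf{Z}}(\nu^{-1}h_\sigma)\big) + C\,\| |v_3|^{\frac{1}{2}}\varphi_{A,\sigma}\|_{L^2_{\Sigma_-^A}},
\]
where the boundary-source error $\big|\int_{v_3<0}\mathfrak{w}(v)v_3\varphi_{A,\sigma}(v)\,\d v\big|$ appearing in Lemma \ref{Lmm-NDBL} is absorbed into the last term by Cauchy--Schwarz and $\int_{v_3<0}\mathfrak{w}^2|v_3|\,\d v\lesssim 1$.

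It then remains to assemble. Since $I_2\ge0$ we have $-\alpha_*^2 J^2 I_2\ge -C\alpha_*^2 J^2$, and $-2\alpha_*(1-\alpha_*)JI_1\ge -2\alpha_*|J||I_1|$; using $\alpha_*^2,\ \alpha_*(1-\alpha_*)\le\alpha_*$ on $[0,1]$, the bound on $|J|+|I_1|$, and $(a+b)^2\le 2a^2+2b^2$ to separate the $\digamma$-part from the $\varphi_{A,\sigma}$-part, the two non-positive terms are bounded below by $-C_0\alpha_*\big(\digamma_{\delta,l,a,\mathbf{Z}}(g_\sigma)+\digamma_{\delta,l,a,\mathbf{Z}}(\nu^{-1}h_\sigma)\big)^2 - C_0\alpha_*\| |v_3|^{\frac{1}{2}}\varphi_{A,\sigma}\|^2_{L^2_{\Sigma_-^A}}$, which is \eqref{BEL-bnd}; the constant $C_0$ depends only on $T,T_w,\u,u_w$ and the fixed Gaussian rates, hence is independent of $l,\delta,A,\lambda,a,\hbar$. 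As a sanity check, when $\alpha_*=0$ both sides of \eqref{BEL-bnd} vanish and the identity collapses to $-\int v_3 w_* g_\sigma^2 = \int_{v_3<0}|v_3|w_*g_\sigma^2-\int_{v_3<0}|v_3|w_*g_\sigma^2 = 0$. The main obstacle is precisely the Gaussian-domination check for $w_*\mathfrak{m}$ — reading off from \eqref{Assmp-T}, \eqref{BEL-1} and \eqref{BEL-2} that the competing exponential rates leave a net decay — everything else being the elementary algebra above together with two applications of Lemma \ref{Lmm-NDBL}.
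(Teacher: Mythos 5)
Your proposal is correct and follows essentially the same route as the paper: the identical expansion of the boundary energy via the reflection condition and the $R_0$-symmetries (the paper's \eqref{BE-0}), the same Gaussian-domination checks on $\sqrt{\M_\sigma}$, $w_*M_w/\sqrt{\M_\sigma}$ and $w_*M_w^2/\M_\sigma$ under \eqref{Assmp-T}, \eqref{BEL-1}, \eqref{BEL-2}, and the same two applications of Lemma \ref{Lmm-NDBL} followed by Cauchy--Schwarz on the $\varphi_{A,\sigma}$ term. No gaps.
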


\begin{proof}
	Observe that the solution $g_\sigma (x,v)$ satisfies the boundary condition
	\begin{equation*}{\small
		\begin{aligned}
			g_\sigma (0, v) |_{v_3 > 0} = (1 - \alpha_*) g_\sigma (0, R_0 v) + \alpha_* \tfrac{M_w (v)}{\sqrt{\M_\sigma (v)}} \int_{v_3' < 0} (- v_3') g_\sigma (0, v') \sqrt{\M_\sigma (v') } \d v' \,,
		\end{aligned}}
	\end{equation*}
	where $\M_\sigma$ is defined in \eqref{M-sigma}. It then follows from a direct calculation that
{\small
	\begin{align}\label{BE-0}
		\no - \int_{\R^3} v_3 w_* (v) & g_\sigma^2 (0,v) \d v = \int_{v_3 < 0} | v_3 | w_* (v) g_\sigma^2 (0,v) \d v - \int_{v_3 > 0} |v_3| w_* (v) g_\sigma^2 (0,v) \d v \\
		\no = & [ 1 - (1 - \alpha_*)^2 ] \int_{v_3 < 0} | v_3 | w_* (v) g_\sigma^2 (0,v) \d v \\
		\no & - \alpha_*^2 \int_{v_3 < 0} |v_3| w_* (v) \tfrac{M_w^2 (v)}{\M_\sigma (v)} \d v \Big( \int_{v_3 < 0} | v_3 | g_\sigma (0, v) \sqrt{\M_\sigma (v)} \d v \Big)^2 \\
		& - 2 \alpha_* (1 - \alpha_*) \int_{v_3 < 0} |v_3| w_* (v) \tfrac{M_w (v)}{\sqrt{\M_\sigma (v)}} g_\sigma (0, v) \d v \int_{v_3 < 0} |v_3| g_\sigma (0, v) \sqrt{\M_\sigma (v)} \d v \,,
	\end{align}}
	where we have utilized the facts
	\begin{equation*}
		\begin{aligned}
			& \int_{v_3 > 0} | v_3 | w_* (v) g_\sigma^2 (0, R_0 v) \d v = \int_{v_3 < 0} | v_3 | w_* (v) g_\sigma^2 (0,v) \d v \,, \\
			& \int_{v_3 > 0} |v_3| w_* (v) \tfrac{M_w (v)}{\sqrt{\M_\sigma (v)}} g_\sigma (0, R_0 v) \d v = \int_{v_3 < 0} |v_3| w_* (v) \tfrac{M_w (v)}{\sqrt{\M_\sigma (v)}} g_\sigma (0, v) \d v \,, \\
			& \int_{v_3 > 0} |v_3| w_* (v) \tfrac{M_w^2 (v)}{\M_\sigma (v)} \d v = \int_{v_3 < 0} |v_3| w_* (v) \tfrac{M_w^2 (v)}{\M_\sigma (v)} \d v \,.
		\end{aligned}
	\end{equation*}
	It is easy to see that
	\begin{equation*}
		\begin{aligned}
			\tfrac{M_w (v)}{\sqrt{\M (v)}} = & \tfrac{\sqrt{2 \pi} T^\frac{3}{2}}{\rho T_w^2} \exp \{ - ( \tfrac{1}{2 T_w } - \tfrac{1}{4 T}) |v - \u|^2 - \tfrac{1}{T_w} (\u - u_w) \cdot (v - \u) - \tfrac{|\u - u_w|^2}{2 T_w} \} \\
			\leq & C \exp \{ - \tfrac{1}{2} ( \tfrac{1}{2 T_w } - \tfrac{1}{4 T}) |v - \u|^2 \} \,.
		\end{aligned}
	\end{equation*}
	Together with the assumptions \eqref{BEL-1}-\eqref{BEL-2} and the definition of $\M_\sigma (v)$ in \eqref{M-sigma}, one obtains
	\begin{equation*}
		\begin{aligned}
			& w_* (v) \tfrac{M_w (v)}{\sqrt{\M_\sigma (v)}} \leq C \exp \{ - \tfrac{1}{4} ( \tfrac{1}{2 T_w } - \tfrac{1}{4 T}) |v - \u|^2 \} \,, \\
			& w_* (v) \tfrac{M_w^2 (v)}{\M_\sigma (v)} \leq C \exp \{ - \tfrac{5}{8} ( \tfrac{1}{2 T_w } - \tfrac{1}{4 T}) |v - \u|^2 \} \,, \ \sqrt{\M_\sigma (v)} \leq C \exp \{ - \tfrac{1}{8 T} |v - \u|^2 \} \,,
		\end{aligned}
	\end{equation*}
	who all exponentially decay at $|v| \to + \infty$ under the assumption $0 < T_w < 2 T$.
	
	We now take $\mathfrak{w} (v) = \sqrt{\M_\sigma (v)}$ or $w_* (v) \tfrac{M_w (v)}{\sqrt{\M_\sigma (v)}}$ in Lemma \ref{Lmm-NDBL}. Then the inequality \eqref{NDBL-inq} tells us
{\small
	\begin{align}\label{BE-1}
		\Big| \int_{v_3 < 0} |v_3| \mathfrak{w} (v) g_\sigma (0, v) \d v \Big| \leq & C ( \digamma_{\delta, l,a, \mathbf{Z}} (g_\sigma) + \digamma_{\delta, l,a, \mathbf{Z}} (\nu^{-1} h_\sigma ) ) + \Big| \int_{v_3 < 0} \mathfrak{w} (v) v_3 \varphi_{A, \sigma} (v)  \d v \Big|
	\end{align}}
	for any $ a > 0$, $0 < \delta < 1$ and $l \geq 1$. Moreover, the H\"older inequality implies
	\begin{equation}\label{BE-5}
		\begin{aligned}
			\Big| \int_{v_3 < 0} \mathfrak{w} (v) v_3 \varphi_{A, \sigma} (v)  \d v \Big| \leq C \| |v_3|^\frac{1}{2} \varphi_{A, \sigma} \|_{L^2_{\Sigma_-^A}}
		\end{aligned}
	\end{equation}
	for $\mathfrak{w} (v) = \sqrt{\M_\sigma (v)}$ or $w_* (v) \frac{M_w (v)}{\sqrt{\M_\sigma (v)}}$. From substituting the bound \eqref{BE-5} into \eqref{BE-1}, one has
	\begin{align}\label{BE-6}
		\Big| \int_{v_3 < 0} |v_3| \mathfrak{w} (v) g_\sigma (0, v) \d v \Big| \leq C ( \digamma_{\delta, l,a, \mathbf{Z}} (g_\sigma) + \digamma_{\delta, l,a, \mathbf{Z}} (\nu^{-1} h_\sigma ) ) + C \| |v_3|^\frac{1}{2} \varphi_{A, \sigma} \|_{L^2_{\Sigma_-^A}}
	\end{align}
	for $\mathfrak{w} (v) = \sqrt{\M_\sigma (v)}$ or $w_* (v) \frac{M_w (v)}{\sqrt{\M_\sigma (v)}}$ and any $a > 0$. Furthermore, one has
	\begin{equation}\label{BE-7}
		\begin{aligned}
			\int_{v_3 < 0} |v_3| w_* (v) \tfrac{M_w^2 (v)}{\M_\sigma (v)} \d v \leq C \int_{v_3 < 0} \exp \{ - \tfrac{5}{8} ( \tfrac{1}{2 T_w } - \tfrac{1}{4 T}) |v - \u|^2 \} \d v \leq C \,.
		\end{aligned}
	\end{equation}
	As a consequence, the equality \eqref{BE-0} and the bounds \eqref{BE-6}-\eqref{BE-7} conclude the estimate \eqref{BEL-bnd}. The proof of Lemma \ref{Lmm-BEL} is therefore completed.
\end{proof}


\subsection{$w_{\beta, \vartheta}$-weighted $L^2_{x,v}$ estimates}

In this subsection, the majority is to dominate the quantity $ [ \mathscr{E}^A_2 ( g_\sigma ) ]^2 $ defined in \eqref{E2-lambda} by employing the weighted $L^2_{x,v}$ energy method. In order to study the uniqueness of the solution, we also should consider the difference ${\vartriangle} g_\sigma = g_{\sigma 2} - g_{\sigma 1}$, where $g_{\sigma i}$ is the solution to \eqref{A3-lambda} associated with the source term $h_{\sigma i}$ for $i = 1, 2$. Then the difference ${\vartriangle} g_\sigma $ subjects to
\begin{equation}\label{L-lambda-Dif}{\footnotesize
	\left\{	
	\begin{aligned}
		& v_3 \partial_x ( {\vartriangle} g_\sigma ) + [ - \hbar \sigma_x v_3 + \nu (v) ] ( {\vartriangle} g_\sigma ) - ( K_\hbar - \mathbf{D}_\hbar ) ( {\vartriangle} g_\sigma ) = {\vartriangle} h_\sigma \,, \\
		& ( {\vartriangle} g_\sigma ) (0, v) |_{v_3 > 0} = (1 - \alpha_*) ( {\vartriangle} g_\sigma ) (0, R_0 v) + \alpha_* \tfrac{M_w (v)}{\sqrt{\M_\sigma (v)}} \int_{v_3' < 0} (- v_3') ( {\vartriangle} g_\sigma ) (0, v') \sqrt{\M_\sigma (v') } \d v' \,, \\
		& ( {\vartriangle} g_\sigma ) (A, v) |_{v_3 < 0} = 0 \,,
	\end{aligned}
	\right.}
\end{equation}
where ${\vartriangle} h_\sigma : = h_{\sigma 2} - h_{\sigma 1}$. More precisely, the following lemma holds.

\begin{lemma}\label{Lmm-L2xv}
	Let $- 3 < \gamma \leq 1$, $0 \leq \alpha_* \leq 1$, $A \geq 1$, $a > 0$, $\mathbf{Z} \geq 0$ and $\beta \in \R$. Moreover, the parameters $\delta, l, \hbar, \vartheta$ are given in Lemma \ref{Lmm-Dh} with further constraint $\hbar \leq o (1) \delta$, where $0 < o (1) \ll 1$ is independent of $\delta, l, \hbar $ and $A$. Assume that $g_\sigma (x,v)$ is a solution to \eqref{A3-lambda}. Then there is a constant $C > 0$, independent of $A, \delta, l, a, \mathbf{Z}$ and $\hbar$, such that
	\begin{equation}\label{L2-g-sigma-Bnd}
		\begin{aligned}
			& [1 - (1 - \alpha_*)^2] \| |v_3|^\frac{1}{2} w_{\beta, \vartheta} g_\sigma \|^2_{L^2_{\Sigma_+^0}} + \| |v_3|^\frac{1}{2} w_{\beta, \vartheta} g_\sigma \|^2_{L^2_{\Sigma_+^A}} + \mathscr{E}^A_2 ( g_\sigma ) \\
			\leq & C \mathscr{A}^A_2 (h_\sigma) + C \mathscr{B}_2 (\varphi_{A, \sigma}) + C \alpha_* \big( \digamma_{\delta, l,a, \mathbf{Z}}^2 (g_\sigma) + \digamma_{\delta, l,a, \mathbf{Z}}^2 (\nu^{-1} h_\sigma) \big) \,,
		\end{aligned}
	\end{equation}
	where the functionals $ \mathscr{E}^A_2 ( \cdot ) $, $ \mathscr{A}^A_2 ( \cdot ) $, $ \mathscr{B}_2 ( \cdot ) $ and $\digamma_{\delta, l,a, \mathbf{Z}} (\cdot)$ are defined in \eqref{E2-lambda}, \eqref{As-def}, \eqref{B-def} and \eqref{F-delta-l-a}, respectively. Moreover, the difference ${\vartriangle} g_\sigma$ enjoys the bound
	\begin{equation}\label{L2-g-sigma-Bnd-Dif}
		\begin{aligned}
			& [1 - (1 - \alpha_*)^2] \| |v_3|^\frac{1}{2} w_{\beta, \vartheta} {\vartriangle} g_\sigma \|^2_{L^2_{\Sigma_+}} + \| |v_3|^\frac{1}{2} w_{\beta, \vartheta} {\vartriangle} g_\sigma \|^2_{L^2_{\Sigma_+^A}} + \mathscr{E}^A_2 ( {\vartriangle} g_\sigma ) \\
			\leq & C \mathscr{A}^A_2 ( {\vartriangle} h_\sigma) + C \alpha_* \big( \digamma_{\delta, l,a, \mathbf{Z}}^2 ( {\vartriangle} g_\sigma) + \digamma_{\delta, l,a, \mathbf{Z}}^2 (\nu^{-1} {\vartriangle} h_\sigma) \big) \,.
		\end{aligned}
	\end{equation}
\end{lemma}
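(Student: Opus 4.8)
The plan is to run a weighted $L^2_{x,v}$ energy estimate on the rescaled equation \eqref{A3-lambda}. Multiplying the first equation of \eqref{A3-lambda} by $w_{\beta,\vartheta}^2 g_\sigma$ and integrating over $(x,v)\in(0,A)\times\R^3$, the transport term $v_3\partial_x g_\sigma$ produces the boundary contributions $\tfrac12\int_{\R^3}v_3 w_{\beta,\vartheta}^2 g_\sigma^2\,\mathrm dv$ evaluated at $x=A$ and $x=0$, with an opposite sign. The outgoing parts at $x=A$ (where $v_3>0$) and the relevant parts at $x=0$ need to be kept on the left-hand side. For the $x=0$ boundary term one invokes the Boundary energy lemma (Lemma \ref{Lmm-BEL}) with the weight $w_*(v)=w_{\beta,\vartheta}^2(v)$; this is legitimate because $w_{\beta,\vartheta}^2$ is even in $v_3$ (so $w_*(R_0v)=w_*(v)$) and, after fixing $\hbar$ small under the hypotheses of Lemma \ref{Lmm-Dh} (which we have, since $\hbar\le o(1)\delta$ and the constraints of {\bf (PH)}), one has $w_{\beta,\vartheta}^2(v)e^{\hbar\sigma(0,v)}$ decaying slowly enough to satisfy \eqref{BEL-1}--\eqref{BEL-2}. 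This converts $-\int_{\R^3}v_3 w_{\beta,\vartheta}^2 g_\sigma^2(0,v)\,\mathrm dv$ into $[1-(1-\alpha_*)^2]\|\,|v_3|^{1/2}w_{\beta,\vartheta}g_\sigma\|^2_{L^2_{\Sigma_+^0}}$ plus the controllable error terms $-C_0\alpha_*(\digamma_{\delta,l,a,\mathbf Z}(g_\sigma)+\digamma_{\delta,l,a,\mathbf Z}(\nu^{-1}h_\sigma))^2 - C_0\alpha_*\|\,|v_3|^{1/2}\varphi_{A,\sigma}\|^2_{L^2_{\Sigma_-^A}}$, which matches the structure of \eqref{L2-g-sigma-Bnd}. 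The $x=A$ boundary term splits into the outgoing square $\|\,|v_3|^{1/2}w_{\beta,\vartheta}g_\sigma\|^2_{L^2_{\Sigma_+^A}}$ (kept on the left) and the incoming piece controlled by $\mathscr B_2(\varphi_{A,\sigma})=\|\,|v_3|^{1/2}w_{\beta,\vartheta}\varphi_{A,\sigma}\|^2_{L^2_{\Sigma_-^A}}$.

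\textbf{Interior terms.}
The remaining interior terms are handled as follows. The coefficient $-\hbar\sigma_x v_3+\nu(v)$ combined with the damping term $\mathbf D_\hbar g$ must produce the coercive quantity $\mathscr E_2^A(g_\sigma)$ defined in \eqref{E2-lambda}. For the $\mathrm{Null}^\perp(\L)$ part this comes from the spectral gap of $\nu-K$ after applying Lemma \ref{Lmm-Kh-L2} (with $m=0$) to absorb $K_\hbar g_\sigma$, yielding $\|\nu^{1/2}\P^\perp w_{\beta,\vartheta}g_\sigma\|_A^2$; the cross term $-\hbar\sigma_x v_3$ is of lower order by Lemma \ref{Lmm-sigma} ($|\sigma_x v_3|\le c\nu$) and is absorbed by taking $\hbar$ small. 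For the macroscopic part, one invokes Lemma \ref{Lmm-Dh} (coercivity of $\mathbf D$), which gives exactly $\mu_0\delta\hbar(\delta x+l)^{-\frac{1-\gamma}{3-\gamma}}\|\P w_{\beta,\vartheta}g_\sigma\|^2$ at the price of a small loss $C\hbar^{1/2}\|\nu^{1/2}\P^\perp w_{\beta,\vartheta}g_\sigma\|^2$, which is reabsorbed by the microscopic coercivity thanks to $\hbar\le o(1)\delta$ being small. The right-hand side pairing $\int\int w_{\beta,\vartheta}^2 g_\sigma h_\sigma$ is split via Cauchy--Schwarz into a $\P$-part and a $\P^\perp$-part, each controlled by $\mathscr A_2^A(h_\sigma)$ plus a small multiple of $\mathscr E_2^A(g_\sigma)$ (using the weights $(\delta x+l)^{\pm\frac{1-\gamma}{2(3-\gamma)}}$ and $\nu^{\pm1/2}$ built into the definitions \eqref{E2-lambda}--\eqref{As-def}), which is then absorbed. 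Finally, the commutator between $w_{\beta,\vartheta}^2$ and $K_\hbar$ (i.e.\ the fact that $K_\hbar$ is not self-adjoint in the weighted space) contributes a remainder bounded by $C\|\nu^{1/2}w_{\beta,\vartheta}g_\sigma\|^2$ up to lower-order velocity weights; the $\P^\perp$ portion is absorbed while the $\P$ portion is of the right form to be absorbed by the macroscopic damping $\delta\hbar(\delta x+l)^{-\frac{1-\gamma}{3-\gamma}}\|\P w_{\beta,\vartheta}g_\sigma\|^2$ after multiplying by the weight; one keeps track of the $(\delta x+l)$-powers carefully so nothing with a growing weight is left unabsorbed.

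\textbf{The difference estimate.}
For \eqref{L2-g-sigma-Bnd-Dif} the argument is nearly identical: ${\vartriangle}g_\sigma$ solves the linear system \eqref{L-lambda-Dif}, which has no damping term $\mathbf D_\hbar$ but is still of the form treated above with source ${\vartriangle}h_\sigma$ and zero incoming data at $x=A$ (so the $\mathscr B_2$ term drops out, and the boundary term at $x=A$ is purely the outgoing square $\|\,|v_3|^{1/2}w_{\beta,\vartheta}{\vartriangle}g_\sigma\|^2_{L^2_{\Sigma_+^A}}$). Note that in \eqref{L-lambda-Dif} the operator appearing is $(K_\hbar-\mathbf D_\hbar)$ acting together, so one still has access to the damping coercivity Lemma \ref{Lmm-Dh} for the macroscopic part; the boundary term at $x=0$ is again treated by Lemma \ref{Lmm-BEL} applied to ${\vartriangle}g_\sigma$ with $\varphi_{A,\sigma}=0$, producing $[1-(1-\alpha_*)^2]\|\,|v_3|^{1/2}w_{\beta,\vartheta}{\vartriangle}g_\sigma\|^2_{L^2_{\Sigma_+^0}}$ on the left and the $\alpha_*$-error terms in ${\vartriangle}g_\sigma,{\vartriangle}h_\sigma$ on the right. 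One keeps both boundary contributions $\Sigma_+^0$ and $\Sigma_+^A$, which together give $\|\,|v_3|^{1/2}w_{\beta,\vartheta}{\vartriangle}g_\sigma\|^2_{L^2_{\Sigma_+}}$ as stated.

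\textbf{Main obstacle.}
The delicate point is \emph{not} any individual estimate but the bookkeeping of the spatial polynomial weights $(\delta x+l)^{\theta}$: the macroscopic damping from Lemma \ref{Lmm-Dh} only supplies the negative power $(\delta x+l)^{-\frac{1-\gamma}{3-\gamma}}$, whereas the source term $\mathscr A_2^A$ is measured with the \emph{positive} power $(\delta x+l)^{+\frac{1-\gamma}{2(3-\gamma)}}$ on its $\P$-component; one must pair the $\P$-part of the source against the $\P$-part of $g_\sigma$ so that the two $(\delta x+l)$-powers exactly cancel in Cauchy--Schwarz, $\int(\delta x+l)^{-\frac{1-\gamma}{2(3-\gamma)}}\P g_\sigma\cdot(\delta x+l)^{\frac{1-\gamma}{2(3-\gamma)}}\P h_\sigma\le \epsilon\,\mathscr E_2^A+C_\epsilon\mathscr A_2^A$, with the $\delta\hbar$ vs.\ $(\delta\hbar)^{-1}$ weights in \eqref{E2-lambda} and \eqref{As-def} designed precisely to make Young's inequality close. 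Getting every factor of $\delta$, $\hbar$, and every power of $(\delta x+l)$ to land in the right place — so that the constant $C$ can be taken independent of $A,\delta,l,a,\mathbf Z,\hbar$ as claimed — is the real work; the $\alpha_*$-weighted $\digamma$ error terms simply propagate unchanged from Lemma \ref{Lmm-BEL} and are dealt with later by the machinery of Part (III) of the introduction.
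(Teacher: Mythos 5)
Your proposal follows essentially the same route as the paper: multiply by $w_{\beta,\vartheta}^2 g_\sigma$, integrate by parts, treat the $x=0$ boundary with Lemma \ref{Lmm-BEL} (with $w_*=w_{\beta,\vartheta}^2$), get macroscopic coercivity from Lemma \ref{Lmm-Dh}, and close the source term by the paired-weight Cauchy--Schwarz, exactly as in \eqref{BC-L2}--\eqref{h-bnd-2}. The only imprecision is that the microscopic coercivity of $\nu-K_\hbar$ in the weighted space does not follow from the boundedness result Lemma \ref{Lmm-Kh-L2} alone; the paper instead invokes the weighted spectral-gap estimate of Chen--Liu--Yang / Strain--Guo (see \eqref{Lh-bnd}), which is the correct tool here.
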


\begin{proof}[Proof of Lemma \ref{Lmm-L2xv}]
	Multiplying \eqref{A3-lambda} by $w_{\beta, \vartheta}^2 g_\sigma$ and integrating by parts over $(x,v) \in \Omega_A \times \R^3$, we have
	\begin{equation*}
		\begin{aligned}
			\iint_{\Omega_A \times \R^3} v_3 \partial_x g_\sigma \cdot w_{\beta, \vartheta}^2 g_\sigma \d v \d x + \mathscr{W} (g_\sigma) = \iint_{\Omega_A \times \R^3} h_\sigma \cdot w_{\beta, \vartheta}^2 g_\sigma \d v \d x \,,
		\end{aligned}
	\end{equation*}
	where
	\begin{equation*}
		\begin{aligned}
			\mathscr{W} (g_\sigma) = & \iint_{\Omega_A \times \R^3} [ - \hbar \sigma_x v_3 + \nu (v) ] g_\sigma \cdot w_{\beta, \vartheta}^2 g_\sigma \d v \d x - \iint_{\Omega_A \times \R^3} ( K_\hbar - \mathbf{D}_\hbar ) g_\sigma \cdot w_{\beta, \vartheta}^2 g_\sigma \d v \d x \,.
		\end{aligned}
	\end{equation*}
	
	Note that
	\begin{equation*}
		\begin{aligned}
			\iint_{\Omega_A \times \R^3} v_3 \partial_x g_\sigma \cdot w_{\beta, \vartheta}^2 g_\sigma \d v \d x = \tfrac{1}{2} \int_{\R^3} v_3 w_{\beta, \vartheta}^2 g_\sigma^2 (A, v) \d v - \tfrac{1}{2} \int_{\R^3} v_3 w_{\beta, \vartheta}^2 g_\sigma^2 (0, v) \d v \,.
		\end{aligned}
	\end{equation*}
	The boundary condition in \eqref{A3-lambda} reduces to
	\begin{equation*}
		\begin{aligned}
			\tfrac{1}{2} \int_{\R^3} v_3 w_{\beta, \vartheta}^2 g_\sigma^2 (A, v) \d v = \tfrac{1}{2} \| |v_3|^\frac{1}{2} w_{\beta, \vartheta} g_\sigma \|^2_{L^2_{\Sigma_+^A}} - \tfrac{1}{2} \| |v_3|^\frac{1}{2} w_{\beta, \vartheta} \varphi_{A, \sigma} \|^2_{L^2_{\Sigma_-^A}} \,.
		\end{aligned}
	\end{equation*}
	Moreover, it follows from Lemma \ref{Lmm-BEL} that
	\begin{align*}
		- \tfrac{1}{2} \int_{\R^3} v_3 w_{\beta, \vartheta}^2 g_\sigma^2 (0, v) \d v \geq & \tfrac{1}{2} [ 1 - (1 - \alpha_*)^2 ] \| |v_3|^\frac{1}{2} w_{\beta, \vartheta} g_\sigma \|^2_{L^2_{\Sigma_+^0}} - C \alpha_* \| |v_3|^\frac{1}{2} \varphi_{A, \sigma} \|^2_{L^2_{\Sigma_-^A}} \\
		& - C \alpha_* \big( \digamma_{\delta, l,a, \mathbf{Z}} (g_\sigma) + \digamma_{\delta, l,a, \mathbf{Z}} (\nu^{-1} h_\sigma) \big)^2 \,.
	\end{align*}
	It is easy to see that $ \| |v_3|^\frac{1}{2} \varphi_{A, \sigma} \|^2_{L^2_{\Sigma_-^A}} \leq C \mathscr{B}_2 ( \varphi_{A, \sigma} ) $. As a consequence, one has
	\begin{align}\label{BC-L2}
		\no & \iint_{\Omega_A \times \R^3} v_3 \partial_x g_\sigma \cdot w_{\beta, \vartheta}^2 g_\sigma \d v \d x \\
		\no \geq & \tfrac{1}{2} [ 1 - (1 - \alpha_*)^2 ] \| |v_3|^\frac{1}{2} w_{\beta, \vartheta} g_\sigma \|^2_{L^2_{\Sigma_+^0}} + \tfrac{1}{2} \| |v_3|^\frac{1}{2} w_{\beta, \vartheta} g_\sigma \|^2_{L^2_{\Sigma_+^A}} - C \mathscr{B}_2 ( \varphi_{A, \sigma} ) \\
		& - C \alpha_* \big( \digamma_{\delta, l,a, \mathbf{Z}} (g_\sigma) + \digamma_{\delta, l,a, \mathbf{Z}} (\nu^{-1} h_\sigma) \big)^2 \,.
	\end{align}
	
	Recalling $\L_\hbar g_\sigma = \nu (v) g_\sigma - K_\hbar g_\sigma$, there holds
	\begin{equation*}
		\begin{aligned}
			\mathscr{W} (g_\sigma) = \iint_{\Omega_A \times \R^3} \L_\hbar g_\sigma \cdot w_{\beta, \vartheta}^2 g_\sigma \d v \d x + \iint_{\Omega_A \times \R^3} ( \mathbf{D}_\hbar g_\sigma - \hbar \sigma_x v_3 g_\sigma ) w_{\beta, \vartheta}^2 g_\sigma \d v \d x \,.
		\end{aligned}
	\end{equation*}
	From using Lemma \ref{Lmm-Dh}, it follows that
	\begin{equation}\label{Dh-bnd}{\footnotesize
		\begin{aligned}
			\iint_{\Omega_A \times \R^3} ( \mathbf{D}_\hbar g_\sigma - \hbar \sigma_x v_3 g_\sigma ) w_{\beta, \vartheta}^2 g_\sigma \d v \d x \geq & \mu_0 \delta \hbar \| (\delta x + l)^{- \frac{1 - \gamma}{2 (3 - \gamma)}} \P w_{\beta, \vartheta} g_\sigma \|^2_A - \lambda C \hbar^\frac{1}{2} \| \nu^\frac{1}{2} \P^\perp w_{\beta, \vartheta} g_\sigma \|^2_A \,.
		\end{aligned}}
	\end{equation}
	Moreover, by Lemma 2.2 of \cite{Chen-Liu-Yang-2004-AA} (or Lemma 2.4 of \cite{Wang-Yang-Yang-2007-JMP}) and the similar arguments in Corollary 1 of \cite{Strain-Guo-2008-ARMA}, it infers that
	\begin{equation}\label{Lh-bnd}
		\begin{aligned}
			\iint_{\Omega_A \times \R^3} \L_\hbar g_\sigma \cdot w_{\beta, \vartheta}^2 g_\sigma \d v \d x \geq \lambda \mu_2 \| \nu^\frac{1}{2} \P^\perp w_{\beta, \vartheta} g_\sigma \|^2_A - C \hbar^2 \| (\delta x + l)^{- \frac{1 - \gamma}{2 (3 - \gamma)}} \P w_{\beta, \vartheta} g_\sigma \|^2_A
		\end{aligned}
	\end{equation}
	for $\mu_2 > 0$. Take small $\hbar \geq 0$ such that $\mu_2 - C \hbar^\frac{1}{2} \geq \frac{1}{2} \mu_2$ and $\mu_0 \delta - C \hbar \geq \frac{1}{2} \mu_0 \delta$. Then
	\begin{equation}\label{Wg-bnd}
		\begin{aligned}
			\mathscr{W} (g_\sigma) \geq & \tfrac{1}{2} \mu_2 \| \nu^\frac{1}{2} \P^\perp w_{\beta, \vartheta} g_\sigma \|^2_A + \tfrac{1}{2} \mu_0 \delta \hbar \| (\delta x + l)^{- \frac{1 - \gamma}{2 (3 - \gamma)}} \P w_{\beta, \vartheta} g_\sigma \|^2_A \geq c_0 \mathscr{E}_2^A (g_\sigma) \,,
		\end{aligned}
	\end{equation}
	where the functional $\mathscr{E}_2^A ( \cdot )$ is given in \eqref{E2-lambda}.
	
	We then control the quantity $\iint_{\Omega_A \times \R^3} h_\sigma \cdot w_{\beta, \vartheta}^2 g_\sigma \d v \d x$. By the H\"older inequality one has
	{\small
		\begin{align}\label{h-bnd-2}
			\no \iint_{\Omega_A \times \R^3} h_\sigma \cdot w_{\beta, \vartheta}^2 g_\sigma \d v \d x = & \iint_{\Omega_A \times \R^3} \P w_{\beta, \vartheta} h_\sigma \cdot \P w_{\beta, \vartheta} g_\sigma \d v \d x + \iint_{\Omega_A \times \R^3} \P^\perp w_{\beta, \vartheta} h_\sigma \cdot \P^\perp w_{\beta, \vartheta} g_\sigma \d v \d x \\
			\no \leq & \tfrac{1}{2} c_0 \delta \hbar \| (\delta x + l)^{- \frac{1 - \gamma}{2 (3 - \gamma)}} \P w_{\beta, \vartheta} g_\sigma \|^2_A + C (\delta \hbar)^{-1} \| (\delta x + l)^{ \frac{1 - \gamma}{2 (3 - \gamma)}} \P w_{\beta, \vartheta} h_\sigma \|^2_A \\
			& + \tfrac{1}{2} c_0 \| \nu^\frac{1}{2} \P^\perp w_{\beta, \vartheta} g_\sigma \|^2_A + C \| \nu^{- \frac{1}{2}} \P^\perp w_{\beta, \vartheta} h_\sigma \|^2_A \\
			\no \leq & \tfrac{1}{2} c_0 \mathscr{E}_2^A (g_\sigma) + C \mathscr{A}_2^A (h_\sigma) \,.
		\end{align}}
	
	Collecting the all above estimates \eqref{BC-L2}, \eqref{Wg-bnd} and \eqref{h-bnd-2}, one  concludes the bound \eqref{L2-g-sigma-Bnd}. Moreover, as similar arguments as in \eqref{L2-g-sigma-Bnd}, the difference ${\vartriangle} g_\sigma : = g_{\sigma 2} - g_{\sigma 1}$ enjoys the estimate \eqref{L2-g-sigma-Bnd-Dif}. Therefore, the proof of Lemma \ref{Lmm-L2xv} is completed.
\end{proof}

We remark that the weighted $L^2_{x,v}$ estimate \eqref{L2-g-sigma-Bnd} in Lemma \ref{Lmm-L2xv} is not closed due to the quantity $\alpha_* \digamma_{\delta, l,a, \mathbf{Z}}^2 (g_\sigma)$. Recalling the definition of $\digamma_{\delta, l,a, \mathbf{Z}} (g_\sigma)$ in \eqref{F-delta-l-a}, one has
\begin{equation*}
	\begin{aligned}
		\digamma_{\delta, l,a, \mathbf{Z}}^2 (g_\sigma) \lesssim & \delta^{- 1} l^{- 100} \| (\delta x + l)^{ - \frac{1 - \gamma}{2 (3 - \gamma)}} \nu^\frac{1}{2} g_\sigma \|_A^2 \\
		& + \delta^{- 1} l^{ - ( 2 a - \frac{1 - \gamma}{ 3 - \gamma } ) } \| ( l^{- \mathbf{Z}} \delta x + l)^a (\delta x + l)^{\- \frac{1 - \gamma}{2 (3 - \gamma)}} \nu^\frac{1}{2} g_\sigma \|_A^2
	\end{aligned}
\end{equation*}
for any $a > 0$ and $\mathbf{Z} \geq 0$. By \eqref{E2-lambda}, it is easy to see that $\| (\delta x + l)^{ - \frac{1 - \gamma}{2 (3 - \gamma)}} \nu^\frac{1}{2} g_\sigma \|_A^2 \lesssim (\delta \hbar)^{-1} \mathscr{E}^A_2 ( g_\sigma )$, which means that
\begin{equation*}
	\begin{aligned}
		\delta^{- 1} l^{- 100} \| (\delta x + l)^{ - \frac{1 - \gamma}{2 (3 - \gamma)}} \nu^\frac{1}{2} g_\sigma \|_A^2 \leq (\delta \hbar)^{-1} \delta^{- 1} l^{- 100} \mathscr{E}^A_2 ( g_\sigma ) \,.
	\end{aligned}
\end{equation*}
Consequently, one has
\begin{equation}\label{F-delta-l-a-bnd}{\small
	\begin{aligned}
		\digamma_{\delta, l,a, \mathbf{Z}}^2 (g_\sigma) \lesssim (\delta \hbar)^{-1} \delta^{- 1} l^{- 100} \mathscr{E}^A_2 ( g_\sigma ) + \delta^{- 1} l^{ - ( 2 a - \frac{1 - \gamma}{ 3 - \gamma } ) } \| (l^{- \mathbf{Z}} \delta x + l )^a (\delta x + l)^{ - \frac{1 - \gamma}{2 (3 - \gamma)}} \nu^\frac{1}{2} g_\sigma \|_A^2 \,.
	\end{aligned}}
\end{equation}
Then, together with \eqref{L2-g-sigma-Bnd}, one gains the following corollary.
\begin{corollary}\label{Coro-L2}
	Under the same assumptions in Lemma \ref{Lmm-L2xv}, there holds
	\begin{align}\label{Sum-1st-1}
		\mathscr{E}^A_2 ( g_\sigma ) \lesssim & \alpha_* (\delta \hbar)^{-1} \delta^{- 1} l^{ - ( 2 a - \frac{1 - \gamma}{ 3 - \gamma } ) } \| (l^{- \mathbf{Z}} \delta x + l )^a (\delta x + l)^{- \frac{1 - \gamma}{2 (3 - \gamma)}} \nu^\frac{1}{2} g_\sigma \|_A^2 \\
		\no & + \alpha_* (\delta \hbar)^{-1} \delta^{- 1} l^{- 100} \mathscr{E}^A_2 ( g_\sigma ) + \alpha_* \digamma_{\delta, l,a, \mathbf{Z}}^2 (\nu^{-1} h_\sigma) + \mathscr{A}^A_2 (h_\sigma) + \mathscr{B}_2 (\varphi_{A, \sigma})
	\end{align}
	for any $\mathbf{Z} \geq 0$ and $a > 0$.
\end{corollary}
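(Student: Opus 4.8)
The plan is to obtain \eqref{Sum-1st-1} as an immediate consequence of two facts already at our disposal: the weighted $L^2_{x,v}$ estimate \eqref{L2-g-sigma-Bnd} of Lemma \ref{Lmm-L2xv}, and the pointwise bound \eqref{F-delta-l-a-bnd} on $\digamma_{\delta, l, a, \mathbf{Z}}^2 (g_\sigma)$ derived just above. First I would start from \eqref{L2-g-sigma-Bnd} and simply discard the two boundary contributions on its left-hand side. This is legitimate because $0 \leq \alpha_* \leq 1$ forces $1 - (1 - \alpha_*)^2 \geq 0$, so that both $[1 - (1 - \alpha_*)^2] \| |v_3|^\frac{1}{2} w_{\beta, \vartheta} g_\sigma \|^2_{L^2_{\Sigma_+^0}}$ and $\| |v_3|^\frac{1}{2} w_{\beta, \vartheta} g_\sigma \|^2_{L^2_{\Sigma_+^A}}$ are nonnegative. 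What remains is the reduced inequality
\[
\mathscr{E}^A_2 ( g_\sigma ) \leq C \mathscr{A}^A_2 (h_\sigma) + C \mathscr{B}_2 (\varphi_{A, \sigma}) + C \alpha_* \big( \digamma_{\delta, l,a, \mathbf{Z}}^2 (g_\sigma) + \digamma_{\delta, l,a, \mathbf{Z}}^2 (\nu^{-1} h_\sigma) \big) \,.
\]

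Next I would feed the bound \eqref{F-delta-l-a-bnd} into the term $\alpha_* \digamma_{\delta, l, a, \mathbf{Z}}^2 (g_\sigma)$ on the right. This replaces it by a constant multiple of $\alpha_* (\delta \hbar)^{-1} \delta^{-1} l^{-100} \mathscr{E}^A_2 ( g_\sigma )$ plus a constant multiple of $\alpha_* \delta^{-1} l^{ - ( 2 a - \frac{1 - \gamma}{3 - \gamma} ) } \| (l^{- \mathbf{Z}} \delta x + l )^a (\delta x + l)^{- \frac{1 - \gamma}{2 (3 - \gamma)}} \nu^\frac{1}{2} g_\sigma \|_A^2$; since $\delta \hbar < 1$, the latter may also be written with the extra harmless factor $(\delta \hbar)^{-1}$ so as to match the display \eqref{Sum-1st-1} verbatim. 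Leaving $\alpha_* \digamma_{\delta, l, a, \mathbf{Z}}^2 (\nu^{-1} h_\sigma)$, $\mathscr{A}^A_2 (h_\sigma)$ and $\mathscr{B}_2 (\varphi_{A, \sigma})$ untouched and gathering everything yields exactly \eqref{Sum-1st-1}, with a constant depending only on the harmless constants of Lemma \ref{Lmm-L2xv} and of \eqref{F-delta-l-a-bnd}; in particular it is independent of $A$, $\delta$, $l$, $a$, $\mathbf{Z}$ and $\hbar$. Finally, since \eqref{L2-g-sigma-Bnd} and \eqref{F-delta-l-a-bnd} both hold for every $a > 0$ and $\mathbf{Z} \geq 0$, the same is true of \eqref{Sum-1st-1}.

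As the argument is a mechanical combination of already-established estimates, there is no genuine analytic obstacle; the only point requiring a little care is the bookkeeping of the powers of $l$, $\delta$ and $\hbar$, so that the coefficient in front of $\mathscr{E}^A_2 ( g_\sigma )$ on the right emerges precisely as $\alpha_* (\delta \hbar)^{-1} \delta^{-1} l^{-100}$ and that of the weighted norm as $\alpha_* (\delta \hbar)^{-1} \delta^{-1} l^{ - ( 2 a - \frac{1 - \gamma}{3 - \gamma} ) }$. I would emphasize that \eqref{Sum-1st-1} is deliberately \emph{not} absorbed at this stage: the self-referential term $\alpha_* (\delta \hbar)^{-1} \delta^{-1} l^{-100} \mathscr{E}^A_2 ( g_\sigma )$ is retained on the right, to be moved to the left only later, once $l$ is taken large (depending on $\delta, \hbar$) in the closing argument, while the term carrying the extra spatial weight $(l^{- \mathbf{Z}} \delta x + l )^a$ is precisely the quantity that the spatial-velocity indices iteration of Lemma \ref{Lmm-SVII} is designed to handle.
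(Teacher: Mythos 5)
Your proposal is correct and coincides with the paper's own (essentially one-line) derivation: drop the nonnegative boundary terms in \eqref{L2-g-sigma-Bnd} and insert \eqref{F-delta-l-a-bnd} into the $\alpha_* \digamma_{\delta,l,a,\mathbf{Z}}^2(g_\sigma)$ term, noting that the extra factor $(\delta\hbar)^{-1}\geq 1$ only weakens the bound. Nothing further is needed.
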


Remark that the quantity $\alpha_* (\delta \hbar)^{-1} \delta^{- 1} l^{- 100} \mathscr{E}^A_2 ( g_\sigma ) $ can be absorbed by the quantity $ \mathscr{E}^A_2 ( g_\sigma ) $ in the left-hand side of \eqref{Sum-1st-1}, provided that $(\delta \hbar)^{-1} \delta^{- 1} l^{- 100}$ is sufficiently small. Therefore, the quantity
$$\alpha_* (\delta \hbar)^{-1} \delta^{- 1} l^{ - ( 2 a - \frac{1 - \gamma}{ 3 - \gamma } ) } \| ( l^{- \mathbf{Z}} \delta x + l)^{ a } (\delta x + l)^{ - \frac{1 - \gamma}{2 (3 - \gamma)}} \nu^\frac{1}{2} g_\sigma \|_A^2$$
in the right-hand side of \eqref{Sum-1st-1} is the only one that should be further dominated by developing the so-called {\em Spatial-velocity indices iteration approach}, as given in next subsection.


\subsection{Spatial-velocity indices iteration approach}

The main goal of this subsection is to control the quantity $\alpha_* (\delta \hbar)^{-1} \delta^{- 1} l^{ - ( 2 a - \frac{1 - \gamma}{ 3 - \gamma } ) } \| ( l^{- \mathbf{Z}} \delta x + l)^{ a } (\delta x + l)^{ - \frac{1 - \gamma}{2 (3 - \gamma)}} \nu^\frac{1}{2} g_\sigma \|_A^2$ in the right-hand side of \eqref{Sum-1st-1}. One emphasizes that the small factor $l^{ - ( 2 a - \frac{1 - \gamma}{ 3 - \gamma } ) }$ is dominant when $a > \frac{1 - \gamma}{ 2 ( 3 - \gamma ) }$, while $\alpha_* (\delta \hbar)^{-1} \delta^{- 1}$ is subdominant. In other words, the smallness of the factor $l^{ - ( 2 a - \frac{1 - \gamma}{ 3 - \gamma } ) }$ is based on sacrificing a positive power of the spatial polynomial weight $( l^{- \mathbf{Z}} \delta x + l)^{ a } (\delta x + l)^{ - \frac{1 - \gamma}{2 (3 - \gamma)}}$. In order to deal with this weight, we try to develop a so-called {\em spatial-velocity indices iteration approach} to shift the power of spatial polynomial weight $( l^{- \mathbf{Z}} \delta x + l)^{ a } $ to a controllable velocity one, while the factor $ (\delta x + l)^{ - \frac{1 - \gamma}{2 (3 - \gamma)}} $ coincides with the weight in the functional $\mathscr{E}_2^A (g_\sigma
)$ (see \eqref{E2-lambda}) resulting from the coercivity of the operators $\L$ and $\mathbf{D}$.

For any $\mathtt{q}, \beta_\sharp \in \R$, $\mathbf{Z} \geq 0$ and $a > 0$, we define the spatial-velocity indices functions as
\begin{equation}\label{psi-y}{\small
	\begin{aligned}
		\psi_{ \mathbf{Z}; \mathtt{q}, \beta_\sharp} : = & \delta \hbar \| ( l^{- \mathbf{Z}} \delta x + l )^{ \mathtt{q} } (\delta x + l)^{ - \frac{1 - \gamma}{2 (3 - \gamma)} } \P w_{\beta_\sharp, 0} g_\sigma \|^2_A + \| ( l^{- \mathbf{Z}} \delta x + l )^\mathtt{q} \nu^\frac{1}{2} \P^\perp w_{\beta_\sharp, 0} g_\sigma \|^2_A \,, \\
		y_{a, \mathbf{Z};\mathtt{q}, \beta_\sharp} : = & \alpha_* l^{2 \mathtt{q}} \digamma_{\delta, l,a, \mathbf{Z}}^2 (\nu^{-1} h_\sigma ) + (\delta \hbar)^{-1} \| ( l^{- \mathbf{Z}} \delta x + l)^{ \mathtt{q} } (\delta x + l)^{ \frac{1 - \gamma}{2 (3 - \gamma)} } \P w_{\beta_\sharp, 0} h_\sigma \|^2_A \\
		& + \alpha_* l^{2 \mathtt{q} } \| |v_3|^\frac{1}{2} ( l^{- \mathbf{Z}} \delta A + l )^\mathtt{q} w_{\beta_\sharp, 0} \varphi_{A, \sigma} \|^2_{L^2_{\Sigma_-^A}} + \| ( l^{- \mathbf{Z}} \delta x + l )^\mathtt{q} \nu^{ - \frac{1}{2} } \P^\perp w_{\beta_\sharp, 0} h_\sigma \|^2_A \,,
	\end{aligned}}
\end{equation}
where the functional $\digamma_{\delta, l,a, \mathbf{Z}} (\cdot )$ is defined in \eqref{F-delta-l-a}. Here $\mathtt{q}$ is the spatial polynomial weighted index, and $\beta_\sharp$ stands for the velocity polynomial weighted index.

First, we establish the following spatial-velocity indices iteration lemma.

\begin{lemma}[Spatial-velocity indices iteration form]\label{Lmm-SVII}
	Let $- 3 < \gamma \leq 1$, $0 \leq \alpha_* \leq 1$, $\beta_\sharp, \mathtt{q} \in \R$, $A \geq 1$, $a > 0$, $\mathbf{Z} \geq 0$. The parameters $\delta, \hbar, l$ are given in Lemma \ref{Lmm-Dh}. Then there is a constant $C > 0$, independent of $A, \delta, l, a, \mathbf{Z} $ and $\hbar$, such that
	\begin{equation}\label{It0}
		\begin{aligned}
			\psi_{ \mathbf{Z}; \mathtt{q}, \beta_\sharp } \leq & C |\mathtt{q}| l^{ - \frac{2}{3 - \gamma} \mathbf{Z} } \delta (\delta \hbar)^{-1} \psi_{ \mathbf{Z}; \mathtt{q} - \frac{1}{3 - \gamma}, \beta_\sharp + \beta_\gamma + \frac{1}{2} } + C \alpha_* \delta^{- 1} l^{2 \mathtt{q}} l^{ - ( 2 a - \frac{1 - \gamma}{ 3 - \gamma } ) } (\delta \hbar)^{-1} \psi_{\mathbf{Z}; a, 0} \\
			& + C (\delta \hbar)^{-2} \delta^{- 1} l^{- 100} \mathscr{E}^A_2 ( g_\sigma ) + C y_{a, \mathbf{Z}; \mathtt{q}, \beta_\sharp } \,,
		\end{aligned}
	\end{equation}
	where $\beta_\gamma$ is given in \eqref{beta-gamma}, and the functional $\mathscr{E}^A_2 ( \cdot )$ is defined in \eqref{E2-lambda}.
\end{lemma}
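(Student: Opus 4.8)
The plan is to mimic the weighted $L^2_{x,v}$ energy argument of Lemma \ref{Lmm-L2xv}, but now with the extra spatial polynomial weight $(l^{-\mathbf{Z}}\delta x + l)^{\mathtt{q}}$ inserted. Concretely, I would multiply the equation \eqref{A3-lambda} by $(l^{-\mathbf{Z}}\delta x + l)^{2\mathtt{q}} w_{\beta_\sharp,0}^2 g_\sigma$ and integrate by parts over $(x,v)\in\Omega_A\times\R^3$. Compared with the proof of Lemma \ref{Lmm-L2xv}, there are three new structural features to handle: (i) the transport term $\iint v_3\partial_x g_\sigma\cdot (l^{-\mathbf{Z}}\delta x+l)^{2\mathtt{q}} w_{\beta_\sharp,0}^2 g_\sigma$ now produces, besides the boundary terms, an interior commutator term $-\mathtt{q}\, l^{-\mathbf{Z}}\delta \iint v_3 (l^{-\mathbf{Z}}\delta x+l)^{2\mathtt{q}-1} w_{\beta_\sharp,0}^2 g_\sigma^2$ coming from $\partial_x[(l^{-\mathbf{Z}}\delta x+l)^{2\mathtt{q}}]$; (ii) the coercivity of $\L_\hbar$ (via Lemma 2.2 of \cite{Chen-Liu-Yang-2004-AA} and Corollary 1 of \cite{Strain-Guo-2008-ARMA}) and of $\mathbf{D}_\hbar$ (Lemma \ref{Lmm-Dh}) still apply with the inert weight $(l^{-\mathbf{Z}}\delta x+l)^{\mathtt{q}}$ multiplied in, producing the lower bound $c_0\,\psi_{\mathbf{Z};\mathtt{q},\beta_\sharp}$ for the $\mathscr{W}$-type term; (iii) the boundary energy at $x=0$ is again nondissipative, so Lemma \ref{Lmm-BEL} (Boundary energy lemma) must be applied, this time with the weight $w_*(v) = (l^{-\mathbf{Z}}\delta\cdot 0 + l)^{2\mathtt{q}} w_{\beta_\sharp,0}^2(v) = l^{2\mathtt{q}} w_{\beta_\sharp,0}^2(v)$, which is exponential-decay-bounded as required in \eqref{BEL-1} (after shrinking $\hbar$), yielding the $\alpha_* l^{2\mathtt{q}}(\digamma_{\delta,l,a,\mathbf{Z}}(g_\sigma)+\digamma_{\delta,l,a,\mathbf{Z}}(\nu^{-1}h_\sigma))^2$ error plus the $\varphi_{A,\sigma}$-boundary term.

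The key is then to dominate the commutator term. On the support where $v_3$ and $\nu(v)$ are comparable — but for general $-3<\gamma\le1$ they are not — I use the weight $\sigma(x,v)$: by Lemma \ref{Lmm-sigma} and the structure of $\sigma_x$, one has $|v_3|\lesssim \nu(v)(1+|v-\u|)^{1-\gamma}$, hence $|v_3|(l^{-\mathbf{Z}}\delta x+l)^{2\mathtt{q}-1}\lesssim l^{-\mathbf{Z}}\,\nu(v)(1+|v-\u|)^{1-\gamma}(l^{-\mathbf{Z}}\delta x+l)^{2\mathtt{q}-1}$, absorbing exactly one power of $l^{-\mathbf{Z}}$. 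Writing $2\mathtt{q}-1 = 2(\mathtt{q}-\tfrac{1}{3-\gamma}) + \tfrac{2}{3-\gamma}-1 = 2(\mathtt{q}-\tfrac{1}{3-\gamma}) - \tfrac{1-\gamma}{3-\gamma}$, and bounding $(l^{-\mathbf{Z}}\delta x+l)^{-\frac{1-\gamma}{3-\gamma}}\lesssim(\delta x+l)^{-\frac{1-\gamma}{3-\gamma}}$ matches the weight in $\psi$; the velocity factor $(1+|v-\u|)^{1-\gamma}$ paired with $\nu^{1/2}\thicksim(1+|v|)^{\gamma/2}$ is absorbed by raising the velocity index by $\beta_\gamma+\tfrac12$, using the definition \eqref{beta-gamma} of $\beta_\gamma$ (so that $1-\gamma \le 2(\beta_\gamma+\tfrac12) - $ the available $\nu$-gain, after the standard split into $\P$ and $\P^\perp$ parts and an interpolation on the $\P$ part). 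This produces the term $C|\mathtt{q}| l^{-\frac{2}{3-\gamma}\mathbf{Z}}\delta(\delta\hbar)^{-1}\psi_{\mathbf{Z};\mathtt{q}-\frac{1}{3-\gamma},\beta_\sharp+\beta_\gamma+\tfrac12}$ — one factor $l^{-\mathbf{Z}}$ converts, via $\sigma$, into the full $l^{-\frac{2}{3-\gamma}\mathbf{Z}}$ because of the exponent $\frac{2}{3-\gamma}$ built into $\sigma$; the $\delta(\delta\hbar)^{-1}$ accounts for recasting an $L^2$ norm of $g_\sigma$ in terms of the $\P$-part normalization in $\psi$ (which carries a $\delta\hbar$) and the $\delta$ from $\partial_x$.

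Finally, the source term $\iint h_\sigma\cdot(l^{-\mathbf{Z}}\delta x+l)^{2\mathtt{q}}w_{\beta_\sharp,0}^2 g_\sigma$ is split into $\P$ and $\P^\perp$ pieces and Young's inequality absorbs the $g_\sigma$ parts into $\tfrac12 c_0\psi_{\mathbf{Z};\mathtt{q},\beta_\sharp}$ at the cost of the $y_{a,\mathbf{Z};\mathtt{q},\beta_\sharp}$ terms; the $\alpha_* l^{2\mathtt{q}}(\digamma_{\delta,l,a,\mathbf{Z}}(g_\sigma))^2$ error from the boundary lemma is expanded via \eqref{F-delta-l-a}, where the $l^{-50}$-term feeds (after $\|(\delta x+l)^{-\frac{1-\gamma}{2(3-\gamma)}}\nu^{1/2}g_\sigma\|_A^2\lesssim(\delta\hbar)^{-1}\mathscr{E}_2^A(g_\sigma)$) into $C(\delta\hbar)^{-2}\delta^{-1}l^{-100}\mathscr{E}_2^A(g_\sigma)$, and the $a$-term gives $C\alpha_*\delta^{-1}l^{2\mathtt{q}}l^{-(2a-\frac{1-\gamma}{3-\gamma})}(\delta\hbar)^{-1}\psi_{\mathbf{Z};a,0}$ (again $\|(l^{-\mathbf{Z}}\delta x+l)^a(\delta x+l)^{-\frac{1-\gamma}{2(3-\gamma)}}\nu^{1/2}g_\sigma\|_A^2\lesssim(\delta\hbar)^{-1}\psi_{\mathbf{Z};a,0}$). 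The remaining $\digamma_{\delta,l,a,\mathbf{Z}}(\nu^{-1}h_\sigma)$ and $\varphi_{A,\sigma}$ contributions land inside $y_{a,\mathbf{Z};\mathtt{q},\beta_\sharp}$. Collecting everything and absorbing the $\tfrac12 c_0\psi$ pieces on the left yields \eqref{It0}. I expect the main obstacle to be bookkeeping the velocity-index shift cleanly in the commutator term: one has to split $g_\sigma=\P g_\sigma+\P^\perp g_\sigma$, control $|v_3|\lesssim\nu(v)(1+|v-\u|)^{1-\gamma}$ against the available $\nu^{1/2}$-coercivity gain separately on each piece (the $\P$-part has no intrinsic $\nu$-weight, so one trades a small $\delta\hbar$-factor and an interpolation against the $(\delta x+l)^{-\frac{1-\gamma}{2(3-\gamma)}}$-weighted $\P$-norm), and verify that $\beta_\gamma+\tfrac12$ is exactly the index raise that closes the estimate in both the soft and hard potential regimes.
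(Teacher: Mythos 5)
Your proposal follows essentially the same route as the paper: multiply \eqref{A3-lambda} by $(l^{-\mathbf{Z}}\delta x+l)^{2\mathtt{q}}w_{\beta_\sharp,0}^2 g_\sigma$, use Lemma \ref{Lmm-BEL} with $w_*=l^{2\mathtt{q}}w_{\beta_\sharp,0}^2$ for the boundary, the coercivity of $\L_\hbar$ and $\mathbf{D}_\hbar$ for the lower bound, the bound $|v_3|\lesssim\nu(v)(1+|v-\u|)^{2\beta_\gamma+1}$ for the commutator, and \eqref{F-delta-l-a-bnd} to expand the $\digamma$-terms. One small correction: the intermediate inequality $(l^{-\mathbf{Z}}\delta x+l)^{-\frac{1-\gamma}{3-\gamma}}\lesssim(\delta x+l)^{-\frac{1-\gamma}{3-\gamma}}$ is false with a uniform constant (the left side is the larger one); the correct move is $(\delta x+l)^{\frac{1-\gamma}{3-\gamma}}\le l^{\frac{1-\gamma}{3-\gamma}\mathbf{Z}}(l^{-\mathbf{Z}}\delta x+l)^{\frac{1-\gamma}{3-\gamma}}$, which costs exactly the power of $l$ that turns your $l^{-\mathbf{Z}}$ into the stated $l^{-\frac{2}{3-\gamma}\mathbf{Z}}$, so your final coefficient is right even though the step as written is not.
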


\begin{proof}
	From multiplying \eqref{A3-lambda} by $ ( l^{- \mathbf{Z}} \delta x + l )^{2 \mathtt{q} } w_{\beta_\sharp, 0}^2 g_\sigma $, it follows that
{\small
	\begin{align}\label{Q0}
		\no & \iint_{\Omega_A \times \R^3} \partial_x \big( \tfrac{1}{2} v_3 ( l^{- \mathbf{Z}} \delta x + l )^{2 \mathtt{q} } w_{\beta_\sharp, 0}^2 g_\sigma^2 \big) \d v \d x - \mathtt{q} l^{- \mathbf{Z}} \delta \iint_{\Omega_A \times \R^3} v_3 ( l^{- \mathbf{Z}} \delta x + l )^{ 2 \mathtt{q} - 1 } w_{\beta_\sharp, 0}^2 g_\sigma^2 \d v \d x \\
		& + \iint_{\Omega_A \times \R^3} ( \nu (v) - K_\hbar ) g_\sigma \cdot ( l^{- \mathbf{Z}} \delta x + l )^{2 \mathtt{q} } w_{\beta_\sharp, 0}^2 g_\sigma \d v \d x \\
		\no & + \iint_{\Omega_A \times \R^3} [ \mathbf{D}_\hbar - \hbar \sigma_x v_3 ] g_\sigma ( l^{- \mathbf{Z}} \delta x + l )^{2 \mathtt{q} } w_{\beta_\sharp, 0}^2 g_\sigma \d v \d x = \iint_{\Omega_A \times \R^3} h_\sigma \cdot ( l^{- \mathbf{Z}} \delta x + l )^{2 \mathtt{q} } w_{\beta_\sharp, 0}^2 g_\sigma \d v \d x \,.
	\end{align}}
	Observe that
{\small
		\begin{align*}
			& \iint_{\Omega_A \times \R^3} \partial_x \big( \tfrac{1}{2} v_3 ( l^{- \mathbf{Z}} \delta x + l )^{2 \mathtt{q}} w_{\beta_\sharp, 0}^2 g_\sigma^2 \big) \d v \d x \\
			= & \tfrac{1}{2} \int_{\R^3} v_3 ( l^{- \mathbf{Z}} \delta A + l )^{2 \mathtt{q}} w_{\beta_\sharp, 0}^2 g_\sigma^2 (A, v) \d v - \tfrac{1}{2} l^{2 \mathtt{q}} \int_{\R^3} v_3 w_{\beta_\sharp, 0}^2 g_\sigma^2 (0, v) \d v \,.
		\end{align*}}
	It follows from the boundary condition in \eqref{A3-lambda} that
	\begin{equation*}
		\begin{aligned}
			& \tfrac{1}{2} \int_{\R^3} v_3 ( l^{- \mathbf{Z}} \delta A + l )^{2 \mathtt{q}} w_{\beta_\sharp, 0}^2 g_\sigma^2 (A, v) \d v \\
			= & \tfrac{1}{2} \| |v_3|^\frac{1}{2} ( l^{- \mathbf{Z}} \delta A + l )^{\mathtt{q}} w_{\beta_\sharp, 0} g_\sigma \|^2_{L^2_{\Sigma_+^A}} - \tfrac{1}{2} \| |v_3|^\frac{1}{2} ( l^{- \mathbf{Z}} \delta A + l )^{\mathtt{q}} w_{\beta_\sharp, 0} \varphi_{A, \sigma} \|^2_{L^2_{\Sigma_-^A}} \,.
		\end{aligned}
	\end{equation*}
	Then Lemma \ref{Lmm-BEL} indicates that
		\begin{align*}
			& - \tfrac{1}{2} l^{2 \mathtt{q}} \int_{\R^3} v_3 w_{\beta_\sharp, 0}^2 g_\sigma^2 (0, v) \d v \\
			\geq & [1 - (1 - \alpha_*)^2] l^{2 \mathtt{q}} \| |v_3|^\frac{1}{2} w_{\beta_\sharp, 0} g_\sigma \|^2_{L^2_{\Sigma_+^0}} - C \alpha_* l^{2 \mathtt{q}} \| |v_3|^\frac{1}{2} ( l^{- \mathbf{Z}} \delta A + l )^\mathtt{q} w_{\beta_\sharp, 0} \varphi_{A, \sigma} \|^2_{L^2_{\Sigma_-^A}} \\
			& - C_0 \alpha_* l^{2 \mathtt{q}} ( \digamma_{\delta, l,a, \mathbf{Z}} (g_\sigma) + \digamma_{\delta, l,a, \mathbf{Z}} (\nu^{-1} h_\sigma ) )^2 \,.
		\end{align*}
	In summary, one has
	\begin{align}\label{Q2}
		\no & \iint_{\Omega_A \times \R^3} \partial_x \big( \tfrac{1}{2} v_3 (l^{- \mathbf{Z}} \delta x + l)^{2 \mathtt{q}} w_{\beta_\sharp, 0}^2 g_\sigma^2 \big) \d v \d x \\
		\geq & \tfrac{1}{2} [ 1 - (1 - \alpha_*)^2 ] l^{2 \mathtt{q}} \| |v_3|^\frac{1}{2} w_{\beta_\sharp, 0} g_\sigma \|^2_{ L^2_{\Sigma_+^0} } + \tfrac{1}{2} \| |v_3|^\frac{1}{2} ( l^{- \mathbf{Z}} \delta A + l )^\mathtt{q} w_{\beta_\sharp, 0} g_\sigma \|^2_{L^2_{\Sigma_+^A}} \\
		\no & - C \alpha_* l^{2 \mathtt{q} } ( \digamma_{\delta, l,a, \mathbf{Z}}^2 (g_\sigma) + \digamma_{\delta, l,a, \mathbf{Z}}^2 (\nu^{-1} h_\sigma ) ) - C \alpha_* l^{2 \mathtt{q}} \| |v_3|^\frac{1}{2} ( l^{- \mathbf{Z}} \delta A + l )^\mathtt{q} w_{\beta_\sharp, 0} \varphi_{A, \sigma} \|^2_{L^2_{\Sigma_-^A}} \,.
	\end{align}
	Moreover, the similar arguments in \eqref{Lh-bnd} show that
	\begin{align*}
		\iint_{\Omega_A \times \R^3} ( \nu (v) - K_\hbar ) g_\sigma \cdot (l^{- \mathbf{Z}} \delta x + l)^{2 \mathtt{q}} w_{\beta_\sharp, 0}^2 g_\sigma \d v \d x \geq \mu_2 \| ( l^{- \mathbf{Z}} \delta x + l)^{\mathtt{q}} \nu^\frac{1}{2} \P^\perp w_{\beta_\sharp, 0} g_\sigma \|^2_A \\
		- C \hbar^2 \| ( l^{- \mathbf{Z}} \delta x + l)^{ \mathtt{q} } (\delta x + l)^{ - \frac{1 - \gamma}{2 (3 - \gamma)} } \P w_{\beta_\sharp, 0} g_\sigma \|^2_A \,.
	\end{align*}
	As similarly in \eqref{Dh-bnd}, one further obtains
	\begin{align*}
		& \iint_{\Omega_A \times \R^3} [ \mathbf{D}_\hbar - \hbar \sigma_x v_3 ] g_\sigma ( l^{- \mathbf{Z}} \delta x + l)^{2 \mathtt{q}} w_{\beta_\sharp, 0}^2 g_\sigma \d v \d x \\
		\geq & \mu_0 \delta \hbar \| ( l^{- \mathbf{Z}} \delta x + l)^{ \mathtt{q} } (\delta x + l)^{ - \frac{1 - \gamma}{2 (3 - \gamma)} } \P w_{\beta_\sharp, 0} g_\sigma \|^2_A - C \hbar^\frac{1}{2} \| ( l^{- \mathbf{Z}} \delta x + l)^{\mathtt{q}} \nu^\frac{1}{2} \P^\perp w_{\beta_\sharp, 0} g_\sigma \|^2_A \,.
	\end{align*}
	By taking small $\hbar \geq 0$ such that $\mu_2 - C \hbar^\frac{1}{2} \geq \frac{1}{2} \mu_2$ and $\mu_0 \delta - C \hbar \geq \frac{1}{2} \mu_0 \delta$, one gains
	\begin{align}\label{Q4}
		\no & \iint_{\Omega_A \times \R^3} ( \nu (v) - K_\hbar ) g_\sigma \cdot ( l^{- \mathbf{Z}} \delta x + l)^{2 \mathtt{q}} w_{\beta_\sharp, 0}^2 g_\sigma \d v \d x \\
		& + \iint_{\Omega_A \times \R^3} [ \mathbf{D}_\hbar - \hbar \sigma_x v_3 ] g_\sigma ( l^{- \mathbf{Z}} \delta x + l)^{2 \mathtt{q}} w_{\beta_\sharp, 0}^2 g_\sigma \d v \d x \\
		\no \geq & c_0 \delta \hbar l^{ - \frac{1 - \gamma}{3 - \gamma} } \| ( l^{- \mathbf{Z}} \delta x + l)^{ \mathtt{q} } (\delta x + l)^{ - \frac{1 - \gamma}{2 (3 - \gamma)} } \P w_{\beta_\sharp, 0} g_\sigma \|^2_A + c_0 \| ( l^{- \mathbf{Z}} \delta x + l)^{\mathtt{q}} \nu^\frac{1}{2} \P^\perp w_{\beta_\sharp, 0} g_\sigma \|^2_A \,,
	\end{align}
	where $c_0 = \min \{ \frac{1}{2} \mu_0, \frac{1}{2} \mu_2 \} > 0$.
	
	Furthermore, it follow from similar arguments in \eqref{h-bnd-2} that
	\begin{align}\label{Q5}
		\no & | \iint_{\Omega_A \times \R^3} h_\sigma \cdot ( l^{- \mathbf{Z}} \delta x + l )^{2 \mathtt{q}} w_{\beta_\sharp, 0}^2 g_\sigma \d v \d x | \\
		\leq & \tfrac{c_0}{2} \big( \delta \hbar \| ( l^{- \mathbf{Z}} \delta x + l)^{ \mathtt{q} } (\delta x + l)^{ - \frac{1 - \gamma}{2 (3 - \gamma)} } \P w_{\beta_\sharp, 0} g_\sigma \|^2_A + \| ( l^{- \mathbf{Z}} \delta x + l )^\mathtt{q} \P^\perp w_{\beta_\sharp, 0} g_\sigma \|^2_A \big) \\
		\no & + C \big( (\delta \hbar)^{-1} \| ( l^{- \mathbf{Z}} \delta x + l)^{ \mathtt{q} } (\delta x + l)^{ \frac{1 - \gamma}{2 (3 - \gamma)} } \P w_{\beta_\sharp, 0} h_\sigma \|^2_A + \| ( l^{- \mathbf{Z}} \delta x + l )^\mathtt{q} \P^\perp w_{\beta_\sharp, 0} h_\sigma \|^2_A \big) \,.
	\end{align}
	
	Note that $ |v_3| \lesssim \tfrac{ \nu (v) | v - \u | }{ ( 1 + | v - \u | )^\gamma } \lesssim \nu (v) ( 1 + |v - \u| )^{ 2 \beta_\gamma + 1 } $, where $\beta_\gamma$ is given in \eqref{beta-gamma}. Moreover, $(\delta x + l)^\frac{1 - \gamma}{3 - \gamma} \leq l^{\frac{1 - \gamma}{3 - \gamma} \mathbf{Z} } ( l^{- \mathbf{Z}} \delta x + l )^\frac{1 - \gamma}{3 - \gamma}$ for $l \geq 1$, $\mathbf{Z} \geq 0$ and $- 3 < \gamma \leq 1$. Then
{\small
	\begin{align}\label{Q3}
		\no & | \mathtt{q} l^{- \mathbf{Z}} \delta \iint_{\Omega_A \times \R^3} v_3 ( l^{- \mathbf{Z}} \delta x + l )^{ 2 \mathtt{q} - 1 } w_{\beta_\sharp, 0}^2 g_\sigma^2 \d v \d x | \\
		\leq & | \mathtt{q} | l^{- \mathbf{Z}} \delta \iint_{\Omega_A \times \R^3} | v_3 | ( l^{- \mathbf{Z}} \delta x + l )^{ 2 \mathtt{q} - 1 } w_{\beta_\sharp, 0}^2 g_\sigma^2 \d v \d x \\
		\no \leq & C | \mathtt{q} | l^{- \mathbf{Z}} \delta \iint_{\Omega_A \times \R^3} \nu (v) ( 1 + |v - \u| )^{ 2 \beta_\gamma + 1 } ( l^{- \mathbf{Z}} \delta x + l )^{ 2 \mathtt{q} - 1 } (\delta x + l)^{ \frac{1 - \gamma}{3 - \gamma} - \frac{1 - \gamma}{3 - \gamma} } w_{\beta_\sharp, 0}^2 g_\sigma^2 \d v \d x \\
		\no \leq & C | \mathtt{q} | l^{- \frac{2}{3 - \gamma} \mathbf{Z}} \delta \| ( l^{- \mathbf{Z}} \delta x + l )^{ \mathtt{q} - \frac{1}{3 - \gamma} } (\delta x + l)^{ - \frac{1 - \gamma}{2 (3 - \gamma)} } \nu^\frac{1}{2} w_{\beta_\sharp + \beta_\gamma + \frac{1}{2}, 0} g_\sigma \|^2_A \\
		\no \leq & C | \mathtt{q} | l^{ - \frac{2}{3 - \gamma} \mathbf{Z} } \delta (\delta \hbar)^{-1} \psi_{\mathtt{q} - \frac{1}{3 - \gamma}, \beta_\sharp + \beta_\gamma + \frac{1}{2}} \,,
	\end{align}}
	where the quantity $\psi_{\mathtt{q} - \frac{1}{3 - \gamma}, \beta_\sharp + \beta_\gamma + \frac{1}{2}}$ is defined in \eqref{psi-y}.

	Then collecting the estimates \eqref{Q0}, \eqref{Q2}, \eqref{Q4}, \eqref{Q5} and \eqref{Q3} can conclude that
	\begin{align}\label{Q-bnd}
		\no & [ 1 - (1 - \alpha_*)^2 ] l^{2 \mathtt{q}} \| |v_3|^\frac{1}{2} w_{\beta_\sharp, 0} g_\sigma \|^2_{ L^2_{\Sigma_+^0} } + \| |v_3|^\frac{1}{2} ( l^{- \mathbf{Z}} \delta A + l )^\mathtt{q} w_{\beta_\sharp, 0} g_\sigma \|^2_{ L^2_{\Sigma_+^A} } \\
		\no & + \| ( l^{- \mathbf{Z}} \delta x + l )^\mathtt{q} \nu^\frac{1}{2} \P^\perp w_{\beta_\sharp, 0} g_\sigma \|^2_A + \delta \hbar \| ( l^{- \mathbf{Z}} \delta x + l)^{ \mathtt{q} } (\delta x + l)^{ - \frac{1 - \gamma}{2 (3 - \gamma)} } \P w_{\beta_\sharp, 0} g_\sigma \|^2_A \\
		\no \leq & C | \mathtt{q} | l^{ - \frac{2}{3 - \gamma} \mathbf{Z} } \delta (\delta \hbar)^{-1} \psi_{\mathtt{q} - \frac{1}{3 - \gamma}, \beta_\sharp + \beta_\gamma + \frac{1}{2}} + C_0 \alpha_* l^{2 \mathtt{q}} ( \digamma_{\delta, l,a, \mathbf{Z}}^2 (g_\sigma) + \digamma_{\delta, l,a, \mathbf{Z}}^2 (\nu^{-1} h_\sigma ) ) \\
		\no & + C (\delta \hbar)^{-1} \| ( l^{- \mathbf{Z}} \delta x + l)^{ \mathtt{q} } (\delta x + l)^{ \frac{1 - \gamma}{2 (3 - \gamma)} } \P w_{\beta_\sharp, 0} h_\sigma \|^2_A \\
		& + C \alpha_* l^{2 \mathtt{q} } \| |v_3|^\frac{1}{2} ( l^{- \mathbf{Z}} \delta A + l )^\mathtt{q} w_{\beta_\sharp, 0} \varphi_{A, \sigma} \|^2_{L^2_{\Sigma_-^A}} + C \| ( l^{- \mathbf{Z}} \delta x + l )^\mathtt{q} \nu^{ - \frac{1}{2} } \P^\perp w_{\beta_\sharp, 0} h_\sigma \|^2_A \,.
	\end{align}
	Together with \eqref{psi-y}, the bounds \eqref{Q-bnd} and \eqref{F-delta-l-a-bnd} completes the proof of Lemma \ref{Lmm-SVII}.
\end{proof}

Now, based on the spatial-velocity indices iteration form \eqref{It0} in Lemma \ref{Lmm-SVII}, we will dominate the uncontrolled quantity $\alpha_* (\delta \hbar)^{-1} \delta^{- 1} l^{ - ( 2 a - \frac{1 - \gamma}{ 3 - \gamma } ) } \| ( l^{- \mathbf{Z}} \delta x + l)^a (\delta x + l)^{ - \frac{1 - \gamma}{2 (3 - \gamma)}} \nu^\frac{1}{2} g_\sigma \|_A^2$ in the right-hand side of \eqref{Sum-1st-1}. Namely, we will close the estimates in Lemma \ref{Lmm-APE-A3}.

\begin{lemma}[Uniform and closed weighted $L^2_{x,v}$ estimates]\label{Lmm-L2xv-closed}
	Let $- 3 < \gamma \leq 1$, $0 \leq \alpha_* \leq 1$, $0 < \delta < 1$, $A \geq 1$, $\beta \geq 3 (\beta_\gamma + \frac{1}{2})$, $0 < \vartheta \leq \vartheta_0$, $0 < \hbar \leq o (1) \delta$ and $l \geq O (1) (\delta \hbar)^{ - \frac{512 ( 3 - \gamma )}{ 3 ( 3 + \gamma ) } }$, where the sufficiently small $0 < \vartheta_0, \hbar_0, o (1) \ll 1$ and large enough $O (1) \gg 1$ are all independent of $\delta, \hbar, l, A$. Then the problem \eqref{A3-lambda} admits the following uniform weighted $L^2_{x,v}$ bound
	\begin{equation}\label{L2xv-Unf-Bnd}
		\begin{aligned}
			\mathscr{E}_2^A (g_\sigma) + \mathscr{E}_{\mathtt{NBE}}^A (g_\sigma) \leq C_l \big[ \mathscr{A}_2^A (h_\sigma) + \mathscr{A}_{\mathtt{NBE}}^A (h_\sigma) \big] + C_l \big[ \mathscr{B}_2 (\varphi_{A, \sigma}) + \mathscr{B}_{\mathtt{NBE}} (\varphi_{A, \sigma}) \big]
		\end{aligned}
	\end{equation}
    for some constant $C_l > 0$ depending on $l$ but being independent of $\delta, \hbar, A$, where the functionals $ \mathscr{E}_2^A (g_\sigma), \mathscr{E}_{\mathtt{NBE}}^A (g_\sigma) $ are respectively given in \eqref{E-NBE}, \eqref{E2-lambda}, the functionals $ \mathscr{A}_2^A (h_\sigma), \mathscr{A}_{\mathtt{NBE}}^A (h_\sigma) $ are defined in \eqref{As-def}, and the functionals $ \mathscr{B}_2 (\varphi_{A, \sigma}), \mathscr{B}_{\mathtt{NBE}} (\varphi_{A, \sigma}) $ are introduced in \eqref{B-def}.
\end{lemma}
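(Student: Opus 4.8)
The plan is to upgrade Corollary~\ref{Coro-L2} into a closed estimate by committing to the specific parameters $a=\tfrac{15}{8(3-\gamma)}$ and $\mathbf{Z}=6$, and then running the interleaved iteration sketched in Figure~\ref{Fig-IIP}, with every coupling tamed by the largeness of $l$ prescribed by {\bf (PH)}. \textbf{Step 1 (bootstrap from Corollary~\ref{Coro-L2}).} Fix $a=\tfrac{15}{8(3-\gamma)}$, $\mathbf{Z}=6$. Since $l\ge O(1)(\delta\hbar)^{-\frac{512(3-\gamma)}{3(3+\gamma)}}$ forces $(\delta\hbar)^{-1}\delta^{-1}l^{-100}\ll 1$, the self-term $\alpha_*(\delta\hbar)^{-1}\delta^{-1}l^{-100}\mathscr{E}_2^A(g_\sigma)$ in \eqref{Sum-1st-1} is absorbed to the left. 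Splitting $g_\sigma=\P g_\sigma+\P^\perp g_\sigma$ and using $\nu(v)\lesssim(1+|v|)^{\gamma}$ one gets $\|(l^{-6}\delta x+l)^{a}(\delta x+l)^{-\frac{1-\gamma}{2(3-\gamma)}}\nu^{1/2}g_\sigma\|_A^2\lesssim(\delta\hbar)^{-1}\psi_{6;a,0}$ with $\psi$ from \eqref{psi-y}; writing $\psi_{6;a,0}=l^{-\frac{59-24\gamma}{24(3-\gamma)}}\,\Xi^A[\tfrac{59-24\gamma}{24(3-\gamma)};\tfrac{15}{8(3-\gamma)},0](g_\sigma)$, a short exponent count produces the gain $l^{-\frac{125}{24(3-\gamma)}}$, and since $\tfrac{125}{24(3-\gamma)}$ exceeds $\varepsilon_\gamma:=\tfrac{3(3+\gamma)}{32(3-\gamma)}>0$ by a margin that (again by the lower bound on $l$) dominates $(\delta\hbar)^{-2}\delta^{-1}$, we arrive at
\[
\mathscr{E}_2^A(g_\sigma)\ \lesssim\ l^{-\varepsilon_\gamma}\,\Xi^A\!\big[\tfrac{59-24\gamma}{24(3-\gamma)};\tfrac{15}{8(3-\gamma)},0\big](g_\sigma)+C_l\big[\mathscr{A}_2^A(h_\sigma)+\mathscr{A}_{\mathtt{NBE}}^A(h_\sigma)+\mathscr{B}_2(\varphi_{A,\sigma})+\mathscr{B}_{\mathtt{NBE}}(\varphi_{A,\sigma})\big],
\]
the remainders $\alpha_*\digamma_{\delta,l,a,\mathbf{Z}}^2(\nu^{-1}h_\sigma)$, $\mathscr{A}_2^A(h_\sigma)$, $\mathscr{B}_2(\varphi_{A,\sigma})$ being put into $C_l[\cdots]$ (by {\bf (PH)} the quantities $\delta,\hbar$ are bounded below in terms of $l$, so $C_l$ may be taken to depend on $l$ alone).

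\textbf{Step 2 (iterate the five $\Xi$--quantities).} Write $\mathscr{E}_{\mathtt{NBE}}^A(g_\sigma)=\sum_{i=1}^5E_i$, $E_i=\Xi^A[\mathbf{a}_i;\mathbf{b}_i,\mathbf{c}_i](g_\sigma)=l^{\mathbf{a}_i}\psi_{6;\mathbf{b}_i,\mathbf{c}_i}$, with the triples $(\mathbf{a}_i,\mathbf{b}_i,\mathbf{c}_i)$ read off from \eqref{E-NBE}. Apply Lemma~\ref{Lmm-SVII} (with $\mathbf{Z}=6$ and free parameter $a=\tfrac{15}{8(3-\gamma)}$): one iteration step replaces $\psi_{6;\mathbf{b}_i,\mathbf{c}_i}$ by $\psi_{6;\mathbf{b}_i-\frac{1}{3-\gamma},\,\mathbf{c}_i+\beta_\gamma+\frac12}$ at the universally small price $l^{-\frac{12}{3-\gamma}}\hbar^{-1}\ll1$, plus a feedback multiple of $\psi_{6;\frac{15}{8(3-\gamma)},0}$ (that is, of $E_1$ up to a fixed power of $l$), plus the negligible $(\delta\hbar)^{-2}\delta^{-1}l^{-100}\mathscr{E}_2^A(g_\sigma)$, plus terms $y_{a,6;\cdot,\cdot}$ which by the very definition \eqref{psi-y} are exactly the $\Lambda^A$-- and $\Lambda_\star$--blocks composing $\mathscr{A}_{\mathtt{NBE}}^A$ and $\mathscr{B}_{\mathtt{NBE}}$. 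The triples $(\mathbf{a}_i,\mathbf{b}_i,\mathbf{c}_i)$ are calibrated so that at most two iteration steps, supplemented by the four one-parameter Young inequalities displayed in Subsection~\ref{Subsec:MI} (each redistributing an excess factor $(l^{-6}\delta x+l)^{\mathtt{q}}$ between two neighbouring $\mathbf{b}_j$'s at the cost of an $\epsilon_*$, a $C_{\epsilon_*}$ and a strictly negative power of $l$), drive every reachable $\psi_{6;\mathtt{q},\beta_\sharp}$ either to one of the $E_j$ renormalized by a factor $\le l^{-\varepsilon_\gamma}$, $\le\epsilon_*^2$, or $\le C_{\epsilon_*}l^{-\varepsilon_\gamma}$, or to a $\psi_{6;\mathtt{q},\beta_\sharp}$ with $\mathtt{q}\le 0$ and $\beta_\sharp\le\beta$, which — because $\beta\ge 3(\beta_\gamma+\tfrac12)$ — is $\le\mathscr{E}_2^A(g_\sigma)$. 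This produces precisely the web of estimates depicted in Figure~\ref{Fig-IIP}.

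\textbf{Step 3 (absorption).} Collecting Steps~1--2 gives, for $\mathbf{X}=(\mathscr{E}_2^A(g_\sigma),E_1,\dots,E_5)^{\top}$, an inequality $\mathbf{X}\le M\mathbf{X}+\mathbf{R}$ in which each entry of $M$ is $\le C l^{-\varepsilon_\gamma}$, $\le C\epsilon_*^2$, or $\le C_{\epsilon_*}l^{-\varepsilon_\gamma}$, while $\mathbf{R}\lesssim C_l[\mathscr{A}_2^A(h_\sigma)+\mathscr{A}_{\mathtt{NBE}}^A(h_\sigma)+\mathscr{B}_2(\varphi_{A,\sigma})+\mathscr{B}_{\mathtt{NBE}}(\varphi_{A,\sigma})]$. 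Taking $\epsilon_*$ small enough (the $\epsilon_*^2$-couplings being the only ones not carrying an $l$-decay) and then $l$ large enough — which the constant $O(1)$ in {\bf (PH)} is chosen to permit — the spectral radius of $M$ is $<\tfrac12$, so $I-M$ is invertible with inverse bounded by a constant depending on $l$ only; hence $\mathbf{X}\le C_l\mathbf{R}$, and summing the components gives $\mathscr{E}_2^A(g_\sigma)+\mathscr{E}_{\mathtt{NBE}}^A(g_\sigma)\le C_l\mathbf{R}$, which is \eqref{L2xv-Unf-Bnd}.

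\textbf{The main difficulty}, and where I expect the real work, is that the off-diagonal couplings of $M$ are \emph{not} small for free: a single application of \eqref{It0} produces a feedback prefactor $\alpha_*\delta^{-1}(\delta\hbar)^{-1}l^{\,2\mathtt{q}-\frac{11+4\gamma}{4(3-\gamma)}}$ carrying a \emph{positive} power of $l$ (and negative powers of $\delta,\hbar$) for the spatial indices in play. It is only after renormalizing the $i$-th line by $l^{\mathbf{a}_i}$ — with the calibrated values $\mathbf{a}_1=\tfrac{59-24\gamma}{24(3-\gamma)}$, $\mathbf{a}_2=\tfrac{9+\gamma}{3(3-\gamma)}$, $\mathbf{a}_3=\tfrac{107-24\gamma}{12(3-\gamma)}$, $\mathbf{a}_4=\tfrac{297-40\gamma}{36(3-\gamma)}$, $\mathbf{a}_5=\tfrac{189-52\gamma}{36(3-\gamma)}$ — and after bridging the mismatched spatial powers via the four interpolation inequalities, that every coupling regains the uniform gain $l^{-\varepsilon_\gamma}$. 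Verifying this bookkeeping along all five lines of the system, checking the four Young inequalities with their stated exponents, and confirming that two iteration steps always suffice to reach a term dominated by $\mathscr{E}_2^A(g_\sigma)$, is the substantive content; Step~3 is then the routine finite-dimensional absorption.
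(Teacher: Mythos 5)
Your proposal follows the paper's own proof essentially verbatim: the same choice $a=\tfrac{15}{8(3-\gamma)}$, $\mathbf{Z}=6$ in Corollary \ref{Coro-L2}, the same application of Lemma \ref{Lmm-SVII} to the five $\Xi^A$-quantities with the same calibrated $l$-exponents $\mathbf{a}_i$, the same four interpolation inequalities, and the same absorption by first taking $\epsilon_*$ small and then $l\geq O(1)(\delta\hbar)^{-\frac{512(3-\gamma)}{3(3+\gamma)}}$ so that $(\delta\hbar)^{-16}l^{-\frac{3(3+\gamma)}{32(3-\gamma)}}$ is small (your correctly computed feedback exponent $\frac{11+4\gamma}{4(3-\gamma)}$ and gain $\frac{125}{24(3-\gamma)}$ confirm the bookkeeping). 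The only cosmetic difference is packaging the final step as a matrix inequality with spectral radius below $\tfrac12$, which is equivalent to the paper's summation of \eqref{U0-d}--\eqref{U5-d} into \eqref{Bnd-L2-1}.
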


\begin{proof}[Proof of Lemma \ref{Lmm-L2xv-closed}]
	
	In order to avoid the tedious mathematical symbols, we denote by
	\begin{equation}\label{Xi-Theta}
		\begin{aligned}
			\Theta (h_\sigma, \varphi_{A, \sigma}) : = \alpha_* \digamma_{\delta, l,a, \mathbf{Z}}^2 (\nu^{-1} h_\sigma) + \mathscr{A}^A_2 (h_\sigma) + \mathscr{B}_2 (\varphi_{A, \sigma}) \,.
		\end{aligned}
	\end{equation}
	The definition of $\psi_{\mathbf{Z}; \mathtt{q}, \beta_\sharp}$ in \eqref{psi-y} shows that
	\begin{eqnarray*}
		\begin{aligned}
			\| ( l^{- \mathbf{Z}} \delta x + l)^a (\delta x + l)^{ - \frac{1 - \gamma}{2 (3 - \gamma)}} \nu^\frac{1}{2} g_\sigma \|_A^2 \lesssim (\delta \hbar)^{-1} \psi_{\mathbf{Z}; a, 0} \,.
		\end{aligned}
	\end{eqnarray*}
	Then the bound \eqref{Sum-1st-1} in Corollary \ref{Coro-L2} infers that
	\begin{equation}\label{Sum-1st-2}
		\begin{aligned}
			\mathscr{E}^A_2 ( g_\sigma ) \lesssim & \alpha_* (\delta \hbar)^{-2} \delta^{- 1} l^{ - 2 ( a - \frac{1 - \gamma}{2 ( 3 - \gamma ) } - \mathbf{U}_1) } l^{- 2 \mathbf{U}_1} \psi_{\mathbf{Z}; a, 0} \\
			& + \alpha_* (\delta \hbar)^{-1} \delta^{- 1} l^{- 100} \mathscr{E}^A_2 ( g_\sigma ) + \Theta (h_\sigma, \varphi_{A, \sigma})
		\end{aligned}
	\end{equation}
	for $a > 0 $, $ \mathbf{U}_1 \in \R$ and $\mathbf{Z} \geq 0$ to be determined later. By \eqref{It0} in Lemma \ref{Lmm-SVII} and \eqref{Xi-Theta}, one has
	\begin{equation}\label{U1}
		\begin{aligned}
			l^{- 2 \mathbf{U}_1} &  \psi_{\mathbf{Z}; a, 0} \lesssim \delta (\delta \hbar)^{-1} l^{- 2 ( \mathbf{U}_1 + \frac{1}{3 - \gamma} \mathbf{Z} )} \psi_{\mathbf{Z}; a - \frac{1}{3 - \gamma}, \beta_\gamma + \frac{1}{2}} \\
			& + \delta^{-1} (\delta \hbar)^{-1} l^{ - 2 ( \mathbf{U}_1 - ( a - a_1 ) - \frac{1 - \gamma}{2 (3 - \gamma)} ) } \psi_{\mathbf{Z}; a_1, 0} + \delta^{-1} ( \delta \hbar )^{-1} l^{-100} \mathscr{E}^A_2 ( g_\sigma ) + y_{a_1, \mathbf{Z}; a, 0}
		\end{aligned}
	\end{equation}
	for $a_1 > 0$ to be determined later. Moreover,
	\begin{equation}\label{U2}
		\begin{aligned}
			l^{- 2 \mathbf{U}_2} \psi_{\mathbf{Z}; a - \frac{1}{3 - \gamma}, \beta_\gamma + \frac{1}{2}} \lesssim & \delta (\delta \hbar)^{-1} l^{- 2 ( \mathbf{U}_2 + \frac{1}{3 - \gamma} \mathbf{Z} )} \psi_{\mathbf{Z}; a - \frac{2}{3 - \gamma}, 2 ( \beta_\gamma + \frac{1}{2} )} \\
			& + \delta^{-1} (\delta \hbar)^{-1} l^{ - 2 ( \mathbf{U}_2 - ( a - a_2 - \frac{1}{3 - \gamma} ) - \frac{1 - \gamma}{2 (3 - \gamma)} ) } \psi_{\mathbf{Z}; a_2, 0} \\
			& + \delta^{-1} ( \delta \hbar )^{-1} l^{-100} \mathscr{E}^A_2 ( g_\sigma ) + y_{a_2, \mathbf{Z}; a - \frac{1}{3 - \gamma}, \beta_\gamma + \frac{1}{2}}
		\end{aligned}
	\end{equation}
	for $a_2 > 0 $ and $ \mathbf{U}_2 \in \R$ to be determined. Furthermore, Lemma \ref{Lmm-SVII} gives us
	\begin{equation}\label{U3}{\small
		\begin{aligned}
			l^{- 2 \mathbf{U}_3} & \psi_{\mathbf{Z}; a_1, 0} \lesssim \delta (\delta \hbar)^{-1} l^{- 2 ( \mathbf{U}_3 + \frac{1}{3 - \gamma} \mathbf{Z} )} \psi_{\mathbf{Z}; a_1 - \frac{1}{3 - \gamma}, \beta_\gamma + \frac{1}{2}} \\
			& + \delta^{-1} (\delta \hbar)^{-1} l^{ - 2 ( \mathbf{U}_3 - ( a_1 - a_3 ) - \frac{1 - \gamma}{2 (3 - \gamma)} ) } \psi_{\mathbf{Z}; a_3, 0} + \delta^{-1} ( \delta \hbar )^{-1} l^{-100} \mathscr{E}^A_2 ( g_\sigma ) + y_{a_3, \mathbf{Z}; a_1, 0} \,,
		\end{aligned}}
	\end{equation}
	and
	\begin{equation}\label{U4}
		\begin{aligned}
			l^{- 2 \mathbf{U}_4} \psi_{\mathbf{Z}; a_1 - \frac{1}{3 - \gamma}, \beta_\gamma + \frac{1}{2}} \lesssim & \delta (\delta \hbar)^{-1} l^{- 2 ( \mathbf{U}_4 + \frac{1}{3 - \gamma} \mathbf{Z} )} \psi_{\mathbf{Z}; a_1 - \frac{2}{3 - \gamma}, 2 ( \beta_\gamma + \frac{1}{2} )} \\
			& + \delta^{-1} (\delta \hbar)^{-1} l^{ - 2 ( \mathbf{U}_4 - ( a_1 - a_4 - \frac{1}{3 - \gamma} ) - \frac{1 - \gamma}{2 (3 - \gamma)} ) } \psi_{\mathbf{Z}; a_4, 0} \\
			& + \delta^{-1} ( \delta \hbar )^{-1} l^{-100} \mathscr{E}^A_2 ( g_\sigma ) + y_{a_4, \mathbf{Z}; a_1 - \frac{1}{3 - \gamma}, \beta_\gamma + \frac{1}{2}} \,,
		\end{aligned}
	\end{equation}
	and
	\begin{equation}\label{U5}
		\begin{aligned}
			l^{- 2 \mathbf{U}_5 } \psi_{\mathbf{Z}; a - \frac{2}{3 - \gamma}, 2 ( \beta_\gamma + \frac{1}{2} )} \lesssim \delta (\delta \hbar)^{-1} l^{- 2 ( \mathbf{U}_5 + \frac{1}{3 - \gamma} \mathbf{Z} )} \psi_{\mathbf{Z}; a - \frac{3}{3 - \gamma}, 3 ( \beta_\gamma + \frac{1}{2} )} \\
			+ \delta^{-1} (\delta \hbar)^{-1} l^{ - 2 ( \mathbf{U}_5 - ( a - a_5) - \frac{1 - \gamma}{2 (3 - \gamma)} + \frac{2}{3 - \gamma} ) } \psi_{\mathbf{Z}; a_5, 0} \\
			+ \delta^{-1} ( \delta \hbar )^{-1} l^{-100} \mathscr{E}^A_2 ( g_\sigma ) + y_{a_5, \mathbf{Z}; a - \frac{2}{3 - \gamma}, 2 ( \beta_\gamma + \frac{1}{2} ) } \,,
		\end{aligned}
	\end{equation}
	where the parameters $a_3, a_4, a_5 > 0 $ and $ \mathbf{U}_3, \mathbf{U}_4, \mathbf{U}_5 \in \R$ are all to be determined later.
	
	Now we determinate the all above parameters $a, a_1, a_2, a_3, a_4, a_5 > 0 $, $ \mathbf{U}_1, \mathbf{U}_2, \mathbf{U}_3, \mathbf{U}_4, \mathbf{U}_5 \in \R $ and $\mathbf{Z} \geq 0$ so that the estimate are closed.
	\begin{itemize}
		\item[(i)] Take
		\begin{equation}\label{Para-1}
			\begin{aligned}
				\mathcal{J}_1 : = a - \tfrac{1 - \gamma}{2 ( 3 - \gamma ) } - \mathbf{U}_1 > 0
			\end{aligned}
		\end{equation}
		such that the quantity $\alpha_* (\delta \hbar)^{-2} \delta^{- 1} l^{ - 2 ( a - \frac{1 - \gamma}{2 ( 3 - \gamma ) } - \mathbf{U}_1) } l^{- 2 \mathbf{U}_1} \psi_{\mathbf{Z}; a, 0}$ in the right-hand side of \eqref{Sum-1st-2} can be absorbed by $\alpha_* (\delta \hbar)^{-2} \delta^{- 1} l^{ - 2 ( a - \frac{1 - \gamma}{2 ( 3 - \gamma ) } - \mathbf{U}_1) } l^{- 2 \mathbf{U}_1} \psi_{\mathbf{Z}; a, 0}$ in the left-hand side of \eqref{U1}.
		\item[(ii)] Take
		\begin{equation}\label{Para-2}
			\begin{aligned}
				\mathcal{J}_2 : = \mathbf{U}_1 + \tfrac{1}{3 - \gamma} \mathbf{Z} - \mathbf{U}_2 > 0
			\end{aligned}
		\end{equation}
		such that the quantity $\delta (\delta \hbar)^{-1} l^{- 2 ( \mathbf{U}_1 + \frac{1}{3 - \gamma} \mathbf{Z} )} \psi_{\mathbf{Z}; a - \frac{1}{3 - \gamma}, \beta_\gamma + \frac{1}{2}}$ in the right-hand side of \eqref{U1} can be bounded by $l^{- 2 \mathbf{U}_2} \psi_{\mathbf{Z}; a - \frac{1}{3 - \gamma}, \beta_\gamma + \frac{1}{2}}$ in the left-hand side of \eqref{U2}.
		\item[(iii)] Take
		\begin{equation}\label{Para-3}
			\begin{aligned}
				\mathcal{J}_3 : = \mathbf{U}_1 - ( a - a_1 ) - \tfrac{1 - \gamma}{2 (3 - \gamma)} - \mathbf{U}_3 > 0
			\end{aligned}
		\end{equation}
		such that the quantity $\delta^{-1} (\delta \hbar)^{-1} l^{ - 2 ( \mathbf{U}_1 - ( a - a_1 ) - \frac{1 - \gamma}{2 (3 - \gamma)} ) } \psi_{\mathbf{Z}; a_1, 0}$ in the right-hand side of \eqref{U1} can be dominated by $l^{- 2 \mathbf{U}_3} \psi_{\mathbf{Z}; a_1, 0}$ in the left-hand side of \eqref{U3}.
		\item[(iv)] Take
		\begin{equation}\label{Para-4}
			\begin{aligned}
				a < \tfrac{2}{3 - \gamma} \,, \ \mathcal{J}_4 : = \mathbf{U}_2 + \tfrac{1}{3 - \gamma} \mathbf{Z} > 0
			\end{aligned}
		\end{equation}
		such that the quantity $\delta (\delta \hbar)^{-1} l^{- 2 \mathbf{U}_2} \psi_{\mathbf{Z}; a - \frac{2}{3 - \gamma}, 2 ( \beta_\gamma + \frac{1}{2} )}$ in the right-hand side of \eqref{U2} can be controlled by $(\delta \hbar)^{-1} \mathscr{E}^A_2 ( g_\sigma ) $ in the left-hand side of \eqref{Sum-1st-2}.
		\item[(v)] Denote by $W_\gamma (a, a_2, \mathbf{U}_1, \mathbf{U}_2) = ( \mathbf{U}_2 - ( a - a_2 - \frac{1}{3 - \gamma} ) - \frac{1 - \gamma}{2 (3 - \gamma)} ) - \mathbf{U}_1 [ 1 - (3 - \gamma) (a - a_2) ]$. Take
		\begin{equation}\label{Para-5}
			\begin{aligned}
				a - \tfrac{1}{3 - \gamma} < a_2 < a \,, \ \mathcal{J}_5 : = \tfrac{1}{(3 - \gamma) (a - a_2)} W_\gamma (a, a_2, \mathbf{U}_1, \mathbf{U}_2) - \mathbf{U}_2 > 0
			\end{aligned}
		\end{equation}
		such that the quantity $\delta^{-1} (\delta \hbar)^{-1} l^{ - 2 ( \mathbf{U}_2 - ( a - a_2 - \frac{1}{3 - \gamma} ) - \frac{1 - \gamma}{2 (3 - \gamma)} ) } \psi_{\mathbf{Z}; a_2, 0}$ in the right-hand side of \eqref{U2} can be absorbed by the quantities $l^{- 2 \mathbf{U}_1} \psi_{\mathbf{Z}; a, 0}$ in the left-hand side of \eqref{U1} and $l^{- 2 \mathbf{U}_2} \psi_{\mathbf{Z}; a - \frac{1}{3 - \gamma}, \beta_\gamma + \frac{1}{2}}$ in the left-hand side of \eqref{U2}. More precisely, the majority is to control the factor $l^{ - ( \mathbf{U}_2 - ( a - a_2 - \frac{1}{3 - \gamma} ) - \frac{1 - \gamma}{2 (3 - \gamma)} ) } ( l^{ - \mathbf{Z}} \delta x + l)^{a_2}$. A direct calculation shows
		\begin{equation*}{\small
			\begin{aligned}
				& l^{ - ( \mathbf{U}_2 - ( a - a_2 - \frac{1}{3 - \gamma} ) - \frac{1 - \gamma}{2 (3 - \gamma)} ) } ( l^{ - \mathbf{Z}} \delta x + l)^{a_2} \\
				= & \big[ l^{- \mathbf{U}_1} ( l^{ - \mathbf{Z}} \delta x + l)^a \big]^{ 1 - (3 - \gamma) (a - a_2) } \cdot l^{ - W_\gamma (a, a_2, \mathbf{U}_1, \mathbf{U}_2) } [ ( l^{ - \mathbf{Z}} \delta x + l)^{a - \frac{1}{3 - \gamma}} ]^{ (3 - \gamma) (a - a_2) } \\
				\leq & \epsilon_* l^{- \mathbf{U}_1} ( l^{ - \mathbf{Z}} \delta x + l)^a + C_{\epsilon_*} l^{ - \frac{1}{(3 - \gamma) (a - a_2)} W_\gamma (a, a_2, \mathbf{U}_1, \mathbf{U}_2) } ( l^{ - \mathbf{Z}} \delta x + l)^{a - \frac{1}{3 - \gamma}}
			\end{aligned}}
		\end{equation*}
		for any small $\epsilon_* > 0$ to be determined later. Then
		\begin{equation}\label{U-int-1}{\small
			\begin{aligned}
				& \delta^{-1} (\delta \hbar)^{-1} l^{ - 2 ( \mathbf{U}_2 - ( a - a_2 - \frac{1}{3 - \gamma} ) - \frac{1 - \gamma}{2 (3 - \gamma)} ) } \psi_{\mathbf{Z}; a_2, 0} \\
\leq & 2 \epsilon_*^2 l^{- 2 \mathbf{U}_1} \psi_{\mathbf{Z}; a, 0} + 2 C_{\epsilon_*}^2 [ \delta^{-1} (\delta \hbar)^{-1} ]^{ \frac{2}{(3 - \gamma) (a - a_2)} } l^{ - \frac{2}{(3 - \gamma) (a - a_2)} W_\gamma (a, a_2, \mathbf{U}_1, \mathbf{U}_2) } \psi_{\mathbf{Z}; a - \frac{1}{3 - \gamma}, \beta_\gamma + \frac{1}{2}} \,.
			\end{aligned}}
		\end{equation}
		By choosing $\epsilon_* > 0$ sufficiently small, the first term $2 \epsilon_*^2 l^{- 2 \mathbf{U}_1} \psi_{\mathbf{Z}; a, 0}$ can be controlled by $l^{- 2 \mathbf{U}_1} \psi_{\mathbf{Z}; a, 0}$ in the left-hand side of \eqref{U1}. On the other hand, the second term above can be dominated by $l^{- 2 \mathbf{U}_2} \psi_{\mathbf{Z}; a - \frac{1}{3 - \gamma}, \beta_\gamma + \frac{1}{2}}$ in the left-hand side of \eqref{U2}, provided that
		\begin{equation*}
			\begin{aligned}
				\tfrac{1}{(3 - \gamma) (a - a_2)} W_\gamma (a, a_2, \mathbf{U}_1, \mathbf{U}_2) > \mathbf{U}_2 \,.
			\end{aligned}
		\end{equation*}
		\item[(vi)] Take
		\begin{equation}\label{Para-6}
			\begin{aligned}
				\mathcal{J}_6 : = \mathbf{U}_3 + \tfrac{1}{3 - \gamma} \mathbf{Z} - \mathbf{U}_4 > 0
			\end{aligned}
		\end{equation}
		such that the quantity $\delta (\delta \hbar)^{-1} l^{- 2 ( \mathbf{U}_3 + \frac{1}{3 - \gamma} \mathbf{Z} )} \psi_{\mathbf{Z}; a_1 - \frac{1}{3 - \gamma}, \beta_\gamma + \frac{1}{2}}$ in the right-hand side of \eqref{U3} can be controlled by $l^{- 2 \mathbf{U}_4} \psi_{\mathbf{Z}; a_1 - \frac{1}{3 - \gamma}, \beta_\gamma + \frac{1}{2}}$ in the left-hand side of \eqref{U4}.
		\item[(vii)] Take
		\begin{equation}\label{Para-7}
			\begin{aligned}
				a_1 < \tfrac{2}{3 - \gamma} \,, \ \mathcal{J}_7 : = \mathbf{U}_4 + \tfrac{1}{3 - \gamma} \mathbf{Z} > 0
			\end{aligned}
		\end{equation}
		such that the quantity $\delta (\delta \hbar)^{-1} l^{- 2 ( \mathbf{U}_4 + \frac{1}{3 - \gamma} \mathbf{Z} )} \psi_{\mathbf{Z}; a_1 - \frac{2}{3 - \gamma}, 2 ( \beta_\gamma + \frac{1}{2} )}$ in the right-hand side of \eqref{U4} can be absorbed by $(\delta \hbar)^{-1} \mathscr{E}^A_2 ( g_\sigma ) $ in the left-hand side of \eqref{Sum-1st-2}.
		\item[(viii)] As similar in (v), take
		\begin{equation}\label{Para-8}
			\begin{aligned}
				a_1 - \tfrac{1}{3 - \gamma} < a_4 < a_1 \,, \ \mathcal{J}_8 : = \tfrac{1}{(3 - \gamma) (a - a_2)} W_\gamma (a_1, a_4, \mathbf{U}_3, \mathbf{U}_4) - \mathbf{U}_4 > 0
			\end{aligned}
		\end{equation}
		such that the quantity $\delta^{-1} (\delta \hbar)^{-1} l^{ - 2 ( \mathbf{U}_4 - ( a_1 - a_4 - \frac{1}{3 - \gamma} ) - \frac{1 - \gamma}{2 (3 - \gamma)} ) } \psi_{\mathbf{Z}; a_4, 0}$ in the right-hand side of \eqref{U4} can be dominated by $l^{- 2 \mathbf{U}_3} \psi_{\mathbf{Z}; a_1, 0}$ in the left-hand side of \eqref{U3} and $l^{- 2 \mathbf{U}_4} \psi_{\mathbf{Z}; a_1 - \frac{1}{3 - \gamma}, \beta_\gamma + \frac{1}{2}}$ in the left-hand side of \eqref{U4}. More precisely, the key inequality is
		\begin{equation}\label{U-int-4}{\small
			\begin{aligned}
				& \delta^{-1} (\delta \hbar)^{-1} l^{ - 2 ( \mathbf{U}_4 - ( a_1 - a_4 - \frac{1}{3 - \gamma} ) - \frac{1 - \gamma}{2 (3 - \gamma)} ) } \psi_{\mathbf{Z}; a_4, 0} \leq 2 \epsilon_*^2 l^{- 2 \mathbf{U}_3} \psi_{\mathbf{Z}; a_1, 0} \\
				& \qquad \qquad + 2 C_{\epsilon_*}^2 [ \delta^{-1} (\delta \hbar)^{-1} ]^{ \frac{2}{(3 - \gamma) (a_1 - a_4)} } l^{ - \frac{2}{(3 - \gamma) (a_1 - a_4)} W_\gamma (a_1, a_4, \mathbf{U}_3, \mathbf{U}_4) } \psi_{\mathbf{Z}; a_1 - \frac{1}{3 - \gamma}, \beta_\gamma + \frac{1}{2}} \,.
			\end{aligned}}
		\end{equation}
		\item[(ix)] Take
		\begin{equation}\label{Para-9}{\small
			\begin{aligned}
				& a_3 - a + \tfrac{2}{3 - \gamma} \leq a_1 - \tfrac{1}{3 - \gamma} \,, \\
				& \mathcal{J}_9 : = \mathbf{U}_4 + \mathbf{U}_5 - [ a_1 - \tfrac{1}{3 - \gamma} - ( a_3 - a + \tfrac{2}{3 - \gamma} ) ] - [ \mathbf{U}_3 - (a_1 - a_3) - \tfrac{1 - \gamma}{2 (3 - \gamma)} ] < 0
			\end{aligned}}
		\end{equation}
		such that the quantity $\delta^{-1} (\delta \hbar)^{-1} l^{ - 2 ( \mathbf{U}_3 - ( a_1 - a_3 ) - \frac{1 - \gamma}{2 (3 - \gamma)} ) } \psi_{\mathbf{Z}; a_3, 0}$ in the right-hand side of \eqref{U3} can be bounded by $l^{- 2 \mathbf{U}_5 } \psi_{\mathbf{Z}; a - \frac{2}{3 - \gamma}, 2 ( \beta_\gamma + \frac{1}{2} )}$ in the left-hand side of \eqref{U5} and $l^{- 2 \mathbf{U}_4} \psi_{\mathbf{Z}; a_1 - \frac{1}{3 - \gamma}, \beta_\gamma + \frac{1}{2}}$ in the left-hand side of \eqref{U4}. More precisely, the core is to dominate the factor $l^{ - ( \mathbf{U}_3 - ( a_1 - a_3 ) - \frac{1 - \gamma}{2 (3 - \gamma)} ) } ( l^{ - \mathbf{Z}} \delta x + l )^{a_3} $. Note that
		\begin{equation*}
			\begin{aligned}
				& l^{ - ( \mathbf{U}_3 - ( a_1 - a_3 ) - \frac{1 - \gamma}{2 (3 - \gamma)} ) } ( l^{ - \mathbf{Z}} \delta x + l )^{a_3} \\
				\leq & l^{ \widetilde{W}_\gamma (a, a_1, a_3, \mathbf{U}_3, \mathbf{U}_4, \mathbf{U}_5) } \big[ l^{ - \mathbf{U}_4 } ( l^{ - \mathbf{Z} } \delta x + l )^{a_1 - \frac{1}{3 - \gamma}} \big] \big[ l^{ - \mathbf{U}_5 } ( l^{- \mathbf{Z}} \delta x + l )^{a - \frac{2}{3 - \gamma}} \big] \,,
			\end{aligned}
		\end{equation*}
		where
		\begin{equation*}{\small
			\begin{aligned}
				\widetilde{W}_\gamma (a, a_1, a_3, \mathbf{U}_3, \mathbf{U}_4, \mathbf{U}_5) = \mathbf{U}_4 + \mathbf{U}_5 - [ a_1 - \tfrac{1}{3 - \gamma} - ( a_3 - a + \tfrac{2}{3 - \gamma} ) ] \\
				- [ \mathbf{U}_3 - (a_1 - a_3) - \tfrac{1 - \gamma}{2 (3 - \gamma)} ] \,.
			\end{aligned}}
		\end{equation*}
		One then gains
		\begin{equation}\label{U-int-2}
			\begin{aligned}
				\delta^{-1} (\delta \hbar)^{-1} & l^{ - 2 ( \mathbf{U}_3 - ( a_1 - a_3 ) - \frac{1 - \gamma}{2 (3 - \gamma)} ) } \psi_{\mathbf{Z}; a_3, 0} \leq \epsilon_*^2 l^{- 2 \mathbf{U}_4} \psi_{\mathbf{Z}; a_1 - \frac{1}{3 - \gamma}, \beta_\gamma + \frac{1}{2}} \\
				& + C_{\epsilon_*}^2 [ \delta^{-1} (\delta \hbar)^{-1} ]^2 l^{ 2 \widetilde{W}_\gamma (a, a_1, a_3, \mathbf{U}_3, \mathbf{U}_4, \mathbf{U}_5) } l^{- 2 \mathbf{U}_5 } \psi_{\mathbf{Z}; a - \frac{2}{3 - \gamma}, 2 ( \beta_\gamma + \frac{1}{2} )} \,.
			\end{aligned}
		\end{equation}
		By choosing $\epsilon_* > 0$ sufficiently small such that the first term $\epsilon_*^2 l^{- 2 \mathbf{U}_4} \psi_{\mathbf{Z}; a_1 - \frac{1}{3 - \gamma}, \beta_\gamma + \frac{1}{2}}$ above can be absorbed by $ l^{- 2 \mathbf{U}_4} \psi_{\mathbf{Z}; a_1 - \frac{1}{3 - \gamma}, \beta_\gamma + \frac{1}{2}}$ in the left-hand side of \eqref{U4}. On the other hand, the last term above can be bounded by $ l^{- 2 \mathbf{U}_5 } \psi_{\mathbf{Z}; a - \frac{2}{3 - \gamma}, 2 ( \beta_\gamma + \frac{1}{2} )} $ in the left-hand side of \eqref{U5} under the condition
		\begin{equation*}
			\begin{aligned}
				\widetilde{W}_\gamma (a, a_1, a_3, \mathbf{U}_3, \mathbf{U}_4, \mathbf{U}_5) < 0 \,,
			\end{aligned}
		\end{equation*}
		which means the second inequality in \eqref{Para-9}.
		\item[(x)] Take
		\begin{equation}\label{Para-10}
			\begin{aligned}
				a_5 \leq a_1 - \tfrac{1}{3 - \gamma} \,, \ \mathcal{J}_{10} : = \mathbf{U}_5 + \tfrac{1 + \gamma}{2 (3 - \gamma)} - \mathbf{U}_4 > 0
			\end{aligned}
		\end{equation}
		such that the quantity $ \delta^{-1} (\delta \hbar)^{-1} l^{ - 2 ( \mathbf{U}_5 - ( a - a_5) - \frac{1 - \gamma}{2 (3 - \gamma)} + \frac{2}{3 - \gamma} ) } \psi_{\mathbf{Z}; a_5, 0} $ in the right-hand side of \eqref{U5} can be bounded by the left-hand side of \eqref{U4}. Indeed, under the first condition in \eqref{Para-10}, one has $\psi_{\mathbf{Z}; a_5, 0} \leq l^{ - 2 ( a_1 - \tfrac{1}{3 - \gamma} - a_5 ) } \psi_{\mathbf{Z}; a_1 - \frac{1}{3 - \gamma}, \beta_\gamma + \frac{1}{2}}$, which means that
		\begin{equation}\label{U-int-3}
			\begin{aligned}
				& \delta^{-1} (\delta \hbar)^{-1} l^{ - 2 ( \mathbf{U}_5 - ( a - a_5) - \frac{1 - \gamma}{2 (3 - \gamma)} + \frac{2}{3 - \gamma} ) } \psi_{\mathbf{Z}; a_5, 0} \\
				\leq & \delta^{-1} (\delta \hbar)^{-1} l^{ - 2 ( \mathbf{U}_5 + \tfrac{1 + \gamma}{2 (3 - \gamma)} ) } \psi_{\mathbf{Z}; a_1 - \frac{1}{3 - \gamma}, \beta_\gamma + \frac{1}{2}} \,.
			\end{aligned}
		\end{equation}
		It can be further bounded by the left-hand side of \eqref{U4} under the second condition in \eqref{Para-10}.
		
		\item[(xi)] Take
		\begin{equation}\label{Para-11}
			\begin{aligned}
				\mathcal{J}_{11} : = \mathbf{U}_5 + \tfrac{1}{3 - \gamma} \mathbf{Z} > 0
			\end{aligned}
		\end{equation}
		such that the quantity $\delta (\delta \hbar)^{-1} l^{- 2 ( \mathbf{U}_5 + \frac{1}{3 - \gamma} \mathbf{Z} )} \psi_{\mathbf{Z}; a - \frac{3}{3 - \gamma}, 3 ( \beta_\gamma + \frac{1}{2} )}$ can be bounded by the quantity $(\delta \hbar)^{-1} \mathscr{E}^A_2 ( g_\sigma ) $ in the left-hand side of \eqref{Sum-1st-2}.
	\end{itemize}
	
	The next goal is to take the proper parameters $a, a_1, a_2, a_3, a_4 > 0 $, $ \mathbf{U}_1, \mathbf{U}_2, \mathbf{U}_3, \mathbf{U}_4, \mathbf{U}_5 \in \R$ and $\mathbf{Z} \geq 0$ such that the required nine inequalities \eqref{Para-1}-\eqref{Para-9} above hold.
	
	For $- 3 < \gamma \leq 1$, we now take
	\begin{equation}\label{Choice-para}
		\begin{aligned}
			& a = \tfrac{15}{8 ( 3 - \gamma )} \,, \ a_1 = \tfrac{7}{4 ( 3 - \gamma )} \,, \ a_2 = \tfrac{13}{8 ( 3 - \gamma )} \,, \ a_3 = \tfrac{1}{2 ( 3 - \gamma )} \,, \ a_4 = \tfrac{3}{2 ( 3 - \gamma )} \,, \ a_5 = \tfrac{1}{3 ( 3 - \gamma )} \,, \\
			& \mathbf{U}_1 = - \tfrac{59 - 24 \gamma}{48 ( 3 - \gamma )} \,, \ \mathbf{U}_2 = - \tfrac{9 + \gamma}{6 ( 3 - \gamma )} \,, \ \mathbf{U}_3 = - \tfrac{107 - 24 \gamma}{24 ( 3 - \gamma )} \,, \ \mathbf{U}_4 = - \tfrac{297 - 40 \gamma}{72 ( 3 - \gamma )} \,, \\
			& \mathbf{U}_5 = - \tfrac{189 - 52 \gamma}{72 ( 3 - \gamma )} \,, \ \mathbf{Z} = 6 \,.
		\end{aligned}
	\end{equation}
	The choice of $a$ and $a_i$ ($1 \leq i \leq 4$) in \eqref{Choice-para} obviously guarantees themselves relations in \eqref{Para-1}-\eqref{Para-11} above. We only need to verify the sign of the numbers $\mathcal{J}_i$ for $1 \leq i \leq 11$ under the choice \eqref{Choice-para}. Straightforward calculations tell us
	\begin{equation}\label{J-sign}
		\begin{aligned}
			& \mathcal{J}_1 = \tfrac{125}{48 (3 - \gamma)} > 0 \,, \ \mathcal{J}_2 = \tfrac{301 + 32 \gamma}{48 (3 - \gamma)} > 0 \,, \ \mathcal{J}_3 = \tfrac{125}{48 (3 - \gamma)} > 0 \,, \ \mathcal{J}_4 = \tfrac{27 - \gamma}{6 (3 - \gamma)} > 0 \,, \\
			& \mathcal{J}_5 = \tfrac{3}{16 (3 - \gamma)} > 0 \,, \ \mathcal{J}_6 = \tfrac{51 + 4 \gamma}{9 (3 - \gamma)} > 0 \,, \ \mathcal{J}_7 = \tfrac{135 + 40 \gamma}{72 (3 - \gamma)} > 0 \,, \ \mathcal{J}_8 = \tfrac{2 (3 + \gamma)}{3 (3 - \gamma)} > 0 \,, \\
			& \mathcal{J}_9 = - \tfrac{2 (3 + \gamma)}{9 (3 - \gamma)} < 0 \,, \ \mathcal{J}_{10} = \tfrac{2 (3 + \gamma)}{9 (3 - \gamma)} > 0 \,, \ \mathcal{J}_{11} = \tfrac{243 + 52 \gamma}{72 (3 - \gamma)} > 0 \,.
		\end{aligned}
	\end{equation}
    It is easy to see that for any $- 3 < \gamma \leq 1$,
    \begin{equation}\label{J-lowerbnd}
    	\begin{aligned}
    		\mathcal{J}_i \geq \tfrac{3}{16 ( 3 - \gamma )} \geq \tfrac{3 (3 + \gamma)}{64 (3 - \gamma)} > 0 \ (i \neq 8,9, 10) \,, \ \mathcal{J}_8, - \mathcal{J}_9, \mathcal{J}_{10} \geq \tfrac{3 (3 + \gamma)}{64 (3 - \gamma)} > 0 \,.
    	\end{aligned}
    \end{equation}
    With the above choices \eqref{Choice-para}, the quantities $\mathcal{J}_i$ for $1 \leq i \leq 11$ in \eqref{J-sign} and the positive lower bound of $\mathcal{J}_i$ in \eqref{J-lowerbnd}, the inequalities \eqref{Sum-1st-2}-\eqref{U1}-\eqref{U2}-\eqref{U3}-\eqref{U4}-\eqref{U5} infer that
    \begin{equation}\label{U0-d}
    	\begin{aligned}
    		\mathscr{E}^A_2 ( g_\sigma ) \lesssim & \alpha_* (\delta \hbar)^{-2} \delta^{- 1} l^{ - \frac{3 (3 + \gamma)}{32 (3 - \gamma)} } l^{ \frac{59 - 24 \gamma}{ 24 (3 - \gamma) } } \psi_{6; \frac{15}{8 (3 - \gamma)}, 0} \\
    		& + \alpha_* (\delta \hbar)^{-1} \delta^{- 1} l^{- 100} \mathscr{E}^A_2 ( g_\sigma ) + \Theta (h_\sigma, \varphi_{A, \sigma}) \,,
    	\end{aligned}
    \end{equation}
    and
    \begin{equation}\label{U1-d}
    	\begin{aligned}
    		l^{ \frac{59 - 24 \gamma}{ 24 (3 - \gamma) } } \psi_{6; \frac{15}{8 (3 - \gamma)}, 0} \lesssim & \delta (\delta \hbar)^{-1} l^{ - \frac{3 (3 + \gamma)}{32 (3 - \gamma)} }  l^{ \frac{9 + \gamma}{3 (3 - \gamma)} } \psi_{6; \frac{7}{8 ( 3 - \gamma ) }, \beta_\gamma + \frac{1}{2}} \\
    		& + \delta^{-1} (\delta \hbar)^{-1} l^{ - \frac{3 (3 + \gamma)}{32 (3 - \gamma)} } l^{ \frac{107 - 24 \gamma}{ 12 ( 3 - \gamma ) } } \psi_{6; \frac{7}{4 (3 - \gamma)}, 0} \\
    		& + \delta^{-1} ( \delta \hbar )^{-1} l^{-100} \mathscr{E}^A_2 ( g_\sigma ) + y_{\frac{7}{4 (3 - \gamma)}, 6; \frac{15}{8 (3 - \gamma)}, 0} \,,
    	\end{aligned}
    \end{equation}
    and
    \begin{equation}\label{U2-d}
    	\begin{aligned}
    		l^{ \frac{9 + \gamma}{3 (3 - \gamma)} } \psi_{6; \frac{7}{8 ( 3 - \gamma ) }, \beta_\gamma + \frac{1}{2}} \lesssim & \delta (\delta \hbar)^{-1} l^{ - \frac{3 (3 + \gamma)}{32 (3 - \gamma)} } \psi_{6; - \frac{1}{8 ( 3 - \gamma ) }, 2 ( \beta_\gamma + \frac{1}{2} )} + 2 \epsilon_*^2 l^{ \frac{59 - 24 \gamma}{ 24 (3 - \gamma) } } \psi_{6; \frac{15}{8 (3 - \gamma)}, 0} \\
    		& + 2 C_{\epsilon_*}^2 \delta^{-8} (\delta \hbar)^{-8} l^{ - \frac{3 (3 + \gamma)}{32 (3 - \gamma)} } l^{ \frac{9 + \gamma}{3 (3 - \gamma)} } \psi_{6; \frac{7}{8 ( 3 - \gamma ) }, \beta_\gamma + \frac{1}{2}} \\
    		& + \delta^{-1} ( \delta \hbar )^{-1} l^{-100} \mathscr{E}^A_2 ( g_\sigma ) + y_{\frac{13}{8 (3 - \gamma)}, 6; \frac{7}{ 8 ( 3 - \gamma ) }, \beta_\gamma + \frac{1}{2}}
    	\end{aligned}
    \end{equation}
    for any small $\epsilon_* > 0$ to be determined later, where the inequality \eqref{U-int-1} is used, and
    \begin{equation}\label{U3-d}
    	\begin{aligned}
    		l^{ \frac{107 - 24 \gamma}{ 12 ( 3 - \gamma ) } } \psi_{6; \frac{7}{4 (3 - \gamma)}, 0} \lesssim & \delta (\delta \hbar)^{-1} l^{ - \frac{3 (3 + \gamma)}{32 (3 - \gamma)} } l^{ \frac{297 - 40 \gamma}{36 (3 - \gamma) } } \psi_{6; \frac{3}{4 ( 3 - \gamma ) }, \beta_\gamma + \frac{1}{2}} + \epsilon_*^2 l^{ \frac{ 297 - 40 \gamma }{36 (3 - \gamma)} } \psi_{6; \frac{3}{ 4 ( 3 - \gamma ) }, \beta_\gamma + \frac{1}{2}} \\
    		& + C_{\epsilon_*}^2 \delta^{-2} (\delta \hbar)^{-2} l^{ - \frac{3 (3 + \gamma)}{32 (3 - \gamma)} } l^{ \frac{189 - 52 \gamma}{36 (3 - \gamma)} } \psi_{6; - \frac{1}{ 8 ( 3 - \gamma )}, 2 ( \beta_\gamma + \frac{1}{2} )} \\
    		& + \delta^{-1} ( \delta \hbar )^{-1} l^{-100} \mathscr{E}^A_2 ( g_\sigma ) + y_{\frac{1}{2 (3 - \gamma)}, 6; \frac{7}{4 (3 - \gamma)}, 0}
    	\end{aligned}
    \end{equation}
    for any small $\epsilon_* > 0$ to be determined later, where the inequality \eqref{U-int-2} is utilized, and
    \begin{equation}\label{U4-d}
    	\begin{aligned}
    		l^{ \frac{ 297 - 40 \gamma }{36 (3 - \gamma)} } \psi_{6; \frac{3}{ 4 ( 3 - \gamma ) }, \beta_\gamma + \frac{1}{2}} \lesssim & \delta (\delta \hbar)^{-1} l^{ - \frac{3 (3 + \gamma)}{32 (3 - \gamma)} } \psi_{6; - \frac{1}{ 4 ( 3 - \gamma )}, 2 ( \beta_\gamma + \frac{1}{2} )} + 2 \epsilon_*^2 l^{ \frac{107 - 24 \gamma}{ 12 ( 3 - \gamma ) } } \psi_{6; \frac{7}{4 (3 - \gamma)}, 0} \\
    		& + 2 C_{\epsilon_*}^2 \delta^{-8} (\delta \hbar)^{-8} l^{ - \frac{3 (3 + \gamma)}{32 (3 - \gamma)} } l^{ \frac{ 297 - 40 \gamma }{36 (3 - \gamma)} } \psi_{6; \frac{3}{ 4 ( 3 - \gamma ) }, \beta_\gamma + \frac{1}{2}} \\
    		& + \delta^{-1} ( \delta \hbar )^{-1} l^{-100} \mathscr{E}^A_2 ( g_\sigma ) + y_{\frac{3}{2 (3 - \gamma)}, 6; \frac{3}{ 4 ( 3 - \gamma ) }, \beta_\gamma + \frac{1}{2}}
    	\end{aligned}
    \end{equation}
    for any small $\epsilon_* > 0$ to be determined later, where the inequality \eqref{U-int-4} is employed, and
    \begin{equation}\label{U5-d}
    	\begin{aligned}
    		l^{ \frac{189 - 52 \gamma}{36 (3 - \gamma)} } \psi_{6; - \frac{1}{ 8 ( 3 - \gamma ) }, 2 ( \beta_\gamma + \frac{1}{2} )} \lesssim & \delta (\delta \hbar)^{-1} l^{ - \frac{3 (3 + \gamma)}{32 (3 - \gamma)} } \psi_{6; - \frac{7}{8 ( 3 - \gamma ) }, 3 ( \beta_\gamma + \frac{1}{2} )} \\
    		& + \delta^{-1} (\delta \hbar)^{-1} l^{ - \frac{3 (3 + \gamma)}{32 (3 - \gamma)} } l^{ \frac{ 297 - 40 \gamma }{36 (3 - \gamma)} } \psi_{6; \frac{3}{ 4 ( 3 - \gamma ) }, \beta_\gamma + \frac{1}{2}} \\
    		& + \delta^{-1} ( \delta \hbar )^{-1} l^{-100} \mathscr{E}^A_2 ( g_\sigma ) + y_{ \frac{1}{3 (3 - \gamma)} , 6; - \frac{1}{ 8 ( 3 - \gamma ) }, 2 ( \beta_\gamma + \frac{1}{2} ) } \,,
    	\end{aligned}
    \end{equation}
    where the inequality \eqref{U-int-3} is used.

    Notice that if $\beta \geq 3 (\beta_\gamma + \frac{1}{2})$ involved in the functional $\mathscr{E}_2^A (\cdot)$ in \eqref{E2-lambda}, one has
    \begin{equation}\label{RHS-d}{\small
    	\begin{aligned}
    		\big(\mathrm{RHS\ of\ \eqref{U2-d}} \big)_{1st} + \big(\mathrm{RHS\ of\ \eqref{U4-d}} \big)_{1st} + \big(\mathrm{RHS\ of\ \eqref{U5-d}} \big)_{1st} \lesssim \delta (\delta \hbar)^{-1} l^{ - \frac{3 (3 + \gamma)}{32 (3 - \gamma)} } \mathscr{E}_2^A (g_\sigma) \,,
    	\end{aligned}}
    \end{equation}
    where the symbol $\big(\mathrm{RHS\ of\ (X) } \big)_{1st}$ means the first term in the right-hand side of the inequality (X). Recalling the definition of the functional $\mathscr{E}_{\mathtt{NBE}}^A (\cdot)$ in \eqref{E-NBE} and summing up for the inequalities \eqref{U0-d}-\eqref{U1-d}-\eqref{U2-d}-\eqref{U3-d}-\eqref{U4-d}-\eqref{U5-d}, one gains
    \begin{equation}\label{Bnd-L2-1}
    	\begin{aligned}
    		\mathscr{E}_2^A (g_\sigma) + \mathscr{E}_{\mathtt{NBE}}^A (g_\sigma) \lesssim & \big[ \epsilon_*^2 + ( 1 + C_{\epsilon_*}^2 ) ( \delta \hbar )^{- 16} l^{ - \frac{3 (3 + \gamma)}{32 (3 - \gamma)} } \big] \big[ \mathscr{E}_2^A (g_\sigma) + \mathscr{E}_{\mathtt{NBE}}^A (g_\sigma) \big] \\
    		& + y_{\frac{1}{2 (3 - \gamma)}, 6; \frac{7}{4 (3 - \gamma)}, 0} + y_{\frac{7}{4 (3 - \gamma)}, 6; \frac{15}{8 (3 - \gamma)}, 0} + y_{\frac{13}{8 (3 - \gamma)}, 6; \frac{7}{ 8 ( 3 - \gamma ) }, \beta_\gamma + \frac{1}{2}} \\
    		& +  \Theta ( h_\sigma, \varphi_{A, \sigma} ) + y_{\frac{3}{2 (3 - \gamma)}, 6; \frac{3}{ 4 ( 3 - \gamma ) }, \beta_\gamma + \frac{1}{2}} + y_{ \frac{1}{3 (3 - \gamma)} , 6; - \frac{1}{ 8 ( 3 - \gamma ) }, 2 ( \beta_\gamma + \frac{1}{2} ) }
    	\end{aligned}
    \end{equation}
    for any $\epsilon_* > 0$, where the facts $0 < \delta, \hbar < 1$ and $l^{- 100} \leq l^{- \frac{3 (3 + \gamma)}{32 (3 - \gamma)}}$ for all $- 3 < \gamma \leq 1$ and $l \geq 1$ have been utilized. Initially taking $\epsilon_* > 0$ and then choosing $l \geq 1$ large enough such that
    \begin{equation}\label{l-bnd}
    	\begin{aligned}
    		l \geq O(1) (\delta \hbar)^{- \frac{512 (3 - \gamma)}{3 (3 + \gamma)}} \,,
    	\end{aligned}
    \end{equation}
    where $O (1) \gg 1$ is sufficiently large and independent of $\delta, \hbar$. Remark that $l \geq 1$ in Lemma \ref{Lmm-Dh} is assumed by $ l > O (1) ( \ln \frac{1}{\delta} )^\frac{3 - \gamma}{2} $ for $O (1) \gg 1$ is sufficiently large and independent of $\delta, \hbar$, which can be naturally implied by \eqref{l-bnd}. Recalling the definition of $\digamma_{\delta, l,a, \mathbf{Z}} (\nu^{-1} h_\sigma)$ in \eqref{F-delta-l-a} and $y_{a, \mathbf{Z}; \mathtt{q}, \beta_\sharp}$ in \eqref{psi-y}, one easily obtains
    \begin{equation}\label{Bnd-L2-2}
    	\begin{aligned}
    		& y_{\frac{1}{2 (3 - \gamma)}, 6; \frac{7}{4 (3 - \gamma)}, 0} + y_{\frac{7}{4 (3 - \gamma)}, 6; \frac{15}{8 (3 - \gamma)}, 0} + y_{\frac{13}{8 (3 - \gamma)}, 6; \frac{7}{ 8 ( 3 - \gamma ) }, \beta_\gamma + \frac{1}{2}} \\
    		& + \Theta (h_\sigma, \varphi_{A, \sigma}) + y_{\frac{3}{2 (3 - \gamma)}, 6; \frac{3}{ 4 ( 3 - \gamma ) }, \beta_\gamma + \frac{1}{2}} + y_{ \frac{1}{3 (3 - \gamma)} , 6; - \frac{1}{ 8 ( 3 - \gamma ) }, 2 ( \beta_\gamma + \frac{1}{2} ) } \\
    		\leq & C_l \mathscr{A}_{\mathtt{NBE}}^A (h_\sigma) + C_l \mathscr{B}_{\mathtt{NBE}} (\varphi_{A, \sigma}) + \mathscr{A}^A_2 (h_\sigma) + \mathscr{B}_2 (\varphi_{A, \sigma}) \,,
    	\end{aligned}
    \end{equation}
    where the functionals $\mathscr{A}_{\mathtt{NBE}}^A (h_\sigma)$ and $\mathscr{B}_{\mathtt{NBE}} (\varphi_{A, \sigma})$ are defined in \eqref{As-def} and \eqref{B-def}, respectively.

    As a result, under \eqref{l-bnd}, the inequalities \eqref{Bnd-L2-1} and \eqref{Bnd-L2-2} complete the bound \eqref{L2xv-Unf-Bnd}, and the proof of Lemma \ref{Lmm-L2xv-closed} is then finished.
\end{proof}


\subsection{Existence and uniqueness of weak solution to the problem \eqref{A1} with $\varphi_A = 0$}

In this subsection, based on the uniform a priori estimate in Lemma \ref{Lmm-L2xv-closed}, we will prove the existence and uniqueness of the weak solution to the problem \eqref{A1} (equivalently, the equation \eqref{A3-lambda}) with the boundary source term $\varphi_A = 0$ (equivalently, $\varphi_{A, \sigma} = 0$) by employing the well-known Hahn-Banach Theorem and Lax-Milgram Theorem. The existence result is stated as follows.

\begin{lemma}\label{Lmm-Ext-L2xv}
	Under the same assumptions on Lemma \ref{Lmm-L2xv-closed}, we assume that the source term $h_\sigma$ in \eqref{A3-lambda} satisfies
	\begin{equation*}
		\begin{aligned}
			\mathscr{A}_2^A (h_\sigma) + \mathscr{A}_{\mathtt{NBE}}^A (h_\sigma) < \infty
		\end{aligned}
	\end{equation*}
    and the boundary source term $\varphi_{A, \sigma} = 0$. Then the problem \eqref{A3-lambda} (equivalently \eqref{A1}) admits a unique weak solution $g_\sigma (x, v)$ enjoying the bound \eqref{L2xv-Unf-Bnd} in Lemma \ref{Lmm-L2xv-closed}.
\end{lemma}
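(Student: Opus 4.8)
The plan is to construct the weak solution by duality, using the a priori estimate of Lemma~\ref{Lmm-L2xv-closed} applied to the \emph{adjoint} problem, and then to upgrade it to a mild solution via the trace theory for the free transport operator. Fix $h_\sigma$ with $\mathscr{A}_2^A(h_\sigma) + \mathscr{A}_{\mathtt{NBE}}^A(h_\sigma) < \infty$ and $\varphi_{A,\sigma} = 0$. Denote by $\mathfrak{L}$ the operator in the left-hand side of \eqref{A3-lambda} and by $\mathfrak{L}^*$ its formal adjoint on $\Omega_A \times \R^3$: the transport term changes sign, $\nu(v)$ and $K_\hbar$ are self-adjoint in $L^2_v$, and $\mathbf{D}_\hbar$ has an explicit bounded adjoint. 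Crucially, after the spatial reflection $x \mapsto A - x$ the adjoint problem is of exactly the same structural type as \eqref{A3-lambda}: it carries the free incoming condition on $\{x = A,\ v_3 < 0\}$ and, on $\{x = 0,\ v_3 > 0\}$, the boundary operator dual (with respect to $|v_3|\,\d v$) to the Maxwell reflection $(1-\alpha_*)R_0 + \alpha_*\mathcal{D}_w$, which is again a convex combination of specular reflection and a diffusive-type operator built from $M_w$ and $\M$, hence still admissible for the nondissipative boundary analysis. Let $\mathcal{T}^*$ be the space of functions $\phi$ that are $C^1$ up to the boundary, compactly supported in $v$, and satisfy these adjoint boundary conditions; $g_\sigma$ is by definition a weak solution of \eqref{A3-lambda} (with $\varphi_{A,\sigma} = 0$) if
\begin{equation*}
\iint_{\Omega_A \times \R^3} g_\sigma \, \mathfrak{L}^* \phi \, \d v \, \d x = \iint_{\Omega_A \times \R^3} h_\sigma \, \phi \, \d v \, \d x \quad \text{for every } \phi \in \mathcal{T}^* .
\end{equation*}

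First I would establish the a priori estimate for the adjoint problem. Since $\mathfrak{L}^*$ becomes an operator of the same type as $\mathfrak{L}$ under $x \mapsto A - x$, the whole chain of arguments of Lemmas \ref{Lmm-NDBL}, \ref{Lmm-BEL}, \ref{Lmm-L2xv}, \ref{Lmm-L2xv-closed} --- the nondissipative boundary inequality, the boundary energy lemma, the weighted $L^2_{x,v}$ estimate, and the spatial--velocity indices iteration, together with the $L^2_v$ boundedness of $K_\hbar$, $\mathbf{D}_\hbar$ (Lemmas \ref{Lmm-Kh-L2}, \ref{Lmm-Dh-L2-L2}) and the coercivity of $\mathbf{D}_\hbar$ (Lemma \ref{Lmm-Dh}) --- goes through verbatim, provided the boundary weight involved still obeys the hypothesis \eqref{BEL-1} (it does, since $0 < T_w < 2T$). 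This yields, for $l$ as large as in Lemma \ref{Lmm-L2xv-closed}, the bound $\mathscr{E}_2^A(\phi) + \mathscr{E}_{\mathtt{NBE}}^A(\phi) \leq C\,[\mathscr{A}_2^A(\mathfrak{L}^*\phi) + \mathscr{A}_{\mathtt{NBE}}^A(\mathfrak{L}^*\phi)]$ for all $\phi \in \mathcal{T}^*$; in particular $\mathfrak{L}^*\phi = 0 \Rightarrow \phi = 0$. Hence the functional $\ell(\mathfrak{L}^*\phi) := \iint h_\sigma\phi$ is well defined on the subspace $\{\mathfrak{L}^*\phi : \phi \in \mathcal{T}^*\}$ of the Hilbert space with norm $\Psi \mapsto [\mathscr{A}_2^A(\Psi) + \mathscr{A}_{\mathtt{NBE}}^A(\Psi)]^{1/2}$, and it is bounded there: Cauchy--Schwarz with matching weights (exactly as in \eqref{h-bnd-2} and its iterated versions) gives $|\ell(\mathfrak{L}^*\phi)| \leq C\,[\mathscr{A}_2^A(h_\sigma) + \mathscr{A}_{\mathtt{NBE}}^A(h_\sigma)]^{1/2}[\mathscr{E}_2^A(\phi) + \mathscr{E}_{\mathtt{NBE}}^A(\phi)]^{1/2}$, and the adjoint estimate controls the last factor by $C[\mathscr{A}_2^A(\mathfrak{L}^*\phi) + \mathscr{A}_{\mathtt{NBE}}^A(\mathfrak{L}^*\phi)]^{1/2}$. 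By the Hahn--Banach theorem --- equivalently, the Lax--Milgram theorem for the coercive form $a(\phi,\psi) = \langle \mathfrak{L}^*\phi, \mathfrak{L}^*\psi\rangle_{L^2_{x,v}}$ on the completion of $\mathcal{T}^*$ --- $\ell$ extends to a bounded functional and is represented by some $g_\sigma$ in the dual (weighted-$L^2_{x,v}$) space, satisfying the weak formulation above and the bound \eqref{L2xv-Unf-Bnd}.

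Next, following Lemma~II.2 of \cite{DiPerna-Lions-1989-AM}, I would verify that this weak solution is in fact mild. From the weak formulation one reads off $v_3 \partial_x g_\sigma = h_\sigma + [\hbar\sigma_x v_3 - \nu(v)]g_\sigma + K_\hbar g_\sigma - \mathbf{D}_\hbar g \in L^2_{x,v}$ in the sense of distributions, so the free transport theory furnishes well-defined $L^2_{\mathrm{loc}}$ traces of $g_\sigma$ on $\{x = 0\}\times\{v_3 \neq 0\}$ and $\{x = A\}\times\{v_3 \neq 0\}$, a Green's identity converting the boundary conditions built into $\mathcal{T}^*$ into the Maxwell reflection condition at $x = 0$ together with $g_\sigma(A,v)|_{v_3 < 0} = 0$, and the Duhamel representation of $g_\sigma$ along the characteristics $x \mapsto (x,v)$ with $v$ frozen --- i.e.\ precisely the mild form used later. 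Uniqueness follows from Lemma \ref{Lmm-L2xv}: the difference of two weak solutions with the same data solves \eqref{L-lambda-Dif} with vanishing source and vanishing incoming data, so \eqref{L2-g-sigma-Bnd-Dif} forces $\mathscr{E}_2^A(\vartriangle g_\sigma) = 0$; the quantitative bound \eqref{L2xv-Unf-Bnd} is then Lemma \ref{Lmm-L2xv-closed} applied to $g_\sigma$ with $\varphi_{A,\sigma} = 0$.

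The main obstacle is the coercivity-type estimate for the \emph{adjoint} equation, whose nondissipative boundary term $-\int v_3 |\phi(0,v)|^2\,\d v$ is again sign-indefinite; one must re-run the entire closed-loop argument of Lemma \ref{Lmm-L2xv-closed} (nondissipative boundary inequality, then boundary energy lemma, then spatial--velocity indices iteration, absorbed only for $l$ large) for the reflected operator, and check that the dual of the Maxwell reflection is still admissible in the sense of Lemma \ref{Lmm-BEL}. A secondary, more routine point is the trace/renormalization step: showing that the weak solution, a priori only in a weighted $L^2_{x,v}$, possesses honest one-sided traces and that the weak and mild formulations coincide --- standard given \cite{DiPerna-Lions-1989-AM}, but to be carried out in the presence of the weights $\sigma$ and $w_{\beta,\vartheta}$.
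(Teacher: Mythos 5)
Your overall architecture coincides with the paper's: both proofs obtain the weak solution by duality (Hahn--Banach plus Lax--Milgram in the Hilbert space built from $\mathscr{E}_2^A+\mathscr{E}_{\mathtt{NBE}}^A$), both invoke Lemma~II.2 of \cite{DiPerna-Lions-1989-AM} to identify the weak solution with the mild one, and both get uniqueness from the closed $L^2_{x,v}$ difference estimate. The implementations of the duality step differ in a way that matters. The paper tests against $\phi\in C_0^\infty((0,A)\times\R^3)$ with $\phi(0,v)|_{v_3<0}=0$ and uses only the coercivity $(\phi,\mathfrak{L}^*\phi)_{\mathbb{X}}\geq c_0\|\phi\|_{\mathbb{X}}^2$, which is just the interior energy identity of Lemma \ref{Lmm-L2xv-closed} rerun on $\phi$; it therefore never has to analyze the adjoint of the Maxwell reflection operator or prove a solvability estimate for the adjoint boundary-value problem. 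You instead base everything on a \emph{closed a priori estimate for the adjoint problem with the dual Maxwell condition}, which forces you to redo Lemmas \ref{Lmm-NDBL}, \ref{Lmm-BEL}, \ref{Lmm-L2xv}, \ref{Lmm-L2xv-closed} for the dual reflection operator. That is a much heavier burden than the statement requires, and you only assert that it "goes through verbatim." What your route buys is a cleaner weak formulation (the boundary conditions are genuinely encoded in the test space, whereas the paper's compactly supported test functions do not see the boundary and the Maxwell condition has to be recovered afterwards through the trace theory); what it costs is the entire adjoint boundary analysis.

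Within that heavier route there is one concrete misstep: the claim that after the spatial reflection $x\mapsto A-x$ the adjoint problem "is of exactly the same structural type as \eqref{A3-lambda}." It is not. The mixed weight $\sigma(x,v)$, the damping weight $(\delta x+l)^{-\frac{1-\gamma}{3-\gamma}}$ in $\mathbf{D}$, and the Euler factor $e^{-(\delta x+l)}$ on which Lemma \ref{Lmm-NDBL} is built are all anchored at $x=0$ and are not invariant under $x\mapsto A-x$; moreover that reflection relocates the Maxwell-type boundary to $x=A$ and the free incoming condition to $x=0$, which is the opposite configuration from \eqref{A3-lambda}. The correct reduction is the velocity reflection $v\mapsto R_0v$, which converts $-v_3\partial_x$ into $+v_3\partial_x$ while keeping both boundaries and all weights in place (this uses $\u_3=u_{w3}=0$ through \eqref{nu-rf} and \eqref{sigma-w-inv}, and the invariance of $K$ under $R_0$). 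One must then still check that the $R_0$-conjugated dual diffuse operator, whose kernel carries $\sqrt{\M_\sigma(v')}$ outside and $M_w/\sqrt{\M_\sigma}$ inside the integral, satisfies the decay hypotheses \eqref{BEL-1}--\eqref{BEL-2} of Lemma \ref{Lmm-BEL}; this does hold under $0<T_w<2T$ and small $\hbar$, but it is not automatic. A final small point: in the uniqueness step, \eqref{L2-g-sigma-Bnd-Dif} alone does not force $\mathscr{E}_2^A(\vartriangle g_\sigma)=0$ because of the term $\alpha_*\digamma^2_{\delta,l,a,\mathbf{Z}}(\vartriangle g_\sigma)$ on its right-hand side; you need the closed version of the estimate, i.e.\ Lemma \ref{Lmm-L2xv-closed} applied to the difference, exactly as the paper does.
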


\begin{proof}[Proof of Lemma \ref{Lmm-Ext-L2xv}]
We first introduce the Hilbert space
\begin{equation*}
	\begin{aligned}
		\mathbb{X} : = \{ f = f (x,v) | \mathscr{E}_2^A (f) + \mathscr{E}^A_{\mathtt{NBE}} (f) < \infty \}
	\end{aligned}
\end{equation*}
with the inner product
{\small
	\begin{align*}
		\big( f, g\big)_{\mathbb{X}} : = & \delta \hbar \int_0^A \int_{\R^3} \P ( w_{\beta, \vartheta} f ) \P ( w_{\beta, \vartheta} g ) (\delta x + l)^{ - \frac{1 - \gamma}{3 - \gamma} } \d v \d x \\
		& + \int_0^A \int_{\R^3} \P^\perp ( w_{\beta, \vartheta} f ) \P^\perp ( w_{\beta, \vartheta} g ) \nu \d v \d x \\
		& + \sum_{ [ \mathbf{a}; \mathbf{b}, \mathbf{c} ] \in \mathbbm{Index} } l^\mathbf{a} \delta \hbar \int_0^A \int_{\R^3} \P ( w_{\mathbf{c}, 0} f ) \P ( w_{\mathbf{c}, 0} g ) ( l^{- 6} \delta x + l )^{2 \mathbf{b}} ( \delta x + l )^{ - \frac{1 - \gamma}{3 - \gamma} } \d v \d x \\
		& + \sum_{ [ \mathbf{a}; \mathbf{b}, \mathbf{c} ] \in \mathbbm{Index} } l^\mathbf{a} \delta \hbar \int_0^A \int_{\R^3} \P^\perp ( w_{\mathbf{c}, 0} f ) \P^\perp ( w_{\mathbf{c}, 0} g ) \nu \d v \d x \,,
	\end{align*}}
where the index set
\begin{equation*}
	\begin{aligned}
		\mathbbm{Index} = \Big\{ [ \tfrac{59 - 24 \gamma}{24 (3 - \gamma)}; \tfrac{15}{8 (3 - \gamma)}, 0 ] \,, [ \tfrac{9 + \gamma}{3 (3 - \gamma)}; \tfrac{7}{8 (3 - \gamma)}, \beta_\gamma + \tfrac{1}{2} ] \,, [ \tfrac{ 107 - 24 \gamma }{12 (3  - \gamma)}; \tfrac{7}{4 (3 - \gamma)}, 0 ] \,, \\
		[ \tfrac{ 297 - 40 \gamma }{36 (3 - \gamma)}; \tfrac{3}{4 (3 - \gamma)}, \beta_\gamma + \tfrac{1}{2} ] \,, [ \tfrac{189 - 52 \gamma}{36 ( 3 - \gamma )}; - \tfrac{1}{8 (3 - \gamma)}, 2 \beta_\gamma + 1 ] \Big\} \,.
	\end{aligned}
\end{equation*}
We further define the space
\begin{equation*}
	\begin{aligned}
		\mathbb{Y} : = \big\{ h = h (x,v) | \mathscr{A}_2^A (h) + \mathscr{A}_{\mathtt{NBE}}^A (h) < \infty \big\} \,.
	\end{aligned}
\end{equation*}
It is easy to see that $\mathbb{Y}$ is a subspace of $\mathbb{X}$.

For any $\phi \in C_0^\infty ( (0,A) \times \R^3 )$ with $\phi (0, v) |_{v_3 < 0} = 0$, define
\begin{equation*}
	\begin{aligned}
		\chi = - v_3 \partial_x \phi + [ - \hbar \sigma_x v_3 + \nu (v) ] \phi - K_\hbar^* \phi + \mathbf{D}_\hbar^* \phi \,,
	\end{aligned}
\end{equation*}
where $K_\hbar^*$ and $\mathbf{D}_\hbar^*$ are the duality of $K_\hbar$ and $\mathbf{D}_\hbar$, respectively. Define the linear functional $\mathbb{L}_\chi : \mathbb{Y} \to \R$ with $h \mapsto \mathbb{L}_\chi (h) = ( h, \chi )_{ \mathbb{X} } $. By the similar arguments in \eqref{h-bnd-2} or \eqref{Q5}, one knows that $| \mathbb{L}_\chi (h) | \leq C \| h \|_{\mathbb{Y}} \| \chi \|_{\mathbb{X}}$. Note that $\mathbb{Y} \subseteq \mathbb{X}$. The Hahn-Banach Theorem tells us that the linear functional $ \mathbb{L}_\chi $ can be extended to the space $\mathbb{X}$, denoted by $\widetilde{\mathbb{L}}_\chi$, satisfying $ \widetilde{\mathbb{L}}_\chi |_{\mathbb{Y}} = \mathbb{L}_\chi $. Moreover, Lemma \ref{Lmm-L2xv-closed} shows that $( \phi, \chi )_{\mathbb{X}} \geq c_0 \| \phi \|^2_{\mathbb{X}}$ for any $\phi \in C_0^\infty ( (0,A) \times \R^3 )$ with $\phi (0, v) |_{v_3 < 0} = 0$. Then the existence result follows from the Lax-Milgram Theorem.

Next we verify the uniqueness. We focus on the difference problem \eqref{L-lambda-Dif}. Following the same arguments in Lemma \ref{Lmm-L2xv-closed}, one gains
\begin{equation*}
	\begin{aligned}
		\mathscr{E}_2^A ({\vartriangle} g_\sigma) + \mathscr{E}_{\mathtt{NBE}}^A ({\vartriangle} g_\sigma) \leq C_l \big[ \mathscr{A}_2^A ({\vartriangle} h_\sigma) + \mathscr{A}_{\mathtt{NBE}}^A ({\vartriangle} h_\sigma) \big] \,,
	\end{aligned}
\end{equation*}
where ${\vartriangle} g_\sigma = g_{\sigma 2} - g_{\sigma 1}$ and ${\vartriangle} h_\sigma = h_{\sigma 2} - h_{\sigma 1} $. Then the uniqueness holds immediately.
\end{proof}


\section{Weighted $L^\infty_{x,v}$ solution to connection auxiliary equation \eqref{A1}}\label{Sec:UECA}

In this section, based on the weak solution to \eqref{A1} constructed in Lemma \ref{Lmm-Ext-L2xv}, we mainly derive the weighted $L^\infty_{x,v}$ bounds uniformly in $A \geq 1$ for the wean solution to the connection auxiliary problem \eqref{A1} constructed in Lemma \ref{Lmm-Ext-L2xv}. The main idea is to employ the trajectory approach. It is one of the cores in current work. We remark that merely the weighted $L^\infty_{x,v}$ can not be closed, so that we shall close the estimates together with the weighted $L^2_{x,v}$ bound derived in Lemma \eqref{Lmm-L2xv-closed}.

\subsection{Statements of the uniform results}

We mainly focus on the equivalent form \eqref{A3-lambda} of the equation \eqref{A1}, i.e.,
\begin{equation}\label{L-lambda}
	\left\{
	\begin{aligned}
		& \mathscr{L}_\hbar g_\sigma : = v_3 \partial_x g_\sigma + [ - \hbar \sigma_x v_3 + \nu (v) ] g_\sigma - K_\hbar g_\sigma + \mathbf{D}_\hbar g = h_\sigma \,, \\
		& g_\sigma (0, v) |_{v_3 > 0} = (1 - \alpha_*) g_\sigma (0, R_0 v) + \alpha_* \tfrac{M_w (v)}{\sqrt{\M_\sigma (v)}} \int_{v_3' < 0} (- v_3') g_\sigma (0, v') \sqrt{\M_\sigma (v') } \d v' \,, \\
		& g_\sigma (A, v) |_{v_3 < 0} = \varphi_{A, \sigma} (v) \,,
	\end{aligned}
	\right.
\end{equation}
For notation convenience, let $g = \mathscr{L}_\hbar^{-1} (h)$ be the solution to the problem \eqref{L-lambda}. In Lemma \ref{Lmm-Ext-L2xv}, we consider merely the case $\varphi_{A, \sigma} = 0$. Here we study the general case $\varphi_{A, \sigma}$, which will be used to prove the uniqueness of the problem \eqref{KL-Damped}. First, we given the following a priori estimates for the operator $\mathscr{L}_\hbar^{-1} (h)$ uniformly in $A \geq 1$. Note that
\begin{equation*}
	\begin{aligned}
		\partial_x (\sigma_x^\frac{m}{2} g_\sigma ) + \partial_x {\kappa} (x,v) (\sigma_x^\frac{m}{2} g_\sigma ) = \tfrac{1}{v_3} \underbrace{ ( \sigma_x^\frac{m}{2} K_\hbar g_\sigma - \sigma_x^\frac{m}{2} \mathbf{D}_\hbar g_\sigma + \sigma_x^\frac{m}{2} h_\sigma ) }_{: = H (x,v)} \,,
	\end{aligned}
\end{equation*}
which means
\begin{equation}\label{A3-1}
	\begin{aligned}
		\partial_x \big[ e^{{\kappa} (x,v)} \sigma_x^\frac{m}{2} g_\sigma (x,v) \big] = e^{{\kappa} (x,v)} \tfrac{1}{v_3} H (x,v) \,.
	\end{aligned}
\end{equation}
Here $\kappa (x,v)$ is introduced in \eqref{kappa}.

If $v_3 < 0$, by integrating \eqref{A3-1} from $x$ to $A$, it follows that
\begin{equation}\label{v3-n}
	\begin{aligned}
		\sigma_x^\frac{m}{2} g_\sigma (x,v) = e^{{\kappa} (A, v) - {\kappa} (x,v)} \sigma_x^\frac{m}{2} (A, v) \varphi_{A, \sigma} (v) - \int_x^A e^{- [{\kappa} (x,v) - {\kappa} (x', v)]} \tfrac{1}{v_3} H (x', v) \d x' \,.
	\end{aligned}
\end{equation}
If $v_3 > 0$, integrating \eqref{A3-1} from 0 to $x$ and together with the specular reflection boundary condition in \eqref{A3-lambda}, one has
\begin{equation}\label{v3-p-1}
	\begin{aligned}
		\sigma_x^\frac{m}{2} g_\sigma (x,v) = e^{- {\kappa} (x,v)} \sigma_x^\frac{m}{2} (0, v) g_\sigma (0, v) + \int_0^x e^{- [{\kappa} (x,v) - {\kappa} (x', v)]} \tfrac{1}{v_3} H (x', v) \d x' \,.
	\end{aligned}
\end{equation}
By the boundary conditions in \eqref{A3-lambda}, one has
	\begin{align}\label{v3-p-2}
		\no & e^{- {\kappa} (x,v)} \sigma_x^\frac{m}{2} (0, v) g_\sigma (0, v) \\
		\no = & (1 - \alpha_*) e^{- {\kappa} (x,v)} \sigma_x^\frac{m}{2} (0, R_0 v) g_\sigma (0, R_0 v) \\
		& + \alpha_* e^{- {\kappa} (x,v)} \tfrac{M_w (v)}{\sqrt{\M_\sigma (v)}} \int_{v_3' < 0} (- v_3') \tfrac{ \sigma_x^\frac{m}{2} (0, v) }{ \sigma_x^\frac{m}{2} (0, v') } \sigma_x^\frac{m}{2} (0, v') g_\sigma (0, v') \sqrt{\M_\sigma (v') } \d v' \,.
	\end{align}
Here we have utilized the fact $ \sigma_x (0, R_0 v) = \sigma_x (0, v) $ by \eqref{sigma-w-inv}. Note that \eqref{A3-1} indicates
\begin{equation}\label{A3-2}
	\begin{aligned}
		\partial_x \big[ e^{{\kappa} (x, R_0 v)} \sigma_x^\frac{m}{2} g_\sigma (x, R_0 v) \big] = e^{{\kappa} (x, R_0 v)} \tfrac{1}{(R_0 v)_3} H (x, R_0 v) \,.
	\end{aligned}
\end{equation}
Due to $(R_0 v)_3 = - v_3 < 0$, together with the boundary condition in \eqref{A3-lambda}, integrating \eqref{A3-2} from 0 to $A$ implies
	\begin{align*}
		& e^{{\kappa} (A, R_0 v)} \sigma_x^\frac{m}{2} (A, R_0 v) \varphi_{A, \sigma} (R_0 v) - \sigma_x^\frac{m}{2} (0, R_0 v) g_\sigma (0, R_0 v) = \int_0^A e^{{\kappa} (x', R_0 v)} \tfrac{1}{- v_3} H (x', R_0 v) \d x' \,,
	\end{align*}
which means that
\begin{equation}\label{v3-p-3}
	\begin{aligned}
		e^{- {\kappa} (x,v)} \sigma_x^\frac{m}{2} (0, R_0 v) g_\sigma (0, R_0 v) = & e^{{\kappa} (A, R_0 v) - {\kappa} (x,v)} \sigma_x^\frac{m}{2} (A, R_0 v) \varphi_{A, \sigma} (R_0 v) \\
		& + \int_0^A e^{ - [ {\kappa} (x,v) - {\kappa} (x', R_0 v) ] } \tfrac{1}{ v_3} H (x', R_0 v) \d x' \,.
	\end{aligned}
\end{equation}
Furthermore, \eqref{v3-n} shows that for $v_3' < 0$,
\begin{equation*}
	\begin{aligned}
		\sigma_x^\frac{m}{2} (0, v') g_\sigma (0, v') = e^{{\kappa} (A, v')} \sigma_x^\frac{m}{2} (A, v') \varphi_{A, \sigma} (v') - \int_0^A e^{ {\kappa} (x', v') } \tfrac{1}{v_3'} H (x', v') \d x' \,.
	\end{aligned}
\end{equation*}
It thereby holds
\begin{equation}\label{v3-p-4}{\small
	\begin{aligned}
		& e^{- {\kappa} (x,v)} \tfrac{M_w (v)}{\sqrt{\M_\sigma (v)}} \int_{v_3' < 0} (- v_3') \tfrac{ \sigma_x^\frac{m}{2} (0, v) }{ \sigma_x^\frac{m}{2} (0, v') } \sigma_x^\frac{m}{2} (0, v') g_\sigma (0, v') \sqrt{\M_\sigma (v') } \d v' \\
		= & \tfrac{M_w (v)}{\sqrt{\M_\sigma (v)}} \int_{v_3' < 0} (- v_3') \tfrac{ \sigma_x^\frac{m}{2} (0, v) }{ \sigma_x^\frac{m}{2} (0, v') } e^{ {\kappa} (A, v') - {\kappa} (x,v)} \sigma_x^\frac{m}{2} (A, v') \varphi_{A, \sigma} (v') \sqrt{\M_\sigma (v') } \d v' \\
		& - \tfrac{M_w (v)}{\sqrt{\M_\sigma (v)}} \int_{v_3' < 0} \int_0^A (- v_3') \tfrac{ \sigma_x^\frac{m}{2} (0, v) }{ \sigma_x^\frac{m}{2} (0, v') } e^{- [ {\kappa} (x,v) - {\kappa} (x', v') ] } \tfrac{1}{v_3'} H (x', v') \sqrt{\M_\sigma (v') } \d x' \d v' \,.
	\end{aligned}}
\end{equation}
It is thus derived from collecting the equations \eqref{v3-p-1}, \eqref{v3-p-2}, \eqref{v3-p-3} and \eqref{v3-p-4} that for $v_3 > 0$,
{\small
	\begin{align}\label{v3-p}
		\no \sigma_x^\frac{m}{2} g_\sigma (x,v) = & (1 - \alpha_*) e^{ {\kappa} (A, R_0 v) - {\kappa} (x,v)} \sigma_x^\frac{m}{2} (A, R_0 v) \varphi_{A, \sigma} (R_0 v) \\
		\no & + (1 - \alpha_* ) \int_0^A e^{- [ {\kappa} (x,v) - {\kappa} (x', R_0 v)]} \tfrac{1}{ v_3} H (x', R_0 v) \d x' \\
		\no & + \alpha_* \tfrac{M_w (v)}{\sqrt{\M_\sigma (v)}} \int_{v_3' < 0} (- v_3') \tfrac{ \sigma_x^\frac{m}{2} (0, v) }{ \sigma_x^\frac{m}{2} (0, v') } e^{ {\kappa} (A, v') - {\kappa} (x,v)} \\
		\no & \qquad \qquad \qquad \qquad \qquad \quad \qquad \qquad \times \sigma_x^\frac{m}{2} (A, v') \varphi_{A, \sigma} (v') \sqrt{\M_\sigma (v') } \d v' \\
		\no & - \alpha_* \tfrac{M_w (v)}{\sqrt{\M_\sigma (v)}} \int_{v_3' < 0} \int_0^A (- v_3') \tfrac{ \sigma_x^\frac{m}{2} (0, v) }{ \sigma_x^\frac{m}{2} (0, v') } e^{- [ {\kappa} (x,v) - {\kappa} (x', v') ] } \tfrac{1}{v_3'} H (x', v') \sqrt{\M_\sigma (v') } \d x' \d v' \\
		& + \int_0^x e^{- [ {\kappa} (x,v) - {\kappa} (x', v)]} \tfrac{1}{v_3} H (x', v) \d x' \,.
	\end{align}}
Summarily, the equations \eqref{v3-n} and \eqref{v3-p} indicate that
	\begin{align}\label{A3-3}
		\no \sigma_x^\frac{m}{2} g_\sigma = & Y_A (\sigma_x^\frac{m}{2} \varphi_{A, \sigma}) + Z ( \sigma_x^\frac{m}{2} K_\hbar g_\sigma - \sigma_x^\frac{m}{2} \mathbf{D}_\hbar g_\sigma + \sigma_x^\frac{m}{2} h_\sigma) \\
		& + U ( \sigma_x^\frac{m}{2} K_\hbar g_\sigma - \sigma_x^\frac{m}{2} \mathbf{D}_\hbar g_\sigma + \sigma_x^\frac{m}{2} h_\sigma) \,,
	\end{align}
where the operators $Y_A (\cdot)$, $Z (\cdot)$ and $U (\cdot)$ are introduced in \eqref{YAn-f}, \eqref{Rn-f} and \eqref{U-f}, respectively.

Then we can establish the uniform bounds for the problem \eqref{A1} (or equivalently \eqref{A3-lambda}, also \eqref{L-lambda}) in the following lemma.

\begin{lemma}[Uniform weighted $L^\infty_{x,v}$ bounds for \eqref{L-lambda}]\label{Lmm-APE-A3}
	Let $- 3 < \gamma \leq 1$, $0 \leq \alpha_* \leq 1$, $m \geq 1$, the integer $\beta_* \geq 0$ and $0 < \alpha < \mu_\gamma$, where $\mu_\gamma > 0$ is given in Lemma \ref{Lmm-Kh-L2}. The parameters $\delta, \hbar, l, \vartheta, \beta$ are given in Lemma \ref{Lmm-Ext-L2xv}. Assume that the source term $h_\sigma$ and the boundary source term $\varphi_{A, \sigma}$ of the system \eqref{A3-lambda} satisfy
	\begin{equation}\label{Assmp-hsigma}
		\begin{aligned}
			\mathscr{A}^A (h_\sigma) \,, \mathscr{B} ( \varphi_{A, \sigma} ) < \infty \,.
		\end{aligned}
	\end{equation}
	Then the problem \eqref{A1} admits a mild solution $g (x,v) = e^{ - \hbar \sigma (x,v) } g_\sigma (x,v)$, where $ g_\sigma (x,v) $ subjects to equation \eqref{A3-lambda}, such that $g_\sigma = g_\sigma (x,v)$ enjoys the following bounds:
	\begin{equation}\label{Apriori-bnd}
		\begin{aligned}
			\mathscr{E}^A \big( g_\sigma \big) \leq C_l \big( \mathscr{A}^A (h_\sigma) + \mathscr{B} ( \varphi_{A, \sigma} ) \big)
		\end{aligned}
	\end{equation}
	for a constant $C_l > 0$ independent of $A$, $\hbar$ and $\delta$.
	Moreover, let $g_{\sigma i} = g_{\sigma i} (x,v) $ be the solutions with respect to the source terms $h_{\sigma i }$ $(i = 1,2)$, where $ \mathscr{A}^A (h_{\sigma i}) < \infty $ for $ i = 1,2 $. Then $g_{\sigma 2 } - g_{\sigma 1 } $ satisfies
	\begin{equation}\label{Apriori-bnd-diff}
		\begin{aligned}
			\mathscr{E}^A \big( g_{\sigma 2 } - g_{\sigma 1 } \big) \leq C \mathscr{A}^A (h_{\sigma 2 } - h_{\sigma 1 } ) \,.
		\end{aligned}
	\end{equation}
	Here the functionals $\mathscr{E}^A (\cdot)$, $\mathscr{A}^A (\cdot)$ and $\mathscr{B} (\cdot)$ are defined in \eqref{Eg-lambda} and \eqref{Ah-lambda}.
\end{lemma}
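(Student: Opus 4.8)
The plan is to prove Lemma \ref{Lmm-APE-A3} by bootstrapping the weighted $L^2_{x,v}$ theory of Lemma \ref{Lmm-L2xv-closed} up to the weighted $L^\infty_{x,v}$ level through the mild formulation \eqref{A3-3}, following the scheme sketched in Figure \ref{Fig1}. First, I would record that the weak solution $g_\sigma$ constructed in Lemma \ref{Lmm-Ext-L2xv} (for $\varphi_{A,\sigma}=0$; the general $\varphi_{A,\sigma}$ case is handled identically since Lemma \ref{Lmm-L2xv-closed} already retains the $\varphi_{A,\sigma}$-dependent terms) is in fact a mild solution, i.e.\ satisfies the integral identity \eqref{A3-3}, by the DiPerna--Lions type argument (Lemma II.2 of \cite{DiPerna-Lions-1989-AM}) applied to the transport equation \eqref{A3-1}. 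Then I would apply the operator bounds of Lemma \ref{Lmm-ARU} to \eqref{A3-3}: writing $g_\sigma^\natural = \sigma_x^{m/2} w_{\beta_*,\vartheta} g_\sigma$ and using \eqref{YAn-bnd}--\eqref{Rn-bnd}--\eqref{U-bnd} together with the weighted $K$-estimate of Lemma \ref{Lmm-K-Oprt} and the $\mathbf{D}$-estimate of Lemma \ref{Lmm-D-XY}, one gets
\begin{equation*}
	\LL g_\sigma \RR_{A;m,\beta_*,\vartheta} \leq C\LL \nu^{-1} K_\hbar g_\sigma \RR_{A;m,\beta_*,\vartheta} + C\delta\hbar \LL g_\sigma \RR_{A;m,\beta_*-\tilde N,\vartheta} + C\mathscr{A}^A_\infty(h_\sigma) + C\mathscr{B}_\infty(\varphi_{A,\sigma}) + \cdots .
\end{equation*}
The $\mathbf{D}_\hbar$ term, carrying the small factor $\delta\hbar$, is absorbed into the left side. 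The $K_\hbar$ term is the obstruction: $K$ is not compact in $L^\infty_{x,v}$, so one cannot close directly there.

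The second step is the passage from $L^\infty_{x,v}$ to $L^\infty_x L^2_v$, via Lemma \ref{Lmm-Kh-2-infty}: choosing $\eta_1$ small, $\LL \nu^{-1} K_\hbar g_\sigma\RR_{A;m,\beta_*,\vartheta} \leq \eta_1 \LL g_\sigma\RR_{A;m,\beta_*,\vartheta} + C_{\eta_1}\|\sigma_x^{m/2} z_{\alpha'} w_{-\gamma,\vartheta} g_\sigma\|_{L^\infty_x L^2_v}$, so the $\eta_1$-term is absorbed and one is reduced to bounding $\|\sigma_x^{m/2} z_{\alpha'} w_{-\gamma,\vartheta} g_\sigma\|_{L^\infty_x L^2_v}$ (this is Lemma \ref{Lmm-Y-bnd} referenced in Figure \ref{Fig1}). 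The third step invokes the one-dimensional Sobolev interpolation in $x$ (Lemma \ref{Lmm-Linfty-L2}): $\|\sigma_x^{m/2} z_{\alpha'} w_{-\gamma,\vartheta} g_\sigma\|_{L^\infty_x L^2_v}$ is dominated by the cross energy $\mathscr{E}^A_{\mathtt{cro}}(g_\sigma) = \|\nu^{1/2} z_{-\alpha}\sigma_x^{m/2} w_{-\gamma+\beta_\gamma,\vartheta} g_\sigma\|_A + \|\nu^{-1/2} z_{-\alpha}\sigma_x^{m/2} z_1 w_{-\gamma+\beta_\gamma,\vartheta}\partial_x g_\sigma\|_A$, where the singular weight $z_{-\alpha}$ is unavoidable because of the structure of the equation. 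The fourth step is Lemma \ref{Lmm-L2xv-alpha}, which bounds $\mathscr{E}^A_{\mathtt{cro}}(g_\sigma)$ by $\|(\delta x+l)^{-m(1-\gamma)/(2(3-\gamma))}\nu^{1/2} w_{-\gamma+\beta_\gamma,\vartheta} g_\sigma\|_A + \alpha_*(\delta\hbar)^{-15/2} l^{-3(3+\gamma)/(64(3-\gamma))}[\mathscr{E}^A_2(g_\sigma)+\mathscr{E}^A_{\mathtt{NBE}}(g_\sigma)]^{1/2}$; here the $(x,v)$-mixed weight $\sigma_x^{m/2}$ is traded for pure $x$- and $|v|$-polynomial weights using Lemma \ref{Lmm-sigma}, the singular weight $z_{-\alpha}$ is removed by Lemma \ref{Lmm-Kh-L2} (through the estimate $\int z_{-\alpha}^2|v-v_*|^\gamma\M^{3/8}\d v\lesssim(1+|v_*|)^\gamma$), and the nondissipative boundary contributions are controlled by Lemma \ref{Lmm-NDBL}. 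Since $m\geq 1$, the first term above is further dominated by $[\mathscr{E}^A_2(g_\sigma)+\mathscr{E}^A_{\mathtt{NBE}}(g_\sigma)]^{1/2}$ (the estimate \eqref{X2'} in Figure \ref{Fig1}).

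The fifth step closes the loop: Lemma \ref{Lmm-L2xv-closed} gives the uniform-in-$A$ bound $\mathscr{E}^A_2(g_\sigma)+\mathscr{E}^A_{\mathtt{NBE}}(g_\sigma) \leq C_l[\mathscr{A}^A_2(h_\sigma)+\mathscr{A}^A_{\mathtt{NBE}}(h_\sigma)] + C_l[\mathscr{B}_2(\varphi_{A,\sigma})+\mathscr{B}_{\mathtt{NBE}}(\varphi_{A,\sigma})]$, and since each of $\mathscr{A}^A_2,\mathscr{A}^A_{\mathtt{NBE}},\mathscr{B}_2,\mathscr{B}_{\mathtt{NBE}}$ is a summand of $\mathscr{A}^A(h_\sigma)^2$ or $\mathscr{B}(\varphi_{A,\sigma})^2$, everything is bounded by $C_l(\mathscr{A}^A(h_\sigma)+\mathscr{B}(\varphi_{A,\sigma}))^2$. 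Chaining steps 1--4, all of the quantities in $\mathscr{E}^A(g_\sigma)=\mathscr{E}^A_\infty+\mathscr{E}^A_{\mathtt{cro}}+[\mathscr{E}^A_{\mathtt{NBE}}]^{1/2}+[\mathscr{E}^A_2]^{1/2}$ are simultaneously controlled by $C_l(\mathscr{A}^A(h_\sigma)+\mathscr{B}(\varphi_{A,\sigma}))$, which is \eqref{Apriori-bnd}; the constant $C_l$ depends on $l$ only (through Lemma \ref{Lmm-L2xv-closed}), not on $A,\delta,\hbar$. For the difference estimate \eqref{Apriori-bnd-diff}, the same chain applies to the linear homogeneous-boundary difference problem \eqref{L-lambda-Dif} for ${\vartriangle}g_\sigma = g_{\sigma 2}-g_{\sigma 1}$, using the difference bound \eqref{L2-g-sigma-Bnd-Dif} in place of Lemma \ref{Lmm-L2xv-closed} (with $\varphi_{A,\sigma}$-terms absent), yielding $\mathscr{E}^A({\vartriangle}g_\sigma)\leq C\mathscr{A}^A({\vartriangle}h_\sigma)$.

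The main obstacle is step 4, i.e.\ the passage through $\mathscr{E}^A_{\mathtt{cro}}(g_\sigma)$: one must simultaneously (i) convert the $(x,v)$-mixed weight $\sigma_x^{m/2}$ into separated weights without losing more $x$-decay than $\mathscr{E}^A_2$ can supply — this forces the delicate bookkeeping $(\delta x+l)^{-m(1-\gamma)/(2(3-\gamma))}$ and is where the assumption $m\geq 1$ enters — (ii) remove the singular velocity weight $z_{-\alpha}(v)$ attached to $K$, which is exactly Lemma \ref{Lmm-Kh-L2} and requires $0<\alpha<\mu_\gamma$, and (iii) absorb the nondissipative boundary term produced when integrating against $z_{-\alpha}\partial_x g_\sigma$, which is why Lemma \ref{Lmm-NDBL} and the spatial-velocity iteration of Lemma \ref{Lmm-L2xv-closed} must have been established first, and why the factor $\alpha_*(\delta\hbar)^{-15/2}l^{-3(3+\gamma)/(64(3-\gamma))}$ in front of $[\mathscr{E}^A_2+\mathscr{E}^A_{\mathtt{NBE}}]^{1/2}$ must be checked to be small under the hypothesis $l\geq O(1)(\delta\hbar)^{-512(3-\gamma)/(3(3+\gamma))}$ so that it can be absorbed. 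Once these three points are in hand the remainder is a routine assembly of the quoted lemmas.
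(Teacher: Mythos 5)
Your proposal is correct and follows essentially the same route as the paper: chaining Lemma \ref{Lmm-Y-bnd} (built from Lemma \ref{Lmm-ARU}, Lemma \ref{Lmm-K-Oprt}, Lemma \ref{Lmm-Kh-2-infty} and the $\mathbf{D}$-bounds), Lemma \ref{Lmm-Linfty-L2}, Lemma \ref{Lmm-L2xv-alpha} together with \eqref{X2'} (where $m\geq 1$ and $\beta\geq 3(\beta_\gamma+\tfrac12)$ enter), and Lemma \ref{Lmm-L2xv-closed}, then absorbing the small coefficient $\alpha_*^{1/2}(\delta\hbar)^{-8}l^{-3(3+\gamma)/(64(3-\gamma))}$ via the hypothesis $l\geq O(1)(\delta\hbar)^{-512(3-\gamma)/(3(3+\gamma))}$. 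The difference estimate is likewise handled exactly as in the paper.
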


\begin{remark}
	As the similar arguments in Lemma II.2 of \cite{DiPerna-Lions-1989-AM}, one knows that the weak solution to \eqref{A1} constructed in Lemma \ref{Lmm-Ext-L2xv} is equivalent to the mild solution to \eqref{A1} with the form \eqref{A3-3}.
\end{remark}

The proof of Lemma \ref{Lmm-APE-A3} will be completed in Subsection \ref{Subsec:Uniform-Est} later.


\subsection{Weighted $L^\infty_{x,v}$ estimates in $Y^\infty_{m, \beta_*, \vartheta} (A)$ space}

In this subsection, we aim at controlling the norm $\LL g_\sigma \RR_{A; m, \beta_*, \vartheta}$ with respect to the space $Y^\infty_{m, \beta_*, \vartheta} (A)$. Moreover, we will also investigate the continuous dependence of $g_\sigma $ with respect to the source term $h_\sigma$. Set $g_{\sigma i} = \mathscr{L}_{ \hbar}^{-1} (h_{\sigma i})$ ($i = 1,2$) for sufficiently small $\hbar \geq 0$. More precisely, the result is stated as follows.

\begin{lemma}\label{Lmm-Y-bnd}
	Let the parameters $\gamma, \alpha_*, A$ be given in Lemma \ref{Lmm-Ext-L2xv} and the parameters $l, \delta, \hbar, \vartheta$ be sufficiently small satisfying the assumptions given in Lemmas \ref{Lmm-ARU}-\ref{Lmm-K-Oprt}-\ref{Lmm-Kh-2-infty}-\ref{Lmm-D-XY}-\ref{Lmm-Dh-L2}. The integer $\beta_* \geq 0$, $m \in \R$, $0 \leq \frac{1}{2} - \mu_\gamma < \alpha' < \frac{1}{2}$, where the constant $\mu_\gamma > 0$ is given in Lemma \ref{Lmm-Kh-L2}. Then there is a constant $C > 0$, independent of $l$, $A$, $\delta$ and $\hbar$, such that
	\begin{equation}\label{LA-g-sigma}
		\begin{aligned}
			& \mathscr{E}_\infty^A ( g_\sigma ) + \LL g_\sigma \RR_{m, \beta_*, \vartheta, \Sigma} \leq C \| z_{\alpha'} \sigma_x^\frac{m}{2} w_{- \gamma, \vartheta} g_\sigma \|_{L^\infty_x L^2_v} + C \big( \mathscr{A}^A_\infty (h_\sigma) + \mathscr{B}_\infty ( \varphi_{A, \sigma} ) \big) \,,
		\end{aligned}
	\end{equation}
	where the functionals $ \mathscr{E}_\infty^A ( \cdot ) $, $ \mathscr{A}^A_\infty ( \cdot ) $ and $ \mathscr{B}_\infty ( \cdot ) $ are defined in \eqref{E-infty}, \eqref{As-def} and \eqref{B-def}, respectively. Moreover, ${\vartriangle} g_{\sigma} (x,v)$ satisfying \eqref{L-lambda-Dif} enjoys the bound
	\begin{equation}\label{LA-g-sigma-Dif}
		\begin{aligned}
			\mathscr{E}_\infty^A ( {\vartriangle} g_\sigma ) + \LL {\vartriangle} g_\sigma \RR_{m, \beta_*, \vartheta, \Sigma} \leq C \| z_{\alpha'} \sigma_x^\frac{m}{2} w_{- \gamma, \vartheta} {\vartriangle} g_\sigma \|_{L^\infty_x L^2_v} + C \mathscr{A}^A_\infty ( {\vartriangle} h_\sigma ) \,.
		\end{aligned}
	\end{equation}
\end{lemma}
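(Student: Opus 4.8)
The plan is to combine the mild-solution representation \eqref{A3-3} with the $L^\infty_{x,v}$ bounds for $Y_A$, $Z$, $U$ from Lemma \ref{Lmm-ARU}, the weighted $L^\infty$ bound for the damping from Lemma \ref{Lmm-D-XY}, and the (crucial) $L^\infty\to L^\infty_xL^2_v$ reduction for $K_\hbar$ from Lemma \ref{Lmm-Kh-2-infty}, then to absorb the resulting small terms. Starting from \eqref{A3-3}, I multiply through by the velocity weight $w_{\beta_*,\vartheta}$. Since $w_{\beta_*,\vartheta}$ is $x$-independent and invariant under $v\mapsto R_0v$ by \eqref{sigma-w-inv}, it passes freely through the specular parts of $Y_A$, $Z$ and through all of $U$; in the diffusive parts of $Y_A$ and $Z$ it meets $\tfrac{M_w(v)}{\sqrt{\M_\sigma(v)}}$, which, because $\M_\sigma=\M e^{-2\hbar\sigma(0,\cdot)}$ and $\sigma(0,v)\lesssim l^{2/(3-\gamma)}+|v-\u|^2$, decays like $\exp\{-(\tfrac{1}{2T_w}-\tfrac1{4T})^-|v-\u|^2\}$ under the hypothesis $0<T_w<2T$; hence for $\hbar,\vartheta$ small the product $w_{\beta_*,\vartheta}(v)\tfrac{M_w(v)}{\sqrt{\M_\sigma(v)}}$ is bounded, while the $v'$-integration against $\sqrt{\M_\sigma(v')}$ absorbs both $w_{\beta_*,\vartheta}^{-1}(v')$ and the harmless ratio $\sigma_x^{m/2}(0,v)/\sigma_x^{m/2}(0,v')$. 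Running the estimates of Lemma \ref{Lmm-ARU} with the weight carried along in this way, and using that the mild formula also controls the trace values at $x=0$ and $x=A$, yields
$$\mathscr{E}_\infty^A(g_\sigma)+\LL g_\sigma\RR_{m,\beta_*,\vartheta,\Sigma}\lesssim \mathscr{B}_\infty(\varphi_{A,\sigma})+\mathscr{A}_\infty^A(h_\sigma)+\big\|\nu^{-1}\sigma_x^{m/2}w_{\beta_*,\vartheta}K_\hbar g_\sigma\big\|_{L^\infty_{x,v}}+\big\|\nu^{-1}\sigma_x^{m/2}w_{\beta_*,\vartheta}\mathbf{D}_\hbar g_\sigma\big\|_{L^\infty_{x,v}}.$$

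The damping term is routine: since $\mathbf{D}_\hbar g_\sigma=e^{\hbar\sigma}\mathbf D g$ and $\nu(v)\gtrsim(1+|v|)^\gamma$, applying Lemma \ref{Lmm-D-XY} with $\beta=\beta_*-\gamma$ and $\tilde N$ chosen large gives $\|\nu^{-1}\sigma_x^{m/2}w_{\beta_*,\vartheta}\mathbf{D}_\hbar g_\sigma\|_{L^\infty_{x,v}}\lesssim\delta\hbar\,\LL g_\sigma\RR_{A;m,\beta_*-\gamma-\tilde N,\vartheta}\le\delta\hbar\,\mathscr{E}_\infty^A(g_\sigma)$, which for $\delta\hbar$ small is absorbed on the left. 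The $K_\hbar$ term is the heart of the matter and the main obstacle, precisely because $K$ fails to be compact in the weighted $L^\infty_{x,v}$ spaces, so this contribution cannot be closed in $L^\infty$ alone. Here I invoke Lemma \ref{Lmm-Kh-2-infty}, or rather its $w_{\beta_*,\vartheta}$-weighted counterpart obtained by the same Grad-type splitting of the kernel of $K_\hbar$: on the outer region $|v|>N$ the kernel has enough velocity decay that the weighted $L^\infty$ norm reappears with a coefficient tending to $0$ as $N\to\infty$, while on the bounded region $|v|\le N$ the weight $w_{\beta_*,\vartheta}$ is itself bounded and a Cauchy--Schwarz in $v$ of the $L^1_v$ kernel integral against $g_\sigma$ produces precisely $\|z_{\alpha'}\sigma_x^{m/2}w_{-\gamma,\vartheta}g_\sigma\|_{L^\infty_xL^2_v}$ --- the light weight $w_{-\gamma,\vartheta}$ (rather than $w_{\beta_*-\gamma,\vartheta}$) surviving because the heavier polynomial weight is only needed where $v$ is bounded, and the singular factor $z_{\alpha'}$ with $\tfrac12-\mu_\gamma<\alpha'<\tfrac12$ reflecting the $1/v_3$ in the trajectory integrals. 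This delivers, for any $\eta_1>0$, the estimate $\|\nu^{-1}\sigma_x^{m/2}w_{\beta_*,\vartheta}K_\hbar g_\sigma\|_{L^\infty_{x,v}}\le\eta_1\mathscr{E}_\infty^A(g_\sigma)+C_{\eta_1}\|z_{\alpha'}\sigma_x^{m/2}w_{-\gamma,\vartheta}g_\sigma\|_{L^\infty_xL^2_v}$.

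Collecting the three pieces and first fixing $\eta_1$ small, then $\delta\hbar$ small, to absorb $\eta_1\mathscr{E}_\infty^A(g_\sigma)$ and $\delta\hbar\,\mathscr{E}_\infty^A(g_\sigma)$ into the left-hand side gives \eqref{LA-g-sigma}. The difference estimate \eqref{LA-g-sigma-Dif} follows by applying the identical argument to ${\vartriangle}g_\sigma$, which solves \eqref{L-lambda-Dif}: this is again $\mathscr{L}_\hbar^{-1}$ acting on ${\vartriangle}h_\sigma$ but with vanishing incoming data $\varphi_{A,\sigma}=0$, so the $Y_A$-contribution and hence the $\mathscr{B}_\infty$-term drop out, and linearity does the rest. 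Beyond the $K$-term non-compactness, I expect the two genuinely delicate points to be (i) checking that $w_{\beta_*,\vartheta}$ really does commute through the diffusive reflection with only a harmless loss --- this is exactly where $0<T_w<2T$ and the smallness of $\hbar,\vartheta$ are used, to beat the growth $e^{O(\hbar)|v-\u|^2}$ coming from $e^{\hbar\sigma(0,v)}$ inside $\M_\sigma$ --- and (ii) organizing the kernel splitting so that the $L^\infty_xL^2_v$ remainder carries the exact weight $z_{\alpha'}w_{-\gamma,\vartheta}$, since it is this precise norm that the interpolation Lemma \ref{Lmm-Linfty-L2} and the subsequent $L^2_{x,v}$ estimates are designed to absorb.
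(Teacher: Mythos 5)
Your proposal is correct in outline and shares the paper's skeleton (mild formulation \eqref{A3-3}, the $Y_A,Z,U$ bounds of Lemma \ref{Lmm-ARU}, absorption of the damping and of a small multiple of $\mathscr{E}^A_\infty(g_\sigma)$), but it handles the velocity weight $w_{\beta_*,\vartheta}$ differently, and this is the one place where you are relying on a lemma the paper does not actually state. You invoke ``a $w_{\beta_*,\vartheta}$-weighted counterpart of Lemma \ref{Lmm-Kh-2-infty}''; the paper's Lemma \ref{Lmm-Kh-2-infty} is proved only for the weight index $0$, i.e.\ it bounds $\LL \nu^{-1}K_\hbar g\RR_{A;m,0,\vartheta}$. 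The paper avoids generalizing it by a short finite induction: first, Lemma \ref{Lmm-K-Oprt} together with $\nu^{-1}\sim(1+|v|)^{-\gamma}$ gives $\LL \nu^{-1}(K_\hbar-\mathbf{D}_\hbar)g_\sigma\RR_{A;m,\beta_*,\vartheta}\leq C\LL g_\sigma\RR_{A;m,\beta_*-1,\vartheta}$ (a gain of exactly one power of $(1+|v|)^{-1}$), so iterating $\beta_*$ times reduces everything to $\LL g_\sigma\RR_{A;m,0,\vartheta}$, and only then is Lemma \ref{Lmm-Kh-2-infty} applied, exactly as stated, to produce the $\eta_1$-small term plus $\|z_{\alpha'}\sigma_x^{m/2}w_{-\gamma,\vartheta}g_\sigma\|_{L^\infty_xL^2_v}$. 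Your route is viable --- the Grad-type kernel has Gaussian decay in both arguments, so the extra polynomial weight $(1+|v'|)^{\beta_*}$ can indeed be absorbed and the $L^2_v$ remainder can be kept with the light weight $z_{\alpha'}w_{-\gamma,\vartheta}$, as you argue --- but it requires rerunning the entire kernel analysis of Subsection \ref{Subsec:K-2infty} with the heavier weight, whereas the paper's induction costs nothing beyond lemmas already proved. If you keep your route, you must state and prove the weighted variant of Lemma \ref{Lmm-Kh-2-infty} explicitly; otherwise the cleaner fix is to insert the paper's reduction $\beta_*\to\beta_*-1\to\cdots\to 0$ before invoking it. Your treatment of the damping (absorbing $C\delta\hbar\,\mathscr{E}^A_\infty(g_\sigma)$), of the diffusive boundary weight via $0<T_w<2T$, and of the difference estimate (where $\varphi_{A,\sigma}=0$ kills the $Y_A$ and $\mathscr{B}_\infty$ contributions) all match the paper.
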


\begin{proof}
	By Lemma \ref{Lmm-ARU}, the equation \eqref{A3-3} shows
		\begin{align}\label{Y-bnd-0}
			\no & \LL g_\sigma \RR_{A; m, \beta_*, \vartheta} + \LL g_\sigma \RR_{m, \beta_*, \vartheta, \Sigma} = \| \sigma_x^\frac{m}{2} w_{\beta_*, \vartheta} g_\sigma \|_{ L^\infty_{x,v} } + \| \sigma_x^\frac{m}{2} w_{\beta_*, \vartheta} g_\sigma \|_{L^\infty_\Sigma} \\
			\no \leq & \| w_{\beta_*, \vartheta} Y_A (\sigma_x^\frac{m}{2} \varphi_{A, \sigma} ) \|_{L^\infty_{x,v}} + \sum_{ \Xi \in \{ Z, U \} } \| w_{\beta_*, \vartheta} \Xi ( \sigma_x^\frac{m}{2} ( K_\hbar - \mathbf{D}_\hbar ) g_\sigma + \sigma_x^\frac{m}{2} h_\sigma ) \|_{L^\infty_{x,v}} \\
			\leq & \mathscr{B}_\infty ( \varphi_{A, \sigma} ) + C \| \nu^{-1} ( \sigma_x^\frac{m}{2} ( K_\hbar - \mathbf{D}_\hbar ) g_\sigma + \sigma_x^\frac{m}{2} h_\sigma ) \|_{A; \beta_*, \vartheta} \\
			\no \leq & \mathscr{B}_\infty ( \varphi_{A, \sigma} ) + C \big( \LL \nu^{-1} ( K_\hbar - \mathbf{D}_\hbar ) g_\sigma \RR_{A; m, \beta_*, \vartheta} + \mathscr{A}^A_\infty (h_\sigma) \big) \,.
		\end{align}
	By Lemma \ref{Lmm-K-Oprt} and Lemma \ref{Lmm-D-XY}, one has $ \LL \nu^{-1} ( K_\hbar - \mathbf{D}_\hbar ) g_\sigma \RR_{A; m, \beta_*, \vartheta} \leq C \LL g_\sigma \RR_{A; m, \beta_* - 1, \vartheta} $. It therefore infers that
	\begin{equation*}
		\begin{aligned}
			\LL g_\sigma \RR_{A; m, \beta_*, \vartheta} + \LL g_\sigma \RR_{m, \beta_*, \vartheta, \Sigma} \leq C \LL g_\sigma \RR_{A; m, \beta_* - 1, \vartheta} + C \mathscr{A}^A_\infty (h_\sigma) + \mathscr{B}_\infty ( \varphi_{A, \sigma} ) \,.
		\end{aligned}
	\end{equation*}
	Inductively, for any given integer $\beta_* \geq 0$, it follows that
	\begin{equation}\label{Y-bnd-1}
		\begin{aligned}
			\LL g_\sigma \RR_{A; m, \beta_*, \vartheta} + \LL g_\sigma \RR_{m, \beta_*, \vartheta, \Sigma} \leq C \LL g_\sigma \RR_{A; m, 0, \vartheta} + C \big( \mathscr{A}^A_\infty (h_\sigma) + \mathscr{B}_\infty ( \varphi_{A, \sigma} ) \big) \,,
		\end{aligned}
	\end{equation}
	where we have utilized $ \LL \nu^{-1} h_\sigma \RR_{A; m, i, \vartheta} \leq C \LL \nu^{-1} h_\sigma \RR_{A; m, \beta_*, \vartheta} $ for $0 \leq i \leq \beta_*$.
	
	It remains to dominate the quantity $\LL g_\sigma \RR_{A; m, 0, \vartheta}$. Together with \eqref{A3-3}, the similar arguments in \eqref{Y-bnd-0} indicate that
	\begin{equation*}{\small
		\begin{aligned}
			\LL g_\sigma \RR_{A; m, 0, \vartheta} + \LL g_\sigma \RR_{m, 0, \vartheta, \Sigma} \leq & C \LL \nu^{-1} ( K_\hbar - \mathbf{D}_\hbar ) g_\sigma \RR_{A; m, 0, \vartheta} + C \big( \LL \nu^{-1} h_\sigma \RR_{A; m, 0, \vartheta} + \| \sigma_x^\frac{m}{2} (A, \cdot) w_{0, \vartheta} \varphi_{A, \sigma} \|_{L^\infty_v} \big) \,.
		\end{aligned}}
	\end{equation*}
	Lemmas \ref{Lmm-Kh-2-infty} and Lemma \ref{Lmm-Dh-L2} show that
	\begin{equation*}
		\begin{aligned}
			\LL \nu^{-1} ( K_\hbar - \mathbf{D}_\hbar ) g_\sigma \RR_{A; m, 0, \vartheta} \leq \eta_1 \LL g_\sigma \RR_{A; m, 0, \vartheta} + C_{\eta_1} \| z_{\alpha'} \sigma_x^\frac{m}{2} w_{- \gamma, \vartheta} g_\sigma \|_{L^\infty_x L^2_v}
		\end{aligned}
	\end{equation*}
	for any small $\eta_1 > 0$. Taking $\eta_1 > 0$ such that $C \eta_1 \leq \frac{1}{2}$, it follows that
	\begin{equation}\label{Y-bnd-2}
		\begin{aligned}
			\LL g_\sigma \RR_{A; m, 0, \vartheta} + \LL g_\sigma \RR_{m, 0, \vartheta, \Sigma} \leq C \| z_{\alpha'} \sigma_x^\frac{m}{2} w_{- \gamma, \vartheta} g_\sigma \|_{L^\infty_x L^2_v} + C \big( \mathscr{A}^A_\infty (h_\sigma) + \mathscr{B}_\infty ( \varphi_{A, \sigma} ) \big) \,.
		\end{aligned}
	\end{equation}
	Then \eqref{Y-bnd-1} and \eqref{Y-bnd-2} imply the bound \eqref{LA-g-sigma}.
	
	By the virtue of the similar arguments in \eqref{LA-g-sigma}, ${\vartriangle} g_\sigma$ subjecting to the equation \eqref{L-lambda-Dif} satisfies the bound \eqref{LA-g-sigma-Dif}. Then the proof of Lemma \ref{Lmm-Y-bnd} is finished.
\end{proof}

\subsection{Estimate for $L^\infty_x L^2_v$ norm with weight $ z_{\alpha'} \sigma_x^\frac{m}{2} w_{- \gamma, \vartheta} $}

In this subsection, we will control the norm $\| z_{\alpha'} \sigma_x^\frac{m}{2} w_{- \gamma, \vartheta} g_\sigma \|_{L^\infty_x L^2_v}$ appeared in the right hand side of \eqref{LA-g-sigma}, Lemma \ref{Lmm-Y-bnd}. More precisely, the follow lemma holds.

\begin{lemma}\label{Lmm-Linfty-L2}
	Under the same assumptions in Lemma \ref{Lmm-Y-bnd}, let $\alpha = \frac{1}{2} - \alpha'$, where $\alpha'$ is given in Lemma \ref{Lmm-Y-bnd}. Then there is a constant $C > 0$, independent of $l$, $A$, $\delta$ and $\hbar$, such that
	\begin{equation}\label{Y-bnd-4}
		\begin{aligned}
			\| z_{\alpha'} \sigma_x^\frac{m}{2} w_{- \gamma, \vartheta} g_\sigma \|_{L^\infty_x L^2_v} \leq C \mathscr{E}_{\mathtt{cro}}^A (g_\sigma) \,,
		\end{aligned}
	\end{equation}
	where the functional $\mathscr{E}_{\mathtt{cro}}^A ( \cdot )$ is given in \eqref{E-cro}. Moreover, one similarly has
	\begin{equation}\label{Y-bnd-4-Dif}
		\begin{aligned}
			\| z_{\alpha'} \sigma_x^\frac{m}{2} w_{- \gamma, \vartheta} {\vartriangle} g_\sigma \|_{L^\infty_x L^2_v} \leq C \mathscr{E}_{\mathtt{cro}}^A ( {\vartriangle} g_\sigma ) \,.
		\end{aligned}
	\end{equation}
\end{lemma}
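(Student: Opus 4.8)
The plan is to prove \eqref{Y-bnd-4} by a one–dimensional Sobolev-type interpolation in the $x$-variable with frozen $v$, followed by integration in $v$. Set $G(x,v):=z_{\alpha'}(v)\,\sigma_x^{m/2}(x,v)\,w_{-\gamma,\vartheta}(v)\,g_\sigma(x,v)$, so that the left side of \eqref{Y-bnd-4} equals $\sup_{x}\|G(x,\cdot)\|_{L^2_v}$. For fixed $v$ and any $x_0\in(0,A)$ the fundamental theorem of calculus gives $G(x,v)^2=G(x_0,v)^2+2\int_{x_0}^x G\,\partial_y G\,\d y$; averaging in $x_0$ over $(0,A)$ and using $A\ge1$ yields $G(x,v)^2\le\tfrac1A\int_0^A G(y,v)^2\,\d y+2\int_0^A|G\,\partial_y G|\,\d y$, and integrating in $v$ gives
\begin{equation*}
\|G(x,\cdot)\|_{L^2_v}^2\le\|G\|_A^2+2\iint_{\Omega_A\times\R^3}|G|\,|\partial_x G|\,\d v\,\d x\,.
\end{equation*}
(If the right side is infinite there is nothing to prove; otherwise this step is justified by the usual mollification in $x$, since the mild solution has $\partial_x g_\sigma$ controlled through $\mathscr{E}^A_{\mathtt{cro}}$.) It then remains to dominate both terms by $C[\mathscr{E}^A_{\mathtt{cro}}(g_\sigma)]^2$.

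The bookkeeping rests on the elementary identity $z_{\alpha'}^2=z_{-\alpha}^2\,z_1$, valid because $\alpha+\alpha'=\tfrac12$ (so $|v_3|^{2\alpha'}=|v_3|^{-2\alpha}|v_3|$ when $|v_3|<1$, and both sides equal $1$ when $|v_3|\ge1$), together with $z_1\le\min(|v_3|,1)\le|v_3|$ and the comparison $z_1\,w_{-\gamma,\vartheta}^2\lesssim\nu\,w_{-\gamma+\beta_\gamma,\vartheta}^2$, which holds since $z_1\le1$ and $(1+|v|)^{-2\beta_\gamma}\le(1+|v|)^\gamma$ for $-3<\gamma\le1$ by the definition \eqref{beta-gamma} of $\beta_\gamma$ and $\nu(v)\thicksim(1+|v|)^\gamma$. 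Consequently $\|G\|_A^2=\|z_{-\alpha}z_1^{1/2}\sigma_x^{m/2}w_{-\gamma,\vartheta}g_\sigma\|_A^2\lesssim\|\nu^{1/2}z_{-\alpha}\sigma_x^{m/2}w_{-\gamma+\beta_\gamma,\vartheta}g_\sigma\|_A^2\le[\mathscr{E}^A_{\mathtt{cro}}(g_\sigma)]^2$.

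For the cross term I would split $\partial_x G=z_{\alpha'}w_{-\gamma,\vartheta}\big(\tfrac m2\sigma_x^{m/2-1}\sigma_{xx}\,g_\sigma+\sigma_x^{m/2}\partial_x g_\sigma\big)$. In the second contribution, $|G|\cdot|z_{\alpha'}w_{-\gamma,\vartheta}\sigma_x^{m/2}\partial_x g_\sigma|=z_{\alpha'}^2w_{-\gamma,\vartheta}^2\sigma_x^{m}|g_\sigma||\partial_x g_\sigma|$; distributing $z_{\alpha'}^2=z_{-\alpha}\cdot(z_{-\alpha}z_1)$ and $w_{-\gamma,\vartheta}^2=(1+|v|)^{-2\beta_\gamma}w_{-\gamma+\beta_\gamma,\vartheta}^2$ with $(1+|v|)^{-2\beta_\gamma}\le1$, this is bounded by $\big(\nu^{1/2}z_{-\alpha}\sigma_x^{m/2}w_{-\gamma+\beta_\gamma,\vartheta}|g_\sigma|\big)\big(\nu^{-1/2}z_{-\alpha}z_1\sigma_x^{m/2}w_{-\gamma+\beta_\gamma,\vartheta}|\partial_x g_\sigma|\big)$, and Cauchy--Schwarz in $(x,v)$ produces exactly the product of the two norms in $\mathscr{E}^A_{\mathtt{cro}}(g_\sigma)$ appearing in \eqref{E-cro}. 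In the first ($\sigma_{xx}$) contribution, $|G|\cdot|z_{\alpha'}w_{-\gamma,\vartheta}\tfrac m2\sigma_x^{m/2-1}\sigma_{xx}g_\sigma|=\tfrac{|m|}2z_{\alpha'}^2w_{-\gamma,\vartheta}^2\sigma_x^{m-1}|\sigma_{xx}|g_\sigma^2$; here I invoke Lemma \ref{Lmm-sigma}, which gives $|\sigma_{xx}v_3|\le\delta\sigma_x\nu(v)$, hence $z_{\alpha'}^2|\sigma_{xx}|=z_{-\alpha}^2z_1|\sigma_{xx}|\le\delta z_{-\alpha}^2|v_3|\,\sigma_x\nu(v)/|v_3|=\delta z_{-\alpha}^2\sigma_x\nu(v)$, so this contribution is $\le\tfrac{|m|\delta}2(1+|v|)^{-2\beta_\gamma}\big(\nu^{1/2}z_{-\alpha}\sigma_x^{m/2}w_{-\gamma+\beta_\gamma,\vartheta}g_\sigma\big)^2$, whose $(x,v)$-integral is $\le\tfrac{|m|\delta}2[\mathscr{E}^A_{\mathtt{cro}}(g_\sigma)]^2$. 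Adding the three bounds gives \eqref{Y-bnd-4}, and \eqref{Y-bnd-4-Dif} follows verbatim from the equation \eqref{L-lambda-Dif} for ${\vartriangle}g_\sigma$, since the argument never touches the source term.

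The main obstacle is not a deep one but rather the precise matching of the singular $z$-weights: one must observe that the deficit of $z_{\alpha'}^2$ against $z_{-\alpha}^2$ is exactly $z_1$, which is precisely the weight attached to $\partial_x g_\sigma$ in $\mathscr{E}^A_{\mathtt{cro}}$, and that this same $z_1$, being $\le|v_3|$, is what neutralizes the $1/|v_3|$ singularity generated by the second-derivative bound $|\sigma_{xx}v_3|\lesssim\delta\sigma_x\nu$ from Lemma \ref{Lmm-sigma}. A secondary technical point is the rigorous justification of the fundamental-theorem-of-calculus step for the mild solution, which is handled by a standard approximation in the $x$-variable.
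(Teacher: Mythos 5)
Your proof is correct and follows essentially the same route as the paper's: a one-dimensional interpolation in $x$ reducing $\sup_x\|G(x,\cdot)\|_{L^2_v}^2$ to $\|G\|_A^2+\|G\,\partial_xG\|_{L^1_{x,v}}$, followed by the identical weight bookkeeping ($z_{\alpha'}^2=z_{-\alpha}^2\,z_1$, $z_1\le|v_3|$, $|\sigma_{xx}v_3|\le\delta\sigma_x\nu$ from Lemma \ref{Lmm-sigma}, and $z_1w_{-\gamma,\vartheta}^2\lesssim\nu\,w_{-\gamma+\beta_\gamma,\vartheta}^2$ via the definition of $\beta_\gamma$). The only difference is cosmetic: you obtain the interpolation by averaging the fundamental theorem of calculus over the base point $x_0\in(0,A)$ and invoking $A\ge1$, whereas the paper argues through the maximum and minimum of $\phi(x)=\|G(x,\cdot)\|_{L^2_v}^2$ with a two-case comparison of $M_\phi$ against $\tfrac32 m_\phi$; both devices yield the same bound.
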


\begin{proof}
	Denote by
	\begin{equation*}
		\begin{aligned}
			& \phi (x) = \int_{\R^3} | z_{\alpha'} \sigma_x^\frac{m}{2} w_{- \gamma, \vartheta} g_\sigma (x,v)|^2 \d v \,, \bar{\phi} (x,v) = \tfrac{\partial}{\partial x} | z_{\alpha'} \sigma_x^\frac{m}{2} w_{- \gamma, \vartheta} g_\sigma |^2 (x, v) \,.
		\end{aligned}
	\end{equation*}
	Then for any $x, y \in \bar{\Omega}_A$,
	\begin{equation}\label{phi}
		\begin{aligned}
			\phi (x) - \phi (y) = \int_y^x \int_{\R^3} \bar{\phi} (x', v) \d v \d x' \,.
		\end{aligned}
	\end{equation}
	
	We claim that
	\begin{equation}\label{Claim-phi}
		\begin{aligned}
			\| \bar{\phi} \|_{L^1_{x,v}} \leq C \mathscr{E}_{\mathtt{cro}}^A (g_\sigma) \,.
		\end{aligned}
	\end{equation}
	Indeed, a direct computation yields
{\small
		\begin{align}\label{M1-M2}
			\no \| \bar{\phi} \|_{L^1_{x,v}} \leq & \underbrace{ 2 \int_0^A \int_{\R^3} |\sigma_x^\frac{m}{2} z_{\alpha'} w_{- \gamma, \vartheta} g_\sigma| \cdot |\sigma_x^\frac{m}{2} z_{\alpha'} w_{- \gamma, \vartheta} \partial_x g_\sigma| \d v \d x }_{: = \scorpio_1 } \\
			& + \underbrace{ m \int_0^A \int_{\R^3} |\sigma_x^\frac{m}{2} z_{\alpha'} w_{- \gamma, \vartheta} g_\sigma| \cdot |\sigma_x^{ \frac{m}{2} - 1 } \sigma_{xx} z_{\alpha'} w_{- \gamma, \vartheta} g_\sigma| \d v \d x }_{: = \scorpio_2} \,.
		\end{align}}
	By the virtue of $|z_1 \sigma_{xx}| \leq |v_3 \sigma_{xx}| \leq \delta \nu (v) \sigma_x$ derived from Lemma \ref{Lmm-sigma}, we have
{\small
		\begin{align}\label{M2-bnd}
			\scorpio_2 \leq \delta |m| \int_0^A \int_{\R^3} \nu (v) \sigma_x^m (x,v) z_{2 \alpha' - 1} w_{- \gamma, \vartheta}^2 (v) g_\sigma^2 (x,v) \d v \d x = & \delta |m| \| \nu^\frac{1}{2} z_{- \alpha} \sigma_x^\frac{m}{2} w_{- \gamma, \vartheta} g_\sigma \|^2_A \,,
		\end{align}}
	where the fact $z_{2 \alpha' - 1} = z_{- 2 \alpha} = z^2_{- \alpha}$ with $\alpha = \frac{1}{2} - \alpha'$ has been used.
	
	Furthermore, the relation $z_{2 \alpha' - 1} = z^2_{- \alpha}$ also implies
	\begin{equation}\label{M1-bnd}{\small
		\begin{aligned}
			\scorpio_1 = & 2 \int_0^A \int_{\R^3} |\nu^\frac{1}{2} z_{- \alpha} \sigma_x^\frac{m}{2} w_{- \gamma, \vartheta} g_\sigma| \cdot |\nu^{- \frac{1}{2}} z_{- \alpha} \sigma_x^\frac{m}{2} z_1 w_{- \gamma, \vartheta} \partial_x g_\sigma| \d v \d x \\
			\leq & 2 \| \nu^\frac{1}{2} z_{- \alpha} \sigma_x^\frac{m}{2} w_{- \gamma, \vartheta} g_\sigma \|_A \| \nu^{- \frac{1}{2}} z_{- \alpha} \sigma_x^\frac{m}{2} z_1 w_{- \gamma, \vartheta} \partial_x g_\sigma \|_A \\
			\leq & \| \nu^\frac{1}{2} z_{- \alpha} \sigma_x^\frac{m}{2} w_{- \gamma, \vartheta} g_\sigma \|^2_A + \| \nu^{- \frac{1}{2}} z_{- \alpha} \sigma_x^\frac{m}{2} z_1 w_{- \gamma, \vartheta} \partial_x g_\sigma \|^2_A \,.
		\end{aligned}}
	\end{equation}
	By using $w_{- \gamma, \vartheta} \leq w_{- \gamma + \beta_\gamma, \vartheta}$, one can conclude the claim \eqref{Claim-phi} from \eqref{M1-M2}-\eqref{M2-bnd}-\eqref{M1-bnd}.
	
	Since the claim \eqref{Claim-phi} holds with the finite value in the right hand side of \eqref{Claim-phi}, the relation \eqref{phi} tells us that $\phi \in C (\bar{\Omega}_A)$. Let $ M_\phi = \max_{x \in \bar{\Omega}_A} \phi (x) \geq 0 $ and $ m_\phi = \min_{x \in \bar{\Omega}_A} \phi (x) \geq 0 $. Then there are two points $x_M$, $x_m \in \bar{\Omega}_A$ such that
	\begin{equation*}
		\begin{aligned}
			0 \leq M_\phi - m_\phi = \phi (x_M) - \phi (x_m) = \int_{x_m}^{x_M} \int_{\R^3} \bar{\phi} (x,v) \d v \d x \leq \| \bar{\phi} \|_{L^1_{x,v}} \,.
		\end{aligned}
	\end{equation*}
	If $M_\phi \geq \frac{3}{2} m_\phi$, one has $ M_\phi \leq \tfrac{2}{3} M_\phi + \| \bar{\phi} \|_{L^1_{x,v}} $, which means that
	\begin{equation}\label{Mphi-1}
		\begin{aligned}
			M_\phi \leq 3 \| \bar{\phi} \|_{L^1_{x,v}} \,.
		\end{aligned}
	\end{equation}
	If $M_\phi < \tfrac{3}{2} m_\phi$, one has
	\begin{equation}\label{Mphi-2}{\small
		\begin{aligned}
			M_\phi < & \tfrac{3}{2} m_\phi \leq \tfrac{3}{2 A} \int_0^A \phi (x) \d x = \tfrac{3}{2 A} \| \nu^{- \frac{1}{2}} z_{\alpha} z_{\alpha'} \nu^\frac{1}{2} z_{- \alpha} \sigma_x^\frac{m}{2} w_{- \gamma, \vartheta} g_\sigma \|^2_A \\
			\leq & C \| \nu^\frac{1}{2} z_{- \alpha} \sigma_x^\frac{m}{2} w_{- \gamma + \beta_\gamma, \vartheta} g_\sigma \|^2_A \,,
		\end{aligned}}
	\end{equation}
	where the last inequality is derived from $A^{-1} \leq 1$ for $A \geq 1$ and $\nu^{- \frac{1}{2}} z_{\alpha} z_{\alpha'} w_{- \gamma, \vartheta} \leq C w_{- \gamma + \beta_\gamma, \vartheta}$. We remark that the index $\beta_\gamma$ given in \eqref{beta-gamma} is required here. Consequently, \eqref{Claim-phi}, \eqref{Mphi-1} and \eqref{Mphi-2} indicate that
	\begin{equation*}
		\begin{aligned}
			\| z_{\alpha'} \sigma_x^\frac{m}{2} w_{- \gamma, \vartheta} g_\sigma \|_{L^\infty_x L^2_v} = M_\phi^\frac{1}{2} \leq C \mathscr{E}_{\mathtt{cro}}^A (g_\sigma) \,.
		\end{aligned}
	\end{equation*}
	Namely, the bound \eqref{Y-bnd-4} holds. Furthermore, the estimate of the bound \eqref{Y-bnd-4-Dif} is similar to that of \eqref{Y-bnd-4}. Thus the proof of Lemma \ref{Lmm-Linfty-L2} is completed.
\end{proof}

\subsection{Estimate for $L^2_{x,v}$ with weight $\nu^\frac{1}{2} \sigma_x^\frac{m}{2} z_{- \alpha} w_{- \gamma + \beta_\gamma, \vartheta}$}

In this subsection, we will dominate the quantity $ \mathscr{E}_{\mathtt{cro}}^A (g_\sigma) = \| \nu^{- \frac{1}{2}} z_{- \alpha} \sigma_x^\frac{m}{2} z_1 w_{- \gamma + \beta_\gamma, \vartheta} \partial_x g_\sigma \|_A $$+ \| \nu^\frac{1}{2} z_{- \alpha} \sigma_x^\frac{m}{2} w_{- \gamma + \beta_\gamma, \vartheta} g_\sigma \|_A $ in the right hand side of \eqref{Y-bnd-4}.

\begin{lemma}\label{Lmm-L2xv-alpha}
	Under the same assumptions in Lemma \ref{Lmm-Linfty-L2}, there is a constant $C > 0$, independent of $A$, $l$, $ \delta$ and $\hbar$, such that
	\begin{equation}\label{C-L2}{\small
		\begin{aligned}
			& [1 - (1 - \alpha_*)^2] \| |v_3|^\frac{1}{2} \sigma_x^\frac{m}{2} z_{- \alpha} w_{- \gamma + \beta_\gamma, \vartheta} g_\sigma \|^2_{L^2_{\Sigma_+^0}} + \| |v_3|^\frac{1}{2} \sigma_x^\frac{m}{2} z_{- \alpha} w_{- \gamma + \beta_\gamma, \vartheta} g_\sigma \|^2_{L^2_{\Sigma_+^A}} + [ \mathscr{E}_{\mathtt{cro}}^A (g_\sigma) ]^2 \\
			\leq & C \| (\delta x + l)^{- \frac{m ( 1 - \gamma ) }{ 2 (3 - \gamma) } } \nu^\frac{1}{2} w_{- \gamma + \beta_\gamma - \mathbf{1}_{m<0} m (1 - \gamma) / 2, \vartheta} g_\sigma \|^2_A + C [ \mathscr{A}^A_{\mathtt{cro}} (h_\sigma) ]^2 + C [ \mathscr{B}_{\mathtt{cro}} ( \varphi_{A, \sigma} ) ]^2 \\
			& + C \alpha_* (\delta \hbar)^{-15} l^{ - \frac{3 (3 + \gamma)}{32 (3 - \gamma)} } \big[ \mathscr{E}^A_2 (g_\sigma) + \mathscr{E}_{\mathtt{NBE}}^A (g_\sigma) \big] + C_l \alpha_* \mathscr{A}_{\mathtt{NBE}}^A (h_\sigma)
		\end{aligned}}
	\end{equation}
	for a constant $C_l > 0$ depending on $l$ but being independent of $A, \delta, \hbar$, where the functionals $ \mathscr{E}_{\mathtt{cro}}^A (\cdot) $, $ \mathscr{E}_2^A (\cdot) $, $ \mathscr{E}_{\mathtt{NBE}}^A (\cdot) $, $ \mathscr{A}_{\mathtt{NBE}}^A (\cdot) $, $ \mathscr{A}^A_{\mathtt{cro}} (\cdot) $ and $ \mathscr{B}_{\mathtt{cro}} ( \cdot ) $ are introduced in \eqref{E-cro}, \eqref{E2-lambda}, \eqref{E-NBE}, \eqref{As-def}, \eqref{As-def} and \eqref{B-def}, respectively. Similarly, there holds
{\small
		\begin{align}\label{L2w-g-sigma-Bnd-Dif}
			\no & [1 - (1 - \alpha_*)^2] \| |v_3|^\frac{1}{2} \sigma_x^\frac{m}{2} z_{- \alpha} w_{- \gamma + \beta_\gamma, \vartheta} {\vartriangle} g_\sigma \|^2_{L^2_{\Sigma_+^0}} + \| |v_3|^\frac{1}{2} \sigma_x^\frac{m}{2} z_{- \alpha} w_{- \gamma + \beta_\gamma, \vartheta} {\vartriangle} g_\sigma \|^2_{L^2_{\Sigma_+^A}} + [ \mathscr{E}_{\mathtt{cro}}^A ({\vartriangle} g_\sigma) ]^2 \\
			\no \leq & C \| (\delta x + l)^{ - \frac{m ( 1 - \gamma ) }{ 2 (3 - \gamma) } } \nu^\frac{1}{2} w_{- \gamma + \beta_\gamma - \mathbf{1}_{m < 0} m (1 - \gamma) / 2, \vartheta} {\vartriangle} g_\sigma \|^2_A + C [ \mathscr{A}^A_{\mathtt{cro}} ({\vartriangle} h_\sigma) ]^2 \\
			& + C \alpha_* (\delta \hbar)^{-15} l^{ - \frac{3 (3 + \gamma)}{32 (3 - \gamma)} } \big[ \mathscr{E}^A_2 ({\vartriangle} g_\sigma) + \mathscr{E}_{\mathtt{NBE}}^A ({\vartriangle} g_\sigma) \big] + C_l \alpha_* \mathscr{A}_{\mathtt{NBE}}^A ({\vartriangle} h_\sigma) \,.
		\end{align}}
\end{lemma}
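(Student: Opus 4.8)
The plan is to run a weighted $L^2_{x,v}$ energy estimate for \eqref{A3-lambda} against the test function $\sigma_x^m z_{-2\alpha}w_{-\gamma+\beta_\gamma,\vartheta}^2\,g_\sigma$, i.e. to work with the weight $W(x,v):=\sigma_x^{m/2}(x,v)z_{-\alpha}(v)w_{-\gamma+\beta_\gamma,\vartheta}(v)$ (note $z_{-\alpha}^2=z_{-2\alpha}$). Integrating $\iint_{\Omega_A\times\R^3}v_3\partial_x g_\sigma\cdot W^2 g_\sigma$ by parts produces the boundary contributions at $x=A$ and $x=0$ plus the interior correction $-\tfrac{m}{2}\iint v_3\sigma_x^{m-1}\sigma_{xx}z_{-2\alpha}w_{-\gamma+\beta_\gamma,\vartheta}^2 g_\sigma^2$, which by Lemma \ref{Lmm-sigma} ($|v_3\sigma_{xx}|\le\delta\sigma_x\nu$) is $\le\tfrac{|m|\delta}{2}\|\nu^{1/2}W g_\sigma\|_A^2$, hence absorbable for $\delta$ small. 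The drift/collision diagonal $\iint[\nu(v)-\hbar\sigma_x v_3]W^2 g_\sigma^2\ge\tfrac12\|\nu^{1/2}W g_\sigma\|_A^2$ (again by $|\sigma_x v_3|\le c\nu$ and $\hbar$ small) is the coercive quantity that reproduces the first half of $\mathscr{E}_{\mathtt{cro}}^A$. For the $K_\hbar$, $\mathbf{D}_\hbar$ and source terms I use Cauchy--Schwarz of the form $|\iint K_\hbar g_\sigma\cdot W^2 g_\sigma|\le\|\nu^{-1/2}W K_\hbar g_\sigma\|_A\|\nu^{1/2}W g_\sigma\|_A$, then invoke Lemma \ref{Lmm-Kh-L2} and Lemma \ref{Lmm-Dh-L2-L2} with $\beta=-\gamma+\beta_\gamma$, which are precisely the statements that \emph{shed} the singular weight $z_{-\alpha}$: they give $\|\nu^{-1/2}z_{-\alpha}\sigma_x^{m/2}w_{-\gamma+\beta_\gamma,\vartheta}(K_\hbar-\mathbf{D}_\hbar)g_\sigma\|_A\le C\|\nu^{1/2}\sigma_x^{m/2}w_{-\gamma+\beta_\gamma,\vartheta}g_\sigma\|_A$, so a small multiple of $\|\nu^{1/2}W g_\sigma\|_A^2$ is reabsorbed and the residue is $C\|\nu^{1/2}\sigma_x^{m/2}w_{-\gamma+\beta_\gamma,\vartheta}g_\sigma\|_A^2$ with no singular weight; the source gives $\epsilon\|\nu^{1/2}W g_\sigma\|_A^2+C_\epsilon[\mathscr{A}_{\mathtt{cro}}^A(h_\sigma)]^2$.

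For the boundary terms: at $x=A$ the incoming datum $g_\sigma(A,\cdot)|_{v_3<0}=\varphi_{A,\sigma}$ splits $\tfrac12\int v_3 W^2(A,\cdot)g_\sigma^2(A,\cdot)$ into the nonnegative $L^2_{\Sigma_+^A}$-term, kept on the left, minus $\tfrac12\||v_3|^{1/2}W(A,\cdot)\varphi_{A,\sigma}\|_{L^2_{\Sigma_-^A}}^2\le C[\mathscr{B}_{\mathtt{cro}}(\varphi_{A,\sigma})]^2$ (using $w_{-\gamma+\beta_\gamma,\vartheta}\le w_{\beta+\beta_\gamma,\vartheta}$ since $\beta\ge-\gamma$). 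At $x=0$, the Maxwell reflection condition and the change of variables $v\mapsto R_0 v$ (with $W(0,R_0 v)=W(0,v)$ by \eqref{sigma-w-inv}) express $-\tfrac12\int v_3 W^2(0,\cdot)g_\sigma^2(0,\cdot)$ as $\tfrac12[1-(1-\alpha_*)^2]\int_{v_3<0}|v_3|W^2(0,v)g_\sigma^2(0,v)\d v$ minus a cross term and a diffusive term built from $\int_{v_3'<0}(-v_3')g_\sigma(0,v')\sqrt{\M_\sigma(v')}\d v'$, exactly as in the proof of Lemma \ref{Lmm-BEL}. The one genuine subtlety is that $W^2(0,\cdot)$ carries the unbounded factor $z_{-2\alpha}$, so neither Lemma \ref{Lmm-BEL} nor Lemma \ref{Lmm-NDBL} applies with that weight directly. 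I circumvent this by observing: (i) the \emph{internal} diffusive integral $\int_{v_3'<0}(-v_3')g_\sigma(0,v')\sqrt{\M_\sigma(v')}\d v'$ is weighted only by the Gaussian $\sqrt{\M_\sigma}$, so Lemma \ref{Lmm-NDBL} with $\mathfrak{w}=\sqrt{\M_\sigma}$ applies verbatim and bounds it by $C(\digamma_{\delta,l,a,\mathbf{Z}}(g_\sigma)+\digamma_{\delta,l,a,\mathbf{Z}}(\nu^{-1}h_\sigma))+|\int_{v_3<0}\sqrt{\M_\sigma}v_3\varphi_{A,\sigma}\d v|$; (ii) the \emph{external} prefactors $\int_{v_3<0}|v_3|W^2(0,v)\tfrac{M_w^2}{\M_\sigma}\d v$ and $\int_{v_3<0}|v_3|W^2(0,v)\tfrac{M_w}{\sqrt{\M_\sigma}}\d v$ are finite constants, because the extra factor $|v_3|$ turns $|v_3|z_{-2\alpha}(v)$ into $|v_3|^{1-2\alpha}$ near $v_3=0$ — integrable precisely because $0<\alpha<\mu_\gamma\le\tfrac12$ forces $1-2\alpha>0$ — while $\tfrac{M_w}{\sqrt{\M_\sigma}}$ decays exponentially under \eqref{Assmp-T} and small $\hbar,\vartheta$; (iii) the remaining factor of the cross term, $\int_{v_3<0}|v_3|W^2(0,v)\tfrac{M_w}{\sqrt{\M_\sigma}}g_\sigma(0,v)\d v$, is controlled by Cauchy--Schwarz directly against the good term $\int_{v_3<0}|v_3|W^2(0,v)g_\sigma^2(0,v)\d v$ and then reabsorbed after a Young inequality (using $\alpha_*/[1-(1-\alpha_*)^2]\le1$). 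This yields $-\tfrac12\int v_3 W^2(0,\cdot)g_\sigma^2(0,\cdot)\ge\tfrac14[1-(1-\alpha_*)^2]\||v_3|^{1/2}W(0,\cdot)g_\sigma\|_{L^2_{\Sigma_+^0}}^2-C\alpha_*(\digamma_{\delta,l,a,\mathbf{Z}}^2(g_\sigma)+\digamma_{\delta,l,a,\mathbf{Z}}^2(\nu^{-1}h_\sigma)+\||v_3|^{1/2}\varphi_{A,\sigma}\|_{L^2_{\Sigma_-^A}}^2)$.

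Assembling the pieces gives an energy inequality with the two boundary norms and $\|\nu^{1/2}W g_\sigma\|_A^2$ on the left, and on the right $\|\nu^{1/2}\sigma_x^{m/2}w_{-\gamma+\beta_\gamma,\vartheta}g_\sigma\|_A^2$, $[\mathscr{A}_{\mathtt{cro}}^A(h_\sigma)]^2$, $[\mathscr{B}_{\mathtt{cro}}(\varphi_{A,\sigma})]^2$, $\alpha_*(\digamma_{\delta,l,a,\mathbf{Z}}^2(g_\sigma)+\digamma_{\delta,l,a,\mathbf{Z}}^2(\nu^{-1}h_\sigma))$ and $\alpha_*\||v_3|^{1/2}\varphi_{A,\sigma}\|_{L^2_{\Sigma_-^A}}^2\lesssim\alpha_*\mathscr{B}_2(\varphi_{A,\sigma})$. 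Now I clean the right side with the tools of the previous subsection: $\|\nu^{1/2}\sigma_x^{m/2}w_{-\gamma+\beta_\gamma,\vartheta}g_\sigma\|_A^2\le C\|(\delta x+l)^{-m(1-\gamma)/(2(3-\gamma))}\nu^{1/2}w_{-\gamma+\beta_\gamma-\mathbf{1}_{m<0}m(1-\gamma)/2,\vartheta}g_\sigma\|_A^2$ by the two-sided bounds for $\sigma_x$ in Lemma \ref{Lmm-sigma} (the $\mathbf{1}_{m<0}$ being exactly the $v$-polynomial loss from the lower bound $\sigma_x\gtrsim(1+|v-\u|)^{-(1-\gamma)}$ when $m<0$); and $\alpha_*\digamma_{\delta,l,a,\mathbf{Z}}^2(g_\sigma)$ is expanded via \eqref{F-delta-l-a-bnd} and then, with $a=\tfrac{15}{8(3-\gamma)}$ and $\mathbf{Z}=6$, passed through the very iteration \eqref{U0-d}--\eqref{U5-d} and \eqref{RHS-d} proved in Lemma \ref{Lmm-L2xv-closed}; tracking the $(\delta\hbar)^{-1}$-powers accumulated there produces $C\alpha_*(\delta\hbar)^{-15}l^{-3(3+\gamma)/(32(3-\gamma))}[\mathscr{E}_2^A(g_\sigma)+\mathscr{E}_{\mathtt{NBE}}^A(g_\sigma)]+C_l\alpha_*\mathscr{A}_{\mathtt{NBE}}^A(h_\sigma)$. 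Finally the $\partial_x g_\sigma$ half of $\mathscr{E}_{\mathtt{cro}}^A$ comes for free: from the equation $z_1\partial_x g_\sigma=\tfrac{z_1}{v_3}([\hbar\sigma_x v_3-\nu]g_\sigma+K_\hbar g_\sigma-\mathbf{D}_\hbar g_\sigma+h_\sigma)$ and $|z_1/v_3|\le1$, so $\|\nu^{-1/2}z_{-\alpha}\sigma_x^{m/2}z_1 w_{-\gamma+\beta_\gamma,\vartheta}\partial_x g_\sigma\|_A$ is dominated by $\|\nu^{1/2}W g_\sigma\|_A$, $\|\nu^{1/2}\sigma_x^{m/2}w_{-\gamma+\beta_\gamma,\vartheta}g_\sigma\|_A$ and $\mathscr{A}_{\mathtt{cro}}^A(h_\sigma)$ (using $|\hbar\sigma_x v_3-\nu|\lesssim\nu$ and again Lemmas \ref{Lmm-Kh-L2}, \ref{Lmm-Dh-L2-L2}), all already estimated. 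This is \eqref{C-L2}; running the same argument on the difference equation \eqref{L-lambda-Dif} — where the vanishing incoming data removes the $\mathscr{B}_{\mathtt{cro}}$ and $\mathscr{B}_2$ terms — gives \eqref{L2w-g-sigma-Bnd-Dif}. The main obstacle is the boundary step (i)--(iii): taming the nondissipative Maxwell term in the presence of the unbounded weight $z_{-\alpha}$, which must be arranged so that $z_{-\alpha}$ only ever appears multiplied by $|v_3|$ or inside Gaussian-weighted integrals, while simultaneously keeping every $(\delta\hbar)$- and $l$-power consistent with the precise constants in the statement.
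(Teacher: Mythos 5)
Your proposal follows essentially the same route as the paper: test \eqref{A3-lambda} against $\sigma_x^m z_{-\alpha}^2 w_{-\gamma+\beta_\gamma,\vartheta}^2 g_\sigma$, absorb the $\sigma_{xx}$ and $\hbar\sigma_x v_3$ corrections via Lemma \ref{Lmm-sigma}, treat $K_\hbar$ and $\mathbf{D}_\hbar$ with Lemmas \ref{Lmm-Kh-L2} and \ref{Lmm-Dh-L2-L2}, convert $\sigma_x^{m/2}$ into the $x$- and $v$-polynomial weights (the $\mathbf{1}_{m<0}$ correction exactly as you say), and recover the $\partial_x g_\sigma$ half of $\mathscr{E}_{\mathtt{cro}}^A$ directly from the equation. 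One genuine difference: at $x=0$ the paper simply invokes Lemma \ref{Lmm-BEL} with $w_*=z_{-\alpha}^2\sigma_x^m(0,\cdot)w_{-\gamma+\beta_\gamma,\vartheta}^2$, even though this $w_*$ violates the pointwise hypothesis \eqref{BEL-1} near $v_3=0$; your steps (i)--(iii) redo the boundary computation by hand and observe that $z_{-2\alpha}$ only ever meets $|v_3|$ or Gaussian-weighted integrals, which is exactly why the lemma's conclusion survives for $\alpha<\tfrac12$ --- so your version is the more careful one on this point. Two small slips to fix. First, the term $\alpha_*\digamma_{\delta,l,a,\mathbf{Z}}^2(g_\sigma)$ should \emph{not} be passed through the iteration \eqref{U0-d}--\eqref{U5-d}: with $a=\tfrac{15}{8(3-\gamma)}$, $\mathbf{Z}=6$, the weighted norm in \eqref{F-delta-l-a-bnd} is, up to the factor $l^{\frac{59-24\gamma}{24(3-\gamma)}}$, literally the first constituent $\Xi^A[\tfrac{59-24\gamma}{24(3-\gamma)};\tfrac{15}{8(3-\gamma)},0](g_\sigma)$ of $\mathscr{E}_{\mathtt{NBE}}^A(g_\sigma)$ in \eqref{E-NBE}, so one just reads off the bound $C\alpha_*(\delta\hbar)^{-15}l^{-\frac{3(3+\gamma)}{32(3-\gamma)}}[\mathscr{E}_2^A(g_\sigma)+\mathscr{E}_{\mathtt{NBE}}^A(g_\sigma)]$; running the iteration here would require the hypotheses and conclusion of Lemma \ref{Lmm-L2xv-closed} and would replace $\mathscr{E}_2^A+\mathscr{E}_{\mathtt{NBE}}^A$ by source data, which is not the right-hand side the statement asks for. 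Second, the unweighted boundary contribution $\alpha_*\||v_3|^{1/2}\varphi_{A,\sigma}\|^2_{L^2_{\Sigma_-^A}}$ must be absorbed into $[\mathscr{B}_{\mathtt{cro}}(\varphi_{A,\sigma})]^2$ (via $1\lesssim z_{-\alpha}\sigma_x^{m/2}w_{-\gamma+\beta_\gamma,\vartheta}$, as the paper does), not into $\mathscr{B}_2(\varphi_{A,\sigma})$, since \eqref{C-L2} contains no $\mathscr{B}_2$ term.
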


\begin{proof}
	For $0 \leq \alpha < \mu_\gamma$ with $\mu_\gamma > 0$ given in Lemma \ref{Lmm-Kh-L2}, multiplying \eqref{A3-lambda} by $\sigma_x^m z_{- \alpha}^2 w_{- \gamma + \beta_\gamma, \vartheta}^2 g_\sigma$ and integrating by parts over $(x,v) \in \Omega_A \times \R^3$, one has
{\small
		\begin{align*}
			& \iint_{\Omega_A \times \R^3} \partial_x \big( \tfrac{1}{2} v_3 z_{- \alpha}^2 \sigma_x^m w_{- \gamma + \beta_\gamma, \vartheta}^2 g_\sigma^2 \big) \d v \d x - \tfrac{1}{2} \iint_{\Omega_A \times \R^3} m v_3 z_{- \alpha}^2 \sigma_{xx} \sigma_x^{m-1} w_{- \gamma + \beta_\gamma, \vartheta}^2 g_\sigma^2 \d v \d x \\
			& + \iint_{\Omega_A \times \R^3} [ - \hbar \sigma_x v_3 + \nu (v) ] \sigma_x^m z_{- \alpha}^2 w_{- \gamma + \beta_\gamma, \vartheta}^2 g_\sigma^2 \d v \d x \\
			& - \iint_{\Omega_A \times \R^3} ( K_\hbar - \mathbf{D}_\hbar ) g_\sigma \cdot \sigma_x^m z_{- \alpha}^2 w_{- \gamma + \beta_\gamma, \vartheta}^2 g_\sigma^2 \d v \d x = \iint_{\Omega_A \times \R^3} h_\sigma \cdot \sigma_x^m z_{- \alpha}^2 w_{- \gamma + \beta_\gamma, \vartheta}^2 g_\sigma^2 \d v \d x \,.
		\end{align*}}
	Notice that by Lemma \ref{Lmm-sigma},
	\begin{equation}\label{N1}
		\begin{aligned}
			\iint_{\Omega_A \times \R^3} [ - \hbar \sigma_x v_3 + \nu (v) ] \sigma_x^m z_{- \alpha}^2 w_{- \gamma + \beta_\gamma, \vartheta}^2 g_\sigma^2 \d v \d x \geq c_\hbar \| \nu^\frac{1}{2} z_{- \alpha} \sigma_x^\frac{m}{2} w_{- \gamma + \beta_\gamma, \vartheta} g_\sigma \|^2_A \,,
		\end{aligned}
	\end{equation}
	where $c_\hbar = 1 - c \hbar > \frac{1}{2}$ for sufficiently small $\hbar \geq 0$. Observe that
		\begin{align*}
			& \iint_{\Omega_A \times \R^3} \partial_x \big( \tfrac{1}{2} v_3 z_{- \alpha}^2 \sigma_x^m w_{- \gamma + \beta_\gamma, \vartheta}^2 g_\sigma^2 \big) \d v \d x \\
			= & \tfrac{1}{2} \int_{\R^3} v_3 z_{- \alpha}^2 \sigma_x^m w_{- \gamma + \beta_\gamma, \vartheta}^2 g_\sigma^2 (A, v) \d v - \tfrac{1}{2} \int_{\R^3} v_3 z_{- \alpha}^2 \sigma_x^m (0, v) w_{- \gamma + \beta_\gamma, \vartheta}^2 (v) g_\sigma^2 (0, v) \d v \,.
		\end{align*}
	By the boundary conditions in \eqref{A3-lambda}, it infers that
{\small
		\begin{align*}
			& \int_{\R^3} v_3 z_{- \alpha}^2 \sigma_x^m w_{- \gamma + \beta_\gamma, \vartheta}^2 g_\sigma^2 (A, v) \d v \\
			= & \int_{v_3 > 0} v_3 z_{- \alpha}^2 \sigma_x^m w_{- \gamma + \beta_\gamma, \vartheta}^2 g_\sigma^2 (A, v) \d v + \int_{v_3 < 0} v_3 z_{- \alpha}^2 \sigma_x^m w_{- \gamma + \beta_\gamma, \vartheta}^2 g_\sigma^2 (A, v) \d v \\
			= & \int_{v_3 > 0} | v_3 | z_{- \alpha}^2 \sigma_x^m w_{- \gamma + \beta_\gamma, \vartheta}^2 g_\sigma^2 (A, v) \d v - \int_{v_3 < 0} |v_3| z_{- \alpha}^2 \sigma_x^m w_{- \gamma + \beta_\gamma, \vartheta}^2 \varphi_{A, \sigma}^2 (A, v) \d v \\
			= & \| |v_3|^\frac{1}{2} z_{- \alpha} \sigma_x^\frac{m}{2} w_{- \gamma + \beta_\gamma, \vartheta} g_\sigma \|^2_{L^2_{\Sigma_+^A}} - \| |v_3|^\frac{1}{2} z_{- \alpha} \sigma_x^\frac{m}{2} w_{- \gamma + \beta_\gamma, \vartheta} \varphi_{A, \sigma} \|^2_{L^2_{\Sigma_-^A}} \,.
		\end{align*}}
	Moreover, we choose $w_* (v) = z_{- \alpha}^2 \sigma_x^m (0, v) w^2_{- \gamma + \beta_\gamma, \vartheta} (v)$ in Lemma \ref{Lmm-BEL}. Then the estimate \eqref{BEL-bnd} thereby reduces to
{\small
		\begin{align*}
			& - \int_{\R^3} v_3 z_{- \alpha}^2 \sigma_x^m (0, v) w_{- \gamma + \beta_\gamma, \vartheta}^2 (v) g_\sigma^2 (0, v) \d v \\
			\geq & [ 1 - (1 - \alpha_*)^2 ] \int_{v_3 < 0} | v_3 | z_{- \alpha}^2 \sigma_x^m (0, v) w_{- \gamma + \beta_\gamma, \vartheta}^2 (v) g_\sigma^2 (0,v) \d v \\
			& - C_0 \alpha_* \big( \digamma_{\delta, l,a, \mathbf{Z}} (g_\sigma) + \digamma_{\delta, l,a, \mathbf{Z}} (\nu^{-1} h_\sigma) \big)^2 - C_0 \alpha_* \| |v_3|^\frac{1}{2} \varphi_{A, \sigma} \|^2_{L^2_{\Sigma_-^A}} \,,
		\end{align*}}
	where the functional $\digamma_{\delta, l,a, \mathbf{Z}} (\cdot)$ is defined in \eqref{F-delta-l-a}. Observe that $ 1 \leq \sqrt{C_1} z_{- \alpha} \sigma_x^\frac{m}{2} w_{- \gamma + \beta_\gamma, \vartheta} $ for some universal constant $C_1 > 0$. As a result, one has
{\small
		\begin{align}\label{N2}
			\no & \iint_{\Omega_A \times \R^3} \partial_x \big( \tfrac{1}{2} v_3 z_{- \alpha}^2 \sigma_x^m w_{- \gamma + \beta_\gamma, \vartheta}^2 g_\sigma^2 \big) \d v \d x \\
			\geq & \tfrac{1}{2} [ 1 - (1 - \alpha_*)^2 ] \| |v_3|^\frac{1}{2} z_{- \alpha} \sigma_x^\frac{m}{2} w_{- \gamma + \beta_\gamma, \vartheta} g_\sigma \|^2_{L^2_{\Sigma_+^0}} + \tfrac{1}{2} \| |v_3|^\frac{1}{2} z_{- \alpha} \sigma_x^\frac{m}{2} w_{- \gamma + \beta_\gamma, \vartheta} g_\sigma \|^2_{L^2_{\Sigma_+^A}} \\
			\no & - \tfrac{1}{2} C_0 \alpha_* \big( \digamma_{\delta, l,a, \mathbf{Z}} (g_\sigma) + \digamma_{\delta, l,a, \mathbf{Z}} (\nu^{-1} h_\sigma) \big)^2 - \tfrac{1}{2} ( 1 + C_0  C_1 \alpha_* ) \| |v_3|^\frac{1}{2} z_{- \alpha} \sigma_x^\frac{m}{2} w_{- \gamma + \beta_\gamma, \vartheta} \varphi_{A, \sigma} \|^2_{L^2_{\Sigma_-^A}} \,.
		\end{align}}
	By Lemma \ref{Lmm-sigma}, $|v_3 \sigma_{xx}| \leq \delta \sigma_x \nu (v)$. Then it holds
		\begin{align*}
			& \big| \tfrac{1}{2} \iint_{\Omega_A \times \R^3} m v_3 z_{- \alpha}^2 \sigma_{xx} \sigma_x^{m-1} w_{- \gamma + \beta_\gamma, \vartheta}^2 g_\sigma^2 \d v \d x \big| \\
			\leq & C \delta \| \nu^\frac{1}{2} z_{- \alpha} \sigma_x^\frac{m}{2} w_{- \gamma + \beta_\gamma, \vartheta} g_\sigma \|^2_A \leq \tfrac{c_\hbar}{16} \| \nu^\frac{1}{2} z_{- \alpha} \sigma_x^\frac{m}{2} w_{- \gamma + \beta_\gamma, \vartheta} g_\sigma \|^2_A \,,
		\end{align*}
	where $\delta > 0$ is taken small enough such that $C \delta \leq \frac{c_\hbar}{16}$. Moreover, the H\"older inequality shows
		\begin{align*}
			& \big| \iint_{\Omega_A \times \R^3} h_\sigma \cdot \sigma_x^m z_{- \alpha}^2 w_{- \gamma + \beta_\gamma, \vartheta}^2 g_\sigma^2 \d v \d x \big| \\
			\leq & \tfrac{c_\hbar}{8} \| \nu^\frac{1}{2} z_{- \alpha} \sigma_x^\frac{m}{2} w_{- \gamma + \beta_\gamma, \vartheta} g_\sigma \|^2_A + C \| \nu^{- \frac{1}{2}} z_{- \alpha} \sigma_x^\frac{m}{2} w_{- \gamma + \beta_\gamma, \vartheta} h_\sigma \|^2_A \,,
		\end{align*}
	and
	\begin{equation*}
		\begin{aligned}
			& \big| \iint_{\Omega_A \times \R^3} ( K_\hbar - \mathbf{D}_\hbar ) g_\sigma \cdot \sigma_x^m z_{- \alpha}^2 w_{- \gamma + \beta_\gamma, \vartheta}^2 g_\sigma^2 \d v \d x \big| \\
			\leq & \tfrac{c_\hbar}{4} \| \nu^\frac{1}{2} z_{- \alpha} \sigma_x^\frac{m}{2} w_{- \gamma + \beta_\gamma, \vartheta} g_\sigma \|^2_A + C \| \nu^{- \frac{1}{2}} z_{- \alpha} \sigma_x^\frac{m}{2} w_{- \gamma + \beta_\gamma, \vartheta} ( K_\hbar - \mathbf{D}_\hbar ) g_\sigma \|^2_A \,.
		\end{aligned}
	\end{equation*}
	We thereby establish
		\begin{align}\label{M1}
			\no & [1 - (1 - \alpha_*)^2] \| |v_3|^\frac{1}{2} \sigma_x^\frac{m}{2} z_{- \alpha} w_{- \gamma + \beta_\gamma, \vartheta} g_\sigma \|^2_{L^2_{\Sigma_+}} \\
			\no & + \| |v_3|^\frac{1}{2} \sigma_x^\frac{m}{2} z_{- \alpha} w_{- \gamma + \beta_\gamma, \vartheta} g_\sigma \|^2_{ L^2_{ \Sigma_+^A } } + \| \nu^\frac{1}{2} z_{- \alpha} \sigma_x^\frac{m}{2} w_{- \gamma + \beta_\gamma, \vartheta} g_\sigma \|^2_A \\
			\leq & C \| \nu^{- \frac{1}{2}} z_{- \alpha} \sigma_x^\frac{m}{2} x w_{- \gamma + \beta_\gamma, \vartheta} ( K_\hbar - \mathbf{D}_\hbar ) g_\sigma \|^2_A + C [ \mathscr{A}^A_{\mathtt{cro}} (h_\sigma) ]^2 + C [ \mathscr{B}_{\mathtt{cro}} ( \varphi_{A, \sigma} ) ]^2 \\
			\no & + C \alpha_* \big( \digamma_{\delta, l,a, \mathbf{Z}} (g_\sigma) + \digamma_{\delta, l,a, \mathbf{Z}} (\nu^{-1} h_\sigma) \big)^2 \,.
		\end{align}
	
	Lemma \ref{Lmm-Kh-L2} and Lemma \ref{Lmm-Dh-L2-L2} show that for $0 \leq \alpha < \mu_\gamma$ and $- 3 < \gamma \leq 1$,
	\begin{equation}\label{M2}
		\begin{aligned}
			\| \nu^{- \frac{1}{2}} z_{- \alpha} \sigma_x^\frac{m}{2} w_{- \gamma + \beta_\gamma, \vartheta} ( K_\hbar - \mathbf{D}_\hbar ) g_\sigma \|^2_A \lesssim \| \nu^\frac{1}{2} \sigma_x^\frac{m}{2} w_{- \gamma + \beta_\gamma, \vartheta} g_\sigma \|^2_A \,.
		\end{aligned}
	\end{equation}
	It follow from Lemma \ref{Lmm-sigma} that
	\begin{equation*}{\small
		\begin{aligned}
			\sigma_x^\frac{m}{2} (x,v) \lesssim & \max \{ (\delta x + l)^{- \frac{m ( 1 - \gamma ) }{ 2 (3 - \gamma) } } , ( 1 + | v - \u | )^{- \frac{m (1 - \gamma) }{2}} \} \lesssim & (\delta x + l)^{- \frac{m ( 1 - \gamma ) }{ 2 (3 - \gamma) } } ( 1 + | v - \u | )^{- \frac{m (1 - \gamma) }{2}}
		\end{aligned}}
	\end{equation*}
	for $m < 0 $, and
	\begin{equation*}
		\begin{aligned}
			\sigma_x^\frac{m}{2} (x,v) \lesssim (\delta x + l)^{- \frac{m ( 1 - \gamma ) }{ 2 (3 - \gamma) } }
		\end{aligned}
	\end{equation*}
	for $m \geq 0$. It thereby infers
	\begin{equation}\label{KD-bnd}
		\begin{aligned}
			\| \nu^\frac{1}{2} \sigma_x^\frac{m}{2} w_{- \gamma + \beta_\gamma, \vartheta} g_\sigma \|_A \lesssim \| (\delta x + l)^{ - \frac{m ( 1 - \gamma ) }{ 2 (3 - \gamma) } } \nu^\frac{1}{2} w_{- \gamma + \beta_\gamma - \mathbf{1}_{m < 0} m (1 - \gamma) / 2, \vartheta} g_\sigma \|_A \,.
		\end{aligned}
	\end{equation}

    If we take $a = \frac{15}{8 (3 - \gamma)}$ and $\mathbf{Z} = 6$, then the inequality \eqref{F-delta-l-a-bnd} indicates that
    \begin{equation}\label{F1}
    	\begin{aligned}
    		\digamma_{\delta, l,a, \mathbf{Z}}^2 (g_\sigma) \lesssim & (\delta \hbar)^{-2} l^{- 100} \mathscr{E}^A_2 (g_\sigma) \\
    		& + (\delta \hbar)^{-2} l^{ - \frac{125}{24 ( 3 - \gamma )} } l^{ \frac{59 - 24 \gamma}{24 (3 - \gamma)} } \| (l^{-6} \delta x + l)^\frac{15}{8 (3 - \gamma)} (\delta x + l)^{ - \frac{1 - \gamma}{2 ( 3 - \gamma )} } \nu^\frac{1}{2} g_\sigma \|_A^2 \\
    		\lesssim & (\delta \hbar)^{-2} l^{- 100} \mathscr{E}^A_2 (g_\sigma) + (\delta \hbar)^{-2} l^{ - \frac{125}{24 ( 3 - \gamma )} } \mathscr{E}_{\mathtt{NBE}}^A (g_\sigma) \,,
    	\end{aligned}
    \end{equation}
    where the functionals $\mathscr{E}^A_2 (g_\sigma)$ and $\mathscr{E}_{\mathtt{NBE}}^A (g_\sigma)$ are defined in \eqref{E2-lambda} and \eqref{E-NBE}, respectively. Moreover, under the choice $a = \frac{15}{8 (3 - \gamma)}$ and $\mathbf{Z} = 6$, the quantity $ \digamma_{\delta, l,a, \mathbf{Z}}^2 (\nu^{-1} h_\sigma) $ can be easily bounded by
    \begin{equation}\label{F2}
    	\begin{aligned}
    		\digamma_{\delta, l,a, \mathbf{Z}}^2 (\nu^{-1} h_\sigma) \leq C_l \mathscr{A}_{\mathtt{NBE}}^A (h_\sigma)
    	\end{aligned}
    \end{equation}
    for some constant $C_l > 0$ depending on $l$ but being independent of $A, \delta, \hbar$, where the functional $\mathscr{A}_{\mathtt{NBE}}^A (h_\sigma)$ is given in \eqref{As-def}.

	As a result, the bounds \eqref{M1}, \eqref{M2}, \eqref{KD-bnd}, \eqref{F1} and \eqref{F2} indicate that
		\begin{align}\label{L2w-g-sigma-Bnd}
			\no & [1 - (1 - \alpha_*)^2] \| |v_3|^\frac{1}{2} \sigma_x^\frac{m}{2} z_{- \alpha} w_{- \gamma + \beta_\gamma, \vartheta} g_\sigma \|^2_{L^2_{\Sigma_+^0}} \\
			\no & + \| |v_3|^\frac{1}{2} \sigma_x^\frac{m}{2} z_{- \alpha} w_{- \gamma + \beta_\gamma, \vartheta} g_\sigma \|^2_{ L^2_{ \Sigma_+^A } } + \| \nu^\frac{1}{2} z_{- \alpha} \sigma_x^\frac{m}{2} w_{- \gamma + \beta_\gamma, \vartheta} g_\sigma \|^2_A \\
			\no \leq & C \| (\delta x + l)^{ - \frac{m ( 1 - \gamma ) }{ 2 (3 - \gamma) } } \nu^\frac{1}{2} w_{- \gamma + \beta_\gamma - \mathbf{1}_{m < 0} m (1 - \gamma) / 2, \vartheta} g_\sigma \|^2_A + C [ \mathscr{A}^A_{\mathtt{cro}} (h_\sigma) ]^2 + C [ \mathscr{B}_{\mathtt{cro}} ( \varphi_{A, \sigma} ) ]^2 \\
			& + C \alpha_* (\delta \hbar)^{-2} l^{- 100} \mathscr{E}^A_2 (g_\sigma) + C \alpha_* (\delta \hbar)^{-2} l^{ - \frac{125}{24 ( 3 - \gamma )} } \mathscr{E}_{\mathtt{NBE}}^A (g_\sigma) + C_l \alpha_* \mathscr{A}_{\mathtt{NBE}}^A (h_\sigma)
		\end{align}
	for $0 \leq \alpha < \mu_\gamma$. Here the constant $C > 0$ is independent of $A$, $\delta$, $\hbar$ and $l$.
	
	Recalling \eqref{A3-lambda}, one has $ \partial_x g_\sigma = - [ - \hbar \sigma_x + \tfrac{\nu (v)}{v_3} ] g_\sigma + \tfrac{1}{v_3} ( K_\hbar - \mathbf{D}_\hbar ) g_\sigma + \tfrac{1}{v_3} h_\sigma $, which means
	\begin{equation*}
		\begin{aligned}
			& | \nu^{- \frac{1}{2}} z_{- \alpha} \sigma_x^\frac{m}{2} z_1 w_{- \gamma + \beta_\gamma, \vartheta} \partial_x g_\sigma | \\
			\leq & C (\hbar \nu^{-1} z_1 \sigma_x + \tfrac{z_1}{|v_3|}) |\nu^\frac{1}{2} z_{- \alpha} \sigma_x^\frac{m}{2} w_{- \gamma + \beta_\gamma, \vartheta} g_\sigma | \\
			& + C | \tfrac{z_1}{|v_3|} \nu^{- \frac{1}{2}} \sigma_x^\frac{m}{2} z_{- \alpha} w_{- \gamma + \beta_\gamma, \vartheta} ( K_\hbar - \mathbf{D}_\hbar ) g_\sigma | + C | \tfrac{z_1}{|v_3|} \nu^{- \frac{1}{2}} \sigma_x^\frac{m}{2} z_{- \alpha} w_{- \gamma + \beta_\gamma, \vartheta} h_\sigma | \,.
		\end{aligned}
	\end{equation*}
	Observe that $\frac{z_1}{|v_3|} \leq 1$, and $\sigma_x \leq c \frac{\nu (v)}{|v_3|}$ by Lemma \ref{Lmm-sigma}, which imply $ \hbar \nu^{-1} z_1 \sigma_x + \tfrac{z_1}{|v_3|} \leq c \hbar + 1 $. It therefore follows that
	\begin{equation*}
		\begin{aligned}
			\| \nu^{- \frac{1}{2}} z_{- \alpha} \sigma_x^\frac{m}{2} z_1 w_{- \gamma + \beta_\gamma, \vartheta} & \partial_x g_\sigma \|^2_A \leq C \| \nu^\frac{1}{2} z_{- \alpha} \sigma_x^\frac{m}{2} w_{- \gamma + \beta_\gamma, \vartheta} g_\sigma \|^2_A \\
			& + C \| \nu^{- \frac{1}{2}} \sigma_x^\frac{m}{2} z_{- \alpha} w_{- \gamma + \beta_\gamma, \vartheta} ( K_\hbar - \mathbf{D}_\hbar ) g_\sigma \|^2_A + C [ \mathscr{A}^A_{\mathtt{cro}} (h_\sigma) ]^2 \,.
		\end{aligned}
	\end{equation*}
	Lemma \ref{Lmm-Kh-L2} and Lemma \ref{Lmm-Dh-L2-L2} show that
	$$ \int_{\R^3} | \nu^{- \frac{1}{2}} \sigma_x^\frac{m}{2} z_{- \alpha} w_{- \gamma + \beta_\gamma, \vartheta} ( K_\hbar - \mathbf{D}_\hbar ) g_\sigma |^2 \d v \leq C \int_{\R^3} |\nu^\frac{1}{2} z_{- \alpha} \sigma_x^\frac{m}{2} w_{- \gamma + \beta_\gamma, \vartheta} g_\sigma|^2 \d v \,. $$
	Then, together with \eqref{L2w-g-sigma-Bnd}, one has
	\begin{equation}\label{L2w-pxg-sigma-Bnd}
		\begin{aligned}
			& \| \nu^{- \frac{1}{2}} z_{- \alpha} \sigma_x^\frac{m}{2} z_1 w_{- \gamma + \beta_\gamma, \vartheta} \partial_x g_\sigma \|^2_A \\
			\leq & C \| \nu^\frac{1}{2} z_{- \alpha} \sigma_x^\frac{m}{2} w_{- \gamma + \beta_\gamma, \vartheta} g_\sigma \|^2_A + C \| \nu^{- \frac{1}{2}} \sigma_x^\frac{m}{2} z_{- \alpha} w_{- \gamma + \beta_\gamma, \vartheta} h_\sigma \|^2_A \\
			\leq & C \| (\delta x + l)^{ - \frac{m ( 1 - \gamma ) }{ 2 (3 - \gamma) } } \nu^\frac{1}{2} w_{- \gamma + \beta_\gamma - \mathbf{1}_{m<0} m (1 - \gamma) / 2, \vartheta} g_\sigma \|^2_A + C [ \mathscr{A}^A_{\mathtt{cro}} (h_\sigma) ]^2 + C [ \mathscr{B}_{\mathtt{cro}} ( \varphi_{A, \sigma} ) ]^2 \\
			& + C \alpha_* (\delta \hbar)^{-2} l^{- 100} \mathscr{E}^A_2 (g_\sigma) + C \alpha_* (\delta \hbar)^{-2} l^{ - \frac{125}{24 ( 3 - \gamma )} } \mathscr{E}_{\mathtt{NBE}}^A (g_\sigma) + C_l \alpha_* \mathscr{A}_{\mathtt{NBE}}^A (h_\sigma) \,.
		\end{aligned}
	\end{equation}
	Note that $(\delta \hbar)^{-2} l^{- 100} , (\delta \hbar)^{-2} l^{ - \frac{125}{24 ( 3 - \gamma )} } \leq (\delta \hbar)^{-15} l^{ - \frac{3 (3 + \gamma)}{32 (3 - \gamma)} } $ for all $- 3 < \gamma \leq 1$. Then the bounds \eqref{L2w-g-sigma-Bnd} and \eqref{L2w-pxg-sigma-Bnd} conclude the estimate \eqref{C-L2}.

	Furthermore, as the similar arguments in \eqref{L2w-g-sigma-Bnd} and \eqref{L2w-pxg-sigma-Bnd}, one can easily knows that ${\vartriangle} g_\sigma$ obeying the equation \eqref{L-lambda-Dif} satisfies the bound \eqref{L2w-g-sigma-Bnd-Dif}. Consequently, the proof of Lemma \ref{Lmm-L2xv-alpha} is completed.
\end{proof}

\subsection{Close weighted $L^\infty_{x,v}$ bound: Proof of Lemma \ref{Lmm-APE-A3}}\label{Subsec:Uniform-Est}

In this subsection, we will finish the proof of Lemma \ref{Lmm-APE-A3} based on Lemma \ref{Lmm-Y-bnd}, Lemma \ref{Lmm-Linfty-L2}, Lemma \ref{Lmm-L2xv-alpha} and Lemma \ref{Lmm-L2xv-closed}.

\begin{proof}[Proof of Lemma \ref{Lmm-APE-A3}]
First, it follows from the inequalities \eqref{LA-g-sigma} in Lemma \ref{Lmm-Y-bnd} and \eqref{Y-bnd-4} in Lemma \ref{Lmm-Linfty-L2} that
\begin{equation}\label{X1}
	\begin{aligned}
		\mathscr{E}^A_\infty ( g_\sigma ) + \LL g_\sigma \RR_{m, \beta_*, \vartheta, \Sigma} \lesssim \mathscr{E}^A_{\mathtt{cro}} (g_\sigma) + \mathscr{A}_\infty^A ( h_\sigma ) + \mathscr{B}_\infty ( \varphi_{A, \sigma} ) \,.
	\end{aligned}
\end{equation}
Together with \eqref{C-L2} in Lemma \ref{Lmm-L2xv-alpha} and \eqref{X1}, one then gains
\begin{equation}\label{X2}
	\begin{aligned}
		& \mathscr{E}^A_\infty ( g_\sigma ) + \LL g_\sigma \RR_{m, \beta_*, \vartheta, \Sigma} + \mathscr{E}^A_{\mathtt{cro}} (g_\sigma) \\
		\lesssim & \| (\delta x + l)^{- \frac{m ( 1 - \gamma ) }{ 2 (3 - \gamma) } } \nu^\frac{1}{2} w_{- \gamma + \beta_\gamma - \mathbf{1}_{m<0} m (1 - \gamma) / 2, \vartheta} g_\sigma \|_A \\
		& + \mathscr{A}^A_{ \mathtt{cro} } ( h_\sigma ) + \mathscr{B}_{ \mathtt{cro} } ( \varphi_{A, \sigma} ) + \mathscr{A}_\infty^A ( h_\sigma ) + \mathscr{B}_\infty ( \varphi_{A, \sigma} ) \\
		& + \alpha_*^\frac{1}{2} (\delta \hbar)^{- \frac{15}{2}} l^{ - \frac{3 (3 + \gamma)}{64 (3 - \gamma)} } \big[ \mathscr{E}^A_2 (g_\sigma) + \mathscr{E}_{\mathtt{NBE}}^A (g_\sigma) \big]^\frac{1}{2} + C_l \alpha_*^\frac{1}{2} \big[ \mathscr{A}_{\mathtt{NBE}}^A (h_\sigma) \big]^\frac{1}{2} \,.
	\end{aligned}
\end{equation}
Notice that for $m \geq 1$ and $\beta \geq - \gamma + \beta_\gamma$ in the functional $ \mathscr{E}^A_2 (g_\sigma) $,
\begin{equation}\label{X2'}
	\begin{aligned}
		\| (\delta x + l)^{- \frac{m ( 1 - \gamma ) }{ 2 (3 - \gamma) } } \nu^\frac{1}{2} w_{- \gamma + \beta_\gamma - \mathbf{1}_{m<0} m (1 - \gamma) / 2, \vartheta} g_\sigma \|_A \leq (\delta \hbar)^{ - \frac{1}{2} } \big[ \mathscr{E}^A_2 (g_\sigma) \big]^\frac{1}{2} \,.
	\end{aligned}
\end{equation}
Recalling that $\beta \geq 3 (\beta_\gamma + \frac{1}{2})$ is required in Lemma \ref{Lmm-L2xv-closed}, we take $\beta \geq \max \{ 3 (\beta_\gamma + \frac{1}{2}), \beta_\gamma - \gamma \} = 3 (\beta_\gamma + \frac{1}{2})$ for $- 3 < \gamma \leq 1$, where the last equality holds due to the definition of $\beta_\gamma$ in \eqref{beta-gamma}.

Combining with the bounds \eqref{X2}, \eqref{X2'} and \eqref{L2xv-Unf-Bnd} in Lemma \ref{Lmm-L2xv-closed}, one easily sees that
\begin{equation}\label{X3}
	\begin{aligned}
		& \mathscr{E}^A_\infty ( g_\sigma ) + \LL g_\sigma \RR_{m, \beta_*, \vartheta, \Sigma} + \mathscr{E}^A_{\mathtt{cro}} (g_\sigma) + (\delta \hbar)^{ - \frac{1}{2}  } \big[ \mathscr{E}^A_2 (g_\sigma) + \mathscr{E}_{\mathtt{NBE}}^A (g_\sigma) \big]^\frac{1}{2} \\
		\lesssim & + \mathscr{A}^A_{ \mathtt{cro} } ( h_\sigma ) + \mathscr{B}_{ \mathtt{cro} } ( \varphi_{A, \sigma} ) + \mathscr{A}_\infty^A ( h_\sigma ) + \mathscr{B}_\infty ( \varphi_{A, \sigma} ) \\
		& + \alpha_*^\frac{1}{2} (\delta \hbar)^{- 8} l^{ - \frac{3 (3 + \gamma)}{64 (3 - \gamma)} } (\delta \hbar)^{ - \frac{1}{2}  } \big[ \mathscr{E}^A_2 (g_\sigma) + \mathscr{E}_{\mathtt{NBE}}^A (g_\sigma) \big]^\frac{1}{2} \\
		& + C_l \big[ \mathscr{A}_2^A (h_\sigma) + \mathscr{A}_{\mathtt{NBE}}^A (h_\sigma) \big]^\frac{1}{2} + C_l \big[ \mathscr{B}_2 (\varphi_{A, \sigma}) + \mathscr{B}_{\mathtt{NBE}} (\varphi_{A, \sigma}) \big]^\frac{1}{2} \,.
	\end{aligned}
\end{equation}
Under the assumption $ l \geq O (1) (\delta \hbar)^{ - \frac{512 (3 - \gamma)}{3 (3 + \gamma)} }$ in Lemma \ref{Lmm-L2xv-closed} for sufficiently large $O (1) \gg 1$ independent of $A, \delta, \hbar, l$, the coefficient $\alpha_*^\frac{1}{2} (\delta \hbar)^{- 8} l^{ - \frac{3 (3 + \gamma)}{64 (3 - \gamma)} }$ is actually sufficiently small, so that the quantity $\alpha_*^\frac{1}{2} (\delta \hbar)^{- 8} l^{ - \frac{3 (3 + \gamma)}{64 (3 - \gamma)} } (\delta \hbar)^{ - \frac{1}{2}  } \big[ \mathscr{E}^A_2 (g_\sigma) + \mathscr{E}_{\mathtt{NBE}}^A (g_\sigma) \big]^\frac{1}{2}$ in the right-hand side of \eqref{X3} can be absorbed by that in the left-hand side of \eqref{X3}. Then the bound \eqref{Apriori-bnd} holds.

Moreover, by the similar arguments in \eqref{X3}, we can also easily prove the bound \eqref{Apriori-bnd-diff} for the difference ${\vartriangle} g_\sigma$. Therefore, the proof of Lemma \ref{Lmm-APE-A3} is finished.
\end{proof}


\section{Existence of linear problem \eqref{KL}: Proof of Theorem \ref{Thm-Linear}}\label{Sec:EKLe}

In this section, we mainly justify the existence and uniqueness of the linear problem \eqref{KL}. We first prove the existence result of the damped problem \eqref{KL-Damped} based on the uniform-in-$A$ $L^\infty_{x,v}$ solution to the equation \eqref{A1} with $\varphi_A = 0$ constructed in Lemma \ref{Lmm-APE-A3}. Then by characterizing the structure of the vanishing sources set $\mathbb{VSS}_{\alpha_*}$ defined in \eqref{ASS}, one can remove the artificial damping $\mathbf{D}$ in \eqref{KL-Damped}, which means the existence of the equation \eqref{KL-P0}. At the end, together with the ODE problem \eqref{P0f-eq}, the existence of the equation \eqref{KL} can be obtained, hence, proof of Theorem \ref{Thm-Linear}.


\subsection{Limits from \eqref{A1} with $\varphi_A = 0$ to \eqref{KL-Damped}}

In this section, based on Lemma \ref{Lmm-APE-A3}, we will construct the existence of the problem \eqref{KL-Damped} by taking limit $A \to + \infty$ in the equation \eqref{A1} with $\varphi_A = 0$.

For small $\hbar > 0$ given in Lemma \ref{Lmm-APE-A3}, we initially define a Banach space
\begin{equation}\label{Bh-1}
	\begin{aligned}
		\mathbb{B}_\infty^\hbar : = \{ f (x,v) ; \mathscr{E}^\infty ( e^{\hbar \sigma} f ) < \infty \}
	\end{aligned}
\end{equation}
with the norm
\begin{equation}\label{Bh-2}
	\begin{aligned}
		\| f \|_{ \mathbb{B}_\infty^\hbar } = \mathscr{E}^\infty ( e^{\hbar \sigma} f ) \,,
	\end{aligned}
\end{equation}
where the functional $\mathscr{E}^\infty (\cdot ) $ is given in \eqref{Eg-lambda} with $A = \infty$.

Now we state the existence result on the problem \eqref{KL-Damped} as follows.

\begin{lemma}\label{Lmm-KLe}
	Assume that the parameters $\{ \gamma, \alpha_*, \beta, \beta_*, \alpha, m, \delta, \hbar, \vartheta, l, \rho, T, T_w, \u, u_w \}$ satisfy the hypotheses (PH) given in Theorem \ref{Thm-Linear}. Further assume that the source term $h (x,v) $ in \eqref{KL-Damped} satisfies
	\begin{equation}\label{Asmp-h}
		\begin{aligned}
			\mathscr{A}^\infty ( e^{\hbar \sigma} h ) < \infty \,,
		\end{aligned}
	\end{equation}
    where the functional $\mathscr{A}^\infty (\cdot)$ is defined in \eqref{Ah-lambda} with $A = \infty$. Then the problem \eqref{KL-Damped} admits a unique solution $g = g (x,v) \in \mathbb{B}_\infty^\hbar$ such that
    \begin{equation}
    	\begin{aligned}
    		\| g \|_{ \mathbb{B}_\infty^\hbar } \leq C \mathscr{A}^\infty ( e^{\hbar \sigma} h )
    	\end{aligned}
    \end{equation}
    for some constant $C > 0$.
\end{lemma}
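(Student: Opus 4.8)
\textbf{Proof plan for Lemma \ref{Lmm-KLe}.}
The plan is to obtain the solution to \eqref{KL-Damped} as the limit, as $A\to+\infty$, of the solutions $g^A$ to the connection auxiliary problem \eqref{A1} with $\varphi_A=0$, and then upgrade the convergence to a genuine solution of the half-space problem. First I would apply Lemma \ref{Lmm-APE-A3} with $\varphi_{A,\sigma}=0$: under the hypothesis \eqref{Asmp-h}, i.e. $\mathscr{A}^\infty(e^{\hbar\sigma}h)<\infty$, and the fact that $\mathscr{A}^A(h_\sigma)\leq C\,\mathscr{A}^\infty(e^{\hbar\sigma}h)$ uniformly in $A$ (the weighted norms on $\Omega_A\times\R^3$ are dominated by those on $\R_+\times\R^3$), Lemma \ref{Lmm-APE-A3} produces a mild solution $g^A$ on $(0,A)\times\R^3$ of the form \eqref{A3-3} with $\mathscr{E}^A(e^{\hbar\sigma}g^A)\leq C_l\,\mathscr{A}^\infty(e^{\hbar\sigma}h)$, the constant $C_l$ being independent of $A$. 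I then extend $g^A$ to $\R_+\times\R^3$ by $\widetilde g^A(x,v)=\mathbf{1}_{x\in(0,A)}\,g^A(x,v)$; the uniform-in-$A$ bound gives $\|\widetilde g^A\|_{\mathbb{B}_\infty^\hbar}\leq C\,\mathscr{A}^\infty(e^{\hbar\sigma}h)$, so $\{\widetilde g^A\}$ is a bounded family in $\mathbb{B}_\infty^\hbar$.

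The second step is to prove that $\{\widetilde g^A\}$ is Cauchy in the weaker space $\mathbb{B}_\infty^{\hbar'}$ for any fixed $0<\hbar'<\hbar$. For $A_2>A_1\geq 1$, the difference $\widetilde g^{A_2}-\widetilde g^{A_1}$ restricted to $(0,A_1)\times\R^3$ solves a problem of the same type whose source vanishes and whose only discrepancy comes from the outer boundary data at $x=A_1$, namely the trace of $g^{A_2}$ on $\{x=A_1\}$. Because of the exponential decay encoded in $\sigma(x,v)\geq c(\delta x+l)^{2/(3-\gamma)}$ together with the uniform bound in $\mathbb{B}_\infty^\hbar$, this boundary datum at $x=A_1$ carries a factor $e^{-c(\delta A_1+l)^{2/(3-\gamma)}}$ relative to the $\mathbb{B}_\infty^{\hbar'}$ norm, i.e. $\mathscr{B}(\varphi)\lesssim e^{-(\hbar-\hbar')c(\delta A_1+l)^{2/(3-\gamma)}}\|g^{A_2}\|_{\mathbb{B}_\infty^\hbar}$. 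Feeding this into the continuity estimate \eqref{Apriori-bnd} of Lemma \ref{Lmm-APE-A3} (applied to the difference, with zero interior source and this small outer boundary term), one gets $\|\widetilde g^{A_2}-\widetilde g^{A_1}\|_{\mathbb{B}_\infty^{\hbar'}}\to 0$ as $A_1\to+\infty$. Hence $\widetilde g^A\to g$ in $\mathbb{B}_\infty^{\hbar'}$; since the family is bounded in $\mathbb{B}_\infty^\hbar$, a standard lower-semicontinuity (Fatou) argument gives $g\in\mathbb{B}_\infty^\hbar$ with $\|g\|_{\mathbb{B}_\infty^\hbar}\leq C\,\mathscr{A}^\infty(e^{\hbar\sigma}h)$.

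The third step is to check that the limit $g$ actually solves \eqref{KL-Damped}. Passing to the limit in the mild formulation \eqref{A3-3} (with $\varphi_A=0$) is legitimate because the operators $Y_A$, $Z$, $U$ of \eqref{YAn-f}--\eqref{U-f} converge, on the decaying profiles at hand, to their half-line analogues: the $Y_A$ term dies out since $g^{A}(A,\cdot)$ decays like $e^{-c(\delta A+l)^{2/(3-\gamma)}}$, and the $Z$, $U$ integrals over $(0,A)$ converge to integrals over $(0,\infty)$ by dominated convergence using the $\mathbb{B}_\infty^{\hbar'}$ convergence and the bounds in Lemma \ref{Lmm-ARU}. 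This identifies $g$ as the mild solution of the half-space damped problem, which (again by the argument of Lemma II.2 of \cite{DiPerna-Lions-1989-AM}) is equivalent to being a weak solution of \eqref{KL-Damped}; the far-field condition $\lim_{x\to+\infty}g(x,v)=0$ is automatic from $g\in\mathbb{B}_\infty^\hbar$ and $\sigma(x,v)\geq c(\delta x+l)^{2/(3-\gamma)}\to+\infty$. Uniqueness follows directly: if $g_1,g_2\in\mathbb{B}_\infty^\hbar$ both solve \eqref{KL-Damped}, their difference has zero source and zero far-field limit, so applying the difference estimate \eqref{Apriori-bnd-diff} (in the half-space limit, or equivalently approximating by \eqref{A1}) yields $\mathscr{E}^\infty(e^{\hbar\sigma}(g_1-g_2))=0$.

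\textbf{Main obstacle.} The delicate point is the Cauchy estimate in Step two: one must quantify precisely how the outer boundary trace of $g^{A_2}$ at $x=A_1$ enters the functional $\mathscr{B}(\cdot)$ and verify that the resulting constant, multiplied by $C_l$ from \eqref{Apriori-bnd}, still tends to zero — this requires trading a sliver of the weight $e^{\hbar\sigma}$ for the exponential gain $e^{-(\hbar-\hbar')c(\delta A_1+l)^{2/(3-\gamma)}}$ and checking that every piece of $\mathscr{B}$ (the $L^\infty$, cross, NBE, and $L^2$ parts) is controlled this way. Once the $\hbar'<\hbar$ bookkeeping is set up, the remaining steps are routine limiting arguments.
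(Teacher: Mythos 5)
Your proposal is correct and follows essentially the same route as the paper: uniform-in-$A$ bounds from Lemma \ref{Lmm-APE-A3} with $\varphi_A=0$, zero-extension, a Cauchy estimate in $\mathbb{B}_\infty^{\hbar'}$ obtained by trading the weight gap $\hbar-\hbar'$ for the exponential decay of the trace of $g^{A_2}$ at $x=A_1$ through every component of $\mathscr{B}(\cdot)$, passage to the limit in the mild formulation, and uniqueness via the finite-slab approximation. The only detail worth adding is that the difference $\widetilde g^{A_1}-\widetilde g^{A_2}$ also contains the tail piece $\mathbf{1}_{x\in[A_1,A_2)}g^{A_2}$, which the paper bounds by the same factor $e^{-c(\hbar-\hbar')(\delta A_1+l)^{2/(3-\gamma)}}$; this is handled by exactly the weight-trading argument you already describe.
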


\begin{proof}[Proof of Lemma \ref{Lmm-KLe}]
We mainly consider the approximate problem \eqref{A1} with $\varphi_A (v) = 0$. Then
\begin{equation}\label{Asmp-phi}
	\begin{aligned}
		\mathscr{B} ( e^{\hbar \sigma} \varphi_A ) = 0
	\end{aligned}
\end{equation}
for all $A \geq 1$, where the functional $\mathscr{B} (\cdot)$ is defined in \eqref{Ah-lambda}. The conditions \eqref{Asmp-phi} and \eqref{Asmp-h} tell us that for any $1 \leq A < \infty$,
\begin{equation}
	\begin{aligned}
		\mathscr{A}^A ( e^{\hbar \sigma} h ) + \mathscr{B} ( e^{\hbar \sigma} \varphi_A ) \leq \mathfrak{C}_\flat : = \mathscr{A}^\infty ( e^{\hbar \sigma} h ) < \infty
	\end{aligned}
\end{equation}
Then Lemma \ref{Lmm-APE-A3} indicates that the problem \eqref{A1} with $\varphi_A = 0$ admits a unique solution $g^A = g^A (x,v)$, $(x,v) \in (0, A) \times \R^3$ satisfying
\begin{equation}
	\begin{aligned}
		\mathscr{E}^A ( e^{ \hbar \sigma } g^A ) \leq C ( \mathscr{A}^A ( e^{\hbar \sigma} h ) + \mathscr{B} ( e^{\hbar \sigma} \varphi_A ) ) \leq C \mathfrak{C}_\flat \,,
	\end{aligned}
\end{equation}
where $C > 0$ is independent of $A \geq 1$ and the functional $\mathscr{E}^A (\cdot)$ is given in \eqref{Eg-lambda}.

We now extend $ g^A (x,v) $ as follows
\begin{equation}\label{Extnd-g}
	\begin{aligned}
		\tilde{g}^A (x,v) = \mathbf{1}_{x \in (0, A)} g^A (x,v) \,, (x,v) \in (0, \infty) \times \R^3 \,.
	\end{aligned}
\end{equation}
It is easy to see that $\tilde{g}^A (x,v) \in \mathbb{B}_\infty^\hbar$ with
\begin{equation}\label{gA-h-bnd}
	\begin{aligned}
		\| \tilde{g}^A \|_{ \mathbb{B}_\infty^\hbar } = \mathscr{E}^\infty ( e^{\hbar \sigma} \tilde{g}^A ) \leq C \mathfrak{C}_\flat \,.
	\end{aligned}
\end{equation}
Then there is a $g' = g' (x,v) \in \mathbb{B}_\infty^\hbar $ such that
\begin{equation}\label{Cnv-g1}
	\begin{aligned}
		\tilde{g}^A (x,v) \to g' (x,v) \quad \textrm{weakly in } \mathbb{B}_\infty^\hbar
	\end{aligned}
\end{equation}
as $A \to + \infty$ (in the sense of subsequence, if necessary). Moreover, $g' (x,v)$ obeys
\begin{equation}\label{g-prime-bnd}
	\begin{aligned}
		\| g' \|_{ \mathbb{B}_\infty^\hbar } = \mathscr{E}_1^\infty ( e^{\hbar \sigma} g' ) \leq C \mathfrak{C}_\flat \,.
	\end{aligned}
\end{equation}

For any $1 \leq A_1 < A_2 < + \infty$, let $g^{A_i} (x, v)$ $(i = 1,2)$ be the solution to the problem
\begin{equation*}{\small
	\begin{aligned}
		\left\{
		  \begin{aligned}
		  	& v_3 \partial_x g^{A_i} + \L g^{A_i} + \mathbf{D} g^{A_i} = h \,, \quad x \in (0, A_i) \,, \\
		  	& g^{A_i} (0, v) |_{v_3 > 0} = (1 - \alpha_*) g^{A_i} (0, R_0 v) + \alpha_* \tfrac{M_w (v)}{\sqrt{\M (v)}} \int_{v_3' < 0} (- v_3') g^{A_i} (0, v') \sqrt{\M (v')} \d v' \,, \\
		  	& g^{A_i} (A_i, v) |_{v_3 < 0} = 0 \,,
		  \end{aligned}
		\right.
	\end{aligned}}
\end{equation*}
constructed in Lemma \ref{Lmm-APE-A3}. Then $g^{A_i} (x, v)$ $(i = 1,2)$ subject to the estimates
\begin{equation}\label{gAi-bnd}
	\begin{aligned}
		\mathscr{E}^{A_i} ( e^{ \hbar \sigma } g^{A_i} ) \leq C \mathscr{A}^{A_i} (e^{\hbar \sigma} h ) \leq C \mathfrak{C}_\flat \,.
	\end{aligned}
\end{equation}
Let $\tilde{g}^{A_i}$ be the extension of $g^{A_i}$ as given in \eqref{Extnd-g}. Then
\begin{equation}\label{E1-bnd1}
	\begin{aligned}
		( \tilde{g}^{A_1} - \tilde{g}^{A_2} ) (x,v) = \mathbf{1}_{x \in (0, A_1)} ( g^{A_1} - g^{A_2} ) (x,v) - \mathbf{1}_{x \in [A_1, A_2) } g^{A_2} (x,v) \,.
	\end{aligned}
\end{equation}

It is easy to see that $g^{A_1} - g^{A_2}$ obeys
\begin{equation*}{\footnotesize
	\begin{aligned}
		\left\{
		  \begin{aligned}
		  	& v_3 \partial_x ( \tilde{g}^{A_1} - \tilde{g}^{A_2} ) + \L( \tilde{g}^{A_1} - \tilde{g}^{A_2} ) = 0 \,, \ 0 < x < A_1 \,, \\
		  	& ( \tilde{g}^{A_1} - \tilde{g}^{A_2} ) (0, v) |_{v_3 > 0} = (1 - \alpha_*) ( \tilde{g}^{A_1} - \tilde{g}^{A_2} ) (0, R_0 v) + \alpha_* \tfrac{M_w (v)}{\sqrt{\M (v)}} \int_{v_3' < 0} (- v_3') ( \tilde{g}^{A_1} - \tilde{g}^{A_2} ) (0, v') \sqrt{\M (v')} \d v' \,, \\
		  	& ( \tilde{g}^{A_1} - \tilde{g}^{A_2} ) (A_1, v) |_{v_3 < 0} = - g^{A_2} (A_1, v) \,.
		  \end{aligned}
		\right.
	\end{aligned}}
\end{equation*}
It thereby follows from the similar arguments in Lemma \ref{Lmm-APE-A3} that for any fixed $\hbar' \in (0, \hbar)$,
\begin{equation}\label{EA2A1-bnd1}
	\begin{aligned}
		\mathscr{E}^{A_1} ( e^{\hbar' \sigma} ( g^{A_1} - g^{A_2} ) ) \leq C \mathscr{B} ( e^{\hbar' \sigma (A_1, v) } ( - g^{A_2} (A_1, v) ) ) \leq C \mathscr{B} ( e^{\hbar' \sigma (A_1, v) } g^{A_2} (A_1, v) ) \,.
	\end{aligned}
\end{equation}

Lemma \ref{Lmm-sigma} shows that $ \sigma (A_1, v) \geq c (\delta A_1 + l)^\frac{2}{3 - \gamma} $ uniformly in $v \in \R^3$, which indicates that
\begin{equation}\label{B-1}
	\begin{aligned}
		\sup_{v \in \R^3} e^{ - (\hbar - \hbar') \sigma (A_1, v) } \leq e^{ - c (\hbar - \hbar') (\delta A_1 + l)^\frac{2}{3 - \gamma} } \,.
	\end{aligned}
\end{equation}
It will be frequently used later. In the following, we focus on controlling the quantity $\mathscr{B} ( e^{\hbar' \sigma (A_1, v) } g^{A_2} (A_1, v) )$. Recalling \eqref{Ah-lambda}, one has
\begin{equation}\label{Bbnd-0}
	\begin{aligned}
		\mathscr{B} (\cdot) = \mathscr{B}_\infty (\cdot) + \mathscr{B}_{\mathtt{cro}} (\cdot) + \big[ \mathscr{B}_{\mathtt{NBE}} (\cdot) \big]^\frac{1}{2} + \big[ \mathscr{B}_2 (\cdot) \big]^\frac{1}{2} \,.
	\end{aligned}
\end{equation}

\underline{\em Case 1. Control of $ \mathscr{B}_\infty (e^{\hbar' \sigma (A_1, v) } g^{A_2} (A_1, v)) $.}

By the definition of $\mathscr{B}_\infty (\cdot)$ in \eqref{B-def}, one has
	\begin{align}\label{Bbnd-1}
		\no & \mathscr{B}_\infty (e^{\hbar' \sigma (A_1, v) } g^{A_2} (A_1, v)) = \| e^{\hbar' \sigma (A_1, v) } \sigma_x^\frac{m}{2} (A_1, v) w_{\beta, \vartheta} (v) g^{A_2} (A_1, v) \|_{L^\infty_v} \\
		\no \leq & \sup_{v \in \R^3} e^{ - (\hbar - \hbar') \sigma (A_1, v) } \| e^{\hbar \sigma (A_1, v) } \sigma_x^\frac{m}{2} (A_1, v) w_{\beta, \vartheta} (v) g^{A_2} (A_1, v) \|_{L^\infty_v} \\
		\leq & e^{ - c (\hbar - \hbar') (\delta A_1 + l)^\frac{2}{3 - \gamma} } \LL e^{\hbar \sigma} g^{A_2} \RR_{A_2; m, \beta, \vartheta} = e^{ - c (\hbar - \hbar') (\delta A_1 + l)^\frac{2}{3 - \gamma} } \mathscr{E}_\infty^{A_2} ( e^{\hbar \sigma} g^{A_2} ) \,,
	\end{align}
where the inequality \eqref{B-1} is utilized, and $\mathscr{E}_\infty^{A_2} (\cdot)$ is defined in \eqref{E-infty}.

\underline{\em Case 2. Control of $ \big[ \mathscr{B}_2 (e^{\hbar' \sigma (A_1, v) } g^{A_2} (A_1, v)) \big]^\frac{1}{2} $.}

By the definition of $\mathscr{B}_2 (\cdot)$ in \eqref{B-def}, one has
{\small
	\begin{align*}
		& \big[ \mathscr{B}_2 (e^{\hbar' \sigma (A_1, v) } g^{A_2} (A_1, v)) \big]^\frac{1}{2} \\
		= & \| |v_3|^\frac{1}{2} w_{\beta, \vartheta} e^{\hbar' \sigma (A_1, v) } g^{A_2} (A_1, v) \|_{L^2_{\Sigma_-^{A_1}}} =  \Big( \int_{v_3 > 0} |v_3| w_{\beta, \vartheta}^2 e^{2 \hbar' \sigma (A_1, v) } | g^{A_2} (A_1, v) |^2 \d v \Big)^\frac{1}{2} \\
		= & \Big( \int_{v_3 > 0} \Phi (A_1, v) | z_{\alpha'} (v) \sigma_x^\frac{m}{2} (A_1, v) w_{- \gamma, \vartheta} (v) e^{ \hbar \sigma (A_1, v) } g^{A_2} (A_1, v) |^2 \d v \Big)^\frac{1}{2} \,,
	\end{align*}}
where $m \geq 1$, $\alpha' \in (\tfrac{1}{2} - \mu_\gamma, \frac{1}{2})$ is given in Lemma \ref{Lmm-Y-bnd} below, and
\begin{equation*}
	\begin{aligned}
		\Phi (A_1, v) = |v_3| z_{- \alpha'}^2 (v) (1 + |v|)^{2 \beta + 2 \gamma} \sigma_x^{- m} (A_1, v) e^{ - 2 (\hbar - \hbar') \sigma (A_1, v) } \,.
	\end{aligned}
\end{equation*}
Lemma \ref{Lmm-sigma} implies that for $m \geq 1$,
\begin{equation}\label{B-2}
	\begin{aligned}
		\sigma_x^{- m} (A_1, v) \leq C (\delta A_1 + l)^{ \frac{m (1 - \gamma)}{3 - \gamma} } (1 + |v - \u|)^{ m (1 - \gamma) } \,.
	\end{aligned}
\end{equation}
It is also easy to verify that $ \sigma (A_1, v) \geq c' (1 + |v - \u|^2) $ for some constant $c' > 0$, which means that
\begin{equation}\label{B-3}
	\begin{aligned}
		e^{ - (\hbar- \hbar') \sigma (A_1, v) } \leq e^{ - c' (\hbar - \hbar') } e^{ - c' (\hbar - \hbar') |v - \u|^2 } \leq e^{ - c' (\hbar - \hbar') |v - \u|^2 } \,.
	\end{aligned}
\end{equation}
Then the bounds \eqref{B-1}, \eqref{B-2} and \eqref{B-3} show that
\begin{equation*}{\small
	\begin{aligned}
		\Phi (A_1, v) \leq & C |v_3| z_{- \alpha'}^2 (v) (1 + |v|)^{2 \beta + 2 \gamma + m (1 - \gamma)} e^{ - c' (\hbar - \hbar') |v - \u|^2 } (\delta A_1 + l)^{\frac{m (1 - \gamma)}{3 - \gamma} } e^{ - c (\hbar - \hbar') (\delta A_1 + l)^\frac{2}{3 - \gamma} } \\
		\leq & C e^{ - \frac{c}{2} (\hbar - \hbar') (\delta A_1 + l)^\frac{2}{3 - \gamma} }
	\end{aligned}}
\end{equation*}
uniformly in $v \in \R^3$. It therefore follows that
\begin{equation*}
	\begin{aligned}
		& \big[ \mathscr{B}_2 (e^{\hbar' \sigma (A_1, v) } g^{A_2} (A_1, v)) \big]^\frac{1}{2} \\
		\leq & C e^{ - \frac{c}{4} (\hbar - \hbar') (\delta A_1 + l)^\frac{2}{3 - \gamma} } \Big( \int_{v_3 > 0} | z_{\alpha'} (v) \sigma_x^\frac{m}{2} (A_1, v) w_{- \gamma, \vartheta} (v) e^{ \hbar \sigma (A_1, v) } g^{A_2} (A_1, v) |^2 \d v \Big)^\frac{1}{2} \\
		\leq & C e^{ - \frac{c}{4} (\hbar - \hbar') (\delta A_1 + l)^\frac{2}{3 - \gamma} } \| z_{\alpha'} \sigma_x^\frac{m}{2} w_{- \gamma, \vartheta} e^{\hbar \sigma} g^{A_2} \|_{L^\infty_x L^2_v} \,.
	\end{aligned}
\end{equation*}
From Lemma \ref{Lmm-Linfty-L2} below, it is derived that
\begin{equation*}
	\begin{aligned}
		& \| z_{\alpha'} \sigma_x^\frac{m}{2} w_{- \gamma, \vartheta} e^{\hbar \sigma} g^{A_2} \|_{L^\infty_x L^2_v} \\
		\leq & C \| \nu^{- \frac{1}{2}} z_{- \alpha} \sigma_x^\frac{m}{2} z_1 w_{- \gamma + \beta_\gamma, \vartheta} \partial_x ( e^{\hbar \sigma} g^{A_2} ) \|_{A_2} + C \| \nu^{ \frac{1}{2}} z_{- \alpha} \sigma_x^\frac{m}{2} w_{- \gamma + \beta_\gamma, \vartheta} e^{\hbar \sigma} g^{A_2} \|_{A_2} \\
		= & C \mathscr{E}_{\mathtt{cro}}^{A_2} ( e^{\hbar \sigma} g^{A_2} ) \,,
	\end{aligned}
\end{equation*}
where the functional $\mathscr{E}_{\mathtt{cro}}^{A_2} ( \cdot ) $ is defined in \eqref{E-cro}. Then there hold
\begin{equation}\label{Bbnd-2}
	\begin{aligned}
		\big[ \mathscr{B}_2 (e^{\hbar' \sigma (A_1, v) } g^{A_2} (A_1, v)) \big]^\frac{1}{2} \leq C e^{ - \frac{c}{4} (\hbar - \hbar') (\delta A_1 + l)^\frac{2}{3 - \gamma} } \mathscr{E}_{\mathtt{cro}}^{A_2} ( e^{\hbar \sigma} g^{A_2} ) \,.
	\end{aligned}
\end{equation}

\underline{\em Case 3. Control of $ \mathscr{B}_{\mathtt{cro}} (e^{\hbar' \sigma (A_1, v) } g^{A_2} (A_1, v)) + \big[ \mathscr{B}_{\mathtt{NBE}} (e^{\hbar' \sigma (A_1, v) } g^{A_2} (A_1, v)) \big]^\frac{1}{2} $.}

By employing the similar arguments of controlling the quantity $\big[ \mathscr{B}_2 (e^{\hbar' \sigma (A_1, v) } g^{A_2} (A_1, v)) \big]^\frac{1}{2}$, one obtains
\begin{equation}\label{Bbnd-3}
	\begin{aligned}
		& \mathscr{B}_{\mathtt{cro}} (e^{\hbar' \sigma (A_1, v) } g^{A_2} (A_1, v)) + \big[ \mathscr{B}_{\mathtt{NBE}} (e^{\hbar' \sigma (A_1, v) } g^{A_2} (A_1, v)) \big]^\frac{1}{2} \\
		\leq & C e^{ - \frac{c}{4} (\hbar - \hbar') (\delta A_1 + l)^\frac{2}{3 - \gamma} } \| z_{\alpha'} \sigma_x^\frac{m}{2} w_{- \gamma, \vartheta} e^{\hbar \sigma} g^{A_2} \|_{L^\infty_x L^2_v} \\
		\leq & C e^{ - \frac{c}{4} (\hbar - \hbar') (\delta A_1 + l)^\frac{2}{3 - \gamma} } \mathscr{E}_{\mathtt{cro}}^{A_2} ( e^{\hbar \sigma} g^{A_2} ) \,.
	\end{aligned}
\end{equation}

As a consequence, the relations \eqref{Bbnd-0}, \eqref{Bbnd-1}, \eqref{Bbnd-2} and \eqref{Bbnd-3} reduce to
\begin{equation}\label{EA1A2-bnd3}
	\begin{aligned}
		\mathscr{B} ( e^{\hbar' \sigma (A_1, v) } g^{A_2} ( A_1, v) ) \leq C e^{ - \frac{c}{4} (\hbar - \hbar') (\delta A_1 + l)^\frac{2}{3 - \gamma} } ( \mathscr{E}_\infty^{A_2} ( e^{\hbar \sigma} g^{A_2} ) + \mathscr{E}_{\mathtt{cro}}^{A_2} ( e^{\hbar \sigma} g^{A_2} ) ) \\
		\leq C e^{ - \frac{c}{4} (\hbar - \hbar') (\delta A_1 + l)^\frac{2}{3 - \gamma} } \mathscr{E}^{A_2} ( e^{\hbar \sigma} g^{A_2} ) \leq C \mathfrak{C}_\flat e^{ - \frac{c}{4} (\hbar - \hbar') (\delta A_1 + l)^\frac{2}{3 - \gamma} } \,,
	\end{aligned}
\end{equation}
where the last inequality is derived from the bound \eqref{gAi-bnd}. Collecting the estimates \eqref{EA2A1-bnd1} and \eqref{EA1A2-bnd3}, one gains
\begin{equation}\label{E1-bnd2}
	\begin{aligned}
		\mathscr{E}^\infty ( \mathbf{1}_{x \in (0, A_1)} e^{\hbar' \sigma} ( g^{A_1} - g^{A_2} ) ) = \mathscr{E}^{A_1} ( e^{\hbar' \sigma} ( g^{A_1} - g^{A_2} ) ) \leq C \mathfrak{C}_\flat e^{ - \frac{c}{4} (\hbar - \hbar') (\delta A_1 + l)^\frac{2}{3 - \gamma} }
	\end{aligned}
\end{equation}
for any $\hbar' \in (0, \hbar)$.

Note that
\begin{equation*}
	\begin{aligned}
		& \mathscr{E}^\infty ( \mathbf{1}_{x \in [ A_1, A_2 ) } e^{\hbar' \sigma} g^{A_2} ) = \mathscr{E}^{A_2} ( \mathbf{1}_{x \in [ A_1, A_2 ) } e^{\hbar' \sigma} g^{A_2} ) \\
		\leq & \mathscr{E}^{A_2} ( e^{\hbar \sigma} g^{A_2} ) \sup_{x > 0, v \in \R^3} \mathbf{1}_{x \in [ A_1, A_2 ) } e^{ - (\hbar - \hbar') \sigma (x,v) } \leq C \mathfrak{C}_\flat \sup_{x > 0, v \in \R^3} \mathbf{1}_{x \in [ A_1, A_2 ) } e^{ - (\hbar - \hbar') \sigma (x,v) } \,,
	\end{aligned}
\end{equation*}
where the last inequality is deduced from \eqref{gAi-bnd}. Since $\sigma (x,v) \geq c (\delta x + l)^\frac{2}{3 - \gamma}$ by Lemma \ref{Lmm-sigma}, one has $ \mathbf{1}_{x \in [ A_1, A_2 ) } e^{ - (\hbar - \hbar') \sigma (x,v) } \leq \mathbf{1}_{x \in [ A_1, A_2 ) } e^{ - c (\hbar - \hbar') (\delta x + l)^\frac{2}{3 - \gamma} } \leq e^{ - c (\hbar - \hbar') (\delta A_1 + l)^\frac{2}{3 - \gamma} } $ uniformly in $x$ and $v$. Then
\begin{equation}\label{E1-bnd3}
	\begin{aligned}
		\mathscr{E}^\infty ( \mathbf{1}_{x \in [ A_1, A_2 ) } e^{\hbar' \sigma} g^{A_2} ) \leq C \mathfrak{C}_\flat e^{ - c (\hbar - \hbar') (\delta A_1 + l)^\frac{2}{3 - \gamma} } \,.
	\end{aligned}
\end{equation}
Therefore, the decomposition \eqref{E1-bnd1} and the estimates \eqref{E1-bnd2}-\eqref{E1-bnd3} imply that for any fixed $1 \leq A_1 < A_2 < \infty$ and $\hbar' \in (0, \hbar)$,
\begin{equation}\label{E-bnd1}
	\begin{aligned}
		\mathscr{E}^\infty ( e^{\hbar' \sigma} ( \tilde{g}^{A_1} - \tilde{g}^{A_2} ) ) \leq & \mathscr{E}^\infty ( \mathbf{1}_{x \in (0, A_1)} e^{\hbar' \sigma} ( g^{A_1} - g^{A_2} ) ) + \mathscr{E}^\infty ( \mathbf{1}_{x \in [ A_1, A_2 ) } e^{\hbar' \sigma} g^{A_2} ) \\
		\leq & C \mathfrak{C}_\flat e^{ - \frac{c}{4} (\hbar - \hbar') (\delta A_1 + l)^\frac{2}{3 - \gamma} } \to 0
	\end{aligned}
\end{equation}
as $A_1 \to + \infty$. Moreover, by \eqref{gA-h-bnd},
\begin{equation}\label{E-bnd2}
	\begin{aligned}
		\mathscr{E}^\infty ( e^{\hbar' \sigma} \tilde{g}^A ) \leq \| \tilde{g}^A \|_{ \mathbb{B}_\infty^\hbar } = \mathscr{E}^\infty ( e^{\hbar \sigma} \tilde{g}^A ) \leq C \mathfrak{C}_\flat
	\end{aligned}
\end{equation}
for any $\hbar' \in (0, \hbar)$.

Denote by $\mathbb{B}_\infty^{\hbar'}$ be a Banach space defined as the same way of $\mathbb{B}_\infty^\hbar$ in \eqref{Bh-1}-\eqref{Bh-2}. It is easy to see that $\mathbb{B}_\infty^\hbar \subseteq \mathbb{B}_\infty^{\hbar'}$, i.e., $\| g \|_{ \mathbb{B}_\infty^{\hbar'} } \leq \| g \|_{ \mathbb{B}_\infty^\hbar }$. Then the estimates \eqref{E-bnd1} and \eqref{E-bnd2} tell us that $\{ \tilde{g}^A \}_{A > 1}$ is a bounded Cauchy sequence in $\mathbb{B}_\infty^{\hbar'}$. As a result, there is a unique $g = g (x,v) \in \mathbb{B}_\infty^{\hbar'}$ such that
\begin{equation}\label{Cnv-g2}
	\begin{aligned}
		\tilde{g}^A (x,v) \to g (x,v) \quad \textrm{strongly in } \mathbb{B}_\infty^{\hbar'}
	\end{aligned}
\end{equation}
as $A \to + \infty$. Combining with the convergences \eqref{Cnv-g1} and \eqref{Cnv-g2}, the uniqueness of the limit shows that $ g (x,v) = g' (x,v) $. Moreover, the bound \eqref{g-prime-bnd} infers that $g (x,v)$ satisfies the bound
\begin{equation}\label{g-bnd}
	\begin{aligned}
		\| g \|_{\mathbb{B}_\infty^\hbar} \leq C \mathfrak{C}_\flat = C \mathscr{A}^\infty ( e^{\hbar \sigma} h ) \,.
	\end{aligned}
\end{equation}
Taking limit $A \to + \infty$ in the mild solution form of \eqref{A1} with $\varphi_A = 0$, i.e.,
\begin{equation*}
	\begin{aligned}
		& v_3 \partial_x \tilde{g}^A + \L \tilde{g}^A + \mathbf{D} \tilde{g}^A = h \,, \ 0 < x < A \,, \\
		& \tilde{g}^A (0, v) |_{v_3 > 0} = (1 - \alpha_*) \tilde{g}^A (0, R_0 v) + \alpha_* \tfrac{M_w (v)}{\sqrt{\M (v)}} \int_{v_3' < 0} (- v_3') \tilde{g}^A (0, v') \sqrt{\M (v')} \d v' \,,
	\end{aligned}
\end{equation*}
we easily know that the limit $g (x,v)$ of $\tilde{g}^A (x,v)$ subjects to
\begin{equation}\label{g-equ}
	\begin{aligned}
		& v_3 \partial_x g + \L g + \mathbf{D} g = h \,, x > 0 \,, \\
		& g (0, v) |_{v_3 > 0} = (1 - \alpha_*) g (0, R_0 v) + \alpha_* \tfrac{M_w (v)}{\sqrt{\M (v)}} \int_{v_3' < 0} (- v_3') g (0, v') \sqrt{\M (v')} \d v'
	\end{aligned}
\end{equation}
in the sense of mild solution. Furthermore, the bounds \eqref{B-2} and \eqref{g-bnd} imply
\begin{equation}\label{g-point-bnd}
	\begin{aligned}
		| g (x, v) | \leq & e^{- \hbar \sigma (x,v)} \sigma_x^{- \frac{m}{2}} (x, v) w^{-1}_{\beta, \vartheta} (v) \| g \|_{ \mathbb{B}_\infty^\hbar } \\
		\leq & C \mathfrak{C}_\flat e^{- c \hbar (\delta x + l)^\frac{2}{3 - \gamma} } \cdot C (\delta x + l)^{ \frac{m (1 - \gamma)}{2( 3 - \gamma )} } (1 + |v - \u|)^{- m (1 - \gamma) } w^{-1}_{\beta, \vartheta} (v) \\
		\leq & C' \mathfrak{C}_\flat e^{- \frac{c}{2} \hbar (\delta x + l)^\frac{2}{3 - \gamma} } \to 0
	\end{aligned}
\end{equation}
as $x \to + \infty$. Hence, $ \lim_{x \to + \infty} g (x,v) = 0 $. Therefore, the function $g (x,v)$ solves the problem \eqref{KL-Damped}.

At the end, we justify the uniqueness. Assume that $g_i (x,v)$ $(i = 1,2)$ are both the solutions to the problem \eqref{KL-Damped} enjoying the bound \eqref{g-bnd}. Then, for any fixed $A \geq 1$, the difference $g_1 - g_2$ obeys
\begin{equation*}{\small
	\begin{aligned}
		\left\{
		  \begin{aligned}
		  	& v_3 \partial_x (g_1 - g_2) + \L (g_1 - g_2) + \mathbf{D} (g_1 - g_2) = 0 \,, \ 0 < x < A \,, \\
		  	& (g_1 - g_2) (0, v) |_{v_3 > 0} = (1 - \alpha_*) (g_1 - g_2) (0, R_0 v) + \alpha_* \tfrac{M_w (v)}{\sqrt{\M (v)}} \int_{v_3' < 0} (- v_3') (g_1 - g_2) (0, v') \sqrt{\M (v')} \d v' \,, \\
		  	& (g_1 - g_2) (A, v) |_{v_3 < 0} = \phi_A (v) : = \mathbf{1}_{v_3 < 0} (g_1 - g_2) (A, v) \,.
		  \end{aligned}
		\right.
	\end{aligned}}
\end{equation*}
Following the similar arguments in Lemma \ref{Lmm-APE-A3}, one knows that
\begin{equation*}
	\begin{aligned}
		\mathscr{E}^A ( e^{ \hbar' \sigma } (g_1 - g_2) ) \leq C \mathscr{B} ( e^{\hbar' \sigma (A, v)} \phi_A (v) ) = C \mathscr{B} ( e^{\hbar' \sigma (A, v)} \mathbf{1}_{v_3 < 0} (g_1 - g_2) (A, v) )
	\end{aligned}
\end{equation*}
for any fixed $\hbar' \in (0, \hbar)$. By employing the same arguments of \eqref{EA1A2-bnd3}, one has
\begin{equation*}{\small
	\begin{aligned}
		\mathscr{B} ( e^{\hbar' \sigma (A, v)} \mathbf{1}_{v_3 < 0} (g_1 - g_2) (A, v) ) \leq & C e^{ - \frac{c}{4} (\hbar - \hbar') (\delta A + l)^\frac{2}{3 - \gamma} } \sum_{i=1,2} \mathscr{E}_1^A ( e^{\hbar \sigma} g_i ) \\
		\leq & C \mathscr{A}^\infty (e^{\hbar \sigma} h) e^{ - \frac{c}{4} (\hbar - \hbar') (\delta A + l)^\frac{2}{3 - \gamma} } \,,
	\end{aligned}}
\end{equation*}
where the last inequality is derived from \eqref{g-bnd}. As a result,
\begin{equation*}{\small
	\begin{aligned}
		\mathscr{E}^\infty ( e^{ \hbar' \sigma } (g_1 - g_2) ) = \lim_{A \to + \infty} \mathscr{E}^A ( e^{ \hbar' \sigma } (g_1 - g_2) ) \leq \lim_{A \to + \infty} \big[ C \mathscr{A}^\infty (e^{\hbar \sigma} h) e^{ - \frac{c}{4} (\hbar - \hbar') (\delta A + l)^\frac{2}{3 - \gamma} } \big] = 0 \,,
	\end{aligned}}
\end{equation*}
which means that $g_1 = g_2$. Therefore, the uniqueness holds. The proof of Lemma \ref{Lmm-KLe} is finished.
\end{proof}


\subsection{Existence and uniqueness of \eqref{KL}: Proof of Theorem \ref{Thm-Linear}}

In this subsection, we mainly prove the existence and uniqueness of the linear problem \eqref{KL}, i.e., justify Theorem \ref{Thm-Linear}. As stated in Subsection \ref{Subsec:OESKL} before, the problem \eqref{KL} can be decomposed as the problems \eqref{P0f-eq} and \eqref{KL-P0}. Because \eqref{P0f-eq} admits a unique explicit solution \eqref{P0f-sol}, we only need to prove the existence and uniqueness of the problem \eqref{KL-P0} by applying the solution to \eqref{KL-Damped} constructed in Lemma \ref{Lmm-KLe}. More precisely, by the equivalence between the problems \eqref{KL-Damped} and \eqref{KL-Damped-f} stated in Subsection \ref{Subsec:OESKL} before, Lemma \ref{Lmm-KLe} actually implies the existence and uniqueness of the equation \eqref{KL-Damped-f}. Then by equivalently characterizing the structure of the vanishing sources set $\mathbb{VSS}_{\alpha_*}$ defined in \eqref{ASS}, one can remove the artificial damping $\mathbf{D} (\I - \P^0) f$ in \eqref{KL-Damped-f}, so that the problem \eqref{KL-Damped-f} can be uniquely solved. Here the operator $\P^0$ is given in \eqref{P0}.

\begin{proof}[Proof of Theorem \ref{Thm-Linear}]
	Lemma \ref{Lmm-KLe} implies that the problem \eqref{KL-Damped} admits a unique mild solution $g (x,v)$ with
	\begin{equation*}
		\begin{aligned}
			\mathscr{E}^\infty ( e^{ \hbar \sigma } g ) \lesssim \mathscr{A}^\infty ( e^{\hbar \sigma} h ) \,.
		\end{aligned}
	\end{equation*}
	As the formal illustrations in Subsection \ref{Subsec:OESKL}, the problems \eqref{KL-Damped} and \eqref{KL-Damped-f} are equivalent under the relations \eqref{gh-def}, i.e.,
	\begin{equation*}
		\begin{aligned}
			g (x,v) = & f_* (x,v) - \Upsilon (x) \widetilde{f}_b (v) \,, \\
			h (x,v) = & ( \mathbb{I} - \mathbb{P} ) S(x,v) - v_3 \partial_x \Upsilon (x) \widetilde{f}_b (v) - \Upsilon (x) ( \L + \mathbf{D} ) \widetilde{f}_b (v) \,.
		\end{aligned}
	\end{equation*}
	Then the problem \eqref{KL-Damped-f} admits a unique solution $f_* (x,v)$ enjoying
{\footnotesize
		\begin{align*}
			\mathscr{E}^\infty ( e^{\hbar \sigma} & f_* ) \lesssim \mathscr{E}^\infty ( e^{\hbar \sigma} g ) + \mathscr{E}^\infty ( e^{\hbar \sigma} \Upsilon (x) \widetilde{f}_b (v) ) \lesssim \mathscr{A}^\infty ( e^{\hbar \sigma} h ) + \mathscr{E}^\infty ( e^{\hbar \sigma} \Upsilon (x) \widetilde{f}_b (v) ) \\
			\lesssim & \mathscr{A}^\infty ( e^{\hbar \sigma} (\mathbb{I} - \mathbb{P}) S ) + \mathscr{A}^\infty ( e^{\hbar \sigma} v_3 \Upsilon' (x) \widetilde{f}_b (v) ) + \mathscr{A}^\infty ( e^{\hbar \sigma} \Upsilon (x) ( \L + \mathbf{D} ) \widetilde{f}_b (v) ) + \mathscr{E}^\infty ( e^{\hbar \sigma} \Upsilon (x) \widetilde{f}_b (v) ) \,.
		\end{align*}}
	By using the properties of $K$ and $\mathbf{D}$ in Subsection \ref{Subsec:K} and Subsection \ref{Subsec:D}, respectively, one can obtain that
	\begin{equation*}
		\begin{aligned}
			\mathscr{A}^\infty ( e^{\hbar \sigma} \Upsilon (x) ( \L + \mathbf{D} ) \widetilde{f}_b (v) ) \lesssim \mathscr{A}^\infty ( e^{\hbar \sigma} \Upsilon (x) \nu \widetilde{f}_b (v) ) \,.
		\end{aligned}
	\end{equation*}
	Note that $\Upsilon (x) = \Upsilon' (x) = 0$ for $ x \geq 2 $. Then one easily has
	\begin{equation*}
		\begin{aligned}
			e^{\hbar \sigma} \Upsilon (x) + e^{\hbar \sigma} \Upsilon' (x) \lesssim e^{3 \hbar |v - \u|^2} \,.
		\end{aligned}
	\end{equation*}
	As a result, together with the definitions of $\mathscr{A}^{\infty} (\cdot )$ and $\mathscr{E}^\infty (\cdot)$ in \eqref{Ah-lambda} and \eqref{Eg-lambda}, respectively, one derives that
	\begin{equation*}{\small
		\begin{aligned}
			& \mathscr{A}^\infty ( e^{\hbar \sigma} v_3 \Upsilon' (x) \widetilde{f}_b (v) ) + \mathscr{A}^\infty ( e^{\hbar \sigma} v_3 \Upsilon' (x) \widetilde{f}_b (v) ) \\
			& + \mathscr{A}^\infty ( e^{\hbar \sigma} \Upsilon (x) \nu \widetilde{f}_b (v) ) + \mathscr{E}^\infty ( e^{\hbar \sigma} \Upsilon (x) \widetilde{f}_b (v) ) \\
			\lesssim & \| \widetilde{f}_b \|_{\mathfrak{N}} + \sum_{ \mathbf{c} \in \{ 0, \beta_\gamma + \frac{1}{2}, 2 \beta_\gamma + 1 \} } \big( \| \P w_{1 + \mathbf{c}, 3 \hbar} \widetilde{f}_b \|_{L^2_v} + \| \nu^\frac{1}{2} \P^\perp w_{1 + \mathbf{c}, 3 \hbar} \widetilde{f}_b \|_{L^2_v} \big) \,.
		\end{aligned}}
	\end{equation*}
	It is easy to see that for $\beta \geq 3 (\beta_\gamma + \frac{1}{2})$,
	\begin{equation*}{\small
		\begin{aligned}
			\sum_{ \mathbf{c} \in \{ 0, \beta_\gamma + \frac{1}{2}, 2 \beta_\gamma + 1 \} } \big( \| \P w_{1 + \mathbf{c}, 3 \hbar} \widetilde{f}_b \|_{L^2_v} + \| \nu^\frac{1}{2} \P^\perp w_{1 + \mathbf{c}, 3 \hbar} \widetilde{f}_b \|_{L^2_v} \big) \lesssim \| \widetilde{f}_b \|_{\mathfrak{N}} \,,
		\end{aligned}}
	\end{equation*}
	where the norm $\| \widetilde{f}_b \|_{\mathfrak{N}}$ is defined in \eqref{fb-N}. Collecting the above estimates, one knows that the mild solution $f_*$ to the problem \eqref{KL-Damped-f} enjoys the bound
	\begin{equation}\label{f*-bnd}
		\begin{aligned}
			\mathscr{E}^\infty ( e^{\hbar \sigma} f_* ) \lesssim \mathscr{A}^\infty ( e^{\hbar \sigma} (\mathbb{I} - \mathbb{P}) S ) + \| \widetilde{f}_b \|_{\mathfrak{N}} \,.
		\end{aligned}
	\end{equation}

Define
	\begin{equation}\label{f}
		\begin{aligned}
			f (x, v) = \P^0 f (x,v) + f_* (x,v)
		\end{aligned}
	\end{equation}
where $\P^0 f (x,v)$ is given in \eqref{P0f-sol}. Then $f (x,v)$ is the unique solution to the damped problem \eqref{dd}. By the explicit expression of $\P^0 f$ in \eqref{P0f-sol}, one has
	\begin{equation}
		\begin{aligned}
			e^{ \hbar \sigma (x,v) } \P^0 f (x,v) = - e^{ \hbar \sigma (x,v) } \int_x^{+ \infty} e^{ - \hbar \sigma (x', v) } \tfrac{1}{v_3} e^{ \hbar \sigma (x', v) } \mathbb{P} S (x', v) \d x' \,.
		\end{aligned}
	\end{equation}
	If $(x,v)$ satisfies $\delta x + l \leq 2 (1 + |v - \u|)^{3 - \gamma}$, by the definition of $\sigma (x,v)$ in \eqref{sigma}, the weight $e^{\hbar \sigma (x,v)}$ can be bounded by $C e^{c \hbar |v - \u|^2}$. Due to the smallness of $\hbar > 0$, the weight $e^{\hbar \sigma (x,v)}$ can be absorbed by the exponential decay factors on $v$ variable contained in $ \tfrac{1}{v_3} \mathbb{P} S (x', v) $. Then
	\begin{equation*}{\small
		\begin{aligned}
			| \mathbf{1}_{ \delta x + l \leq 2 (1 + |v - \u|)^{3 - \gamma} } e^{ \hbar \sigma (x,v) } \P^0 f (x,v) | \lesssim \int_x^{+ \infty} e^{- c (\delta x' + l)^\frac{2}{3 - \gamma}} \d x' \LL e^{\hbar \sigma} S \RR_{\infty; m, \beta_*, \vartheta} \lesssim \LL e^{\hbar \sigma} S \RR_{\infty; m, \beta_*, \vartheta}
		\end{aligned}}
	\end{equation*}
	If $\delta x + l \geq 2 (1 + |v - \u|)^{3 - \gamma}$, one has $e^{\hbar \sigma (x,v)} = e^{ 5 (\delta x + l)^\frac{2}{3 - \gamma} }$ and $e^{\hbar \sigma (x',v)} = e^{ 5 (\delta x' + l)^\frac{2}{3 - \gamma} }$ for $x' \geq x$. However, if we still dominate it by the quantity $\LL e^{\hbar \sigma} S \RR_{\infty; m, \beta_*, \vartheta}$, one then has
	\begin{equation*}
		\begin{aligned}
			| \mathbf{1}_{ \delta x + l \geq 2 (1 + |v - \u|)^{3 - \gamma} } e^{ \hbar \sigma (x,v) } \P^0 f (x,v) | \lesssim e^{ 5 (\delta x + l)^\frac{2}{3 - \gamma}} \int_x^{+ \infty} e^{- 5 (\delta x' + l)^\frac{2}{3 - \gamma}} \d x' \LL e^{\hbar \sigma} S \RR_{\infty; m, \beta_*, \vartheta} \,.
		\end{aligned}
	\end{equation*}
	A direct analysis implies $\lim_{x \to + \infty} \tfrac{ e^{ 5 (\delta x + l)^\frac{2}{3 - \gamma}} \int_x^{+ \infty} e^{- 5 (\delta x' + l)^\frac{2}{3 - \gamma}} \d x' }{ \frac{3 - \gamma}{10 \delta} (\delta x + l)^\frac{1 - \gamma}{3 - \gamma} } = 1$, which means that the function $e^{ 5 (\delta x + l)^\frac{2}{3 - \gamma}} \int_x^{+ \infty} e^{- 5 (\delta x' + l)^\frac{2}{3 - \gamma}} \d x'$ grow with the rate $ \frac{3 - \gamma}{10 \delta} (\delta x + l)^\frac{1 - \gamma}{3 - \gamma} $ as $x \to + \infty$. In other words, the quantity $| \mathbf{1}_{ \delta x + l \geq 2 (1 + |v - \u|)^{3 - \gamma} } e^{ \hbar \sigma (x,v) } \P^0 f (x,v) |$ can only be bounded by
\begin{equation*}
    \begin{aligned}
        | \mathbf{1}_{ \delta x + l \geq 2 (1 + |v - \u|)^{3 - \gamma} } e^{ \hbar \sigma (x,v) } \P^0 f (x,v) | \lesssim \LL (\delta x + l)^\frac{1 - \gamma}{3 - \gamma} e^{\hbar \sigma} \mathbb{P} S \RR_{\infty; m, \beta_*, \vartheta} \,.
    \end{aligned}
\end{equation*}
In summary, one has
	\begin{equation*}
		\begin{aligned}
			\mathscr{E}^\infty_\infty ( e^{ \hbar \sigma } \P^0 f ) = \mathscr{E}^\infty_\infty ( e^{ \hbar \sigma } \int_x^{+ \infty} \tfrac{1}{v_3} \mathbb{P} S (x', v) \d x' ) \lesssim \LL (\delta x + l)^\frac{1 - \gamma}{3 - \gamma} e^{\hbar \sigma} \mathbb{P} S \RR_{\infty; m, \beta_*, \vartheta} \,.
		\end{aligned}
	\end{equation*}
From the similar arguments above, one can also derive that
\begin{equation*}
    \begin{aligned}
\mathscr{E}_X^\infty (e^{ \hbar \sigma } \P^0 f) \lesssim \mathscr{E}_X^\infty ((\delta x + l)^\frac{1 - \gamma}{3 - \gamma} e^{\hbar \sigma} S)
    \end{aligned}
\end{equation*}
for $X = \mathtt{cro}, \mathtt{NBE}$ and $2$. Recalling the definition of $\mathscr{E}^\infty (\cdot)$ in \eqref{Eg-lambda}, one gains
\begin{equation}\label{P0f-bnd}
    \begin{aligned}
        \mathscr{E}^\infty (e^{ \hbar \sigma } \P^0 f) \lesssim \mathscr{E}^\infty ((\delta x + l)^\frac{1 - \gamma}{3 - \gamma} e^{\hbar \sigma} S) \,.
    \end{aligned}
\end{equation}
Moreover, together with the similar arguments in \eqref{P0f-bnd}, it also can be derived from the definition of $\widetilde{f}_b$ in \eqref{fb-tilde} that
	\begin{equation}\label{fb-tilde-bnd}
		\begin{aligned}
			\| \widetilde{f}_b \|_{\mathfrak{N}} \leq \| f_b \|_{\mathfrak{N}} + \mathscr{E}^\infty ( (\delta x + l)^\frac{1 - \gamma}{3 - \gamma} e^{\hbar \sigma} S) \,.
		\end{aligned}
	\end{equation}
Note that by the definition of $\mathscr{A}^\infty ( e^{ \hbar \sigma } (\mathbb{I} - \mathbb{P}) S )$ in \eqref{Ah-lambda},
\begin{equation}\label{A-E-bnd}
    \begin{aligned}
        \mathscr{A}^\infty ( e^{ \hbar \sigma } (\mathbb{I} - \mathbb{P}) S ) \lesssim \mathscr{E}^\infty ( e^{ \hbar \sigma } (\mathbb{I} - \mathbb{P}) S ) \lesssim \mathscr{E}^\infty ( (\delta x + l)^\frac{1 - \gamma}{3 - \gamma} e^{\hbar \sigma} S) \,.
    \end{aligned}
\end{equation}
	Therefore, the relations \eqref{f*-bnd}-\eqref{f}-\eqref{P0f-bnd}-\eqref{fb-tilde-bnd}-\eqref{A-E-bnd} indicate that the unique solution $f (x, v)$ to \eqref{KL} enjoys the bound
	\begin{equation*}
		\begin{aligned}
			\mathscr{E}^\infty ( e^{\hbar \sigma} f ) \lesssim \mathscr{E}^\infty ( (\delta x + l)^\frac{1 - \gamma}{3 - \gamma} e^{\hbar \sigma} S) + \| {f}_b \|_{\mathfrak{N}} \,.
		\end{aligned}
	\end{equation*}	
Namely, if the source term $(S, f_b) \in \mathfrak{X}_\gamma^\infty$ (defined in \eqref{X-gamma-infty}), the solution operator $\mathbb{I}_\gamma$ introduced in Remark \ref{Rmk-L-ASS} is well-defined.

Next we prove the existence of \eqref{KL}. Note that the solution $f_*$ to \eqref{KL-Damped-f} is exactly a solution to \eqref{KL-P0} if and only if $\mathbf{D} f_* (x,v) = 0$ for all $x \geq 0$ and $v \in \R^3$. Hence $\P^+ v_3 f_* = 0$ and $\P^0 f_* = 0$.

	Now we remove the artificial damping $ \mathbf{D} f_* $ in \eqref{KL-Damped-f}, i.e.,
	\begin{equation*}{\small
		\left\{	
		\begin{aligned}
			& v_3 \partial_x f_* + \L f_* + \mathbf{D} f_* = ( \mathbb{I} - \mathbb{P} ) S \,, \\
			& f_* (0,v) |_{v_3 > 0} = (1 - \alpha_*) f_* (0, R_0 v) + \alpha_* \mathcal{D}_w f_* (0, v) + \widetilde{f}_b (v) \,, \ \lim_{x \to + \infty} f_* (x,v) = 0
		\end{aligned}
		\right.}
	\end{equation*}
	in certain equivalent forms at $x = 0$.
	
	Multiplying \eqref{KL-Damped-f} by $\psi_3^* (v)$ and integrating over $v \in \R^3$, one has
	\begin{equation*}
		\begin{aligned}
			\tfrac{\d}{\d x} \int_{\R^3} v_3 \psi_3^* f_* \d v = - \ss_+ (\delta x + l)^{- \frac{1 - \gamma}{3 - \gamma}} \int_{\R^3} v_3 \psi_3^* f_* \d v \,,
		\end{aligned}
	\end{equation*}
	which means that $\tfrac{\d}{\d x} \Big( e^{ \frac{\ss_+ (3 - \gamma)}{2 \delta} ( \delta x + l )^\frac{2}{3 - \gamma} } \P^+ v_3 f_* \Big) = 0$. Consequently,
	\begin{equation}\label{P+-equiv}
		\begin{aligned}
			\P^+ v_3 f_* (x, v) = 0 (\forall x \geq 0) \ \ \textrm{if and only if } \ \P^+ v_3 f_* (0, v) = 0 \,.
		\end{aligned}
	\end{equation}
	
	Multiplying by $( \widehat{\mathbb{B}}_3, \widehat{\mathbb{A}}_{13}, \widehat{\mathbb{A}}_{23} )^\top$ and integrating over $v \in \R^3$, one has
	\begin{equation*}
		\begin{aligned}
			\tfrac{\d}{\d x} \int_{\R^3} v_3
			\left(
			    \begin{array}{c}
			    	\widehat{\mathbb{B}}_3 \\
			    	\widehat{\mathbb{A}}_{13}\\
			    	\widehat{\mathbb{A}}_{23}
			    \end{array}
			\right) f_* (x,v) \d v = - \int_{\R^3}
			\left(
			\begin{array}{c}
				\widehat{\mathbb{B}}_3 \\
				\widehat{\mathbb{A}}_{13}\\
				\widehat{\mathbb{A}}_{23}
			\end{array}
			\right) \L f_* (x,v) \d v \,,
		\end{aligned}
	\end{equation*}
	where we have used the facts $ \widehat{\mathbb{A}}_{13}, \widehat{\mathbb{A}}_{23}, \widehat{\mathbb{B}}_3 \in \mathrm{Null}^\perp (\L) $ and $\int_{\R^3} ( \widehat{\mathbb{B}}_3, \widehat{\mathbb{A}}_{13}, \widehat{\mathbb{A}}_{23} )^\top (\mathbb{I} - \mathbb{P} ) S \d v = 0 $. Here the operator $\mathbb{P}$ is defined in \eqref{P-AB}. Notice that by \eqref{psi*-basis}-\eqref{AB-1}-\eqref{AB-2},
	\begin{equation*}
		\begin{aligned}
			\int_{\R^3}
			\left(
			\begin{array}{c}
				\widehat{\mathbb{B}}_3 \\
				\widehat{\mathbb{A}}_{13}\\
				\widehat{\mathbb{A}}_{23}
			\end{array}
			\right) \L f_* (x,v) \d v = \int_{\R^3}
			\left(
			\begin{array}{c}
				{\mathbb{B}}_3 \\
				{\mathbb{A}}_{13}\\
				{\mathbb{A}}_{23}
			\end{array}
			\right) f_* (x,v) \d v = \int_{\R^3} v_3
			\left(
			\begin{array}{c}
				\tfrac{\sqrt{\rho}}{2 \sqrt{T}} \psi_0^* \\
				\sqrt{\tfrac{\rho}{T}} \psi_1^*\\
				\sqrt{\tfrac{\rho}{T}} \psi_2^*
			\end{array}
			\right) f_* (x,v) \d v \,.
		\end{aligned}
	\end{equation*}
	For notational simplicity, we define
	\begin{equation*}
		\begin{aligned}
			\mathfrak{F} (x) = \int_{\R^3} v_3
			\left(
			\begin{array}{c}
				\widehat{\mathbb{B}}_3 \\
				\widehat{\mathbb{A}}_{13}\\
				\widehat{\mathbb{A}}_{23}
			\end{array}
			\right) f_* (x,v) \d v \,, \ \quad \mathfrak{G} (x) = \int_{\R^3} v_3
			\left(
			\begin{array}{c}
				\psi_0^* \\
				\psi_1^*\\
				\psi_2^*
			\end{array}
			\right) f_* (x,v) \d v \,.
		\end{aligned}
	\end{equation*}
	By the fact $\lim_{x \to + \infty} f_* (x, v) = 0$ one knows that $\lim_{x \to + \infty} \mathfrak{F} (x) = 0$. As a result,
	\begin{equation}\label{AB-eq1}
		\begin{aligned}
			\tfrac{\d}{\d x} \mathfrak{F} (x) = - \mathrm{diag} ( \tfrac{\sqrt{\rho}}{2 \sqrt{T}} , \sqrt{\tfrac{\rho}{T}} , \sqrt{\tfrac{\rho}{T}} ) \mathfrak{G} (x) \,, \ \ \lim_{x \to + \infty} \mathfrak{F} (x) = 0_3 \,,
		\end{aligned}
	\end{equation}
	where $0_n = (\underbrace{0,\cdots,0}_{n})^\top \in \R^n$. We then multiply \eqref{KL-Damped-f} by $(\tfrac{\sqrt{\rho}}{2 \sqrt{T}} \psi_0^*, \sqrt{\tfrac{\rho}{T}} \psi_1^*, \sqrt{\tfrac{\rho}{T}} \psi_2^*)^\top$ and integrate the resultant over $v \in \R^3$. It therefore holds
	\begin{equation}\label{AB-eq2}
		\begin{aligned}
			\tfrac{\d}{\d x} \big[ \mathrm{diag} ( \tfrac{\sqrt{\rho}}{2 \sqrt{T}} , \sqrt{\tfrac{\rho}{T}} , \sqrt{\tfrac{\rho}{T}} ) \mathfrak{G} (x) \big] = - ( \delta x + l )^{ - \frac{1 - \gamma}{3 - \gamma} } \mathrm{diag} (\lambda_0, \lambda_1, \lambda_2) \mathfrak{F} (x) \,,
		\end{aligned}
	\end{equation}
	where
	\begin{equation*}
		\begin{aligned}
			\lambda_0 = \tfrac{\sqrt{\rho}}{2 \sqrt{T}} \ss_0 ( \int_{\R^3} \widehat{\mathbb{B}}_3 \mathbb{B}_3 \d v )^{-1} > 0 \,, \ \lambda_i = \sqrt{\tfrac{\rho}{T}} \ss_0 ( \int_{\R^3} \widehat{\mathbb{A}}_{i3} \mathbb{A}_{i3} \d v )^{-1} > 0 \ (i = 1,2) \,.
		\end{aligned}
	\end{equation*}
	Then the equations \eqref{AB-eq1} and \eqref{AB-eq2} are equivalent to
	\begin{equation}\label{F-eq}
		\begin{aligned}
			\tfrac{\d^2}{\d x^2} \mathfrak{F} (x) = ( \delta x + l )^{ - \frac{1 - \gamma}{3 - \gamma} } \mathrm{diag} (\lambda_0, \lambda_1, \lambda_2) \mathfrak{F} (x) \,, \ \ \lim_{x \to + \infty} \mathfrak{F} (x) = 0_3 \,,
		\end{aligned}
	\end{equation}
	which is a second order ODE system.
	
	We then claim that
	\begin{equation}\label{Claim-FG}
		\begin{aligned}
			\mathfrak{F} (x) = 0 \ (\forall x \geq 0) \ \ \textrm{if and only if } \ \mathfrak{F} (0) = 0_3 \,.
		\end{aligned}
	\end{equation}
	Indeed, we first rewrite the equation \eqref{F-eq} in the components form
	\begin{equation}\label{F-eq-comp}
		\begin{aligned}
			\tfrac{\d^2}{\d x^2} \mathfrak{F}_i (x) = \lambda_i ( \delta x + l )^{ - \frac{1 - \gamma}{3 - \gamma} } \mathfrak{F}_i (x) \,, \ \ \lim_{x \to + \infty} \mathfrak{F}_i (x) = 0
		\end{aligned}
	\end{equation}
	for $i = 0, 1, 2$, where
	\begin{equation*}
		\begin{aligned}
			\mathfrak{F}_0 (x) = \int_{\R^3} v_3 \widehat{\mathbb{B}}_3 f_* (x,v) \d v \,, \ \mathfrak{F}_i (x) = \int_{\R^3} v_3 \widehat{\mathbb{A}}_{i3} f_* (x,v) \d v \ \ (i=1,2) \,.
		\end{aligned}
	\end{equation*}
	Now we employ the so-called {\em Freezing Point Method} to complete the proof of claim \eqref{Claim-FG}. For any fixed $x_0 \geq 0$, define
	\begin{equation*}
		\begin{aligned}
			\boldsymbol{\theta}_i (x_0) = \lambda_i^\frac{1}{2} ( \delta x_0 + l )^{ - \frac{1 - \gamma}{2 ( 3 - \gamma ) } } > 0 \,, \ \mathfrak{g}_i (x) = \lambda_i ( \delta x + l )^{ - \frac{1 - \gamma}{3 - \gamma} } \mathfrak{F}_i (x) - \lambda_i ( \delta x_0 + l )^{ - \frac{1 - \gamma}{3 - \gamma} } \mathfrak{F}_i (x)
		\end{aligned}
	\end{equation*}
	for $i = 0, 1, 2$. Then the component equations \eqref{F-eq-comp} for $\mathfrak{F} (x)$ can be reformulated as
	\begin{equation}\label{F-eq-cons}
		\begin{aligned}
			\tfrac{\d^2}{\d x^2} \mathfrak{F}_i (x) - \boldsymbol{\theta}_i^2 (x_0) \mathfrak{F}_i (x) = \mathfrak{g}_i (x) \,, \ \ \lim_{x \to + \infty} \mathfrak{F}_i (x) = 0
		\end{aligned}
	\end{equation}
	Note that the general solution of the second order linear ODE $\tfrac{\d^2}{\d x^2} \mathfrak{F}_i (x) - \boldsymbol{\theta}_i^2 (x_0) \mathfrak{F}_i (x) = 0$ is $\mathfrak{F}_i (x) = C_1 e^{- \boldsymbol{\theta}_i (x_0) x} + C_2 e^{ \boldsymbol{\theta}_i (x_0) x}$, where $C_1, C_2$ are two arbitrary constants. Then one can set that the solution of \eqref{F-eq-cons} admits the form
	\begin{equation*}
		\begin{aligned}
			\mathfrak{F}_i (x) = C_1 (x) e^{- \boldsymbol{\theta}_i (x_0) x} + C_2 (x) e^{ \boldsymbol{\theta}_i (x_0) x} \,,
		\end{aligned}
	\end{equation*}
	where $C_1 (x)$ and $C_2 (x)$ are two functions to be determined later. Substituting the above form into \eqref{F-eq-cons}, one derives from a direct calculation that
	\begin{equation*}
		\begin{aligned}
			- \boldsymbol{\theta}_i (x_0) e^{ - \boldsymbol{\theta}_i (x_0) x } C_1' (x) + \boldsymbol{\theta}_i (x_0) e^{ \boldsymbol{\theta}_i (x_0) x } C_2' (x) + \big[ e^{ - \boldsymbol{\theta}_i (x_0) x } C_1' (x) + e^{ \boldsymbol{\theta}_i (x_0) x } C_2' (x) \big]' = \mathfrak{g}_i (x) \,.
		\end{aligned}
	\end{equation*}
	One then takes $C_1 (x)$ and $C_2 (x)$ satisfying
	\begin{equation*}
		\begin{aligned}
			& e^{ - \boldsymbol{\theta}_i (x_0) x } C_1' (x) + e^{ \boldsymbol{\theta}_i (x_0) x } C_2' (x) = 0 \,, \\
			& - \boldsymbol{\theta}_i (x_0) e^{ - \boldsymbol{\theta}_i (x_0) x } C_1' (x) + \boldsymbol{\theta}_i (x_0) e^{ \boldsymbol{\theta}_i (x_0) x } C_2' (x) = \mathfrak{g}_i (x) \,,
		\end{aligned}
	\end{equation*}
	which implies that
	\begin{equation*}
		\begin{aligned}
			C_1' (x) = - \tfrac{1}{2 \boldsymbol{\theta}_i (x_0)} e^{ \boldsymbol{\theta}_i (x_0) x } \mathfrak{g}_i (x) \,, \ \ C_2' (x) = \tfrac{1}{2 \boldsymbol{\theta}_i (x_0)} e^{ - \boldsymbol{\theta}_i (x_0) x } \mathfrak{g}_i (x) \,.
		\end{aligned}
	\end{equation*}
	Integrating the above two equations from $x_0$ to $x$, one gains
	\begin{equation*}{\small
		\begin{aligned}
			& C_1 (x) = C_1 (x_0) - \tfrac{1}{2 \boldsymbol{\theta}_i (x_0)} \int_{x_0}^x e^{ \boldsymbol{\theta}_i (x_0) y } \mathfrak{g}_i (y) \d y \,, \ C_2 (x) = C_2 (x_0) + \tfrac{1}{2 \boldsymbol{\theta}_i (x_0)} \int_{x_0}^x e^{ - \boldsymbol{\theta}_i (x_0) y } \mathfrak{g}_i (y) \d y \,.
		\end{aligned}}
	\end{equation*}
	Consequently,
{\footnotesize
		\begin{align*}
			\mathfrak{F}_i (x) = \Big[ C_1 (x_0) - \tfrac{1}{2 \boldsymbol{\theta}_i (x_0)} \int_{x_0}^x e^{ \boldsymbol{\theta}_i (x_0) y } \mathfrak{g}_i (y) \d y \Big] e^{- \boldsymbol{\theta}_i (x_0) x} + \Big[ C_2 (x_0) + \tfrac{1}{2 \boldsymbol{\theta}_i (x_0)} \int_{x_0}^x e^{ - \boldsymbol{\theta}_i (x_0) y } \mathfrak{g}_i (y) \d y \Big] e^{ \boldsymbol{\theta}_i (x_0) x} \,.
		\end{align*}}
	Due to the far-field condition $\lim_{x \to + \infty} \mathfrak{F}_i (x) = 0$, it must hold
	\begin{equation*}
		\begin{aligned}
			\mathfrak{F}_i (x) = \Big[ C_1 (x_0) - \tfrac{1}{2 \boldsymbol{\theta}_i (x_0)} \int_{x_0}^x e^{ \boldsymbol{\theta}_i (x_0) y } \mathfrak{g}_i (y) \d y \Big] e^{- \boldsymbol{\theta}_i (x_0) x} \,.
		\end{aligned}
	\end{equation*}
	which means that
	\begin{equation}\label{O-1}
		\begin{aligned}
			\mathfrak{F}_i (x_0) = C_1 (x_0) e^{- \boldsymbol{\theta}_i (x_0) x_0} \,.
		\end{aligned}
	\end{equation}
	Moreover,
	\begin{equation*}
		\begin{aligned}
			\tfrac{\d}{\d x} \mathfrak{F}_i (x) = - \boldsymbol{\theta}_i (x_0) \Big[ C_1 (x_0) - \tfrac{1}{2 \boldsymbol{\theta}_i (x_0)} \int_{x_0}^x e^{ \boldsymbol{\theta}_i (x_0) y } \mathfrak{g}_i (y) \d y \Big] e^{- \boldsymbol{\theta}_i (x_0) x} - \tfrac{1}{2 \boldsymbol{\theta}_i (x_0)} \mathfrak{g}_i (x) \,.
		\end{aligned}
	\end{equation*}
	Note that $\mathfrak{g}_i (x_0) = 0$. One then has
	\begin{equation}\label{O-2}
		\begin{aligned}
			\tfrac{\d}{\d x} \mathfrak{F}_i (x_0) = - \boldsymbol{\theta}_i (x_0) C_1 (x_0) e^{- \boldsymbol{\theta}_i (x_0) x_0} \,.
		\end{aligned}
	\end{equation}
	Due to the arbitrariness of $x_0 \geq 0$, the relations \eqref{O-1} and \eqref{O-2} give us
	\begin{equation*}
		\begin{aligned}
			\tfrac{\d}{\d x} \mathfrak{F}_i (x) = - \boldsymbol{\theta}_i (x) \mathfrak{F}_i (x) = - \lambda_i (\delta x + l)^{ - \frac{1 - \gamma}{3 - \gamma} } \mathfrak{F}_i (x)
		\end{aligned}
	\end{equation*}
	for all $x \geq 0$, which is equivalent to $\tfrac{\d}{\d x} \Big( e^{ \frac{\lambda_i (3 - \gamma)}{2 \delta} ( \delta x + l )^\frac{2}{3 - \gamma} } \mathfrak{F}_i (x) \Big) = 0$. As a result, $\mathfrak{F}_i (x) = 0$ for all $x \geq 0$ if and only if $\mathfrak{F}_i (0) = 0$. The claim \eqref{Claim-FG} is therefore valid.

	Note that $\P^0 f_* = 0$ is equivalent to $\mathfrak{F} (x) = 0$ for all $x \geq 0$. As a consequence, by \eqref{P+-equiv} and \eqref{Claim-FG}, we have proved that $\mathbf{D} f_* (x, v) = 0$ for all $x \geq 0$ and $v \in \R^3$ if and only if
	\begin{equation}\label{Solvb-1}
		\begin{aligned}
			\int_{\R^3} v_3
			\left(
			\begin{array}{c}
				\psi_3^* \\
				\widehat{\mathbb{B}}_3 \\
				\widehat{\mathbb{A}}_{13}\\
				\widehat{\mathbb{A}}_{23}
			\end{array}
			\right) f_* (0,v) \d v = 0_4 \,.
		\end{aligned}
	\end{equation}
By using the relations \eqref{f} and \eqref{P0f-sol}, the solvability conditions \eqref{Solvb-1} can be equivalently expressed by the form \eqref{SBC} in Remark \ref{Rmk-L-ASS}. Then the vanishing sources set $\mathbb{VSS}_{\alpha_*}$ can be re-characterized by
\begin{equation*}
  \begin{aligned}
    \mathbb{VSS}_{\alpha_*} = \Big\{ & (S, f_b); S \in \mathrm{Null}^\perp (\L), \mathbb{I}_\gamma (S, f_b) = f, \\
    & \int_{\R^3}
			\left(
			\begin{array}{c}
				\psi_3^* \\
				\widehat{\mathbb{B}}_3 \\
				\widehat{\mathbb{A}}_{13}\\
				\widehat{\mathbb{A}}_{23}
			\end{array}
			\right) \Big( v_3 f (0,v) + \int_0^\infty S (z, v) \d z \Big) \d v = 0_4 \Big\} \,.
  \end{aligned}
\end{equation*}
Consequently, if $(S, f_b) \in \mathbb{VSS}_{\alpha_*} \cap \mathfrak{X}_\gamma^\infty $, the problem \eqref{KL} admits a unique solution $f (x,v)$ enjoying the bound \eqref{Bnd-f} in Theorem \ref{Thm-Linear}.

Because the four functions $\psi_3^*$, $\widehat{\mathbb{B}}_3$, $\widehat{\mathbb{A}}_{13}$, $\widehat{\mathbb{A}}_{23}$ are linearly independent, the space $\mathbb{VSS}_{\alpha_*} \cap \mathfrak{X}_\gamma^\infty$ is the subspace of $\mathfrak{X}_\gamma^\infty$ with codimension 4. The proof of Theorem \ref{Thm-Linear} is therefore finished.
\end{proof}


\section{Nonlinear problem \eqref{KL-NL}: Proof of Theorem \ref{Thm-Nonlinear}}\label{Sec:ENL}

In this section, we devote to investigating the nonlinear problem \eqref{KL-NL} near the Maxwellian $\M (v)$ by employing the linear theory constructed in Theorem \ref{Thm-Linear}. It is equivalent to study the equation \eqref{KL-NL-f}. The key point is to dominate the quantity $ \mathscr{E}^\infty ( (\delta  x + l)^\frac{1 - \gamma}{3 - \gamma} e^{ \hbar \sigma } \Gamma ( f, g ) ) $, where the functional $ \mathscr{E}^\infty ( \cdot ) $ is given in \eqref{Eg-lambda}.

Notice that $ \mathscr{E}^\infty ( (\delta  x + l)^\frac{1 - \gamma}{3 - \gamma} e^{ \hbar \sigma } \Gamma ( f, g ) ) $ is composed of the weighted $L^\infty_{x,v}$-norms and $L^2_{x,v}$-norms of $ e^{ \hbar \sigma } \Gamma ( f, g ) $. Concerning the weights involved in $ \mathscr{E}^\infty ( (\delta  x + l)^\frac{1 - \gamma}{3 - \gamma} e^{ \hbar \sigma } \Gamma ( f, g ) ) $, one has
\begin{equation}\label{Gamma-sigma}
	\begin{aligned}
		e^{ \hbar \sigma } \Gamma ( f, g ) = e^{ \hbar \sigma } \Gamma ( e^{ - \hbar \sigma } e^{ \hbar \sigma } f, e^{ - \hbar \sigma } e^{ \hbar \sigma } g ) \,.
	\end{aligned}
\end{equation}
By the properties of $\sigma$ in Lemma \ref{Lmm-sigma}, it holds $ e^{\hbar \sigma} \geq e^{ c (\delta x + l)^\frac{2}{3 - \gamma} } e^{ c |v|^2 } $. As shown in \eqref{Gamma-sigma}, loosely speaking, one of $ e^{ - \hbar \sigma } $ can absorb the factor $e^{ \hbar \sigma }$ out of $\Gamma$, and the other decay factor $ e^{ - \hbar \sigma } \leq e^{ - c (\delta x + l)^\frac{2}{3 - \gamma} } e^{ - c |v|^2 } $ can be used to adjust the required weights.

It is easy to see that $\Gamma (f, g)$ can be pointwise bounded by the $L^\infty_{x,v}$-norms of $f$ and $g$. By the similar arguments in Lemma 3 of \cite{Strain-Guo-2008-ARMA}, the weighted $L^2_{x,v}$-norms of $ e^{ \hbar \sigma } \Gamma ( f, g ) $ can be bounded by the quantity with form $\| w_1 e^{ \hbar \sigma } f \|_{L^2_{x,v}} \| w_2 e^{ \hbar \sigma } g \|_{L^\infty_{x,v}} + \| w_1 e^{ \hbar \sigma } g \|_{L^2_{x,v}} \| w_2 e^{ \hbar \sigma } f \|_{L^\infty_{x,v}} $, where $w_1, w_2$ are some required weights. As a result, we can establish the following lemma.

\begin{lemma}\label{Lmm-Gamma}
	For arbitrary functions $f (x,v)$ and $g (x,v)$, there holds
	\begin{equation}\label{Star-1}
		\begin{aligned}
			\mathscr{E}^\infty ( (\delta  x + l)^\frac{1 - \gamma}{3 - \gamma} e^{ \hbar \sigma } \Gamma ( f, g ) ) \leq C \mathscr{E}^\infty ( e^{ \hbar \sigma } f ) \mathscr{E}^\infty ( e^{ \hbar \sigma } g ) \,,
		\end{aligned}
	\end{equation}
    where the functionals $ \mathscr{E}^\infty ( \cdot ) $ is defined in \eqref{Eg-lambda}.
\end{lemma}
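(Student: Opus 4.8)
The plan is to estimate, one by one, the four constituents of $\mathscr{E}^\infty$ evaluated on $(\delta x+l)^{\frac{1-\gamma}{3-\gamma}}e^{\hbar\sigma}\Gamma(f,g)$, namely $\mathscr{E}_\infty^\infty$, $\mathscr{E}_{\mathtt{cro}}^\infty$, $[\mathscr{E}_{\mathtt{NBE}}^\infty]^{1/2}$ and $[\mathscr{E}_2^\infty]^{1/2}$ (see \eqref{Eg-lambda}). The mechanism common to all four estimates is a weight-splitting. Writing $f=e^{-\hbar\sigma}f_\sigma$, $g=e^{-\hbar\sigma}g_\sigma$ with $f_\sigma=e^{\hbar\sigma}f$, $g_\sigma=e^{\hbar\sigma}g$, and using the bilinearity of $\Gamma$ together with the collisional identity $\M(v)\M(v_*)=\M(v')\M(v_*')$, the multiplier $e^{\hbar\sigma(x,v)}$ in front of $\Gamma(f,g)(v)$ can be moved inside the collision integral and controlled against the built-in decay $e^{-\hbar\sigma(x,v')-\hbar\sigma(x,v_*')}$ (for the gain term, and the analogous loss-term factors) by the quasi-collision-invariance of $\sigma$ — precisely the second estimate of Lemma \ref{Lmm-sigma} combined with $|v-\u|^2+|v_*-\u|^2=|v'-\u|^2+|v_*'-\u|^2$, which holds since $\u_3=0$. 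This is the same computation that makes $K$ bounded in Lemma \ref{Lmm-K-Oprt}, and because the constants $5$ and $3$ appearing in \eqref{sigma} are large relative to the small parameters $\hbar,\vartheta$ of (PH) (equivalently $\tfrac14-T(c\hbar+\vartheta)>0$, as in Lemma \ref{Lmm-K-Oprt}), it leaves a residual decaying factor of the form $e^{-c\hbar(\delta x+l)^{2/(3-\gamma)}-c\hbar|v-\u|^2}$. That residual absorbs every auxiliary weight attached to the output: the polynomial prefactor $(\delta x+l)^{\frac{1-\gamma}{3-\gamma}}$ and the $x$-polynomial weights $(l^{-6}\delta x+l)^{\mathbf{b}}$ occurring in $\mathscr{E}_{\mathtt{NBE}}^\infty$ are negligible against the super-linear decay $e^{-c\hbar(\delta x+l)^{2/(3-\gamma)}}$, while $\sigma_x^{m/2}\lesssim(\delta x+l)^{-m(1-\gamma)/2(3-\gamma)}$, the factors $\nu^{\pm1/2}$, $z_1$, $w_{\beta,\vartheta}$, $w_{\beta_*,\vartheta}$ and the $\sqrt{\M(v_*)}$ inside the integral are all Gaussian- or polynomially-absorbable.

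With this reduction the $L^\infty$ piece follows from the pointwise bilinear bound $|\nu^{-1}(v)\,\Gamma(f_1,f_2)(v)|\lesssim\|w_{0,\vartheta}f_1\|_{L^\infty_v}\,\|w_{0,\vartheta}f_2\|_{L^\infty_v}$ (valid for small $\vartheta$) applied with $f_1=e^{-\hbar\sigma}f_\sigma$ and $f_2=e^{-\hbar\sigma}g_\sigma$, which yields $\mathscr{E}_\infty^\infty((\delta x+l)^{\frac{1-\gamma}{3-\gamma}}e^{\hbar\sigma}\Gamma(f,g))\lesssim\mathscr{E}_\infty^\infty(e^{\hbar\sigma}f)\,\mathscr{E}_\infty^\infty(e^{\hbar\sigma}g)\le\mathscr{E}^\infty(e^{\hbar\sigma}f)\,\mathscr{E}^\infty(e^{\hbar\sigma}g)$. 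For $\mathscr{E}_2^\infty$ and $\mathscr{E}_{\mathtt{NBE}}^\infty$ I would invoke a Strain--Guo-type trilinear estimate (as in Lemma 3 of \cite{Strain-Guo-2008-ARMA}): for the relevant weight $w$, $\|w\,e^{\hbar\sigma}\Gamma(f,g)\|_{L^2_{x,v}}\lesssim\|w_1 e^{\hbar\sigma}f\|_{L^2_{x,v}}\|w_2 e^{\hbar\sigma}g\|_{L^\infty_{x,v}}+\|w_1 e^{\hbar\sigma}g\|_{L^2_{x,v}}\|w_2 e^{\hbar\sigma}f\|_{L^\infty_{x,v}}$, with the $L^2$-factors bounded by $\mathscr{E}_2^\infty$ (resp.\ $[\mathscr{E}_{\mathtt{NBE}}^\infty]^{1/2}$) and the $L^\infty$-factors by $\mathscr{E}_\infty^\infty$, after absorbing the extra weights as above. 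It is worth noting here that the prefactor $(\delta x+l)^{\frac{1-\gamma}{3-\gamma}}$ in the statement exactly converts the $(\delta x+l)^{-\frac{1-\gamma}{2(3-\gamma)}}$ weight of $\mathscr{E}_2^\infty$ (resp.\ $\mathscr{E}_{\mathtt{NBE}}^\infty$) into the $(\delta x+l)^{+\frac{1-\gamma}{2(3-\gamma)}}$ weight of $\mathscr{A}_2^\infty$ (resp.\ $\mathscr{A}_{\mathtt{NBE}}^\infty$), which is why no genuine $x$-decay is demanded of $\Gamma(f,g)$ itself.

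The cross energy $\mathscr{E}_{\mathtt{cro}}^\infty$ is the only piece involving $x$-derivatives. Since the collision kernel is $x$-independent and $\Gamma$ is bilinear, $\partial_x\Gamma(f,g)=\Gamma(\partial_x f,g)+\Gamma(f,\partial_x g)$; differentiating the multiplier produces a harmless extra term with $\partial_x e^{\hbar\sigma}=\hbar\sigma_x e^{\hbar\sigma}$, controlled by $|v_3\sigma_x|\lesssim\nu$ (Lemma \ref{Lmm-sigma}). Thus $\mathscr{E}_{\mathtt{cro}}^\infty((\delta x+l)^{\frac{1-\gamma}{3-\gamma}}e^{\hbar\sigma}\Gamma(f,g))$ splits into terms in which the $L^2_{x,v}$ norm falls on the differentiated factor and the $L^\infty_{x,v}$ norm on the other, giving a bound by $\mathscr{E}_{\mathtt{cro}}^\infty(e^{\hbar\sigma}f)\,\mathscr{E}_\infty^\infty(e^{\hbar\sigma}g)+\mathscr{E}_{\mathtt{cro}}^\infty(e^{\hbar\sigma}g)\,\mathscr{E}_\infty^\infty(e^{\hbar\sigma}f)$. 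The singular velocity weight $z_{-\alpha}(v)=|v_3|^{-\alpha}\mathbf{1}_{|v_3|<1}+\mathbf{1}_{|v_3|\ge1}$ stays attached to the output variable $v$ and is handled exactly as in the proof of Lemma \ref{Lmm-Kh-L2}: since $\alpha<\mu_\gamma\le\tfrac12$, the quantity $\int z_{-\alpha}^2(v)\,|v-v_*|^\gamma\,\M^{3/8}(v)\,\d v$ is bounded by $C(1+|v_*|)^\gamma$, which makes the relevant integral operator bounded on $L^2_v$ even with the extra $z_{-\alpha}$ on the target side.

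The step I expect to be the main obstacle is precisely the weight bookkeeping underlying the first paragraph: one must verify, for each of the many weights appearing in $\mathscr{E}_{\mathtt{cro}}^\infty$ and $\mathscr{E}_{\mathtt{NBE}}^\infty$, that after the quasi-collision-invariant cancellation of $e^{\hbar\sigma(x,v)}$ the surviving decaying factor strictly dominates the product of all auxiliary weights uniformly in $(x,v)$ and in the integration variables $v_*,\omega$, including the transitional regime $\delta x+l\sim(1+|v-\u|)^{3-\gamma}$ where the structure of $\sigma$ changes. The remaining ingredients — the pointwise and weighted $L^2$ bounds for the bilinear collision operator $\Gamma$ together with Hölder's inequality — are routine and I would not spell them out in detail.
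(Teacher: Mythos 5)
Your proposal is correct and follows essentially the same route as the paper, which itself only sketches the argument: reduce to the four constituents of $\mathscr{E}^\infty$, redistribute $e^{\hbar\sigma}$ through $\Gamma$ via the quasi-collision-invariance of $\sigma$ from Lemma \ref{Lmm-sigma} and use the leftover stretched-exponential decay to absorb the auxiliary $x$- and $v$-weights, then apply a pointwise bilinear bound for the $L^\infty$ piece and the Strain--Guo trilinear estimate for the $L^2$ pieces. (One cosmetic slip: the identity $|v-\u|^2+|v_*-\u|^2=|v'-\u|^2+|v_*'-\u|^2$ follows from conservation of momentum and energy for any constant $\u$ and has nothing to do with $\u_3=0$.)
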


The proof of Lemma \ref{Lmm-Gamma} can be finished by applying the properties of the weight $\sigma (x,v)$ and employing the similar arguments in Lemma 3 of \cite{Strain-Guo-2008-ARMA}. For simplicity, we omit the details here.

Based on Lemma \ref{Lmm-Gamma}, we will study the nonlinear problem \eqref{KL-NL-f} (equivalently \eqref{KL-NL}) by employing the iterative approach, hence, prove Theorem \ref{Thm-Nonlinear}.

\begin{proof}[\bf Proof of Theorem \ref{Thm-Nonlinear}]
	As similar as in investigating the linear problem, the nonlinear problem \eqref{KL-NL-f} can be decomposed as
\begin{equation}\label{KL-NL-f-decmp}{\small
  \left\{
  \begin{aligned}
    & v_3 \partial_x f_1 + \L f_1 = (\mathbb{I} - \mathbb{P}) \Gamma (f, f) + (\mathbb{I} - \mathbb{P}) \widehat{h} \,, \ v_3 \partial_x f_2 = \mathbb{P} \Gamma (f, f) + \mathbb{P} \widehat{h} \,, \\
    & f = f_1 + f_2 \,, \ f_1 = (\I - \P^0) f \,, \ f_2 = \P^0 f \,, \\
    & f_1 (0, v) |_{v_3 > 0} = (1 - \alpha_*) f (0, R_0 v) + \alpha_* \mathcal{D}_w f_1 (0, v) + \widetilde{f}_b (\widehat{f}_b, f_2) \,, \\
    & \lim_{x \to + \infty} f_1 (x,v) = \lim_{x \to + \infty} f_2 (x,v) = 0 \,,
  \end{aligned}
  \right.}
\end{equation}
where the operators $\P^0$ and $\mathbb{P}$ are respectively defined in \eqref{P0} and \eqref{P-AB}, and
\begin{equation}\label{fb-fb-f2}
  \begin{aligned}
    \widetilde{f}_b (\widehat{f}_b, f_2) (v) = \widehat{f}_b (v) - f_2 (0, v) \mathbf{1}_{v_3 > 0} + (1 - \alpha_*) f_2 (0, R_0 v) \mathbf{1}_{v_3 > 0} + \alpha_* \mathcal{D}_2 f_2 (0,v) \mathbf{1}_{v_3 > 0} \,.
  \end{aligned}
\end{equation}
As inspired by the linear theory, we first consider the artificial damped nonlinear problem \eqref{KL-NL-f-damp}, i.e.,
\begin{equation*}{\small
  \left\{
  \begin{aligned}
    & v_3 \partial_x f_1 + \L f_1 + \mathbf{D} f_1 = (\mathbb{I} - \mathbb{P}) \Gamma (f, f) + (\mathbb{I} - \mathbb{P}) \widehat{h} \,, \\
    & v_3 \partial_x f_2 = \mathbb{P} \Gamma (f, f) + \mathbb{P} \widehat{h} \,, f = f_1 + f_2 \,, \\
    & f_1 (0, v) |_{v_3 > 0} = (1 - \alpha_*) f (0, R_0 v) + \alpha_* \mathcal{D}_w f_1 (0, v) + \widetilde{f}_b (\widehat{f}_b, f_2) \,, \\
    & \lim_{x \to + \infty} f_1 (x,v) = \lim_{x \to + \infty} f_2 (x,v) = 0 \,,
  \end{aligned}
  \right.}
\end{equation*}
where the artificial damping operator $\mathbf{D}$ is defined in \eqref{D-operator}.

Note that $f_1$ and $f_2$ are coupled each other in the system \eqref{KL-NL-f-damp}. In order to study the existence and uniqueness of the damped nonlinear problem \eqref{KL-NL-f-damp}, we design the following iteration scheme, which decouples the functions $f_1$ and $f_2$:
	\begin{equation}\label{Iter-fi}{\small
		\left\{
		    \begin{aligned}
		    	& v_3 \partial_x f^{i+1}_1 + \L f^{i+1}_1 + \mathbf{D} f^{i+1}_1 = (\mathbb{I} - \mathbb{P}) \Gamma ( f^i, f^i ) + (\mathbb{I} - \mathbb{P}) \widehat{h} \,, \ x > 0 \,, v \in \R^3 \,, \\
                & v_3 \partial_x f^{i+1}_2 = \mathbb{P} \Gamma ( f^i, f^i ) + \mathbb{P} \widehat{h} \,, \ x > 0 \,, v \in \R^3 \,, \ f^{i+1} = f^{i+1}_1 + f^{i+1}_2 \,, \\
		    	& f^{i+1}_1 (0, v) |_{v_3 > 0} = ( 1 - \alpha_* ) f^{i+1}_1 (0, R_0 v) + \alpha_* \mathcal{D}_w f^{i+1}_1 (0, v) + \widetilde{f}_b (\widehat{f}_b, f^{i+1}_2) \,, \\
		    	& \lim_{x \to + \infty} f^{i+1}_1 (x, v) = \lim_{x \to + \infty} f^{i+1}_2 (x, v) = 0 \,,
		    \end{aligned}
		\right.}
	\end{equation}
which starts from $f^0_1 (x,v) = f^0_2 (x,v) = 0$. 

We first iteratively solve $f^{i+1}_2 (x,v)$, which are a sequence of ODE equations. By \eqref{P0f-bnd}, one has
\begin{equation}\label{f2-i+1-bnd}
    \begin{aligned}
        \mathscr{E}^\infty (e^{ \hbar \sigma } f^{i+1}_2) \lesssim & \mathscr{E}^\infty ((\delta x + l)^\frac{1 - \gamma}{3 - \gamma} e^{\hbar \sigma} ( \Gamma ( f^i, f^i ) + \widehat{h} ) ) \\
        \lesssim & \mathscr{E}^\infty ((\delta x + l)^\frac{1 - \gamma}{3 - \gamma} e^{\hbar \sigma} \Gamma ( f^i, f^i ) ) + \mathscr{E}^\infty ((\delta x + l)^\frac{1 - \gamma}{3 - \gamma} e^{\hbar \sigma} \widehat{h} ) \\
        \lesssim & \big[ \mathscr{E}^\infty ( e^{\hbar \sigma} f^i_1 ) + \mathscr{E}^\infty ( e^{\hbar \sigma} f^i_2 ) \big]^2 + \mathscr{E}^\infty ((\delta x + l)^\frac{1 - \gamma}{3 - \gamma} e^{\hbar \sigma} \widehat{h} ) \,,
    \end{aligned}
\end{equation}
where the last inequality is derived from \eqref{Star-1} in Lemma \ref{Lmm-Gamma}.

We then iteratively solve $f^{i+1}_1 (x,v)$, which subject to the linear problem with the same type of \eqref{KL-Damped-f}. By \eqref{f*-bnd}, one has
\begin{equation}\label{f1-i+1-bnd}
	\begin{aligned}
		\mathscr{E}^\infty ( e^{\hbar \sigma} f^{i+1}_1 ) \lesssim & \mathscr{A}^\infty ( e^{\hbar \sigma} (\mathbb{I} - \mathbb{P}) ( \Gamma ( f^i, f^i ) + \widehat{h} ) ) + \| \widetilde{f}_b (\widehat{f}_b, f^{i+1}_2) \|_{\mathfrak{N}} \\
\lesssim & \big[ \mathscr{E}^\infty ( e^{\hbar \sigma} f^i_1 ) + \mathscr{E}^\infty ( e^{\hbar \sigma} f^i_2 ) \big]^2 + \mathscr{E}^\infty ((\delta x + l)^\frac{1 - \gamma}{3 - \gamma} e^{\hbar \sigma} \widehat{h} ) + \| \widehat{f}_b \|_{\mathfrak{N}} \,,
	\end{aligned}
\end{equation}
where the last inequality is derived from the similar arguments in \eqref{fb-tilde-bnd}, \eqref{A-E-bnd} and \eqref{f2-i+1-bnd}.

Then the bounds \eqref{f2-i+1-bnd} and \eqref{f1-i+1-bnd} indicate that
	\begin{equation}\label{Star-2}
		\begin{aligned}
			\mathscr{E}^\infty ( e^{ \hbar \sigma } f^{i+1}_1 ) + \mathscr{E}^\infty ( e^{ \hbar \sigma } f^{i+1}_2 ) \leq C_0 \big[ \mathscr{E}^\infty ( e^{\hbar \sigma} f^i_1 ) + \mathscr{E}^\infty ( e^{\hbar \sigma} f^i_2 ) \big]^2 + C_0 \varsigma
		\end{aligned}
	\end{equation}
	for some constant $ C_0 > 0$, where the quantity $\varsigma$ is defined in \eqref{X-gamma-0}, i.e., $\varsigma = \mathscr{E}^\infty ((\delta x + l)^\frac{1 - \gamma}{3 - \gamma} e^{\hbar \sigma} \widehat{h} ) + \| \widehat{f}_b \|_{\mathfrak{N}}$. 
	
Now we assert that there is a small $\varsigma_0 > 0$ such that if $ \varsigma \leq \varsigma_0 $, then for all $i \geq 1$,
	\begin{equation}\label{Claim-fi}
		\begin{aligned}
			\mathscr{E}^\infty ( e^{ \hbar \sigma } f^i_1 ) + \mathscr{E}^\infty ( e^{\hbar \sigma} f^i_2 ) \leq 2 C_0 \varsigma \,.
		\end{aligned}
	\end{equation}
Note that $f^0_1 = f^0_2 = 0$. The bound \eqref{Star-2} shows that $ \mathscr{E}^\infty ( e^{ \hbar \sigma } f^1_1 ) + \mathscr{E}^\infty ( e^{ \hbar \sigma } f^1_2 ) \leq C_0 \varsigma < 2 C_0 \varsigma $, i.e., the claim \eqref{Claim-fi} holds for $i = 1$. Assume that the claim \eqref{Claim-fi} holds for $1, 2, \cdots, i$. Then the case $i + 1$ can be carried out by \eqref{Star-2} that $ \mathscr{E}^\infty ( e^{ \hbar \sigma } f^{i+1}_1 ) + \mathscr{E}^\infty ( e^{ \hbar \sigma } f^{i+1}_2 ) \leq C ( 2 C_0 \varsigma )^2 + C_0 \varsigma = ( 4 C C_0 \varsigma + 1  ) C_0 \varsigma $. Take $\varsigma_0 > 0$ small such that $ 4 C C_0 \varsigma_0 \leq 1 $. Then if $\varsigma \leq \varsigma_0$, one has $ \mathscr{E}^\infty ( e^{ \hbar \sigma } f^{i+1}_1 ) + \mathscr{E}^\infty ( e^{ \hbar \sigma } f^{i+1}_2 ) \leq ( 4 C C_0 \varsigma + 1  ) C_0 \varsigma \leq 2 C_0 \varsigma $. Therefore, the Induction Principle concludes the claim \eqref{Claim-fi}, which indicates that $\{ f^i \}_{i \geq 1}$ is bounded in the Banach space $ \mathbb{B}_\hbar^\infty $ defined in \eqref{Bh-1}.
	
	Next we will show that $\{ f^i \}_{i \geq 1}$ is a Cauchy sequence in $ \mathbb{B}_\hbar^\infty $. Observe that $f^{i+1}_1 - f^i_1$ and $f^{i+1}_2 - f^i_2$ subject to
	\begin{equation*}{\small
		\left\{
		\begin{aligned}
			& v_3 \partial_x ( f^{i+1}_1 - f^i_1 ) + \L ( f^{i+1}_1 - f^i_1 ) + \mathbf{D} ( f^{i+1}_1 - f^i_1 ) \\
            & \qquad \qquad \qquad = ( \mathbb{I} - \mathbb{P} ) \Gamma ( f^i - f^{i-1}, f^i ) + ( \mathbb{I} - \mathbb{P} ) \Gamma ( f^{i-1}, f^i - f^{i-1} ) \,, \ x > 0 \,, v \in \R^3 \,, \\
            & v_3 \partial_x ( f^{i+1}_2 - f^i_2 ) = \mathbb{P} \Gamma ( f^i - f^{i-1}, f^i ) + \mathbb{P} \Gamma ( f^{i-1}, f^i - f^{i-1} ) \,, \ x > 0 \,, v \in \R^3 \,, \\
            & f^{i+1} - f^i = ( f^{i+1}_1 - f^i_1 ) + ( f^{i+1}_2 - f^i_2 ) \,, \\
			& ( f^{i+1}_1 - f^i_1 ) (0, v) |_{v_3 > 0} = ( 1 - \alpha_* ) ( f^{i+1}_1 - f^i_1 ) (0, R_0 v) + \alpha_* \mathcal{D}_w ( f^{i+1}_1 - f^i_1 ) (0, v) + \widetilde{f}_b ( 0, f^{i+1}_2 - f^i_2 ) \,, \\
			& \lim_{x \to + \infty} ( f^{i+1}_1 - f^i_1 ) (x, v) = \lim_{x \to + \infty} ( f^{i+1}_2 - f^i_2 ) (x, v) = 0 \,.
		\end{aligned}
		\right.}
	\end{equation*}
By the similar arguments in \eqref{Star-2}, one has
	\begin{align*}
		& \mathscr{E}^\infty ( e^{ \hbar \sigma } ( f^{i+1}_1 - f^i_1 ) ) + \mathscr{E}^\infty ( e^{ \hbar \sigma } ( f^{i+1}_2 - f^i_2 ) ) \\
        \leq & C \big[ \mathscr{E}^\infty ( e^{ \hbar \sigma } f^i ) + \mathscr{E}^\infty ( e^{ \hbar \sigma } f^{i-1} ) \big] \big[ \mathscr{E}^\infty ( e^{ \hbar \sigma } ( f^i_1 - f^{i-1}_2 ) ) + \mathscr{E}^\infty ( e^{ \hbar \sigma } ( f^i_2 - f^{i-1}_2 ) ) \big] \\
        \leq & 4 C C_0 \varsigma \big[ \mathscr{E}^\infty ( e^{ \hbar \sigma } ( f^i_1 - f^{i-1}_2 ) ) + \mathscr{E}^\infty ( e^{ \hbar \sigma } ( f^i_2 - f^{i-1}_2 ) ) \big] \,,
	\end{align*}
where the last inequality is implied by \eqref{Claim-fi}. As a consequence,
\begin{equation}\label{Star-3}
	\begin{aligned}
		\mathscr{E}^\infty ( e^{ \hbar \sigma } ( f^{i+1} - f^i ) ) \leq 4 C C_0^2 \varsigma \mathscr{E}^\infty ( e^{ \hbar \sigma } ( f^i - f^{i-1} ) ) \leq \tfrac{1}{2} \mathscr{E}^\infty ( e^{ \hbar \sigma } ( f^i - f^{i-1} ) )
	\end{aligned}
\end{equation}
by further taking small $\varsigma_0 > 0$ such that $ 4 C C_0^2 \varsigma \leq 4 C C_0^2 \varsigma_0 \leq \frac{1}{2} $.
	
Note that $f^0_1 = f^0_2 = 0$. It follows from iterating \eqref{Star-3} and employing \eqref{Claim-fi} that
\begin{equation}\label{Star-4}{\small
	\begin{aligned}
		& \mathscr{E}^\infty ( e^{ \hbar \sigma } ( f^{i+1}_1 - f^i_1 ) ) + \mathscr{E}^\infty ( e^{ \hbar \sigma } ( f^{i+1}_2 - f^i_2 ) ) \\
        \leq & ( \tfrac{1}{2} )^i \big[ \mathscr{E}^\infty ( e^{ \hbar \sigma } f^1_1 ) + \mathscr{E}^\infty ( e^{ \hbar \sigma } f^1_2 ) \big] \leq 2 C_0 \varsigma ( \tfrac{1}{2} )^i \to 0
	\end{aligned}}
\end{equation}
as $i \to + \infty$. Consequently, \eqref{Claim-fi} and \eqref{Star-4} tell us that $ \{ f^i_1 \}_{i \geq 1} $ and $ \{ f^i_2 \}_{i \geq 1} $ are both bounded Cauchy sequence in $ \mathbb{B}_\hbar^\infty $. Then there is a unique pair of $ (f_1, f_2) (x, v) \in \mathbb{B}_\hbar^\infty $ such that
\begin{equation*}
    \begin{aligned}
    	(f^i_1, f^i_2) (x, v) \to (f_1, f_2) (x,v) \quad \textrm{strongly in } \mathbb{B}_\hbar^\infty \,.
    \end{aligned}
\end{equation*}
Passing the limit $i \to + \infty$ in the mild formation of the iterative scheme \eqref{Iter-fi}, one knows that $(f_1, f_2) (x,v)$ solves the damped nonlinear problem \eqref{KL-NL-f-damp}. Moreover, $(f_1, f_2) ( x,v )$ enjoys the estimate $ \mathscr{E}^\infty ( e^{ \hbar \sigma } f_1 ) + \mathscr{E}^\infty ( e^{ \hbar \sigma } f_2 ) \leq 2 C_0 \varsigma $. Moreover, the uniqueness of \eqref{KL-NL-f-damp} can be obtained by similar arguments in \eqref{Star-3}. As a result, for any $( \widehat{h}, \widehat{f}_b ) \in \mathfrak{X}_\gamma^{\varsigma_0}$, the solution operator $\mathcal{I}^\gamma$ to the damped nonlinear problem \eqref{KL-NL-f-damp} by
\begin{equation}\label{SolOper-NL}
  \begin{aligned}
    \mathcal{I}^\gamma ( \widehat{h}, \widehat{f}_b ) = f
  \end{aligned}
\end{equation}
given in Remark \ref{Rmk-NL-ASS} is well-defined, where the space $\mathfrak{X}_\gamma^{\varsigma_0}$ is defined in \eqref{X-gamma-0}.

Now we prove the existence of the nonlinear problem \eqref{KL-NL} (equivalently \eqref{KL-NL-f-decmp}) by removing the artificial damping term $\mathbf{D} f_1$ in \eqref{KL-NL-f-decmp}. By the similar derivations of \eqref{Solvb-1}, one knows that $\mathbf{D} f_1 (x,v) = 0$ for any $x \geq 0$ and $v \in \R^3$ if and only if 
\begin{equation}\label{Solvb-NL-1}{\small
  \begin{aligned}
    \int_{\R^3} v_3
    \left(
    \begin{array}{c}
      \psi_3^* \\
      \widehat{\mathbb{B}}_3 \\
      \widehat{\mathbb{A}}_{13} \\
      \widehat{\mathbb{A}}_{23}
    \end{array}
    \right) f_1 (0, v) \d v = 0_4 \,.
  \end{aligned}}
\end{equation}
Note that $f_1 = f - f_2$ and $f_2 (0, v) = - \frac{1}{v_3} \int_0^\infty \big[ \mathbb{P} \Gamma (f, f) + \mathbb{P} \widehat{h} \big] (z, v) \d z$. Together with the facts $\int_{\R^3} X \cdot (\mathbb{I} - \mathbb{P} ) [ \Gamma (f, f) + \widehat{h} ] \d v = 0$ for $X = \psi_3^*, \widehat{\mathbb{B}}_3, \widehat{\mathbb{A}}_{13}, \widehat{\mathbb{A}}_{23} $, the the conditions \eqref{Solvb-NL-1} is equivalent to
\begin{equation}\label{Solvb-NL}{\small
  \begin{aligned}
    \int_{\R^3}
    \left(
    \begin{array}{c}
      \psi_3^* \\
      \widehat{\mathbb{B}}_3 \\
      \widehat{\mathbb{A}}_{13} \\
      \widehat{\mathbb{A}}_{23}
    \end{array}
    \right) \Big( v_3 f (0, v) + \int_0^\infty \big[ \Gamma (f, f) + \widehat{h} \big] (z, v) \d z \Big) \d v = 0_4 \,.
  \end{aligned}}
\end{equation}
Then the vanishing sources set
\begin{equation}\label{X-gamma-NL}
  \begin{aligned}
    \widetilde{\mathbb{VSS}}_{\alpha_*} = \big\{ ( \widehat{h}, \widehat{f}_b ); \widehat{h} \in \mathrm{Null}^\perp, \mathcal{I}^\gamma ( \widehat{h}, \widehat{f}_b ) = f \textrm{ satisfies \eqref{Solvb-NL}} \big\} \,.
  \end{aligned}
\end{equation}
Consequently, $\mathcal{I}^\gamma ( \widehat{h}, \widehat{f}_b ) = f$ is the solution to the nonlinear problem \eqref{KL-NL} if the source terms $ ( \widehat{h}, \widehat{f}_b ) \in \widetilde{\mathbb{VSS}}_{\alpha_*} \cap {\mathfrak{X}}_\gamma^{\varsigma_0} $. 

Since the functions $\psi_3^*, \widehat{\mathbb{B}}_3, \widehat{\mathbb{A}}_{13}, \widehat{\mathbb{A}}_{23}$ are linearly independent in $L^2_v$, $\widetilde{\mathbb{VSS}}_{\alpha_*} \cap {\mathfrak{X}}_\gamma^{\varsigma_0}$ is the subspace of ${\mathfrak{X}}_\gamma^{\varsigma_0}$ with codimension 4. The proof of Theorem \ref{Thm-Nonlinear} is therefore finished.
\end{proof}


\section{Bounds for operators $Y_A$, $Z$ and $U$: Proof of Lemma \ref{Lmm-ARU}}\label{Sec:YZU-proof}

In this section, we mainly aim at verifying the proof of Lemma \ref{Lmm-ARU}.

\begin{proof}[Proof of Lemma \ref{Lmm-ARU}]
	The proof of this lemma will be divided into three steps as follows.
	
	{\bf Step 1. $L^\infty_{x,v}$ bound for the operator $Y_A$.}
	
	We now show the inequality \eqref{YAn-bnd}.
	
	If $v_3 > 0$, Lemma \ref{Lmm-sigma} indicates that $0 < v_3 \sigma_x \leq c \nu (v)$ and $|\sigma_{xx} v_3| \leq \delta \sigma_x \nu (v)$, which mean that
	\begin{equation}\label{k-bnd-1}
		\begin{aligned}
			\kappa (x,v) = & \int_0^x \big[ - \tfrac{m \sigma_{xx} (y,v) }{2 \sigma_x (y,v)} - \hbar \sigma_x (y, v) + \tfrac{\nu (v)}{v_3} \big] \d y \\
			\geq & \int_0^x  (1 - c \hbar - \tfrac{|m|}{2} \delta ) \tfrac{\nu (v)}{v_3} \d y = (1 - c \hbar - \tfrac{|m|}{2} \delta) \tfrac{\nu (v)}{v_3} x \geq 0 \,,
		\end{aligned}
	\end{equation}
	provided that $c_{\hbar, \delta} : = 1 - c \hbar - \tfrac{|m|}{2} \delta > 0$. Actually, one can take sufficiently small $\hbar, \delta > 0$ such that $c_{\hbar, \delta} > \frac{1}{2}$. Then one has
	\begin{equation}\label{k-bnd1-prime}
		\begin{aligned}
			e^{- \kappa (x,v)} \leq 1
		\end{aligned}
	\end{equation}
	for $1 - c \hbar - \tfrac{|m|}{2} \delta > 0$. Moreover, $(R_0 v)_3 = - v_3 < 0$, one similarly knows that
	\begin{equation}\label{k-n}
		\begin{aligned}
			\kappa (A, R_0 v) = & \int_0^A \big[ - \tfrac{m \sigma_{xx} (y, R_0 v) }{2 \sigma_x (y, R_0 v)} - \hbar \sigma_x (y, R_0 v) + \tfrac{\nu (R_0 v)}{( R_0 v )_3} \big] \d y \\
			\leq & - \int_0^A  (1 - c \hbar - \tfrac{|m|}{2} \delta ) \tfrac{\nu (v)}{v_3} \d y = - (1 - c \hbar - \tfrac{|m|}{2} \delta) \tfrac{\nu (v)}{v_3} A \leq 0 \,,
		\end{aligned}
	\end{equation}
	which means that $ e^{ \kappa (A, R_0 v) } \leq 1 $. As a result, for $v_3 > 0$,
	\begin{equation}\label{k-bnd1}
		\begin{aligned}
			e^{ {\kappa} (A, R_0 v) - {\kappa} (x,v) } \leq 1 \,.
		\end{aligned}
	\end{equation}
	Here ${\kappa} (x,v)$ is defined in \eqref{kappa}. It thereby holds
	\begin{equation}\label{Y-1}
		\begin{aligned}
			\| \mathbf{1}_{v_3 > 0} e^{ {\kappa} (A, R_0 v) - {\kappa} (x,v) } f (A, R_0 v) \|_{L^\infty_{x,v}} \leq \| f (A, \cdot ) \|_{L^\infty_v} \,.
		\end{aligned}
	\end{equation}
	
	Moreover, by the similar arguments in \eqref{k-bnd1}, one easily obtains $ e^{ {\kappa} (A, v') - {\kappa} (x,v) } \leq 1 $ for $v_3' < 0$ and $v_3 > 0$. Recall the definition of $M_\sigma$ in \eqref{M-sigma}, i.e., $ \M_\sigma (v) = \M (v) e^{ - 2 \hbar \sigma (0,v) } $. Based on Lemma \ref{Lmm-sigma}, it is easy to verify that
	\begin{equation*}
		\begin{aligned}
			|v_3'| \sigma_x^{- \frac{m}{2}} (0, v') \sqrt{\M_\sigma (v')} \leq C \M^\frac{1}{4} (v')
		\end{aligned}
	\end{equation*}
	for some universal constant $C > 0$. Now we deal with the factor $\frac{M_w (v)}{\sqrt{\M_\sigma (v)}} \sigma_x^\frac{m}{2} (0, v)$. Lemma \ref{Lmm-sigma} tells us
	\begin{equation*}
		0 \leq \sigma_x^\frac{m}{2} (0, v) e^{ \hbar \sigma (0, v) } \leq c_1 (1 + |v - \u|)^{(1 - \gamma) |m|} e^{ C \hbar |v - \u|^2 } \,.
	\end{equation*}
	Then
	\begin{equation*}{\footnotesize
		\begin{aligned}
			\tfrac{M_w (v)}{\sqrt{\M_\sigma (v)}} \sigma_x^\frac{m}{2} (0, v) = & \tfrac{\sqrt{2 \pi} T^\frac{3}{2}}{\rho T_w^2} \sigma_x^\frac{m}{2} (0, v) e^{ \hbar \sigma (0, v) } \exp \{ - ( \tfrac{1}{2 T_w } - \tfrac{1}{4 T}) |v - \u|^2 - \tfrac{1}{T_w} (\u - u_w) \cdot (v - \u) - \tfrac{|\u - u_w|^2}{2 T_w} \} \\
			\leq & C (1 + |v - \u|)^{(1 - \gamma) |m|} e^{c_1 \hbar |v - \u|^2} \exp \{ - \tfrac{1}{2} ( \tfrac{1}{2 T_w } - \tfrac{1}{4 T}) |v - \u|^2 \}
		\end{aligned}}
	\end{equation*}
	with the assumption \eqref{Assmp-T}, i,e., $0 < T_w < 2 T$. By taking small $\hbar > 0$ such that $ c_1 \hbar < \tfrac{1}{2} ( \tfrac{1}{2 T_w } - \tfrac{1}{4 T}) $, it thereby infers $ \tfrac{M_w (v)}{\sqrt{\M_\sigma (v)}} \sigma_x^\frac{m}{2} (0, v) \leq C $ uniformly in $v \in \R^3$. We then have shown that
	\begin{equation}\label{Y-2}
		\begin{aligned}
			& \| \mathbf{1}_{v_3 > 0} \tfrac{M_w (v)}{\sqrt{\M_\sigma (v)}} \int_{v_3' < 0} (- v_3') \tfrac{ \sigma_x^\frac{m}{2} (0, v) }{ \sigma_x^\frac{m}{2} (0, v') } e^{ {\kappa} (A, v') - {\kappa} (x,v)} f(A, v') \sqrt{\M_\sigma (v') } \d v' \|_{L^\infty_{x,v}} \\
			\leq & C \| \int_{v_3' < 0} |f (A, v')| \M^\frac{1}{4} (v') \d v' \|_{L^\infty_{x,v}} \leq C \| f (A, \cdot) \|_{L^\infty_v} \,,
		\end{aligned}
	\end{equation}
	where the fact $ \int_{v_3' < 0} \M^\frac{1}{4} (v') \d v' \leq C $ has been used.
	
	If $v_3 < 0$,
	\begin{equation}\label{k-Ax}{\small
		\begin{aligned}
			\kappa (A, v) - \kappa (x,v) = \int_x^A [ - \tfrac{m \sigma_{xx} (y,v)}{2 \sigma_x (y,v)} - \hbar \sigma_x (y,v) - \tfrac{\nu (v)}{|v_3|}] \d y \leq - (1 - c \hbar - \tfrac{m}{2} \delta ) \tfrac{\nu (v)}{|v_3|} (A - x) \leq 0 \,,
		\end{aligned}}
	\end{equation}
	which means that
	\begin{equation}\label{k-bnd2-prime}
		\begin{aligned}
			e^{\kappa (A, v) - \kappa (x,v)} \leq e^{ - (1 - c \hbar - \frac{m}{2} \delta) \tfrac{\nu (v)}{|v_3|} (A - x) } \leq 1 \,.
		\end{aligned}
	\end{equation}	
	Consequently, there holds
	\begin{equation}\label{Y-3}
		\begin{aligned}
			\| \mathbf{1}_{v_3 < 0} e^{ {\kappa} (A, v) - {\kappa} (x,v)} f (A, v) \|_{L^\infty_{x,v}} \leq \| f (A, \cdot ) \|_{L^\infty_v} \,.
		\end{aligned}
	\end{equation}
	Recalling the definition of the operator $Y_A (f)$ in \eqref{YAn-f}, the estimates \eqref{Y-1}, \eqref{Y-2} and \eqref{Y-3} infer that
	\begin{equation*}
		\begin{aligned}
			\| Y_A (f) \|_{L^\infty_{x,v}} \leq C \| f (A, \cdot) \|_{L^\infty_v}
		\end{aligned}
	\end{equation*}
	for some constant $C > 0$ independent of $A$, $\hbar$. Namely the bound \eqref{YAn-bnd} holds.
	
	{\bf Step 2. $L^\infty_{x,v}$ bound for the operator $Z$.}
	
	Now we prove the inequalities \eqref{Rn-bnd}.
	
	If $v_3 > 0$, then \eqref{nu-rf} and $\sigma_x > 0$ derived from Lemma \ref{Lmm-sigma} show that for $v_3 > 0$,
	\begin{equation}
		\begin{aligned}
			\kappa (x', R_0 v) = \int_0^{x'} \big[ - \tfrac{m \sigma_{xx} (y, R_0 v)}{2 \sigma_x (y, R_0 v)} - \hbar \sigma_x (y, R_0 v) + \tfrac{\nu (R_0 v)}{(R_0 v)_3} \big] \d y \leq - (1 - \tfrac{\delta}{2}) \tfrac{\nu (v)}{ v_3} x' < 0 \,.
		\end{aligned}
	\end{equation}
	Together with \eqref{k-bnd-1}, one has $ e^{- \kappa (x,v) + \kappa (x', R_0 v)} \leq e^{ - c_{\hbar, \delta} \frac{\nu(v)}{v_3} x} e^{- (1 - \tfrac{\delta}{2}) \frac{\nu(v)}{v_3} x'} $ for $v_3 > 0$. Then there hold
		\begin{align}\label{Rn-f-bdn}
			\no & (1 - \tfrac{1}{n}) | \int_0^A e^{ - [ {\kappa} (x,v) - {\kappa} (x', R_0 v) ] } \tfrac{1}{v_3} f (x', R_0 v) \d x' | \\
			\leq & (1 - \tfrac{1}{n}) \mathbf{1}_{v_3 > 0} e^{- c_{\hbar, \delta} \frac{\nu(v)}{v_3} x} \int_0^A e^{- (1 - \tfrac{\delta}{2}) \frac{\nu(v)}{v_3} x'} \tfrac{\nu (v)}{v_3} |\nu^{-1} (v) f (x', R_0 v)| \d x'
		\end{align}
	for small $\hbar > 0$. Note that the right-hand side of the bound \eqref{Rn-f-bdn} can be bounded by
	\begin{equation*}{\small
		\begin{aligned}
			& (1 - \tfrac{1}{n}) \int_0^A e^{- (1 - \tfrac{\delta}{2}) \frac{\nu(v)}{v_3} x'} \tfrac{\nu (v)}{v_3} \d x' \|\nu^{-1} (v) f (x, R_0 v) \|_{L^\infty_x} = (1 - \tfrac{\delta}{2})^{-1} (1 - \tfrac{1}{n}) \nu^{-1} (v) \| f (\cdot, R_0 v) \|_{L^\infty_x} \,,
		\end{aligned}}
	\end{equation*}
	which implies
	\begin{equation}\label{Z-1}
		\begin{aligned}
			(1 - \tfrac{1}{n}) \| \int_0^A e^{ - [ {\kappa} (x,v) - {\kappa} (x', R_0 v) ] } \tfrac{1}{v_3} f (x', R_0 v) \d x' \|_{L^\infty_{x,v}} \leq (1 - \tfrac{\delta}{2})^{-1} (1 - \tfrac{1}{n}) \|\nu^{-1} f \|_{L^\infty_{x,v}} \,.
		\end{aligned}
	\end{equation}
	
	Now we control the quantity
	\begin{equation*}
		\begin{aligned}
			\mathfrak{Q} (f) = \tfrac{M_w (v)}{\sqrt{\M_\sigma (v)}} \int_{v_3' < 0} \int_0^A (- v_3') \tfrac{ \sigma_x^\frac{m}{2} (0, v) }{ \sigma_x^\frac{m}{2} (0, v') } e^{- [ {\kappa} (x,v) - {\kappa} (x', v') ] } \tfrac{1}{v_3'} f (x', v') \sqrt{\M_\sigma (v') } \d x' \d v' \,.
		\end{aligned}
	\end{equation*}
	The inequality \eqref{k-bnd-1} implies $ \mathbf{1}_{v_3 > 0} e^{- \kappa (x,v)} \leq \mathbf{1}_{v_3 > 0} e^{ - c_{\hbar, \delta} \frac{\nu (v)}{v_3} x } \leq 1 $. By the similar arguments in \eqref{k-n}, one has $ \mathbf{1}_{v_3' < 0} e^{ \kappa (x', v') } \leq \mathbf{1}_{v_3' < 0} e^{ - c_{\hbar, \delta} \frac{\nu (v') }{|v_3'|} x' } $. It also follows from the similar arguments of \eqref{Y-2} that $ \tfrac{M_w (v)}{\sqrt{\M_\sigma (v)}} \tfrac{ \sigma_x^\frac{m}{2} (0, v) }{ \sigma_x^\frac{m}{2} (0, v') } \sqrt{\M_\sigma (v') } \leq C \M^\frac{1}{4} (v') $ uniformly in $v \in \R^3$ for sufficiently small $\hbar > 0$, where the constant $C > 0$ is independent of $A, n, \hbar, l$ and $\delta$. As a consequence,
{\small
		\begin{align}\label{Z-2}
			\no \| \mathfrak{Q} (f) \|_{L^\infty_{x,v}} \leq & C \| \nu^{-1} f \|_{L^\infty_{x,v}} \int_{v_3' < 0} \int_0^A \tfrac{\nu (v')}{ |v_3'| } e^{ - c_{\hbar, \delta} \frac{\nu (v') }{|v_3'|} x' } |v_3'| \M^\frac{1}{4} (v') \d x' \d v' \\
			\no = & \tfrac{C}{c_{\hbar, \delta}} \| \nu^{-1} f \|_{L^\infty_{x,v}} \int_{v_3' < 0} ( - e^{ - c_{\hbar, \delta} \tfrac{\nu (v')}{|v_3'|} x' }  ) \big|_0^A |v_3'| \M^\frac{1}{4} (v') \d v' \\
			\leq & \tfrac{C}{c_{\hbar, \delta}} \| \nu^{-1} f \|_{L^\infty_{x,v}} \int_{v_3' < 0} |v_3'| \M^\frac{1}{4} (v') \d v' \leq C \| \nu^{-1} f \|_{L^\infty_{x,v}}
		\end{align}}
	with small $\hbar > 0$ and assumption \eqref{Assmp-T}. Therefore, plugging the estimates \eqref{Z-1} and \eqref{Z-2} into the expression \eqref{Rn-f} of the operator $Z (f)$, one gains $ \| Z (f) \|_{L^\infty_{x,v}} \leq C \| \nu^{-1} f \|_{L^\infty_{x,v}} $, hence the bound \eqref{Rn-bnd} holds.
	
	{\bf Step 3. $L^\infty_{x,v}$ bound for the operator $U$.}
	
	We now justify the bound \eqref{U-bnd}. By the definition of ${\kappa} (x,v)$ in \eqref{kappa}, one has
	\begin{equation*}
		\begin{aligned}
			\mathbf{1}_{x' \in (0, x)} \mathbf{1}_{v_3 > 0} e^{ - [ {\kappa} (x,v) - {\kappa} (x', v) ] } = e^{ - \int_{x'}^x [ - \frac{m \sigma_{xx} (y, v)}{2 \sigma_x (y,v)} - \hbar \sigma_x (y, v) + \frac{\nu (v)}{v_3} ] \d y } \,.
		\end{aligned}
	\end{equation*}
	By the similar arguments in \eqref{k-bnd-1}, one has
	\begin{equation*}
		\begin{aligned}
			& \mathbf{1}_{x' \in (0, x)} \mathbf{1}_{v_3 > 0} e^{ - [ {\kappa} (x,v) - {\kappa} (x', v) ] } \leq \mathbf{1}_{x' \in (0, x)} \mathbf{1}_{v_3 > 0}  e^{ - c_{\hbar, \delta} \frac{\nu (v)}{v_3} (x - x') } \,.
		\end{aligned}
	\end{equation*}
	Together with the definition of the operator $U$ in \eqref{U-f}, it infers that
		\begin{align}\label{U-1}
			\no | \mathbf{1}_{v_3 > 0} U (f) | \leq & C \int_0^x \tfrac{\nu (v)}{v_3} e^{ - \frac{1}{2} c_{\hbar, \delta} \frac{\nu (v)}{v_3} (x - x') } | \nu^{-1} f (x' v) | \d x' \\
			\no \leq & C \| \nu^{-1} f \|_{L^\infty_{x,v}} \int_0^x \tfrac{\nu (v)}{v_3} e^{ - c_{\hbar, \delta} \frac{\nu (v)}{v_3} (x - x') } \d x' \\
			= & \tfrac{C}{c_{\hbar, \delta}} \| \nu^{-1} f \|_{L^\infty_{x,v}} ( 1 - e^{ - \frac{1}{2} c_{\hbar, \delta} \frac{\nu (v)}{v_3} x } ) \leq \tfrac{2 C}{c_{\hbar, \delta}} \| \nu^{-1} f \|_{L^\infty_{x,v}} \,.
		\end{align}
	
	We then consider the case $v_3 < 0$. Combining with the definition of ${\kappa} (x,v)$ in \eqref{kappa} and the inequality \eqref{k-Ax}, one derives
		\begin{align*}
			\mathbf{1}_{x' \in (x,A)} \mathbf{1}_{v_3 < 0} e^{ - [ {\kappa} (x,v) - {\kappa} (x', v) ] } = & \mathbf{1}_{x' \in (x,A)} \mathbf{1}_{v_3 < 0} e^{ \int^{x'}_x [ - \frac{m \sigma_{xx} (y, v)}{2 \sigma_x (y,v)} - \hbar \sigma_x (y, v) - \frac{\nu (v)}{|v_3|} ] \d y } \\
			\leq & \mathbf{1}_{x' \in (x,A)} \mathbf{1}_{v_3 < 0} e^{ - c_{\hbar, \delta} \frac{\nu (v)}{|v_3|} (x' - x) } \,.
		\end{align*}
	Then
	\begin{equation}\label{U-2}
		\begin{aligned}
			| \mathbf{1}_{v_3 < 0} U (f) | \leq & \int_x^A e^{ - c_{\hbar, \delta} \frac{\nu (v)}{|v_3|} (x' - x) } \tfrac{\nu (v)}{|v_3|} | \nu^{-1} (v) f (x', v) | \d x' \\
			\leq & \| \nu^{-1} f \|_{L^\infty_{x,v}} \int_x^A e^{ - c_{\hbar, \delta} \frac{\nu (v)}{|v_3|} (x' - x) } \tfrac{\nu (v)}{|v_3|} \d x' \\
			= & \tfrac{1}{c_{\hbar, \delta}} \| \nu^{-1} f \|_{L^\infty_{x,v}} ( 1 - e^{ - c_{\hbar, \delta} \frac{\nu (v)}{|v_3|} (A - x) } ) \leq \tfrac{1}{c_{\hbar, \delta}} \| \nu^{-1} f \|_{L^\infty_{x,v}} \,.
		\end{aligned}
	\end{equation}
	Then the bounds \eqref{U-1} and \eqref{U-2} conclude the estimate \eqref{U-bnd}. The proof of Lemma \ref{Lmm-ARU} is therefore completed.
\end{proof}

\section{Properties of the operator $K$: Proof of Lemma \ref{Lmm-K-Oprt}-\ref{Lmm-Kh-2-infty}-\ref{Lmm-Kh-L2}}\label{Sec:K}

In this section, we aim at investigating the properties of the operator $K$.

\subsection{$L^\infty_{x,v}$ property of $K$: Proof of Lemma \ref{Lmm-K-Oprt}}\label{Subsec:K-infty}

In this subsection, we will prove the $L^\infty_{x,v}$ property of $K$, i.e., proof of Lemma \ref{Lmm-K-Oprt}.

Before proving Lemma \ref{Lmm-K-Oprt}, we need to analyze the factor $\sigma_x$ in the bound \eqref{K-bnd}. By the definition of $\sigma$ in \eqref{sigma}, it infers
\begin{equation}\label{sigma-x}
	\sigma_x (x,v) =
	\left\{
	\begin{array}{ll}
		\delta (1 + |v - \u|)^{- 1 + \gamma} \,, & (x, v) \in \Omega_1 \,, \\
		\delta \big( c_1 (1 + |v - \u|)^{-1 + \gamma} + c_2 (\delta x + l)^{- \frac{1-\gamma}{3-\gamma}} \big) \,, & (x,v) \in \Omega_2 \,, \\
		\tfrac{10 \delta}{3 - \gamma} (\delta x + l)^{- \frac{1-\gamma}{3-\gamma}} \,, & (x,v) \in \Omega_3 \,,
	\end{array}
	\right.
\end{equation}
where $c_1$ and $c_2$ are positive functions depending on $(x,v)$ and $\Upsilon$ with $c_1 + c_2$ admiting a uniform lower bound. Also $\Omega_i$ ($i=1,2,3$) are defined as follows.
\begin{equation}\label{Omega123}
	\begin{aligned}
		& \Omega_1 = \{ (x,v) : \delta x + l \leq (1 + |v-\u|)^{3 - \gamma} \} \,, \\
		& \Omega_2 = \{ (x,v) : (1 + |v-\u|)^{3 - \gamma} < \delta x + l < 2 (1 + |v-\u|)^{3 - \gamma} \} \,, \\
		& \Omega_3 = \{ (x,v) : \delta x + l \geq 2 (1 + |v-\u|)^{3 - \gamma} \} \,.
	\end{aligned}
\end{equation}

\begin{lemma}\label{Lmm-Psigma}
	Let $-3 < \gamma \leq 1$. Denote by
	\begin{equation}
		\begin{aligned}
			P_\sigma = \tfrac{\sigma_x (x,v)}{\sigma_x (x, v_*)} \,, \quad P_v = \tfrac{( 1 + | v - \u | )^{-1 + \gamma}}{( 1 + | v_* - \u | )^{-1 + \gamma}} \,.
		\end{aligned}
	\end{equation}
	Then the following results hold:
{\small
	\begin{figure}[H]
		\begin{center}
			\begin{tikzpicture}
				\draw (-5,0)--(5,0);
				\draw (-5, -1)--(5,-1);
				\draw (-5, -2)--(5,-2);
				\draw (-5, -3)--(5,-3);
				\draw (-5, -4)--(5,-4);
				\draw (-5, 0)--(-5, -4);
				\draw (5, 0)--(5, -4);
				\draw (-3.1, 0)--(-3.1, -4);
				\draw (-0.4, 0)--(-0.4, -4);
				\draw (2.3, 0)--(2.3, -4);
				\draw (-5, -0.35)--(-3.1, -1);
				\draw (-4.35, -0)--(-3.1, -1);
				\draw (-3.5, -0.3) node{\footnotesize $(x,v)$ };
				\draw (-5.1, -0.8) node[right]{\footnotesize $(x, v_*)$};
				\draw (-4.9, -0.25) node[right]{$P_\sigma$};
				\draw (-2.15, -0.6) node[right]{$\Omega_1$};
				\draw (0.55, -0.6) node[right]{$\Omega_2$};
				\draw (3.25, -0.6) node[right]{$\Omega_3$};
				\draw (-4.4, -1.55) node[right]{$\Omega_1$};
				\draw (-4.4, -2.55) node[right]{$\Omega_2$};
				\draw (-4.4, -3.55) node[right]{$\Omega_3$};
				\draw (-2.6, -1.55) node[right]{$P_\sigma = P_v$};
				\draw (-2.6, -2.55) node[right]{$P_\sigma \thicksim P_v$};
				\draw (-2.9, -3.55) node[right]{$P_v \lesssim P_\sigma \lesssim 1$};
				\draw (0.1, -1.55) node[right]{$P_\sigma \thicksim P_v$};
				\draw (0.1, -2.55) node[right]{$P_\sigma \thicksim P_v$};
				\draw (0.1, -3.55) node[right]{$P_\sigma \thicksim 1$};
				\draw (2.45, -1.55) node[right]{$1 \lesssim P_\sigma \lesssim P_v$};
				\draw (2.9, -2.55) node[right]{$P_\sigma \thicksim 1$};
				\draw (2.9, -3.55) node[right]{$P_\sigma = 1$};
			\end{tikzpicture}
		\end{center}
	\end{figure}}
\end{lemma}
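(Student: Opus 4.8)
The plan is to verify the nine cells of the table directly from the piecewise expression \eqref{sigma-x} for $\sigma_x$. Write $r = 1 + |v - \u|$, $r_* = 1 + |v_* - \u|$ and $L = \delta x + l$; since the common prefactor $\delta$ cancels in $P_\sigma$, the whole matter reduces to comparing the scalars $L^{-\frac{1-\gamma}{3-\gamma}}$ and $r^{-1+\gamma}$ (respectively $r_*^{-1+\gamma}$) inside each of the regions $\Omega_1,\Omega_2,\Omega_3$ of \eqref{Omega123}. Because $1-\gamma\ge 0$ for $-3<\gamma\le 1$, raising the defining inequalities of these regions to the power $-\frac{1-\gamma}{3-\gamma}$ reverses them, giving the three elementary facts I will use repeatedly: on $\Omega_1$ one has $r^{-1+\gamma}\le L^{-\frac{1-\gamma}{3-\gamma}}$; on $\Omega_3$ one has $L^{-\frac{1-\gamma}{3-\gamma}}\le 2^{-\frac{1-\gamma}{3-\gamma}}r^{-1+\gamma}\le r^{-1+\gamma}$; and on $\Omega_2$ one has $r^{-1+\gamma}\thicksim L^{-\frac{1-\gamma}{3-\gamma}}$ with constants depending only on $\gamma$. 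Moreover, on $\Omega_2$ the combination $c_1 r^{-1+\gamma}+c_2 L^{-\frac{1-\gamma}{3-\gamma}}$ is $\thicksim r^{-1+\gamma}$, since $c_1+c_2$ is bounded above and below by harmless constants.

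First I would dispatch the cells in which both values of $\sigma_x$ are comparable to the same quantity. When $(x,v),(x,v_*)\in\Omega_1$, \eqref{sigma-x} gives $\sigma_x(x,v)=\delta r^{-1+\gamma}$ and $\sigma_x(x,v_*)=\delta r_*^{-1+\gamma}$, so $P_\sigma=P_v$ identically; when $(x,v),(x,v_*)\in\Omega_3$ one has $\sigma_x(x,v)=\sigma_x(x,v_*)=\frac{10\delta}{3-\gamma}L^{-\frac{1-\gamma}{3-\gamma}}$ because $L$ is common to both points, hence $P_\sigma=1$. In every mixed cell in which a point sits in $\Omega_2$, I replace its $\Omega_2$-value of $\sigma_x$ by the comparable counterpart — $r^{-1+\gamma}$ when the other point ``sees'' the velocity polynomial (i.e.\ lies in $\Omega_1$ or $\Omega_2$), or $L^{-\frac{1-\gamma}{3-\gamma}}$ when the other point lies in $\Omega_3$ — and read off $P_\sigma\thicksim P_v$ or $P_\sigma\thicksim 1$ accordingly; the cell $(x,v)\in\Omega_1$, $(x,v_*)\in\Omega_2$ and its transpose are handled the same way and give $P_\sigma\thicksim P_v$.

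The two genuinely two-sided cells, namely $(x,v)\in\Omega_1$ with $(x,v_*)\in\Omega_3$ and its transpose, are where both region-defining inequalities are invoked simultaneously. In the first, $P_\sigma=\frac{3-\gamma}{10}\,r^{-1+\gamma}/L^{-\frac{1-\gamma}{3-\gamma}}$; the $\Omega_1$-bound $r^{-1+\gamma}\le L^{-\frac{1-\gamma}{3-\gamma}}$ gives $P_\sigma\lesssim 1$, while the $\Omega_3$-bound $L^{-\frac{1-\gamma}{3-\gamma}}\le r_*^{-1+\gamma}$ gives $P_\sigma\gtrsim r^{-1+\gamma}/r_*^{-1+\gamma}=P_v$, i.e.\ $P_v\lesssim P_\sigma\lesssim 1$; by symmetry the transpose yields $1\lesssim P_\sigma\lesssim P_v$. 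The main — and essentially only — obstacle is purely bookkeeping: one must consistently track the direction of each inequality when raising the region-defining relations to the negative power $-\frac{1-\gamma}{3-\gamma}$, and keep the transition coefficients $c_1,c_2$ under control through the uniform two-sided bound on $c_1+c_2$. No analytic difficulty arises beyond this, so the lemma follows by assembling the nine cases.
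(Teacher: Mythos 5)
Your proposal is correct and follows essentially the same route as the paper: a nine-cell case analysis reading off $\sigma_x$ from its piecewise form \eqref{sigma-x}, using that the defining inequalities of $\Omega_1,\Omega_2,\Omega_3$ reverse under the exponent $-\tfrac{1-\gamma}{3-\gamma}\le 0$, and combining the two region bounds in the genuinely mixed cells $(\Omega_1,\Omega_3)$ and $(\Omega_3,\Omega_1)$. The treatment of the $\Omega_2$ cells via $c_1 r^{-1+\gamma}+c_2 L^{-\frac{1-\gamma}{3-\gamma}}\thicksim r^{-1+\gamma}\thicksim L^{-\frac{1-\gamma}{3-\gamma}}$ matches the paper's argument, so no further comment is needed.
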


\begin{proof}
	Based on the expression $\sigma_x$ in \eqref{sigma-x} and the definitions of $\Omega_1$ ($i = 1,2,3$) in \eqref{Omega123}, the results in Lemma \ref{Lmm-Psigma} will be proved case by case.
	
	{\em Case 1. $(x,v) \in \Omega_1$, $(x,v_*) \in \Omega_1$.}
	
	By \eqref{sigma-x}, one has $ \sigma_x (x,v) = \delta (1 + |v - \u|)^{-1 + \gamma} $ and $ \sigma_x (x,v_*) = \delta (1 + |v_* - \u|)^{-1 + \gamma} $. Then
	\begin{equation*}
		\begin{aligned}
			P_\sigma = \tfrac{\sigma_x (x,v)}{\sigma_x (x, v_*)} = \tfrac{(1 + |v - \u|)^{-1 + \gamma}}{(1 + |v_* - \u|)^{-1 + \gamma}} = P_v \,.
		\end{aligned}
	\end{equation*}
	
	{\em Case 2. $(x,v) \in \Omega_1$, $(x,v_*) \in \Omega_2$.}
	
	Due to $(x,v_*) \in \Omega_2$, Lemma \ref{Lmm-sigma} and the definition of $\Omega_2$ in \eqref{Omega123} show that
	\begin{equation*}
		\begin{aligned}
			\sigma_x (x,v_*) \thicksim (\delta x + l)^{- \frac{1 - \gamma}{3 - \gamma}} \thicksim (1 + |v_* - \u|)^{- 1 + \gamma} \,.
		\end{aligned}
	\end{equation*}
	Together $\sigma_x (x,v) = \delta (1 + |v - \u|)^{-1 + \gamma}$, it follows that
	\begin{equation*}
		\begin{aligned}
			P_\sigma = \tfrac{\sigma_x (x,v)}{\sigma_x (x, v_*)} \thicksim \tfrac{(1 + |v - \u|)^{-1 + \gamma}}{(1 + |v_* - \u|)^{-1 + \gamma}} = P_v \,.
		\end{aligned}
	\end{equation*}
	
	{\em Case 3. $(x,v) \in \Omega_1$, $(x,v_*) \in \Omega_3$.}
	
	By \eqref{sigma-x}, one has $ \sigma_x (x, v_*) = \tfrac{10 \delta}{3 - \gamma} (\delta x + l)^{- \frac{1 - \gamma}{3 - \gamma}} $. Then
	\begin{equation*}
		\begin{aligned}
			\tfrac{\sigma_x (x,v)}{\sigma_x (x, v_*)} = \tfrac{3 - \gamma}{10} (1 + |v - \u|)^{-1 + \gamma} (\delta x + l)^\frac{1 - \gamma}{3 - \gamma} \,.
		\end{aligned}
	\end{equation*}
	In $\Omega_1$, one has $\delta x + l \leq (1 + |v - \u|)^{3 - \gamma}$. Then
	\begin{equation*}
		\begin{aligned}
			\tfrac{\sigma_x (x,v)}{\sigma_x (x, v_*)} \leq \tfrac{3 - \gamma}{10} (1 + |v - \u|)^{-1 + \gamma} (1 + |v - \u|)^{1 - \gamma} = \tfrac{3 - \gamma}{10} \,.
		\end{aligned}
	\end{equation*}
	In $\Omega_3$, one has $\delta x + l \geq 2 (1 + |v_* - \u|)^{3 - \gamma}$. Then
	\begin{equation*}
		\begin{aligned}
			\tfrac{\sigma_x (x,v)}{\sigma_x (x, v_*)} \geq 2^\frac{1 - \gamma}{3 - \gamma} \tfrac{3 - \gamma}{10} (1 + |v - \u|)^{-1 + \gamma} (1 + |v_* - \u|)^{1 - \gamma} = 2^\frac{1 - \gamma}{3 - \gamma} \tfrac{3 - \gamma}{10} P_v \,.
		\end{aligned}
	\end{equation*}
	In summary, $ 2^\frac{1 - \gamma}{3 - \gamma} \tfrac{3 - \gamma}{10} P_v \leq P_\sigma = \tfrac{\sigma_x (x,v)}{\sigma_x (x, v_*)} \leq \tfrac{3 - \gamma}{10} $, i.e., $P_v \lesssim P_\sigma \lesssim 1$.
	
	{\em Case 4. $(x,v) \in \Omega_2$, $(x,v_*) \in \Omega_1$.}
	
	By the similar arguments in {\em Case 2}, there hold
	\begin{equation*}
		\begin{aligned}
			\sigma_x (x,v) \thicksim (\delta x + l)^{- \frac{1 - \gamma}{3 - \gamma}} \thicksim (1 + |v - \u|)^{- 1 + \gamma} \,, \quad \sigma_x (x, v_*) = (1 + |v_* - \u|)^{- 1 + \gamma} \,,
		\end{aligned}
	\end{equation*}
	which implies $P_\sigma \thicksim P_v$.
	
	{\em Case 5. $(x,v) \in \Omega_2$, $(x,v_*) \in \Omega_2$.}
	
	By the arguments in {\em Case 2} and {\em Case 3}, one knows
	\begin{equation*}
		\begin{aligned}
			\sigma_x (x,v) \thicksim (1 + |v - \u|)^{- 1 + \gamma} \,, \quad \sigma_x (x, v_*) \thicksim (1 + |v_* - \u|)^{- 1 + \gamma} \,,
		\end{aligned}
	\end{equation*}
	which means $P_\sigma \thicksim P_v$.
	
	{\em Case 6. $(x,v) \in \Omega_2$, $(x,v_*) \in \Omega_3$.}
	
	In $\Omega_2$, $\sigma_x (x, v) \thicksim (1 + |v - \u|)^{-1 + \gamma} \thicksim (\delta x + l)^{- \frac{1 - \gamma}{3 - \gamma}}$. In $\Omega_3$, $\sigma_x (x, v_*) = \tfrac{10 \delta}{3 - \gamma} (\delta x + l)^{- \frac{1 - \gamma}{3 - \gamma}}$. It thereby infers that $ P_\sigma \thicksim 1 $.
	
	{\em Case 7. $(x,v) \in \Omega_3$, $(x,v_*) \in \Omega_1$.}
	
	For $(x,v) \in \Omega_3$, one has
	\begin{equation*}
		\begin{aligned}
			\sigma_x (x, v) = \tfrac{10 \delta}{3 - \gamma} (\delta x + l)^{- \frac{1 - \gamma}{3 - \gamma}} \leq \tfrac{10 \delta}{3 - \gamma} 2^{- \frac{1 - \gamma}{3 - \gamma}} (1 + |v - \u|)^{-1 + \gamma} \,.
		\end{aligned}
	\end{equation*}
	For $(x,v_*) \in \Omega_1$, it is easy to see that $ \sigma_x (x, v_*) = \delta (1 + |v_* - \u|)^{- 1 + \gamma} \leq \delta (\delta x + l)^{- \frac{1 - \gamma}{3 - \gamma}} $. Then there hold
	\begin{equation*}
		\begin{aligned}
			\tfrac{\sigma_x (x,v)}{\sigma_x (x, v_*)} = \tfrac{10}{3 - \gamma}  \tfrac{(\delta x + l)^{- \frac{1 - \gamma}{3 - \gamma}}}{(1 + |v_* - \u|)^{- 1 + \gamma}} \leq \tfrac{10}{3 - \gamma} 2^{- \frac{1 - \gamma}{3 - \gamma}} \tfrac{(1 + |v - \u|)^{-1 + \gamma}}{(1 + |v_* - \u|)^{- 1 + \gamma}} \,,
		\end{aligned}
	\end{equation*}
	and
	\begin{equation*}
		\begin{aligned}
			\tfrac{\sigma_x (x,v)}{\sigma_x (x, v_*)} = \tfrac{10}{3 - \gamma}  \tfrac{(\delta x + l)^{- \frac{1 - \gamma}{3 - \gamma}}}{(1 + |v_* - \u|)^{- 1 + \gamma}} \geq \tfrac{10}{3 - \gamma} \tfrac{(\delta x + l)^{- \frac{1 - \gamma}{3 - \gamma}}}{(\delta x + l)^{- \frac{1 - \gamma}{3 - \gamma}}} = \tfrac{10}{3 - \gamma} \,.
		\end{aligned}
	\end{equation*}
	As a result, one has $1 \lesssim P_\sigma \lesssim P_v$.
	
	{\em Case 8. $(x,v) \in \Omega_3$, $(x,v_*) \in \Omega_2$.}
	
	For $(x,v) \in \Omega_3$ and $(x,v_*) \in \Omega_2$,
	\begin{equation*}
		\begin{aligned}
			\sigma_x (x, v) = \tfrac{10 \delta}{3 - \gamma} (\delta x + l)^{- \frac{1 - \gamma}{3 - \gamma}} \,, \quad \sigma_x (x.v_*) \thicksim (\delta x + l)^{- \frac{1 - \gamma}{3 - \gamma}} \thicksim (1 + |v_* - \u|)^{- 1 + \gamma} \,,
		\end{aligned}
	\end{equation*}
	which shows that $P_\sigma = \tfrac{\sigma_x (x,v)}{\sigma_x (x, v_*)} \thicksim 1$.
	
	{\em Case 9. $(x,v) \in \Omega_3$, $(x,v_*) \in \Omega_3$.}
	
	In this case, one has $ \sigma_x (x, v) = \sigma_x (x, v_*) = \tfrac{10 \delta}{3 - \gamma} (\delta x + l)^{- \frac{1 - \gamma}{3 - \gamma}} $, which means that $P_\sigma = \tfrac{\sigma_x (x,v)}{\sigma_x (x, v_*)} = 1$.
	
	The proof of Lemma \ref{Lmm-Psigma} is therefore completed.
\end{proof}

\begin{proof}[Proof of Lemma \ref{Lmm-K-Oprt}]
	For the simplicity of notations, we denote by
	\begin{equation}\label{p}
		\begin{aligned}
			\mathfrak{p} (x,v) = \sigma_x^\frac{m}{2} (x,v) e^{\hbar \sigma (x,v)} w_{\beta, \vartheta} (v) \,.
		\end{aligned}
	\end{equation}
	By the definition of the norm $\| \cdot \|_{\beta, \vartheta}$ in \eqref{XY-space}, one knows that
	\begin{equation}
		\begin{aligned}
			\| \sigma_x^\frac{m}{2} e^{\hbar \sigma} K f \|_{A; \beta, \vartheta} = \| \mathfrak{p} K f \|_{L^\infty_{x,v}} \,.
		\end{aligned}
	\end{equation}
	Let $\tilde{f} (x,v) = \mathfrak{p} f (x,v)$ and $K_\mathfrak{p} (\cdot) = \mathfrak{p} K ( \mathfrak{p}^{-1} \cdot )$. Then one has
	\begin{equation}{\small
		\begin{aligned}
			& \| \sigma_x^\frac{m}{2} e^{\hbar \sigma} K f \|_{A; \beta, \vartheta} = \| \mathfrak{p} K f \|_{L^\infty_{x,v}} = \| K_\mathfrak{p} \tilde{f} \|_{L^\infty_{x,v}} \,, \ \| \sigma_x^\frac{m}{2} e^{\hbar \sigma} f \|_{A; \beta + \gamma - 1, \vartheta} = \| (1 + |v|)^{\gamma - 1} \tilde{f} \|_{L^\infty_{x,v}} \,.
		\end{aligned}}
	\end{equation}
	It thereby suffices to prove
	\begin{equation}\label{Kp-bnd}
		\begin{aligned}
			\| K_\mathfrak{p} \tilde{f} \|_{L^\infty_{x,v}} \leq C \| (1 + |v|)^{\gamma - 1} \tilde{f} \|_{L^\infty_{x,v}} \,.
		\end{aligned}
	\end{equation}
	
	Let $\chi (r)$ be a smooth monotone function satisfying $\chi (r) = 0$ for $0 \leq r \leq 1$ and $\chi (r) = 1$ for $r \geq 2$. Then, together with \eqref{K-K1-K2}-\eqref{K1}-\eqref{K2}, the operator $K_\mathfrak{p}$ can be decomposed as follows:
	\begin{equation}\label{Kp-split}
		\begin{aligned}
			K_\mathfrak{p} \tilde{f} (x,v) = - K_{1 \mathfrak{p}} \tilde{f} (x,v) + K_{2 \mathfrak{p}}^{1 - \chi} \tilde{f} (x,v) + K_{2 \mathfrak{p}}^\chi \tilde{f} (x,v) \,,
		\end{aligned}
	\end{equation}
	where
{\footnotesize
		\begin{align}\label{K12p}
			\no K_{1 \mathfrak{p}} \tilde{f} (x,v) = & \int_{\R^3} \tfrac{\mathfrak{p} (x,v)}{\mathfrak{p} (x,v_*)} |v - v_*|^\gamma \M^\frac{1}{2} (v) \M^\frac{1}{2} (v_*) \tilde{f} (x,v_*) \d v_* \,, \\
			K_{2 \mathfrak{p}}^{1 - \chi} \no \tilde{f} (x,v) = & \mathfrak{p} (x,v) \iint_{\mathbb{R}^3 \times \mathbb{S}^2} (1 - \chi) (|v_* - v|) \big[ \M^\frac{1}{2} (v_*') \tfrac{\tilde{f}}{\mathfrak{p}} (x,v') + \M^\frac{1}{2} (v') \tfrac{\tilde{f}}{\mathfrak{p}} (x,v_*') \big] \M^\frac{1}{2} (v_*) b (\omega, v_* - v) \d \omega \d v_* \,, \\
			K_{2 \mathfrak{p}}^\chi \tilde{f} (x,v) = & \mathfrak{p} (x,v) \iint_{\mathbb{R}^3 \times \mathbb{S}^2} \chi (|v_* - v|) \big[ \M^\frac{1}{2} (v_*') \tfrac{\tilde{f}}{\mathfrak{p}} (x,v') + \M^\frac{1}{2} (v') \tfrac{\tilde{f}}{\mathfrak{p}} (x,v_*') \big] \M^\frac{1}{2} (v_*) b (\omega, v_* - v) \d \omega \d v_* \,.
		\end{align}}
	
	{\bf Step 1. Estimate of $\tfrac{\mathfrak{p} (x,v)}{\mathfrak{p} (x,v_*)}$.}
	
	By \eqref{p}, one has $ \tfrac{\mathfrak{p} (x,v)}{\mathfrak{p} (x,v_*)} = \big[ \tfrac{\sigma_x (x, v)}{\sigma_x (x, v_*)} \big]^\frac{m}{2} \tfrac{(1 + |v|)^\beta}{(1 + |v_*|)^\beta} e^{\hbar [ \sigma (x,v) - \sigma (x, v_*) ]} e^{\vartheta ( |v - \u|^2 - |v_* - \u|^2 )} $. By Lemma \ref{Lmm-sigma}, it follows $ | \sigma (x,v) - \sigma (x, v_*) | \leq c \big| |v - \u|^2 - |v_* - \u|^2 \big| $.

	We claim that for any $\beta \in \R$,
	\begin{equation}\label{Claim-beta}
		\begin{aligned}
			\tfrac{(1 + |v|)^\beta}{(1 + |v_*|)^\beta} \leq (1 + |v - v_*|)^{|\beta|} \,.
		\end{aligned}
	\end{equation}
	Indeed, if $\beta \geq 0$,
	\begin{equation*}
		\begin{aligned}
			(1 + |v|)^\beta \leq (1 + |v_*| + |v - v_*|)^\beta \leq (1 + |v_*|)^\beta (1 + |v - v_*|)^\beta \,,
		\end{aligned}
	\end{equation*}
	namely, $ \tfrac{(1 + |v|)^\beta}{(1 + |v_*|)^\beta} \leq (1 + |v - v_*|)^\beta $. If $\beta < 0$,
	\begin{equation*}
		\begin{aligned}
			(1 + |v_*|)^{- \beta} \leq (1 + |v_*|)^{- \beta} (1 + |v - v_*|)^{- \beta} = (1 + |v_*|)^{- \beta} (1 + |v - v_*|)^{|\beta|} \,,
		\end{aligned}
	\end{equation*}
	which means that $ \tfrac{(1 + |v|)^\beta}{(1 + |v_*|)^\beta} \leq (1 + |v - v_*|)^{|\beta|} $. As a result, the claim \eqref{Claim-beta} holds. Moreover, Lemma \ref{Lmm-Psigma} and similar arguments in \eqref{Claim-beta} indicate that for any $m \in \R$,
	\begin{equation}\label{pro-sigma}
		\begin{aligned}
			\big[ \tfrac{\sigma_x (x, v)}{\sigma_x (x, v_*)} \big]^\frac{m}{2} \leq C_{m, \gamma} \Big( 1 + \tfrac{( 1 + | v - \u | )^{- \frac{m (1 - \gamma)}{2}}}{( 1 + | v_* - \u | )^{- \frac{m (1 - \gamma)}{2}}} \Big) \leq C_{m, \gamma} (1 + |v - v_*|)^\frac{|m| (1 - \gamma)}{2} \,.
		\end{aligned}
	\end{equation}
	In summary, one derives that
	\begin{equation}\label{p-pro}
		\begin{aligned}
			\tfrac{\mathfrak{p} (x,v)}{\mathfrak{p} (x,v_*)} \leq C (1 + |v - v_*|)^k \exp \big[ (c \hbar + \vartheta) \big| |v - \u|^2 - |v_* - \u|^2 \big| \big] \,,
		\end{aligned}
	\end{equation}
	where $k = |\beta| + |m| (1 - \gamma) /2 \geq 0$.
	
	{\bf Step 2. Estimate of $K_{1 \mathfrak{p}} \tilde{f} (x,v)$.}
	
	By the definition of $K_{1 \mathfrak{p}} \tilde{f} (x,v)$ in \eqref{K12p} and the bound \eqref{p-pro}, it is easy to see
	\begin{equation}\label{A+A-}
		\begin{aligned}
			| K_{1 \mathfrak{p}} \tilde{f} (x,v)| \leq C \underbrace{ \int_{\R^3} |v_* - v|^\gamma (1 + |v_* - v|)^k [\M (v) \M (v_*)]^{2 \delta_0} | \tilde{f} (x, v_*)| \d v_* }_{: = \mathsf{A}^-} \,,
		\end{aligned}
	\end{equation}
	where $\delta_0 = \tfrac{1}{4} - T (c \hbar + \vartheta) > 0$ by the assumptions in Lemma \ref{Lmm-K-Oprt}. Due to $|v - \u|^2 + |v_* - \u|^2 \geq \tfrac{1}{2} |v - v_*|^2$,
	\begin{equation}\label{MM-bnd}
		\begin{aligned}
			\M (v) \M (v_*) \leq \big[ \tfrac{\rho}{(2 \pi T)^{3/2}} \big]^2 \exp \big[ - \tfrac{|v - v_*|^2}{4 T} \big] \,.
		\end{aligned}
	\end{equation}
	Together with $[\M (v) \M (v_*)]^{\delta_0} \leq C (1 + |v_*|)^{\gamma - 1}$, it infers that
	\begin{equation}\label{A-bnd}
		\begin{aligned}
			\mathsf{A}^- \leq & C \| (1 + |v|)^{\gamma - 1} \tilde{f} \|_{L^\infty_{x,v}} \int_{\R^3} |v_* - v|^\gamma (1 + |v_* - v|)^k \exp \big[ - \tfrac{\delta_0 |v - v_*|^2}{4 T} \big] \d v_* \\
			\leq & C \| (1 + |v|)^{\gamma - 1} \tilde{f} \|_{L^\infty_{x,v}} \,,
		\end{aligned}
	\end{equation}
	where the last inequality is followed from the convergence of the integral
	\begin{equation*}
		\begin{aligned}
			\int_{\R^3} |v_* - v|^\gamma (1 + |v_* - v|)^k \exp \big[ - \tfrac{\delta_0 |v - v_*|^2}{4 T} \big] \d v_* < \infty \,,
		\end{aligned}
	\end{equation*}
	which holds if and only if $\gamma > - 3$.
	
	Therefore, the relations \eqref{A+A-} and \eqref{A-bnd} indicate that
	\begin{equation}\label{K1p-bnd}
		\begin{aligned}
			| K_{1 \mathfrak{p}} \tilde{f} (x,v)| \leq C \| (1 + |v|)^{\gamma - 1} \tilde{f} \|_{L^\infty_{x,v}}
		\end{aligned}
	\end{equation}
	for $\gamma > - 3$, $\beta, m \in \R$ and $\vartheta \geq 0$ sufficiently small.

	{\bf Step 3. Estimate of $K_{2 \mathfrak{p}}^{1 - \chi} \tilde{f} (x,v)$.}
	
	First, we assert that
	\begin{equation}\label{Claim-M}
		\begin{aligned}
			\M (v_*) \leq C \M^{1 - \delta_0} (v) \,, \ \M (v_*') \leq C \M^{1 - \delta_0} (v') \,, \ \M (v') \leq C \M^{1 - \delta_0} (v_*') \,,
		\end{aligned}
	\end{equation}
	provided that $|v_* - v| = |v_*' - v'| \leq 2$, where $\delta_0 \in (0, 1/4)$ is given in Lemma \ref{Lmm-K-Oprt}. Indeed, if $|v_* - v| \leq 2$, it follows that
	\begin{equation*}
		\begin{aligned}
			|v - \u|^2 = & |v - v_* + v_* - \u|^2 = |v_* - \u|^2 + 2 (v - v_*) \cdot (v_* - \u) + |v - v_*|^2 \\
			\leq & (1 + \tfrac{\delta_0}{1 - \delta_0}) |v_* - \u|^2 + (1 + \tfrac{1 - \delta_0}{\delta_0}) |v - v_*|^2 \leq \tfrac{1}{1 - \delta_0} |v_* - \u|^2 + \tfrac{4}{\delta_0} \,.
		\end{aligned}
	\end{equation*}
	Then there hold
	\begin{equation*}
		\begin{aligned}
			\M (v) = \tfrac{\rho}{(2 \pi T)^\frac{3}{2}} \exp \big( - \tfrac{|v - \u|^2}{2 T} \big) \geq \tfrac{\rho}{(2 \pi T)^\frac{3}{2}} \exp \big[ - \tfrac{|v_* - \u|^2}{2 T (1 - \delta_0)} - \tfrac{2}{\delta_0 T} \big] \geq C \M^\frac{1}{1 - \delta_0} (v_*) \,,
		\end{aligned}
	\end{equation*}
	which concludes the first inequality in \eqref{Claim-M}. Moreover, the same arguments imply the last two bounds in \eqref{Claim-M}.
	
	By \eqref{Claim-M} and the definition of $K_{2 \mathfrak{p}}^{1 - \chi} \tilde{f} (x,v)$ in \eqref{K12p}, it is easy to obtain
	\begin{equation*}
		\begin{aligned}
			| K_{2 \mathfrak{p}}^{1 - \chi} \tilde{f} (x,v)| \leq C ( \Pi' + \Pi_*' ) \,,
		\end{aligned}
	\end{equation*}
	where
	\begin{equation*}
		\begin{aligned}
			\Pi' = & \mathfrak{p} (x,v) \iint_{\mathbb{R}^3 \times \mathbb{S}^2} (1 - \chi) (|v_* - v|) \big| \tfrac{\tilde{f}}{\mathfrak{p}} (x, v') \big| \M^\frac{1 - \delta_0}{2} (v') \M^\frac{1}{2} (v_*) b (\omega, v_* - v) \d \omega \d v_* \,, \\
			\Pi_*' = & \mathfrak{p} (x,v) \iint_{\mathbb{R}^3 \times \mathbb{S}^2} (1 - \chi) (|v_* - v|) \big| \tfrac{\tilde{f}}{\mathfrak{p}} (x, v_*') \big| \M^\frac{1 - \delta_0}{2} (v_*') \M^\frac{1}{2} (v_*) b (\omega, v_* - v) \d \omega \d v_* \,.
		\end{aligned}
	\end{equation*}
	Now we focus on dominating the quantity $\Pi'$. By \eqref{p-pro}, one observes that
	\begin{equation*}
		\begin{aligned}
			\tfrac{\mathfrak{p} (x,v)}{\mathfrak{p} (x, v')} \leq & C (1 + |v - v'|)^k \exp \big[ (c \hbar + \vartheta) \big| |v - \u|^2 - |v' - \u|^2 \big| \big] \\
			\leq & C (1 + |v - v'|)^k [ \M (v) \M (v') ]^{- 2 T (c \hbar + \vartheta)} = C (1 + |v - v'|)^k [ \M (v) \M (v') ]^{2 \delta_0 - \frac{1}{2} }\,.
		\end{aligned}
	\end{equation*}
	Then the quantiy $\Pi'$ can be bounded by
{\tiny
	\begin{equation*}
		\begin{aligned}
			\Pi' \leq & C \iint_{\mathbb{R}^3 \times \mathbb{S}^2} (1 - \chi) (|v_* - v|) |\tilde{f} (x, v')| (1 + |v - v'|)^k \M^{2 \delta_0 - \frac{1}{2}} (v) \M^{\frac{1}{2} - \frac{\delta_0}{4 (1 - \delta_0)}} (v_*) \M^{\frac{3}{2} \delta_0} (v') \M^\frac{\delta_0}{4 (1 - \delta_0)} (v_*) b (\omega, v_* - v) \d \omega \d v_* \\
			\leq & C \iint_{\mathbb{R}^3 \times \mathbb{S}^2} (1 - \chi) (|v_* - v|) |\tilde{f} (x, v')| (1 + |v - v'|)^k \M^{\frac{5}{4} \delta_0 } (v) \M^{\frac{3}{2} \delta_0} (v') \M^\frac{\delta_0}{4 (1 - \delta_0)} (v_*) b (\omega, v_* - v) \d \omega \d v_* \,,
		\end{aligned}
	\end{equation*}}
	where the last inequality is followed from the bound $\M^{\frac{1}{2} - \frac{\delta_0}{4 (1 - \delta_0)}} (v_*) \leq C \M^{\frac{1}{2} - \frac{3}{4} \delta_0} (v)$ by \eqref{Claim-M}. It is derived from \eqref{MM-bnd} that $ \M^{\frac{5}{8} \delta_0 } (v) \M^{\frac{5}{8} \delta_0 } (v') \leq C \exp \big( - \tfrac{\delta_0 |v - v'|^2}{20 T} \big) $. Together with $\M^\frac{\delta_0}{4} (v') \leq C (1 + |v'|)^{\gamma - 1}$, the quantity $\Pi'$ can be further bounded by
	\begin{equation*}
		\begin{aligned}
			\Pi' \leq & C \| (1 + |v|)^{\gamma - 1} \tilde{f} \|_{L^\infty_{x,v}} \iint_{\mathbb{R}^3 \times \mathbb{S}^2} (1 - \chi) (|v_* - v|) (1 + |v - v'|)^k \M^{\frac{5}{8} \delta_0} (v) \M^{\frac{5}{8} \delta_0} (v') \\
			& \qquad \qquad \quad \qquad \qquad \qquad \quad \times \exp \big( - \tfrac{\delta_0 |v - v'|^2}{20 T} \big) \M^\frac{\delta_0}{4 (1 - \delta_0)} (v_*) b (\omega, v_* - v) \d \omega \d v_* \\
			\leq & C \| (1 + |v|)^{\gamma - 1} \tilde{f} \|_{L^\infty_{x,v}} \,,
		\end{aligned}
	\end{equation*}
	where we have used the estimates $(1 + |v - v'|)^k \exp \big( - \tfrac{\delta_0 |v - v'|^2}{20 T} \big) \leq C$ and
{\small
	\begin{equation*}
		\begin{aligned}
			\iint_{\mathbb{R}^3 \times \mathbb{S}^2} (1 - \chi) (|v_* - v|)  \M^{\frac{5}{8} \delta_0} (v) \M^{\frac{5}{8} \delta_0} (v')  \M^\frac{\delta_0}{4 (1 - \delta_0)} (v_*) b (\omega, v_* - v) \d \omega \d v_* \leq C \M^{\frac{5}{8} \delta_0} (v) \nu (v) \leq C \,.
		\end{aligned}
	\end{equation*}}
	
	Similarly, the quantity $\Pi_*'$ can be bounded by $ \Pi_*' \leq C \| (1 + |v|)^{\gamma - 1} \tilde{f} \|_{L^\infty_{x,v}} $ for $- 3 < \gamma \leq  1$, $\beta \in \R$ and sufficiently small $\vartheta \geq 0$. Consequently, it concludes that
	\begin{equation}\label{K2p-bnd-l}
		\begin{aligned}
			| K_{2 \mathfrak{p}}^{1 - \chi} \tilde{f} (x,v)| \leq C \| (1 + |v|)^{\gamma - 1} \tilde{f} \|_{L^\infty_{x,v}}
		\end{aligned}
	\end{equation}
	for $- 3 < \gamma \leq  1$, $\beta \in \R$ and sufficiently small $\vartheta \geq 0$.
	
	{\bf Step 4. Estimate of $K_{2 \mathfrak{p}}^{\chi} \tilde{f} (x,v)$.}
	By letting $V = v_* - v$, $V_\shortparallel = (V \cdot \omega) \omega$, $V_\perp = V - V_\shortparallel$ and $\omega = \frac{v' -v}{|v' - v|}$, one has $\d \omega \d v_* = 2 |V_\shortparallel|^{-2} \d V_\perp \d V_\shortparallel$, see Section 2 of \cite{Grad-1963}. Moreover, $v' = v + V_\shortparallel$, $v_*' = v + V_\perp$, $v_* = v + V$. Then, by \eqref{K12p}, one has
	\begin{equation*}
		\begin{aligned}
			K_{2 \mathfrak{p}}^{\chi} \tilde{f} (x,v) = \Lambda_1 + \Lambda_2 \,,
		\end{aligned}
	\end{equation*}
	where
   {\small
	\begin{equation}\label{Lambda12}
		\begin{aligned}
			\Lambda_1 = 2 \mathfrak{p} (x,v) \int_{\R^3} \int_{V_\perp \perp V_\shortparallel} |V_\shortparallel|^{-2} \chi (|V|) \tfrac{\tilde{f}}{\mathfrak{p}} (x, v + V_\shortparallel) \M^\frac{1}{2} (v + V_\perp) \M^\frac{1}{2} (v + V) b (\omega, V) \d V_\perp \d V_\shortparallel \,, \\
			\Lambda_2 = 2 \mathfrak{p} (x,v) \int_{\R^3} \int_{V_\perp \perp V_\shortparallel} |V_\shortparallel|^{-2} \chi (|V|) \tfrac{\tilde{f}}{\mathfrak{p}} (v + V_\perp) \M^\frac{1}{2} (x, v + V_\shortparallel) \M^\frac{1}{2} (v + V) b (\omega, V) \d V_\perp \d V_\shortparallel \,.
		\end{aligned}
	\end{equation}}
	
	\underline{\em Step 4.1. Control of $|\Lambda_1|$.} Set
	\begin{equation}\label{zeta}
		\begin{aligned}
			\zeta = v + \tfrac{1}{2} V_\shortparallel \,, \ \zeta_\shortparallel = [(\zeta - \u) \cdot \omega] \omega \,, \ \zeta_\perp = (\zeta - \u) - \zeta_\shortparallel \,.
		\end{aligned}
	\end{equation}
	A direct calculation implies
	\begin{equation}\label{Trans-1}{\small
		\begin{aligned}
			& - \tfrac{|v + V_\perp - \u|^2 + |v + V - \u|^2}{4 T} = - \tfrac{|v + V_\perp - \u|^2 + |v + V_\perp - \u + V_\shortparallel|^2}{4 T} \\
			= & - \tfrac{|\zeta - \tfrac{1}{2} V_\shortparallel + V_\perp - \u|^2 + |\zeta + V_\perp - \u + \tfrac{1}{2} V_\shortparallel|^2}{4 T} = - \tfrac{|\zeta - \u + V_\perp |^2 + \tfrac{1}{4} | V_\shortparallel |^2}{2 T} = - \tfrac{|\zeta_\shortparallel|^2 + |V_\perp + \zeta_\perp|^2 + \tfrac{1}{4} | V_\shortparallel |^2}{2 T} \,,
		\end{aligned}}
	\end{equation}
	which yields
	\begin{equation}\label{Lambda1-1}
		\begin{aligned}
			\M^\frac{1}{2} (v + V_\perp) \M^\frac{1}{2} (v + V) = \tfrac{\rho}{(2 \pi T)^\frac{3}{2}} \exp \big[ - \tfrac{|V_\shortparallel|^2}{8 T} - \tfrac{|\zeta_\shortparallel|^2}{2 T} - \tfrac{|V_\perp + \zeta_\perp|^2}{2 T} \big] \,.
		\end{aligned}
	\end{equation}
	Moreover, \eqref{b} and \eqref{Cutoff} imply that
	\begin{equation}\label{Lambda1-2}
		\begin{aligned}
			|b (\omega, V)| \leq \tilde{b}_0 \tfrac{|V \cdot \omega|}{|V|} |V|^\gamma = \tfrac{\tilde{b}_0 |V_\shortparallel|}{|V|^{1 - \gamma}} \,.
		\end{aligned}
	\end{equation}
	It also follows from \eqref{p-pro} that $ \tfrac{\mathfrak{p} (x,v)}{\mathfrak{p} (x, v + V_\shortparallel)} \leq C (1 + |V_\shortparallel|)^k \exp \big[ \tfrac{(1 - 4 \delta_0)}{4 T} \big| |v - \u|^2 - |v + V_\shortparallel - \u|^2 \big| \big] $, where $\delta_0 = \tfrac{1}{4} - T (c \hbar + \vartheta) > 0$ has been used. Note that
	\begin{equation}\label{Trans-2}
		\begin{aligned}
			|v - \u|^2 - |v + V_\shortparallel - \u|^2 = - V_\shortparallel \cdot [2 (v - \u) + V_\shortparallel] = - V_\shortparallel \cdot (2 \zeta_\perp + 2 \zeta_\shortparallel) = - 2 V_\shortparallel \cdot \zeta_\shortparallel \,,
		\end{aligned}
	\end{equation}
	which infers
	\begin{equation}\label{Lambda1-3}
		\begin{aligned}
			\tfrac{\mathfrak{p} (x,v)}{\mathfrak{p} (x, v + V_\shortparallel)} \leq C (1 + |V_\shortparallel|)^k \exp \big[ \tfrac{(1 - 4 \delta_0 )}{2 T} | V_\shortparallel \cdot \zeta_\shortparallel | \big] \,.
		\end{aligned}
	\end{equation}
	Plugging \eqref{Lambda1-1}-\eqref{Lambda1-2}-\eqref{Lambda1-3} into the expression of $\Lambda_1$ in \eqref{Lambda12}, one has
	\begin{equation}\label{Lambda1-pm}{\small
		\begin{aligned}
			|\Lambda_1| \leq C \int_{\R^3} \int_{V_\perp \perp V_\shortparallel} |V_\shortparallel|^{-1} (1 + |V_\shortparallel|)^k \exp \big[ - \tfrac{|V_\shortparallel|^2}{8 T} - \tfrac{|\zeta_\shortparallel|^2}{2 T} + \tfrac{(1 - 4 \delta_0)}{2 T} | V_\shortparallel \cdot \zeta_\shortparallel | \big] \\
			\times \exp \big( - \tfrac{|V_\perp + \zeta_\perp|^2}{2 T} \big) \tfrac{\chi (|V|)}{|V|^{1 - \gamma}} | \tilde{f} (x, v + V_\shortparallel)| \d V_\perp \d V_\shortparallel \,.
		\end{aligned}}
	\end{equation}
	
	It is easy to see that by $- \tfrac{|V_\shortparallel|^2}{8 T} - \tfrac{|\zeta_\shortparallel|^2}{2 T} + \tfrac{(1 - 4 \delta_0)}{2 T} | V_\shortparallel \cdot \zeta_\shortparallel | \leq - \tfrac{\delta_0 |V_\shortparallel|^2}{2 T} - \tfrac{2 \delta_0 |\zeta_\shortparallel|^2}{T}$,
{\small
		\begin{align*}
			|\Lambda_1| \leq & C \| (1 + |v|)^{\gamma - 1} \tilde{f} \|_{L^\infty_{x,v}} \int_{\R^3} |V_\shortparallel|^{-1} (1 + |V_\shortparallel|)^k (1 + |v + V_\shortparallel|)^{1 - \gamma} \exp \big[ - \tfrac{\delta_0 |V_\shortparallel|^2}{2 T} - \tfrac{2 \delta_0 |\zeta_\shortparallel|^2}{T} \big] \\
			& \qquad \qquad \qquad \qquad \qquad \qquad \qquad \qquad \ \times \int_{V_\perp \perp V_\shortparallel} \exp \big( - \tfrac{|V_\perp + \zeta_\perp|^2}{2 T} \big) \tfrac{\chi (|V|)}{|V|^{1 - \gamma}} \d V_\perp \d V_\shortparallel \,.
		\end{align*}}
	Recalling that $v + V_\shortparallel = \zeta_\shortparallel + \zeta_\perp + \tfrac{1}{2} V_\shortparallel + \u$, it holds $ |v + V_\shortparallel| \leq C ( 1 + |\zeta_\shortparallel| + |\zeta_\perp| + |V_\shortparallel| ) $, which means $ (1 + |v + V_\shortparallel|)^{1 - \gamma} \leq C (1 + |V_\shortparallel|)^{1 - \gamma} (1 + |\zeta_\shortparallel|)^{1 - \gamma} (1 + |\zeta_\perp|)^{1 - \gamma} $. Observe that $ (1 + |V_\shortparallel|)^{k + 1 - \gamma} (1 + |\zeta_\shortparallel|)^{1 - \gamma} \exp \big[ - \tfrac{\delta |V_\shortparallel|^2}{4 T} - \tfrac{\delta |\zeta_\shortparallel|^2}{T} \big] \leq C $ and $ \tfrac{\chi (|V|)}{|V|^{1 - \gamma}} \leq \tfrac{C}{(1 + |V_\shortparallel|^2 + |V_\perp|^2 )^\frac{1-\gamma}{2}} $. Then the quantity $ \Lambda_1 $ can be further bounded by
	\begin{equation*}{\small
		\begin{aligned}
			|\Lambda_1| \leq C \| (1 + |v|)^{\gamma - 1} \tilde{f} \|_{L^\infty_{x,v}} & \int_{\R^3} |V_\shortparallel|^{-1} \exp \big[ - \tfrac{\delta |V_\shortparallel|^2}{4 T} - \tfrac{\delta |\zeta_\shortparallel|^2}{T} \big] \\
			& \times \underbrace{ \int_{V_\perp \perp V_\shortparallel} \exp \big( - \tfrac{|V_\perp + \zeta_\perp|^2}{2 T} \big) \tfrac{(1 + |\zeta_\perp|)^{1 - \gamma}}{(1 + |V_\shortparallel|^2 + |V_\perp|^2 )^\frac{1-\gamma}{2}} \d V_\perp }_{: = U} \d V_\shortparallel \,.
		\end{aligned}}
	\end{equation*}
	
	It turns to show that the quantity $U$ is uniformly bounded in $V_\shortparallel$ and $\zeta_\perp$. Let
	\begin{equation*}
		\begin{aligned}
			I_1 = \{ V_\perp ; V_\perp \perp V_\shortparallel \,, |V_\perp + \zeta_\perp| > \tfrac{1}{2} |\zeta_\perp| \} \,, \quad I_2 = \{ V_\perp ; V_\perp \perp V_\shortparallel \,, |V_\perp + \zeta_\perp| \leq \tfrac{1}{2} |\zeta_\perp| \} \,.
		\end{aligned}
	\end{equation*}
	Then
	\begin{equation*}{\small
		\begin{aligned}
			U = \int_{I_1} (\cdots) \d V_\perp + \int_{I_2} (\cdots) \d V_\perp : = U_1 + U_2 \,.
		\end{aligned}}
	\end{equation*}
	If $|V_\perp + \zeta_\perp| > \tfrac{1}{2} |\zeta_\perp|$, one has $ \tfrac{(1 + |\zeta_\perp|)^{1 - \gamma}}{(1 + |V_\shortparallel|^2 + |V_\perp|^2 )^\frac{1-\gamma}{2}} \leq C \exp \big( \tfrac{|V_\perp + \zeta_\perp|^2}{4 T} \big) $. Then $U_1$ can be bounded by
	\begin{equation*}{\small
		\begin{aligned}
			U_1 \leq C\int_{V_\perp \perp V_\shortparallel} \exp \big( - \tfrac{|V_\perp + \zeta_\perp|^2}{4 T} \big) \d V_\perp \leq C \,.
		\end{aligned}}
	\end{equation*}
	If $|V_\perp + \zeta_\perp| \leq \tfrac{1}{2} |\zeta_\perp|$, it holds $ |V_\perp| \geq |\zeta_\perp| - |V_\perp + \zeta_\perp| \geq \tfrac{1}{2} |\zeta_\perp| $, which implies $ \tfrac{(1 + |\zeta_\perp|)^{1 - \gamma}}{(1 + |V_\shortparallel|^2 + |V_\perp|^2 )^\frac{1-\gamma}{2}} \leq \tfrac{(1 + |\zeta_\perp|)^{1 - \gamma}}{(1 + \tfrac{1}{4} |\zeta_\perp|^2 )^\frac{1-\gamma}{2}} \leq C $. Then $U_2$ can be dominated by
	\begin{equation*}{\small
		\begin{aligned}
			U_2 \leq C\int_{V_\perp \perp V_\shortparallel} \exp \big( - \tfrac{|V_\perp + \zeta_\perp|^2}{2 T} \big) \d V_\perp \leq C \,.
		\end{aligned}}
	\end{equation*}
	It therefore follows that $U = U_1 + U_2 \leq C$, which implies
{\small
	\begin{equation}\label{Lambda1-bnd}
		\begin{aligned}
			|\Lambda_1| \leq C \| (1 + |v|)^{\gamma - 1} \tilde{f} \|_{L^\infty_{x,v}} \int_{\R^3} |V_\shortparallel|^{-1} \exp \big[ - \tfrac{\delta |V_\shortparallel|^2}{4 T} - \tfrac{\delta |\zeta_\shortparallel|^2}{T} \big] \d V_\shortparallel \leq C \| (1 + |v|)^{\gamma - 1} \tilde{f} \|_{L^\infty_{x,v}}
		\end{aligned}
	\end{equation}}
	for $- 3 < \gamma \leq 1$, $\beta \in \R$ and sufficiently small $\vartheta \geq 0$.

	\underline{\em Step 4.2. Control of $|\Lambda_2|$.} Recalling the definition \eqref{zeta}, a direct computation implies
	\begin{equation}\label{L1}
		\begin{aligned}
			- \tfrac{|v + V_\shortparallel - \u|^2 + |v + V - \u|^2}{4 T} = - \tfrac{|\zeta_\shortparallel + \tfrac{1}{2} V_\shortparallel|^2}{2 T} - \tfrac{|\zeta_\perp|^2}{4 T} - \tfrac{|V_\perp + \zeta_\perp|^2}{4 T} \,,
		\end{aligned}
	\end{equation}
	which yields
	\begin{equation}\label{Lambda2-1}
		\begin{aligned}
			\M^\frac{1}{2} (v + V_\shortparallel) \M^\frac{1}{2} (v + V) = \tfrac{\rho}{(2 \pi T)^\frac{3}{2}} \exp \big[ - \tfrac{|\zeta_\shortparallel + \tfrac{1}{2} V_\shortparallel|^2}{2 T} - \tfrac{|\zeta_\perp|^2}{4 T} - \tfrac{|V_\perp + \zeta_\perp|^2}{4 T} \big] \,.
		\end{aligned}
	\end{equation}
	By \eqref{p-pro}, there holds $ \tfrac{\mathfrak{p} (x,v)}{\mathfrak{p} (x, v + V_\perp)} \leq C (1 + |V_\perp|)^k \exp \big[ \tfrac{1 - 4 \delta_0}{4 T} \big| |v - \u|^2 - |v + V_\perp - \u|^2 \big| \big] $. By \eqref{zeta}, it follows
	\begin{equation}\label{L2}
		\begin{aligned}
			|v - \u|^2 - |v + V_\perp - \u|^2 = - |V_\perp + \zeta_\perp|^2 + |\zeta_\perp|^2 \,.
		\end{aligned}
	\end{equation}
	Moreover, $(1 + |V_\perp|)^k \leq (1 + |\zeta_\perp|)^k (1 + |V_\perp + \zeta_\perp|)^k$. Then
	\begin{equation}\label{Lambda2-2}
		\begin{aligned}
			\tfrac{\mathfrak{p} (x,v)}{\mathfrak{p} (x, v + V_\perp)} \leq C (1 + |\zeta_\perp|)^k (1 + |V_\perp + \zeta_\perp|)^k \exp \big[ \tfrac{1 - 4 \delta_0}{4 T} \big( |V_\perp + \zeta_\perp|^2 + |\zeta_\perp|^2 \big) \big] \,.
		\end{aligned}
	\end{equation}
	Substituting \eqref{Lambda1-2}, \eqref{Lambda2-1} and \eqref{Lambda2-2} into the expression of $\Lambda_2$ in \eqref{Lambda12}, it follows that
		\begin{align}\label{Lambda2-pm}
			\no |\Lambda_2| \leq & C \int_{\R^3} |V_\shortparallel|^{-1} (1 + |\zeta_\perp|)^k \exp \big( - \tfrac{|\zeta_\shortparallel + \tfrac{1}{2} V_\shortparallel|^2}{2 T} - \tfrac{\delta_0 |\zeta_\perp|^2}{T} \big) \\
			& \times \int_{V_\perp \perp V_\shortparallel} (1 + |V_\perp + \zeta_\perp|)^k \tfrac{\chi (|V|)}{|V|^{1 - \gamma}} | \tilde{f} (x, v + V_\perp)| \exp \big( - \tfrac{\delta_0 |V_\perp + \zeta_\perp|^2}{T} \big) \d V_\perp \d V_\shortparallel \,.
		\end{align}
	
	Notice that $ \tfrac{\chi (|V|)}{|V|^{1 - \gamma}} \leq \tfrac{C}{(1 + |V_\shortparallel|^2 + |V_\perp|^2 )^\frac{1-\gamma}{2}} \leq \tfrac{C}{(1 + |V_\shortparallel|)^{1-\gamma}} $ and
	\begin{equation*}
		\begin{aligned}
			| \tilde{f} (x, v + V_\perp)| \leq & \| (1 + |v|)^{\gamma - 1} \tilde{f} \|_{L^\infty_{x,v}} (1 + |v + V_\perp|)^{1 - \gamma} \\
			\leq & C \| (1 + |v|)^{\gamma - 1} \tilde{f} \|_{L^\infty_{x,v}} (1 + |V_\perp + \zeta_\perp|)^{1 - \gamma} (1 + |\zeta_\shortparallel + \tfrac{1}{2} V_\shortparallel|)^{1 - \gamma} (1 + |\zeta_\shortparallel|)^{1 - \gamma} \,,
		\end{aligned}
	\end{equation*}
	where the relation $v + V_\perp = \u + \zeta_\shortparallel + \zeta_\perp - \tfrac{1}{2} V_\shortparallel + V_\perp$ has been utilized. Moreover,
	\begin{equation*}
		\begin{aligned}
			(1 + |\zeta_\perp|)^k (1 + |\zeta_\shortparallel + \tfrac{1}{2} V_\shortparallel|)^{1 - \gamma} \exp \big( - \tfrac{\delta_0 |\zeta_\perp|^2}{T} - \tfrac{|\zeta_\shortparallel + \tfrac{1}{2} V_\shortparallel|^2}{4 T} \big) \leq C \,.
		\end{aligned}
	\end{equation*}
	Then the quantity $ \Lambda_2 $ can be further bounded by
	\begin{equation*}{\small
		\begin{aligned}
			|\Lambda_2| \leq & C \| (1 + |v|)^{\gamma - 1} \tilde{f} \|_{L^\infty_{x,v}} \int_{\R^3} |V_\shortparallel|^{-1} \tfrac{(1 + |\zeta_\shortparallel|)^{1 - \gamma}}{(1 + |V_\shortparallel|)^{1-\gamma}} \exp \big( - \tfrac{|\zeta_\shortparallel + \tfrac{1}{2} V_\shortparallel|^2}{4 T} \big) \\
			& \qquad \qquad \qquad \qquad \times \int_{V_\perp \perp V_\shortparallel} (1 + |V_\perp + \zeta_\perp|)^{k + 1 - \gamma} \exp \big( - \tfrac{\delta_0 |V_\perp + \zeta_\perp|^2}{T} \big) \d V_\perp \d V_\shortparallel \\
			\leq & C \| (1 + |v|)^{\gamma - 1} \tilde{f} \|_{L^\infty_{x,v}} \underbrace{ \int_{\R^3} |V_\shortparallel|^{-1} \tfrac{(1 + |\zeta_\shortparallel|)^{1 - \gamma}}{(1 + |V_\shortparallel|)^{1-\gamma}} \exp \big( - \tfrac{|\zeta_\shortparallel + \tfrac{1}{2} V_\shortparallel|^2}{4 T} \big) \d V_\shortparallel }_{: = W} \,.
		\end{aligned}}
	\end{equation*}
	Set $ I\!I_1 = \{ V_\shortparallel; |\zeta_\shortparallel + \tfrac{1}{2} V_\shortparallel| > \tfrac{1}{2} |\zeta_\shortparallel| \} \,, I\!I_2 = \{ V_\shortparallel; |\zeta_\shortparallel + \tfrac{1}{2} V_\shortparallel| \leq \tfrac{1}{2} |\zeta_\shortparallel| \} $. The quantity $W$ can thereby be decomposed as
	\begin{equation*}
		\begin{aligned}
			W = \int_{I\!I_1} (\cdots) \d V_\shortparallel + \int_{I\!I_2} (\cdots) \d V_\shortparallel : = W_1 + W_2 \,.
		\end{aligned}
	\end{equation*}
	If $|\zeta_\shortparallel + \tfrac{1}{2} V_\shortparallel| > \tfrac{1}{2} |\zeta_\shortparallel|$, one has $ \tfrac{(1 + |\zeta_\shortparallel|)^{1 - \gamma}}{(1 + |V_\shortparallel|)^{1-\gamma}} \exp \big( - \tfrac{|\zeta_\shortparallel + \tfrac{1}{2} V_\shortparallel|^2}{8 T} \big) \leq C $. Then
	\begin{equation}\label{W1}
		\begin{aligned}
			W_1 \leq & C \int_{\R^3} |V_\shortparallel|^{-1} \exp \big( - \tfrac{|\zeta_\shortparallel + \tfrac{1}{2} V_\shortparallel|^2}{8 T} \big) \d V_\shortparallel \\
			\leq & C \int_{|V_\shortparallel| \leq 1} |V_\shortparallel|^{-1} \d V_\shortparallel + C \int_{|V_\shortparallel| > 1} \exp \big( - \tfrac{|\zeta_\shortparallel + \tfrac{1}{2} V_\shortparallel|^2}{8 T} \big) \d V_\shortparallel \leq C \,.
		\end{aligned}
	\end{equation}
	If $|\zeta_\shortparallel + \tfrac{1}{2} V_\shortparallel| \leq \tfrac{1}{2} |\zeta_\shortparallel|$, it follows $ \tfrac{1}{2} |V_\shortparallel| \geq |\zeta_\shortparallel| - |\zeta_\shortparallel + \tfrac{1}{2} V_\shortparallel| \geq \tfrac{1}{2} |\zeta_\shortparallel| $, i.e., $|V_\shortparallel| \geq |\zeta_\shortparallel|$, which infers that $\tfrac{(1 + |\zeta_\shortparallel|)^{1 - \gamma}}{(1 + |V_\shortparallel|)^{1-\gamma}} \leq 1$. As a result,
	\begin{equation}\label{W2}
		\begin{aligned}
			W_2 \leq \int_{\R^3} |V_\shortparallel|^{-1} \exp \big( - \tfrac{|\zeta_\shortparallel + \tfrac{1}{2} V_\shortparallel|^2}{4 T} \big) \d V_\shortparallel \leq C \,.
		\end{aligned}
	\end{equation}
	It thereby holds $W = W_1 + W_2 \leq C$, which shows
	\begin{equation}\label{Lambda2-bnd}
		\begin{aligned}
			|\Lambda_2| \leq C \| (1 + |v|)^{\gamma - 1} \tilde{f} \|_{L^\infty_{x,v}} \,.
		\end{aligned}
	\end{equation}
	
	By \eqref{Lambda1-bnd} and \eqref{Lambda2-bnd}, one establishes that for $- 3 < \gamma \leq 1$,
	\begin{equation}\label{K2p-bnd-h}
		\begin{aligned}
			| K_{2 \mathfrak{p}}^{\chi} \tilde{f} (x,v) | \leq |\Lambda_1| + |\Lambda_2| \leq C \| (1 + |v|)^{\gamma - 1} \tilde{f} \|_{L^\infty_{x,v}} \,.
		\end{aligned}
	\end{equation}
	
	Finally, by plugging the estimates \eqref{K1p-bnd}, \eqref{K2p-bnd-l} and \eqref{K2p-bnd-h} into \eqref{Kp-split}, one knows that the bound \eqref{Kp-bnd} holds. The proof of Lemma \ref{Lmm-K-Oprt} is therefore completed.
\end{proof}

\subsection{$L^\infty_x L^2_v$-$L^\infty_{x,v}$ property of $K$: Proof of Lemma \ref{Lmm-Kh-2-infty}}\label{Subsec:K-2infty}

In this subsection, we will justify the property that the weighted $L^\infty_x L^2_v$ norm can be bounded by the weighted $L^\infty_{x,v}$ norm, i.e., proof of Lemma \ref{Lmm-Kh-2-infty}.

\begin{proof}[Proof of Lemma \ref{Lmm-Kh-2-infty}]
	Let $\chi_\eps (r)$ be a smooth monotone function satisfying $\chi_\eps (r) = 0$ for $0 \leq r \leq \eps$ and $\chi_\eps (r) = 1$ for $r \geq 2 \eps$, where $\eps > 0$ is small to be determined. By \eqref{K-K1-K2}-\eqref{K1}-\eqref{K2}, one has
	\begin{equation}
		\begin{aligned}
			K_\hbar g (x,v) = - K_{\hbar 1} g (x,v) + K_{\hbar 2}^{1 - \chi_\eps} g (x,v) + K_{\hbar 2}^{\chi_\eps} g (x,v) \,,
		\end{aligned}
	\end{equation}
	where for $\Gamma = \chi_\eps$ and $1 - \chi_\eps$,
		\begin{align}\label{Kh-12}
			\no K_{\hbar 1} g (x,v) = & \int_{\R^3} e^{\hbar \sigma (x,v) - \hbar \sigma (x, v_*)} |v - v_*|^\gamma \M^\frac{1}{2} (v) \M^\frac{1}{2} (v_*) g (x, v_*) \d v_* \,, \\
			K_{\hbar 2}^\Gamma g (x,v) = & \iint_{\mathbb{R}^3 \times \mathbb{S}^2} \Gamma (|v - v_*|) e^{\hbar \sigma (x,v) - \hbar \sigma (x, v')} \M^\frac{1}{2} (v_*') \M^\frac{1}{2} (v_*) g (x,v') b (\omega, v_* - v) \d \omega \d v_* \\
			\no & + \iint_{\mathbb{R}^3 \times \mathbb{S}^2} \Gamma (|v - v_*|) e^{\hbar \sigma (x,v) - \hbar \sigma (x, v_*')} \M^\frac{1}{2} (v') \M^\frac{1}{2} (v_*) g (x,v_*') b (\omega, v_* - v) \d \omega \d v_* \,.
		\end{align}
	Observe that $\LL \nu^{-1} K_\hbar g \RR_{A; m, 0, \vartheta} = \| \sigma_x^\frac{m}{2} w_{- \gamma, \vartheta} K_\hbar g \|_{L^\infty_{x,v}} $.
	
	{\bf Step 1. Control the quantity $\LL \nu^{-1} K_{\hbar 1} g \RR_{A; m, 0, \vartheta}$.}
	
	By splitting
{\small
	\begin{equation*}
		\begin{aligned}
			\sigma_x^\frac{m}{2} (x,v) w_{- \gamma, \vartheta} (v) K_{\hbar 1} g (x,v) = & \sigma_x^\frac{m}{2} (x,v) w_{- \gamma, \vartheta} (v) \int_{|v-v_*|\leq \eps} (\cdots) \d v_* \\
			& + \sigma_x^\frac{m}{2} (x,v) w_{- \gamma, \vartheta} (v) \int_{|v-v_*|> \eps} (\cdots) \d v_* = I_1 + I_2 \,.
		\end{aligned}
	\end{equation*}}
	Recalling the bound \eqref{p-pro} with $\beta = 0$, one has
	\begin{equation}\label{p0-bnd}
		\begin{aligned}
			\tfrac{\sigma_x^\frac{m}{2} (x,v) w_{0, \vartheta} (v) e^{\hbar \sigma (x,v)} }{ \sigma_x^\frac{m}{2} (x,v_*) w_{0, \vartheta} (v_*) e^{\hbar \sigma (x,v_*)} } \leq C (1 + |v - v_*|)^\frac{|m| ( 1 - \gamma )}{2} e^{ (c \hbar + \vartheta ) \big| |v - \u|^2 - |v_* - \u|^2 \big| }
		\end{aligned}
	\end{equation}
	for sufficiently small $\hbar, \vartheta \geq 0$. It therefore holds
{\small
	\begin{equation*}
		\begin{aligned}
			|I_1| \leq & C \LL g \RR_{A; m, 0, \vartheta} (1 + |v|)^{- \gamma} \int_{|v-v_*| \leq \eps} |v-v_*|^\gamma (1 + |v - v_*|)^\frac{|m| ( 1 - \gamma )}{2} e^{- [ \tfrac{T}{4} - (c \hbar + \vartheta ) ] (|v- \u|^2 + |v_* - \u|^2) } \d v_* \\
			\leq & C \LL g \RR_{A; m, 0, \vartheta} \int_{|v-v_*| \leq \eps} |v-v_*|^\gamma e^{- [ \tfrac{T}{8} - (c \hbar + \vartheta ) ] (|v- \u|^2 + |v_* - \u|^2) } \d v_* \leq C \eps^{3 + \gamma} \LL g \RR_{A; m, 0, \vartheta} \,,
		\end{aligned}
	\end{equation*}}
	provided that $\hbar, \vartheta \geq 0$ are both sufficiently small and $ \gamma > - 3$.
	
	By Lemma \ref{Lmm-sigma} and \eqref{pro-sigma}, one easily knows that
	\begin{equation*}
		\begin{aligned}
			w_{- \gamma, \vartheta} (v) \sigma_x^\frac{m}{2} (x,v) \sigma_x^{- \frac{m}{2}} (x,v_*) e^{\hbar \sigma (x,v) - \hbar \sigma (x, v_*)} |v - v_*|^\gamma \mathbf{1}_{|v-v_*| > \eps} \M^\frac{1}{4} (v) \M^\frac{1}{4} (v_*) \leq C_\eps
		\end{aligned}
	\end{equation*}
	uniformly in $(x,v,v_*)$, provided that $\hbar, \vartheta \geq 0$ are small enough. Then the quantity $I_2$ can be bounded by
	\begin{equation*}
		\begin{aligned}
			|I_2| \leq & C_\eps \int_{|v-v_*|> \eps} \M^\frac{1}{4} (v) \M^\frac{1}{4} (v_*) \sigma_x^\frac{m}{2} (x,v_*) |g (x,v_*)| \d v_* \\
			\leq & C_\eps \Big( \int_{|v-v_*|> \eps} z_{- \alpha'}^2 (v_*) \M^\frac{1}{2} (v) \M^\frac{1}{2} (v_*) \d v_* \Big)^\frac{1}{2} \| \sigma_x^\frac{m}{2} z_{\alpha'} g \|_{L^\infty_x L^2_v} \\
			\leq & C_\eps \| \sigma_x^\frac{m}{2} z_{\alpha'} g \|_{L^\infty_x L^2_v} \leq C_\eps \| \sigma_x^\frac{m}{2} z_{\alpha'} w_{- \gamma, \vartheta } g \|_{L^\infty_x L^2_v} \,,
		\end{aligned}
	\end{equation*}
	where the last inequality is derived from $0 \leq \alpha' < \frac{1}{2}$. Here $z_{\alpha'}$ is defined in \eqref{z-alpha}. Consequently, it holds
	\begin{equation*}
		\begin{aligned}
			| \sigma_x^\frac{m}{2} (x,v) w_{- \gamma, \vartheta} (v) K_{\hbar 1} g (x,v) | \leq |I_1| + |I_2| \leq & C \eps^{3 + \gamma} \LL g \RR_{A; m, 0, \vartheta} + C_\eps \| \sigma_x^\frac{m}{2} z_{\alpha'} w_{- \gamma, \vartheta} g \|_{L^\infty_x L^2_v} \,,
		\end{aligned}
	\end{equation*}
	which means that
	\begin{equation}\label{Loss-Est}
		\begin{aligned}
			\LL \nu^{-1} K_{\hbar1} g \RR_{A; m, 0, \vartheta} \leq C \eps^{3 + \gamma} \LL g \RR_{A; m, 0, \vartheta} + C_\eps \| \sigma_x^\frac{m}{2} z_{\alpha'} w_{- \gamma, \vartheta} g \|_{L^\infty_x L^2_v}
		\end{aligned}
	\end{equation}
	for $0 \leq \alpha' < \frac{1}{2}$, $- 3 < \gamma \leq 1$, $m \in \R$, sufficiently small $\hbar, \vartheta \geq 0$, and small $\eps > 0$ to be determined.
	
	{\bf Step 2. Control the quantity $\LL \nu^{-1} K_{\hbar 2}^{1 - \chi_\eps} g \RR_{m, 0, \vartheta}$. }
	
	Recalling the definition of $K_{\hbar 2}^{1 - \chi_\eps} g (x,v)$ in \eqref{Kh-12}, we decompose $K_{\hbar 2}^{1 - \chi_\eps} g (x,v) = I\!I_1 + I\!I_2$, where
	\begin{equation*}
		\begin{aligned}
			I\!I_1 = & \iint_{\mathbb{R}^3 \times \mathbb{S}^2} (1 - \chi_\eps) (|v - v_*|) e^{\hbar \sigma (x,v) - \hbar \sigma (x, v')} \M^\frac{1}{2} (v_*') \M^\frac{1}{2} (v_*) g (x,v') b (\omega, v_* - v) \d \omega \d v_* \,, \\
			I\!I_2 = & \iint_{\mathbb{R}^3 \times \mathbb{S}^2} (1 - \chi_\eps) (|v - v_*|) e^{\hbar \sigma (x,v) - \hbar \sigma (x, v_*')} \M^\frac{1}{2} (v') \M^\frac{1}{2} (v_*) g (x,v_*') b (\omega, v_* - v) \d \omega \d v_* \,.
		\end{aligned}
	\end{equation*}
	Due to $|v - v_*| \leq 2 \eps \leq 2$, it follow from \eqref{Claim-M} that $\M (v_*') \leq C \M^{1 - \delta_0}$ and $\M (v_*') \leq C \M^{1 - \delta_0} (v')$ for $\delta_0 \in (0, 1)$. Together with \eqref{p0-bnd} (replacing $v_*$ by $v'$), it infers that for sufficiently small $\hbar, \vartheta \geq 0$,
	\begin{equation}\label{II1-pm}
		\begin{aligned}
			| \sigma_x^\frac{m}{2} (x,v) w_{- \gamma, \vartheta} (v) I\!I_1 | \leq C I\!I_1^- \,,
		\end{aligned}
	\end{equation}
	where
	\begin{equation*}{\small
		\begin{aligned}
			I\!I_1^- = & (1 + |v|)^{- \gamma} \iint_{\mathbb{R}^3 \times \mathbb{S}^2} (1 + |v - v'|)^\frac{|m| ( 1 - \gamma )}{2} \M^\frac{1 - \delta_0}{4} (v) |\sigma_x^\frac{m}{2} w_{0, \vartheta} g (x,v')| \\
			& \times (1 - \chi_\eps) (|v - v_*|) e^{(c \hbar + \vartheta) (|v - \u|^2 + |v' - \u|^2)}  \M^\frac{1 - \delta_0}{4} (v') \M^\frac{1}{2} (v_*) b (\omega, v_* - v) \d \omega \d v_* \,.
		\end{aligned}}
	\end{equation*}
	For the quantity $I\!I_1^-$, one easily has
{\small
	\begin{equation}\label{II1-n}
		\begin{aligned}
			I\!I_1^- \leq & C \LL g \RR_{A; m, 0, \vartheta} (1 + |v|)^{- \gamma} \iint_{\mathbb{R}^3 \times \mathbb{S}^2} (1 - \chi_\eps) (|v - v_*|) (1 + |v - v'|)^\frac{|m| ( 1 - \gamma )}{2} \M^\frac{1 - \delta_0}{4} (v) \\
			& \times e^{(c \hbar + \vartheta) (|v - \u|^2 + |v' - \u|^2)}  \M^\frac{1 - \delta_0}{4} (v') \M^\frac{1}{2} (v_*) b (\omega, v_* - v) \d \omega \d v_* \\
			\leq & C \LL g \RR_{A; m, 0, \vartheta} \int_{\R^3} (1 - \chi_\eps) (|v - v_*|) \M^\frac{1 - \delta_0}{8} (v) \M^\frac{1}{2} (v_*) |v-v_*|^\gamma \d v_* \leq C \eps^{3 + \gamma} \LL g \RR_{A; m, 0, \vartheta} \,.
		\end{aligned}
	\end{equation}}
	Then the bounds \eqref{II1-pm} and \eqref{II1-n} indicate that
	\begin{equation}\label{II1-bnd}
		\begin{aligned}
			| \sigma_x^\frac{m}{2} (x,v) w_{- \gamma, \vartheta} (v) I\!I_1 | \leq C \eps^{3 + \gamma} \LL g \RR_{A; m, 0, \vartheta}
		\end{aligned}
	\end{equation}
	for $- 3 < \gamma \leq 1$ and sufficiently small $\vartheta, \hbar \geq 0$.
	
	Similarly in \eqref{II1-bnd}, there holds
	\begin{equation}\label{II2-bnd}
		\begin{aligned}
			| \sigma_x^\frac{m}{2} (x,v) w_{- \gamma, \vartheta} (v) I\!I_2 | \leq C \eps^{3 + \gamma} \LL g \RR_{A; m, 0, \vartheta}
		\end{aligned}
	\end{equation}
	for $- 3 < \gamma \leq 1$ and sufficiently small $\vartheta, \hbar \geq 0$. Then, by \eqref{II1-bnd} and \eqref{II2-bnd}, one has
	\begin{equation}\label{Lv-Est}
		\begin{aligned}
			\LL \nu^{-1} K_{\hbar 2}^{1 - \chi_\eps} g \RR_{A; m, 0, \vartheta} \leq C \eps^{3 + \gamma} \LL g \RR_{A; m, 0, \vartheta}
		\end{aligned}
	\end{equation}
	for $- 3 < \gamma \leq 1$, $ m \in \R $ and sufficiently small $\vartheta, \hbar \geq 0$.
	
	{\bf Step 3. Control the quantity $\LL \nu^{-1} K_{\hbar 2}^{\chi_\eps} g \RR_{A; m, 0, \vartheta}$. }
	
	Recalling \eqref{Kh-12}, we split $K_{\hbar 2}^{\chi_\eps} g (x,v) = I\!I\!I_1 + I\!I\!I_2$, where
	\begin{equation}\label{III1-III2}
		\begin{aligned}
			I\!I\!I_1 = & \iint_{\mathbb{R}^3 \times \mathbb{S}^2} \chi_\eps (|v - v_*|) e^{\hbar \sigma (x,v) - \hbar \sigma (x, v')} \M^\frac{1}{2} (v_*') \M^\frac{1}{2} (v_*) g (x,v') b (\omega, v_* - v) \d \omega \d v_* \,, \\
			I\!I\!I_2 = & \iint_{\mathbb{R}^3 \times \mathbb{S}^2} \chi_\eps (|v - v_*|) e^{\hbar \sigma (x,v) - \hbar \sigma (x, v_*')} \M^\frac{1}{2} (v') \M^\frac{1}{2} (v_*) g (x,v_*') b (\omega, v_* - v) \d \omega \d v_* \,.
		\end{aligned}
	\end{equation}
	
	\underline{\em Step 3.1. Estimate of $\LL \nu^{-1} I\!I\!I_1 \RR_{A; m, 0, \vartheta}$.} We first divide the quantity $I\!I\!I_1$ into two parts:
	\begin{equation}
		\begin{aligned}
			I\!I\!I_1 = & \underbrace{ \iint_{\mathbb{R}^3 \times \mathbb{S}^2} \mathbf{1}_{\{ |v' - v| \leq \tilde{\eps} |v_* - v| \}} (\cdots) \d \omega \d v_* }_{:= I\!I\!I_{11} } + \underbrace{ \iint_{\mathbb{R}^3 \times \mathbb{S}^2} \mathbf{1}_{\{ |v' - v| > \tilde{\eps} |v_* - v| \}} (v') (\cdots) \d \omega \d v_* }_{:= I\!I\!I_{12} } \,,
		\end{aligned}
	\end{equation}
	where $\tilde{\eps} > 0$ is small enough to be determined.
	
	{\em Step 3.1.1. Estimate the quantity $| \sigma_x^\frac{m}{2} (x,v) w_{- \gamma , \vartheta} (v) I\!I\!I_{11} |$.} If $|v' - v| \leq \tilde{\eps} |v_* - v|$, one has
	\begin{equation*}
		\begin{aligned}
			b (\omega, v_* - v) \leq C |v_* - v|^\gamma \tfrac{|v' - v|}{|v_* - v|} \leq C \tilde{\eps} |v_* - v|^\gamma \,.
		\end{aligned}
	\end{equation*}
	Together with \eqref{p0-bnd} (replacing $v_*$ by $v'$), it is easy to derive that
	\begin{equation}
		\begin{aligned}
			| \sigma_x^\frac{m}{2} (x,v) w_{- \gamma , \vartheta} (v) I\!I\!I_{11} | \leq C \tilde{\eps} \widetilde{I\!I\!I}_{11} \LL g \RR_{A; m, 0, \vartheta} \,,
		\end{aligned}
	\end{equation}
	where
{\footnotesize
	\begin{equation*}
		\begin{aligned}
			\widetilde{I\!I\!I}_{11} = \iint_{\mathbb{R}^3 \times \mathbb{S}^2} \tfrac{\chi_\eps (|v_* - v|)}{|v_* - v|^{- \gamma}} (1 + |v|)^{- \gamma} (1 + |v' - v|)^\frac{|m| ( 1 - \gamma )}{2} e^{ (c \hbar + \vartheta ) \big| |v - \u|^2 - |v' - \u|^2 \big| } e^{ - \tfrac{1}{4 T} ( |v_* - \u|^2 + |v_*' - \u|^2 ) } \d \omega \d v_* \,.
		\end{aligned}
	\end{equation*}}
	Note that {\small $\tfrac{\chi_\eps (|v_* - v|)}{|v_* - v|^{- \gamma}} (1 + |v|)^{- \gamma} \leq C_\eps (1 + |v_* - \u|)^{|\gamma|}$}, which means that {\small $ \tfrac{\chi_\eps (|v_* - v|)}{|v_* - v|^{- \gamma} (1 + |v|)^\gamma } e^{- \tfrac{|v_* - \u|^2}{8 T}} \leq C_\eps $}. Then
	\begin{equation}
		\begin{aligned}
			\widetilde{I\!I\!I}_{11} \leq C_\eps \iint_{\mathbb{R}^3 \times \mathbb{S}^2} (1 + |v' - v|)^\frac{|m| ( 1 - \gamma )}{2} e^{ (c \hbar + \vartheta ) \big| |v - \u|^2 - |v' - \u|^2 \big| } e^{ - \tfrac{1}{8 T} ( |v_* - \u|^2 + |v_*' - \u|^2 ) } \d \omega \d v_* \,.
		\end{aligned}
	\end{equation}
	By letting $V = v_* - v$, $V_\shortparallel = (V \cdot \omega) \omega$, $V_\perp = V - V_\shortparallel$ and $\omega = \frac{v' -v}{|v' - v|}$, one has $\d \omega \d v_* = 2 |V_\shortparallel|^{-2} \d V_\perp \d V_\shortparallel$, see Section 2 of \cite{Grad-1963}. Moreover, $v' = v + V_\shortparallel$, $v_*' = v + V_\perp$, $v_* = v + V$. Recall \eqref{zeta}, hence, $ \zeta = v + \tfrac{1}{2} V_\shortparallel \,,  \zeta_\shortparallel = [(\zeta - \u) \cdot \omega] \omega \,, \zeta_\perp = (\zeta - \u) - \zeta_\shortparallel $. Combining with \eqref{Trans-1} and \eqref{Trans-2}, one has
{\small
	\begin{equation*}
		\begin{aligned}
			\widetilde{I\!I\!I}_{11} \leq & 2 C_\eps \int_{\R^3} \int_{V_\perp \perp V_\shortparallel} |V_\shortparallel|^{-2} (1 + |V_\shortparallel|)^\frac{|m| ( 1 - \gamma )}{2} e^{- \tfrac{1}{4 T} (|\zeta_\shortparallel|^2 + \tfrac{1}{4} |V_\shortparallel|^2) + 2 (c \hbar + \vartheta) |V_\shortparallel \cdot \zeta_\shortparallel|} e^{- \tfrac{|V_\perp + \zeta_\perp|^2}{4 T}} \d V_\perp \d V_\shortparallel \leq C_\eps
		\end{aligned}
	\end{equation*}}
	provided that $\hbar, \vartheta \geq 0$ are both sufficiently small. It therefor follows that
	\begin{equation}\label{III-11}
		\begin{aligned}
			| \sigma_x^\frac{m}{2} (x,v) w_{- \gamma , \vartheta} (v) I\!I\!I_{11} | \leq C_\eps \tilde{\eps} \LL g \RR_{A; m, 0, \vartheta} \,.
		\end{aligned}
	\end{equation}

	{\em Step 3.1.2. Estimate the quantity $| \sigma_x^\frac{m}{2} (x,v) w_{- \gamma , \vartheta} (v) I\!I\!I_{12} |$.} It easily follows from \eqref{pro-sigma} that $\sigma_x^\frac{m}{2} (x,v) \sigma_x^{- \frac{m}{2}} (x,v') \leq C (1 + |v-v'|)^\frac{|m| ( 1 - \gamma )}{2}$. Then by Lemma \ref{Lmm-sigma},
	\begin{equation*}{\small
		\begin{aligned}
			| I\!I\!I_{12} | \leq C \iint_{\mathbb{R}^3 \times \mathbb{S}^2} \chi_\eps (|v_* - v|) e^{c \hbar ||v-\u|^2 - |v'-\u|^2|} \M^\frac{1}{2} (v_*') \M^\frac{1}{2} (v_*) \sigma_x^\frac{m}{2} (x,v') | g (x, v')| \\
			\times (1 + |v-v'|)^\frac{|m| ( 1 - \gamma )}{2} \mathbf{1}_{\{ |v' - v| > \tilde{\eps} |v_* - v| \}} |v_* - v|^\gamma \tfrac{|v' - v|}{|v_* - v|} \d \omega \d v_* \,.
		\end{aligned}}
	\end{equation*}
	Note that $ w_{- \gamma, \vartheta} (v) \leq C (1 + |v_* - \u|)^{|\gamma|} e^{2 \vartheta |v_* - \u|^2} \cdot (1 + |v_* - v|)^{|\gamma|} e^{2 \vartheta |v_* - v|^2} $, which means that
	\begin{equation*}
		\begin{aligned}
			w_{- \gamma, \vartheta} (v) \M^\frac{1}{4} (v_*) \leq C (1 + |v_* - v|)^{|\gamma|} e^{2 \vartheta |v_* - v|^2}
		\end{aligned}
	\end{equation*}
	for sufficiently small $\vartheta \geq 0$. Moreover, one has $\tfrac{\chi_\eps}{|v_* - v|^{1 - \gamma}} \leq C_\eps (1 + |v_* - v|)^{\gamma - 1}$. Then
{\footnotesize
	\begin{equation*}
		\begin{aligned}
			| \sigma_x^\frac{m}{2} (x,v) w_{- \gamma, \vartheta} (v) I\!I\!I_{12} | \leq C_\eps \iint_{\mathbb{R}^3 \times \mathbb{S}^2} |v'-v| (1 + |v-v'|)^\frac{|m| ( 1 - \gamma )}{2} (1 + |v_* - v|)^{|\gamma| + \gamma - 1} e^{2 \vartheta |v_* - v|^2} \\
			\times e^{c \hbar ||v-\u|^2 - |v'-\u|^2|} \sigma_x^\frac{m}{2} (x,v') (\delta x + l)^\n | g (x,v')| e^{- \tfrac{1}{8 T} (|v_*' - \u|^2 + |v_* - \u|^2)} \mathbf{1}_{\{ |v' - v| > \tilde{\eps} |v_* - v| \}} \d \omega \d v_* \,.
		\end{aligned}
	\end{equation*}}
	By employing the change variables $(\omega, v_*) \mapsto (V_\perp, V_\shortparallel)$, one gains
{\small
	\begin{equation*}
		\begin{aligned}
			| \sigma_x^\frac{m}{2} (x,v) w_{- \gamma, \vartheta} (v) I\!I\!I_{12} | \leq C_\eps \int_{\R^3} \int_{V_\perp \perp V_\shortparallel} |V_\shortparallel|^{-1} (1 + |V_\shortparallel|)^\frac{|m| ( 1 - \gamma )}{2} (1 + |V|)^{|\gamma| + \gamma - 1} e^{2 \vartheta |V|^2} \\
			\times e^{2 c \hbar |V_\shortparallel \cdot \zeta_\shortparallel|} |\sigma_x^\frac{m}{2} g (x, v+ V_\shortparallel)| e^{- \tfrac{\frac{1}{4} |V_\shortparallel|^2 + |\zeta_\shortparallel|^2 + |V_\perp + \zeta_\perp|^2}{4 T}} \mathbf{1}_{\{ |V_\shortparallel| > \tilde{\eps} |V| \}} \d V_\perp V_\shortparallel \,.
		\end{aligned}
	\end{equation*}}
	Observe that for sufficiently small $\hbar \geq 0$
	\begin{equation*}
		\begin{aligned}
			& e^{- \tfrac{\frac{1}{4} |V_\shortparallel|^2 + |\zeta_\shortparallel|^2 + |V_\perp + \zeta_\perp|^2}{4 T} + 2 c \hbar |V_\shortparallel \cdot \zeta_\shortparallel|} \mathbf{1}_{\{ |V_\shortparallel| > \tilde{\eps} |V| \}} \\
			\leq & e^{- C_0 ( |V_\shortparallel|^2 + |\zeta_\shortparallel|^2 + |V_\perp + \zeta_\perp|^2 )} \mathbf{1}_{\{ |V_\shortparallel| > \tilde{\eps} |V| \}} \leq e^{- \frac{C_0 \tilde{\eps}^2}{2} |V|^2} e^{- \frac{C_0}{2} |V_\shortparallel|^2}  e^{- C_0 |V_\perp + \zeta_\perp|^2} \,,
		\end{aligned}
	\end{equation*}
	and $ (1 + |V|)^{|\gamma| + \gamma - 1} e^{- \frac{C_0 \tilde{\eps}^2}{2} |V|^2} \leq C_{\tilde{\eps}} $, $ \int_{V_\perp \perp V_\shortparallel} e^{- C_0 |V_\perp + \zeta_\perp|^2} \d V_\shortparallel \leq C $. Then one has
	\begin{equation*}{\small
		\begin{aligned}
			| \sigma_x^\frac{m}{2} (x,v) w_{- \gamma, \vartheta} (v) I\!I\!I_{12} | \leq & C_{\eps, \tilde{\eps}} \int_{\R^3} |V_\shortparallel|^{-1} (1 + |V_\shortparallel|)^\frac{|m| ( 1 - \gamma )}{2} e^{- \frac{C_0}{2} |V_\shortparallel|^2} |\sigma_x^\frac{m}{2} g (x, v + V_\shortparallel)| \d V_\shortparallel \\
			\leq & C_{\eps, \tilde{\eps}} \big( \int_{\R^3} z_{\alpha'}^2 (v + V_\shortparallel) |\sigma_x^\frac{m}{2} g (x, v + V_\shortparallel)|^2 \d V_\shortparallel \big)^\frac{1}{2} \\
			& \times \big( \int_{\R^3} |V_\shortparallel|^{-2} (1 + |V_\shortparallel|)^{|m| ( 1 - \gamma )} z_{- \alpha'}^2 (v + V_\shortparallel) e^{- C_0 |V_\shortparallel|^2} \d V_\shortparallel \big)^\frac{1}{2} \,.
		\end{aligned}}
	\end{equation*}
	Due to $ \int_{\R^3} |V_\shortparallel|^{-2} (1 + |V_\shortparallel|)^{|m| ( 1 - \gamma )} z_{- \alpha'}^2 (v + V_\shortparallel) e^{- C_0 |V_\shortparallel|^2} \d V_\shortparallel \leq C $ uniformly in $v \in \R^3$ whence $0 \leq \alpha' < \frac{1}{2}$. It thereby infers that for $0 \leq \alpha' < \frac{1}{2}$,
	\begin{equation}\label{III-12}
		\begin{aligned}
			| \sigma_x^\frac{m}{2} (x,v) w_{- \gamma, \vartheta} (v) I\!I\!I_{12} | \leq C_{\eps, \tilde{\eps}} \| \sigma_x^\frac{m}{2} z_{\alpha'} g \|_{L^\infty_x L^2_v} \,.
		\end{aligned}
	\end{equation}

	Then \eqref{III-11} and \eqref{III-12} imply that
	\begin{equation}\label{III-1}
		\begin{aligned}
			| \sigma_x^\frac{m}{2} (x,v) w_{- \gamma, \vartheta} (v) I\!I\!I_1| \leq C_\eps \tilde{\eps} \LL g \RR_{A; m, 0, \vartheta} + C_{\eps, \tilde{\eps}} \| \sigma_x^\frac{m}{2} z_{\alpha'} w_{- \gamma, \vartheta} g \|_{L^\infty_x L^2_v}
		\end{aligned}
	\end{equation}
	for $0 \leq \alpha' < \frac{1}{2}$, $- 3 < \gamma \leq 1$, $ m \in \R $ and sufficiently small $\hbar, \vartheta \geq 0$, where $\eps, \tilde{\eps} > 0$ are both small to be determined later.
	
	\underline{\em Step 3.2. Estimate of $\LL \nu^{-1} I\!I\!I_2 \RR_{A; m, 0, \vartheta}$.} Recalling the definition of $I\!I\!I_2$ in \eqref{III1-III2}, we decompose it as
	\begin{equation*}
		\begin{aligned}
			I\!I\!I_2 = I\!I\!I_{21} + I\!I\!I_{22} \,,
		\end{aligned}
	\end{equation*}
	where
{\small
	\begin{equation*}
		\begin{aligned}
			I\!I\!I_{21} = \iint_{\mathbb{R}^3 \times \mathbb{S}^2} \chi_\eps (|v - v_*|) e^{\hbar \sigma (x,v) - \hbar \sigma (x, v_*')} \M^\frac{1}{2} (v') \M^\frac{1}{2} (v_*) g (x,v_*') \mathbf{1}_{\{ |v_*' - v| \leq \tilde{\eps} |v_* - v| \}} b (\omega, v_* - v) \d \omega \d v_* \,,
		\end{aligned}
	\end{equation*}}
	and
{\small
		\begin{align*}
			I\!I\!I_{22} = \iint_{\mathbb{R}^3 \times \mathbb{S}^2} \chi_\eps (|v - v_*|) e^{\hbar \sigma (x,v) - \hbar \sigma (x, v_*')} \M^\frac{1}{2} (v') \M^\frac{1}{2} (v_*) g (x,v_*') \mathbf{1}_{\{ |v_*' - v| > \tilde{\eps} |v_* - v| \}} b (\omega, v_* - v) \d \omega \d v_* \,.
		\end{align*}}
	
	{\em Step 3.2.1. Control the quantity $| \sigma_x^\frac{m}{2} (x,v) w_{- \gamma, \vartheta} (v) I\!I\!I_{21} |$.} By $b (\omega, v_* - v) \leq C |v_* - v|^\gamma \frac{|v' - v|}{|v_* - v|} \leq C |v_* - v|^\gamma$ and $\frac{\chi_\eps (|v_* - v|) (1 + |v|)^{- \gamma}}{|v_* - v|^{- \gamma}} \leq C_\eps (1 + |v_*|)^{|\gamma|}$, one has
	\begin{equation*}
		\begin{aligned}
			| \sigma_x^\frac{m}{2} (x,v) w_{- \gamma, \vartheta} (v) I\!I\!I_{21} | \leq & C_\eps \iint_{\mathbb{R}^3 \times \mathbb{S}^2} \tfrac{e^{\hbar \sigma (x,v) } \sigma_x^\frac{m}{2} (x,v) w_{0, \vartheta } (v)}{e^{\hbar \sigma (x,v_*') } \sigma_x^\frac{m}{2} (x,v_*') w_{0, \vartheta }(v_*')} (1 + |v_*|)^{|\gamma|} \mathbf{1}_{\{ |v_*' - v| \leq \tilde{\eps} |v_* - v| \}} \\
			& \quad \times | \sigma_x^\frac{m}{2} (x,v_*') w_{0, \vartheta }(v_*') g (x, v_*') | \M^\frac{1}{2} (v') \M^\frac{1}{2} (v_*) \d \omega \d v_* \\
			\leq & C_\eps \LL g \RR_{A; m, 0, \vartheta} \widetilde{I\!I\!I}_{21} \,,
		\end{aligned}
	\end{equation*}
	where
	\begin{equation*}
		\begin{aligned}
			\widetilde{I\!I\!I}_{21} = \iint_{\mathbb{R}^3 \times \mathbb{S}^2} (1 + |v_*' - v|)^\frac{|m| ( 1 - \gamma )}{2} e^{(c \hbar + \vartheta) ||v-\u|^2 - |v_*' - \u|^2|} e^{- \frac{|v_*-\u|^2 + |v' - \u|^2}{8 T}} \mathbf{1}_{\{ |v_*' - v| \leq \tilde{\eps} |v_* - v| \}} \d \omega \d v_* \,.
		\end{aligned}
	\end{equation*}
	Here we have used the bound $ (1 + |v_*|)^{|\gamma|} \M^\frac{1}{2} (v') \M^\frac{1}{2} (v_*) \leq C e^{- \frac{|v_*-\u|^2 + |v' - \u|^2}{8 T}} $ and the inequality \eqref{p0-bnd} (replacing $v_*$ by $v_*'$). By employing the change variables $(\omega, v_*) \mapsto (V_\shortparallel, V_\perp)$ and combining with the relations \eqref{L1}-\eqref{L2}, it holds that for sufficiently small $\hbar, \vartheta \geq 0$
{\footnotesize
	\begin{equation*}
		\begin{aligned}
			\widetilde{I\!I\!I}_{21} = & \int_{\R^3} \int_{V_\perp \perp V_\shortparallel} (1 + |V_\perp|)^\frac{|m| ( 1 - \gamma )}{2} e^{(c \hbar + \vartheta) |- |V_\perp + \zeta_\perp|^2 + |\zeta_\perp|^2 |} \mathbf{1}_{\{ |V_\perp| \leq \tilde{\eps} |V| \}} \\
& \qquad \qquad \qquad \times e^{- \tfrac{|\zeta_\shortparallel + \frac{1}{2} V_\shortparallel|^2}{4 T} - \frac{|\zeta_\perp|^2}{8 T} - \frac{|V_\perp + \zeta_\perp|^2}{8 T}} \cdot 2 |V_\shortparallel|^{-2} \d V_\perp \d V_\shortparallel \\
			\leq & C \int_{\R^3} \int_{V_\perp \perp V_\shortparallel} (1 + |V_\perp + \zeta_\perp|)^\frac{|m| ( 1 - \gamma )}{2} (1 + |\zeta_\perp|)^\frac{|m| ( 1 - \gamma )}{2} |V_\shortparallel|^{-2} \\
			& \qquad \qquad \times e^{- C_1 ( |\zeta_\shortparallel + \frac{1}{2} V_\shortparallel|^2 + |\zeta_\perp|^2 + |V_\perp + \zeta_\perp|^2 )} \mathbf{1}_{\{ |V_\perp| \leq \frac{\tilde{\eps}}{\sqrt{1 - \tilde{\eps}^2}} |V_\shortparallel| \}} \d V_\perp \d V_\shortparallel \\
			\leq & C \int_{\R^3} |V_\shortparallel|^{-2} e^{- C_1 |\zeta_\shortparallel + \frac{1}{2} V_\shortparallel|^2 } \int_{V_\perp \perp V_\shortparallel} \mathbf{1}_{\{ |V_\perp| \leq \frac{\tilde{\eps}}{\sqrt{1 - \tilde{\eps}^2}} |V_\shortparallel| \}} e^{- \frac{C_1}{2} |V_\perp + \zeta_\perp|^2 } \d V_\perp \d V_\shortparallel \,.
		\end{aligned}
	\end{equation*}}
	Thanks to $\int_{V_\perp \perp V_\shortparallel} e^{- \frac{C_1}{2} |V_\perp + \zeta_\perp|^2 } \d V_\perp \leq C < + \infty$ uniformly in $V_\shortparallel \in \R^3$, we know that for any $\hat{\eps} > 0$ there is a $\tilde{\eps}_{\hat{\eps}} > 0$ such that for any $0 < \tilde{\eps} < \tilde{\eps}_{\hat{\eps}}$, $ \int_{V_\perp \perp V_\shortparallel} \mathbf{1}_{\{ |V_\perp| \leq \frac{\tilde{\eps}}{\sqrt{1 - \tilde{\eps}^2}} |V_\shortparallel| \}} e^{- \frac{C_1}{2} |V_\perp + \zeta_\perp|^2 } \d V_\perp \leq \hat{\eps} $. Then the quantity $\widetilde{I\!I\!I}_{21}$ can be bounded by $ \widetilde{I\!I\!I}_{21} \leq C \hat{\eps} $, which means that
	\begin{equation}\label{III-21}
		\begin{aligned}
			| \sigma_x^\frac{m}{2} (x,v) w_{- \gamma, \vartheta} (v) I\!I\!I_{21} | \leq C_\eps \hat{\eps} \LL g \RR_{A; m, 0, \vartheta}
		\end{aligned}
	\end{equation}
	for any $0 < \tilde{\eps} < \tilde{\eps}_{\hat{\eps}}$.
	
	{\em Step 3.2.2. Control the quantity $| \sigma_x^\frac{m}{2} (x,v) w_{- \gamma, \vartheta} (v) I\!I\!I_{22}|$.} Observe that
	\begin{equation*}
		\begin{aligned}
			& w_{- \gamma, \vartheta} (v) \M^\frac{1}{2} (v') \M^\frac{1}{2} (v_*) \leq C (1 + |v_* - v|)^{|\gamma|} e^{2 \vartheta |v_* - v|^2} e^{- \tfrac{1}{8 T} (|v' - \u|^2 + |v_* - \u|^2)} \,, \\
			& \chi_\eps (|v_* - v|) b (\omega, v_* - v) \leq C_\eps |v' - v| (1 + |v_* - v|)^{\gamma - 1} \,, \\
			& \sigma_x^\frac{m}{2} (x,v) \sigma_x^{- \frac{m}{2}} (x,v_*') e^{\hbar \sigma (x,v) - \hbar \sigma (x,v_*')} \leq C (1 + |v-v_*'|)^\frac{|m| ( 1 - \gamma )}{2} e^{c \hbar ||v - \u|^2 - |v_*' - \u|^2|} \,.
		\end{aligned}
	\end{equation*}
	Then
		\begin{align*}
			| \sigma_x^\frac{m}{2} (x,v) w_{- \gamma, \vartheta} (v) I\!I\!I_{22} | \leq C_\eps \iint_{\mathbb{R}^3 \times \mathbb{S}^2} |v'-v| (1 + |v-v_*'|)^\frac{|m| ( 1 - \gamma )}{2} (1 + |v_*-v|)^{|\gamma| + \gamma - 1} e^{2 \vartheta |v_* - v|^2} \\
			\times e^{c \hbar ||v - \u|^2 - |v_*' - \u|^2|} | \sigma_x^\frac{m}{2} g (x,v_*')| e^{- \tfrac{1}{8 T} (|v' - \u|^2 + |v_* - \u|^2)} \mathbf{1}_{|v_*' - v| > \tilde{\eps} |v_* - v|} \d \omega \d v_* \,.
		\end{align*}
	By employing the change variables $(\omega, v_*) \mapsto (V_\shortparallel, V_\perp)$ and the similar computations of $\widetilde{I\!I\!I}_{21}$, one has
	\begin{equation*}
		\begin{aligned}
			| \sigma_x^\frac{m}{2} (x,v) w_{- \gamma, \vartheta} (v) I\!I\!I_{22} | \leq C_\eps \int_{\R^3} |V_\shortparallel|^{-1} e^{- C_2 |\zeta_\shortparallel + \frac{1}{2} V_\shortparallel|^2 } \int_{V_\perp \perp V_\shortparallel} (1 + |V|)^{|\gamma| + \gamma - 1} e^{2 \vartheta |V|^2} \\
			\times |\sigma_x^\frac{m}{2} g (x,v + V_\perp)| \mathbf{1}_{\{ |V_\perp| > \tilde{\eps} |V| \}} (1 + |V_\perp|)^\frac{|m| ( 1 - \gamma )}{2} e^{- C_2 (|\zeta_\perp|^2 + |V_\perp + \zeta_\perp|^2)} \d V_\perp \d V_\shortparallel \,.
		\end{aligned}
	\end{equation*}
	If $|V_\perp| > \tilde{\eps} |V|$, there holds $|\zeta_\perp|^2 + |V_\perp + \zeta_\perp|^2 \geq |V_\perp|^2 > \tilde{\eps}^2 |V|^2$. Together with $(1 + |V_\perp|)^\frac{|m| ( 1 - \gamma )}{2} e^{- \frac{C_2}{4} (|\zeta_\perp|^2 + |V_\perp + \zeta_\perp|^2)} \leq C$ and $(1 + |V|)^{|\gamma| + \gamma - 1} e^{- (\frac{C_2}{2} \tilde{\eps}^2 - 2 \vartheta) |V|^2} \leq C_{\tilde{\eps}}$ under $0 \leq \vartheta < \frac{C_2}{4} \tilde{\eps}^2$, one has
{\small
	\begin{equation*}
		\begin{aligned}
			| \sigma_x^\frac{m}{2} (x,v) w_{- \gamma, \vartheta} (v) I\!I\!I_{22} | \leq & C_{\eps, \tilde{\eps}} \int_{\R^3} |V_\shortparallel|^{-1} e^{- C_2 |\zeta_\shortparallel + \frac{1}{2} V_\shortparallel|^2 } \int_{V_\perp \perp V_\shortparallel} |\sigma_x^\frac{m}{2} g (x,v + V_\perp)| e^{- \frac{C_2}{4} |V_\perp + \zeta_\perp|^2 } \d V_\perp \d V_\shortparallel \\
			= & C_{\eps, \tilde{\eps}} \int_{\R^3} |\sigma_x^\frac{m}{2} g (x,v + V_\perp)| e^{- \frac{C_2}{4} |V_\perp + \zeta_\perp|^2 } \int_{V_\perp \perp V_\shortparallel} |V_\shortparallel|^{-1} e^{- C_2 |\zeta_\shortparallel + \frac{1}{2} V_\shortparallel|^2 } \d V_\shortparallel \d V_\perp \\
			\leq & C_{\eps, \tilde{\eps}} \int_{\R^3} |\sigma_x^\frac{m}{2} g (x,v + V_\perp)| e^{- \frac{C_2}{4} |V_\perp + \zeta_\perp|^2 } \d V_\perp \,.
		\end{aligned}
	\end{equation*}}
	Since $ \int_{\R^3} z_{- \alpha'}^2 (v + V_\perp ) e^{- \frac{C_2}{2} |V_\perp + \zeta_\perp|^2} \d V_\perp \leq C $ uniformly in $v, \zeta_\perp \in \R^3$ whence $0 \leq \alpha' < \frac{1}{2}$, we obtain
	\begin{equation}\label{III-22}
		\begin{aligned}
			| \sigma_x^\frac{m}{2} (x,v) w_{- \gamma, \vartheta} (v) I\!I\!I_{22} | \leq C_{\eps, \tilde{\eps}} \| \sigma_x^\frac{m}{2} z_{\alpha'} w_{- \gamma, \vartheta} g \|_{L^\infty_x L^2_v} \,.
		\end{aligned}
	\end{equation}

	Then \eqref{III-21} and \eqref{III-22} indicate that for any $\hat{\eps} > 0 $ there is an $\tilde{\eps}_{\hat{\eps}} > 0$ such that for all $\tilde{\eps} \in (0, \tilde{\eps}_{\hat{\eps}})$,
	\begin{equation}\label{III-2}
		\begin{aligned}
			| \sigma_x^\frac{m}{2} (x,v) w_{- \gamma, \vartheta} (v) I\!I\!I_2 | \leq C_\eps \hat{\eps} \LL g \RR_{A; m, 0, \vartheta} + C_{\eps, \tilde{\eps}} \| \sigma_x^\frac{m}{2} z_{\alpha'} w_{- \gamma, \vartheta} g \|_{L^\infty_x L^2_v} \,,
		\end{aligned}
	\end{equation}
	provided that $\hbar, \vartheta \geq 0$ are both small enough. It therefore follows from \eqref{III-1} and \eqref{III-2} that for any fixed $\eps, \hat{\eps} > 0$, $- 3 < \gamma \leq 1$, $ m \in \R $ and $0 \leq \alpha' < \frac{1}{2}$
	\begin{equation}\label{Hv-est}
		\begin{aligned}
			\LL \nu^{-1} K_{\hbar 2}^{\chi_\eps} g \RR_{A; m, 0, \vartheta} \leq C_\eps ( \tilde{\eps} + \hat{\eps} ) \LL g \RR_{A; m, 0, \vartheta} + C_{\eps, \tilde{\eps}} \| \sigma_x^\frac{m}{2} z_{\alpha'} w_{- \gamma, \vartheta} g \|_{L^\infty_x L^2_v}
		\end{aligned}
	\end{equation}
	for all $\tilde{\eps} \in (0, \tilde{\eps}_{\hat{\eps}})$, provided that $\hbar, \vartheta \geq 0$ are both small enough. Consequently, \eqref{Loss-Est}, \eqref{Lv-Est} and \eqref{Hv-est} show that
	\begin{equation*}
		\begin{aligned}
			\LL \nu^{-1} K_\hbar g \RR_{A; m, 0, \vartheta} \leq [ C \eps^{3 + \gamma} + C_\eps (\hat{\eps} + \tilde{\eps}) ] \LL g \RR_{A; m, 0, \vartheta} + C_{\eps, \tilde{\eps}} \| \sigma_x^\frac{m}{2} z_{\alpha'} w_{- \gamma, \vartheta} g \|_{L^\infty_x L^2_v} \,.
		\end{aligned}
	\end{equation*}
	
	For any fixed $\eta_1 > 0$, we first take $\eps > 0$ such that $C \eps^{3 + \gamma} \leq \frac{\eta_1}{3}$ and fixed. Then we take $\hat{\eps} > 0$ such that $C_\eps \hat{\eps} \leq \frac{\eta_1}{3}$ and fixed. At the end, we choose $\tilde{\eps} \in (0, \tilde{\eps}_{\hat{\eps}})$ such that $C_\eps \tilde{\eps} \leq \frac{\eta_1}{3}$ and fixed. Therefore, the above inequality finish the proof of Lemma \ref{Lmm-Kh-2-infty}.
\end{proof}

\subsection{Weighted $L^2_{x,v}$ property of $K$: Proof of Lemma \ref{Lmm-Kh-L2}}\label{Subsec:K-L2}

In this section,

\begin{proof}[Proof of Lemma \ref{Lmm-Kh-L2}]
	By \eqref{K-K1-K2}-\eqref{K1}-\eqref{K2}, one knows that
	\begin{equation*}
		\begin{aligned}
			K_\hbar g = - K_{\hbar 1} g + K_{\hbar 2} g + K_{\hbar 3} g \,,
		\end{aligned}
	\end{equation*}
	where
	\begin{align}
		K_{\hbar 1} g (x,v) & = e^{\hbar \sigma (x,v)} \M^\frac{1}{2} (v) \int_{\R^3} e^{- \hbar \sigma (x,v_*)} g (x, v_*) |v - v_*|^\gamma \M^\frac{1}{2} (v_*) \d v_* \,, \label{Kh1} \\
		K_{\hbar 2} g (x,v) & = e^{\hbar \sigma (x,v)} \iint_{\mathbb{R}^3 \times \mathbb{S}^2} e^{- \hbar \sigma (x, v')} g (x, v') \M^\frac{1}{2} (v_*') \M^\frac{1}{2} (v_*) b (\omega, v_* - v) \d \omega \d v_* \,, \label{Kh2} \\
		K_{\hbar 3} g (x,v) & = e^{\hbar \sigma (x,v)} \iint_{\mathbb{R}^3 \times \mathbb{S}^2} e^{- \hbar \sigma (x, v_*')} g (x, v_*') \M^\frac{1}{2} (v') \M^\frac{1}{2} (v_*) b (\omega, v_* - v) \d \omega \d v_* \,. \label{Kh3}
	\end{align}

	{\bf Step 1. Estimates for $K_{\hbar 1}$ part.}
	
	Claim that for $0 \leq \alpha < \min \{ \frac{1}{2}, \frac{\gamma + 3}{2} \}$,
	\begin{equation}\label{Claim-Kh1}
		\begin{aligned}
			\int_{\R^3} |\nu^{- \frac{1}{2}} z_{- \alpha} \sigma_x^\frac{m}{2} w_{\beta, \vartheta} K_{\hbar 1} g (x,v)|^2 \d v \lesssim \int_{\R^3} |\nu^\frac{1}{2} \sigma_x^\frac{m}{2} w_{\beta, \vartheta} g (x,v)|^2 \d v \,.
		\end{aligned}
	\end{equation}
	
	Indeed, we rewrite
	\begin{equation*}
		\begin{aligned}
			\nu^{- \frac{1}{2}} z_{- \alpha} \sigma_x^\frac{m}{2} w_{\beta, \vartheta} K_{\hbar 1} g (x,v) = \int_{\R^3} \mathfrak{k}_{\hbar 1} (x, v, v_*) \hat{g} (x, v_*) \d v_* \,,
		\end{aligned}
	\end{equation*}
	where $\hat{g} : = \nu^\frac{1}{2} \sigma_x^\frac{m}{2} w_{\beta, \vartheta} g$, and
	\begin{equation*}
		\begin{aligned}
			\mathfrak{k}_{\hbar 1} (x, v, v_*) = z_{- \alpha} (v) \nu^{- \frac{1}{2}} (v) \nu^{- \frac{1}{2}} (v_*) \tfrac{w_{\beta, \vartheta} (v) \sigma_x^\frac{m}{2} (x,v) e^{\hbar \sigma (x,v)}}{w_{\beta, \vartheta} (v_*) \sigma_x^\frac{m}{2} (x,v_*) e^{\hbar \sigma (x,v_*)}} |v - v_*|^\gamma \M^\frac{1}{2} (v) \M^\frac{1}{2} (v_*) \,.
		\end{aligned}
	\end{equation*}
	By \eqref{p-pro}, one has
	\begin{equation*}
		\begin{aligned}
			\tfrac{w_{\beta, \vartheta} (v) \sigma_x^\frac{m}{2} (x,v) e^{\hbar \sigma (x,v)}}{w_{\beta, \vartheta}(v_*) \sigma_x^\frac{m}{2} (x,v_*) e^{\hbar \sigma (x,v_*)}} \lesssim (1 + |v-v_*|)^{|\beta| + \frac{|m| ( 1 - \gamma )}{2}} e^{( c \hbar + \vartheta ) ||v-\u|^2 - |v_* - \u|^2|} \,.
		\end{aligned}
	\end{equation*}
	Then for sufficiently small $\hbar, \vartheta \geq 0$, $\nu^{- \frac{1}{2}} (v) \nu^{- \frac{1}{2}} (v_*) \tfrac{w_{\beta, \vartheta} (v) \sigma_x^\frac{m}{2} (x,v) e^{\hbar \sigma (x,v)}}{w_{\beta, \vartheta}(v_*) \sigma_x^\frac{m}{2} (x,v_*) e^{\hbar \sigma (x,v_*)}} \M^\frac{1}{4} (v) \M^\frac{1}{4} (v_*) \lesssim 1$ uniformly in $x$, $v$ and $v_*$, which yields that $ |\mathfrak{k}_{\hbar 1} (x,v,v_*)| \lesssim z_{- \alpha} (v) |v-v_*|^\gamma \M^\frac{1}{4} (v) \M^\frac{1}{4} (v_*) $ uniformly in $x$. Then
{\small
	\begin{equation*}
		\begin{aligned}
			& I : = \int_{\R^3} | \nu^{- \frac{1}{2}} z_{- \alpha} \sigma_x^\frac{m}{2} w_{\beta, \vartheta} K_{\hbar 1} g (x,v) |^2 \d v = \int_{\R^3} | \int_{\R^3} \mathfrak{k}_{\hbar 1} (x, v, v_*) \hat{g} (x, v_*) \d v_* |^2 \d v \\
			\lesssim & \int_{\R^3} \Big( \int_{\R^3} z_{- \alpha} (v) |v-v_*|^\gamma \M^\frac{1}{4} (v) \M^\frac{1}{4} (v_*) |\hat{g} (x,v_*)| \d v_* \Big)^2 \d v \\
			\lesssim & \int_{\R^3} \Big( \int_{\R^3} z_{- \alpha}^2 (v) |v_* - v|^\gamma \M^\frac{1}{4} (v) \M^\frac{1}{4} (v_*) \d v_* \Big) \Big( \int_{\R^3} |v_* - v|^\gamma \M^\frac{1}{4} (v) \M^\frac{1}{4} (v_*) |\hat{g} (x,v_*)|^2 \d v_* \Big) \d v \,.
		\end{aligned}
	\end{equation*}}
	Note that by Lemma 2.3 of \cite{LS-2010-KRM}, $ \int_{\R^3} |v-v_*|^\gamma \M^\frac{1}{4} (v_*) \d v_* \lesssim (1 + |v|)^\gamma $, which means that
	\begin{equation*}
		\begin{aligned}
			\int_{\R^3} z_{- \alpha}^2 (v) |v_* - v|^\gamma \M^\frac{1}{4} (v) \M^\frac{1}{4} (v_*) \d v_* \lesssim z_{- \alpha}^2 (v) \M^\frac{1}{4} (v) (1 + |v|)^\gamma \lesssim z_{- \alpha}^2 (v) \M^\frac{1}{8} (v) \,.
		\end{aligned}
	\end{equation*}
	It thereby holds
		\begin{align*}
			I \lesssim \iint_{\R^3 \times \R^3} z_{- \alpha}^2 (v) \M^\frac{3}{8} (v) \M^\frac{1}{4} (v_*) |v-v_*|^\gamma |\hat{g} (x,v_*)|^2 \d v_* \d v = \int_{\R^3} \M^\frac{1}{4} (v_*) |\hat{g} (x,v_*)|^2 \mathsf{h} (v_*) \d v_* \,,
		\end{align*}
	where
	\begin{equation*}
		\begin{aligned}
			\mathsf{h} (v_*) = \int_{\R^3} z_{- \alpha}^2 (v) |v-v_*|^\gamma \M^\frac{3}{8} (v) \d v \,.
		\end{aligned}
	\end{equation*}
	
	One asserts that
	\begin{equation}\label{h-bnd}
		\begin{aligned}
			\mathsf{h} (v_*) \lesssim (1 + |v_*|)^\gamma
		\end{aligned}
	\end{equation}
	for $- 3 < \gamma \leq 1$ and $0 \leq \alpha < \min \{ \frac{1}{2}, \frac{\gamma + 3}{3} \}$. Once the assertion \eqref{h-bnd} holds, one has
	\begin{equation*}
		\begin{aligned}
			I \lesssim \int_{\R^3} (1 + |v_*|)^\gamma \M^\frac{1}{4} (v_*) |\hat{g} (x, v_*)|^2 \d v_* \lesssim \int_{\R^3} |\hat{g} (x, v_*)|^2 \d v_* \,,
		\end{aligned}
	\end{equation*}
	which means the claim \eqref{Claim-Kh1} holds.
	
	It remains to show the assertion \eqref{h-bnd}. Note that
	\begin{equation*}
		\begin{aligned}
			\mathsf{h} (v_*) = \underbrace{ \int_{|v-v_*| \leq 1} (\cdots) \d v}_{:= \mathsf{h}_1 (v_*)} +  \underbrace{ \int_{|v-v_*| > 1} (\cdots) \d v }_{:= \mathsf{h}_2 (v_*)} \,.
		\end{aligned}
	\end{equation*}
	Recalling the definition of $z_{- \alpha}$ in \eqref{z-alpha}, one has
	\begin{equation*}
		\begin{aligned}
			\mathsf{h}_1 (v_*) \lesssim & \int_{|v-v_*| \leq 1} z_{- \alpha}^2 (v) |v - v_*|^\gamma \d v \\
			= & \int_{|v-v_*| \leq 1, |v_3| \leq 1} |v_3|^{- 2 \alpha} |v-v_*|^\gamma \d v + \int_{|v-v_*| \leq 1, |v_3| > 1} |v-v_*|^\gamma \d v \\
			\lesssim & \int_{|v-v_*| \leq 1} |v_3|^{- 2 \alpha} |v-v_*|^\gamma \d v + \int_{|v-v_*| \leq 1} |v-v_*|^\gamma \d v \,.
		\end{aligned}
	\end{equation*}
	By letting $ v_1 - v_{*1} = r \sin \varphi \cos \theta \,, \ v_2 - v_{* 2} = r \sin \varphi \sin \theta \,, \ v_3 - v_{*3} = r \cos \varphi $ with $0 \leq r \leq 1$, $0 \leq \theta \leq 2 \pi$ and $0 \leq \varphi \leq \pi$, a direct computation shows
	\begin{equation*}
		\begin{aligned}
			\mathsf{h}_1 (v_*) \lesssim 2 \pi \int_0^1 r^{2 + \gamma - 2 \alpha} \int_0^\frac{\pi}{2} \big( |\cos \varphi + \tfrac{|v_{*3}|}{r}|^{- 2 \alpha} + |\cos \varphi - \tfrac{|v_{*3}|}{r}|^{- 2 \alpha} \big) \sin \varphi \d \varphi \d r + \tfrac{4 \pi}{3 + \gamma} \,.
		\end{aligned}
	\end{equation*}
	Observe that for $0 \leq \alpha < \frac{1}{2}$,
	\begin{equation*}
		\begin{aligned}
			& \int_0^\frac{\pi}{2} \big( |\cos \varphi + \tfrac{|v_{*3}|}{r}|^{- 2 \alpha} + |\cos \varphi - \tfrac{|v_{*3}|}{r}|^{- 2 \alpha} \big) \sin \varphi \d \varphi \\
			= & \left\{
			\begin{aligned}
				\tfrac{1}{1 - 2 \alpha} \big[ ( \tfrac{|v_{*3}|}{r} + 1 )^{1 - 2 \alpha} - (\tfrac{|v_{*3}|}{r} - 1)^{1 - 2 \alpha} \big] \,, & \quad \tfrac{|v_{*3}|}{r} \geq 1 \\
				\tfrac{1}{1 - 2 \alpha} \big[ (1 + \tfrac{|v_{*3}|}{r})^{1 - 2 \alpha} - (1 - \tfrac{|v_{*3}|}{r})^{1 - 2 \alpha} \big] \,, & \quad 0 \leq \tfrac{|v_{*3}|}{r} < 1
			\end{aligned}
			\right. \quad \leq C_\alpha < \infty
		\end{aligned}
	\end{equation*}
	uniformly in $\tfrac{|v_{*3}|}{r} \geq 0$, which implies that
	\begin{equation}\label{h1-bnd}
		\begin{aligned}
			\mathsf{h}_1 (v_*) \lesssim 2 \pi C_\alpha \int_0^1 r^{2 + \gamma - 2 \alpha} \d r + \tfrac{4 \pi}{3 + \gamma} = \tfrac{2 \pi C_\alpha}{3 + \gamma - 2 \alpha} + \tfrac{4 \pi}{3 + \gamma} < \infty \,,
		\end{aligned}
	\end{equation}
	provided that $0 \leq \alpha < \min \{ \frac{1}{2}, \frac{3 + \gamma}{2} \}$.
	
	For the case $|v - v_* | > 1$, one can assert that $|v - v_*|^\gamma \lesssim (1 + |v|)^{|\gamma|} (1 + |v_*|)^\gamma$ for $- 3 < \gamma \leq 1$. Indeed, if $0 \leq \gamma \leq 1$, the bound $|v - v_*| \leq (1 + |v|) (1 + |v_*|)$ implies the assertion for the case $0 \leq \gamma \leq 1$. If $- 3 < \gamma < 0$, one has $ 1 + |v_*| \leq (1 + |v|) (1 + |v - v_*|) \leq 2 (1 + |v|) |v - v_*| $, hence, $|v-v_*|^{-1} \leq 2 (1 + |v|) (1 + |v_*|)^{-1}$, which infers the assertion for the case $-3 < \gamma < 0$. Then it follows
	\begin{equation*}
		\begin{aligned}
			\mathsf{h}_2 (v_*) \lesssim \int_{|v-v_*| > 1} z_{- \alpha}^2 (v) (1 + |v|)^{|\gamma|} (1 + |v_*|)^\gamma \M^\frac{3}{8} (v) \d v \lesssim (1 + |v_*|)^\gamma \int_{\R^3} z_{- \alpha}^2 (v) \M^\frac{1}{4} (v) \d v \,.
		\end{aligned}
	\end{equation*}
	Thanks to $0 \leq \alpha < \frac{1}{2}$, one has $ \int_{\R^3} z_{- \alpha}^2 (v) \M^\frac{1}{4} (v) \d v \lesssim 1 $, which implies
	\begin{equation}\label{h2-bnd}
		\begin{aligned}
			\mathsf{h}_2 (v_*) \lesssim (1 + |v_*|)^\gamma \,.
		\end{aligned}
	\end{equation}
	Then the bounds \eqref{h1-bnd} and \eqref{h2-bnd} conclude the assertion \eqref{h-bnd}.

	{\bf Step 2. Estimates for $K_{\hbar 2}$ part.}
	
	One asserts that for $0 \leq \alpha < \min \{ \frac{1}{2}, \frac{b_0 + \gamma + 1}{2} \}$ with $b_0 \in \mathcal{S}_\gamma$ given in Lemma \ref{Lmm-Kh-L2},
	\begin{equation}\label{Claim-Kh2}
		\begin{aligned}
			\int_{\R^3} |\nu^{- \frac{1}{2}} z_{- \alpha} \sigma_x^\frac{m}{2} w_{\beta, \vartheta} K_{\hbar 2} g (x, v)|^2 \d v \lesssim \int_{\R^3} |\nu^\frac{1}{2} \sigma_x^\frac{m}{2} w_{\beta, \vartheta} g (x,v)|^2 \d v \,.
		\end{aligned}
	\end{equation}
	
	Indeed, let $\chi (r)$ be a smooth monotone function satisfying $\chi (r) = 0$ for $0 \leq r \leq 1$ and $\chi (r) = 1$ for $r \geq 2$. Then the operator in \eqref{Kh2} can be decomposed as follows:
	\begin{equation}\label{Kh2-Decomp}
		\begin{aligned}
			K_{\hbar 2} g (x,v) = K_{\hbar 2}^\chi g (x,v) + K_{\hbar 2}^{1 - \chi} g (x,v) \,,
		\end{aligned}
	\end{equation}
	where for $a = \chi$ or $1 - \chi$
	\begin{equation*}
		\begin{aligned}
			K_{\hbar 2}^a g (x,v) = e^{\hbar \sigma (x,v)} \iint_{\mathbb{R}^3 \times \mathbb{S}^2} e^{- \hbar \sigma (x, v')} a (|v-v_*|) g (x, v') \M^\frac{1}{2} (v_*') \M^\frac{1}{2} (v_*) b (\omega, v_* - v) \d \omega \d v_* \,.
		\end{aligned}
	\end{equation*}
	
	{\em Step 2.1. Control of $K_{\hbar 2}^\chi$.}
	
	Denote by
	\begin{equation*}
		\begin{aligned}
			\widetilde{K}_{\hbar 2}^\chi g (x,v) = z_{- \alpha} (v) \nu^{- \frac{1}{2}} (v) \sigma_x^\frac{m}{2} (x,v) w_{\beta, \vartheta} (v) K_{\hbar 2}^\chi g (x,v) \,.
		\end{aligned}
	\end{equation*}
	By \eqref{p-pro},
	\begin{equation}\label{p-pro-0}
		\begin{aligned}
			\tfrac{w_{\beta, \vartheta} (v) \sigma_x^\frac{m}{2} (x, v) e^{\hbar \sigma (x,v)}}{w_{\beta, \vartheta} (v') \sigma_x^\frac{m}{2} (x, v') e^{\hbar \sigma (x, v')}} \lesssim (1 + |v - v'|)^{|\beta| + \frac{|m| ( 1 - \gamma )}{2} } e^{ (c \hbar + \vartheta ) ||v - \u|^2 - |v' - \u|^2|} \,.
		\end{aligned}
	\end{equation}
	Then
	\begin{equation*}
		\begin{aligned}
			| \widetilde{K}_{\hbar 2}^\chi g (x,v) | \lesssim z_{-\alpha} (v) \iint_{\mathbb{R}^3 \times \mathbb{S}^2} (1 + |v-v'|)^{|\beta| + \frac{|m| ( 1 - \gamma )}{2} } e^{ ( c \hbar + \vartheta ) ||v - \u|^2 - |v' - \u|^2|} \nu^{- \frac{1}{2}} (v) \nu^{- \frac{1}{2}} (v') \\
			\times |\nu^\frac{1}{2} \sigma_x^\frac{m}{2} g (x,v')| \chi (|v-v_*|) b (\omega, v_*-v) \d \omega \d v_* \,,
		\end{aligned}
	\end{equation*}
	which means
	\begin{equation*}{\small
		\begin{aligned}
			& |\int_{\R^3} \widetilde{K}_{\hbar 2}^\chi g (x,v) \cdot h (x,v) \d v | \\
			\lesssim & \iiint_{\R^3 \times \R^3 \times \mathbb{S}^2} z_{- \alpha} (v) \nu^{- \frac{1}{2}} (v') (1 + |v-v'|)^{|\beta| + \frac{|m| ( 1 - \gamma )}{2} } e^{ ( c \hbar + \vartheta ) ||v - \u|^2 - |v' - \u|^2|} |\nu^\frac{1}{2} \sigma_x^\frac{m}{2} w_{\beta, \vartheta} g (x,v')| \\
			& \qquad \quad \ \times |\nu^{- \frac{1}{2}} h (x,v)| \M^\frac{1}{2} (v_*') \M^\frac{1}{2} (v_*) \chi (|v_*-v|) b (\omega, v_* - v) \d \omega \d v_* \d v \\
			\lesssim & \sqrt{I_1 I_2} \,,
		\end{aligned}}
	\end{equation*}
	where
	\begin{equation*}
		\begin{aligned}
			I_1 = \iiint_{\R^3 \times \R^3 \times \mathbb{S}^2} |\nu^{- \frac{1}{2}} h (x,v)|^2 \M^\frac{1}{4} (v_*) b (\omega, v_* - v) \d \omega \d v_* \d v
		\end{aligned}
	\end{equation*}
	and
{\small
	\begin{equation*}
		\begin{aligned}
			I_2 = \iiint_{\R^3 \times \R^3 \times \mathbb{S}^2} z_{- \alpha}^2 (v) \nu^{-1} (v') (1 + |v-v'|)^{2 |\beta| + |m| ( 1 - \gamma )} e^{2 ( c \hbar + \vartheta ) ||v-\u|^2 - |v'-\u|^2|} |\nu^\frac{1}{2} \sigma_x^\frac{m}{2} w_{\beta, \vartheta} g (x,v')|^2 \\
			\times \M (v_*') \M^\frac{3}{4} (v_*) \chi^2 (|v-v_*|) b (\omega, v_* - v) \d \omega \d v_* \d v \,.
		\end{aligned}
	\end{equation*}}
	
	Due to $\iint_{\mathbb{R}^3 \times \mathbb{S}^2} \M^\frac{1}{4} (v_*) b (\omega, v_* - v) \d \omega \d v_* \lesssim \nu (v)$ derived from Lemma 2.1 of \cite{LS-2010-KRM}, the factor $I_1$ can be bounded by
	\begin{equation}\label{I1-bnd}
		\begin{aligned}
			I_1 \lesssim \int_{\R^3} |h (x,v)|^2 \d v \,.
		\end{aligned}
	\end{equation}
	Now we estimate the second factor $I_2$. Note that
	\begin{equation*}
		\begin{aligned}
			\nu^{-1} (v') \thicksim (1 + |v'|)^{- \gamma} \lesssim (1 + |v_*'|)^{|\gamma|} (1 + |v' - v_*'|)^{- \gamma} \lesssim \M^{- \frac{1}{4}} (v_*') (1 + |v' - v_*'|)^{- \gamma}
		\end{aligned}
	\end{equation*}
	for $- 3 < \gamma \leq 1$. Moreover, the measure $\chi^2 (|v_* - v|) b (\omega, v_*-v) \d \omega \d v_* \d v$ is invariant under the transform $(v,v_*) \mapsto (v', v_*')$. Then
	\begin{equation*}
		\begin{aligned}
			I_2 \lesssim \int_{\R^3} |\nu^\frac{1}{2} \sigma_x^\frac{m}{2} w_{\beta, \vartheta} g (x,v)|^2 \mathfrak{i}_2 (v) \d v \,,
		\end{aligned}
	\end{equation*}
	where
	\begin{equation*}
		\begin{aligned}
			\mathfrak{i}_2 (v) = \iint_{\mathbb{R}^3 \times \mathbb{S}^2} z_{- \alpha}^2 (v') (1 + |v-v_*|)^{- \gamma} (1 + |v-v'|)^{2 |\beta| + |m| ( 1 - \gamma )} e^{2 ( c \hbar + \vartheta ) ||v-\u|^2 - |v' - \u|^2|} \\
			\times \M^\frac{3}{4} (v_*') \M^\frac{3}{4} (v_*) \chi^2 (|v-v_*|) b (\omega, v_*-v) \d \omega \d v_* \,.
		\end{aligned}
	\end{equation*}
	
	Let $V = v_* -v$, $V_\shortparallel = (V \cdot \omega) \omega$, $V_\perp = V - V_\shortparallel $. Then $\d \omega \d v_* = 2 |V_\shortparallel|^{-2} \d V_\perp \d V_\shortparallel$, $v' = v + V_\shortparallel$, $v_*' = v + V_\perp $, $v_* = v + V$. By \eqref{Trans-1} and \eqref{Trans-2}, one has
	\begin{equation*}
		\begin{aligned}
			- \tfrac{|v_*' - \u|^2 + |v_* - \u|^2}{2 T} = - \tfrac{|\zeta_\shortparallel|^2 + \frac{1}{4} |V_\shortparallel|^2 + |V_\perp + \zeta_\perp|^2}{T} \,, \ |v-\u|^2 - |v' - \u|^2 =- 2 V_\shortparallel \cdot \zeta_\shortparallel \,,
		\end{aligned}
	\end{equation*}
	where $\zeta = v + \frac{1}{2} V_\shortparallel$, $\zeta_\shortparallel = [(\zeta - \u) \cdot \omega] \omega$, $\zeta_\perp = (\zeta - \u) - \zeta_\shortparallel$. Recall that
	\begin{equation*}
		\begin{aligned}
			b (\omega, v_* - v) = |v-v_*|^\gamma \tilde{b} (\cos \theta) \leq C |v-v_*|^\gamma |\cos \theta| = \tfrac{C |V_\shortparallel|}{|V|^{1 - \gamma}} \,.
		\end{aligned}
	\end{equation*}
	Then
	\begin{equation*}
		\begin{aligned}
			\mathfrak{i}_2 (v) \lesssim \int_{\R^3} \int_{V_\perp \perp V_\shortparallel} z_{- \alpha}^2 (v + V_\shortparallel) (1 + |V|)^{- \gamma} (1 + |V_\shortparallel|)^{2 |\beta| + |m| ( 1 - \gamma )} e^{4 ( c \hbar + \vartheta ) |V_\shortparallel \cdot \zeta_\shortparallel|} \\
			\times e^{- \frac{3}{4 T} ( |\zeta_\shortparallel|^2 + \frac{1}{4} |V_\shortparallel|^2 + |V_\perp + \zeta_\perp|^2 )} \tfrac{|V_\shortparallel| \chi^2 (|V|)}{|V|^{1 - \gamma}} 2 |V_\shortparallel|^{-2} \d V_\perp \d V_\shortparallel \,.
		\end{aligned}
	\end{equation*}
	Note that $(1 + |V|)^{- \gamma} \lesssim (1 + |V_\shortparallel|)^{|\gamma|} (1 + |V_\perp |)^{- \gamma}$. Then, for sufficiently small $\hbar, \vartheta \geq 0$,
	\begin{equation*}
		\begin{aligned}
			\mathfrak{i}_2 (v) \lesssim \int_{\R^3} \int_{V_\perp \perp V_\shortparallel} z_{- \alpha}^2 (v + V_\shortparallel) |V_\shortparallel|^{-1} e^{- c ( |\zeta_\shortparallel|^2 + |V_\shortparallel|^2 + |V_\perp + \zeta_\perp|^2 )} (1 + |V_\perp|)^{- \gamma} \tfrac{\chi^2 (|V|)}{|V|^{1 - \gamma}} \d V_\perp \d V_\shortparallel
		\end{aligned}
	\end{equation*}
	for some positive constant $c > 0$. Observe that $ \tfrac{\chi^2 (|V|)}{|V|^{1 - \gamma}} \lesssim (1 + |V_\shortparallel|^2 + |V_\perp|^2)^\frac{\gamma - 1}{2} $. If $|V_\perp + \zeta_\perp| > \frac{1}{2} |\zeta_\perp|$, one has $|V_\perp| \leq |V_\perp + \zeta_\perp| + |\zeta_\perp| < 3 |V_\perp + \zeta_\perp|$. Then $ (1 + |V_\perp|)^{- \gamma} \tfrac{\chi^2 (|V|)}{|V|^{1 - \gamma}} \lesssim e^{\frac{c}{2} |V_\perp + \zeta_\perp|^2} $. If $|V_\perp + \zeta_\perp| \leq \frac{1}{2} |\zeta_\perp|$, it holds $|V_\perp| \geq |\zeta_\perp| - |V_\perp + \zeta_\perp| \geq \tfrac{1}{2} |\zeta_\perp|$ and $ (1 + |V_\perp|)^{- \gamma} \leq (1 + |V_\perp + \zeta_\perp|)^{|\gamma|} (1 + |\zeta_\perp|)^{1 - \gamma} $, which imply that
	\begin{equation*}
		\begin{aligned}
			(1 + |V_\perp|)^{- \gamma} \tfrac{\chi^2 (|V|)}{|V|^{1 - \gamma}} \lesssim \tfrac{(1 + |V_\perp + \zeta_\perp|)^{|\gamma|} (1 + |\zeta_\perp|)^{1 - \gamma}}{(1 + |V_\shortparallel|^2 + \frac{1}{4} |\zeta_\perp|^2 )^\frac{1 - \gamma}{2}} \lesssim e^{\frac{c}{2} |V_\perp + \zeta_\perp|^2} \,.
		\end{aligned}
	\end{equation*}
	In summary, the inequality $(1 + |V_\perp|)^{- \gamma} \tfrac{\chi^2 (|V|)}{|V|^{1 - \gamma}} \lesssim e^{\frac{c}{2} |V_\perp + \zeta_\perp|^2}$ holds. Then
	\begin{equation*}
		\begin{aligned}
			\mathfrak{i}_2 (v) \lesssim & \int_{\R^3} |V_\shortparallel|^{-1} z_{- \alpha}^2 (v + V_\shortparallel) e^{- c (|\zeta_\shortparallel|^2 + |V_\shortparallel|^2)} \big( \int_{V_\perp \perp V_\shortparallel} e^{- \frac{c}{2} |V_\perp + \zeta_\perp|^2} \d V_\perp \big) \d V_\shortparallel \\
			\lesssim & \int_{\R^3} |V_\shortparallel|^{-1} z_{- \alpha}^2 (v + V_\shortparallel) e^{- c |V_\shortparallel|^2} \d V_\shortparallel \,.
		\end{aligned}
	\end{equation*}
	By the similar arguments in \eqref{h1-bnd}, $ \int_{\R^3} |V_\shortparallel|^{-1} z_{- \alpha}^2 (v + V_\shortparallel) e^{- c |V_\shortparallel|^2} \d V_\shortparallel \lesssim 1 $ uniformly in $v \in \R^3$, provided that $0 \leq \alpha < \frac{1}{2}$. Then one has proved that $\mathfrak{i}_2 (v) \lesssim 1$ uniformly in $v \in \R^3$. It therefore infers that $I_2 \lesssim \int_{\R^3} |\nu^\frac{1}{2} \sigma_x^\frac{m}{2} w_{\beta, \vartheta} g (x,v)|^2 \d v$, which, together with \eqref{I1-bnd}, implies that
	\begin{equation*}
		\begin{aligned}
			| \int_{\R^3} \widetilde{K}_{\hbar 2}^\chi g (x,v) \cdot h (x,v) \d v | \lesssim \Big( \int_{\R^3} |\nu^\frac{1}{2} \sigma_x^\frac{m}{2} w_{\beta, \vartheta} g (x,v)|^2 \d v \Big)^\frac{1}{2} \Big( \int_{\R^3} |h (x,v)|^2 \d v \Big)^\frac{1}{2} \,.
		\end{aligned}
	\end{equation*}
	By letting $h (x,v) = \widetilde{K}_{\hbar 2}^\chi g (x,v)$, one concludes that for $0 \leq \alpha < \frac{1}{2}$,
	\begin{equation}\label{Kh2-x}
		\begin{aligned}
			\int_{\R^3} |\nu^{- \frac{1}{2}} z_{- \alpha} \sigma_x^\frac{m}{2} w_{\beta, \vartheta} K_{\hbar 2}^\chi g (x, v)|^2 \d v \lesssim \int_{\R^3} |\nu^\frac{1}{2} \sigma_x^\frac{m}{2} w_{\beta, \vartheta} g (x,v)|^2 \d v \,.
		\end{aligned}
	\end{equation}

	{\em Step 2.2. Control of $K_{\hbar 2}^{1 - \chi}$.}
	
	Since $|v-v_*| \leq 2$ in $K_{\hbar 2}^{1 - \chi}$, the claim \eqref{Claim-M} shows $ \M (v_*') \lesssim \M^{1 - \delta_0} (v') \,, \ \M (v_*) \lesssim \M^{1 - \delta_0} (v) $ for any fixed $\delta_0 \in (0, 1)$. Then
	\begin{equation*}
		\begin{aligned}
			& |\nu^{- \frac{1}{2}} z_{- \alpha} \sigma_x^\frac{m}{2} w_{\beta, \vartheta} K_{\hbar 2}^{1 - \chi} g (x,v)| \\
			\lesssim & z_{- \alpha} (v) \iint_{\mathbb{R}^3 \times \mathbb{S}^2} \tfrac{ w_{\beta, \vartheta} (v) \sigma_x^\frac{m}{2} (x,v) e^{\hbar \sigma (x,v)}}{ w_{\beta, \vartheta} (v') \sigma_x^\frac{m}{2} (x,v') e^{\hbar \sigma (x,v')}} \M^\frac{1}{4} (v_*') \M^\frac{1}{4} (v_*) \M^\frac{1-\delta_0}{4} (v') \M^\frac{1-\delta_0}{4} (v) \nu^{- \frac{1}{2}} (v) \nu^{- \frac{1}{2}} (v') \\
			& \qquad \qquad \qquad \qquad \qquad \times (1-\chi) (|v-v_*|) |\nu^\frac{1}{2} \sigma_x^\frac{m}{2} w_{\beta, \vartheta} g (x,v')| b (\omega, v_*-v) \d \omega \d v_* \,.
		\end{aligned}
	\end{equation*}
	By \eqref{p-pro} and taking $\delta_0 = \frac{1}{2}$, one has
{\footnotesize
	\begin{equation*}
		\begin{aligned}
			\tfrac{w_{\beta, \vartheta} (v) \sigma_x^\frac{m}{2} (x,v) e^{\hbar \sigma (x,v)}}{ w_{\beta, \vartheta} (v') \sigma_x^\frac{m}{2} (x,v') e^{\hbar \sigma (x,v')}} \M^\frac{1}{4} (v_*') \M^\frac{1}{4} (v_*) \M^\frac{1-\delta_0}{4} (v') \M^\frac{1-\delta_0}{4} (v) \nu^{- \frac{1}{2}} (v) \nu^{- \frac{1}{2}} (v') \lesssim [ \M (v_*') \M (v_*) \M (v') \M (v) ]^\frac{1}{16}
		\end{aligned}
	\end{equation*}}
	for sufficiently small $\hbar, \vartheta \geq 0$. It thereby holds
	\begin{equation}\label{Kh2-1-x-1}
		\begin{aligned}
			& |\nu^{- \frac{1}{2}} z_{- \alpha} \sigma_x^\frac{m}{2} w_{\beta, \vartheta} K_{\hbar 2}^{1 - \chi} g (x,v)| \\
			\lesssim & \iint_{\mathbb{R}^3 \times \mathbb{S}^2} |\nu^\frac{1}{2} \sigma_x^\frac{m}{2} w_{\beta, \vartheta} g (x,v')| z_{- \alpha} (v) [ \M (v_*') \M (v_*) \M (v') \M (v) ]^\frac{1}{16} b (\omega, v_*-v) \d \omega \d v_* \,.
		\end{aligned}
	\end{equation}
	
	Let $V = v_* -v$, $V_\shortparallel = (V \cdot \omega) \omega$, $V_\perp = V - V_\shortparallel $, $\zeta = v + \frac{1}{2} V_\shortparallel$, $\zeta_\shortparallel = [(\zeta - \u) \cdot \omega] \omega$, $\zeta_\perp = (\zeta - \u) - \zeta_\shortparallel$. Then $\d \omega \d v_* = 2 |V_\shortparallel|^{-2} \d V_\perp \d V_\shortparallel$, $v' = v + V_\shortparallel$, $v_*' = v + V_\perp $, $v_* = v + V$. Denote by $\tilde{g} (x,v') = \nu^\frac{1}{2} \sigma_x^\frac{m}{2} w_{\beta, \vartheta} g (x, v')$. It then follows from \eqref{b}-\eqref{Cutoff}-\eqref{Trans-1} and $|v +V_\shortparallel - \u|^2 = |\zeta_\perp|^2 + |\zeta_\shortparallel + \frac{1}{2} V_\shortparallel|^2$ that
{\footnotesize
		\begin{align}\label{Kh2-1-x-2}
			\no & |\nu^{- \frac{1}{2}} z_{- \alpha} \sigma_x^\frac{m}{2} w_{\beta, \vartheta} K_{\hbar 2}^{1 - \chi} g (x,v)| \\
			\no \lesssim & \int_{\R^3} \int_{V_\perp \perp V_\shortparallel} z_{- \alpha} (v) \tilde{g} (x, v + V_\shortparallel) [ \M (v + V_\perp) \M (v + V) \M (v + V_\shortparallel) \M (v) ]^\frac{1}{16} \\
			\no & \qquad \qquad \qquad \qquad \qquad \qquad \qquad \times (1 - \chi) (|V|) b (\omega, V) \cdot 2 |V_\shortparallel|^{- 2} \d V_\perp \d V_\shortparallel \\
			\no \lesssim & z_{- \alpha} (v) \M^\frac{1}{16} (v) \int_{\R^3} \int_{V_\perp \perp V_\shortparallel} \tilde{g} (v + V_\shortparallel) \M^\frac{1}{32} (v + V_\shortparallel) \\
			\no & \qquad \quad \times e^{- \frac{1}{32 T} ( |\zeta_\shortparallel|^2 + \frac{1}{4} |V_\shortparallel|^2 + |V_\perp + \zeta_\perp|^2 + |\zeta_\perp|^2 + |\zeta_\shortparallel + \frac{1}{2} V_\shortparallel|^2 )} \tfrac{|V_\shortparallel|^{-1} (1 - \chi) (|V|)}{|V|^{1-\gamma}} \d V_\perp \d V_\shortparallel \\
			\no \lesssim & z_{- \alpha} (v) \M^\frac{1}{16} (v) \int_{\R^3} \int_{V_\perp \perp V_\shortparallel} \tilde{g} (x,v + V_\shortparallel) \M^\frac{1}{32} (v + V_\shortparallel) \\
			\no & \qquad \quad \times e^{- \frac{1}{32 T} ( \frac{1}{4} |V_\shortparallel|^2 + \frac{1}{2} |V_\perp|^2 )} \tfrac{|V_\shortparallel|^{-1} }{(|V_\shortparallel|^2 + |V_\perp|^2)^\frac{1 - \gamma}{2}} \d V_\perp \d V_\shortparallel \\
			= &  z_{- \alpha} (v) \M^\frac{1}{16} (v) \int_{\R^3} \tilde{g} (x,v + V_\shortparallel) \M^\frac{1}{32} (v + V_\shortparallel) |V_\shortparallel|^{-1} e^{- \frac{|V_\shortparallel|^2}{128 T}} \Phi (V_\shortparallel) \d V_\shortparallel \,,
		\end{align}}
	where
	\begin{equation*}
		\begin{aligned}
			\Phi (V_\shortparallel) = \int_{V_\perp \perp V_\shortparallel} (|V_\shortparallel|^2 + |V_\perp |^2 )^\frac{\gamma - 1}{2} e^{- \frac{|V_\perp|^2}{64 T}} \d V_\perp \,.
		\end{aligned}
	\end{equation*}
	By Lemma 2.3 of \cite{JLT-2022-arXiv}, $( |V_\shortparallel|^2 + |V_\perp|^2 )^\frac{\gamma - 1}{2} |V_\perp|^{b_0} \lesssim |V_\shortparallel|^{b_0 + \gamma - 1}$ holds for any fixed $b_0 \in [0, 1 - \gamma]$. Then, if $0 \leq b_0 \leq 1 -\gamma$ and $b_0 < 2$,
	\begin{equation*}
		\begin{aligned}
			\Phi (|V_\shortparallel|) \lesssim |V_\shortparallel|^{b_0 + \gamma - 1} \int_{V_\perp \perp V_\shortparallel} |V_\perp|^{- b_0} e^{- \frac{|V_\perp|^2}{64 T}} \d V_\perp \lesssim |V_\shortparallel|^{b_0 + \gamma - 1} \,.
		\end{aligned}
	\end{equation*}
	It therefore infers that
{\small
		\begin{align}\label{Kh2-1-x-3}
			\no | \nu^{- \frac{1}{2}} z_{- \alpha} \sigma_x^\frac{m}{2} w_{\beta, \vartheta} K_{\hbar 2}^{1 - \chi} g (x, v) | \lesssim & z_{- \alpha} (v) \M^\frac{1}{32} (v) \int_{\R^3} \tilde{g} (x,v + V_\shortparallel) \M^\frac{1}{32} (v + V_\shortparallel) |V_\shortparallel|^{b_0 + \gamma -2} e^{- \frac{|V_\shortparallel|^2}{128 T}} \d V_\shortparallel \\
			= & z_{- \alpha} (v) \int_{\R^3} \tilde{g} (x,\xi) \mathfrak{k}_{b_0} (v, \xi) \d \xi \,,
		\end{align}}
	where $\xi = v + V_\shortparallel$ has been used, and
	\begin{equation*}
		\begin{aligned}
			\mathfrak{k}_{b_0} (v, \xi) = [\M (v) \M (\xi)]^\frac{1}{32} |v - \xi|^{b_0 + \gamma - 2} e^{- \frac{|v - \xi|^2}{128 T}}
		\end{aligned}
	\end{equation*}
	with $b_0 < 2$ and $0 \leq b_0 \leq 1 - \gamma$. We further require $b_0 + \gamma - 2 > - 3$ such that the kernel $\mathfrak{k}_{b_0} (v, \xi)$ is integrable on the variables both $\xi$ and $v$. More precisely, one has
{\small
	\begin{equation}\label{kb0-1}
		\begin{aligned}
			\int_{\R^3} \mathfrak{k}_{b_0} (v, \xi) \d \xi = \M^\frac{1}{32} (v) \int_{\R^3} |v - \xi|^{b_0 + \gamma - 2} e^{- \frac{|v - \xi|^2}{128 T}} \M^\frac{1}{32} (\xi) \d \xi \lesssim (1 + |v|)^{b_0 + \gamma - 2} \M^\frac{1}{32} (v) \lesssim 1
		\end{aligned}
	\end{equation}}
	uniformly in $v$ by Lemma 2.1 of \cite{LS-2010-KRM}. Furthermore, by \eqref{h-bnd},
{\footnotesize
	\begin{equation}\label{kb0-2}
		\begin{aligned}
			\int_{\R^3} z_{- \alpha}^2 (v) \mathfrak{k}_{b_0} (v, \xi) \d v = \M^\frac{1}{32} (\xi) \int_{\R^3} z_{- \alpha}^2 (v) |v - \xi|^{b_0 + \gamma - 2} e^{- \frac{|v - \xi|^2}{128 T}} \M^\frac{1}{32} (v) \d \xi \lesssim \M^\frac{1}{32} (\xi) (1 + |\xi|)^{b_0 + \gamma - 2} \lesssim 1
		\end{aligned}
	\end{equation}}
	uniformly in $\xi$, provided that $b_0 + \gamma - 2 > - 3$ and $0 \leq \alpha < \min \{ \frac{1}{2}, \frac{b_0 + \gamma + 1}{2} \}$ with $b_0 \in S_\gamma$. It thereby follows from \eqref{kb0-1}-\eqref{kb0-2} that
		\begin{align}\label{Kh2-1-x}
			\no \int_{\R^3} |\nu^{- \frac{1}{2}} & z_{- \alpha} \sigma_x^\frac{m}{2} w_{\beta, \vartheta} K_{\hbar 2}^{1 - \chi} g (x,v)|^2 \d v \lesssim \int_{\R^3} z_{- \alpha}^2 (v) \big( \tilde{g} (x,\xi) \mathfrak{k}_{b_0} (v, \xi) \d \xi \big)^2 \d v \\
			\no & \lesssim \int_{\R^3} z_{- \alpha}^2 (v) \big( \int_{\R^3} \mathfrak{k}_{b_0} (v, \xi) \d \xi \big) \big( \int_{\R^3} | \tilde{g} (x,\xi) |^2 \mathfrak{k}_{b_0} (v, \xi) \d \xi \big) \d v \\
			\no & \xlongequal[]{\text{Fubini Theorem}} \int_{\R^3} \big( \int_{\R^3} \mathfrak{k}_{b_0} (v, \xi) \d \xi \big) \big( \int_{\R^3} z_{- \alpha}^2 (v) \mathfrak{k}_{b_0} (v, \xi) \d \xi \big) | \tilde{g} (x,\xi) |^2 \d \xi \\
			& \lesssim \int_{\R^3} | \tilde{g} (x,\xi) |^2 \d \xi = \int_{\R^3} |\nu^\frac{1}{2} \sigma_x^\frac{m}{2} w_{\beta, \vartheta} g (x,v)|^2 \d v
		\end{align}
	for $0 \leq \alpha < \min \{ \frac{1}{2}, \frac{b_0 + \gamma + 1}{2} \}$ with $- 3 < \gamma \leq 1$ and $b_0 \in S_\gamma$. Therefore, the bounds \eqref{Kh2-x} and \eqref{Kh2-1-x} conclude the claim \eqref{Claim-Kh2}.
	
	{\bf Step 3. Estimates for $K_{\hbar 3}$ part.}
	
	The goal is to show that for $0 \leq \alpha < \min \{ \frac{1}{2} , \frac{b_1 + \gamma}{2} \}$ with $b_1 \in T_\gamma$ given in Lemma \ref{Lmm-Kh-L2},
	\begin{equation}\label{Claim-Kh3}
		\begin{aligned}
			\int_{\R^3} |\nu^{- \frac{1}{2}} z_{- \alpha} \sigma_x^\frac{m}{2} w_{\beta, \vartheta} K_{\hbar 3} g (x, v)|^2 \d v \lesssim \int_{\R^3} |\nu^\frac{1}{2} \sigma_x^\frac{m}{2} w_{\beta, \vartheta} g (x,v)|^2 \d v \,.
		\end{aligned}
	\end{equation}
	As similar as in \eqref{Kh2-Decomp}, $K_{\hbar 3} g (x,v)$ can be split as
	\begin{equation*}
		\begin{aligned}
			K_{\hbar 3} g (x,v) = K_{\hbar 3}^\chi g (x,v) + K_{\hbar 3}^{1 - \chi} g (x,v) \,,
		\end{aligned}
	\end{equation*}
	where for $a = \chi$ or $1 - \chi$
	\begin{equation*}
		\begin{aligned}
			K_{\hbar 3}^a g (x,v) = \iint_{\mathbb{R}^3 \times \mathbb{S}^2} e^{\hbar \sigma (x,v) - \hbar \sigma (x, v_*')} a (|v-v_*|) g (x, v_*') \M^\frac{1}{2} (v') \M^\frac{1}{2} (v_*) b (\omega, v_* - v) \d \omega \d v_* \,.
		\end{aligned}
	\end{equation*}
	
	{\em Step 3.1. Control of $K_{\hbar 3}^\chi$.}
	
	By \eqref{p-pro-0} with $v'$ replaced by $v_*'$, one has
	\begin{equation*}
		\begin{aligned}
			& |\nu^{- \frac{1}{2}} z_{- \alpha} \sigma_x^\frac{m}{2} w_{\beta, \vartheta} K_{\hbar 3} g (x, v)| \\
			\lesssim & z_{- \alpha} (v) \iint_{\mathbb{R}^3 \times \mathbb{S}^2}     (1 + |v - v_*'|)^{ |\beta| + \frac{|m| ( 1 - \gamma )}{2} } e^{ ( c \hbar + \vartheta ) ||v - \u|^2 - |v_*' - \u|^2|} \nu^{- \frac{1}{2}} (v) \nu^{- \frac{1}{2}} (v_*') \\
			& \qquad \times |\nu^\frac{1}{2} \sigma_x^\frac{m}{2} w_{\beta, \vartheta} g (x,v_*')| \chi (|v-v_*|) \M^\frac{1}{2} (v') \M^\frac{1}{2} (v_*) b (\omega, v_* - v) \d \omega \d v_* \,,
		\end{aligned}
	\end{equation*}
	which implies that by the H\"older inequality
	\begin{equation*}
		\begin{aligned}
			| \int_{\R^3} \nu^{- \frac{1}{2}} z_{- \alpha} \sigma_x^\frac{m}{2} w_{\beta, \vartheta} K_{\hbar 3}^\chi g (x,v) \cdot h (x, v) \d v | \lesssim \sqrt{I\!I_1 I\!I_2} \,,
		\end{aligned}
	\end{equation*}
	where
	\begin{equation*}
		\begin{aligned}
			I\!I_1 = \iiint_{\R^3 \times \R^3 \times \mathbb{S}^2} |\nu^{- \frac{1}{2}} (v) h (x,v)|^2 \M^\frac{1}{4} (v_*) b (\omega, v_* - v) \d \omega \d v_* \d v \,,
		\end{aligned}
	\end{equation*}
	and
	\begin{equation*}
		\begin{aligned}
			I\!I_2 = \iiint_{\R^3 \times \R^3 \times \mathbb{S}^2} z_{- \alpha}^2 (v) \nu^{-1} (v_*') (1 + |v-v_*'|)^{2 |\beta| + |m| ( 1 - \gamma )} e^{2 ( c \hbar + \vartheta ) | |v-\u|^2 - |v_*' - \u|^2 |} \\
			\times |\nu^\frac{1}{2} \sigma_x^\frac{m}{2} w_{\beta, \vartheta} g (x,v_*')|^2 \M (v') \M^\frac{3}{4} (v_*) \chi^2 (|v-v_*|) b (\omega, v_* - v) \d \omega \d v_* \d v \,.
		\end{aligned}
	\end{equation*}
	
	By \eqref{I1-bnd}, one has
	\begin{equation}\label{II1}
		\begin{aligned}
			I\!I_1 \lesssim \int_{\R^3} |h (x,v)|^2 \d v \,.
		\end{aligned}
	\end{equation}
	From changing the variables $(v, v') \mapsto (v_*, v_*')$ and employing the Fubini Theorem, it follows that
	\begin{equation*}
		\begin{aligned}
			I\!I_2 = \iiint_{\R^3 \times \R^3 \times \mathbb{S}^2} z_{- \alpha}^2 (v_*) \nu^{-1} (v') (1 + |v_*-v'|)^{2 |\beta| + |m| ( 1 - \gamma )} e^{2 ( c \hbar + \vartheta ) | |v_*-\u|^2 - |v' - \u|^2 |} \\
			\times |\nu^\frac{1}{2} \sigma_x^\frac{m}{2} w_{\beta, \vartheta} g (x,v')|^2 \M (v_*') \M^\frac{3}{4} (v) \chi^2 (|v-v_*|) b (\omega, v_* - v) \d \omega \d v_* \d v \,,
		\end{aligned}
	\end{equation*}
	where we have used the fact that the measure $\chi^2 (|v-v_*|) b (\omega, v_* - v) \d \omega \d v_* \d v$ is invariant under the previous changing variables. Further by changing variables $(v, v_*) \mapsto (v', v_*')$ and the corresponding invariant of $\chi^2 (|v-v_*|) b (\omega, v_* - v) \d \omega \d v_* \d v$, one has
	\begin{equation*}
		\begin{aligned}
			I\!I_2 = \int_{\R^3} |\nu^\frac{1}{2} \sigma_x^\frac{m}{2} w_{\beta, \vartheta} g (x,v)|^2 \mathfrak{j}_2 (v) \d v \,,
		\end{aligned}
	\end{equation*}
	where the kernel
	\begin{equation*}
		\begin{aligned}
			\mathfrak{j}_2 (v) = \iint_{\R^3 \times \mathbb{S}^2} z_{- \alpha}^2 (v_*') \nu^{-1} (v) (1 + |v_*'-v|)^{2 |\beta| + |m| ( 1 - \gamma )} e^{2 ( c \hbar + \vartheta ) | |v_*'-\u|^2 - |v - \u|^2 |} \\
			\times \M (v_*) \M^\frac{3}{4} (v') \chi^2 (|v-v_*|) b (\omega, v_* - v) \d \omega \d v_* \,.
		\end{aligned}
	\end{equation*}
	We now want to show that $ \mathfrak{j}_2 (v) \lesssim 1 $ uniformly in $v \in \R^3$.
	
	Let $V = v_* -v$, $V_\shortparallel = (V \cdot \omega) \omega$, $V_\perp = V - V_\shortparallel $, $\zeta = v + \frac{1}{2} V_\shortparallel$, $\zeta_\shortparallel = [(\zeta - \u) \cdot \omega] \omega$, $\zeta_\perp = (\zeta - \u) - \zeta_\shortparallel$. Then $\d \omega \d v_* = 2 |V_\shortparallel|^{-2} \d V_\perp \d V_\shortparallel$, $v' = v + V_\shortparallel$, $v_*' = v + V_\perp $, $v_* = v + V$. Note that
	\begin{equation*}
		\begin{aligned}
			- \tfrac{|v' - \u|^2 + |v_* - \u|^2}{2 T} = - \tfrac{2 |\zeta_\shortparallel + \frac{1}{2} V_\shortparallel|^2 + |\zeta_\perp|^2 + |V_\perp + \zeta_\perp|^2}{2 T} \,, \ |v - \u|^2 - |v_*' - \u|^2 = |\zeta_\perp|^2 - |V_\perp + \zeta_\perp|^2 \,.
		\end{aligned}
	\end{equation*}
	Moreover, $ \nu^{-1} (v) \thicksim (1 + |v|)^{- \gamma} \lesssim (1 + |v'|)^{|\gamma|} (1 + |v-v'|)^{- \gamma} $ for $- 3 < \gamma \leq 1$, which means that $ \nu^{-1} (v) \M (v_*) \M^\frac{3}{4} (v') \lesssim (1 + |V_\shortparallel|)^{- \gamma} e^{- \frac{|v' - \u|^2 + |v_* - \u|^2}{4 T}} $. Together with \eqref{b}-\eqref{Cutoff}, one has
	\begin{equation*}
		\begin{aligned}
			\mathfrak{j}_2 (v) \lesssim \int_{\R^3} \int_{V_\perp \perp V_\shortparallel} z_{- \alpha}^2 (v + V_\perp) (1 + |V_\shortparallel|)^{- \gamma} (1 + |V_\perp|)^{2 |\beta| + |m| ( 1 - \gamma )} e^{2 ( c \hbar + \vartheta ) ||\zeta_\perp|^2 - |V_\perp + \zeta_\perp|^2|} \\
			\times e^{- \frac{2 |\zeta_\shortparallel + \frac{1}{2} V_\shortparallel|^2 + |\zeta_\perp|^2 + |V_\perp + \zeta_\perp|^2}{4 T}} \tfrac{\chi^2 (|V|) |V_\shortparallel|^{-1}}{|V|^{1 - \gamma}} \d V_\perp \d V_\shortparallel \,.
		\end{aligned}
	\end{equation*}
	For sufficiently small $\hbar \geq 0$, there is a $c' > 0$ such that
	\begin{equation*}
		\begin{aligned}
			(1 + |V_\perp|)^{2 |\beta| + |m| ( 1 - \gamma )} e^{2 ( c \hbar + \vartheta ) ||\zeta_\perp|^2 - |V_\perp + \zeta_\perp|^2|} e^{- \frac{|\zeta_\perp|^2 + |V_\perp + \zeta_\perp|^2}{4 T}} \lesssim e^{- c' ( |\zeta_\perp|^2 + |V_\perp|^2 )} \,.
		\end{aligned}
	\end{equation*}
	It also holds $ \tfrac{ (1 + |V_\shortparallel|)^{- \gamma} \chi^2 (|V|)}{|V|^{1 - \gamma}} \lesssim \tfrac{(1 + |V_\shortparallel|)^{- \gamma}}{(1 + |V_\shortparallel| + |V_\perp|)^{1 - \gamma}} \lesssim 1 $ for $- 3 < \gamma \leq 1$. Then
	\begin{equation*}
		\begin{aligned}
			\mathfrak{j}_2 (v) \lesssim \int_{\R^3} |V_\shortparallel|^{-1} e^{- \frac{|\zeta_\shortparallel + \frac{1}{2} V_\shortparallel|^2}{2 T}} \Big( \int_{V_\perp \perp V_\shortparallel} z_{- \alpha}^2 (v + V_\perp) e^{- c' ( |\zeta_\perp|^2 + |V_\perp|^2 )} \d V_\perp \Big) \d V_\shortparallel \,.
		\end{aligned}
	\end{equation*}
	Observe that for $0 \leq \alpha < \frac{1}{2}$, $ \int_{V_\perp \perp V_\shortparallel} z_{- \alpha}^2 (v + V_\perp) e^{- c' ( |\zeta_\perp|^2 + |V_\perp|^2 )} \d V_\perp \lesssim 1 $ uniformly in $v \in \R^3$. Then $ \mathfrak{j}_2 (v) \lesssim \int_{\R^3} |V_\shortparallel|^{-1} e^{- \frac{|\zeta_\shortparallel + \frac{1}{2} V_\shortparallel|^2}{2 T}} \d V_\shortparallel \lesssim 1 $, where the last inequality is derived from \eqref{W1}-\eqref{W2}. It thereby follows
	\begin{equation}\label{II2}
		\begin{aligned}
			I\!I_2 \lesssim \int_{\R^3} |\nu^\frac{1}{2} \sigma_x^\frac{m}{2} w_{\beta, \vartheta} g (x,v)|^2 \d v \,.
		\end{aligned}
	\end{equation}
	As a result, \eqref{II1} and \eqref{II2} indicate that
{\small
	\begin{equation*}
		\begin{aligned}
			| \int_{\R^3} \nu^{- \frac{1}{2}} z_{- \alpha} \sigma_x^\frac{m}{2} w_{\beta, \vartheta} K_{\hbar 3}^\chi g (x,v) \cdot h (x, v) \d v | \lesssim \Big( \int_{\R^3} |\nu^\frac{1}{2} \sigma_x^\frac{m}{2} w_{\beta, \vartheta} g (x,v)|^2 \d v \Big)^\frac{1}{2} \Big( \int_{\R^3} |h (x,v)|^2 \d v \Big)^\frac{1}{2} \,,
		\end{aligned}
	\end{equation*}}
	which, by taking $h (x,v) = \nu^{- \frac{1}{2}} z_{- \alpha} \sigma_x^\frac{m}{2} w_{\beta, \vartheta} K_{\hbar 3}^\chi g (x,v)$, implies that
	\begin{equation}\label{Kh3-x}
		\begin{aligned}
			\int_{\R^3} |\nu^{- \frac{1}{2}} z_{- \alpha} \sigma_x^\frac{m}{2} w_{\beta, \vartheta} K_{\hbar 3}^\chi g (x,v)|^2 \d v \lesssim \int_{\R^3} |\nu^\frac{1}{2} \sigma_x^\frac{m}{2} w_{\beta, \vartheta} g (x,v)|^2 \d v
		\end{aligned}
	\end{equation}
	for $- 3 < \gamma \leq 1$, $ m \in \R $ and $0 \leq \alpha < \frac{1}{2}$.
	
	{\em Step 3.2. Control of $K_{\hbar 3}^{1 - \chi}$.}
	
	Due to $|v-v_*| \leq 2$ in $K_{\hbar 3}^{1 - \chi} g (x,v)$, the claim \eqref{Claim-M} reads $ \M (v') \lesssim \M^{1 - \delta_0} (v_*') $ and $ \M (v_*) \lesssim \M^{1 - \delta_0} (v) $ for any fixed $\delta_0 \in (0, 1)$. From the similar arguments in \eqref{Kh2-1-x-1}, it follows that
	\begin{equation*}
		\begin{aligned}
			|\nu^{- \frac{1}{2}} & z_{- \alpha} \sigma_x^\frac{m}{2} w_{\beta, \vartheta} K_{\hbar 3}^{1 - \chi} g (x,v)| \\
			& \lesssim \iint_{\mathbb{R}^3 \times \mathbb{S}^2} |\nu^\frac{1}{2} \sigma_x^\frac{m}{2} w_{\beta, \vartheta} g (x,v_*')| z_{- \alpha} (v) [ \M (v_*') \M (v_*) \M (v') \M (v) ]^\frac{1}{16} b (\omega, v_*-v) \d \omega \d v_* \,.
		\end{aligned}
	\end{equation*}
	for sufficiently small $\hbar, \vartheta \geq 0$. Let $V = v_* -v$, $V_\shortparallel = (V \cdot \omega) \omega$, $V_\perp = V - V_\shortparallel $, $\zeta = v + \frac{1}{2} V_\shortparallel$, $\zeta_\shortparallel = [(\zeta - \u) \cdot \omega] \omega$, $\zeta_\perp = (\zeta - \u) - \zeta_\shortparallel$. Then one has $ \d \omega \d v_* = 2 |V_\perp|^{-2} \d V_\shortparallel \d V_\perp $. We remark that the derivation of the previous relation is similar to that of $\d \omega \d v_* = 2 |V_\shortparallel|^{-2} \d V_\perp \d V_\shortparallel$ given in (38), Page 35 of \cite{Grad-1963}. It therefore infers from the similar arguments in \eqref{Kh2-1-x-2} that
	\begin{equation*}
		\begin{aligned}
			|\nu^{- \frac{1}{2}} & z_{- \alpha} \sigma_x^\frac{m}{2} w_{\beta, \vartheta} K_{\hbar 3}^{1 - \chi} g (x,v)| \\
			& \lesssim z_{- \alpha} (v) \M^\frac{1}{16} (v) \int_{\R^3} |\nu^\frac{1}{2} \sigma_x^\frac{m}{2} w_{\beta, \vartheta} g (x,v + V_\perp)| \M^\frac{1}{16} (v + V_\perp) |V_\perp|^{- 2} e^{- \frac{|V_\perp|^2}{64 T}} \Psi (V_\perp) \d V_\perp \,,
		\end{aligned}
	\end{equation*}
	where
	\begin{equation*}
		\begin{aligned}
			\Psi (V_\perp) = \int_{V_\shortparallel \perp V_\perp} e^{- \frac{1}{32 T} ( |\zeta_\shortparallel|^2 + \frac{1}{4} |V_\shortparallel|^2 )} \tfrac{|V_\shortparallel| (1 - \chi) (|V|)}{(|V_\shortparallel|^2 + |V_\perp|^2)^\frac{1 - \gamma}{2}} \d V_\shortparallel \,.
		\end{aligned}
	\end{equation*}
	By Lemma 2.3 of \cite{JLT-2022-arXiv}, $ ( |V_\shortparallel|^2 + |V_\perp|^2 )^\frac{\gamma - 1}{2} |V_\shortparallel|^{b_1} \lesssim |V_\perp|^{b_1 + \gamma - 1} $ holds for any fixed $b_1 \in [0, 1 - \gamma]$. Then
	\begin{equation*}
		\begin{aligned}
			\Psi (V_\perp) \lesssim |V_\perp|^{b_1 + \gamma - 1} \int_{V_\shortparallel \perp V_\perp} e^{- \frac{|V_\shortparallel|^2}{128 T}} |V_\shortparallel|^{1 - b_1} \d V_\shortparallel \lesssim |V_\perp|^{b_1 + \gamma - 1}
		\end{aligned}
	\end{equation*}
	under the further constraint $b_1 < 3$. It thereby follows from the same arguments in \eqref{Kh2-1-x-3} that
	\begin{equation*}
		\begin{aligned}
			|\nu^{- \frac{1}{2}} z_{- \alpha} \sigma_x^\frac{m}{2} w_{\beta, \vartheta} K_{\hbar 3}^{1 - \chi} g (x,v)| \lesssim z_{- \alpha} (v) \int_{\R^3} |\nu^\frac{1}{2} \sigma_x^\frac{m}{2} w_{\beta, \vartheta} g (x, \varpi)| \mathfrak{k}_{b_1} (v, \varpi) \d \varpi \,,
		\end{aligned}
	\end{equation*}
	where $ \mathfrak{k}_{b_1} (v, \varpi) = [ \M (v) \M (\varpi) ]^\frac{1}{32} |v - \varpi|^{b_1 + \gamma - 3} e^{- \frac{|v - \varpi|^2}{128 T}} $ with $b_1 < 3$ and $0 \leq b_1 \leq 1 - \gamma$. We further assume that $b_1 + \gamma - 3 > - 3$. As similar as in \eqref{kb0-1}-\eqref{kb0-2}, one has
	\begin{equation*}
		\begin{aligned}
			\int_{\R^3} \mathfrak{k}_{b_1} (v, \varpi) \d \varpi \lesssim (1 + |v|)^{b_1 + \gamma - 3} \M^\frac{1}{32} (v) \lesssim 1
		\end{aligned}
	\end{equation*}
	uniformly in $v \in \R^3$, and
	\begin{equation*}
		\begin{aligned}
			\int_{\R^3} z_{- \alpha}^2 (v) \mathfrak{k}_{b_1} (v, \varpi) \d v \lesssim (1 + |\varpi|)^{b_1 + \gamma - 3} \M^\frac{1}{32} (\varpi) \lesssim 1
		\end{aligned}
	\end{equation*}
	uniformly in $\varpi \in \R^3$, provided that $0 \leq \alpha < \min \{ \frac{1}{2} , \frac{b_1 + \gamma}{2} \}$ with $- 3 < \gamma \leq 1$ and $b_1 \in \mathcal{T}_\gamma$. Then the similar arguments in \eqref{Kh2-1-x} shows that
	\begin{equation}\label{Kh3-1-x}
		\begin{aligned}
			\int_{\R^3} |\nu^{- \frac{1}{2}} z_{- \alpha} \sigma_x^\frac{m}{2} w_{\beta, \vartheta} K_{\hbar 3}^{1 - \chi} g (x,v)|^2 \d v \lesssim \int_{\R^3} |\nu^\frac{1}{2} \sigma_x^\frac{m}{2} w_{\beta, \vartheta} g (x,v)|^2 \d v
		\end{aligned}
	\end{equation}
	for $0 \leq \alpha < \min \{ \frac{1}{2} , \frac{b_1 + \gamma}{2} \}$ with $- 3 < \gamma \leq 1$ and $b_1 \in \mathcal{T}_\gamma$. As a result, the bounds \eqref{Kh3-x} and \eqref{Kh3-x} imply the claim \eqref{Claim-Kh3}.
	
	Finally, the inequalities \eqref{Claim-Kh1}, \eqref{Claim-Kh2} and \eqref{Claim-Kh3} conclude the bound \eqref{Kh-L2-Bnd}. Then the proof of Lemma \ref{Lmm-Kh-L2} is completed.
\end{proof}

\section{Properties of artificial damping operator $\mathbf{D}$}\label{Sec:WMDM}

The goal of this section is to study the properties of artificial damping operator $\mathbf{D}$, i.e., to prove Lemma \ref{Lmm-Dh}-\ref{Lmm-D-XY}-\ref{Lmm-Dh-L2}-\ref{Lmm-Dh-L2-L2}.

\subsection{Coercivity of $\mathbf{D}$}\label{Subsec:D-Coercivity}

We now study the coercivity of the artificial damping operator $\mathbf{D}$, hence, to verify Lemma \ref{Lmm-Dh}.

\begin{proof}[Proof of Lemma \ref{Lmm-Dh}]
	We first decompose the left quantity in \eqref{Dh-Coercivity} as
	\begin{equation}\label{D0D1D2}
		\begin{aligned}
			\int_{\R^3} w_{\beta, \vartheta}^2 g ( \mathbf{D}_\hbar g - \hbar \sigma_x v_3 g) \d v = D_0 + D_1 + D_2 \,,
		\end{aligned}
	\end{equation}
	where
	\begin{equation}\label{D012}
		\begin{aligned}
			D_0 & = \int_{\R^3} \big[ w_{\beta, \vartheta}^2 g \mathbf{D}_\hbar g - \hbar \sigma_x v_3 (\P w_{\beta, \vartheta} g)^2 \big] \d v \,, \ D_1 = - \hbar \int_{\R^3} \sigma_x v_3 (\P^\perp w_{\beta, \vartheta} g)^2 \d v \,, \\
			D_2 & = - 2 \hbar \int_{\R^3} \sigma_x v_3 \P^\perp w_{\beta, \vartheta} g \cdot \P w_{\beta, \vartheta} g \d v \,.
		\end{aligned}
	\end{equation}
	
	Due to $|v_3 \sigma_x| \leq c \nu (v)$ as in Lemma \ref{Lmm-sigma}, the quantity $D_1$ can be bounded by
	\begin{equation}\label{D1-bnd}
		\begin{aligned}
			| D_1 | \leq c \hbar \int_{\R^3} \nu (v) |\P^\perp w_{\beta, \vartheta} g|^2 \d v \,,
		\end{aligned}
	\end{equation}
	and $D_2$ can be dominated by
	\begin{equation*}{\small
		\begin{aligned}
			|D_2| \leq & c \hbar \int_{\R^3} |\sigma_x v_3| |\P^\perp w_{\beta, \vartheta} g| \, |\P w_{\beta, \vartheta} g| \d v \\
			\leq & c \hbar \big( \int_{\R^3} \nu^{- \frac{1}{2}} (v) |v_3 \sigma_x| |\P w_{\beta, \vartheta} g|^2 \d v \big)^\frac{1}{2} \big( \int_{\R^3} \nu (v) |\P^\perp w_{\beta, \vartheta} g|^2 \d v \big)^\frac{1}{2} \,.
		\end{aligned}}
	\end{equation*}
	Moreover, one has
	\begin{equation*}{\small
		\begin{aligned}
			\int_{\R^3} \nu^{- \frac{1}{2}} |v_3 \sigma_x| |\P w_{\beta, \vartheta} g|^2 \d v = \underbrace{ \int_{2 ( 1 + |v - \u| )^{3 - \gamma} \leq \delta x + l} (\cdots) \d v }_{:= D_{21}} + \underbrace{ \int_{2 ( 1 + |v - \u| )^{3 - \gamma} > \delta x + l} (\cdots) \d v }_{: = D_{22}} \,.
		\end{aligned}}
	\end{equation*}
	By \eqref{sigma-x}-\eqref{Omega123} below, if $2 ( 1 + |v - \u| )^{3 - \gamma} \leq \delta x + l$, it follows that $\sigma_x = \tfrac{10 \delta}{3 - \gamma} (\delta x + l)^{- \frac{1 - \gamma}{3 - \gamma}} $. Furthermore, the definition of $\P w_{\beta, \vartheta} g$ in \eqref{Projt-P} can be rewritten as
	\begin{equation}\label{Proj-1}
		\begin{aligned}
			\P w_{\beta, \vartheta} g = \sum_{j=0}^4 a_j \psi_j \,, \ \int_{\R^3} \psi_i (v) \psi_j (v) \d v = \delta_{ij} \quad (0 \leq i,j \leq 4) \,,
		\end{aligned}
	\end{equation}
	which means that
	\begin{equation}\label{Proj-2}
		\begin{aligned}
			\int_{\R^3} |\P w_{\beta, \vartheta} g|^2 \d v = \sum_{j=0}^4 a_j^2 \,.
		\end{aligned}
	\end{equation}
	It is easy to see that $\psi_j$ satisfy
	\begin{equation}\label{Proj-3}
		\begin{aligned}
			\sum_{j=0}^4 \nu^{- \frac{1}{2}} (v) |\psi_j (v)| \leq c'' e^{- c' |v - \u|^2}
		\end{aligned}
	\end{equation}
	for some $c', c'' > 0$. Then $D_{21}$ can be bounded by
{\small
	\begin{align*}
		D_{21} \leq C \delta (\delta x + l)^{- \frac{1 - \gamma}{3 - \gamma}} \int_{2 ( 1 + |v - \u| )^{3 - \gamma} \leq \delta x + l} |v_3| e^{- c' |v - \u|^2} \d v \sum_{j=0}^4 a_j^2 \leq C \delta (\delta x + l)^{- \frac{1 - \gamma}{3 - \gamma}} \int_{\R^3} |\P w_{\beta, \vartheta} g|^2 \d v \,.
	\end{align*}}
	Due to $|v_3 \sigma_x| \leq c \nu (v)$ in Lemma \ref{Lmm-sigma} and the relations \eqref{Proj-1}-\eqref{Proj-2}-\eqref{Proj-3}, $D_{22}$ can be controlled by
{\small
	\begin{align*}
		D_{22} \leq & C \int_{2 ( 1 + |v - \u| )^{3 - \gamma} > \delta x + l} \nu (v) e^{- c' |v - \u|^2} \d v \int_{\R^3} |\P w_{\beta, \vartheta} g|^2 \d v \\
		\leq & C e^{- \frac{c'}{8} ( \frac{\delta x + l}{2} )^\frac{2}{3 - \gamma}} \int_{\R^3} \nu (v) e^{- \frac{c'}{2} |v - \u|^2} \d v \int_{\R^3} |\P w_{\beta, \vartheta} g|^2 \d v \\
		\leq & C e^{ - \frac{c_0}{2} (l/2)^\frac{2}{3 - \gamma} } (\delta x + l)^{- \frac{1 - \gamma}{3 - \gamma}} \int_{\R^3} |\P w_{\beta, \vartheta} g|^2 \d v
	\end{align*}}
	for some $c_0 > 0$, where we have used the fact $|v - \u|^2 \geq \tfrac{1}{4} (\tfrac{\delta x + l}{2})^\frac{2}{3 - \gamma}$ derived from $2 ( 1 + |v - \u| )^{3 - \gamma} > \delta x + l$ when $l \geq 2^{4 - \gamma}$. Then one has $ \int_{\R^3} \nu^{- \frac{1}{2}} |v_3 \sigma_x| |\P w_{\beta, \vartheta} g|^2 \d v \leq C ( \delta + e^{ - \frac{c_0}{2} (l/2)^\frac{2}{3 - \gamma} } ) (\delta x + l)^{- \frac{1 - \gamma}{3 - \gamma}} \int_{\R^3} |\P w_{\beta, \vartheta} g|^2 \d v $, which yields that
{\small
	\begin{equation}\label{D2-bnd}
		\begin{aligned}
			|D_2| \leq C \hbar^\frac{3}{2} ( \delta + e^{ - \frac{c_0}{2} (l/2)^\frac{2}{3 - \gamma} } ) (\delta x + l)^{- \frac{1 - \gamma}{3 - \gamma}} \int_{\R^3} |\P w_{\beta, \vartheta} g|^2 \d v + C \hbar^\frac{1}{2} \int_{\R^3} \nu (v) |\P^\perp w_{\beta, \vartheta} g|^2 \d v \,.
		\end{aligned}
	\end{equation}}
	
	It turns to control the quantity $D_0$ in \eqref{D012}. Observe that
	\begin{equation}\label{D0EF}
		\begin{aligned}
			D_0 = E + G \,,
		\end{aligned}
	\end{equation}
	where
	\begin{equation}\label{EF}{\small
		\begin{aligned}
			E = \int_{\R^3} \big[ w_{\beta, \vartheta} g \mathbf{D} w_{\beta, \vartheta} g - \hbar \sigma_x v_3 ( \P w_{\beta, \vartheta} g )^2 \big] \d v \,, \ G = \int_{\R^3} w_{\beta, \vartheta} g ( w_{\beta, \vartheta} \mathbf{D}_\hbar w_{\beta, \vartheta}^{-1} - \mathbf{D} ) w_{\beta, \vartheta} g \d v \,.
		\end{aligned}}
	\end{equation}
	We split the quantity $E$ as
	\begin{equation}\label{EE1E2}
		\begin{aligned}
			E = \int_{2 ( 1 + |v - \u| )^{3 - \gamma} \leq \delta x + l} (\cdots) \d v + \int_{2 ( 1 + |v - \u| )^{3 - \gamma} > \delta x + l} (\cdots) \d v : = E_1 + E_2 \,.
		\end{aligned}
	\end{equation}
	We first control the quantity $E_1$. By utilizing the fact \eqref{psi*-basis}, one knows that
	\begin{equation}\label{a*}
		\begin{aligned}
			\P w_{\beta, \vartheta} g = \sum_{j=0}^4 a_j^* \psi_j^* : = \sum_{j=0}^4 \int_{\R^3} w_{\beta, \vartheta} g \psi_j^* \d v \psi_j^* \,.
		\end{aligned}
	\end{equation}
	By \eqref{sigma-x}, if $2 ( 1 + |v - \u| )^{3 - \gamma} \leq \delta x + l$, one has $\sigma_x = \tfrac{10 \delta}{3 - \gamma} (\delta x + l)^{- \frac{1 - \gamma}{3 - \gamma}} $. Together with the definition of $\mathbf{D}$ in \eqref{D-operator}, the quantity $E_1$ can be expressed by
	\begin{equation}\label{E1bnd0}
		\begin{aligned}
			E_1 = (\delta x + l)^{- \frac{1 - \gamma}{3 - \gamma}} \int_{2 ( 1 + |v - \u| )^{3 - \gamma} \leq \delta x + l} \big[ \ss_+ w_{\beta, \vartheta} g \P^+ ( v_3 w_{\beta, \vartheta} g ) + \ss_0 w_{\beta, \vartheta} g \P^0 w_{\beta, \vartheta} g \\
			- \tfrac{10 \delta \hbar}{3 - \gamma} v_3 \big\{  \big( \sum_{j=0}^4 a_j^* \psi_j^* \big)^2 - (a_4^* \psi_4^*)^2 \big\} + v_3 (a_4^* \psi_4^*)^2 \big] \d v \,.
		\end{aligned}
	\end{equation}
	Observe that
	\begin{equation}\label{E10-E11}
		\begin{aligned}
			& \int_{2 ( 1 + |v - \u| )^{3 - \gamma} \leq \delta x + l} \ss_+ w_{\beta, \vartheta} g \P^+ ( v_3 w_{\beta, \vartheta} g ) \d v \\
			= & \underbrace{ \ss_+ \int_{\R^3} w_{\beta, \vartheta} g \P^+ ( v_3 w_{\beta, \vartheta} g ) \d v }_{E_{10}} - \underbrace{ \ss_+ \int_{2 ( 1 + |v - \u| )^{3 - \gamma} > \delta x + l} \ss_+ w_{\beta, \vartheta} g \P^+ ( v_3 w_{\beta, \vartheta} g ) \d v }_{E_{11}} \,.
		\end{aligned}
	\end{equation}
	It then follows from the straightforward computation that
	\begin{equation}\label{E10-bnd}
		\begin{aligned}
			E_{10} = & \ss_+ \int_{\R^3} w_{\beta, \vartheta} g \P^+ ( v_3 \P w_{\beta, \vartheta} g ) \d v + \ss_+ \int_{\R^3} w_{\beta, \vartheta} g \P^+ ( v_3 \P^\perp w_{\beta, \vartheta} g ) \d v \\
			= & \ss_+ (a_3^*)^2 \int_{\R^3} \psi_3^* \P v_3 \P \psi_3^* \d v + \ss_+ a_3^* \int_{\R^3} \psi_3^* \P^+ ( v_3 \P^\perp w_{\beta, \vartheta} g ) \d v \\
			= & \sqrt{\tfrac{5}{3} T } \ss_+ (a_3^*)^2 + \ss_+ a_3^* \int_{\R^3} \psi_3^* \P^+ ( v_3 \P^\perp w_{\beta, \vartheta} g ) \d v \\
			\geq & \tfrac{1}{2} \sqrt{\tfrac{5}{3} T } \ss_+ (a_3^*)^2 - C \ss_+ \int_{\R^3} \nu | \P^\perp w_{\beta, \vartheta} g |^2 \d v \,.
		\end{aligned}
	\end{equation}
	By the definition of $\P^+$ in \eqref{P+}, one has
	\begin{equation*}
		\begin{aligned}
			E_{11} = \ss_+ \int_{\R^3} v_3 \psi_3^* w_{\beta, \vartheta} g \d v \int_{2 ( 1 + |v - \u| )^{3 - \gamma} > \delta x + l} \psi_3^* w_{\beta, \vartheta} g \d v \,.
		\end{aligned}
	\end{equation*}
	Notice that $|\psi_3^*| \leq C e^{ - c_0 ( 1 + |v - \u| )^2 }$ for some $C, c_0 > 0$. Then
	\begin{equation*}
		\begin{aligned}
			| \int_{2 ( 1 + |v - \u| )^{3 - \gamma} > \delta x + l} \psi_3^* w_{\beta, \vartheta} g \d v | \leq & e^{ - \frac{c_0}{2} (l / 2)^\frac{2}{3 - \gamma} } \int_{\R^3} e^{ \frac{c_0}{2} (1 + |v - \u|)^2 } |\psi_3^* w_{\beta, \vartheta} g | \d v \\
			\leq & C e^{ - \frac{c_0}{2} (l / 2)^\frac{2}{3 - \gamma} } \big( \int_{\R^3} \nu | w_{\beta, \vartheta} g |^2 \d v \big)^\frac{1}{2} \,.
		\end{aligned}
	\end{equation*}
	One obviously has $| \int_{\R^3} v_3 \psi_3^* w_{\beta, \vartheta} g \d v | \leq C \big( \int_{\R^3} \nu | w_{\beta, \vartheta} g |^2 \d v \big)^\frac{1}{2} $. Then the quantity $E_{11}$ can be bounded by
	\begin{equation}\label{E11-bnd}
		\begin{aligned}
			E_{11} \leq C \ss_+ e^{ - \frac{c_0}{2} (l / 2)^\frac{2}{3 - \gamma} } \big( \int_{\R^3} | \P w_{\beta, \vartheta} g |^2 \d v + \int_{\R^3} \nu | \P^\perp w_{\beta, \vartheta} g |^2 \d v \big) \,.
		\end{aligned}
	\end{equation}
	In summary, the relations \eqref{E10-E11}, \eqref{E10-bnd} and \eqref{E11-bnd} reduce to
		\begin{align}\label{E1bnd1}
			\no & \int_{2 ( 1 + |v - \u| )^{3 - \gamma} \leq \delta x + l} \ss_+ w_{\beta, \vartheta} g \P^+ ( v_3 w_{\beta, \vartheta} g ) \d v \\
			\no \geq & \tfrac{1}{2} \sqrt{\tfrac{5}{3} T } \ss_+ (a_3^*)^2 - C \ss_+ e^{ - \frac{c_0}{2} (l / 2)^\frac{2}{3 - \gamma} } \int_{\R^3} | \P w_{\beta, \vartheta} g |^2 \d v \\
			& - C \ss_+ ( 1 + e^{ - \frac{c_0}{2} (l / 2)^\frac{2}{3 - \gamma} } ) \int_{\R^3} \nu | \P^\perp w_{\beta, \vartheta} g |^2 \d v \,.
		\end{align}
	
	Moreover,
	\begin{equation*}
		\begin{aligned}
			& \int_{2 ( 1 + |v - \u| )^{3 - \gamma} \leq \delta x + l} \ss_0 w_{\beta, \vartheta} g \P^0 ( v_3 w_{\beta, \vartheta} g ) \d v \\
			= & \ss_0 \int_{\R^3} w_{\beta, \vartheta} g \P^0 ( v_3 w_{\beta, \vartheta} g ) \d v - \ss_0 \int_{2 ( 1 + |v - \u| )^{3 - \gamma} > \delta x + l} w_{\beta, \vartheta} g \P^0 ( v_3 w_{\beta, \vartheta} g ) \d v \,.
		\end{aligned}
	\end{equation*}
	By Lemma 3 of \cite{Golse-2008-BIMA}, the map $ w_{\beta, \vartheta} g \mapsto \big( \int_{\R^3} w_{\beta, \vartheta} g \P^0 w_{\beta, \vartheta} g \d v \big)^\frac{1}{2} $ defines a norm on $\mathrm{Span} \{ \psi_0^*, \psi_1^*, \psi_2^* \} $. It is well known that all norms on the finite dimensional space are equivalent each other. Then there is a constant $\mu_1 > 0$ such that
	\begin{equation*}{\small
		\begin{aligned}
			\int_{\R^3} w_{\beta, \vartheta} g \P^0 ( v_3 w_{\beta, \vartheta} g ) \d v \geq \mu_1 \sum_{j=0}^2 (a_j^*)^2 \,.
		\end{aligned}}
	\end{equation*}
	Following the similar arguments in \eqref{E11-bnd}, one has
{\small
	\begin{equation*}
		\begin{aligned}
			\int_{2 ( 1 + |v - \u| )^{3 - \gamma} > \delta x + l} w_{\beta, \vartheta} g \P^0 ( v_3 w_{\beta, \vartheta} g ) \d v \leq C e^{ - \frac{c_0}{2} (l / 2)^\frac{2}{3 - \gamma} } \big( \int_{\R^3} | \P w_{\beta, \vartheta} g |^2 \d v + \int_{\R^3} \nu | \P^\perp w_{\beta, \vartheta} g |^2 \d v \big) \,.
		\end{aligned}
	\end{equation*}}
	It therefore holds that
	\begin{equation}\label{E1bnd2}{\small
		\begin{aligned}
			& \int_{2 ( 1 + |v - \u| )^{3 - \gamma} \leq \delta x + l} \ss_0 w_{\beta, \vartheta} g \P^0 ( v_3 w_{\beta, \vartheta} g ) \d v \\
			\geq & \mu_1 \ss_0 \sum_{j=0}^2 (a_j^*)^2 - C \ss_0 e^{ - \frac{c_0}{2} (l / 2)^\frac{2}{3 - \gamma} } \big( \int_{\R^3} | \P w_{\beta, \vartheta} g |^2 \d v + \int_{\R^3} \nu | \P^\perp w_{\beta, \vartheta} g |^2 \d v \big) \,.
		\end{aligned}}
	\end{equation}
	
	Recall that $\psi_4^*$ satisfies $\A \psi_4^* = \P v_3 \P \psi_4^* = \lambda_4 \psi_4^*$ with $\lambda_4 = - \sqrt{\frac{5}{3} T} < 0$. Then
	\begin{equation}\label{Neg-Damp}
		\begin{aligned}
			- \tfrac{10 \delta \hbar}{3 - \gamma} \int_{\R^3} v_3 ( a_4^* \psi_4^* )^2 \d v = & \sqrt{\tfrac{3}{5 T}} \tfrac{10 \delta \hbar}{3 - \gamma} (a_4^*)^2 \int_{\R^3} v_3 \psi_4^* \P v_3 \P \psi_4^* \d v \\
			= & \sqrt{\tfrac{3}{5 T}} \tfrac{10 \delta \hbar}{3 - \gamma} (a_4^*)^2 \int_{\R^3} ( \P v_3 \psi_4^* )^2 \d v \geq \mu_2 \delta \hbar  (a_4^*)^2
		\end{aligned}
	\end{equation}
	with $\mu_2 = \sqrt{\tfrac{3}{5 T}} \tfrac{10}{3 - \gamma} \int_{\R^3} ( \P v_3 \psi_4^* )^2 \d v > 0$. Furthermore, the similar arguments in \eqref{E11-bnd} imply that
	\begin{equation*}
		\begin{aligned}
			| \tfrac{10 \delta \hbar}{3 - \gamma} \int_{2 ( 1 + |v - \u| )^{3 - \gamma} > \delta x + l} v_3 ( a_4^* \psi_4^* )^2 \d v | \leq C \delta \hbar e^{ - \frac{c_0}{2} (l / 2)^\frac{2}{3 - \gamma} } \int_{\R^3} | \P w_{\beta, \vartheta} g |^2 \d v \,.
		\end{aligned}
	\end{equation*}
	As a consequence,
	\begin{equation}\label{E1bnd3}{\small
		\begin{aligned}
			- \tfrac{10 \delta \hbar}{3 - \gamma} \int_{2 ( 1 + |v - \u| )^{3 - \gamma} \leq \delta x + l} v_3 ( a_4^* \psi_4^* )^2 \d v \geq \mu_2 \delta \hbar  (a_4^*)^2 - C \delta \hbar e^{ - \frac{c_0}{2} (l / 2)^\frac{2}{3 - \gamma} } \int_{\R^3} | \P w_{\beta, \vartheta} g |^2 \d v \,.
		\end{aligned}}
	\end{equation}
	Following the above similar arguments, one derives that
	\begin{equation}\label{E1bnd4}{\small
		\begin{aligned}
			& | \tfrac{10 \delta \hbar}{3 - \gamma} \int_{2 ( 1 + |v - \u| )^{3 - \gamma} \leq \delta x + l} v_3 \big\{  \big( \sum_{j=0}^4 a_j^* \psi_j^* \big)^2 - (a_4^* \psi_4^*)^2 \big\} \d v | \\
			\leq & \tfrac{1}{2} \mu_2 \delta \hbar (a_4^*)^2 + c_1 \delta \hbar \sum_{j=0}^3 (a_j^*)^2 + C \delta \hbar e^{ - \frac{c_0}{2} (l / 2)^\frac{2}{3 - \gamma} } \int_{\R^3} | \P w_{\beta, \vartheta} g |^2 \d v
		\end{aligned}}
	\end{equation}
	for some harmless constant $c_1 > 0$. Consequently, combining with the relations \eqref{E1bnd0}, \eqref{E1bnd1}, \eqref{E1bnd2}, \eqref{E1bnd3} and \eqref{E1bnd4}, one gains
{\small
		\begin{align*}
			E_1 \geq & (\delta x + l)^{- \frac{1 - \gamma}{3 - \gamma}} \big[ (\mu_1 \ss_0 - c_1 \delta \hbar) \sum_{j=0}^2 (a_j^*)^2 + ( \tfrac{1}{2} \sqrt{\tfrac{5}{3} T} \ss_+ - c_1 \delta \hbar ) (a_3^*)^2 + \tfrac{1}{2} \mu_2 \delta \hbar (a_4^*)^2 \big] \\
			& - C (\ss_0 + \ss_+ + \delta \hbar ) e^{ - \frac{c_0}{2} (l / 2)^\frac{2}{3 - \gamma} } (\delta x + l)^{- \frac{1 - \gamma}{3 - \gamma}} \int_{\R^3} | \P w_{\beta, \vartheta} g |^2 \d v \\
			& - C [ \ss_0 + \ss_+ (1 + e^{ - \frac{c_0}{2} (l / 2)^\frac{2}{3 - \gamma} } ) ] (\delta x + l)^{- \frac{1 - \gamma}{3 - \gamma}} \int_{\R^3} \nu | \P^\perp w_{\beta, \vartheta} g |^2 \d v \,.
		\end{align*}}
	We now take $\ss_0 = \ss_0' \delta \hbar$ and $\ss_+ = \ss_+' \delta \hbar$, where $\ss_0', \ss_+' > 0$ are large enough such that
	\begin{equation*}
		\begin{aligned}
			\mu_1 \ss_0' - c_1 \geq \tfrac{1}{2} \mu_2 \,, \ \tfrac{1}{2} \sqrt{\tfrac{5}{3} T} \ss_+' - c_1 \geq \tfrac{1}{2} \mu_2 \,.
		\end{aligned}
	\end{equation*}
	Then, together with the properties of the orthonormal basis $\{ \psi_j^* \}_{0 \leq j \leq 4}$ in \eqref{psi*-basis} and the definitions of $a_j^* (0 \leq j \leq 4)$ in \eqref{a*}, one obtains
	\begin{equation*}
		\begin{aligned}
			& (\mu_1 \ss_0 - c_1 \delta \hbar) \sum_{j=0}^2 (a_j^*)^2 + ( \tfrac{1}{2} \sqrt{\tfrac{5}{3} T} \ss_+ - c_1 \delta \hbar ) (a_3^*)^2 + \tfrac{1}{2} \mu_2 \delta \hbar (a_4^*)^2 \\
			\geq & \tfrac{1}{2} \mu_2 \delta \hbar \sum_{j=0}^4 (a_j^*)^2 = \tfrac{1}{2} \mu_2 \delta \hbar \int_{\R^3} | \P w_{\beta, \vartheta} g |^2 \d v \,,
		\end{aligned}
	\end{equation*}
	which means that
	\begin{equation}\label{E1-bnd}{\scriptsize
		\begin{aligned}
			E_1 \geq ( 5 \mu_0 - C e^{ - \frac{c_0}{2} (l / 2)^\frac{2}{3 - \gamma} } ) \delta \hbar (\delta x + l)^{- \frac{1 - \gamma}{3 - \gamma}} \int_{\R^3} |\P w_{\beta, \vartheta} g|^2 \d v - C \delta \hbar (\delta x + l)^{- \frac{1 - \gamma}{3 - \gamma}} \int_{\R^3} \nu | \P^\perp w_{\beta, \vartheta} g |^2 \d v \,,
		\end{aligned}}
	\end{equation}
	where $\mu_0 = \tfrac{1}{10} \mu_2 > 0$.
	
	By the similar arguments in \eqref{E11-bnd}, the quantity $E_2$ can be bounded by
	\begin{equation}\label{E2-bnd}
		\begin{aligned}
			|E_2| \leq C e^{ - \frac{c_0}{2} (l / 2)^\frac{2}{3 - \gamma} } \hbar (\delta x + l)^{- \frac{1 - \gamma}{3 - \gamma}} \int_{\R^3} |\P w_{\beta, \vartheta} g|^2 \d v \,.
		\end{aligned}
	\end{equation}
	Then \eqref{EE1E2}, \eqref{E1-bnd} and \eqref{E2-bnd} show that
	\begin{equation}\label{E-bnd}
		\begin{aligned}
			E \geq E_1 - |E_2| \geq \big[ 5 \mu_0 \delta - C (\delta + 1) e^{ - \frac{c_0}{2} (l / 2)^\frac{2}{3 - \gamma} } \big] \hbar (\delta x + l)^{- \frac{1 - \gamma}{3 - \gamma}} \int_{\R^3} |\P w_{\beta, \vartheta} g|^2 \d v \\
			- C \delta \hbar (\delta x + l)^{- \frac{1 - \gamma}{3 - \gamma}} \int_{\R^3} \nu | \P^\perp w_{\beta, \vartheta} g |^2 \d v \,.
		\end{aligned}
	\end{equation}
	
	We then control the quantity $G$. By the definition of the operator $\mathbf{D}$ in \eqref{D-operator}, one has
	\begin{equation}\label{G1G2}{\footnotesize
		\begin{aligned}
			( w_{\beta, \vartheta} \mathbf{D}_\hbar w_{\beta, \vartheta}^{-1} - \mathbf{D} ) w_{\beta, \vartheta} g = & (\delta x + l)^{- \frac{1 - \gamma}{3 - \gamma}} \Big\{ \ss_+ \big[ \underbrace{ w_{\beta, \vartheta} e^{\hbar \sigma} \P^+ ( v_3 w_{\beta, \vartheta}^{-1} e^{- \hbar \sigma} w_{\beta, \vartheta} g ) - \P^+ (v_3 w_{\beta, \vartheta} g ) }_{G_1} \big] \\
		    & \qquad \qquad \qquad \quad + \ss_0 \big[ \underbrace{ w_{\beta, \vartheta} e^{\hbar \sigma} \P^0 ( w_{\beta, \vartheta}^{-1} e^{- \hbar \sigma} w_{\beta, \vartheta} g ) - \P^0 ( w_{\beta, \vartheta} g ) }_{G_2} \big] \Big\} \,.
		\end{aligned}}
	\end{equation}
	Together with the definition of $\P^+$ in \eqref{P+} and the fact $w_{\beta, \vartheta} (v) w_{\beta, \vartheta}^{-1} (\tilde{v}) \leq C e^{2 \vartheta ( |v - \u|^2 + |\tilde{v} - \u|^2 )}$, one has
	\begin{equation*}
		\begin{aligned}
			| G_1 | \leq C | \psi_3^* (v) | \int_{\R^3} | \tilde{v}_3 | \mathfrak{q} (\hbar, \vartheta, v, \tilde{v}) | w_{\beta, \vartheta} (\tilde{v}) g (\tilde{v}) \psi_3^* (\tilde{v}) | \d \tilde{v} \,,
		\end{aligned}
	\end{equation*}
	where
	\begin{equation*}
		\begin{aligned}
			\mathfrak{q} (\hbar, \vartheta, v, \tilde{v}) = e^{ 2 \vartheta ( |v - \u|^2 + |\tilde{v} - \u|^2 ) + \hbar | \sigma (x,v) - \sigma (x, \tilde{v}) | } - 1 \,.
		\end{aligned}
	\end{equation*}
    By the similar arguments in Lemma 2 of \cite{Chen-Liu-Yang-2004-AA}, one knows that for $0 \leq \vartheta < \tfrac{1}{16 T}$,
	\begin{equation}\label{Claim-q}
		\begin{aligned}
			\sup_{v, \tilde{v} \in \R^3} \big\{ \mathfrak{q} (\hbar, \vartheta, v, \tilde{v}) e^{ - \frac{|v - \u|^2 + |\tilde{v} - \u|^2}{8 T} } \big\} \leq C \hbar
		\end{aligned}
	\end{equation}
	for sufficiently small $\hbar > 0 $ and for some harmless constant $C > 0$. Then $G_1$ can be further bounded by
	\begin{equation}\label{G1-bnd}
		\begin{aligned}
			| G_1 | \leq C \hbar | \psi_3^* (v) | \big( \int_{\R^3} \nu (\tilde{v}) | w_{\beta, \vartheta} (\tilde{v}) g (\tilde{v}) |^2 \d \tilde{v} \big)^\frac{1}{2} \,.
		\end{aligned}
	\end{equation}
	Similarly, $G_2$ can be bounded by
	\begin{equation}\label{G2-bnd}
		\begin{aligned}
			| G_2 | \leq C \hbar \sum_{j=0}^2 |\psi_j^* (v)| \big( \int_{\R^3} \nu (\tilde{v}) | w_{\beta, \vartheta} (\tilde{v}) g (\tilde{v}) |^2 \d \tilde{v} \big)^\frac{1}{2} \,.
		\end{aligned}
	\end{equation}
	From plugging \eqref{G1G2}, \eqref{G1-bnd} and \eqref{G2-bnd} into the definition of $G$ in \eqref{EF}, one follows that
	\begin{equation}\label{G-bnd}
		\begin{aligned}
			|G| \leq C \delta \hbar^2 (\delta x + l)^{- \frac{1 - \gamma}{3 - \gamma}} \big( \int_{\R^3} | \P w_{\beta, \vartheta} g |^2 \d v + \int_{\R^3} \nu | \P^\perp w_{\beta, \vartheta} g |^2 \d v \big)
		\end{aligned}
	\end{equation}
	for $0 \leq \vartheta < \tfrac{1}{16 T}$ and sufficiently small $\hbar > 0$. It then follows from \eqref{D0EF}, \eqref{E-bnd} and \eqref{G-bnd} that
{\small
	\begin{align}\label{D0-bnd}
		\no D_0 \geq & E - |G| \geq \big[ 5 \mu_0 \delta - C \delta \hbar - C (\delta + 1) e^{ - \frac{c_0}{2} (l / 2)^\frac{2}{3 - \gamma} } \big] \hbar (\delta x + l)^{- \frac{1 - \gamma}{3 - \gamma}} \int_{\R^3} |\P w_{\beta, \vartheta} g|^2 \d v \\
		& - C \delta \hbar (\delta x + l)^{- \frac{1 - \gamma}{3 - \gamma}} \int_{\R^3} \nu | \P^\perp w_{\beta, \vartheta} g |^2 \d v \,.
	\end{align}}
	Then, by plugging \eqref{D1-bnd}, \eqref{D2-bnd} and \eqref{D0-bnd} into \eqref{D0D1D2}, one has
	\begin{equation*}
		\begin{aligned}
			& \int_{\R^3} w_{\beta, \vartheta}^2 g ( \mathbf{D}_\hbar g - \hbar \sigma_x v_3 g) \d v \\
			\geq & \big[ 5 \mu_0 \delta - C \delta \hbar - C \delta \hbar^\frac{1}{2} - C (\delta + \hbar^\frac{1}{2} + 1) e^{ - \frac{c_0}{2} (l / 2)^\frac{2}{3 - \gamma} } \big] \hbar (\delta x + l)^{- \frac{1 - \gamma}{3 - \gamma}} \int_{\R^3} |\P w_{\beta, \vartheta} g|^2 \d v \\
			& - C ( \delta \hbar + \hbar + \hbar^\frac{1}{2} ) (\delta x + l)^{- \frac{1 - \gamma}{3 - \gamma}} \int_{\R^3} \nu | \P^\perp w_{\beta, \vartheta} g |^2 \d v \,.
		\end{aligned}
	\end{equation*}
	We first take $0 < \delta < 1$ and $0 < \hbar < \frac{2 \mu_0}{C}$, then take $l > 2 [ \frac{2}{c_0} ( \ln \frac{C}{\nu_0} + \ln \frac{1}{\delta} ) ]^\frac{3 - \gamma}{2}$ such that $ 2 C e^{ - \frac{c_0}{2} (l / 2)^\frac{2}{3 - \gamma} } < 2 \mu_0 \delta $. One thereby has
	\begin{equation*}
		\begin{aligned}
			5 \mu_0 \delta - C \delta \hbar - C \delta \hbar^\frac{1}{2} - C (\delta + \hbar^\frac{1}{2} + 1) e^{ - \frac{c_0}{2} (l / 2)^\frac{2}{3 - \gamma} } \geq \mu_0 \delta \,, \quad C ( \delta \hbar + \hbar + \hbar^\frac{1}{2} ) \leq 2 C \hbar^\frac{1}{2} \,.
		\end{aligned}
	\end{equation*}
	Therefore, the results in Lemma \ref{Lmm-Dh} hold and the proof is finished.
\end{proof}

\subsection{Weighted $L^\infty_{x,v}$ estimate of $\mathbf{D}$}\label{Subsec:D-L5}

We now study the weighted $L^\infty_{x,v}$ estimate of the artificial damping operator $\mathbf{D}$, hence, to verify Lemma \ref{Lmm-D-XY}.

\begin{proof}[Proof of Lemma \ref{Lmm-D-XY}]
	For the simplicity of notations, we employ the notation $\mathfrak{p} (x,v)$ given in \eqref{p}, i.e.,
	\begin{equation*}
		\begin{aligned}
			\mathfrak{p} (x,v) = \sigma_x^\frac{m}{2} (x,v) e^{\hbar \sigma (x,v)} w_{\beta, \vartheta} (v) \,.
		\end{aligned}
	\end{equation*}
	Then one has
	\begin{equation*}
		\begin{aligned}
			\| \sigma_x^\frac{m}{2} e^{\hbar \sigma} \mathbf{D} g \|_{\beta, \vartheta} = \| \mathfrak{p} \mathbf{D} g \|_{L^\infty_{x,v}} \,, \quad \| \sigma_x^\frac{m}{2} e^{\hbar \sigma} g \|_{\beta - \tilde{N}, \vartheta} = \| (1 + |v|)^{-\tilde{N}} \mathfrak{p} g \|_{L^\infty_{x,v}} \,.
		\end{aligned}
	\end{equation*}
	
	Now we control the norm $\| \mathfrak{p} \mathbf{D} g \|_{L^\infty_{x,v}}$. Recalling the definition of $\mathbf{D} g$ in \eqref{D-operator}, one has
	\begin{equation*}
		\begin{aligned}
			\mathfrak{p} \mathbf{D} g (x, v) = \sum_{j=0}^3 \underbrace{ \int_{\R^3} \mathfrak{d}_j (x,v, \tilde{v}) (1 + \tilde{v})^{-\tilde{N}} \mathfrak{p} (x, \tilde{v}) g (x, \tilde{v}) \d \tilde{v} }_{ \mathbf{P}_j } \,,
		\end{aligned}
	\end{equation*}
	where
		\begin{align*}
			& \mathfrak{d}_0 (x,v, \tilde{v}) = ( \int_{\R^3} \widehat{\mathbb{B}}_3 \mathbb{B}_3 \d v )^{-1} \ss_0 (\delta x + l)^{- \frac{1 - \gamma}{3 - \gamma}} \tfrac{\mathfrak{p} (x,v)}{\mathfrak{p} (x, \tilde{v})} \tilde{v}_3 (1 + |\tilde{v}|)^{\tilde{N}} \widehat{\mathbb{B}}_3 (\tilde{v}) \psi_0^* (v) \,, \\
			& \mathfrak{d}_j (x,v, \tilde{v}) = ( \int_{\R^3} \widehat{\mathbb{A}}_{j3} \mathbb{A}_{j3} \d v )^{-1} \ss_0 (\delta x + l)^{- \frac{1 - \gamma}{3 - \gamma}} \tfrac{\mathfrak{p} (x,v)}{\mathfrak{p} (x, \tilde{v})} \tilde{v}_3 (1 + |\tilde{v}|)^{\tilde{N}} \widehat{\mathbb{A}}_{j3} (\tilde{v}) \psi_j^* (v) \,, \ j = 1,2 \,, \\
			& \mathfrak{d}_3 (x,v, \tilde{v}) = \ss_+ (\delta x + l)^{- \frac{1 - \gamma}{3 - \gamma}} \tfrac{\mathfrak{p} (x,v)}{\mathfrak{p} (x, \tilde{v})} \tilde{v}_3 (1 + |\tilde{v}|)^{\tilde{N}} \psi_3^* (\tilde{v}) \psi_3^* (v) \,.
		\end{align*}
	
	We first control the quantity $\mathbf{P}_0$. More precisely, it holds
	\begin{equation*}
		\begin{aligned}
			| \mathbf{P}_0 | \leq \| \sigma_x^\frac{m}{2} e^{\hbar \sigma} g \|_{\beta - \tilde{N}, \vartheta} \int_{\R^3} | \mathfrak{d}_0 (x,v, \tilde{v}) | \d \tilde{v}
		\end{aligned}
	\end{equation*}
	By \eqref{p-pro}, it infers
	\begin{equation*}
		\begin{aligned}
			\tfrac{\mathfrak{p} (x,v)}{\mathfrak{p} (x, \tilde{v})} \leq C (1 + |v|)^k (1 + |\tilde{v}|)^k \exp \big[ (c \hbar + \vartheta) ( |v - \u|^2 + |\tilde{v} - \u|^2 ) \big] \,.
		\end{aligned}
	\end{equation*}
	Moreover, it is easy to see that
	\begin{equation*}
		\begin{aligned}
			|\widehat{\mathbb{B}}_3 (\tilde{v}) \psi_0^* (v)| \leq C \exp \big[ - c' (|v - \u|^2 + |\tilde{v} - \u|^2) \big] \,.
		\end{aligned}
	\end{equation*}
	Note that
	\begin{equation*}
		\begin{aligned}
			|\tilde{v}_3| (1 + |\tilde{v}|)^{k+\tilde{N}} (1 + |v|)^k \exp \big[ - \tfrac{c'}{2} (|v - \u|^2 + |\tilde{v} - \u|^2) \big] \leq C \,.
		\end{aligned}
	\end{equation*}
	It therefore follows that
	\begin{equation}\label{dj-bnd}
		\begin{aligned}
			|\mathfrak{d}_0 (x,v, \tilde{v})| \leq C \ss_0 (\delta x + l)^{- \frac{1 - \gamma}{3 - \gamma}} \exp \big[ - s_0 (|v - \u|^2 + |\tilde{v} - \u|^2) \big] \,,
		\end{aligned}
	\end{equation}
	where $s_0 = \tfrac{c'}{2} - c \hbar - \vartheta > 0$ provived that $\hbar, \vartheta \geq 0$ is sufficiently small. Consequently, one has
	\begin{equation*}
		\begin{aligned}
			\sum_{j=0}^4 \int_{\R^3} | \mathfrak{d}_j (x,v, \tilde{v}) | \d \tilde{v} \leq C \tau (\delta x + l)^{- \frac{1 - \gamma}{3 - \gamma}} \int_{\R^3} \exp \big[ - s_0 (|v - \u|^2 + |\tilde{v} - \u|^2) \big] \d \tilde{v} \leq C \tau
		\end{aligned}
	\end{equation*}
	uniformly in $(x, v) \in [0, + \infty) \times \R^3$, which concludes that
	\begin{equation}\label{P-n-bnd}
		\begin{aligned}
			| \mathbf{P}_0 | \leq C \ss_0 \| \sigma_x^\frac{m}{2} e^{\hbar \sigma} g \|_{\beta - \tilde{N}, \vartheta} \leq C \delta \hbar \| \sigma_x^\frac{m}{2} e^{\hbar \sigma} g \|_{\beta - \tilde{N}, \vartheta} \,.
		\end{aligned}
	\end{equation}
	Following the almost same arguments in \eqref{P-n-bnd}, one has
	\begin{equation}\label{P-j-bnd}
		\begin{aligned}
			| \mathbf{P}_j | \leq C \delta \hbar \| \sigma_x^\frac{m}{2} e^{\hbar \sigma} g \|_{\beta - \tilde{N}, \vartheta}
		\end{aligned}
	\end{equation}
	for $j = 1,2,3$. As a result, the bounds \eqref{P-n-bnd} and \eqref{P-j-bnd} conclude the estimate \eqref{D-inf-bnd}, and the proof of Lemma \ref{Lmm-D-XY} is completed.
\end{proof}

\subsection{Boundedness of $\mathbf{D}$ from weighted $L^\infty_x L^2_v$ to $L^\infty_{x,v}$}\label{Subsec:D-L5L2}

We now study the boundedness of $\mathbf{D}$ from weighted $L^\infty_x L^2_v$ to $L^\infty_{x,v}$, hence, to verify Lemma \ref{Lmm-Dh-L2}.

\begin{proof}[Proof of Lemma \ref{Lmm-Dh-L2}]
	Note that
	\begin{equation*}
		\begin{aligned}
			\LL \nu^{-1} \mathbf{D}_\hbar g \RR_{A; m, 0, \vartheta} \leq C \| \sigma_x^\frac{m}{2} (x,v) w_{- \gamma, \vartheta} (v) \mathbf{D}_\hbar g (x,v) \|_{L^\infty_{x,v}} \,,
		\end{aligned}
	\end{equation*}
	and the definition of $\mathbf{D}_h$ in \eqref{Dh} reads
	\begin{equation}\label{Dh-Exp}{\small
		\begin{aligned}
			\mathbf{D}_\hbar g (x,v) = & \ss_0 \kappa_0 (\delta x + l)^{- \frac{1 - \gamma}{3 - \gamma}} \int_{\R^3} e^{\hbar \sigma (x,v) - \hbar \sigma (x, \tilde{v})} g (x, \tilde{v} ) \psi_0^* (v) \tilde{v}_3 \widehat{\mathbb{B}}_3 (\tilde{v}) \d \tilde{v} \\
			& + \ss_0 (\delta x + l)^{- \frac{1 - \gamma}{3 - \gamma}} \sum_{j=1}^2 \kappa_j \int_{\R^3} e^{\hbar \sigma (x,v) - \hbar \sigma (x, \tilde{v})} g (x, \tilde{v} ) \psi_j^* (v) \tilde{v}_3 \widehat{\mathbb{A}}_{j3} (\tilde{v}) \d \tilde{v} \\
			& + \ss_+ (\delta x + l)^{- \frac{1 - \gamma}{3 - \gamma}} \int_{\R^3} e^{\hbar \sigma (x,v) - \hbar \sigma (x, \tilde{v})} g (x, \tilde{v} ) \psi_3^* (v) \tilde{v}_3 \psi_3^* (\tilde{v}) \d \tilde{v} \,,
		\end{aligned}}
	\end{equation}
	where the constants $\kappa_0 = ( \int_{\R^3} \widehat{\mathbb{B}}_3 \mathbb{B}_3 \d v )^{-1}$ and $\kappa_j = ( \int_{\R^3} \widehat{\mathbb{A}}_{j3} \mathbb{A}_{j3} \d v )^{-1}$ with $j =1,2$. It thereby follows that
	\begin{equation*}
		\begin{aligned}
			| \sigma_x^\frac{m}{2} (x,v) w_{- \gamma, \vartheta} (v) \mathbf{D}_\hbar g (x,v) | \leq \sum_{j=0}^3 \int_{\R^3} z_{- \alpha'} (\tilde{v}) \mathfrak{b}_j (x,v, \tilde{v}) \sigma_x^\frac{m}{2} (x,\tilde{v}) |z_{\alpha'} (\tilde{v}) g (x, \tilde{v})| \d \tilde{v} \,,
		\end{aligned}
	\end{equation*}
	where
		\begin{align*}
			\mathfrak{b}_0 (x,v, \tilde{v}) = & \ss_0 \kappa_0 (\delta x + l)^{- \frac{1 - \gamma}{3 - \gamma}} \sigma_x^\frac{m}{2} (x,v) \sigma_x^{- \frac{m}{2}} (x,\tilde{v}) e^{\hbar \sigma (x,v) - \hbar \sigma (x, \tilde{v})} | w_{- \gamma, \vartheta} (v) \psi_0^* (v) \tilde{v}_3 \widehat{\mathbb{B}}_3 (\tilde{v}) | \,, \\
			\mathfrak{b}_j (x,v, \tilde{v}) = & \ss_0 \kappa_j (\delta x + l)^{- \frac{1 - \gamma}{3 - \gamma}} \sigma_x^\frac{m}{2} (x,v) \sigma_x^{- \frac{m}{2}} (x,\tilde{v}) e^{\hbar \sigma (x,v) - \hbar \sigma (x, \tilde{v})} | w_{- \gamma, \vartheta} (v) \psi_j^* (v) \tilde{v}_3 \widehat{\mathbb{A}}_{j3} (\tilde{v}) | \,, \\
			\mathfrak{b}_3 (x,v, \tilde{v}) = & \ss_+ (\delta x + l)^{- \frac{1 - \gamma}{3 - \gamma}} \sigma_x^\frac{m}{2} (x,v) \sigma_x^{- \frac{m}{2}} (x,\tilde{v}) e^{\hbar \sigma (x,v) - \hbar \sigma (x, \tilde{v})} | w_{- \gamma, \vartheta} (v) \psi_3^* (v) \tilde{v}_3 \psi_3^* (\tilde{v}) | \,,
		\end{align*}
	for $j =1,2$. Together with Lemma \ref{Lmm-sigma}, \eqref{pro-sigma}, and the facts of exponential decay behavior of the functions $\psi_j^* (v) ( j =0,1,2,3)$ and $\widehat{\mathbb{A}}_{j3} (v) (j=1,2)$, $\widehat{\mathbb{B}}_3 (v)$, one gains
	\begin{equation}\label{b-bnd}
		\begin{aligned}
			\sum_{j=0}^3 \mathfrak{b}_j (x,v, \tilde{v}) \leq C e^{- \frac{c'}{2} (|v-\u|^2 + |\tilde{v} - \u|^2)}
		\end{aligned}
	\end{equation}
	for small enough $\hbar, \vartheta \geq 0$. Then
	\begin{equation*}{\small
		\begin{aligned}
			| \sigma_x^\frac{m}{2} (x,v) w_{- \gamma, \vartheta} (v) \mathbf{D}_\hbar g (x,v) | \leq & C \int_{\R^3} z_{- \alpha'} (\tilde{v}) e^{- \frac{c'}{2} (|v-\u|^2 + |\tilde{v} - \u|^2)} \sigma_x^\frac{m}{2} (x,\tilde{v}) |z_{\alpha'} (\tilde{v}) g (x, \tilde{v})| \d \tilde{v} \\
			\leq & C \Big( \int_{\R^3} z_{- \alpha'}^2 (\tilde{v}) e^{- c' (|v-\u|^2 + |\tilde{v} - \u|^2)} \d \tilde{v} \Big)^\frac{1}{2} \| \sigma_x^\frac{m}{2} z_{\alpha'} g (x, \cdot ) \|_{L^2_v} \\
			\leq & C \| \sigma_x^\frac{m}{2} z_{\alpha'} g \|_{L^\infty_x L^2_v} \leq C \| \sigma_x^\frac{m}{2} z_{\alpha'} w_{- \gamma, \vartheta} g \|_{L^\infty_x L^2_v} \,.
		\end{aligned}}
	\end{equation*}
	Here $0 \leq \alpha' < \frac{1}{2}$ is required. Then the proof of Lemma \ref{Lmm-Dh-L2} is completed.
\end{proof}

\subsection{Weighted $L^2_v$ boundedness of $\mathbf{D}$}\label{Subsec:D-L2L2}

We now study the weighted $L^2_v$ boundedness of $\mathbf{D}$, hence, to verify Lemma \ref{Lmm-Dh-L2-L2}.

\begin{proof}[Proof of Lemma \ref{Lmm-Dh-L2-L2}]
	Recalling \eqref{Dh-Exp}, one easily has
	\begin{equation*}
		\begin{aligned}
			\nu^{- \frac{1}{2}} z_{- \alpha} \sigma_x^\frac{m}{2} w_{\beta, \vartheta} \mathbf{D}_\hbar g (x,v) = \sum_{j=0}^3 \int_{\R^3} \mathfrak{n}_j (x,v, \tilde{v}) \nu^\frac{1}{2} \sigma_x^\frac{m}{2} w_{\beta, \vartheta} g (x, \tilde{v}) \d \tilde{v} \,,
		\end{aligned}
	\end{equation*}
	where
	\begin{equation*}
		\begin{aligned}
			\mathfrak{n}_0 (x,v, \tilde{v}) = & \ss_0 \kappa_0 (\delta x + l)^{- \frac{1 - \gamma}{3 - \gamma}} \tfrac{\mathfrak{p} (x,v)}{ \mathfrak{p} (x,\tilde{v}) } z_{- \alpha} (v) \nu^{- \frac{1}{2}} (v) \nu^{- \frac{1}{2}} (\tilde{v}) | \psi_0^* (v) \tilde{v}_3 \widehat{\mathbb{B}}_3 (\tilde{v}) | \,, \\
			\mathfrak{n}_j (x,v, \tilde{v}) = & \ss_0 \kappa_j (\delta x + l)^{- \frac{1 - \gamma}{3 - \gamma}} \tfrac{\mathfrak{p} (x,v)}{ \mathfrak{p} (x,\tilde{v}) } z_{- \alpha} (v) \nu^{- \frac{1}{2}} (v) \nu^{- \frac{1}{2}} (\tilde{v}) | \psi_j^* (v) \tilde{v}_3 \widehat{\mathbb{A}}_{j3} (\tilde{v}) | \,, \\
			\mathfrak{n}_3 (x,v, \tilde{v}) = & \ss_+ (\delta x + l)^{- \frac{1 - \gamma}{3 - \gamma}} \tfrac{\mathfrak{p} (x,v)}{ \mathfrak{p} (x,\tilde{v}) } z_{- \alpha} (v) \nu^{- \frac{1}{2}} (v) \nu^{- \frac{1}{2}} (\tilde{v}) | \psi_3^* (v) \tilde{v}_3 \psi_3^* (\tilde{v}) | \,,
		\end{aligned}
	\end{equation*}
	for $j=1,2$. Here the function $\mathfrak{p} (x,v)$ is defined in \eqref{p}. Following the similar arguments in \eqref{b-bnd}, and combining with the fact $(\delta x + l)^{- \frac{1 - \gamma}{3 - \gamma}} \lesssim 1$ uniformly in $x \geq 0$, one infers that
	\begin{equation*}
		\begin{aligned}
			\sum_{j=0}^3 |\mathfrak{n}_j (x,v,\tilde{v})| \lesssim z_{- \alpha} (v) e^{- \frac{c'}{2} (|v - \u|^2 + |\tilde{v} - \u|^2)}
		\end{aligned}
	\end{equation*}
	uniformly in $x \geq 0$. Thanks to $0 \leq \alpha < \frac{1}{2}$, it holds
	\begin{equation*}
		\begin{aligned}
			\sum_{j=0}^3 \iint_{\R^3 \times \R^3} |\mathfrak{n}_j (x,v,\tilde{v})|^2 \d \tilde{v} \d v \lesssim \iint_{\R^3 \times \R^3} z_{- \alpha}^2 (v) e^{- c' (|v - \u|^2 + |\tilde{v} - \u|^2)} \d \tilde{v} \d v \lesssim 1
		\end{aligned}
	\end{equation*}
	uniformly in $x \geq 0$. Consequently, together with the H\"older inequality, one has
{\small
		\begin{align*}
			& \int_{\R^3} |\nu^{- \frac{1}{2}} z_{- \alpha} \sigma_x^\frac{m}{3} w_{\beta, \vartheta} \mathbf{D}_\hbar g (x,v)|^2 \d v = \int_{\R^3} \Big( \sum_{j=0}^3 \int_{\R^3} \mathfrak{n}_j (x,v, \tilde{v}) \nu^\frac{1}{2} \sigma_x^\frac{m}{2} w_{\beta, \vartheta} g (x, \tilde{v}) \d \tilde{v} \Big)^2 \d v \\
			\lesssim & \int_{\R^3} \big( \sum_{j=0}^3 \int_{\R^3} | \mathfrak{n}_j (x,v, \tilde{v}) |^2 \d \tilde{v} \big) \big( \int_{\R^3} | \nu^\frac{1}{2} \sigma_x^\frac{m}{2} w_{\beta, \vartheta} g (x, \tilde{v}) |^2 \d \tilde{v} \big) \d v \\
			= & \sum_{j=0}^3 \iint_{\R^3 \times \R^3} |\mathfrak{n}_j (x,v,\tilde{v})|^2 \d \tilde{v} \d v \int_{\R^3} | \nu^\frac{1}{2} \sigma_x^\frac{m}{2} w_{\beta, \vartheta} g (x, v) |^2 \d v \lesssim \int_{\R^3} | \nu^\frac{1}{2} \sigma_x^\frac{m}{2} w_{\beta, \vartheta} g (x, v) |^2 \d v \,.
		\end{align*}}
	The proof of Lemma \ref{Lmm-Dh-L2-L2} is therefore finished.
\end{proof}


\section*{Acknowledgments}
This work was supported by National Key R\&D Program of China under the grant 2023YFA1010300. The author N. Jiang is supported by the grants from the National Natural Foundation of China under contract Nos. 11971360, 12371224 and 12221001. The author Y.-L. Luo is supported by grants from the National Natural Science Foundation of China under contract No. 12201220, the Guang Dong Basic and Applied Basic Research Foundation under contract No. 2021A1515110210 and 2024A1515012358.


\bibliography{reference}

\end{document}